\title{Knot contact homology, string topology, and the cord
  algebra}
\author{Kai Cieliebak}
\author{Tobias Ekholm}
\author{Janko Latschev}
\author{Lenhard Ng}
\theoremstyle{plain}
\newtheorem{theorem}{Theorem}[section]
\newtheorem{thm}[theorem]{Theorem}
\newtheorem{corollary}[theorem]{Corollary}
\newtheorem{cor}[theorem]{Corollary}
\newtheorem{proposition}[theorem]{Proposition}
\newtheorem{prop}[theorem]{Proposition}
\newtheorem{lemma}[theorem]{Lemma}
\theoremstyle{remark}
\newtheorem{remark}[theorem]{Remark}
\newtheorem{example}[theorem]{Example}
\theoremstyle{definition}
\newtheorem{definition}[theorem]{Definition}
\newcommand{\id}{{{\mathchoice {\rm 1\mskip-4mu l} {\rm 1\mskip-4mu l}
{\rm 1\mskip-4.5mu l} {\rm 1\mskip-5mu l}}}}
\newcommand{\ol}{\overline}
\newcommand{\wt}{\widetilde}
\newcommand{\wh}{\widehat}
\newcommand{\p}{\partial}
\newcommand{\om}{\omega}
\newcommand{\Om}{\Omega}
\newcommand{\eps}{\varepsilon}
\newcommand{\pHi}{\varphi}
\newcommand{\la}{\langle}
\newcommand{\ra}{\rangle}
\newcommand{\N}{{\mathbb{N}}}
\newcommand{\Z}{{\mathbb{Z}}}
\newcommand{\R}{{\mathbb{R}}}
\newcommand{\C}{{\mathbb{C}}}
\newcommand{\BH}{{\mathbb{H}}}
\newcommand{\Bf}{{\mathbb{F}}}
\newcommand{\D}{{\bf D}}
\newcommand{\ind}{{\rm ind}}
\newcommand{\im}{{\rm im }}        
\newcommand{\st}{{\rm st}}
\renewcommand{\min}{{\rm min}}
\renewcommand{\max}{{\rm max}}
\newcommand{\sgn}{{\rm sgn\,}}
\newcommand{\inn}{{\rm int}}
\newcommand{\ev}{{\rm ev}}
\newcommand{\sing}{{\rm sing}}
\newcommand{\sy}{{\rm sy}}
\newcommand{\EE}{\mathcal{E}}
\newcommand{\BB}{\mathcal{B}}
\newcommand{\JJ}{\mathcal{J}}
\newcommand{\MM}{\mathcal{M}}
\newcommand{\CC}{\mathcal{C}}
\newcommand{\FF}{\mathcal{F}}
\newcommand{\OO}{\mathcal{O}}
\newcommand{\HH}{\mathcal{H}}
\renewcommand{\AA}{\mathcal{A}}
\newcommand{\II}{\mathcal{I}}
\newcommand{\RR}{\mathcal{R}}
\newcommand{\TT}{\mathcal{T}}
\newcommand{\s}{{\mathfrak s}}
\newcommand{\comment}[1]{}
\newcommand{\x}{\times}
\newcommand{\str}{{\rm string}}
\newcommand{\cont}{{\rm contact}}
\newcommand{\lk}{{\rm lk}}
\newcommand{\pl}{{\rm pl}}
\newcommand{\lin}{{\rm lin}}
\newcommand{\HSN}{H^{\str}_0}
\newcommand{\HSQ}{\hat{H}^{\str}_0}
\newcommand{\HAN}{S^{\hat{\pi}\hat{\pi}}(\pi,\hat \pi)}
\newcommand{\HAQ}{S^{\pi\pi}(\pi,\hat \pi)}
\newcommand{\pa}{\partial}
\newcommand{\Ordo}{{\mathcal O}}
\newcommand{\sblv}{{\mathcal H}}
\newcommand{\Cord}{\operatorname{Cord}}
\begin{document}

\begin{abstract}
The conormal Lagrangian $L_K$ of a knot $K$ in $\R^{3}$ is the
submanifold of the cotangent bundle $T^{\ast}\R^{3}$ consisting of covectors
along $K$ that annihilate tangent vectors to $K$. By intersecting with
the unit cotangent bundle $S^{\ast}\R^{3}$, one obtains the unit
conormal $\Lambda_{K}$, and the Legendrian contact homology of
$\Lambda_{K}$ is a knot invariant of $K$, known as
knot contact homology. We
define a version of string topology for strings in $\R^{3}\cup L_K$
and prove that this is isomorphic in degree $0$ to knot contact homology.
The string topology perspective gives a topological
derivation of the cord algebra (also isomorphic to degree $0$ knot
contact homology) and relates it to the knot group. Together with the
isomorphism this gives a new proof that knot
contact homology detects the unknot.
Our techniques involve a detailed analysis of certain moduli spaces of
holomorphic disks in $T^{\ast}\R^{3}$ with boundary on $\R^{3}\cup L_K$.
\end{abstract}

\maketitle
\tableofcontents

\section{Introduction}\label{sec:intro}

To a smooth $n$-manifold $Q$ we can naturally associate a symplectic
manifold and a contact manifold: its cotangent bundle $T^*Q$ with the
canonical symplectic structure $\om=dp\wedge dq$, and its unit cotangent
bundle (with respect to any Riemannian metric) $S^*Q\subset T^*Q$ with
its canonical contact structure $\xi=\ker(p\,dq)$. Moreover, a
$k$-dimensional submanifold $K\subset Q$ naturally gives rise to a
Lagrangian and a Legendrian submanifold in $T^*Q$ resp.~$S^*Q$: its
conormal bundle $L_K=\{(q,p)\in T^*Q\mid q\in K,\;p|_{T_qK}=0\}$ and
its unit conormal bundle $\Lambda_K=L_K\cap S^*Q$. Symplectic field
theory (SFT~\cite{EGH}) provides a general framework for associating
algebraic invariants to a pair $(M,\Lambda)$ consisting of a contact manifold and
a Legendrian submanifold; when applied to $(S^*Q,\Lambda_K)$, these
invariants will be diffeotopy invariants of the manifold pair
$(Q,K)$.
The study of the resulting invariants was first suggested by Y.~Eliashberg.

In this paper we concentrate on the case where $K$ is a framed
oriented knot in $Q=\R^3$.
Moreover, we consider only the simplest SFT
invariant: {\em Legendrian contact homology}. For $Q=\R^3$, $S^*Q$ is contactomorphic to the $1$-jet space $J^1(S^2)$,
for which Legendrian contact homology has been rigorously defined in \cite{EES2}.
The Legendrian contact homology of the pair $(S^*\R^3,\Lambda_K)$ is called the \textit{knot contact homology} of $K$. We will denote it $H^\cont_*(K)$.

In its most general form (see \cite{EENStransverse,Ngtransverse}),
knot contact homology is the homology of a differential graded algebra
over the group ring $\Z[H_2(S^*\R^3,\Lambda_K)] = \Z[\lambda^{\pm
    1},\mu^{\pm 1},U^{\pm 1}]$, where the images of $\lambda,\mu$ under the connecting homomorphism generate
$H_1(\Lambda_K) = H_1(T^2)$ and $U$ generates $H_2(S^*\R^3)$. The
isomorphism class of $H^\cont_*(K)$ as a $\Z[\lambda^{\pm 1},\mu^{\pm
    1},U^{\pm 1}]$-algebra is then an isotopy invariant of the framed
oriented knot $K$.

The topological content of knot contact homology has been much studied
in recent years; see for instance \cite{AENV} for a conjectured
relation, which we will not discuss here, to colored HOMFLY-PT
polynomials and topological strings. One part of knot contact homology
that has an established topological interpretation is its $U=1$
specialization. In \cite{Ng:2b,Ng:1}, the third author constructed a
knot invariant called the \textit{cord algebra} $\Cord(K)$, whose
definition we will review in Section~\ref{ss:cordalg}. The combined
results of \cite{Ng:2b,Ng:1,EENS} then prove that the cord algebra is
isomorphic as a $\Z[\lambda^{\pm 1},\mu^{\pm 1}]$-algebra to the $U=1$
specialization of degree $0$ knot contact homology. We will assume
throughout this paper that we have set $U=1$;\footnote{However, we
  note that it is an interesting open problem to find a similar
  topological interpretation of the full degree $0$ knot contact homology as a
  $\Z[\lambda^{\pm 1},\mu^{\pm 1},U^{\pm 1}]$-algebra.
}
then the result is:

\begin{thm}[\cite{Ng:2b,Ng:1,EENS}]
$H^\cont_0(K)\cong \Cord(K)$. \label{thm:cordHC0}
\end{thm}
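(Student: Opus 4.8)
The plan is to prove the theorem in two stages: first reduce knot contact homology to an explicit combinatorial differential graded algebra attached to a braid, and then identify the degree~$0$ homology of that algebra with the cord algebra by comparing finite presentations.

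For the first stage, present $K$ as the closure of a braid $B\in B_n$. In \cite{Ng:1,Ng:2b} Ng associated to $B$ a finitely generated differential graded algebra $(\AA_n(B),\p)$ over $\Z[\lambda^{\pm1},\mu^{\pm1}]$ (enlarged to carry the variable $U$ in \cite{EENStransverse}), with generators $a_{ij}$ ($1\le i\ne j\le n$) in degree~$0$, generators $b_{ij}$ together with several further families in degree~$1$, and generators in higher degrees; he also proved that its stable isomorphism type is invariant under Markov moves, hence a knot invariant. The key input is the theorem of \cite{EENS} (relying on the foundational work of \cite{EES2}) that $(\AA_n(B),\p)$ is quasi-isomorphic to the Legendrian contact homology DGA of $\Lambda_K$, so that $H^\cont_*(K)\cong H_*(\AA_n(B),\p)$. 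Specializing to $U=1$ and to degree~$0$, it then suffices to prove $H_0(\AA_n(B),\p)\cong\Cord(K)$.

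Since $(\AA_n(B),\p)$ is concentrated in nonnegative degrees, every degree~$0$ element is a cycle, so $H_0(\AA_n(B),\p)$ is the free associative $\Z[\lambda^{\pm1},\mu^{\pm1}]$-algebra on the generators $a_{ij}$ modulo the two-sided ideal $I$ generated by $\p x$ as $x$ ranges over the degree~$1$ generators. A direct computation from the combinatorial differential shows that $I$ is generated by the entries of an explicit matrix identity built from $B$: assembling the $a_{ij}$ into a matrix $\mathbf{A}$, one obtains relations of the form $\mathbf{A}\,\Phi^R_B = \Phi^L_B\,\mathbf{A}$, where $\Phi^L_B$ and $\Phi^R_B$ arise from a Burau-type linearized action of $B_n$ together with a diagonal twist recording the longitude class $\lambda$ and the exponent sum of $B$. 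This gives an explicit finite presentation of $H_0(\AA_n(B),\p)$ over $\Z[\lambda^{\pm1},\mu^{\pm1}]$. In \cite{Ng:1,Ng:2b} Ng showed that the same presentation is a presentation of $\Cord(K)$: cords of $K$ can be put in normal form relative to the braid, a cord running between the $i$-th and $j$-th strands at the top of the braid is recorded by $a_{ij}$, and the homotopy relations among cords — the skein relation splitting a cord where it meets $K$, the relation that a trivial cord equals $1-\mu$, and the relation obtained by dragging an endpoint once around the closed braid, which introduces the powers of $\lambda$ and $\mu$ — translate exactly into the matrix relations above. Matching generators and relations identifies $H_0(\AA_n(B),\p)$ with $\Cord(K)$, and composing with the isomorphism of \cite{EENS} yields $H^\cont_0(K)\cong\Cord(K)$.

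The main obstacle is the analytic input of \cite{EENS}: identifying the combinatorial DGA with the Legendrian contact homology of $\Lambda_K$ requires a Morse--Bott style analysis of moduli spaces of holomorphic disks in the symplectization of $S^*\R^3$ with boundary on $\R\times\Lambda_K$, matching Reeb chords of $\Lambda_K$ and their products along the torus $\Lambda_K\cong T^2$ with the generators $a_{ij},b_{ij},\dots$ and rigid disks with the combinatorial differential, together with the separate verification that the combinatorial DGA is invariant under Markov moves. By contrast, once the combinatorial model is in hand, the degree~$0$ computation and the identification with the cord algebra are elementary, amounting to linear algebra over the group ring $\Z[\lambda^{\pm1},\mu^{\pm1}]$ and a homotopy classification of cords in the complement of the closed braid. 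It is precisely this heavy analytic step that the string-topology approach of the present paper is designed to bypass.
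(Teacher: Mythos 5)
Your proposal is correct, but it is essentially the original proof from the cited literature, not the route this paper takes. You go: braid presentation $\to$ Ng's combinatorial DGA $(\AA_n(B),\p)$ with Markov invariance \cite{Ng:1,Ng:2b} $\to$ the analytic theorem of \cite{EENS} identifying that DGA with the Legendrian contact homology DGA of $\Lambda_K$ (via degeneration of holomorphic disks to Morse flow trees) $\to$ an elementary degree-$0$ computation matching the matrix presentation $\mathbf{A}\,\Phi^R_B=\Phi^L_B\,\mathbf{A}$ with a normal form for cords. The paper instead deduces Theorem~\ref{thm:cordHC0} as a corollary of its new results: Theorem~\ref{thm:main}, which constructs an isomorphism $H^\cont_0(K)\cong H_0^\str(K)$ directly by counting punctured holomorphic disks in $T^*\R^3$ with boundary on $\R^3\cup L_K$ (switching boundary conditions, length filtration, and a chain homotopy straightening $Q$-strings), together with Proposition~\ref{prop:HS0cord}, which identifies $H_0^\str(K)\cong\Cord(K)$ by an elementary translation between cords and broken strings. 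What each approach buys: yours reuses established machinery but depends on choosing a braid, verifying Markov invariance, and the heavy flow-tree analysis of \cite{EENS}, and it naturally produces the statement over commuting coefficients $\Z[\lambda^{\pm1},\mu^{\pm1}]$; the paper's route bypasses the combinatorial model entirely, works intrinsically with the conormal geometry, proves the sharper fully noncommutative refinement (where $\lambda,\mu$ do not commute with cords/Reeb chords), and feeds directly into the group-ring description of Proposition~\ref{prop:subring} and the Loop-Theorem proof of unknot detection. Minor quibbles with your write-up: \cite{EENS} establishes an isomorphism of DGAs (up to stable tame isomorphism) rather than merely a quasi-isomorphism, and the degenerate objects there are Morse flow trees rather than a Morse--Bott model; neither affects the correctness of your argument.
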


It has been noticed by many people that the definition of the cord algebra bears a striking resemblance to certain operations in string topology \cite{CS:1,S:1}. Indeed, Basu, McGibbon, Sullivan, and Sullivan used this observation in \cite{BMSS} to construct a theory called ``transverse string topology'' associated to any codimension $2$ knot $K\subset Q$, and proved that it determines the $U=\lambda=1$ specialization of the cord algebra.

In this paper, we present a different approach to knot contact homology and the cord algebra via string topology. Motivated by the general picture sketched by the first and third authors in \cite{CL},
we use string topology operations to define the \textit{string
  homology $H^\str_*(K)$} of $K$.
Then the main result of this paper is:

\begin{thm}\label{thm:main}
For any framed oriented knot $K\subset\R^3$, we have an isomorphism
between $U=1$ knot contact homology and string homology in degree $0$,
$$
   H^\cont_0(K)\cong H_0^\str(K),
$$
defined by a count of punctured holomorphic disks in $T^{\ast} \R^{3}$ with Lagrangian boundary condition $L_K\cup \R^{3}$.
\end{thm}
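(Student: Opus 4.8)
The plan is to set up both sides of the claimed isomorphism explicitly and then construct a chain-level map by counting holomorphic disks, following the general SFT philosophy in which holomorphic curves in $T^*\R^3$ with boundary on $\R^3\cup L_K$ interpolate between the Morse-theoretic/combinatorial data of string topology and the SFT data of Legendrian contact homology of $\Lambda_K$. First I would give a careful definition of the string homology $H^\str_*(K)$: the underlying chain complex should be generated by ``broken strings'' — piecewise paths in $\R^3$ with endpoints on $K$ and jumps governed by the conormal $L_K$ — with a differential and product built from string topology operations (concatenation, and the ``half-disk'' operation that pushes an interior intersection with $K$ off to a boundary jump). A key preliminary step is to compute $H^\str_0(K)$ in purely topological terms: I expect it to be generated by homotopy classes of such broken strings modulo the string topology relations, and the main algebraic lemma will be to identify this with the cord algebra $\Cord(K)$, using the knot group $\pi_1(\R^3\setminus K)$ and the peripheral subgroup to encode $\lambda,\mu$. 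Combined with Theorem~\ref{thm:cordHC0}, this reduces the main theorem to showing $H^\str_0(K)\cong\Cord(K)$ and separately matching the holomorphic-disk count to the known chain map behind Theorem~\ref{thm:cordHC0}; alternatively, and more in the spirit of the paper, one constructs the disk-counting map directly and shows it is an isomorphism on $H_0$.

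The heart of the proof is the analysis of the relevant moduli spaces of punctured holomorphic disks in $T^*\R^3$ with boundary on $L_K\cup\R^3$. I would fix a suitable almost complex structure (cylindrical near $S^*\R^3$, standard near the zero section) and study disks with one positive puncture asymptotic to a Reeb chord of $\Lambda_K$ and boundary punctures where the boundary switches between the two Lagrangians $\R^3$ and $L_K$ (switching punctures mapping near the knot $K$). The expected-dimension-zero components of these moduli spaces define the map $\Phi\colon CC_0(K)\to C^\str_0(K)$ on generators (Reeb chords to broken strings), and I must verify: (1) transversality/regularity, perhaps after a generic perturbation or via obstruction bundle gluing near degenerate configurations; (2) compactness, i.e.\ an SFT-type description of the boundary of the one-dimensional moduli spaces, with breaking either at Reeb chords (giving the contact-homology differential) or at string-topology degenerations (nodal disks hitting $K$, giving the string differential/product); and (3) that the count is finite, which requires controlling disks with many switching punctures — here one uses an a priori bound coming from the fact that $\R^3$ has no nonconstant holomorphic disks and $L_K$ is a conormal, so area/index constraints limit the combinatorics.

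The main technical obstacle, as the abstract itself flags, will be step (2)--(3): the full SFT compactness and gluing for disks with boundary on the \emph{non-compact, mutually-intersecting} pair $\R^3\cup L_K$. The intersection locus is exactly $\Lambda_K$'s shadow $K$, so the disks are not embedded-boundary and one has corners/switching behavior that is not covered by standard Legendrian contact homology machinery; controlling disk bubbling at $K$ and identifying the resulting degenerate configurations with the string-topology operations (in particular getting the signs and the $\lambda,\mu$-coefficients right via the relative homology classes in $H_2(T^*\R^3, \R^3\cup L_K)$) is where the real work lies. I would handle this by a local model near $K$ — identifying a neighborhood of $K$ in $T^*\R^3$ with a product and reducing disk behavior there to an explicit one- or two-dimensional problem — and then a gluing argument assembling these local pieces, much as in the analysis of holomorphic disks with corners. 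Once the chain map $\Phi$ and its properties are established, showing it is an isomorphism in degree $0$ follows from comparing generators and relations on both sides, using the topological computation of $H^\str_0(K)$ and Theorem~\ref{thm:cordHC0} as a consistency check, or directly by exhibiting an inverse at the chain level.
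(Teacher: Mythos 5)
Your overall architecture (chain map $\Phi$ defined by counting disks with switching boundary on $\R^3\cup L_K$, boundary of $1$-dimensional moduli spaces splitting into contact-homology breaking and string-topology degenerations at $K$) matches the paper, but there is a genuine gap at the decisive step: you have no workable plan for proving that $\Phi$ is an isomorphism on $H_0$. ``Comparing generators and relations on both sides'' presupposes a presentation of $H^\cont_0(K)$, which is exactly the content of Theorem~\ref{thm:cordHC0} that this theorem is meant to reprove; and even if you import Theorem~\ref{thm:cordHC0}, knowing that both sides are abstractly isomorphic to $\Cord(K)$ does not show that the \emph{disk-count} map is an isomorphism, while ``matching the holomorphic-disk count to the known chain map behind Theorem~\ref{thm:cordHC0}'' (which goes through a combinatorial braid DGA and Morse flow trees) is a separate, no easier project. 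There is also no natural chain-level inverse. The paper's actual argument is a length-filtration argument: the a priori estimate that the total length of the $Q$-boundary of a disk is at most the action of its positive Reeb chord, with equality only for trivial half-strips over binormal chords (Propositions~\ref{prop:disklength} and~\ref{prop:length-estimate}), makes $\Phi$ filtration-preserving and ``upper triangular''; then one deforms the string complex by length-nonincreasing homotopies onto broken strings with piecewise-linear and finally linear $Q$-strings (this is where flatness of $\R^3$ enters essentially, cf.\ Remark~\ref{rem:R3}), computes the filtered string homology in small length windows by Morse theory for the distance function on $K\times K$ (whose critical points are exactly the binormal chords, i.e.\ the Reeb chords, Lemma~\ref{lem:eps}), and concludes via a five-lemma argument over the filtration (Proposition~\ref{prop:rel-hom}). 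None of this is present or replaceable by your suggested endgame.

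A second, more localized error: your finiteness step (3) does not work as stated. Switching punctures of minimal winding number $\tfrac12$ contribute \emph{zero} to the expected codimension ($|\mathbf{n}|=\sum_j 2(n_j-\tfrac12)$), so index/area constraints do not bound the number of switches at all --- disks with arbitrarily many winding-$\tfrac12$ corners live in moduli spaces of the same dimension. The required bound is the uniform bound on the \emph{total winding number} of a disk with one positive puncture (Theorem~\ref{thm:finite}, from \cite{CEL}), which needs real analyticity of $K$ and is a separate analytic theorem, not a consequence of the absence of closed holomorphic disks in $\R^3$. Relatedly, the degenerate boundary phenomena you must match with the string operations are not generic nodal bubbling but corners colliding (two winding-$\tfrac12$ punctures merging into a winding-$1$ puncture, constant disks splitting off at $K$), and making the string-side differential $\delta_Q+\delta_N$ strictly compatible with these degenerations at chain level (rather than only up to homotopy) requires the careful spike-insertion bookkeeping the paper develops; your proposal treats this only at the level of ``getting the signs right.''
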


On the other hand, degree $0$ string homology is easily related to
the cord algebra:

\begin{prop}\label{prop:str-cord}
For any framed oriented knot $K\subset\R^3$, we have an isomorphism
$$
   H^\str_0(K)\cong \Cord(K).
$$
\end{prop}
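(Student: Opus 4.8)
The plan is to unwind both sides to combinatorial objects built from loops in the knot complement and to match them directly. First I would recall the definition of the cord algebra $\Cord(K)$ from Section~\ref{ss:cordalg}: it is the quotient of the (noncommutative, unital) $\Z[\lambda^{\pm1},\mu^{\pm1}]$-algebra freely generated by homotopy classes of \emph{cords} — paths in $\R^3$ with endpoints on $K$ and interior disjoint from $K$ — by the relations coming from (a) contracting a cord to a point on $K$, giving $1+\mu$ (or $1-\mu$, with signs as in the paper's convention), and (b) the skein-type relation that pushing an interior strand through $K$ replaces one cord by a difference of products of two cords, with $\lambda,\mu$ bookkeeping the framing and meridian classes. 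The degree $0$ part of string homology $H^\str_0(K)$ should, by construction in the earlier sections, be presented by chains of strings in $\R^3\cup L_K$ modulo the image of the string topology differential; in degree $0$ the relevant strings are (broken) paths with endpoints free to move along $K$ and with interior interactions governed by the coproduct-type and product-type string operations, which is manifestly the same bookkeeping.

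The key steps, in order: (1) Identify the generators. A degree $0$ string generator is an equivalence class of a single string with both endpoints on $K$ (endpoints on $L_K$ projecting to $K$), which is exactly a homotopy class of cord once one records the framing/meridian data as the homology class in $H_1$ of the complement of $K$ relative to a neighborhood; set up the bijection on generators, being careful about basepoints and the role of the framing in trivializing the relevant circle bundle. (2) Match the relations. The ``contract a string to $K$'' move in string topology produces the constant string weighted by the Euler class contribution of $L_K\to K$, which is the $1\pm\mu$ relation; the interior self-intersection / splitting operation — the string analogue of the Goldman–Turaev coproduct — splits one string into an ordered pair of strings precisely as in the cord skein relation, with the $\lambda$ appearing when the split point crosses the framing curve. (3) Check the algebra structure: concatenation of strings (the string product) corresponds to the noncommutative product of cords, and the unit (empty/trivial string near $K$) corresponds to $1$. (4) Verify that no further relations appear on either side in degree $0$: on the string side this is the assertion that the degree $1$ part of the complex surjects onto exactly these relations and nothing more, which should follow from the explicit description of the low-degree part of the string complex established earlier; on the cord side it is the known presentation of $\Cord(K)$.

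The main obstacle I expect is bookkeeping of signs, orientations, and the equivariant/framing data, rather than anything conceptual: one must pin down how the framing of $K$ enters the trivialization of the $S^1$-bundle $\Lambda_K\to K$ (equivalently the normal circle bundle), track the resulting $\lambda$ and $\mu$ decorations consistently through both the string operations and the cord skein moves, and confirm the signs in the $1\pm\mu$ relation agree with the orientation conventions fixed for string homology. A secondary subtlety is making sure the ``interior disjoint from $K$'' condition on cords is correctly reflected by the genericity/transversality conventions used to define the string operations, so that the map on generators is well defined and its inverse is too. Once the dictionary is set up, both the map $H^\str_0(K)\to\Cord(K)$ and its inverse are defined on generators and shown to respect all relations, giving the isomorphism.
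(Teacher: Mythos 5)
Your overall strategy---set up an explicit two-way dictionary between words of cords (with coefficient monomials) and broken closed strings, match the cord skein relations against the string skein relations and homotopies, and rely on the presentation of $H_0^\str(K)$ by the two skein relations already established in Proposition~\ref{prop:iso}---is exactly the route the paper takes in the proof of Proposition~\ref{prop:HS0cord}. However, several of your specifics would have to be corrected before the dictionary proves the statement as it stands.

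First, in Definition~\ref{def:cordalg} the cords end on the parallel copy $K'$ (away from a marked point), and $\lambda,\mu$ do \emph{not} commute with cords; your setup (cords ending on $K$, a free algebra over $\Z[\lambda^{\pm 1},\mu^{\pm 1}]$) is the older commutative-coefficient model, and with central $\lambda,\mu$ the claimed isomorphism fails, because $H_0^\str(K)$ is genuinely noncommutative over the coefficients: a degree $0$ generator is a whole broken closed string based at $x_0\in\p N$, in which the coefficients $\lambda^a\mu^b$ are realized as $N$-strings (elements of $\pi_1(\p N)$, identified with $\Z^2$ via the framing) interleaved between the $Q$-strings---not as an $H_1$-decoration attached to a single string with endpoints on $K$. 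Second, the mechanisms you propose for the relations are not the ones that occur: the $1-\mu$ relation is not an Euler-class effect; it follows from the $\delta_N$ skein relation \eqref{eq:delta1} combined with a rotation homotopy of the two strands meeting at a switch, while the splitting relation \eqref{eq:delta2} matches cord relation~(\ref{it:cord3}) and involves no $\lambda$ at all---$\lambda$ and $\mu$ enter only through homotopies that slide a switch point along $K$ or wrap it around the meridian (cord relations~(\ref{it:cord0}) and~(\ref{it:cord2})). Those same relations are exactly what make the inverse map (homotope every switch to the base point, with prescribed tangent direction) independent of choices, a well-definedness check your sketch leaves implicit but which is the substantive step in the paper's argument.
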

\noindent
As a corollary we obtain a new geometric proof of
Theorem~\ref{thm:cordHC0}. In fact, we even prove a slight refinement
of the usual formulation of Theorem~\ref{thm:cordHC0}, as we relate
certain noncommutative versions of the two sides where the
coefficients $\lambda,\mu$ do not commute with everything;
see Section~\ref{ss:cordalg} for the version of $\Cord(K)$ and
Section~\ref{sec:leg} for the definition of $H^\cont_0(K)$ that we use.

Our proof is considerably more direct than the original proof of Theorem~\ref{thm:cordHC0}, which was rather circuitous and went as follows.
The third author constructed in \cite{Ng:2a,Ng:1} a combinatorial differential graded algebra associated to a braid whose closure is $K$, and then proved in \cite{Ng:2b,Ng:1} that the degree $0$ homology of this combinatorial complex is isomorphic to $\Cord(K)$ via a mapping class group argument. The second and third authors, in joint work with Etnyre and Sullivan \cite{EENS}, then proved that the combinatorial complex is equal to the differential graded algebra for knot contact homology, using an analysis of degenerations of holomorphic disks to Morse flow trees.

Besides providing a cleaner proof of Theorem~\ref{thm:cordHC0}, the string topology formulation also
gives a geometric explanation for the somewhat mystifying skein relations that define the cord algebra. Moreover, string homology can be directly related to the group ring $\Z\pi$ of the fundamental group $\pi = \pi_1(\R^3\setminus K)$ of the knot complement:

\begin{prop}[see Proposition~\ref{prop:subring}] \label{prop:str-fund}
For a framed oriented knot $K \subset \R^3$, $H_0^\str(K) \cong H_0^\cont(K)$ is isomorphic to the subring of $\Z\pi$ generated by $\lambda^{\pm 1}$, $\mu^{\pm 1}$, and $\operatorname{im}(1-\mu)$, where $\lambda,\mu$ are the elements of $\pi$ representing the longitude and meridian of $K$, and $1-\mu$ denotes the map $\Z\pi \to \Z\pi$ given by left multiplication by $1-\mu$.
\end{prop}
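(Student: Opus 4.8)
The plan is to combine Proposition~\ref{prop:str-cord}, which identifies $H_0^\str(K)$ with the cord algebra $\Cord(K)$, with a direct algebraic manipulation of the cord algebra's presentation in terms of the knot group. So the first step is to recall from Section~\ref{ss:cordalg} the presentation of $\Cord(K)$: it is generated by (homotopy classes of) cords of $K$ together with the meridian and longitude, modulo the skein relations. A cord is a path in $\R^3\setminus K$ with endpoints on $K$ (or on a normal pushoff), and the key observation is that after choosing a basepoint and connecting each endpoint of a cord to it, a cord determines a well-defined element of $\Z\pi$ once one accounts for the meridian ambiguity at the endpoints. Concretely, I would set up a map sending a cord $\gamma$ to the element $1-\mu_\gamma \in \Z\pi$, where $\mu_\gamma$ is the element of $\pi$ obtained by closing up $\gamma$ with a small meridian loop at one end; one checks this is well defined up to the choices and that it intertwines the skein relations with multiplication in $\Z\pi$.

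The second step is to verify that this assignment extends to a ring homomorphism $\Phi\colon \Cord(K) \to \Z\pi$ with image exactly the subring $R$ generated by $\lambda^{\pm 1}, \mu^{\pm 1}$, and $\operatorname{im}(1-\mu)$. The skein relations defining $\Cord(K)$ are precisely of three types: (i) a contractible cord equals $1-\mu$ (or $\mu-1$ depending on orientation conventions); (ii) adding a meridian at an endpoint of a cord multiplies by $\mu$; (iii) the ``stitching'' relation that breaking a cord at an interior point where it crosses a Seifert surface replaces it by a product of two cords. Under $\Phi$, relation (i) maps the empty/trivial cord to $1-\mu$, relation (ii) corresponds to left or right multiplication by $\mu$, and relation (iii) corresponds to the identity $(1-\mu)a(1-\mu) = (1-\mu)a - (1-\mu)a\mu$ inside $\Z\pi$, i.e. to the fact that a product of two cords, each contributing a factor $(1-\mu)$, can be rewritten using a single element of $\pi$ sandwiched between. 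This shows $\operatorname{im}\Phi \subseteq R$; conversely $\lambda,\mu$ and any $(1-\mu)w$ for $w\in\pi$ are visibly hit (the latter by the cord running along a representative of $w$), so $\operatorname{im}\Phi = R$.

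The third and most delicate step is \emph{injectivity} of $\Phi$, equivalently showing that the skein relations capture \emph{all} relations among cords that hold in $\Z\pi$. Here I would argue that $\Cord(K)$ has a normal form: every element can be written as a $\Z$-linear combination of words $\mu^{a_0} g_1 \mu^{a_1} g_2 \cdots$ where the $g_i$ are cords in some fixed finite generating set and the $\mu$-powers are pushed to prescribed positions using relation (ii); then relation (iii) lets one reduce the number of cord factors, so that in fact every element is a $\Z[\mu^{\pm 1},\lambda^{\pm 1}]$-combination of a single cord or the empty word. Matching this against the description of $R$ as $\Z[\mu^{\pm1},\lambda^{\pm1}] + \Z[\mu^{\pm1},\lambda^{\pm1}]\cdot\operatorname{im}(1-\mu)\cdot\Z[\mu^{\pm1},\lambda^{\pm1}]$ and using that $\Z\pi$ is torsion-free as an abelian group, one gets that $\Phi$ is a bijection on normal forms. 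I expect the main obstacle to be precisely this normal-form analysis: one must be careful that the noncommutative bookkeeping of $\lambda$ and $\mu$ (which, as emphasized in the introduction, do \emph{not} commute with the cord generators in our refined version) is consistent, and that the Seifert-surface stitching relation really does suffice to collapse arbitrarily long products of cords. Once the normal form is established, the isomorphism $H_0^\str(K)\cong\Cord(K)\xrightarrow{\ \Phi\ }R\subset\Z\pi$ follows, and combined with Theorem~\ref{thm:main} this also gives $H_0^\cont(K)\cong R$, completing the proof.
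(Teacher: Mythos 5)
Your overall route is the paper's: identify $H_0^\str(K)$ with $\Cord(K)$, send a cord $\gamma$ to an element of $\Z\pi$ built from $(1-\mu)$ and the loop $\widetilde{\gamma}$ obtained by closing up the cord, check the skein relations, and observe surjectivity onto the subring $\mathfrak{R}$ generated by $\lambda^{\pm1},\mu^{\pm1},\operatorname{im}(1-\mu)$. (Minor but real slip: in your first step you define the map as $\gamma\mapsto 1-\mu_\gamma$ with $\mu_\gamma$ the meridian-closed loop; this formula does \emph{not} satisfy skein relation (iv), e.g.\ $(1-\mu x_1x_2)-(1-\mu x_1\mu x_2)\neq(1-\mu x_1)(1-\mu x_2)$. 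The correct assignment, which is the one your step 2 verifications implicitly use, is $\gamma\mapsto \widetilde{\gamma}-\mu\widetilde{\gamma}=(1-\mu)\widetilde{\gamma}$.)

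The genuine gap is in your injectivity step. Reducing to a normal form $\sum_i a_i\lambda^i+\sum_j b_j\,(1-\mu)x_j$ (which the paper also does, via the broken-word presentation and the map to $\Z\pi$) is only half the work: you then assert ``bijection on normal forms,'' but that requires proving that the images $\lambda^i$ and $(1-\mu)x_j$ are $\Z$-linearly independent in $\Z\pi$, and torsion-freeness of $\Z\pi$ says nothing about this, since the group elements $x_j,\mu x_j,\lambda^i$ occurring in different terms can coincide and cancel. The paper supplies exactly this missing argument: with the Seifert framing one has the linking-number homomorphism $\lk\colon\pi\to\Z$ with $\lk(\mu)=1$, $\lk(\lambda)=0$; if $\sum a_i\lambda^i+\sum b_j(1-\mu)x_j=0$, comparing the contributors of maximal and minimal linking number among the $x_j$ forces all $b_j=0$, and then $a_i=0$ because $\hat\pi\to\pi$ is injective (Loop Theorem). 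Note that your argument nowhere uses that $K$ is knotted, whereas injectivity genuinely fails for the unknot: there $\lambda=1$ in $\pi$, so $\psi(\{\lambda\})=\psi(\{1\})$ while these differ in $H_0^\str(U)\cong\Z[\lambda^{\pm1},\mu^{\pm1}]/((\lambda-1)(\mu-1))$, which has zero divisors and is not isomorphic to the subring $\Z[\mu^{\pm1}]\subset\Z\pi$. So the statement needs the hypothesis that the peripheral subgroup injects (i.e.\ $K$ nontrivial), and any proof of injectivity must invoke it; your normal-form claim, as stated, would ``prove'' the unknot case as well and hence cannot be complete.
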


As an easy consequence of Proposition~\ref{prop:str-fund}, we recover the following result from \cite{Ng:1}:

\begin{cor}[see Section~\ref{ss:groupring}]
Knot contact homology detects the unknot: if $H_0^\cont(K) \cong
H_0^\cont(U)$ where $K$ is a framed oriented knot in $\R^3$ and $U$ is
the unknot with any framing, then $K=U$ as framed oriented knots.
\label{cor:unknot}
\end{cor}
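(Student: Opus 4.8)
The plan is to feed Proposition~\ref{prop:str-fund} into classical $3$-manifold topology. Write $\pi=\pi_1(\R^3\setminus K)$ and let $\mu,\lambda\in\pi$ denote the meridian and the framed longitude, so that Proposition~\ref{prop:str-fund} identifies $H_0^\cont(K)\cong H_0^\str(K)$ with the subring $\mathcal R_K\subseteq\Z\pi$ generated by $\mu^{\pm1}$, $\lambda^{\pm1}$ and $\operatorname{im}(1-\mu)$, as a ring equipped with the distinguished elements $\mu$ and $\lambda$. For the unknot $U$ with framing $n$ one has $\pi\cong\Z=\langle\mu\rangle$ and $\lambda=\mu^n$, and since $\mu^{\pm1}$ already generate $\Z[\mu^{\pm1}]$ as a ring the subring is all of $\Z[\mu^{\pm1}]$; in particular $H_0^\cont(U)$ is \emph{commutative}. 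Thus the first step is the observation that an isomorphism $H_0^\cont(K)\cong H_0^\cont(U)$ (preserving $\mu$ and $\lambda$, as in the formulation of Section~\ref{ss:cordalg}) forces $\mathcal R_K$, and hence the relevant part of $\Z\pi$, to be commutative.

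The one genuine computation is to deduce from commutativity of $\mathcal R_K$ that $\mu$ is central in $\pi$. Given $g\in\pi$, the element $(1-\mu)g$ lies in $\operatorname{im}(1-\mu)\subseteq\mathcal R_K$ and so commutes with $\mu$; writing out $\mu(1-\mu)g=(1-\mu)g\,\mu$ in the free $\Z$-module $\Z\pi$ yields
$$
   \mu g-\mu^2 g-g\mu+\mu g\mu=0 .
$$
The positively signed group elements here are $\mu g$ and $\mu g\mu$, the negatively signed ones $\mu^2 g$ and $g\mu$, so the identity forces $\{\mu g,\mu g\mu\}=\{\mu^2 g,g\mu\}$ as multisets. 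Both $\mu g=\mu^2 g$ and $\mu g=\mu g\mu$ would give $\mu=1$, which is impossible since $\mu$ generates $H_1(\R^3\setminus K)\cong\Z$; the remaining possibility is $\mu g=g\mu$. Hence $\mu$ commutes with every $g\in\pi$. Since $\pi$ is normally generated by $\mu$ (as $\pi/\langle\!\langle\mu\rangle\!\rangle\cong\pi_1(S^3)=1$) and $\mu$ is now central, $\pi=\langle\mu\rangle$ is infinite cyclic.

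Finally, a knot in $S^3$ whose complement group is $\Z$ is the unknot, by Dehn's lemma and the loop theorem; this already gives that $K$ is unknotted. For the framing, $\lambda=\mu^k$ with $k$ the framing of $K$, and since the isomorphism sends $\mu\mapsto\mu$ and $\lambda\mapsto\lambda$ we get $\mu^k=\mu^n$, i.e.\ $k=n$; thus $K=U$ as framed oriented knots. I expect the main (minor) obstacle to be purely bookkeeping: checking that the version of $H_0^\cont$ in use really retains the distinguished elements $\mu,\lambda$ and that, in the noncommutative setting of Section~\ref{ss:cordalg}, commutativity of $H_0^\cont(U)$ genuinely pulls back to $\mathcal R_K$ itself rather than merely to a quotient; once that is in place, the short $\Z\pi$ computation together with the classical characterization of the unknot by its group finishes the argument.
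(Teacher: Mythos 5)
Your unknot-detection mechanism is sound and genuinely different from the paper's. The paper rules out knotted $K$ either by showing that left multiplication by $\lambda-1$ is injective on $\HSN(K)$ for knotted $K$ (Proposition~\ref{prop:distinguish}), or, in the proof actually given in Example~\ref{ex:unknot}, by observing that for knotted $K$ the ring $\HSN(K)\subset\Z\pi$ has no zero divisors (via left-orderability of knot groups) while $\HSN(U)\cong\Z[\lambda^{\pm1},\mu^{\pm1}]/((\lambda\mu^f-1)(\mu-1))$ does. You instead use commutativity: if $K$ were knotted, Proposition~\ref{prop:subring} identifies $H_0^\cont(K)$ with the subring $\mathfrak{R}\subset\Z\pi$, commutativity of $H_0^\cont(U)$ forces $\mu(1-\mu)g=(1-\mu)g\,\mu$ for all $g\in\pi$, and your group-ring computation makes $\mu$ central (the relevant dichotomy is $\mu g=\mu^2 g$ versus $\mu g=g\mu$, the first being excluded since $\mu\neq 1$ in $H_1$; your phrasing of the case analysis is slightly off but the conclusion stands); normal generation of $\pi$ by $\mu$ then gives $\pi\cong\Z$, and the Loop Theorem finishes. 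This stays within the toolkit the paper advertises (Loop Theorem plus elementary algebra), avoids the left-orderability input of Example~\ref{ex:unknot}, and nicely illustrates Remark~\ref{rmk:commute1} that the fully noncommutative coefficients are what carry the knotting information.

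Two steps need repair. First, your justification that $H_0^\cont(U)$ is commutative misapplies Proposition~\ref{prop:str-fund}: for the unknot $\hat\pi\to\pi$ is not injective and the map $\psi$ of Proposition~\ref{prop:subring} is not injective, so $H_0^\cont(U)$ is \emph{not} the subring $\Z[\mu^{\pm1}]\subset\Z\pi$; it is $\Z[\lambda^{\pm1},\mu^{\pm1}]/((\lambda\mu^f-1)(\mu-1))$ (Example~\ref{ex:unknot}), which happens to be commutative, so the fact you need survives, but it must be quoted from that computation. Second, and more substantially, the framing step fails as written: $\lambda=\mu^k$ is an identity in $\pi$, not in $H_0^\cont(U)$ --- for instance $\lambda\neq 1$ in $\Z[\lambda^{\pm1},\mu^{\pm1}]/((\lambda-1)(\mu-1))$ --- and since $\psi$ is not injective for the unknot you cannot read $\lambda$ off inside $\Z\pi$, so the equation ``$\mu^k=\mu^n$'' is not available in the invariant. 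To pin down the framing, compare instead the kernels of the structure maps $\Z[\lambda^{\pm1},\mu^{\pm1}]\to H_0^\cont$: for the unknot with framing $k$ this kernel is the principal ideal generated by $(\lambda\mu^{f(k)}-1)(\mu-1)$ (with $f(k)$ the framing-dependent twist), an NC-algebra isomorphism forces the two ideals to agree, and mapping $\lambda\mapsto\mu^{-f(n)}$ into $\Z[\mu^{\pm1}]$ shows this is impossible unless $k=n$. This is exactly the content of the paper's closing assertion that $K$ ``must further have the same framing as $U$''; with these two fixes your argument goes through.
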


\noindent
The original proof of Corollary~\ref{cor:unknot} in \cite{Ng:1} uses
the result that the $A$-polynomial detects the unknot \cite{DG}, which
in turn relies on results from gauge theory \cite{KM}. By contrast,
our proof of Corollary~\ref{cor:unknot} uses no technology beyond the
Loop Theorem (more precisely, the consequence of the Loop Theorem that the longitude is null-homotopic in $\R^3\setminus K$ if and only if $K$ is unknotted).

{\bf Organization of the paper. }
In Section~\ref{sec:string} we define degree $0$ string homology and
prove Proposition~\ref{prop:str-cord}, Proposition~\ref{prop:str-fund}
and Corollary~\ref{cor:unknot}. The remainder of the paper is occupied
by the proof of
Theorem~\ref{thm:main}, beginning with an outline in
Section~\ref{sec:roadmap}.
After a digression in Section~\ref{sec:holo} on the local behavior of
holomorphic functions near corners, which serves as a model for the
behavior of broken strings at switches, we define string homology in
arbitrary degrees in Section~\ref{sec:string-ref}.

The main work in proving Theorem~\ref{thm:main}
is an explicit description of
the moduli spaces of holomorphic disks in $T^*\R^3$ with boundary on
$L_K\cup\R^3$ and punctures asymptotic to Reeb chords.
In Section~\ref{sec:chain} we state the main results about these
moduli spaces and show how they give rise to a chain map from
Legendrian contact homology to string homology (in arbitrary
degrees). Moreover, we show that this chain map respects a natural
length filtration. In Section~\ref{sec:iso} we construct a length
decreasing chain homotopy and prove Theorem~\ref{thm:main}.

The technical results about moduli spaces of holomorphic disks and
their compactifications as manifolds with corners are proved in the
remaining Sections~\ref{S:mdlisp}, \ref{sec:trans}
and~\ref{sec:gluing}.

{\bf Extensions. }
The constructions in this paper have several possible extensions.
Firstly, the definition of string homology and the construction of a
homomorphism from Legendrian contact homology to string homology in
degree zero work the same way for a knot $K$ in an arbitrary
$3$-manifold $Q$ instead of $\R^3$ (the corresponding sections are
actually written in this more general setting), and more generally for
a codimension $2$ submanifold $K$ of an arbitrary manifold $Q$.\footnote{
In the presence of contractible closed geodesics in $Q$, this will
require augmentations by holomorphic planes in $T^*Q$, see e.g.~\cite{CL}.}
The fact that the ambient manifold is $\R^3$ is only used to obtain a
certain finiteness result in the proof that this map is an
isomorphism (see Remark~\ref{rem:R3}). If this result can be generalized, then
Theorem~\ref{thm:main} will hold for arbitrary codimension $2$
submanifolds $K\subset Q$.

Secondly, for knots in $3$-manifolds, the homomorphism from Legendrian
contact homology to string homology is actually constructed in arbitrary
degrees. Proving that it is an isomorphism in arbitrary degrees will
require analyzing codimension three phenomena in the space of strings
with ends on the knot, in addition to the codimension one and two
phenomena described in this paper.

\section*{Acknowledgments}

We thank Chris Cornwell, Tye Lidman, and especially Yasha Eliashberg for
stimulating conversations.
This project started when the authors met at the Workshop ``SFT 2'' in Leipzig in August 2006, and the final technical details were cleaned up when we met during the special program on ``Symplectic geometry and topology'' at the Mittag-Leffler institute in Djursholm in the fall of 2015. We would like to thank the sponsors of these programs for the opportunities to meet, as well as for the inspiring working conditions during these events.
The work of KC was supported by DFG grants CI 45/2-1 and CI 45/5-1.
The work of TE was supported by the Knut and Alice Wallenberg Foundation and by the Swedish Research Council.
The work of JL was supported by DFG grant LA 2448/2-1.
The work of LN was supported by NSF grant DMS-1406371 and a grant from
the Simons Foundation (\# 341289 to Lenhard Ng).
Finally, we thank the referee for suggesting numerous improvements.

\section{String homology in degree zero}\label{sec:string}

In this section, we introduce the degree $0$ string homology $H_0^\str(K)$.
The discussion of string homology here is only a
first approximation to the more precise approach in
Section~\ref{sec:string-ref}, but is much less technical and suffices
for the comparison to the cord algebra. We then give several
formulations of the cord algebra $\Cord(K)$ and use these to prove
that $H_0^\str(K) \cong \Cord(K)$ and that string homology detects the
unknot.
Throughout this section, $K$ denotes an oriented framed knot in
some oriented $3$-manifold $Q$.

\subsection{A string topology construction}\label{ss:string0}

Here we define $H_0^\str(K)$ for an oriented knot $K \subset Q$. 
Let $N$ be a tubular neighborhood of $K$.
For this definition we do not need a framing for the knot $K$; later, when we identify $H_0^\str(K)$ with the cord algebra, it will be convenient to fix a framing, which will in turn fix an identification of $N$ with $S^1 \times D^2$.

Any
tangent vector $v$ to $Q$ at a point on $K$ has a tangential component
parallel to $K$ and a normal component lying in the disk fiber; write
$v^{\text{normal}}$ for the normal component of $v$.
Fix a base point $x_0\in\p N$ and a unit tangent vector
$v_0\in T_{x_0}\p N$.

\begin{figure}
\labellist
\small\hair 2pt
\pinlabel ${\color{blue} x_0}$ at 82 -6
\pinlabel ${\color{blue} v_0}$ at 127 2
\pinlabel ${\color{blue} s_1}$ at 120 19
\pinlabel ${\color{red} s_2}$ at 161 95
\pinlabel ${\color{blue} s_3}$ at 136 144
\pinlabel ${\color{red} s_4}$ at 88 111
\pinlabel ${\color{blue} s_5}$ at 46 13
\pinlabel $K$ at 222 27
\endlabellist
\centering
\includegraphics[width=0.7\textwidth]{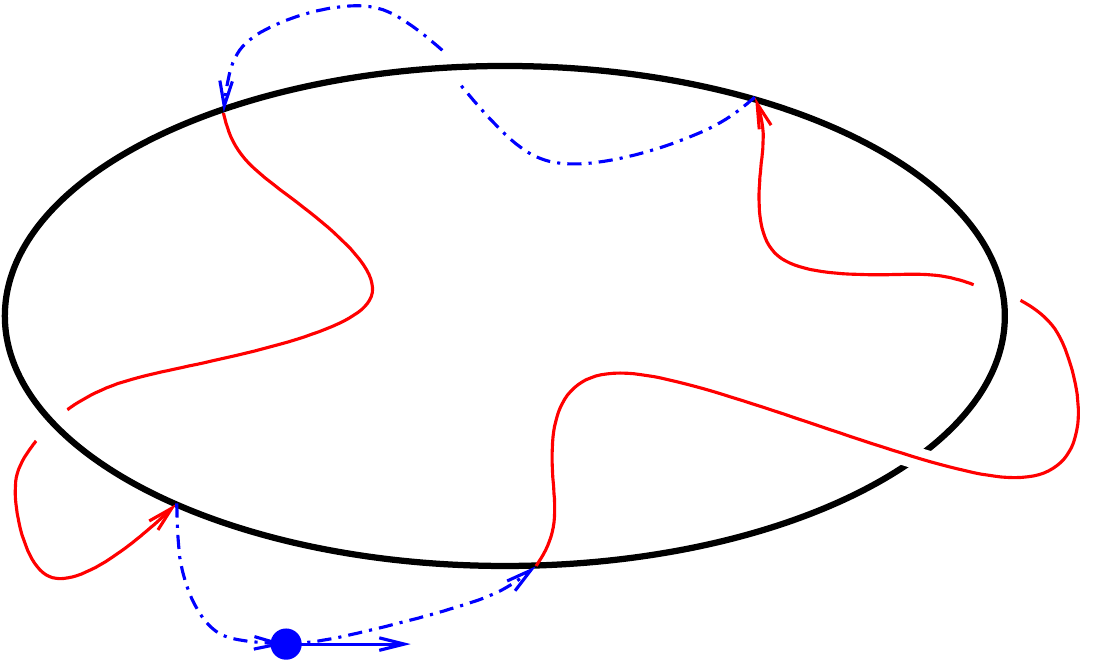}
\caption{A broken closed string with $4$ switches. Here, as in subsequent figures, we draw the knot $K$ in black, $Q$-strings ($s_2,s_4$) in red, and $N$-strings ($s_1,s_3,s_5$) in blue (dashed for clarity to distinguish from the red $Q$-strings).}
\label{fig:1}
\end{figure}

\begin{definition}\label{def:broken_string1}
A {\em broken (closed) string with $2\ell$ switches} on $K$ is a
tuple $s=(a_1,\dots,a_{2\ell+1};s_1,\dots,s_{2\ell+1})$ consisting of real
numbers $0=a_0<a_1<\dots<a_{2\ell+1}$ and $C^1$ maps
$$
   s_{2i+1}:[a_{2i},a_{2i+1}]\to N,\quad s_{2i}:[a_{2i-1},a_{2i}]\to
   Q
$$
satisfying the following conditions:
\begin{enumerate}
\item $s_0(0)=s_{2\ell+1}(a_{2\ell+1})=x_0$ and $\dot s_0(0)=\dot
s_{2\ell+1}(a_{2\ell+1})=v_0$;
\item for $j=1,\dots, 2\ell$, $s_j(a_j)=s_{j+1}(a_j) \in K$;
\item for $i=1,\dots,\ell$,
\begin{align*}
   (\dot s_{2i}(a_{2i}))^{\text{normal}} &= -(\dot s_{2i+1}(a_{2i}))^{\text{normal}} \\
   (\dot s_{2i-1}(a_{2i-1}))^{\text{normal}} &=(\dot s_{2i}(a_{2i-1}))^{\text{normal}}.
\end{align*}
\end{enumerate}
We will refer to the $s_{2i}$ and $s_{2i+1}$ as
{\em Q-strings} and {\em N-strings}, respectively.
Denote by $\Sigma^\ell$ the set of broken strings with $2\ell$
switches.
\end{definition}

\noindent
The last condition, involving normal components of the tangent vectors to the ends of the $Q$- and $N$-strings, models the boundary behavior of holomorphic disks in this context (see Subsections~\ref{ss:series} and \ref{ss:brokenstring} for
more on this point). A typical picture of a broken string is shown in
Figure~\ref{fig:1}.

We call a broken string $s=(s_1,\dots s_{2\ell+1})$ {\em generic} if
none of the derivatives $\dot s_i(a_{i-1})$, $\dot s_i(a_i)$ is tangent to $K$
and no $s_i$ intersects $K$ away from its end points. We call a smooth
1-parameter family of broken strings $s^\lambda=(s_1^\lambda,\dots
s_{2\ell+1}^\lambda)$, $\lambda\in[0,1]$, {\em generic} if $s^0$ and
$s^1$ are generic
strings, none of the derivatives $\dot s_i^\lambda(a_{i-1}^\lambda),\dot
s_i^\lambda(a_i^\lambda)$ is tangent to $K$, and for each $i$ the family
$s_i^\lambda$ intersects $K$ transversally in the interior.
The boundary of this family is given by
$$
   \p\{s^\lambda\} := s^1-s^0.
$$

\begin{figure}
\labellist
\small\hair 2pt
\pinlabel ${\color{blue} \lambda=1}$ at 107 493
\pinlabel ${\color{blue} \lambda=0}$ at 160 528
\pinlabel ${\color{blue} s^\lambda}$ at 338 302
\pinlabel ${\color{blue} 1}$ at 88 400
\pinlabel ${\color{blue} 2}$ at 88 330
\pinlabel ${\color{blue} 3}$ at 14 367
\pinlabel $K$ at 353 525
\pinlabel $K$ at 715 525
\pinlabel $\delta_N$ at 414 417
\pinlabel ${\color{red} \lambda=1}$ at 107 205
\pinlabel ${\color{red} \lambda=0}$ at 160 240
\pinlabel ${\color{red} s^\lambda}$ at 338 14
\pinlabel ${\color{red} 1}$ at 88 112
\pinlabel ${\color{red} 2}$ at 88 42
\pinlabel ${\color{red} 3}$ at 98 79
\pinlabel $K$ at 353 237
\pinlabel $K$ at 715 237
\pinlabel $\delta_Q$ at 414 129
\endlabellist
\centering
\includegraphics[width=0.9\textwidth]{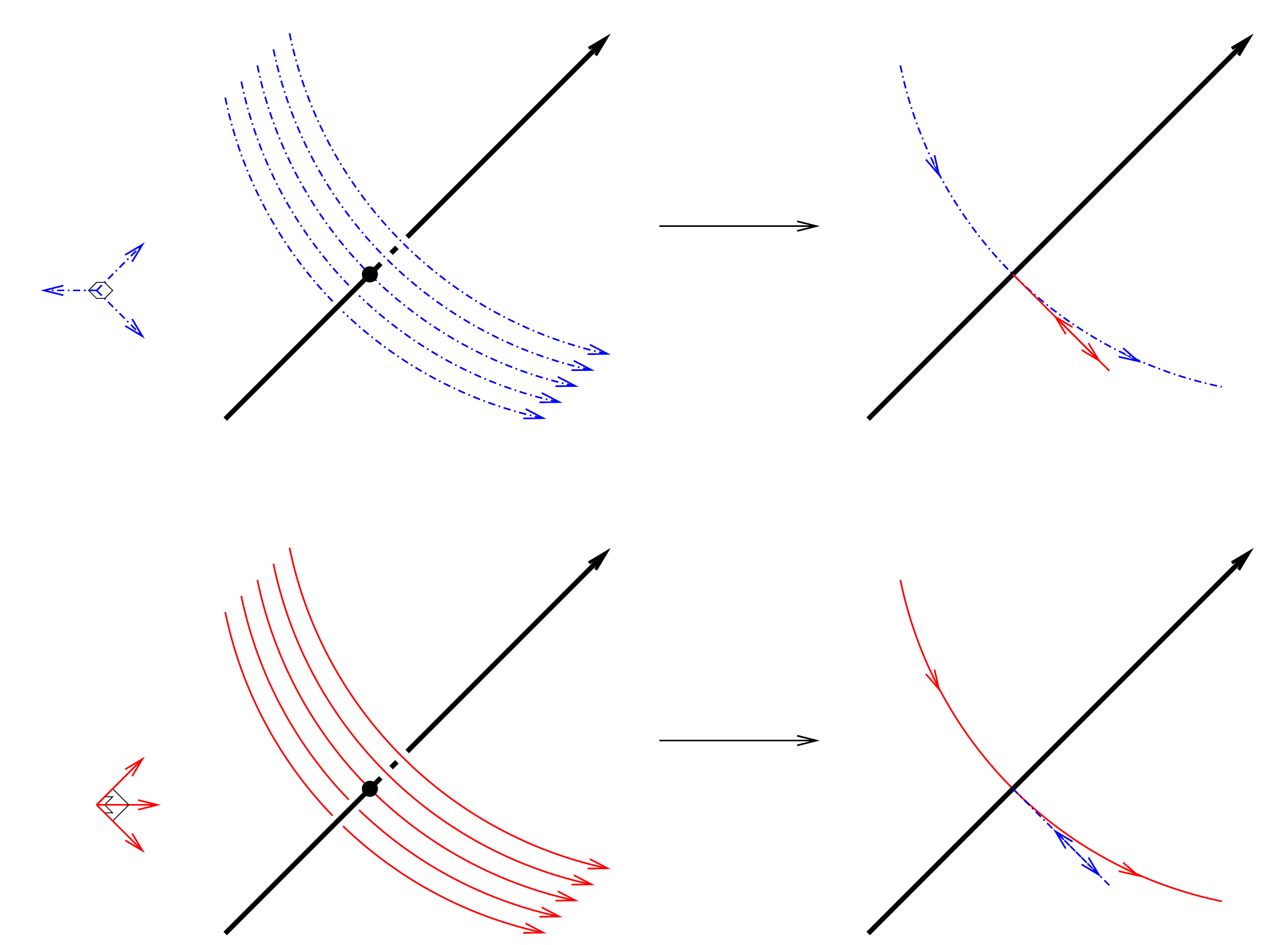}

\caption{The definition of $\delta_N$ and $\delta_Q$. The two configurations shown have sign $\varepsilon=1$.
If the orientation of the $1$-parameter family $s^\lambda$ is
switched, i.e., the $\lambda=0$ and $\lambda=1$ ends are interchanged,
then $\delta_N$ and $\delta_Q$ are still as shown, but with sign
$\varepsilon = -1$. The coordinate axes denote orientations chosen on
$N$ (top) and $Q$ (bottom).
}
\label{fig:2}
\end{figure}

We define {\em string coproducts} $\delta_Q$ and $\delta_N$ as follows, cf.\ Section~\ref{ss:string-op}.
Fix a family of bump functions (which we will call {\em spikes})
$\s_\nu:[0,1] \to D^2$ for $\nu\in D^2$ such that
$\s_\nu^{-1}(0)=\{0,1\}$, $\dot\s_\nu(0)=\nu$ and $\dot\s_\nu(1)=-\nu$; for each $\nu$, $\s_\nu$ lies in the line joining $0$ to $\nu$.
For a
generic $1$-parameter family of broken strings $\{s^\lambda\}$ denote
by $\lambda^j,b^j$ the finitely many values for which
$s_{2i}^{\lambda^j}(b^j)\in K$ for some $i=i(j)$.
For each $j$, let $\s^j=\s_{\nu^j}(\cdot-b^j):[b^j,b^j+1]\to N$ be a
shift of the spike associated to the normal derivative
$\nu^j:=-(\dot\sigma_{2i}^{\lambda^j}(b_j))^{\rm normal}$, with
constant value $s_{2i}^{\lambda^j}(b^j)$ along $K$; interpret this as an $N$-string in the normal disk to $K$ at the point $s_{2i}^{\lambda^j}(b^j)$, traveling along the line joining $0$ to $\nu^j \in D^2$.
Now set
$$
   \delta_Q\{s^\lambda\} :=
   \sum_{j}\eps^j\Bigl(s_1^{\lambda^j},\dots,
   s^{\lambda^j}_{2i}|_{[a_{2i-1},b^j]},\s^j,
   \hat s^{\lambda^j}_{2i}|_{[b^j,a_{2i}]}, \dots, \hat
   s^{\lambda^j}_{2\ell+1}\Bigr),
$$
where the hat means shift by $1$ in the argument, and $\eps^j=\pm 1$
are signs defined as in Figure~\ref{fig:2}.\footnote{Regarding the
  signs: from our considerations of orientation bundles in
  Section~\ref{sec:trans}, we can assign the same sign (which we have
  chosen to be $\eps=1$) to both configurations shown in
  Figure~\ref{fig:2}, provided we choose orientations on $Q$ and $N$
  appropriately. More precisely, at a point $p$ on $K$, if $(v_1,v_2,v_3)$
  is a positively oriented frame in $Q$ where $v_1$ is tangent to $K$
  and $v_2,v_3$ are normal to $K$,
  then we need $(v_1,Jv_2,-Jv_3)$ to be a positively oriented frame in
  $N$, where $J$ is the almost complex structure that rotates normal
  directions in $Q$ to normal directions in $N$. As a result, if we
  give $Q$ any orientation and view $N$ as the subset of $Q$ given by
  a tubular neighborhood of $K$, then we assign the \textit{opposite}
  orientation to $N$.}
Loosely speaking,
$\delta_Q$ inserts an {\em $N$-spike}
at all points where some $Q$-string meets $K$, in such a way that
(iii) still holds. The operation $\delta_N$ is defined analogously,
inserting a {\em $Q$-spike} where an $N$-string meets $K$ (and
defining $\nu^j$ without the minus sign).

Denote by $C_0(\Sigma^\ell)$ and $C_1(\Sigma^\ell)$ the free
$\Z$-modules generated by generic broken strings and generic
1-parameter families of broken strings with $2\ell$ switches,
respectively, and set
$$
   C_i(\Sigma) := \bigoplus_{\ell=0}^\infty C_i(\Sigma^\ell),\qquad i=0,1.
$$
Concatenation of broken strings at the base point gives $C_0(\Sigma)$
the structure of a (noncommutative but strictly associative) algebra
over $\Z$. The operations defined above yield linear maps
$$
   \p:C_1(\Sigma^\ell)\to C_0(\Sigma^\ell)\subset C_0(\Sigma),\qquad
   \delta_N,\delta_Q:C_1(\Sigma^\ell)\to
   C_0(\Sigma^{\ell+1})\subset C_0(\Sigma).
$$
Define the degree zero {\em string homology} of $K$ as
$$
   H_0^\str(K) = H_0(\Sigma) := C_0(\Sigma)/\im(\p+\delta_N+\delta_Q).
$$
Since $\p+\delta_N+\delta_Q$ commutes with multiplication by elements
in $C_0(\Sigma)$, its image is a two-sided ideal in
$C_0(\Sigma)$. Hence degree zero
string homology inherits the structure of an algebra
over $\Z$. By definition, $H_0^\str(K)$ is an isotopy invariant
of the oriented knot $K$ (the framing was used only for convenience
but is not really needed for the construction,
cf.~Remark~\ref{rem:framing} below).

Considering 1-parameter families consisting of generic strings (on which
$\delta_N$ and $\delta_Q$ vanish), we see that for the computation of
$H_0^\str(K)$ we may replace the algebra $C_0(\Sigma)$ by its quotient under
homotopy of generic strings. On the other hand, if $\{s^\lambda\}$ is a generic $1$-parameter family of strings
that consists of generic strings except for an $N$-string (resp.~a $Q$-string) that passes through $K$ exactly once, then $\delta_N$ (resp.~$\delta_Q$) contributes a term to $(\partial+\delta_N+\delta_Q)$, and setting $(\partial+\delta_N+\delta_Q)(\{s^\lambda\})=0$ in these two cases yields the following ``skein relations'':
\begin{enumerate}[(a)]
\item \hspace{2ex} \label{eq:delta1}
$0 \hspace{1ex} = \hspace{1ex}
{\labellist
\small\hair 2pt
\pinlabel $K$ at 87 87 \endlabellist
\raisebox{-3ex}{\includegraphics[width=7ex]{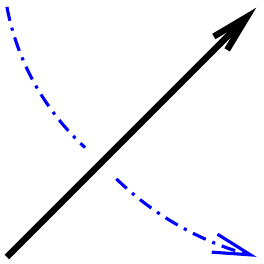}}}
\hspace{2ex} -
\hspace{1ex}
{\labellist
\small\hair 2pt
\pinlabel $K$ at 87 87 \endlabellist
\raisebox{-3ex}{\includegraphics[width=7ex]{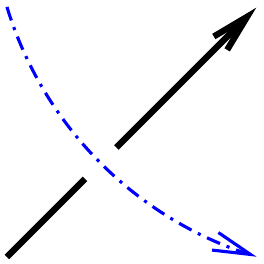}}}
\hspace{2ex} +
\hspace{1ex}
{\labellist
\small\hair 2pt
\pinlabel $K$ at 87 87 \endlabellist
\raisebox{-3ex}{\includegraphics[width=7ex]{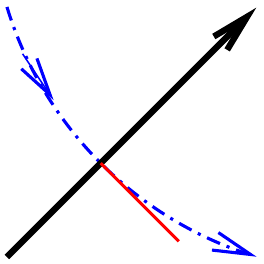}}}$
\vspace{1ex}
\item \hspace{2ex}
$0 \hspace{1ex} = \hspace{1ex}
{\labellist
\small\hair 2pt
\pinlabel $K$ at 87 87 \endlabellist
\raisebox{-3ex}{\includegraphics[width=7ex]{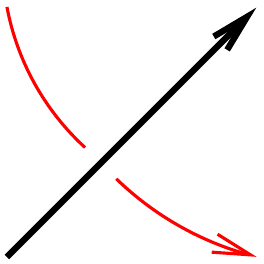}}}
\hspace{2ex} -
\hspace{1ex}
{\labellist
\small\hair 2pt
\pinlabel $K$ at 87 87 \endlabellist
\raisebox{-3ex}{\includegraphics[width=7ex]{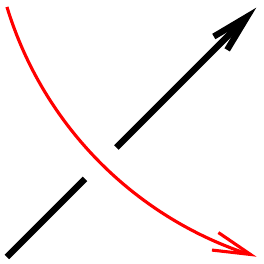}}}
\hspace{2ex} +
\hspace{1ex}
{\labellist
\small\hair 2pt
\pinlabel $K$ at 87 87 \endlabellist
\raisebox{-3ex}{\includegraphics[width=7ex]{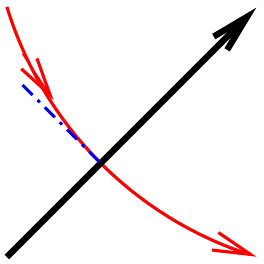}}}$ \hspace{1ex}.
\label{eq:delta2}
\end{enumerate}
Since any generic $1$-parameter family of broken closed strings can be
divided into $1$-parameter families each of which crosses $K$ at most
once, we have proved the following result.

\begin{prop}\label{prop:iso}
Let $\BB$ be the quotient of $C_0(\Sigma)$ by homotopy of generic broken
strings and let $\JJ\subset\BB$ be the two-sided ideal generated by
the skein relations \eqref{eq:delta1} and \eqref{eq:delta2}.
Then
\[
H_0^\str(K) \cong \BB/\JJ.
\]
\end{prop}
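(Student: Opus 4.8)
The plan is to unwind the definition of $H_0^\str(K) = C_0(\Sigma)/\im(\p+\delta_N+\delta_Q)$ in two stages, corresponding to the two types of generators of the image: those coming from families of generic strings, and those coming from families that cross $K$ exactly once. First I would observe that if $\{s^\lambda\}$ is a generic $1$-parameter family consisting entirely of generic broken strings (no $Q$-string or $N$-string ever meets $K$ in the interior), then by construction $\delta_N\{s^\lambda\} = \delta_Q\{s^\lambda\} = 0$, so $(\p+\delta_N+\delta_Q)\{s^\lambda\} = \p\{s^\lambda\} = s^1 - s^0$. Hence quotienting $C_0(\Sigma)$ by the span of such elements is exactly the same as passing to the quotient $\BB$ of $C_0(\Sigma)$ by homotopy through generic broken strings. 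This identifies the first batch of relations with the passage to $\BB$.

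Next I would handle the families in which a single $N$-string (respectively $Q$-string) passes transversally through $K$ exactly once, at one parameter value. For such a family, $\delta_N$ (respectively $\delta_Q$) contributes exactly one term — the insertion of a $Q$-spike (respectively $N$-spike) at the crossing point, with the sign prescribed in Figure~\ref{fig:2} — while $\p\{s^\lambda\} = s^1 - s^0$ records the two endpoints of the family. Setting $(\p+\delta_N+\delta_Q)\{s^\lambda\} = 0$ for these families yields precisely the three-term skein relations \eqref{eq:delta1} and \eqref{eq:delta2}: the two configurations of the family just before and just after the crossing, together with the spike-insertion term. So modding out by these generators of the image corresponds, inside $\BB$, to quotienting by the two-sided ideal $\JJ$ generated by \eqref{eq:delta1} and \eqref{eq:delta2}. (That $\JJ$ is two-sided, and that this quotient is again an algebra, is automatic from the fact noted in the text that $\p+\delta_N+\delta_Q$ commutes with multiplication in $C_0(\Sigma)$.)

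The remaining point — and the one I expect to be the only real content — is that these two families of generators already generate all of $\im(\p+\delta_N+\delta_Q)$; equivalently, that an \emph{arbitrary} generic $1$-parameter family $\{s^\lambda\}$ can be subdivided, after a homotopy rel endpoints through generic families, into finitely many subfamilies each of which is either entirely generic or crosses $K$ transversally exactly once. This is where genericity is used: for a generic family the parameter values $\lambda$ at which some component string meets $K$ are finite in number (the family meets $K$ transversally in the interior, so the incidence locus is a $0$-manifold in the $(\lambda,t)$-parameter space projecting finitely to the $\lambda$-axis), and these values can be separated so that between consecutive crossing times the family is generic and across each crossing time exactly one string passes through $K$ once. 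One then writes $\{s^\lambda\}$ as a telescoping sum of these pieces in $C_1(\Sigma)$, on which $\p+\delta_N+\delta_Q$ is additive, so its value on $\{s^\lambda\}$ lies in the span of the two model types. Combining the three steps gives a well-defined surjection $\BB \to H_0^\str(K)$ with kernel exactly $\JJ$, hence the claimed isomorphism $H_0^\str(K) \cong \BB/\JJ$. The main obstacle is purely the transversality/subdivision bookkeeping in this last step; everything else is formal manipulation of the definitions.
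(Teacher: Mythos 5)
Your proposal is correct and follows essentially the same route as the paper: families of generic strings give the passage to $\BB$, single-crossing families give the skein relations \eqref{eq:delta1} and \eqref{eq:delta2}, and the key point is that any generic $1$-parameter family can be subdivided (at parameter values between the finitely many crossing times, no homotopy needed) into pieces crossing $K$ at most once. This is exactly the paper's argument, with the subdivision step made slightly more explicit.
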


\begin{remark}\label{rem:framing}
Degree zero string homology $H_0^\str$ (as well as its higher degree
version defined later) is an invariant of an
oriented knot $K \subset Q$.
Reversing the orientation of $K$ has the result of changing the signs
of $\delta_N$ and $\delta_Q$ but not of $\p$ and gives rise to
isomorphic $H_0^\str$. More precisely, if $-K$ is $K$ with the
opposite orientation, the map $C_0(\Sigma) \to C_0(\Sigma)$ given by
multiplication by $(-1)^\ell$ on the summand $C_0(\Sigma^\ell)$
intertwines the differentials $\p+\delta_N+\delta_Q$ for $K$ and $-K$
and induces an isomorphism $H_0^\str(K) \to H_0^\str(-K)$.
Similarly, mirroring does not change $H_0^\str$ up to isomorphism: if
$\bar K$ is the mirror of $K$, then the mirror (reflection) map induces
a map $C_0(\Sigma) \to C_0(\bar\Sigma)$, and composing with the above map
$C_0(\Sigma) \to C_0(\Sigma)$ gives a chain isomorphism $C_0(\Sigma) \to
C_0(\bar\Sigma)$.

In Sections~\ref{ss:cordalg} through~\ref{ss:groupring}, we will ``improve''
$H_0^\str$ from an abstract ring to one that canonically contains
the ring $\Z[\lambda^{\pm 1},\mu^{\pm 1}]$. This requires a choice of
framing of $K$ (though for $Q=\R^3$, there is a canonical choice given
by the Seifert framing). In the improved setting, $H_0^\str$ changes
under orientation reversal of $K$ by replacing $(\lambda,\mu)$ by
$(\lambda^{-1},\mu^{-1})$; under framing change by $f\in\Z$ by
replacing $(\lambda,\mu)$ by $(\lambda\mu^f,\mu)$; and under mirroring
by replacing $(\lambda,\mu)$ by $(\lambda,\mu^{-1})$. In particular,
the improved $H_0^\str$ is very sensitive to framing change and
mirroring. For a related discussion, see \cite[\S 4.1]{Ng:1}.
\end{remark}
\smallskip

{\bf A modified version of string homology. }
The choice of the base point in $N$ rather than $Q$ in the definition
of string homology $\HSN(K)$ is dictated by the relation to Legendrian contact
homology. However, from the perspective of string topology we could
equally well pick the base point in $Q$, as we describe next.

\begin{figure}
\labellist
\small\hair 2pt
\pinlabel ${\color{red} x_0}$ at 26 109
\pinlabel ${\color{red} v_0}$ at 6 60
\pinlabel ${\color{red} s_1}$ at 65 40
\pinlabel ${\color{blue} s_2}$ at 229 40
\pinlabel ${\color{red} s_3}$ at 302 72
\pinlabel ${\color{blue} s_4}$ at 243 116
\pinlabel ${\color{red} s_5}$ at 97 135
\pinlabel $K$ at 351 2
\endlabellist
\centering
\includegraphics[width=0.6\textwidth]{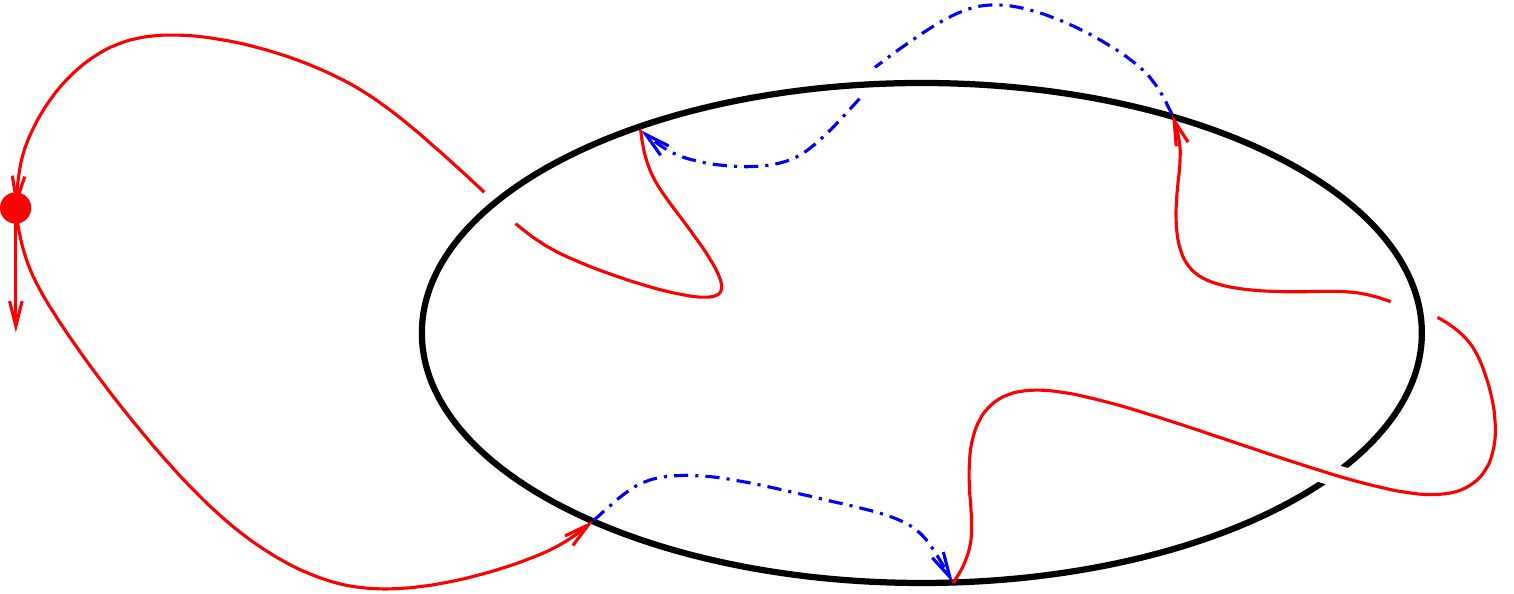}
\caption{In the alternate definition that produces modified string homology, a broken closed string with $4$ switches. As usual, $Q$-strings are in red, $N$-strings in (dashed) blue.}
\label{fig:bcstring-modified}
\end{figure}

Choose a base point $x_0\in Q\setminus K$ and a tangent vector $v_0 \in
T_{x_0}Q$. Modify the definition of a broken string with $2\ell$ switches to $s=(a_0,\dots,
a_{2\ell+1};s_0,\dots,s_{2\ell})$, where now
$$
   s_{2i}:[a_{2i},a_{2i+1}]\to Q,\quad s_{2i-1}:[a_{2i-1},a_{2i}]\to
   N,
$$
and we require that $s_0(a_0)=s_{2\ell}(a_{2\ell+1})=x_0$, $\dot
s_0(a_0)=\dot s_{2\ell}(a_{2\ell+1})=v_0$ and conditions (ii) and
(iii) of Definition~\ref{def:broken_string1} hold. See Figure~\ref{fig:bcstring-modified}.

Let $\hat{C}_0(\Sigma)$ denote the ring generated as a $\Z$-module by
generic broken strings with base point $x_0\in Q$. (As usual, the
product operation on $\hat{C}_0(\Sigma)$ is given by string concatenation.)
We can define
string coproducts $\delta_N$, $\delta_Q$ as before, and then define
the degree $0$ \textit{modified string homology} of $K$ as
$$
\HSQ(K) = \hat{C}_0(\Sigma)/\im(\p+\delta_N+\delta_Q).
$$
We have the following analogue of Proposition~\ref{prop:iso}.

\begin{prop}
Let $\hat{\BB}$ be the quotient of $\hat{C}_0(\Sigma)$ by homotopy
of generic broken strings and let $\hat{\JJ} \subset \hat{\BB}$ be the
two-sided ideal generated by the skein relations \eqref{eq:delta1} and \eqref{eq:delta2}.
Then
$$
\HSQ(K) \cong \hat{\BB}/\hat{\JJ}.
$$
\end{prop}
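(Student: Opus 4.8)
The plan is to carry over the proof of Proposition~\ref{prop:iso} essentially verbatim, checking at each step that moving the base point $(x_0,v_0)$ from $\p N$ to $Q\setminus K$ plays no role. By definition $\HSQ(K)=\hat C_0(\Sigma)/\im(\p+\delta_N+\delta_Q)$, so the task is to identify the two-sided ideal $\im(\p+\delta_N+\delta_Q)$, after passing to $\hat\BB$, with $\hat\JJ$. As in the unmodified case, the key structural fact is that $\p+\delta_N+\delta_Q$ commutes with left and right concatenation by fixed generic broken strings, so its image is automatically a two-sided ideal; this uses only associativity of concatenation at $x_0$, which holds regardless of whether $x_0$ lies in $\p N$ or in $Q\setminus K$.

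First I would handle the ``easy'' inclusion. For a generic $1$-parameter family $\{s^\lambda\}$ consisting entirely of generic strings we have $\delta_N\{s^\lambda\}=\delta_Q\{s^\lambda\}=0$, so $(\p+\delta_N+\delta_Q)\{s^\lambda\}=s^1-s^0$. Hence the quotient map $\hat C_0(\Sigma)\to\hat\BB$ by homotopy of generic broken strings kills this part of the image, and $\HSQ(K)$ is a quotient of $\hat\BB$, say $\HSQ(K)\cong\hat\BB/\bar\JJ$ with $\bar\JJ$ the image of $\im(\p+\delta_N+\delta_Q)$ in $\hat\BB$. Next, realizing each skein relation \eqref{eq:delta1} and \eqref{eq:delta2} as $(\p+\delta_N+\delta_Q)\{s^\lambda\}$ for a family in which a single $N$-string (resp.\ $Q$-string) crosses $K$ exactly once while all other strings stay generic, and using the concatenation-invariance above, shows $\hat\JJ\subseteq\bar\JJ$.

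For the reverse inclusion $\bar\JJ\subseteq\hat\JJ$ I would take an arbitrary generic $1$-parameter family $\{s^\lambda\}$; by genericity its strings meet $K$ transversally in the interior at only finitely many parameter values, so one can subdivide $[0,1]=[0,t_1]\cup\cdots\cup[t_{m-1},1]$ so that on each subinterval at most one string crosses $K$ and it does so at most once. On a subinterval with no crossing the contribution lies in the kernel of $\hat C_0(\Sigma)\to\hat\BB$; on a subinterval with exactly one crossing it equals, up to concatenation with fixed strings, one of the skein relations. Summing the telescoping contributions in $\hat\BB$ shows $(\p+\delta_N+\delta_Q)\{s^\lambda\}$ maps into $\hat\JJ$. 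Combining the two inclusions gives $\bar\JJ=\hat\JJ$ and hence $\HSQ(K)\cong\hat\BB/\hat\JJ$.

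The only genuinely new point — and the only place I expect any friction — is verifying that this subdivision argument is unaffected by the base point lying in $Q\setminus K$: now $s_0$ is a $Q$-string starting at $x_0$ rather than an $N$-string, but since $x_0\notin K$ a generic $s_0$ still meets $K$ transversally in finitely many interior points, $\delta_Q$ inserts an $N$-spike at such a crossing exactly as before, and the definitions of $\delta_N,\delta_Q$ and the genericity notions are otherwise word-for-word the same. I therefore expect no essential obstacle; the proof is a routine transcription once this base-point check is made.
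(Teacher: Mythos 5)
Your argument is correct and is essentially the paper's own: the paper proves Proposition~\ref{prop:iso} by exactly this subdivision-into-at-most-one-crossing argument and then states the modified version as its verbatim analogue, since the only change is the location of the base point. Your closing check that $x_0\in Q\setminus K$ causes no friction (the first and last strings are now $Q$-strings meeting $K$ transversally in finitely many interior points, handled by $\delta_Q$ as usual) is precisely the observation needed.
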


There is one key difference between $\HSQ$ and $\HSN$.
Since any element in $\pi_1(Q\setminus K,x_0)$ can be
viewed as a pure $Q$-string, we have a canonical map $\Z\pi_1(Q\setminus K,x_0) \to
\HSQ(K)$. In fact,
we will see in Proposition~\ref{prop:groupring} that this is a ring
isomorphism. The same is not the case for $\HSN(K)$.

\subsection{The cord algebra}\label{ss:cordalg}

The definition of $\HSN(K)$ in Section~\ref{ss:string0} is very
similar to the definition of the cord algebra of a knot
\cite{Ng:2b,Ng:1,Ngsurvey}. Here we review the cord algebra, or more
precisely, present a noncommutative refinement of it, in which the
``coefficients'' $\lambda,\mu$ do not commute with the ``cords''.

Let $K \subset Q$ be an oriented knot equipped with a framing, and let
$K'$ be a parallel copy of $K$ with respect to this framing. Choose a
base point $\ast$ on $K$ and a corresponding base point $\ast$ on $K'$
(in fact only the base point on $K'$ will be needed).

\begin{definition}
A \textit{(framed) cord} of $K$ is a continuous map $\gamma :\thinspace
[0,1] \to Q$ such that $\gamma([0,1]) \cap K = \emptyset$ and
$\gamma(0),\gamma(1) \in K'\setminus\{\ast\}$. Two framed cords are
\textit{homotopic} if they are homotopic through framed cords.
\end{definition}

We now construct a noncommutative unital ring $\AA$ as follows: as a
ring, $\AA$ is freely
generated by homotopy classes of cords and four extra generators
$\lambda^{\pm 1},\mu^{\pm 1}$, modulo the relations
\[
\lambda\cdot\lambda^{-1} = \lambda^{-1}\cdot\lambda = \mu\cdot\mu^{-1}
= \mu^{-1}\cdot\mu = 1, \hspace{3ex} \lambda\cdot\mu =
\mu\cdot\lambda.
\]
Thus $\AA$ is generated as a $\Z$-module by (noncommutative) words in
homotopy classes of cords and powers of $\lambda$ and $\mu$ (and the
powers of $\lambda$ and $\mu$ commute with each other, but not with
any cords).

\begin{definition}
The \textit{cord algebra} of $K$ is the quotient ring \label{def:cordalg}
\[
\Cord(K) = \AA/\II,
\]
where $\II$ is the two-sided ideal of $\AA$ generated by the following
``skein relations'':
\begin{enumerate}
\item \label{it:cord1}
$\raisebox{-3ex}{\includegraphics[height=7ex]{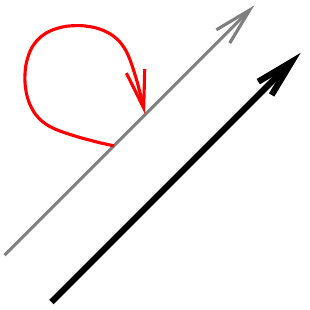}} = 1-\mu$
\item \label{it:cord0}
$\raisebox{-3ex}{\includegraphics[height=7ex]{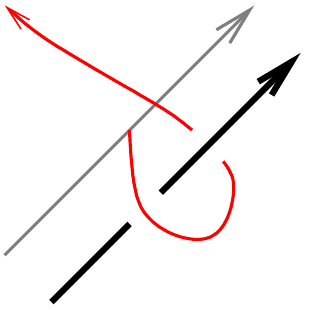}} = \mu
\cdot \raisebox{-3ex}{\includegraphics[height=7ex]{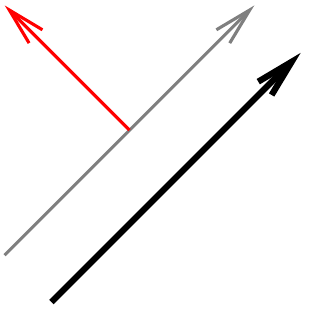}}$
\hspace{3ex} and
\hspace{3ex}
$\raisebox{-3ex}{\includegraphics[height=7ex]{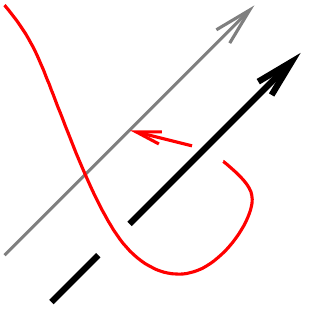}}
=\raisebox{-3ex}{\includegraphics[height=7ex]{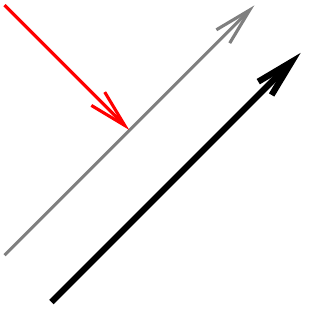}} \cdot \mu$
\item \label{it:cord2}
$\raisebox{-3ex}{\includegraphics[height=7ex]{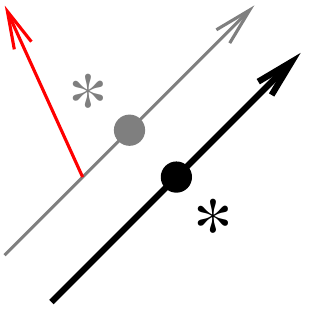}} = \lambda
\cdot \raisebox{-3ex}{\includegraphics[height=7ex]{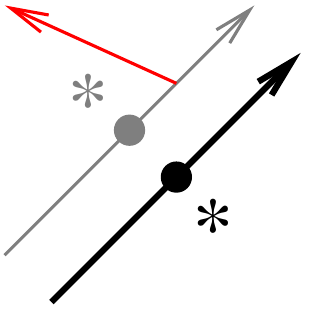}}$
\hspace{3ex} and
\hspace{3ex}
$\raisebox{-3ex}{\includegraphics[height=7ex]{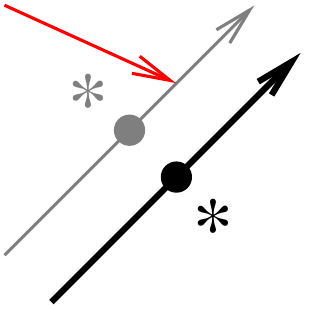}} =
\raisebox{-3ex}{\includegraphics[height=7ex]{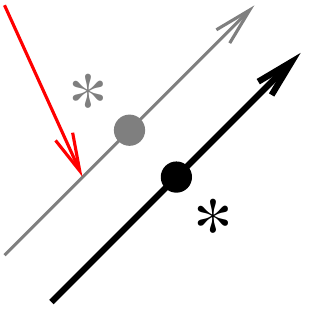}} \cdot
\lambda$
\item \label{it:cord3}
$\raisebox{-3ex}{\includegraphics[height=7ex]{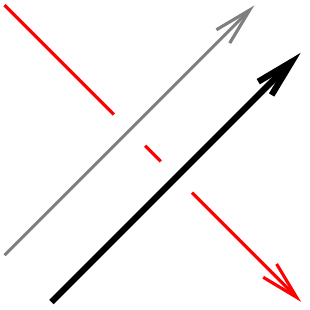}}
- \raisebox{-3ex}{\includegraphics[height=7ex]{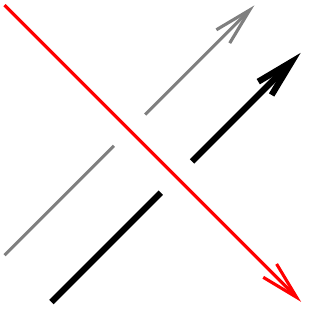}}
= \raisebox{-3ex}{\includegraphics[height=7ex]{figures/skein3c}} \cdot
\raisebox{-3ex}{\includegraphics[height=7ex]{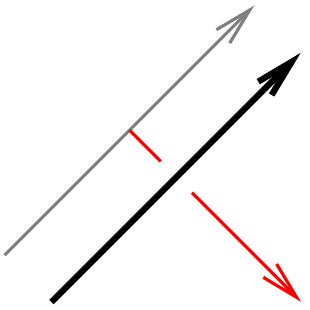}}$.
\end{enumerate}
Here $K$ is depicted in black and $K'$ parallel to $K$ in gray, and
cords are drawn in red.
\end{definition}

\begin{remark}
The skein relations in Definition~\ref{def:cordalg} depict cords in
space that
agree outside of the drawn region (except in (\ref{it:cord3}), where
either of the two cords on the left hand side of the equation splits
into the two on the right). Thus (\ref{it:cord0}) states that
appending a meridian to the beginning or end of a cord multiplies that
cord by $\mu$ on the left or right, and (\ref{it:cord3}) is equivalent
to:
\[
\raisebox{-3ex}{\includegraphics[height=7ex]{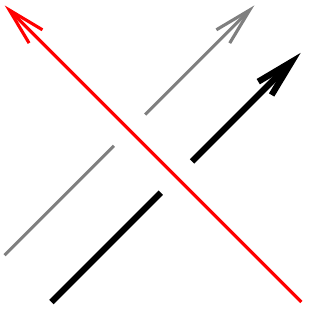}}
- \raisebox{-3ex}{\includegraphics[height=7ex]{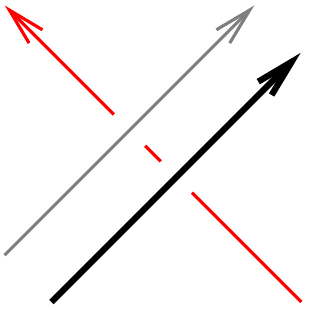}}
= \raisebox{-3ex}{\includegraphics[height=7ex]{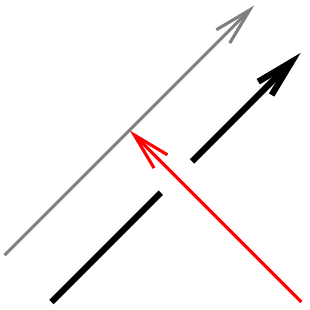}} \cdot
\raisebox{-3ex}{\includegraphics[height=7ex]{figures/skein3i}}.
\]
\end{remark}

\begin{remark} \label{rmk:commute}
Our stipulation that $\lambda,\mu$ not commute with cords necessitates a
different normalization of the cord algebra of $K \subset Q$ from
previous definitions
\cite{Ng:1,Ngsurvey}. In the definition from \cite{Ngsurvey}
(\cite{Ng:1} is the same except for a change of variables),
$\lambda,\mu$ commute with cords, and the
parallel copy $K'$ is
not used. Instead, cords are defined to be paths that begin and end on
$K$ with no interior point lying on $K$, and the skein relations are
suitably adjusted, with the key relation, the equivalent of
(\ref{it:cord3}), being:
\[
\raisebox{-3ex}{\includegraphics[height=7ex]{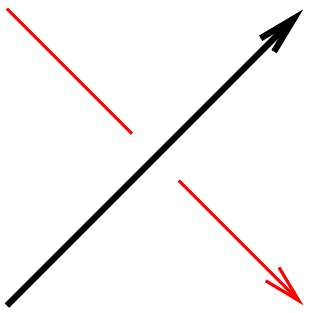}}
- \mu \raisebox{-3ex}{\includegraphics[height=7ex]{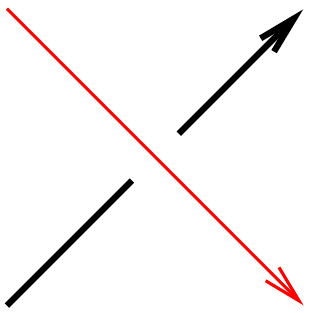}}
= \raisebox{-3ex}{\includegraphics[height=7ex]{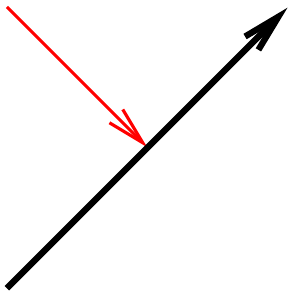}} \cdot
\raisebox{-3ex}{\includegraphics[height=7ex]{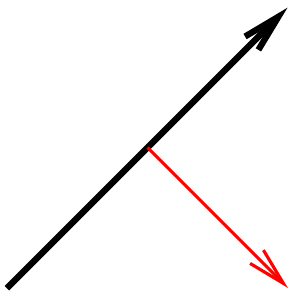}}.
\]
Let $\Cord'(K)$ denote the resulting version of cord algebra.

If we take the quotient of the cord algebra $\Cord(K)$ from
Definition~\ref{def:cordalg} where $\lambda,\mu$ commute with
everything, then the result is a $\Z[\lambda^{\pm 1},\mu^{\pm
  1}]$-algebra isomorphic to $\Cord'(K)$, as long as we take the
Seifert framing ($\operatorname{lk}(K,K') = 0$).
The isomorphism is given as follows: given a framed cord $\gamma$,
extend $\gamma$ to an oriented closed loop $\widetilde{\gamma}$ in $Q\setminus
K$ by joining the
endpoints of $\gamma$ along $K'$ in a way that does not pass through
the base point $\ast$, and map $\gamma$ to
$\mu^{-\operatorname{lk}(\widetilde{\gamma},K)} \gamma$. This is a
well-defined map on $\Cord(K)$ and sends the relations for $\Cord(K)$ to
the relations for $\Cord'(K)$. See also the proof of Theorem 2.10 in
\cite{Ng:1}.
\end{remark}

We now show that the cord algebra is exactly equal to degree $0$
string homology. This follows from the observation that the
$Q$-strings in a generic broken closed string are each
a framed cord of $K$, once we push the endpoints of the $Q$-string off
of $K$; and thus a broken closed string can be thought of as a product
of framed cords.

\begin{proposition}
Let $K \subset Q$ be a framed oriented knot. Then we have a ring isomorphism \label{prop:HS0cord}
\[
\Cord(K) \cong \HSN(K).
\]
\end{proposition}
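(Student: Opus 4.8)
The plan is to construct explicit ring homomorphisms in both directions between $\Cord(K)$ and $\HSN(K)$ and check they are mutually inverse. By Proposition~\ref{prop:iso}, $\HSN(K) \cong \BB/\JJ$, where $\BB$ is the algebra of generic broken closed strings (based at $x_0 \in \p N$) modulo homotopy, and $\JJ$ is generated by the skein relations \eqref{eq:delta1} and \eqref{eq:delta2}. So it suffices to identify $\Cord(K) = \AA/\II$ with $\BB/\JJ$.

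\textbf{From cords to broken strings.} First I would define a map $\AA \to \BB$ on generators. A framed cord $\gamma$ of $K$ is a path with endpoints on $K'\setminus\{\ast\} \subset \p N$ (identifying $K'$ with a chosen parallel copy on $\p N$), and after pushing its endpoints slightly into $\p N$ away from $K$ it becomes a $Q$-string; to make it a broken closed string I concatenate on both sides with short $N$-strings running within $\p N$ from the base point $x_0$ (with tangent $v_0$) to the endpoints of $\gamma$, chosen so that the switch conditions (ii)--(iii) of Definition~\ref{def:broken_string1} are satisfied — i.e. the $N$-strings meet $K'$, not $K$, so condition (ii) is vacuous and we only need to match normal derivatives, which can always be arranged by a small homotopy. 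The generators $\mu^{\pm 1}$ map to the pure $N$-strings based at $x_0$ that wind once around the meridian disk of $N$ (with appropriate sign of winding), and $\lambda^{\pm 1}$ to the pure $N$-strings running once around $\p N$ in the longitude direction. One must check the relations of $\AA$ are respected (this is immediate since $\pi_1(\p N) = \pi_1(T^2)$ is abelian and generated by $\lambda,\mu$), and then that the cord skein relations \eqref{it:cord1}--\eqref{it:cord3} are sent into $\JJ$. Relations \eqref{it:cord0} and \eqref{it:cord2} become statements about appending meridian or longitude loops, which correspond to homotopies in $\BB$ combined with the definition of the $\mu,\lambda$ generators. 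The crucial relations \eqref{it:cord1} and \eqref{it:cord3} are exactly the images of the string skein relations \eqref{eq:delta1}, \eqref{eq:delta2}: a $1$-parameter family in which a $Q$-string crosses $K$ once produces, via $\p + \delta_N + \delta_Q$, the difference of the two ``resolutions'' of the $Q$-string plus the $\delta_Q$ term inserting an $N$-spike; interpreting the $N$-spike (a small spike in the normal disk, homotoped out to $\p N$) in terms of $\mu$ gives precisely relation \eqref{it:cord1} for a contractible cord (yielding $1-\mu$, since the near-constant $N$-spike contributes $1$ and its wrapped-around partner contributes $\mu$), and for a general crossing gives \eqref{it:cord3}. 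This is where I would be most careful about signs, matching the orientation conventions of Figure~\ref{fig:2} against those built into $\II$.

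\textbf{From broken strings to cords, and that the maps are inverse.} In the other direction, given a generic broken closed string $s = (s_1,\dots,s_{2\ell+1})$, I cut it at each switch into its constituent $Q$- and $N$-strings. Each $Q$-string, pushed off $K$ at its endpoints, is a framed cord; each $N$-string is a path in $N$ between two points of $K$ (or between $x_0$ and a point of $K$, at the two ends), and since $N \simeq S^1$ deformation retracts onto $K$, the homotopy class rel endpoints of an $N$-string, recorded appropriately, is determined by how many times it winds in the meridian and longitude directions — i.e. by a monomial in $\mu$ and $\lambda$, together with the knowledge of which points of $K$ it connects. Reading the broken string left to right thus produces a word in cords, $\mu^{\pm 1}$, and $\lambda^{\pm 1}$, i.e.\ an element of $\AA$; one checks this is well-defined on homotopy classes (homotopies of generic broken strings either move a $Q$-string or $N$-string through a homotopy not crossing $K$, giving the same cord/monomial, or move it across $K$, which by the skein relations \eqref{it:cord1}--\eqref{it:cord3} changes the element of $\AA/\II$ in a controlled way — precisely the content already extracted from the string skein relations). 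This descends to $\BB/\JJ \to \Cord(K)$. Finally, composing the two maps in either order is the identity: starting from a cord and going around, the intermediate broken string's only $Q$-string recovers $\gamma$ up to homotopy and its $N$-strings contribute trivial monomials (they run within $\p N$ without winding, by construction); starting from a broken string, the composite reassembles the same cuts. The main obstacle, I expect, is not the topology but the bookkeeping: carefully matching the normal-derivative switch conditions with the ``push-off to $K'$'' convention of the cord algebra, and nailing down the signs in the skein relations so that \eqref{eq:delta1}--\eqref{eq:delta2} map \emph{exactly} onto \eqref{it:cord1}--\eqref{it:cord3} rather than up to sign — the footnote on orientation conventions in Section~\ref{ss:string0} is doing real work here.
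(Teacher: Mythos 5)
Your overall strategy (explicit maps both ways, cords $\mapsto$ $Q$-strings, $\lambda,\mu \mapsto$ $N$-strings, skein relations matched to skein relations) is the same as the paper's, but your construction of the map from cords to broken strings has a genuine flaw: the objects you build are not broken closed strings in the sense of Definition~\ref{def:broken_string1}. You keep the endpoints of the cord on $K'\subset\partial N$ and join them to $N$-strings there, asserting that ``condition (ii) is vacuous'' because the strings meet $K'$ rather than $K$. Condition (ii) is not vacuous --- it is a requirement: every junction $s_j(a_j)=s_{j+1}(a_j)$ between consecutive $Q$- and $N$-strings (i.e.\ every switch) must lie \emph{on} $K$, and the normal-derivative matching (iii) is imposed exactly at those points of $K$. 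A $Q$-string is not permitted to end off of $K$, so your word of cords and coefficients does not define an element of $\Sigma^\ell$, and consequently none of the subsequent skein-relation checks take place in the right space. The fix is what the paper does: fix a marked point $p\in K$ with $p'\in K'$ the corresponding point (identified with $x_0$), choose once and for all connector paths $\gamma_Q,\tilde\gamma_Q$ in $Q$ and $\gamma_N,\tilde\gamma_N$ in $N$ between $p$ and $p'$ with prescribed tangent vectors at $p$ (Figure~\ref{fig:cordtostring}), and send a cord $x_k$ to the $Q$-string $\gamma_Q\cdot x_k\cdot\tilde\gamma_Q$ and each coefficient $\alpha_k=\lambda^a\mu^b$ to the $N$-string $\gamma_N\cdot\alpha_k\cdot\tilde\gamma_N$, so that all switches occur at $p\in K$ and (iii) holds there. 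This is not mere bookkeeping: the correspondence between cord relation (\ref{it:cord1}) and string relation \eqref{eq:delta1} happens precisely at configurations where the $Q$- and $N$-strings meet $K$ (the paper's verification rotates the two ends around $K$ at their common endpoint on $K$), and your ``stay on $\partial N$'' setup cannot see it.

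Two further points. First, your justification that the homotopy class of an $N$-string is a monomial in $\lambda,\mu$ ``since $N$ deformation retracts onto $K$'' is wrong as stated --- that retraction kills the meridian, which would force $\mu=1$. What is actually used is that in $\BB$ (Proposition~\ref{prop:iso}) homotopies run through \emph{generic} strings, whose interiors avoid $K$, so the relevant classes live in the fundamental groupoid of $N\setminus K\simeq\partial N\cong T^2$; pushing an $N$-string across $K$ is exactly what costs a $\delta_N$-term and produces the $1-\mu$ in relation (\ref{it:cord1}). Second, for the inverse map you need a normalization procedure and a proof that it is independent of choices: the paper homotopes any broken string so that all switches lie at $p$ with endpoint tangent vector $-v(p)$, and shows independence of the choices using cord relations (\ref{it:cord2}) (moving switches along $K$) and (\ref{it:cord0}) (rotating tangent vectors); your sketch of ``cut at each switch and read off a word'' needs this step made explicit, and it only works once the switches genuinely lie on $K$.
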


\begin{proof}
Choose a normal vector field $v$ along $K$ defining the framing and let $K'$ be the pushoff of $K$ in the direction of $v$, placed so that $K'$ lies on the boundary of the tubular neighborhood $N$ of $K$. Fix a base point $p \neq \ast$ on $K$, and let $p'$ be the corresponding point on $K'$, so that $v(p)$ is mapped to $p'$ under the diffeomorphism between the normal bundle to $K$ and $N$. Identify $p'$ with $x_0\in \partial N$ from Definition~\ref{def:broken_string1} (the definition of broken closed string). We can homotope any cord of $K$ so that it begins and ends at $p'$, by pushing the endpoints of the cord along $K'$, away from $\ast$, until they reach $p'$.

Every generator of $\Cord(K)$ as a $\Z$-module has the form $\alpha_1 x_1 \alpha_2 x_2 \cdots x_\ell \alpha_{\ell+1}$, where $\ell \geq 0$, $x_1,\ldots,x_\ell$ are cords of $K$, and $\alpha_1,\ldots,\alpha_{\ell+1}$ are each of the form $\lambda^a \mu^b$ for $a,b\in\Z$. We can associate a broken closed string with $2\ell$ switches as follows. Assume that each cord $x_1,\ldots,x_\ell$ begins and ends at $p'$. Fix paths $\gamma_Q,\tilde{\gamma}_Q$ in $Q$ from $p,p'$ to $p',p$ respectively, and paths $\gamma_N,\tilde{\gamma}_N$ in $N$ from $p,p'$ to $p',p$ respectively, as shown in Figure~\ref{fig:cordtostring}: these are chosen so that the derivative of $\gamma_Q,\tilde{\gamma}_Q,\gamma_N,\tilde{\gamma}_N$ at $p$ is $-v(p),-v(p),v(p),-v(p)$, respectively. For $k=1,\ldots,\ell$, let $\overline{x}_k$ be the $Q$-string with endpoints at $p$ given by the concatenation $\gamma_Q \cdot x_k \cdot \tilde{\gamma}_Q$ (more precisely, smoothen this string at $p'$). Similarly, for $k=1,\ldots,\ell+1$, identify $\alpha_k \in \pi_1(\partial N) = \pi_1(T^2)$ with a loop in $\partial N$ with basepoint $p'$ representing this class; then define $\overline{\alpha}_k$ to be the $N$-string $\gamma_N \cdot \alpha_k \cdot \tilde{\gamma}_N$ for $k=1,\ldots,\ell$, $\alpha_1 \cdot \tilde{\gamma}_N$ for $k=0$, and $\gamma_N \cdot \alpha_{\ell+1}$ for $k=\ell+1$. (If $\ell=0$, then $\overline{\alpha}_1 = \alpha_1$.) Then the concatenation
\[
\overline{\alpha}_1 \cdot \overline{x}_1 \cdot \overline{\alpha}_2 \cdot \overline{x}_2 \cdots \overline{x}_\ell \cdot \overline{\alpha}_{\ell+1}
\]
is a broken closed string with $2\ell$ switches.

\begin{figure}
\labellist
\small\hair 2pt
\pinlabel $K$ at 104 24
\pinlabel $K$ at 230 24
\pinlabel $K$ at 392 24
\pinlabel $K$ at 524 24
\pinlabel $K'$ at 104 51
\pinlabel $K'$ at 230 51
\pinlabel $K'$ at 392 51
\pinlabel $K'$ at 524 51
\pinlabel $p$ at 34 14
\pinlabel $p$ at 169 14
\pinlabel $p$ at 330 14
\pinlabel $p$ at 456 14
\pinlabel $p'$ at 36 61
\pinlabel $p'$ at 180 61
\pinlabel $p'$ at 327 61
\pinlabel $p'$ at 467 61
\pinlabel ${\color{red} x_k}$ at 84 82
\pinlabel ${\color{red} x_k}$ at 126 82
\pinlabel ${\color{red} \gamma_Q}$ at 70 10
\pinlabel ${\color{red} \tilde{\gamma}_Q}$ at 183 40
\pinlabel ${\color{blue} \alpha_k}$ at 378 82
\pinlabel ${\color{blue} \alpha_k}$ at 409 82
\pinlabel ${\color{blue} \gamma_N}$ at 346 34
\pinlabel ${\color{blue} \tilde{\gamma}_N}$ at 471 40
\endlabellist
\centering
\includegraphics[width=0.9\textwidth]{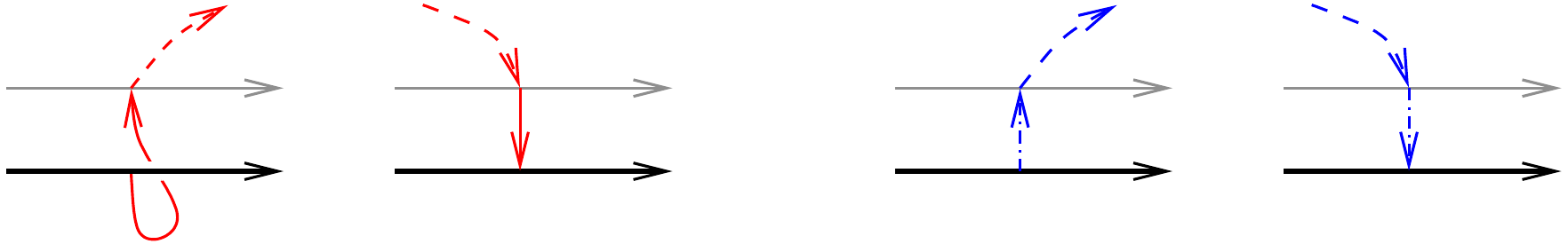}
\caption{Turning an element of the cord algebra into a broken closed string.}
\label{fig:cordtostring}
\end{figure}

Extend this map from generators of $\Cord(K)$ to broken closed strings to a map on $\Cord(K)$ by $\Z$-linearity. We claim that this induces the desired isomorphism $\phi :\thinspace \Cord(K) \to \HSN(K)$. Recall that $\Cord(K)$ is defined by skein relations (\ref{it:cord1}), (\ref{it:cord0}), (\ref{it:cord2}), (\ref{it:cord3}) from Definition~\ref{def:cordalg}, while $\HSN(K)$ is defined by skein relations \eqref{eq:delta1}, \eqref{eq:delta2}
from Proposition~\ref{prop:iso}.

To check that $\phi$ is well-defined, we need for the skein relations (\ref{it:cord1}), (\ref{it:cord0}), (\ref{it:cord2}), (\ref{it:cord3}) to be preserved by $\phi$. Indeed, (\ref{it:cord1}) maps under $\phi$ to
\[
\raisebox{-3ex}{\includegraphics[height=7ex]{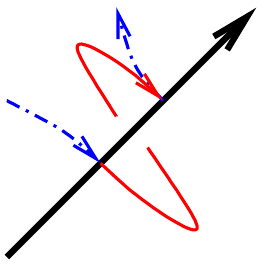}} = \raisebox{-3ex}{\includegraphics[height=7ex]{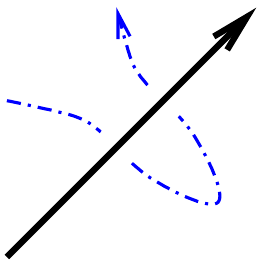}} -
\raisebox{-3ex}{\includegraphics[height=7ex]{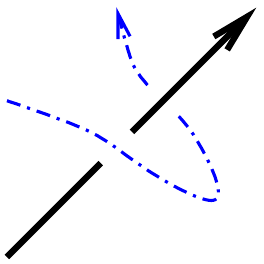}},
\]
which holds in $\HSN(K)$ since both sides are equal to $\raisebox{-3ex}{\includegraphics[height=7ex]{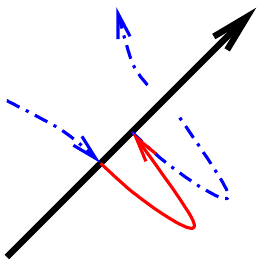}}$: the left hand side by rotating the end of the red $Q$-string and the beginning of the blue $N$-string around $K$ at their common endpoint, the right hand side by skein relation \eqref{eq:delta1}. 
Skein relation (\ref{it:cord3}) maps under $\phi$ to
\[
\raisebox{-3ex}{\includegraphics[height=7ex]{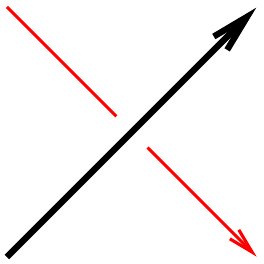}} - \raisebox{-3ex}{\includegraphics[height=7ex]{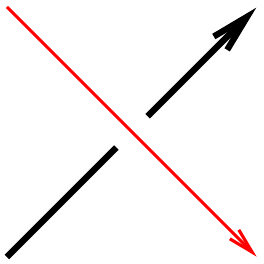}} =
\raisebox{-3ex}{\includegraphics[height=7ex]{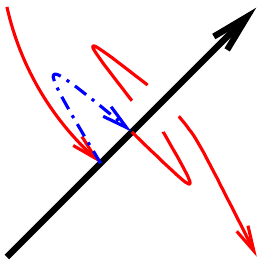}},
\]
which holds by \eqref{eq:delta2}. 
Finally, (\ref{it:cord0}) and (\ref{it:cord2}) map to homotopies of broken closed strings: for instance, the left hand relation in (\ref{it:cord0}) maps to
\[
\raisebox{-3ex}{\includegraphics[height=7ex]{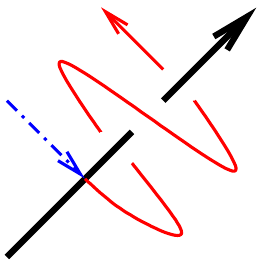}} = \raisebox{-3ex}{\includegraphics[height=7ex]{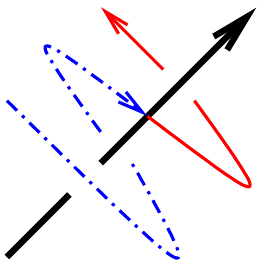}}.
\]

To show that $\phi$ is an isomorphism, we simply describe the inverse map from broken closed strings to the cord algebra. Given any broken closed string, homotope it so that the switches all lie at $p$, and so that the tangent vector to the endpoint of all strings ending at $p$ is $-v(p)$; then the result is in the image of $\phi$ by construction. There is more than one way to homotope a broken closed string into this form, but any such form gives the same element of the cord algebra: moving the switches along $K$ to $p$ in a different way gives the same result by (\ref{it:cord2}), while moving the tangent vectors to $-v(p)$ in a different way gives the same result by (\ref{it:cord0}). The two skein relations \eqref{eq:delta1} and \eqref{eq:delta2} 
are satisfied in the cord algebra because of (\ref{it:cord1}) and (\ref{it:cord3}).
\end{proof}

As mentioned in the Introduction, when $Q=\R^3$, it is an immediate consequence of Theorem~\ref{thm:main} and Proposition~\ref{prop:HS0cord} that the cord algebra is isomorphic to degree $0$ knot contact homology:
\[
H^\cont_0(K) \cong \HSN(K) \cong \Cord(K).
\]
This recovers a result from the literature (see Theorem~\ref{thm:cordHC0}), modulo one important point.
Recall (or see Section~\ref{sec:leg}) that $H^\cont_0(K)$ is the
degree $0$ homology of a differential graded algebra
$(\AA,\partial)$. In much of the literature on knot contact homology,
cf.\ \cite{EENStransverse,Ng:1,Ngtransverse}, this DGA is an algebra
over the coefficient ring $\Z[\lambda^{\pm 1},\mu^{\pm 1}]$ (or
$\Z[\lambda^{\pm 1},\mu^{\pm 1},U^{\pm 1}]$, but in this paper we set
$U=1$): $\AA$ is generated by a finite collection of noncommuting
generators (Reeb chords) along with powers of $\lambda,\mu$ that
commute with Reeb chords. By contrast, in this paper $(\AA,\partial)$
is the \textit{fully noncommutative} DGA in which the coefficients $\lambda,\mu$
commute with each other but not with the Reeb chords; see
\cite{EENS,Ngsurvey}.

The isomorphism $\Cord(K) \cong H^\cont_0(K)$ in
Theorem~\ref{thm:cordHC0} is stated in the existing literature as an
isomorphism of $\Z[\lambda^{\pm 1},\mu^{\pm 1}]$-algebras, i.e., the
coefficients $\lambda,\mu$ commute with everything for both
$H^\cont_0(K)$ and $\Cord(K)$. However, an inspection of the proof of
Theorem~\ref{thm:cordHC0} from \cite{EENS,Ng:2b,Ng:1} shows that it
can be lifted to the fully noncommutative setting, in which
$\lambda,\mu$ do not commute with Reeb chords (for $H^\cont_0(K)$) or
cords (for $\Cord(K)$). We omit the details here, and simply note that
our results give a direct proof of Theorem~\ref{thm:cordHC0} in the
fully noncommutative setting.

\begin{remark} \label{rmk:commute1}
Besides being more natural from the viewpoint of string homology, the stipulation that $\lambda,\mu$ do not commute with cords (in the cord algebra) or Reeb chords (in the DGA) is essential for our construction, in Section~\ref{ss:groupring} below, of a map from degree $0$ homology to the group ring of $\pi$, the fundamental group of the knot complement. This in turn is what allows us to (re)prove that knot contact homology detects the unknot, among other things. If we pass to the quotient where $\lambda,\mu$ commute with everything, then there is no well-defined map to $\Z\pi$.
\end{remark}

\begin{remark}
As already mentioned in the introduction, in \cite{BMSS} Basu, McGibbon, 
Sullivan and Sullivan have given a string topology description of a version 
of the cord algebra for a codimension 2 submanifold $K \subset Q$ of some 
ambient manifold $Q$, proving a theorem which formally looks quite similar to 
Proposition~\ref{prop:HS0cord}. In the language we use here, the main 
difference in their work is the absence of $N$-strings, so that for 
knots $K \subset \R^3$ the version of $\HSN(K)$ they define only recovers the 
specialization at $\lambda=1$ of (the commutative version of) $\Cord(K)$.
\end{remark}

\subsection{Homotopy formulation of the cord algebra}\label{ss:htpy}

We now reformulate the cord algebra in terms of fundamental groups,
more precisely the knot group and its peripheral subgroup, along the
lines of the Appendix to \cite{Ng:2b}.
In light of Proposition~\ref{prop:HS0cord}, we will henceforth denote
the cord algebra as $\HSN(K)$.

We first introduce some notation. Let $K$ be an oriented knot in an
oriented $3$-manifold $Q$ (in fact we only need an orientation and
coorientation of $K$). Let $N$ be a tubular neighborhood of $K$;
as suggested by the notation, we will identify this neighborhood with
the conormal bundle $N \subset T^*Q$ via the tubular neighborhood
theorem.  We write
\begin{align*}
\pi &= \pi_1(Q\setminus K) \\
\hat{\pi} &= \pi_1(\p N);
\end{align*}
note that the inclusion $\p N \hookrightarrow N$ induces a map
$\hat{\pi} \to \pi$, typically an injection.
Let $\Z\pi$,
$\Z\hat{\pi}$ denote the group rings of $\pi,\hat{\pi}$.
We fix a framing on $K$; this, along with the orientation and
coorientation of $K$, allows us to specify two
elements $\mu,\lambda$ for $\hat{\pi}$ corresponding to
the meridian and longitude, and to write
\[
\Z\hat{\pi} =
\Z[\lambda^{\pm 1},\mu^{\pm 1}].
\]

The group ring $\Z\pi$ and the cord algebra $\HSN(K)$ both have natural maps from $\Z[\lambda^{\pm 1},\mu^{\pm 1}]$ (which are injective unless $K$ is the unknot). This motivates the following definition, where ``NC'' stands for ``noncommutative''.

\begin{definition}
Let $R$ be a ring. An \textit{$R$-NC-algebra} is a ring $S$ equipped
with a ring homomorphism $R \to S$. Two $R$-NC-algebras $S_1,S_2$ are
\textit{isomorphic} if there is a ring isomorphism $S_1\to S_2$ that
commutes with the maps $R\to S_1$, $R\to S_2$.
\end{definition}

Note that when $R$ is commutative, the notion of an $R$-NC-algebra
differs from the usual notion of an $R$-algebra; for example, an
$R$-algebra $S$ requires $s_1(rs_2) = rs_1s_2$ for $r\in R$ and
$s_1,s_2\in S$, while an $R$-NC-algebra does not. (One can quotient an
$R$-NC-algebra by commutators involving elements of $R$ to obtain an
$R$-algebra.) If $R$ and $S$ are
both commutative, however, then the notions agree. Also note that any
$R$-NC-algebra is automatically an $R$-bimodule, where $R$ acts on
the left and on the right by multiplication.

By the construction of the cord algebra $\Cord(K)$ from
Section~\ref{ss:cordalg}, $\HSN(K)$ is a $\Z\hat{\pi}$-NC-algebra. We
now give an alternative definition of $\HSN(K)$ that uses $\pi$ and
$\hat{\pi}$ in place of cords.

A \textit{broken word} in $\pi,\hat{\pi}$ is a nonempty word in elements of
$\pi$ and $\hat{\pi}$ whose letters alternate between elements in
$\pi$ and $\hat{\pi}$. For clarity, we use Roman letters for elements
in $\pi$ and Greek for $\hat{\pi}$, and enclose elements in $\pi,\hat{\pi}$ by
square and curly brackets, respectively. Thus examples of broken
words are $\{\alpha\}$, $[x]$, $[x]\{\alpha\}$, and
$\{\alpha_1\}[x_1]\{\alpha_2\}[x_2]\{\alpha_3\}$.

Consider the $\Z$-module freely generated by broken words in
$\pi,\hat{\pi}$, divided by the following
\textit{string relations}:
\begin{enumerate}
\item \label{it:str1}
$\cdots_1 [x\alpha_1]\{\alpha_2\} \cdots_2 = \cdots_1 [x]\{\alpha_1\alpha_2\}
\cdots_2 $
\item \label{it:str2}
$\cdots_1 \{\alpha_1\}[\alpha_2 x] \cdots_2  = \cdots_1 \{\alpha_1\alpha_2\}[x] \cdots_2$
\item \label{it:str3}
$(\cdots_1 [x_1x_2] \cdots_2) - (\cdots_1 [x_1\mu x_2] \cdots_2) = \cdots_1
[x_1]\{1\}[x_2] \cdots_2$
\item \label{it:str4}
$(\cdots_1 \{\alpha_1\alpha_2\} \cdots_2) - (\cdots_1
\{\alpha_1\mu\alpha_2\} \cdots_2 ) = \cdots_1 \{\alpha_1\}[1]\{\alpha_2\} \cdots_2$.
\end{enumerate}
Here $\cdots_1$ is understood to represent the same (possibly empty)
subword each time it appears, as is $\cdots_2$. We denote the
resulting quotient by $S(\pi,\hat\pi)$.

The $\Z$-module $S(\pi,\hat \pi)$ splits into a direct sum
corresponding to the four possible beginnings and endings for broken
words:
\[
S(\pi,\hat\pi) =  S^{\hat{\pi}\hat{\pi}}(\pi,\hat\pi) \oplus
S^{\hat{\pi}\pi}(\pi,\hat\pi) \oplus
S^{\pi\hat\pi}(\pi,\hat\pi) \oplus S^{\pi\pi}(\pi,\hat\pi),
\]
where the superscripts denote which of $\pi$ and $\hat{\pi}$ contain the first
and last letters in the broken word. Thus
$S^{\hat{\pi}\hat{\pi}}(\pi,\hat\pi)$ is generated by broken words
beginning and ending with curly brackets (elements of $\hat{\pi}$)---
$\{\alpha\}$, $\{\alpha_1\}[x]\{\alpha_2\}$, etc.---while
$S^{\pi\pi}(\pi,\hat\pi)$ is generated by $[x]$, $[x]\{\alpha\}[y]$, etc.
We think of these broken words as broken strings with base point on
$N\cap Q$ beginning and ending with $N$-strings (for
$S^{\hat{\pi}\hat{\pi}}(\pi,\hat\pi)$) or $Q$-strings (for
$S^{\pi\pi}(\pi,\hat\pi)$). The other two summands
$S^{\hat{\pi}\pi}(\pi,\hat\pi), S^{\pi\hat\pi}(\pi,\hat\pi)$ can
similarly be interpreted in terms of broken strings, but we will not
consider them further.

On $S^{\hat{\pi}\hat{\pi}}(\pi,\hat\pi)$ and $S^{\pi\pi}(\pi,\hat\pi)$, we can define
 multiplications by
\[
(\cdots_1 \{\alpha_1\})(\{\alpha_2\}\cdots_2) = \cdots_1
\{\alpha_1\alpha_2\} \cdots_2
\]
and
\[
(\cdots_1 [x_1])([x_2] \cdots_2) = \cdots_1 [x_1x_2] \cdots_2,
\]
respectively.
These turn $S^{\hat{\pi}\hat{\pi}}(\pi,\hat\pi)$ and
$S^{\pi\pi}(\pi,\hat\pi)$ into rings. Note for future reference that
$S^{\hat{\pi}\hat{\pi}}(\pi,\hat\pi)$ is generated as a ring by $\{\alpha\}$ and
$\{1\}[x]\{1\}$ for $\alpha \in \hat\pi$ and $x\in\pi$.

\begin{proposition}
$\HAN$ is a $\Z\hat{\pi}$-NC-algebra, while $\HAQ$ is a
$\Z\pi$-NC-algebra and hence a $\Z\hat{\pi}$-NC-algebra as well. Both
$\HAN$ and $\HAQ$ are knot invariants as NC-algebras.
\end{proposition}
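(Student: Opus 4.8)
The plan is to verify each of the three assertions in turn, with the structure of the argument mirroring the passage from $\Cord(K)$ to $\HSN(K)$ in Proposition~\ref{prop:HS0cord} but now phrased entirely in terms of $\pi$ and $\hat\pi$.

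First I would establish that $\HAN$ is a $\Z\hat\pi$-NC-algebra. The ring structure has already been defined by the concatenation $(\cdots_1\{\alpha_1\})(\{\alpha_2\}\cdots_2) = \cdots_1\{\alpha_1\alpha_2\}\cdots_2$; one checks directly that the string relations \eqref{it:str1}--\eqref{it:str4} are respected by this multiplication on both sides (this is where the stipulation that $\cdots_1,\cdots_2$ are arbitrary fixed subwords is used — each relation is ``local'' and can be concatenated on the left or right). The unit is $\{1\}$. The required ring homomorphism $\Z\hat\pi\to \HAN$ sends $\alpha\mapsto\{\alpha\}$; this is a ring map because $\{\alpha_1\}\cdot\{\alpha_2\} = \{\alpha_1\alpha_2\}$ by definition of the product, and it is well-defined since no string relation identifies two one-letter words $\{\alpha\},\{\alpha'\}$ with $\alpha\neq\alpha'$ (relations \eqref{it:str3},\eqref{it:str4} strictly increase or decrease word length, while \eqref{it:str1},\eqref{it:str2} require at least two letters of alternating type).

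Next, for $\HAQ$: the concatenation $(\cdots_1[x_1])([x_2]\cdots_2) = \cdots_1[x_1x_2]\cdots_2$ makes it a ring with unit $[1]$, again checking compatibility with \eqref{it:str1}--\eqref{it:str4}. The map $\Z\pi\to\HAQ$ sending $x\mapsto[x]$ is a ring homomorphism by the same reasoning, and it is well-defined for the same length/type reason as above. Since we are given a map $\hat\pi\to\pi$ induced by $\p N\hookrightarrow N$, composing gives $\Z\hat\pi\to\Z\pi\to\HAQ$, so $\HAQ$ is in particular a $\Z\hat\pi$-NC-algebra.

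Finally, invariance: $\HAN$ and $\HAQ$ are built functorially from the pair of groups $(\pi,\hat\pi)$ together with the distinguished elements $\mu,\lambda\in\hat\pi$ (which enter relations \eqref{it:str3},\eqref{it:str4} and the identification $\Z\hat\pi = \Z[\lambda^{\pm1},\mu^{\pm1}]$) and the homomorphism $\hat\pi\to\pi$. An isotopy of the framed oriented knot $K$ induces an isomorphism of this entire package — $\pi_1(Q\setminus K)$, $\pi_1(\p N)$, the peripheral inclusion, and the meridian-longitude pair, all up to the appropriate based conjugacy — and hence an isomorphism of the quotient $\Z$-modules respecting the ring structure and the map from $\Z\hat\pi$. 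Thus both are invariants as NC-algebras. The main obstacle, and the point requiring genuine care rather than routine checking, is the well-definedness of the multiplications: one must verify that applying a string relation inside a factor before concatenating yields the same element as applying it after concatenating, for each of the four relation types and each of the two products — this is where the ``same subword $\cdots_1$ each time'' convention does the essential work, and it is worth writing out at least one representative case (say relation \eqref{it:str3} interacting with the $\pi$-product) explicitly.
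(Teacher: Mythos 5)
Your proposal is correct and follows essentially the same route as the paper, whose entire proof consists of specifying the ring homomorphisms $\alpha\mapsto\{\alpha\}$ and $x\mapsto[x]$ and leaving the routine compatibility checks (multiplication descending to the quotient by the string relations, invariance via functoriality in the peripheral data) implicit. Your extra verifications are sound — only note that well-definedness of $\Z\hat\pi\to\HAN$ is automatic since $\Z\hat\pi$ is free on $\hat\pi$, so your remark about relations not identifying one-letter words addresses injectivity, which is not needed here.
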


\begin{proof}
We only need to specify the ring homomorphisms $\Z\hat{\pi} \to \HAN$
and $\Z\pi \to \HAQ$; these are given by $\alpha \mapsto \{\alpha\}$
and $x \mapsto [x]$, respectively.
\end{proof}

\begin{remark}
View $\Z\pi$ as a $\Z\hat{\pi}$-bimodule via the map $\hat{\pi} \to
\pi$. Then $\HAN$ and $\HAQ$ can alternatively be defined as
follows. Let $\mathcal{A},\hat{\mathcal{A}}$ be defined by
\begin{align*}
\mathcal{A} &= \Z\hat{\pi} \oplus \Z\pi \oplus (\Z\pi
\otimes_{\Z\hat{\pi}} \Z\pi) \oplus
(\Z\pi
\otimes_{\Z\hat{\pi}} \Z\pi \otimes_{\Z\hat{\pi}} \Z\pi) \oplus \cdots \\
\hat{\mathcal{A}} &= \Z\pi \oplus (\Z\pi
\otimes_{\Z\hat{\pi}} \Z\pi) \oplus
(\Z\pi
\otimes_{\Z\hat{\pi}} \Z\pi \otimes_{\Z\hat{\pi}} \Z\pi) \oplus
\cdots.
\end{align*}
Each of $\mathcal{A},\hat{\mathcal{A}}$ has a multiplication operation
given by concatenation (e.g. $a \cdot (b\otimes c) = a\otimes b\otimes
c$); multiplying by an element of $\Z\hat{\pi} \subset \mathcal{A}$
uses the $\Z\hat{\pi}$-bimodule structure on $\Z\pi$. There are
two-sided ideals $\mathcal{I}\subset\mathcal{A},
\hat{\mathcal{I}}\subset\hat{\mathcal{A}}$ generated by
\begin{gather*}
x_1x_2 - x_1\mu x_2-x_1\otimes x_2 \\
1_{\hat{\pi}}-(1-\mu)_{\pi}
\end{gather*}
where $x_1,x_2\in\pi$, $x_1x_2,x_1\mu x_2$ are viewed as elements in
$\Z\pi$, and $1_{\hat{\pi}}$ denotes the element $1\in\Z\hat\pi$ while
$(1-\mu)_{\pi}$ denotes the element $1-\mu\in\Z\pi$. Then
\begin{align*}
\HAN &\cong \mathcal{A}/\mathcal{I} \\
\HAQ &\cong \hat{\mathcal{A}}/\hat{\mathcal{I}}.
\end{align*}
\end{remark}

We conclude this subsection by noting that $\HAN$ is precisely the
cord algebra of $K$.

\begin{proposition}
We have the following isomorphism of $\Z\hat\pi$-NC-algebras:
\[
\HSN(K) \cong \HAN.
\]
\end{proposition}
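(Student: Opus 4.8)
The plan is to show that the cord algebra $\HSN(K)$ from Section~\ref{ss:cordalg} and the group-ring model $\HAN$ present the same $\Z\hat\pi$-NC-algebra, by exhibiting a dictionary between framed cords and broken words and checking that it carries the two sets of defining relations onto each other. First I would fix a base point $\ast$ on $K$ (and a corresponding point on $K'$), together with the identification $\Z\hat\pi = \Z[\lambda^{\pm1},\mu^{\pm1}]$ supplied by the framing, orientation, and coorientation. The key geometric observation is that a framed cord $\gamma$, whose endpoints lie on $K'\setminus\{\ast\}\subset \partial N$, can be closed up to an element of $\pi = \pi_1(Q\setminus K)$ by connecting its endpoints back to the base point along $K'$ (equivalently along $\partial N$) avoiding $\ast$; conversely, a based loop in $Q\setminus K$ that meets $K'$ only at its base point can be cut at $K'$ to give a cord. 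Likewise a loop in $\partial N$ based at the chosen point is exactly an element of $\hat\pi$. Thus a word $\alpha_1 x_1 \alpha_2 \cdots x_\ell \alpha_{\ell+1}$ in the generators of $\Cord(K)$ (powers of $\lambda,\mu$ interspersed with cords) corresponds to the broken word $\{\alpha_1\}[x_1]\{\alpha_2\}\cdots[x_\ell]\{\alpha_{\ell+1}\}$ in $S^{\hat\pi\hat\pi}(\pi,\hat\pi)$, where $x_k\in\pi$ is the closed-up cord and $\alpha_k\in\hat\pi$ is the corresponding product of meridian and longitude. This defines a $\Z$-module map $\psi\colon \AA \to S^{\hat\pi\hat\pi}(\pi,\hat\pi)$ which is evidently surjective, multiplicative, and compatible with the maps from $\Z\hat\pi$ on both sides (since $\alpha\mapsto\{\alpha\}$).

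The main content is then to check that $\psi$ descends to an isomorphism $\HSN(K)=\AA/\II \xrightarrow{\;\cong\;} \AA/\mathcal I \cong \HAN$, i.e.\ that $\psi(\II) = \hat{\mathcal I}$ (or equivalently that the skein relations (\ref{it:cord1})--(\ref{it:cord3}) of Definition~\ref{def:cordalg} match the string relations (\ref{it:str1})--(\ref{it:str4}) under $\psi$). I would go relation by relation. Relations (\ref{it:cord2}) and one of the two halves of (\ref{it:cord0}) say that appending a meridian or longitude to an endpoint of a cord multiplies by $\mu^{\pm1}$ or $\lambda^{\pm1}$ on that side; under the closing-up procedure this is precisely the bookkeeping identity (\ref{it:str1}) (resp.\ (\ref{it:str2})), which records that sliding a segment of $\partial N$ across the cut point $p'$ can be absorbed into the adjacent $\hat\pi$-letter. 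The crucial skein relation (\ref{it:cord3}) — resolving a crossing of two cord strands — becomes, after closing up, exactly the relation (\ref{it:str3}): the two terms on the left are the two ways of joining the four endpoints into two cords $x_1x_2$ and $x_1\mu x_2$ (the $\mu$ arising because one resolution forces the strand to wind once around $K$), and the right-hand side is $[x_1]\{1\}[x_2]$, the broken word with a trivial $\hat\pi$-letter inserted at the resolution point. Finally relation (\ref{it:cord1}), $(\text{short cord}) = 1-\mu$, together with relation (\ref{it:str4}), is what makes the generators consistent: a contractible cord closed up around $K'$ represents $1-\mu\in\Z\pi$, matching the generator $1_{\hat\pi} - (1-\mu)_\pi$ of $\hat{\mathcal I}$ in the alternative description, and (\ref{it:str4}) is just (\ref{it:str3}) transported to a pair of $\hat\pi$-letters via the inclusion $\hat\pi\to\pi$. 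One also checks the remaining halves of (\ref{it:cord0}) and the equivalent form of (\ref{it:cord3}) from the Remark give no new relations. Since each defining relation of $\II$ maps into $\hat{\mathcal I}$ and conversely each generator of $\hat{\mathcal I}$ is hit, $\psi$ induces a well-defined surjection on quotients, and the explicit inverse — close up a broken word, read off cords and coefficients — shows it is an isomorphism of $\Z\hat\pi$-NC-algebras.

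The step I expect to be the real obstacle is verifying that relation (\ref{it:cord3}) corresponds to (\ref{it:str3}) \emph{with the correct placement of the extra meridian} $\mu$: one must track carefully, using the framing and the coorientation of $K$, which of the two cord-resolutions picks up a loop around $K$ and on which side it sits, so that the homotopy classes in $\pi$ genuinely are $x_1x_2$ and $x_1\mu x_2$ rather than, say, $x_1x_2$ and $x_1 x_2\mu$ or $x_1\mu^{-1}x_2$. This is the same subtlety flagged in Remark~\ref{rmk:commute} about the normalization of the cord algebra, and it is why the framed cords of Definition~\ref{def:cordalg} (endpoints on $K'$ rather than on $K$) are used: pushing the endpoints onto the parallel copy $K'$ is exactly what makes the winding-number bookkeeping unambiguous. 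Everything else — surjectivity, multiplicativity, compatibility with $\Z\hat\pi$, and the construction of the inverse — is a routine unwinding of definitions once this sign/placement point is pinned down, and it mirrors the analogous identification $\HSN(K)\cong\BB/\JJ$ already carried out in the proof of Proposition~\ref{prop:HS0cord}.
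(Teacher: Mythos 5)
Your proposal is correct and is essentially the paper's own proof: both define the map on generators by closing up each framed cord along $K'\setminus\{\ast\}$ to a loop $\widetilde{\gamma}\in\pi$ (sent to $\{1\}[\widetilde{\gamma}]\{1\}$) and sending $\lambda^a\mu^b$ to $\{\lambda^a\mu^b\}$, then check that the four skein relations of $\Cord(K)$ become consequences of the four string relations (the key crossing relation going to $[x_1x_2]-[x_1\mu x_2]=[x_1]\{1\}[x_2]$), with the inverse obtained by reading a based loop at $p'\in K'$ as a cord. The only blemish is notational: the ideal you call $\hat{\mathcal{I}}$ is the one the paper denotes $\mathcal{I}$ (the relations presenting $\HAN$, not $\HAQ$), which does not affect the argument.
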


\begin{proof}
We use the cord-algebra formulation of $\HSN(K) \cong \Cord(K)$ from
Definition~\ref{def:cordalg}. Let $K'$ be the parallel copy of $K$,
and choose a base point $p'$ for $\pi =
\pi_1(Q\setminus K)$ with $p' \in K'\setminus \{\ast\}$. Given a cord
$\gamma$ of $K$, define
$\widetilde{\gamma} \in \pi$ as in
Remark~\ref{rmk:commute}: extend $\gamma$ to a closed loop
$\widetilde{\gamma}$ in $Q\setminus K$ with endpoints at $p'$ by connecting
the endpoints of $\gamma$ to $p'$ along $K'\setminus\{\ast\}$. Then the
isomorphism $\phi :\thinspace \Cord(K) \to \HAN$ is the ring
homomorphism defined by:
\begin{align*}
\phi(\gamma) &= \{1\}[\widetilde{\gamma}]\{1\} &
\phi(\alpha) &= \{\alpha\},
\end{align*}
for $\gamma$ any cord of $K$ and $\alpha$ any element of $\Cord(K)$ of
the form $\lambda^a \mu^b$.

The skein relations in $\Cord(K)$ from Definition~\ref{def:cordalg}
are mapped by $\phi$ to:
\begin{enumerate}
\item
$\{1\}[1]\{1\} = \{1\}-\{\mu\}$
\item
$\{1\}[\mu\widetilde{\gamma}]\{1\} = \{\mu\}[\widetilde{\gamma}]\{1\}$
and
$\{1\}[\widetilde{\gamma}\mu]\{1\} = \{1\}[\widetilde{\gamma}]\{\mu\}$
\item
$\{1\}[\lambda\widetilde{\gamma}]\{1\} = \{\lambda\}[\widetilde{\gamma}]\{1\}$
and
$\{1\}[\widetilde{\gamma}\lambda]\{1\} = \{1\}[\widetilde{\gamma}]\{\lambda\}$
\item
$\{1\}[\widetilde{\gamma}_1\widetilde{\gamma}_2]\{1\} -
\{1\}[\widetilde{\gamma}_1\mu\widetilde{\gamma}_2]\{1\} =
\{1\}[\widetilde{\gamma}_1]\{1\}[\widetilde{\gamma}_2]\{1\}$.
\end{enumerate}
In $\HAN$, these follow from string relations (\ref{it:str4}),
(\ref{it:str1}) and (\ref{it:str2}),
(\ref{it:str1}) and (\ref{it:str2}),
and (\ref{it:str3}), respectively.

Thus $\phi$ is well-defined. It is straightforward to check that
$\phi$ is an isomorphism (indeed, the string relations are constructed
so that this is the case), with inverse $\phi^{-1}$ defined by
\begin{align*}
\phi^{-1}(\{\alpha\}) &= \alpha \\
\phi^{-1}(\{1\}[\widetilde{\gamma}]\{1\}) &= \widetilde{\gamma},
\end{align*}
for $\alpha \in \hat\pi$ and $\widetilde{\gamma}\in\pi$: note that
a closed loop at $p' \in K'\setminus\{\ast\}$ representing
$\widetilde{\gamma}$ is also by definition a cord of $K$.
\end{proof}

\begin{remark}
Similarly, one can show that $\HSQ(K) \cong \HAQ$ as
$\Z\pi$-NC-algebras. In the same vein, there is also a cord formulation for modified string homology $\HSQ(K)$ (as introduced at the end of Section~\ref{ss:string0}), along the lines of Definition~\ref{def:cordalg}: this is $\hat{\mathcal{A}}/\hat{\mathcal{I}}$, where $\hat{\mathcal{A}}$ is the non-unital algebra generated by nonempty products of cords (the difference from $\mathcal{A}$ being that $\hat{\mathcal{A}}$ does not contain words of the form $\lambda^a \mu^b$, which have no cords), and $\hat{\mathcal{I}}$ is the ideal of $\hat{\mathcal{A}}$ generated by skein relations \eqref{it:cord0} through \eqref{it:cord3} from Definition~\ref{def:cordalg}, without \eqref{it:cord1}.
\end{remark}

\subsection{The cord algebra and group rings}\label{ss:groupring}
Having defined the cord algebra in terms of homotopy groups, we can now
give an even more explicit interpretation not involving broken
words, in terms of the group ring $\Z\pi$. Notation is as in
Section~\ref{ss:htpy}: in particular, $K\subset Q$ is a framed oriented knot with
tubular neighborhood $N$, $\pi = \pi_1(Q\setminus K)$, and
$\hat\pi = \pi_1(\partial N)$. When $Q=\R^3$, we assume for simplicity that the framing on $K$ is the Seifert framing.

Before addressing the cord
algebra $\HSN(K) \cong \HAN$ itself, we first note that the modified
version $\HSQ(K) \cong \HAQ$ is precisely $\Z\pi$.

\begin{proposition}
For a knot $K\subset Q$, we have an isomorphism as $\Z\pi$-NC-algebras
$$
\HAQ \cong \Z\pi.
$$
\label{prop:groupring}
\end{proposition}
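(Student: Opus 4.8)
The plan is to exhibit an explicit isomorphism between $\HAQ$ and $\Z\pi$ and check it is well-defined and inverse to an obvious map. Recall that $\HAQ$ is the summand $S^{\pi\pi}(\pi,\hat\pi)$ of $S(\pi,\hat\pi)$ generated by broken words beginning and ending with $\pi$-letters, with its concatenation product $[x_1]\cdots \cdot [x_2]\cdots = [x_1x_2]\cdots$, and it already carries a $\Z\pi$-NC-algebra structure via $x\mapsto[x]$. So I must show this structure map $\Psi\colon\Z\pi\to\HAQ$, $x\mapsto[x]$, is an isomorphism of rings.

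First I would construct the inverse $\Phi\colon\HAQ\to\Z\pi$. On a broken word $[x_0]\{\alpha_1\}[x_1]\{\alpha_2\}\cdots\{\alpha_k\}[x_k]$, send each curly bracket $\{\alpha_i\}$ to the image $\bar\alpha_i\in\pi$ of $\alpha_i$ under $\hat\pi\to\pi$, but \emph{twisted}: the correct assignment, dictated by string relation (\ref{it:str3}) which inserts a factor involving $1-\mu$, is
\[
\Phi\bigl([x_0]\{\alpha_1\}[x_1]\cdots\{\alpha_k\}[x_k]\bigr) = x_0\,(1-\bar\alpha_1\mu\bar\alpha_1^{-1})\,\bar\alpha_1 x_1 \cdots,
\]
or more transparently, read the word left to right, replace each $\{\alpha_i\}$ by $\bar\alpha_i$ and each adjacent pair $\cdots[x_{i-1}]\{\alpha_i\}[x_i]\cdots$ by noting that relation (\ref{it:str3}) forces $[x_{i-1}]\{1\}[x_i]$ to equal $[x_{i-1}x_i]-[x_{i-1}\mu x_i]$; combined with (\ref{it:str1}), (\ref{it:str2}) this lets one collapse every broken word in $S^{\pi\pi}$ to a $\Z$-linear combination of single letters $[x]$. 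I would define $\Phi$ on single letters by $[x]\mapsto x$ and check it respects the string relations (\ref{it:str1})--(\ref{it:str4}) — relations (\ref{it:str1}),(\ref{it:str2}) say moving a $\hat\pi$-element across a bracket is absorbed by group multiplication (immediate since $\bar{}$ is a homomorphism), (\ref{it:str3}) is exactly the collapsing rule, and (\ref{it:str4}) involves only curly brackets so does not arise in $S^{\pi\pi}$ except as an identity among $\hat\pi$-words, consistent with $\bar{}$.

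The key observation making this work is that \emph{every} broken word in $S^{\pi\pi}(\pi,\hat\pi)$ is, modulo the string relations, equal to a $\Z$-linear combination of one-letter words $[x]$: one repeatedly uses (\ref{it:str1}),(\ref{it:str2}) to push all $\hat\pi$-elements into a single innermost $\{1\}$, then (\ref{it:str3}) to eliminate that $\{1\}$ at the cost of splitting into two shorter broken words, and induct on the number of switches. Thus $\Psi$ is surjective. That $\Phi$ is well-defined and a ring homomorphism, together with $\Phi\circ\Psi=\mathrm{id}_{\Z\pi}$ (clear on generators) and $\Psi\circ\Phi=\mathrm{id}_{\HAQ}$ (clear once one knows single letters span), gives the isomorphism; it intertwines the $\Z\pi$-NC-structures by construction since $\Psi(x)=[x]$.

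The main obstacle is purely bookkeeping: verifying that the collapsing procedure is consistent, i.e.\ that $\Phi$ as defined on one-letter representatives does not depend on the order in which one applies relations (\ref{it:str1})--(\ref{it:str3}) to reduce a given broken word. I would handle this by instead defining $\Phi$ directly by the closed formula above (replace $\{\alpha_i\}$ by $\bar\alpha_i$ wherever it is \emph{not} flanked by the need to split, and more honestly: define $\Phi$ on the free $\Z$-module on broken words by the explicit alternating-sum expansion of each interior $\{\alpha_i\}$ as $\bar\alpha_i$ minus a splitting term) and then check this descends to the quotient by (\ref{it:str1})--(\ref{it:str4}) — a finite check, one relation at a time. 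Compatibility with the ring structure is then a one-line computation since concatenation of broken words corresponds under $\Phi$ to multiplication in $\Z\pi$, the splitting terms being compatible with the associative product. No hard analysis or topology enters; it is the same kind of algebraic verification as in the proof of $\HSN(K)\cong\HAN$ above.
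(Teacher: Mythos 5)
Your proposal is correct and takes essentially the same route as the paper: the paper also inverts $x\mapsto[x]$ by the explicit formula $\phi([x_1]\{\alpha_1\}[x_2]\cdots[x_{n-1}]\{\alpha_{n-1}\}[x_n])=x_1(1-\mu)\alpha_1x_2\cdots x_{n-1}(1-\mu)\alpha_{n-1}x_n$ and notes that well-definedness is just a check of the string relations; your placement $\bar\alpha_i(1-\mu)$ coincides with the paper's $(1-\mu)\bar\alpha_i$ because $\mu$ commutes in $\pi$ with every element in the image of the abelian group $\hat\pi$. One minor inaccuracy: relation (\ref{it:str4}) does occur inside $\HAQ$ (e.g.\ $[x]\{\alpha_1\alpha_2\}[y]-[x]\{\alpha_1\mu\alpha_2\}[y]=[x]\{\alpha_1\}[1]\{\alpha_2\}[y]$), but your formula satisfies it, so the relation-by-relation check you propose goes through unchanged.
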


\begin{proof}
The map $\Z\pi \to \HAQ$, $x \mapsto [x]$, has inverse $\phi$ given by
\begin{align*}
\phi([x_1]\{\alpha_1\}[x_2]\{\alpha_2\}&\cdots
[x_{n-1}]\{\alpha_{n-1}\}[x_n])
\\
&= x_1(1-\mu)\alpha_1x_2(1-\mu)\alpha_2\cdots
x_{n-1}(1-\mu)\alpha_{n-1}x_n;
\end{align*}
note that $\phi$ is well-defined (just check the string
relations) and preserves ring structure.
\end{proof}

The corresponding description of the cord algebra $\HAN$ is a bit more
involved, and
we give two interpretations.

\begin{proposition}
For a knot $K \subset Q$, we have a $\Z$-module isomorphism
$$
\HAN \cong \Z[\lambda^{\pm 1}] \oplus \Z\pi.
$$
For any $\alpha\in\Z[\lambda^{\pm 1}]$, the left and right actions of
$\alpha$ on $\HAN$ induced from the $\hat{\pi}$-NC-algebra
structure on $\HAN$ coincide under this isomorphism
with the actions of $\alpha$ on the factors of $\Z[\lambda^{\pm 1}]
\oplus \Z\pi$ by left and right multiplication.
\label{prop:directsum}
\end{proposition}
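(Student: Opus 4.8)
<br>

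The plan is to exhibit $\HAN$ as a $\Z$-module direct sum of $\Z[\lambda^{\pm 1}]$ and a copy of $\Z\pi$. Let $J\subseteq\HAN$ be the two-sided ideal generated by all classes $\{1\}[x]\{1\}$, $x\in\pi$; using string relations \eqref{it:str1} and \eqref{it:str2} to absorb Greek letters into neighbouring Roman ones, one checks that $J$ is exactly the image in $\HAN$ of the span $V_+$ of broken words containing at least one square bracket. Define $\epsilon\colon\HAN\to\Z[\lambda^{\pm 1}]$ by $\{\lambda^a\mu^b\}\mapsto\lambda^a$ and $w\mapsto 0$ for every broken word $w$ containing a square bracket; one verifies directly that $\epsilon$ respects the four string relations, is a ring homomorphism, and satisfies $\epsilon|_J=0$. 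Relation \eqref{it:str4} with $\alpha_1=1$ reads $\{\gamma\}-\{\gamma\mu\}=\{1\}[1]\{\gamma\}\in J$, so modulo $J$ every pure-Greek word $\{\lambda^a\mu^b\}$ is congruent to $\{\lambda^a\}$. Hence $\HAN=J+\operatorname{span}_{\Z}\{\{\lambda^a\}\mid a\in\Z\}$, and applying $\epsilon$ shows this sum is direct and that $\operatorname{span}_{\Z}\{\{\lambda^a\}\}\cong\Z[\lambda^{\pm 1}]$, so $\HAN\cong\Z[\lambda^{\pm 1}]\oplus J$ as $\Z$-modules.

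It remains to identify $J$ with $\Z\pi$. Prepending and appending $\{1\}$ defines a map $\iota\colon\HAQ\to J$, $\xi\mapsto\{1\}\xi\{1\}$, which is surjective (absorb the end Greek letters via \eqref{it:str1} and \eqref{it:str2}); since $\HAQ\cong\Z\pi$ by Proposition~\ref{prop:groupring}, $J$ is a quotient of $\Z\pi$ and it suffices to produce a left inverse. For this I would define $\tilde r\colon V_+\to\Z\pi$ by the analogue of the formula for $\phi$ in Proposition~\ref{prop:groupring}, namely $\{\beta_0\}[x_1]\{\beta_1\}\cdots[x_n]\{\beta_n\}\mapsto\beta_0x_1(1-\mu)\beta_1x_2\cdots(1-\mu)\beta_{n-1}x_n\beta_n\in\Z\pi$ (Greek letters read through $\hat\pi\to\pi$). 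A routine computation shows $\tilde r$ kills every instance of a string relation lying entirely inside $V_+$: this covers \eqref{it:str1}, \eqref{it:str2}, \eqref{it:str3}, and every instance of \eqref{it:str4} occurring inside a word that already contains a bracket. The only string-relation instances not of this form are those of \eqref{it:str4} with both flanking subwords empty, $\{\gamma\}-\{\gamma\mu\}=\{\alpha_1\}[1]\{\alpha_2\}$ with $\alpha_1\alpha_2=\gamma$, whose pure-Greek part $\{\gamma\}-\{\gamma\mu\}$ is nonzero. Given an element of $V_+$ representing $0$ in $\HAN$, writing it as a combination of relation instances and forcing the pure-Greek contribution to cancel yields an identity $\bigl(\sum_i c_i\gamma^i\bigr)(1-\mu)=0$ in $\Z\hat\pi=\Z[\lambda^{\pm 1},\mu^{\pm 1}]$; since this ring is an integral domain and $1-\mu\neq 0$, we get $\sum_i c_i\gamma^i=0$ in $\Z\hat\pi$, hence also in $\Z\pi$, which is exactly the cancellation that makes $\tilde r$ vanish on every such element. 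Thus $\tilde r$ descends to a left inverse of $\iota$, so $\iota$ is an isomorphism $\Z\pi\xrightarrow{\ \sim\ }J$. I expect this well-definedness of the retraction --- controlling how relation \eqref{it:str4} mixes the pure-Greek part with $J$, where the integral-domain property of $\Z[\lambda^{\pm 1},\mu^{\pm 1}]$ is the essential input --- to be the main obstacle.

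Combining the two steps gives $\HAN\cong\Z[\lambda^{\pm 1}]\oplus\Z\pi$ as $\Z$-modules, with first summand $\operatorname{span}_{\Z}\{\{\lambda^a\}\}$ and second summand $J$. Both summands are stable under left and right multiplication by $\Z[\lambda^{\pm 1}]\subset\Z\hat\pi$: on the first, $\{\lambda^b\}\{\lambda^a\}\{\lambda^c\}=\{\lambda^{a+b+c}\}$, so $\alpha\in\Z[\lambda^{\pm 1}]$ acts on either side by multiplication by $\alpha$; on the second, relations \eqref{it:str1} and \eqref{it:str2} give $\{\lambda^b\}\cdot\iota(\xi)=\iota(\lambda^b\xi)$ and $\iota(\xi)\cdot\{\lambda^b\}=\iota(\xi\lambda^b)$, so under $J\cong\Z\pi$ these become left and right multiplication by the longitude $\lambda\in\pi$. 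This yields the compatibility with the $\Z[\lambda^{\pm 1}]$-bimodule structure asserted in the proposition.
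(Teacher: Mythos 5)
Your proof is correct, and the isomorphism you produce is the same one the paper uses: the first summand is spanned by the classes $\{\lambda^a\}$ and the second by the classes $\{1\}[x]\{1\}$, $x\in\pi$, with $J$ being exactly the span of the latter. The difference lies in how directness and injectivity are established. The paper asserts that every trivial word can be written \emph{uniquely} as some $\{\lambda^a\}$ plus nontrivial words, and every nontrivial word \emph{uniquely} as a combination of $\{1\}[x]\{1\}$ (via the map $\phi$ of Proposition~\ref{prop:groupring}), leaving the well-definedness of this normal form implicit; you instead build explicit retractions $\epsilon$ and $\tilde r$ on the free module and analyze the kernel of $V_+\to\HAN$. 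Your key point --- that the only relation instances mixing the pure-Greek span with $V_+$ are instances of the fourth string relation with both flanking subwords empty, and that cancellation of their pure-Greek parts forces $\bigl(\sum_j d_j\gamma_j\bigr)(1-\mu)=0$ and hence $\sum_j d_j\gamma_j=0$ because $\Z[\lambda^{\pm 1},\mu^{\pm 1}]$ is a domain, which is precisely what makes $\tilde r$ vanish --- is exactly the content hidden in the paper's ``uniquely,'' and you carry it out correctly. Two small remarks: your ``routine computation'' that $\tilde r$ kills relations inside $V_+$ silently uses that Greek letters commute with $\mu$ in $\pi$ (true, since $\hat\pi$ is abelian and the relevant elements are peripheral), the same point needed for $\phi$ itself; and your final step only uses the ring map $\Z\hat\pi\to\Z\pi$, not injectivity of $\hat\pi\to\pi$, so the argument covers the unknot, as it must. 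What your route buys is a complete verification of directness; what the paper's buys is brevity by leaning on $\phi$ and a normal-form statement. Your treatment of the $\Z[\lambda^{\pm 1}]$-bimodule compatibility matches the paper's.
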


\begin{proof}
The isomorphism $\Z[\lambda^{\pm 1}] \oplus \Z\pi \to \HAN$ sends
$(\alpha,0)$ to $\{\alpha\}$ and $(0,x)$ to $\{1\}[x]\{1\}$. Note that
this map commutes with left and right multiplication by powers of
$\lambda$; for example, $\{\lambda^k\alpha\} = \lambda^k\{\alpha\}$ and
$\{1\}[\lambda^k x]\{1\} = \{\lambda^k\}[x]\{1\} = \lambda^k\{1\}[x]\{1\}$.

To see
that the map is a bijection, note that the generators of $\HAN$ can be
separated into ``trivial'' broken words of the form $\{\alpha\}$ and
``nontrivial'' broken words of length at least $3$. Using the string
relations, we can write any trivial broken word uniquely as a sum of
some $\{\lambda^a\}$ and some nontrivial broken words:
$$
\{\lambda^a \mu^b\} = \{\lambda^a\} - \sum_{i=0}^{b-1}
\{\lambda^a\}[\mu^{i-1}]\{1\}$$
if $b \geq 0$, and similarly for $b<0$. On the other hand, any
nontrivial broken word in $\HAN$ can be written uniquely as a $\Z$-linear
combination of words of the form $\{1\}[x]\{1\}$, $x\in\pi$: just use
the map $\phi$ from the proof of Proposition~\ref{prop:groupring} to
reduce any nontrivial broken word to broken words of length $3$, and
then apply the identity $\{\alpha_1\}[x]\{\alpha_2\} =
\{1\}[\alpha_1x\alpha_2]\{1\}$.
\end{proof}

\begin{proposition}
For knots $K \subset \R^3$, string homology $\HSN(K)\cong\HAN$, and
thus knot contact homology,
detects the unknot $U$.
More
precisely, left multiplication by $\lambda-1$ on $\HAN$ has nontrivial kernel
if and only if $K$ is unknotted.
\label{prop:distinguish}
\end{proposition}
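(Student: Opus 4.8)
The plan is to analyze when left multiplication by $\lambda - 1$ on $\HAN$ can have nontrivial kernel, using the explicit $\Z$-module description $\HAN \cong \Z[\lambda^{\pm 1}] \oplus \Z\pi$ from Proposition~\ref{prop:directsum} together with the structure of $\pi = \pi_1(\R^3\setminus K)$. First I would note that under that isomorphism, left multiplication by $\lambda$ acts on the $\Z[\lambda^{\pm 1}]$ summand by ordinary multiplication and on the $\Z\pi$ summand by left multiplication by the longitude $\lambda \in \pi$. On the $\Z[\lambda^{\pm 1}]$ summand, $\lambda - 1$ is injective (it is injective on the group ring of $\Z$), so any kernel element must involve the $\Z\pi$ part nontrivially; moreover, because the two summands are not preserved separately by the ring structure I should be a little careful and instead work directly with the relations, but the upshot is that a kernel element produces an element $0 \neq \beta \in \Z\pi$ with $(\lambda - 1)\beta = 0$, i.e.\ $\lambda\beta = \beta$ in $\Z\pi$.

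Next I would invoke the key topological input: $\lambda$ acts with fixed points on $\Z\pi$ (equivalently $\lambda - 1$ is a zero divisor, or has nontrivial kernel, as a left-multiplication operator on $\Z\pi$) if and only if $\lambda$ is trivial in $\pi$. One direction is immediate: if $\lambda = 1 \in \pi$ then $(\lambda-1)\beta = 0$ for every $\beta$. For the converse, if $\lambda \neq 1 \in \pi$ then $\lambda$ has infinite order (the longitude of a nontrivial knot generates an infinite cyclic subgroup, since $\pi$ is torsion-free — knot groups are torsion-free, being fundamental groups of aspherical $3$-manifolds), so the subgroup $\langle \lambda \rangle \cong \Z$ acts freely on $\pi$ by left translation; decomposing $\pi$ into right cosets of $\langle\lambda\rangle$ shows $\Z\pi$ is a free $\Z[\langle\lambda\rangle] = \Z[\lambda^{\pm 1}]$-module, and $\lambda - 1$ is injective on a free module over the integral group ring of $\Z$. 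Hence $\lambda - 1$ has nontrivial kernel on $\Z\pi$ exactly when $\lambda = 1$ in $\pi$.

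Finally I would connect this to unknottedness via the Loop Theorem: $K$ is unknotted if and only if the longitude is null-homotopic in $\R^3\setminus K$, i.e.\ $\lambda = 1 \in \pi$ (this is exactly the consequence of the Loop Theorem quoted in the introduction). Combining the three steps: left multiplication by $\lambda - 1$ on $\HAN$ has nontrivial kernel $\iff$ $\lambda - 1$ has nontrivial kernel as a left multiplication on $\Z\pi$ $\iff$ $\lambda = 1$ in $\pi$ $\iff$ $K$ is unknotted. Since by Proposition~\ref{prop:directsum} the left $\lambda$-action on $\HAN$ is faithfully reflected in the direct sum decomposition, the first equivalence needs only the observation that the $\Z[\lambda^{\pm 1}]$ summand contributes no kernel. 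The main obstacle I anticipate is making the first step fully rigorous: the decomposition $\HAN \cong \Z[\lambda^{\pm 1}]\oplus\Z\pi$ is only an isomorphism of $\Z$-modules (and of $\Z[\lambda^{\pm 1}]$-bimodules), not of rings, so I must verify carefully that a kernel element of $\lambda - 1$ on $\HAN$, when projected appropriately, yields a genuine kernel element on $\Z\pi$ — concretely, that the "trivial" part $\{\lambda^a\mu^b\}$ cannot conspire with the "nontrivial" part to produce extra kernel once we know $\lambda-1$ is injective on each, using that $\lambda$ acts compatibly on both factors and that the summand $\Z[\lambda^{\pm1}]$ is itself $\lambda-1$-torsion-free.
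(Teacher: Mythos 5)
Your proposal is correct and follows essentially the same route as the paper: both reduce, via the decomposition of Proposition~\ref{prop:directsum}, to the fact that left multiplication by $\lambda-1$ on $\Z\pi$ is injective exactly when $\lambda\neq 1$ in $\pi$, and then invoke the Loop Theorem to identify $\lambda=1$ with unknottedness. The only (minor) difference is the justification of injectivity for knotted $K$: the paper uses injectivity of $\hat\pi\to\pi$ (itself a Loop Theorem consequence) to see that a $\lambda$-fixed element of $\Z\pi$ would need infinite support, whereas you use torsion-freeness of $\pi$ and freeness of $\Z\pi$ over $\Z[\lambda^{\pm 1}]$, which is equivalent but leans on asphericity of the knot complement rather than only the Loop Theorem.
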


\begin{proof}
First, if $K = U$, then $\lambda = 1$ in $\pi$, and so
$$
(\lambda-1)\{1\}[1]\{1\} = \{\lambda\}[1]\{1\}-\{1\}[1]\{1\}
= \{1\}[\lambda]\{1\}-\{1\}[1]\{1\} = 0
$$
in $\HSN(U)$, while $\{1\}[1]\{1\} \neq 0$ by the proof of
Proposition~\ref{prop:directsum}.

Next assume that $K \neq U$, and consider the effect of multiplication
by $\lambda-1$ on $\Z[\lambda^{\pm 1}] \oplus \Z\pi$. Clearly this map
is injective on the $\Z[\lambda^{\pm 1}]$ summand; we claim that it is
injective on the $\Z\pi$ summand as well. Indeed, suppose that some
nontrivial sum $\sum a_i [x_i] \in \Z\pi$ is unchanged by
multiplication by $\lambda$. Then $[\lambda^k x_1]$ must appear in
this sum for all $k$, whence the sum is infinite since $\hat{\pi}$
injects into $\pi$ by the Loop Theorem.
\end{proof}

\begin{remark}
It was first shown in \cite{Ng:1} that the cord algebra detects the
unknot. That proof uses a relationship between the cord algebra and the
$A$-polynomial, along with the fact that the $A$-polynomial detects
the unknot \cite{DG}, which in turn relies on gauge-theoretic results
of Kronheimer and Mrowka \cite{KM}. As noted previously, by contrast,
the above proof that string homology detects the unknot uses only the
Loop Theorem. Either proof shows that knot contact homology detects
the unknot. However, we emphasize that for our argument, unlike the
argument in \cite{Ng:1}, it is crucial that we use the fully
noncommutative version
of knot contact homology.
\end{remark}

We can recover the multiplicative structure on $\HAN$ under the
isomorphism of Proposition~\ref{prop:directsum} as follows. On
$\Z[\lambda^\pm] \oplus \Z\pi$, define a multiplication operation $*$
by
$$
(\lambda^{k_1},x_1) * (\lambda^{k_2},x_2)
=
(\lambda^{k_1+k_2}, \lambda^{k_1} x_2 + x_1 \lambda^{k_2} + x_1x_2 -
x_1\mu x_2).
$$
It is easy to check that $*$ is associative, and that the isomorphism
$\HAN \cong (\Z[\lambda^{\pm}] \oplus \Z\pi,*)$ now becomes an
isomorphism of $\Z\hat{\pi}$-NC-algebras, where $\Z[\lambda^{\pm 1}]
\oplus \Z\pi$ is viewed as a $\Z\hat{\pi}$-NC-algebra via the map
$\Z[\hat{\pi}] \to \Z[\lambda^{\pm 1}] \oplus \Z\pi$ sending $\lambda$
to $(\lambda,0)$ and $\mu$ to $(1,0)-(0,1)$.

We now turn to another formulation of string homology in terms of the
group ring $\Z\pi$. This formulation is a bit cleaner than the
one in Proposition~\ref{prop:directsum}, as the multiplication
operation is easier to describe.

\begin{proposition}
For a knot $K\subset Q$,
let $\mathfrak{R}$ denote the subring of $\Z\pi$ generated by
$\hat{\pi}$ and $\im(1-\mu)$, where $1-\mu$ denotes the map
$\Z\pi\to\Z\pi$ given by left multiplication by $1-\mu$. There is a
ring homomorphism
\[
\psi :\thinspace \HSN(K) \to \mathfrak{R}
\]
determined by
$\psi(\{\alpha\}) = \alpha$ and $\psi(\{1\}[x]\{1\}) = x-\mu x$.

If $\hat\pi \to \pi$ is an injection (in particular, if $K
\subset\R^3$ is nontrivial), then $\psi$ is an isomorphism of
$\Z\hat{\pi}$-NC-algebras.
\label{prop:subring}
\end{proposition}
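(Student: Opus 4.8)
The plan is to use the identifications $\HSN(K)\cong\HAN = S^{\hat\pi\hat\pi}(\pi,\hat\pi)$ from the previous proposition, and throughout to regard $\hat\pi$ inside $\Z\pi$ via the natural map $\Z\hat\pi\to\Z\pi$. To construct $\psi$, first define a $\Z$-linear map from the free $\Z$-module on broken words beginning and ending in curly brackets to $\Z\pi$ by sending
\[
\{\alpha_0\}[x_1]\{\alpha_1\}[x_2]\cdots[x_n]\{\alpha_n\}\ \longmapsto\ \alpha_0(1-\mu)x_1\alpha_1(1-\mu)x_2\cdots(1-\mu)x_n\alpha_n
\]
(so $\{\alpha\}\mapsto\alpha$ when $n=0$), all products formed in $\Z\pi$. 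One then checks that this map annihilates the string relations (\ref{it:str1})--(\ref{it:str4}): relations (\ref{it:str1}) and (\ref{it:str4}) are immediate, (\ref{it:str3}) follows from the identity $(1-\mu)x_1x_2-(1-\mu)x_1\mu x_2=(1-\mu)x_1(1-\mu)x_2$, and (\ref{it:str2}) uses that $\mu$ is central in the abelian group $\hat\pi$, hence commutes with every $\alpha\in\hat\pi$ inside $\pi$. Thus the map descends to $\psi\colon\HAN\to\Z\pi$, and from the formula it visibly respects the concatenation product on $S^{\hat\pi\hat\pi}(\pi,\hat\pi)$, so it is a ring homomorphism with $\psi(\{\alpha\})=\alpha$ and $\psi(\{1\}[x]\{1\})=x-\mu x$ as claimed. (Alternatively, one may read $\psi$ off directly from the explicit multiplication $*$ on $\Z[\lambda^{\pm1}]\oplus\Z\pi$ appearing after Proposition~\ref{prop:directsum}, via a short bilinearity check.)

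Next I would verify surjectivity onto $\mathfrak{R}$ and compatibility with the $\Z\hat\pi$-NC-algebra structures. Since $\HAN$ is generated as a ring by the elements $\{\alpha\}$ and $\{1\}[x]\{1\}$, the image of $\psi$ is the subring of $\Z\pi$ generated by $\hat\pi$ together with $\{(1-\mu)x: x\in\pi\}$; as $\im(1-\mu)$ is the $\Z$-span of this latter set and subrings of $\Z\pi$ are $\Z$-submodules, the image is exactly $\mathfrak{R}$. In particular $\psi$ lands in $\mathfrak{R}$ for an arbitrary $3$-manifold $Q$, which is the existence assertion; and $\psi$ intertwines the structure maps $\alpha\mapsto\{\alpha\}$ and $\alpha\mapsto\alpha$ out of $\Z\hat\pi$, so it is a homomorphism of $\Z\hat\pi$-NC-algebras.

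It remains to prove injectivity when $\hat\pi\hookrightarrow\pi$. Using Proposition~\ref{prop:directsum} to identify $\HAN\cong\Z[\lambda^{\pm1}]\oplus\Z\pi$ as $\Z$-modules, the map $\psi$ becomes $(f,x)\mapsto f+(1-\mu)x$ with $f\in\Z[\lambda^{\pm1}]\subseteq\Z\hat\pi$ viewed in $\Z\pi$. Choosing right-coset representatives $\{g_j\}$ for $\hat\pi$ in $\pi$, with $g_0=1$, exhibits $\Z\pi=\bigoplus_j\Z\hat\pi\,g_j$ as a free left $\Z\hat\pi$-module, where the hypothesis $\hat\pi\hookrightarrow\pi$ is precisely what makes each $\Z\hat\pi\,g_j$ free of rank one; left multiplication by $1-\mu\in\Z\hat\pi$ preserves each summand and is injective on each, being injective on $\Z\hat\pi=\Z[\lambda^{\pm1},\mu^{\pm1}]$ (compare $\mu$-degrees). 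Now $f$ lies in the $g_0$ summand, so $f+(1-\mu)x=0$ forces $(1-\mu)x=0$ in all summands $j\neq0$, hence $x$ is supported in the $g_0$ summand, say $x=b_0\in\Z[\lambda^{\pm1},\mu^{\pm1}]$, with $f+(1-\mu)b_0=0$; applying the substitution $\mu\mapsto1$ kills $(1-\mu)b_0$ and yields $f=0$, whence $b_0=0$ and $x=0$. Therefore $\ker\psi=0$, and combined with surjectivity onto $\mathfrak{R}$ we conclude that $\psi$ is an isomorphism of $\Z\hat\pi$-NC-algebras. The only step requiring real care is this last one: the substantive input is that $\Z\pi$ is free over the group ring of the peripheral subgroup, which needs exactly $\hat\pi\hookrightarrow\pi$ --- valid for nontrivial knots in $\R^3$ by the Loop Theorem, and failing for the unknot, consistent with Proposition~\ref{prop:distinguish}.
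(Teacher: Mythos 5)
Your proposal is correct, and its first two parts (constructing $\psi$ on broken words, checking the string relations, and identifying the image with $\mathfrak{R}$) follow essentially the same route as the paper, which simply declares these checks straightforward. The injectivity step, however, is genuinely different. The paper normalizes to the $0$-framing and uses the linking-number homomorphism $\lk\colon\pi\to\Z$ with $\lk(\mu)=1$, $\lk(\lambda)=0$: if $\sum_j b_j(1-\mu)x_j$ plus a Laurent polynomial in $\lambda$ vanishes, the contributor of maximal linking number forces linking number $-1$ while the minimal one forces $0$, a contradiction. You instead invoke Proposition~\ref{prop:directsum} to rewrite $\psi$ as $(f,x)\mapsto f+(1-\mu)x$ and use the purely algebraic fact that $\Z\pi$ is a free left $\Z\hat\pi$-module on right-coset representatives once $\hat\pi\hookrightarrow\pi$, together with $1-\mu$ being a non-zero-divisor in $\Z[\lambda^{\pm1},\mu^{\pm1}]$ and the evaluation $\mu\mapsto1$ to separate $f$ from $(1-\mu)b_0$. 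All steps check out: left multiplication by $1-\mu$ indeed preserves each summand $\Z\hat\pi\,g_j$ and is injective there, and the hypothesis $\hat\pi\hookrightarrow\pi$ is used exactly where it must be. What your route buys is robustness and generality: it needs no linking-number homomorphism on $\pi$ (which requires $K$ null-homologous, or at least a map $\pi\to\Z$ with $\mu\mapsto1$) and no framing normalization, so it applies verbatim to any ambient $Q$ under the stated hypothesis; the paper's argument is shorter and more hands-on but is really tailored to the $\R^3$ (Seifert-framed) setting. One small bookkeeping point worth making explicit if you write this up: the relations (\ref{it:str1})--(\ref{it:str4}) preserve the class of words beginning and ending in curly brackets, so defining the map on the free module of such words and killing the relations does compute $\HAN$.
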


\begin{proof}
It is easy to check that $\psi$ respects all of the string relations
defining $\HAN \cong \HSN(K)$: the key relation $[x_1x_2] - [x_1\mu x_2] -
[x_1]\{1\}[x_2]$ is sent
to $(1-\mu)x_1x_2-(1-\mu)x_1\mu x_2-(1-\mu)x_1(1-\mu)x_2 = 0$. Thus
$\psi$ is well-defined as a map $\HSN(K) \to \mathfrak{R}$. This map acts
as the identity on $\hat{\pi}$ and thus is a $\Z\hat{\pi}$-NC-algebra
map.

Since $\psi$ is surjective by construction, it remains only to show
that $\psi$ is injective when $\hat\pi\to\pi$ is injective. Suppose that
\begin{equation}
0 = \psi\left(\sum_i a_i\{\lambda^i\} + \sum_j b_j\{1\}[x_j]\{1\}\right)
= \sum_i a_i\lambda^i + \sum_j b_j (1-\mu)x_j
\label{eq:injective}
\end{equation}
for some $a_i,b_j\in\Z$ and $x_j\in\pi$. We claim that $b_j = 0$ for
all $j$, whence $a_i = 0$ for all $i$ since $\hat{\pi}$ injects into
$\pi$ for $K$ nontrivial.
Assume without loss of
generality that the framing on $K$ is the $0$-framing (changing
framing simply replaces $\lambda$ by $\lambda \mu^k$ for some $k$).
Then the linking number with $K$ gives a homomorphism
$\lk\colon\thinspace\pi\to\Z$ satisfying $\lk(\lambda) = 0$ and
$\lk(\mu) = 1$.
If $\sum_j b_j
x_j$ is not a trivial sum, then let $x_\ell$ be the contributor to this
sum of maximal linking number. The term $-b_\ell\mu x_\ell$ in $\sum_j
b_j (1-\mu) x_j$ cannot be canceled by any other term in that sum;
thus for (\ref{eq:injective}) to hold, $x_\ell$ must have linking
number $-1$. But a similar argument shows that the contributor to $\sum_j b_j
x_j$ of minimal linking number must have linking number $0$, contradiction.
We conclude that $\sum_j b_j x_j$ must be a trivial sum, as claimed.
\end{proof}

\begin{remark}
To be clear, as a knot invariant derived from knot contact homology,
the cord algebra $\HSN(K)$ (for $K\neq U$) is the ring
$\mathfrak{R} \subset \Z\pi$
along with the map $\Z\hat{\pi} = \Z[\lambda^{\pm 1},\mu^{\pm 1}] \to
\mathfrak{R}$.
Proposition~\ref{prop:subring} implies that the
$\Z[\lambda^{\pm 1},\mu^{\pm 1}]$-NC-algebra structure on
$\Z\pi = \HSQ(K)$ completely determines
$\HSN(K)$. We do not know if $\HSN(K)$ determines $\HSQ(K)$ as well, nor
whether $\HSN(K)$ is a
complete knot invariant.\footnote{Added in revision: it has now been proven by Shende \cite{Shende}, and then reproven in \cite{ENSh}, that the Legendrian isotopy type of $\Lambda_K$ completely determines the knot $K$. The proof in \cite{ENSh} relies on the present paper and shows that an enhanced version of knot contact homology (or of $\HSN(K)$) determines $K$. The question of whether $\HSN(K)$ is a complete invariant remains open.}

On the other hand, $\HSQ(K)$ as a ring is a complete knot invariant for prime knots in $\R^3$ up to mirroring, as we can see as follows. By Proposition~\ref{prop:groupring}, $\HSQ(K) \cong \Z[\pi]$, and for prime knots $K$, Gordon and Luecke \cite{GL} show that $\pi = \pi_1(\R^3 \setminus K)$ determines $K$ up to mirroring. On the other hand, $\pi$ is a left-orderable group, and the ring isomorphism type of $\Z[G]$ when $G$ is left-orderable is determined by the group isomorphism type of $G$ \cite{LR}.
We thank Tye Lidman for pointing this out to us.
\end{remark}

We conclude this section with two examples.

\begin{example}
When $K$ is the unknot $U$, then $\HSN(U) \cong
\Z[\lambda^{\pm 1},\mu^{\pm 1}]/((\lambda-1)(\mu-1))$, while $\HSQ(U)
\cong \Z[\mu^{\pm 1}]$.
\label{ex:unknot}
The ring homomorphism $\psi$ from
Proposition~\ref{prop:subring}, which is not injective, is given by
$\psi(\lambda)=1$, $\psi(\mu)=\mu$. The isomorphism from
Proposition~\ref{prop:directsum} is the (inverse of the) map
\begin{align*}
\Z[\lambda^{\pm 1}]\oplus\Z[\mu^{\pm 1}] &\to \Z[\lambda^{\pm
  1},\mu^{\pm 1}]/((\lambda-1)(\mu-1)) \\
(\alpha,\beta) &\mapsto \alpha+(\mu-1)\beta.
\end{align*}

As noticed by Lidman, this computation of $\HSN(U)$ along with
Proposition~\ref{prop:subring} gives an
alternative (and shorter) proof that knot contact homology detects the
unknot (Proposition~\ref{prop:distinguish}), and more generally that
this continues to hold even if the knot is not assumed to be Seifert
framed (Corollary~\ref{cor:unknot}).

\begin{proof}[Proof of Corollary~\ref{cor:unknot}]
Suppose that $\HSN(K) \cong
\HSN(U)$ where $K$ is a framed
oriented knot and $U$ is the unknot with some framing. By changing the
framing of both, we can assume that $K$ has its Seifert framing. If
$K$ is knotted, then $\Z\pi$ has no zero divisors since $\pi$ is
left-orderable, and thus $\HSN(K) \subset \Z\pi$ also has no zero
divisors by Proposition~\ref{prop:subring}. On the other hand,
$\HSN(U)
\cong \Z[\lambda^{\pm 1},\mu^{\pm
  1}]/((\lambda\mu^f-1)(\mu-1))$ for some $f\in\Z$. Thus $K$ must be
the unknot and must further have the same framing as $U$.
\end{proof}

In \cite{GLid}, Gordon and Lidman extend this line of argument (i.e., applying Proposition~\ref{prop:subring}) to prove that knot contact homology detects torus knots as well as cabling and compositeness.
\end{example}

\begin{example}
When $K$ is the right-handed trefoil $T$, a slightly more elaborate
version of the calculation of the cord algebra from \cite{Ng:1} (see
also \cite{Ngsurvey})
gives the following expression for $\HSN(T)$: it is generated by
$\lambda^{\pm 1}$,
$\mu^{\pm 1}$, and one more generator $x$, along with the relations:
\begin{align*}
\lambda\mu &= \mu\lambda \\
\lambda\mu^6 x &= x \lambda\mu^6 \\
-1+\mu+x-\lambda\mu^5 x \mu^{-3} x \mu^{-1} &= 0 \\
1-\mu-\lambda\mu^4 x \mu^{-2} - \lambda\mu^5 x \mu^{-2} x \mu^{-1} &=
0.
\end{align*}
On the other hand,
$\HSQ(T) = \Z\pi$ is the ring generated by $\mu^{\pm 1}$ and $a^{\pm 1}$
modulo the relation $\mu a \mu = a \mu a$; the longitudinal class is
$\lambda = a\mu a^{-1}\mu a\mu^{-3}$. The explicit map from $\HSN(T)$
to $\Z\pi$ is given by:
\begin{align*}
\mu & \mapsto \mu \\
\lambda & \mapsto \lambda = a\mu a^{-1}\mu a\mu^{-3} \\
x & \mapsto (1-\mu) a \mu^{-1} a^{-1}.
\end{align*}
It can be checked that this map preserves the relations in $\HSN(T)$.
\end{example}

\section{Roadmap to the proof of
  Theorem~\ref{thm:main}}\label{sec:roadmap}

The remainder of this paper is devoted to the proof of
Theorem~\ref{thm:main}. To avoid getting lost in the details, we give
here a roadmap to the proof and explain the technical issues to be
addressed along the way.

The proof follows the scheme that is described for a different
situation in~\cite{CL} and consists of 3 steps. Let $\AA$ be the free
$\Z\hat\pi$-NC-algebra generated by Reeb chords and
$\p_\Lambda:\AA\to\AA$ the boundary operator for
Legendrian contact homology. For a Reeb chord $a$
and an integer $\ell\geq 0$ denote by $\MM_\ell(a)$ the moduli space
of $J$-holomorphic disks in $T^*Q$ with one positive puncture
asymptotic to $a$ and boundary on $Q\cup L_K$ with $2\ell$ corners at
which it switches between $L_K$ and $Q$.

{\bf Step 1. }Show that $\MM_\ell(a)$ can be compactified to a
manifold with corners $\ol\MM_\ell(a)$ and that the generating
functions $\phi(a):=\sum_{\ell=0}^\infty\ol\MM_\ell(a)$ (extended
as algebra maps to $\AA$) satisfy the relation
$$
   \p\phi = \phi\p_\Lambda - \delta\phi,
$$
where $\delta\ol\MM_\ell(a)$ is the subset of elements in
$\ol\MM_\ell(a)$ that intersect $K$ at the interior of some boundary
string.

{\bf Step 2. }Construct a chain complex $(C_*(\Sigma),\p+\delta)$ of suitable
chains of broken strings such that $\phi$ induces a chain map
$$
   \Phi:(\AA,\p_\Lambda)\to(C(\Sigma),\p+\delta),
$$
and the homology $H_0(\Sigma,\p+\delta)$ agrees with the string homology
$H_0^\str(K)$ as defined in Section~\ref{ss:string0}.

{\bf Step 3. }Prove that $\Phi$ induces an isomorphism on homology in
degree zero.
\medskip

Step 1 occupies Sections~\ref{S:mdlisp} to~\ref{sec:gluing}. It
involves detailed descriptions of
\begin{itemize}
\item the behavior of holomorphic disks at corner points;
\item compactifications of moduli spaces of holomorphic disks;
\item transversality and gluing of moduli spaces.
\end{itemize}

In Step 2 (Sections~\ref{sec:holo} to~\ref{sec:chain}) we encounter the following problem: The direct approach to
setting up the complex $(C(\Sigma),\p+\delta)$ would involve chains in
spaces of broken strings with varying number of switches. These spaces
could probably be given smooth structures using the polyfold theory by
Hofer, Wysocki and Zehnder~\cite{HWZ1}. Here we choose a different
approach, keeping the number of switches fixed and inserting small
``spikes'' in the definition of the string operation $\delta=\delta_Q+\delta_N$. Since
this involves non-canonical choices, one does not expect identities
such as $\p\delta+\delta\p=0$ to hold strictly but only up to
homotopy, thus leading to an $\infty$-structure as described by
Sullivan in~\cite{Su}. We avoid $\infty$-structures by carefully
defining $\delta$ via induction over the dimension of chains such that
all identities hold strictly on the chain level.

Step 3 (Section~\ref{sec:iso}) follows the scheme described in~\cite{CL}. This involves
\begin{itemize}
\item a length estimate for the boundary of holomorphic disks, which
  implies that $\Phi$ respects the filtrations of $\AA$ and
  $C(\Sigma)$ by the actions of Reeb chords and the total lengths of
  $Q$-strings, respectively.
\item construction of a length-decreasing chain homotopy deforming
  $C(\Sigma)$ to chains $C(\Sigma_\lin)$ of broken strings all of
  whose $Q$-strings are {\em linear straight line segments} (at this point we specialize to $Q=\R^3$);
\item Morse-theoretical arguments on the space $\Sigma_\lin$ to prove
  that $\Phi$ induces an isomorphism on degree zero homology.
\end{itemize}

\section{Holomorphic functions near corners}\label{sec:holo}

In this section, we call a function $f:R\to\C$ on a subset
$R\subset\C$ with piecewise smooth boundary {\em holomorphic} if it is
continuous on $R$ and holomorphic in the interior of $R$.

\subsection{Power series expansions}\label{ss:series}
Denote by $D\subset\C$ the open unit disk and set
\begin{align*}
   D^+ &:= \{z\in D\mid \Im(z)\geq 0\}, \cr
   Q^+ &:= \{z\in D\mid \Re(z)\geq 0,\,\Im(z)\geq 0\}.
\end{align*}
Consider a holomorphic function $f:Q^+\to\C$ (in the above sense,
i.e.~continuous on $Q^+$ and holomorphic in the interior) with
$f(0)=0$. We distinguish four cases according to their boundary
conditions.

{\em Case 1: }$f$ maps $\R_+$ to $\R$ and $i\R_+$ to  $i\R$.\\
In this case, we extend $f$ to a map $f:D^+\to\C$ by the formula
$$
   f(z):=-\overline{f(-\bar z)},\qquad \Re(z)\leq 0,\Im(z)\geq 0,
$$
and then to a map $f:D\to\C$ by the formula
$$
   f(z):=\overline{f(\bar z)},\qquad \Im(z)\leq 0.
$$
The resulting map $f$ is continuous on $D$ and holomorphic outside
the axes $\R\cup i\R$, hence holomorphic on $D$, and it maps $\R$ to
$\R$ and $i\R$ to $i\R$. Thus it has a power series expansion
$$
   f(z) = \sum_{j=1}^\infty a_{2j-1}z^{2j-1},\qquad a_j\in\R.
$$
This shows that each holomorphic function $f:Q^+\to\C$
mapping $\R_+$ to $\R$ and $i\R_+$ to $i\R$ is uniquely the
restriction of such a power series. In particular, $f$ has an isolated
zero at the origin unless it vanishes identically. Similar discussions
apply in the other cases.

{\em Case 2: }$f$ maps $(\R_+,i\R_+)$ to $(i\R,\R)$. Then it has a power series expansion
$$
   f(z) = i\sum_{j=1}^\infty a_{2j-1}z^{2j-1},\qquad a_j\in\R.
$$
{\em Case 3: }$f$ maps $(\R_+,i\R_+)$ to $(\R,\R)$. Then it has a power series expansion
$$
   f(z) = \sum_{j=1}^\infty a_{2j}z^{2j},\qquad a_j\in\R.
$$
{\em Case 4: }$f$ maps $(\R_+,i\R_+)$ to $(i\R,i\R)$. Then it has a power series expansion
$$
   f(z) = i\sum_{j=1}^\infty a_{2j}z^{2j},\qquad a_j\in\R.
$$

\begin{remark}\label{rem:cases1-4}
We can summarize the four cases by saying that $f:Q^+\to\C$ is given by a
power series
$$
   f(z) = \sum_{k=1}^\infty a_kz^k
$$
with either only odd (in Cases 1 and 2) or only even (in Cases 3 and
4) indices $k$, and with the $a_k$ either all real (in Cases 1 and 3)
or all imaginary (in Cases 2 and 4).
Such holomorphic functions $f$ will appear as projections onto a
normal direction of the holomorphic curves considered in
Section~\ref{ss:switching} near switches. Then Case 1 corresponds to
a switch from $Q$ to $N$, Case 2 to a switch from $N$ to $Q$,
Case 3 to a switch from $N$ to $N$, and Case 4 to a switch from $Q$ to
$Q$.
\end{remark}

\begin{remark}\label{rem:series}
It will sometimes be convenient to switch from the positive quadrant
to other domains. For example, the map $\psi(z):=\sqrt{z}$ maps the
upper half disk $D^+$ biholomorphically onto $Q^+$. Thus in Case 1 the
composition $f\circ\psi$ is a holomorphic function on
$D^+$ which maps $\R_+$ to $\R$ and $\R_-$ to $i\R$,
and it has an expansion in powers of $\sqrt{z}$ by
$$
   f\circ\psi(z) = \sum_{j=1}^\infty
   a_{2j-1}z^{j-1/2},\qquad a_j\in\R.
$$
As another example, the map $\phi(s,t):=ie^{-\pi(s+it)/2}$ maps the strip
$(0,\infty)\times[0,1]$ biholomorphically onto
$Q^+\setminus\{0\}$. Thus in Case 1 the
composition $f\circ\phi$ is a continuous function on
$\R_+\times[0,1]$ which is holomorphic in the interior and maps
$\R_+ \times \{0\}$ to $i\R$ and $\R_+\times \{1\}$ to
$\R$, and it has a power series expansion
$$
   f\circ\phi(s,t) = -i\sum_{j=1}^\infty(-1)^j
   a_{2j-1}e^{-(2j-1)\pi(s+it)/2},\qquad a_j\in\R.
$$
Similar discussions apply to the other cases.
\end{remark}

Let us consider once more the function $f:Q^+\to\C$ of Case 1 mapping
$(\R_+,i\R_+)$ to $(\R,i\R)$. Its restrictions to
$i\R_+$ resp.~$\R_+$ naturally give rise to functions
$f_-:(-1,0]\to\R$ resp.~$f_+:[0,1)\to\R$ via
$$
   f_-(t):=(-i)f(-it),\ t\leq 0,\qquad
   f_+(t):=f(t),\ t\geq 0.\qquad
$$
Here and in the sequel we always use the isomorphism
$(-i)=i^{-1}:i\R\to\R$ to identify $i\R$ with $\R$ in the target.
So $f_\pm$ are related by $f_-=r_*f_+$, where the {\em reflection}
$r_*f$ of a complex valued power series $f(t)=\sum_{k=1}^\infty
a_kt^k$, $a_k\in\C^n$, is defined by
$$
   r_*f(t) := (-i)f(-it) = \sum_{k=1}^\infty(-i)^{k+1}a_kt^k.
$$
(Note that the domain $\mathbb{C}$ and the target $\mathbb{C}$ play different roles here: multiplication by $(-i)$ on the domain comes from opening up the positive quadrant to the 
upper half plane, while multiplication by $(-i)$ in the target corresponds to the
canonical rotation by $-J$ from $i\mathbb{R}\subset Q$ to $\mathbb{R}\subset N$.)

The effect of $r_*$ on the power series expansion $f(t) =
\sum_{j=1}^\infty a_{2j-1}t^{2j-1}$ in Case 1 is as follows:
$$
   r_*f(t) = (-i)\sum_{j=1}^\infty a_{2j-1}(-it)^{2j-1}
   = \sum_{j=1}^\infty (-1)^ja_{2j-1}t^{2j-1},
$$
so the coefficient $a_{2j-1}$ is changed to $(-1)^ja_{2j-1}$. Note
that $a_1$ is changed to $-a_1$, which justifies the name
``reflection''.

Now consider $f$ as in Case 2 mapping $(\R_+,i\R_+)$ to
$(i\R,\R)$. Here the restrictions
to $i\R_+$ resp.~$\R_+$ naturally give rise to functions
$f_-:(-1,0]\to\R$ resp.~$f_+:[0,1)\to\R$ via
$$
   f_-(t):=f(-it),\ t\leq 0,\qquad
   f_+(t):=(-i)f(t),\ t\geq 0.\qquad
$$
So $f_\pm$ are related by $f_-=-r_*f_+$, and the coefficient $a_{2j-1}$
in the power series expansion of $f_+$ is changed to
$(-1)^{j+1}a_{2j-1}$. In particular, $a_1$ is unchanged so that
$f_-$ and $f_+$ fit together to a function $(-1,1)\to\R$ of class
$C^2$ (but not $C^3$).

\subsection{Winding numbers}\label{ss:winding}

Consider a holomorphic function $f:Q^+\to\C$ given by a power series
$f(z)=\sum_{k=1}^\infty a_kz^k$ as in Cases 1-4 of the previous subsection.
In each of these cases we define its {\em winding number} at $0$ as
$$
   w(f,0) := \frac{1}{2}\inf\{k\mid a_k\neq 0\}.
$$
Note that the winding number is a half-integer in the first two cases
and an integer in the last two cases. Also note that the
winding number is given by
$$
   w(f,0) = \frac{1}{\pi}\int_{\gamma}f^*d\theta,
$$
where $\gamma$ is a small arc in $Q^+$ connecting $(0,1)$ to
$i(0,1)$. This can be seen, for example, by choosing $\gamma$ as a
small quarter circle $Q^+\cap\p D_\eps$; then the symmetry of $f$ with
respect to reflections at the coordinate axes implies
$$
   \frac{1}{\pi}\int_{\gamma}f^*d\theta = \frac{1}{4\pi}\int_{\p
     D_\eps}f^*d\theta = \frac{1}{4\pi}\cdot 2\pi\inf\{k\mid a_k\neq
   0\} = w(f,0).
$$

Next let $r>1$, denote by $D_r$ the open disk of radius $r$, by
$$
   H^+:=\{z\in\C\mid \Im(z)\geq 0\}
$$
the upper half plane, and set $D_r^+:=D_r\cap H^+$. Consider a
nonconstant continuous map $f:D_r^+\to\C$ which is holomorphic in the
interior and maps the interval $(-r,r)$
to $\R\cup i\R$. Suppose that $f$ has no zeroes on the semi-circle
$\p D_1\cap H^+$. Then $f$ has finitely
many zeroes $s_1,\dots,s_k$ in the interior of $D_1^+$ as well as
finitely many zeroes $t_1,\dots,t_\ell$ in $(-1,1)$.
(Finiteness holds because the holomorphic function $z\mapsto f(z)^2$
maps $\R$ to $\R$, and thus can only have finitely many zeroes by
the Schwarz reflection principle and unique continuation.)
Denote by $w(f,s_i)\in\N$ resp.~$w(f,t_j)\in\frac{1}{2}\N$ the
winding numbers at the zeroes. Thus with the closed angular
form $d\theta$ on $\C\setminus\{0\}$,
$$
   w(f,s_i) := \frac{1}{\pi}\int_{\alpha_i}f^*d\theta,\qquad
   w(f,t_j) := \frac{1}{\pi}\int_{\beta_j}f^*d\theta,
$$
where $\alpha_i$ is a small circle around $s_i$ and $\beta_j$ is a
small semi-circle around $t_j$ in $D_1^+$, both oriented in the
counterclockwise direction. (Thus the $w(f,s_i)$ are even integers and
the $w(f,t_j)$ are integers or half-integers). Denote by $\gamma$ the
semi-circle $\p D_1\cap H^+$ oriented in the counterclockwise
direction. Then Stokes' theorem yields
$$
   \frac{1}{\pi}\int_\gamma f^*d\theta = \sum_{i=1}^k w(f,s_i) +
   \sum_{j=1}^\ell w(f,t_j).
$$
Since all winding numbers are nonnegative, we have shown the following result.

\begin{lemma}\label{lem:wind}
Consider a nonconstant continuous map
$f:D_r^+\to\C$ which is holomorphic in the interior and maps $(-r,r)$
to $\R\cup i\R$. Suppose that $f$ has no zeroes on the semi-circle
$\gamma=\p D_1\cap H^+$ and zeroes at $t_1,\dots,t_m\in(-1,1)$ (plus
possibly further zeroes in $D_1^+$). Then
$$
   \frac{1}{\pi}\int_\gamma f^*d\theta \geq \sum_{j=1}^m w(f,t_j).
$$
\end{lemma}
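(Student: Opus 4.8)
The plan is to apply Stokes' theorem to the closed $1$-form $f^*d\theta$ on the half-disk $D_1^+$ with small neighborhoods of the zeros of $f$ removed, and then to discard the nonnegative contributions coming from the interior zeros. The boundary condition $f((-r,r))\subset\R\cup i\R$ enters precisely through the fact that $f^*d\theta$ restricts to zero on the parts of the real axis away from the zeros.

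First I would record that $f$ has only finitely many zeros in $\overline{D_1^+}$. Indeed $g:=f^2$ maps $(-r,r)$ into $\R$, since $\R^2$ and $(i\R)^2$ both lie in $\R$; being continuous on $D_r^+$ and holomorphic in the interior, $g$ extends by the Schwarz reflection principle to a holomorphic function on $D_r$, hence has isolated zeros of finite order, and the same then holds for $f$. After shrinking the radius $1$ slightly if necessary we may assume $f$ has no zero on the closed arc $\gamma=\p D_1\cap H^+$ (including its endpoints), and we write $s_1,\dots,s_k$ for the zeros in the interior of $D_1^+$ and $t_1,\dots,t_m$ for the zeros in $(-1,1)$. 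I would also check that the winding numbers at these zeros are well-defined and nonnegative: at an interior zero $s_i$, $f$ is an honest holomorphic function with a zero of some order $n_i\ge 1$, so $w(f,s_i)=\tfrac1\pi\int_{\alpha_i}f^*d\theta=2n_i>0$; at a boundary zero $t_j$, precomposing $f$ with $z\mapsto t_j+z^2$ sends the quadrant $Q^+$ onto a small half-disk about $t_j$ and the two coordinate rays onto the two sides of $(-r,r)$ at $t_j$, each of which $f$ maps into a single ray of $\R\cup i\R$, so $f$ is locally of one of Cases 1--4 of Subsection~\ref{ss:series} and $w(f,t_j)=\tfrac12\inf\{k\mid a_k\ne 0\}\ge\tfrac12>0$ because $f(t_j)=0$.

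Next comes the Stokes computation. The form $f^*d\theta$ is closed on $D_r^+\setminus f^{-1}(0)$, since $d\theta$ is closed on $\C\setminus\{0\}$. On any connected component of $(-r,r)\setminus f^{-1}(0)$ the function $f$ is continuous, nonvanishing, and valued in $\R\cup i\R$, hence valued in a single one of the four open rays; there $\theta\circ f$ is constant and the restriction of $f^*d\theta$ vanishes. Excising from $D_1^+$ small half-disks around each $t_j$ and small disks around each $s_i$ and applying Stokes' theorem on what remains, the only boundary contributions come from $\gamma$, from the small semicircles $\beta_j$ and circles $\alpha_i$ (with the counterclockwise orientations used to define the winding numbers), and from the segments of $(-1,1)$ between zeros, which contribute $0$. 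This gives
$$
  \frac1\pi\int_\gamma f^*d\theta \;=\; \sum_{i=1}^k w(f,s_i) + \sum_{j=1}^m w(f,t_j).
$$
Since every $w(f,s_i)\ge 0$, discarding the first sum yields the asserted inequality.

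The argument is essentially routine; the one point that needs care is the Stokes bookkeeping, namely the vanishing of the boundary term along the real segments — which is exactly where the hypothesis $f((-r,r))\subset\R\cup i\R$ is used — together with getting the orientations of the excised arcs right and matching the resulting integrals with the local power-series definition of the winding numbers via the substitution $z\mapsto t_j+z^2$. I do not expect any serious obstacle beyond this.
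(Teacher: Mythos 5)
Your proof is correct and follows essentially the same route as the paper: Stokes' theorem on $D_1^+$ with small (half-)disks excised around the zeroes, vanishing of $f^*d\theta$ along the segments of $(-1,1)$ because $f$ there takes values in a single ray of $\R\cup i\R$, finiteness of zeroes via $f^2$ and Schwarz reflection, and then discarding the nonnegative interior winding numbers. The only difference is cosmetic (e.g.\ the unnecessary shrinking of the radius, since the hypothesis already excludes zeroes at $\pm1$), so there is nothing to add.
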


More generally, for $n\geq 1$ consider a nonconstant continuous map
$f:D_r^+\to\C^n$ which is holomorphic in the interior and maps $(-r,r)$
to $\R^n\cup i\R^n$. Suppose that $f$ has no zeroes on the semi-circle
$\p D_1\cap H^+$ and zeroes $z_1,\dots,z_m$ in $D_1^+$ (in the interior
or on the boundary). For each direction $v\in S^{n-1}\subset\R^n$
we obtain a holomorphic map $f_v:=\pi_v\circ f$, where $\pi_v$ is the
projection onto the complex line spanned by $v$. Fix a positive volume
form $\Om$ on $S^{n-1}$ of total volume $1$. Then there exists an
open subset $V\subset S^{n-1}$ of measure $1$ such that for all $v\in
V$, $f_v$ has zeroes precisely at the $z_j$ and their winding numbers
are independent of $v\in V$. So we can define
$$
   w(f,z_j) := \int_V w(f_v,z_j)\Om(v) = w(f_{v_0},z_j)
$$
for any $v_0\in V$ and obtain

\begin{corollary}\label{cor:wind}
Consider a nonconstant continuous map
$f:D_r^+\to\C^n$ which is holomorphic in the interior and maps $(-r,r)$
to $\R^n\cup i\R^n$. Suppose that $f$ has no zeroes on the semi-circle
$\gamma=\p D_1\cap H^+$ and zeroes at $t_1,\dots,t_m\in(-1,1)$ (plus
possibly further zeroes in $D_1^+$). Then there exists an open subset
$V\subset S^{n-1}$ of measure $1$ such that for every $v_0\in V$,
$$
   \frac{1}{\pi}\int_\gamma f_{v_0}^*d\theta =
   \int_V\left(\frac{1}{\pi}\int_\gamma f_v^*d\theta\right)\Om(v)
   \geq \sum_{j=1}^m w(f,t_j).
$$
\end{corollary}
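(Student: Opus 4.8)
The plan is to reduce the statement to the one–dimensional Lemma~\ref{lem:wind} by projecting $f$ onto a generic complex line, reusing verbatim the full–measure open set $V\subset S^{n-1}$ constructed in the discussion immediately preceding the statement.

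First I would record the relevant properties of the projections $f_v=\pi_v\circ f$ for $v\in V$. Each such $f_v$ is a continuous map $D_r^+\to\C$, holomorphic in the interior, and its boundary values on $(-r,r)$ lie in $\R\cup i\R$: if $t\in(-r,r)$ then $f(t)\in\R^n$ or $f(t)\in i\R^n$, and since $\pi_v$ is $\C$-linear with $v\in\R^n$ we have $\pi_v(\R^n)\subset\R v$ and $\pi_v(i\R^n)\subset i\R v$, so $f_v(t)$ lies correspondingly in $\R$ or in $i\R$ under the identification $\C v\cong\C$. By construction of $V$, $f_v$ has no zeroes on $\gamma$ and has zeroes precisely at $z_1,\dots,z_m$; in particular its zeroes in $(-1,1)$ are exactly $t_1,\dots,t_m$, and $w(f_v,t_j)=w(f,t_j)$ by the very definition of $w(f,t_j)$. (If nonconstancy of $f_v$ is not already part of the defining property of $V$, it may be appended to the full–measure condition: $f_v\equiv\text{const}$ forces $v$ to be orthogonal to the vectors $(f_1'(z),\dots,f_n'(z))$ for all $z$, which is impossible for $v\neq 0$ unless $f$ is constant.)

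Second I would show that the boundary integral $\tfrac{1}{\pi}\int_\gamma f_v^*d\theta$ is independent of $v\in V$. By Stokes' theorem, applied exactly as in the computation preceding Lemma~\ref{lem:wind}, this integral equals the sum of the winding numbers of $f_v$ over its interior zeroes and its boundary zeroes $t_1,\dots,t_m$; for $v\in V$ all of these winding numbers are independent of $v$ by construction, hence so is the integral. Since $V$ has measure $1$ and $\Om$ has total mass $1$, this common value equals its own average over $V$, which yields the claimed equality $\tfrac{1}{\pi}\int_\gamma f_{v_0}^*d\theta=\int_V\bigl(\tfrac{1}{\pi}\int_\gamma f_v^*d\theta\bigr)\Om(v)$ for every $v_0\in V$.

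Finally I would apply Lemma~\ref{lem:wind} to $f_{v_0}$ for an arbitrary $v_0\in V$, obtaining $\tfrac{1}{\pi}\int_\gamma f_{v_0}^*d\theta\ge\sum_{j=1}^m w(f_{v_0},t_j)=\sum_{j=1}^m w(f,t_j)$, which together with the previous step completes the proof. I do not expect a genuine obstacle: the analytic content (finiteness of zeroes via Schwarz reflection, the existence and winding–number rigidity of $V$, and the Stokes computation of the boundary integral) is already in hand. The one point deserving a moment's care is checking that projecting to a real line $\C v$ really does send $(-r,r)$ into $\R\cup i\R$, so that the winding numbers at the $t_j$ remain defined — and, as noted above, this is immediate pointwise from $v$ being real.
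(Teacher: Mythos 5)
Your proposal is correct and follows essentially the same route as the paper, which states the corollary as an immediate consequence of the construction of $V$ in the preceding paragraph together with Lemma~\ref{lem:wind}: project onto a direction $v\in V$, note the winding numbers and hence the boundary integral are independent of $v\in V$ (so the average over $V$ equals the value at any $v_0$), and apply the lemma to $f_{v_0}$. Your extra remarks on nonconstancy of $f_v$ and on $\pi_v$ preserving the boundary condition are exactly the (implicit) checks the paper relies on.
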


\subsection{Spikes}\label{ss:spikes}
Consider again the upper half disk $D^+=\{z\in D\mid \Im(z)\geq 0\}$
and real points $-1<b_1<b_2<\cdots<b_\ell<1$. We are interested in
holomorphic functions $f:D^+\setminus\{b_1,\dots,b_\ell\}$, continuous
on $D^+$, mapping the intervals $[b_{i-1},b_i]$ alternatingly to $\R$
and $i\R$. We wish to describe models of 1- resp.~2-parameter families
in which 2 resp.~3 of the $b_i$ come together.
A model for such a 1-parameter family is
\begin{equation}\label{eq:1-dimmodel}
 f_\eps(z) := \sqrt{z(z-\eps)},\qquad \eps\geq 0
\end{equation}
with zeroes at $0,\eps$. A model for a 2-parameter family is
\begin{equation}\label{eq:2-dimmodel}
   f_{\delta,\eps}(z) := \sqrt{z(z+\delta)(z-\eps)},\qquad
   \eps,\delta\geq 0
\end{equation}
with zeroes at $-\delta,0,\eps$. Here we choose appropriate branches
of the square root so that the functions become continuous.
The images of these functions are
shown in Figure~\ref{fig:spike-families}.
\begin{figure}
\labellist
\small\hair 2pt
\pinlabel $f_\eps$ at 323 533
\pinlabel $f_{\delta,\eps}$ at 323 191
\pinlabel ${\color{blue} 0}$ at 140 453
\pinlabel ${\color{blue} \eps}$ at 181 453
\pinlabel ${\color{blue} 0}$ at 140 114
\pinlabel ${\color{blue} \eps}$ at 181 114
\pinlabel ${\color{blue} -\delta}$ at 73 114
\endlabellist
\centering
\includegraphics[width=0.8\textwidth]{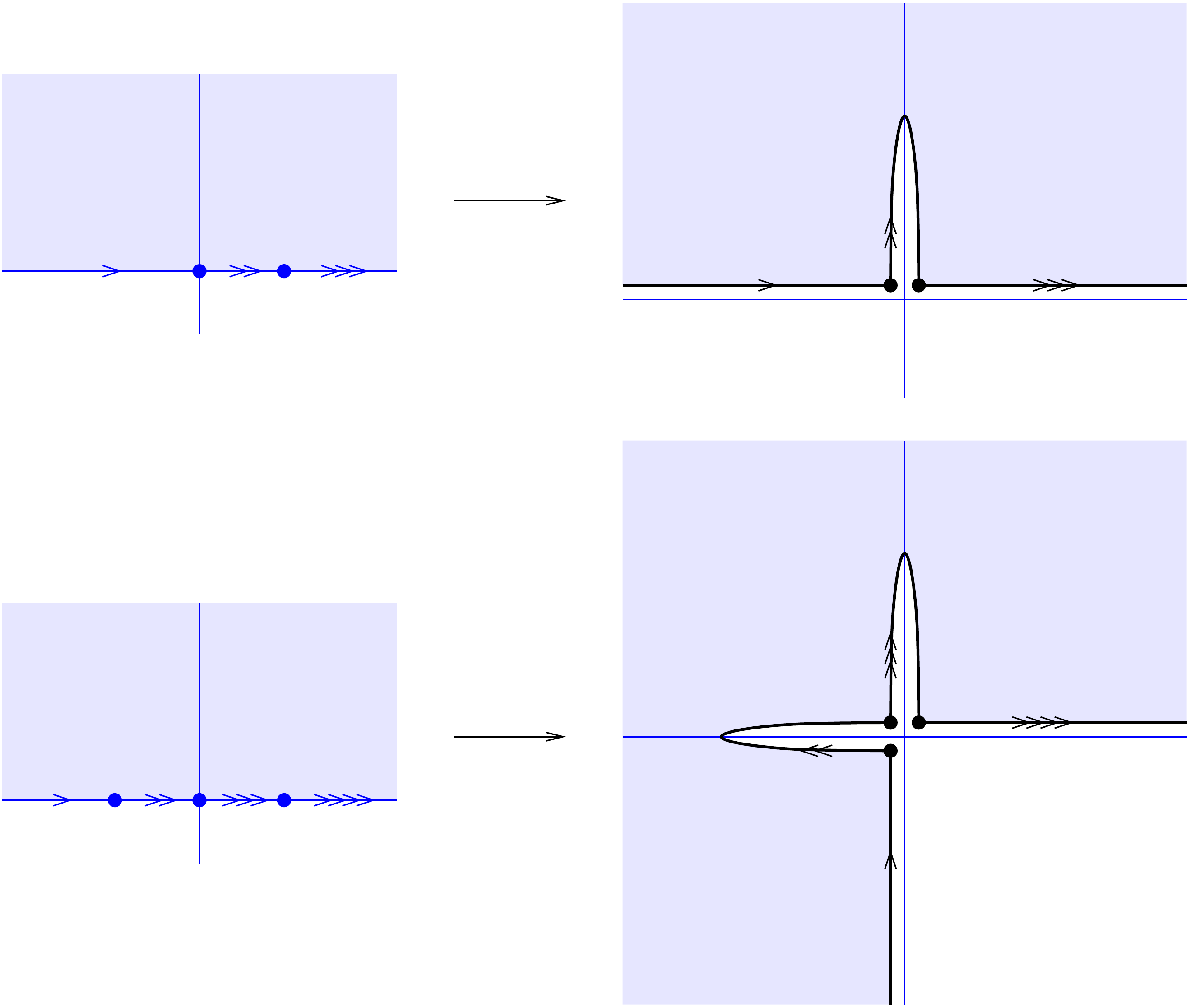}
\caption{
Spikes in the model families $f_\eps$ and $f_{\delta,\eps}$.
}
\label{fig:spike-families}
\end{figure}
They show that $f_\eps$ has a ``spike'' in the
direction $i\R_+$ which disappears as $\eps\to 0$, and
$f_{\delta,\eps}$ has two ``spikes'' in the directions $\R_-$
resp.~$i\R_+$ which disappear as $\delta$ resp.~$\eps$ approaches
zero. Based on these models, the notion of a ``spike'' will be
formalized in Section~\ref{sec:string-ref}.

In the following section, functions with two spikes will appear in the
following local model. Consider the $1$-parameter family of functions
$f_a:Q^+\to\C$,
$$
   f_a(z)=i(az-z^3),\qquad a\in\R.
$$
They map $(\R_+,i\R_+)$ to $(i\R,\R)$ and thus correspond to
Case 2 in Section~\ref{ss:series}. Via the identifications in that
section, $f_a$ induces functions
\begin{align*}
   f_-(a,t) &:= f_a(-it) = at+t^3, \qquad t\neq 0,\cr
   f_+(a,t) &:= (-i)f_a(t) = at-t^3, \qquad t\geq 0,
\end{align*}
which fit together to a $C^2$ (though not $C^3$) function
$$
   f(a,t) = at - \sgn(t)t^3,\qquad t\in\R.
$$
In Case 1, one considers the functions $f_a(z)=-az+z^3$
mapping $(\R_+,i\R_+)$ to $(\R,i\R)$. Here the induced functions
\begin{align*}
   f_-(a,t) &:= (-i)f_a(-it) = at+t^3, \qquad t\neq 0,\cr
   f_+(a,t) &:= f_a(t) = -at+t^3, \qquad t\geq 0
\end{align*}
do not fit together to a $C^1$ function, but when we replace $f_+$ by
$-f_+$ they fit together to the function $f(a,t)$ above.

\section{String homology in arbitrary degree}\label{sec:string-ref}

\subsection{Broken strings}\label{ss:brokenstring}
Let $K$ be a framed oriented knot in some oriented 3-manifold $Q$. Fix a tubular
neighborhood $N$ of $K$ and a diffeomorphism $N\cong S^1\times D^2$.

Fix an 
integer $m\geq 3$ and a base point $x_0\in\p N$. We also fix an $m$-jet of a curve passing through $x_0$ in $N$. Using the diffeomorphism $N \cong S^1 \times D^2$, this is equivalent to specifying suitable vectors $v_0^{(k)}\in\R^3$, $1\leq
k\leq m$. The following definition refines the one given in
Section~\ref{sec:string}, which corresponds to the case $m=1$.

\begin{definition}\label{def:string}
A {\em broken (closed) string with $2\ell$} switches on $K$ is a tuple
$s=(a_1,\dots,a_{2\ell+1};s_1,\dots,s_{2\ell+1})$ consisting of real
numbers $0=a_0<a_1<\dots<a_{2\ell+1}$ and $C^m$-maps
$$
   s_{2i+1}:[a_{2i},a_{2i+1}]\to N,\quad s_{2i}:[a_{2i-1},a_{2i}]\to
   Q
$$
satisfying the following matching conditions at the end points $a_i$:

(i) $s_1(0)=s_{2\ell+1}(a_{2\ell+1})=x_0$ and
$s_1^{(k)}(0)=s_{2\ell+1}^{(k)}(a_{2\ell+1})=v_0^{(k)}$ for $1\leq
k\leq m$.

(ii) For $i=1,\dots,\ell$,
$$
   s_{2i}(a_{2i}) = s_{2i+1}(a_{2i})\in K,\qquad
   s_{2i-1}(a_{2i-1})=s_{2i}(a_{2i-1})\in K.
$$
(iii) Denote by $\sigma_i$ the $D^2$-component of $s_i$ near its end
points. Then for $i=1,\dots,\ell$ and $1\leq k\leq m/2$ (for the left
hand side) resp.~$1\leq k\leq (m+1)/2$ (for the right hand side)
\begin{gather*}
   \sigma_{2i}^{(2k)}(a_{2i}) = \sigma_{2i+1}^{(2k)}(a_{2i})=0,\qquad
   \sigma_{2i}^{(2k-1)}(a_{2i}) = (-1)^{k}\sigma_{2i+1}^{(2k-1)}(a_{2i}),\cr
   \sigma_{2i-1}^{(2k)}(a_{2i-1})=\sigma_{2i}^{(2k)}(a_{2i-1})=0,\qquad
   \sigma_{2i-1}^{(2k-1)}(a_{2i-1})=(-1)^{k+1}\sigma_{2i}^{(2k-1)}(a_{2i-1}).
\end{gather*}
\end{definition}
We will refer to the $s_{2i}$ and $s_{2i+1}$ as
{\em Q-strings} and {\em N-strings}, respectively.
A typical picture of a broken string is shown in
Figure~\ref{fig:1} on page~\pageref{fig:1}.
Conditions (i) and (ii) in Definition~\ref{def:string} mean that the
$s_i$ fit together to a continuous loop $s:[0,a_{2\ell+1}]\to Q$
with end points at $x_0$ (which fit together in $C^m$). 

Condition (iii) is motivated as follows: In Section~\ref{S:mdlisp} below, 
we consider almost complex structures 
$J$ on $T^*Q$ which are particularly well adapted to the immersed Lagrangian 
submanifold $Q \cup L_K \subset T^*Q$. For such a $J$, Lemma~\ref{l:knotnbhd} 
then provides a holomorphic embedding of a neighborhood $\OO$ of 
$S^1 \times \{0\} \subset \C \times \C^2$ onto a neighborhood of 
$K \subset T^*Q$ mapping $\OO \cap (S^1 \times i\R^2)$ to $Q$ and  
$\OO \cap (S^1 \times \R^2)$ to $L_K$. Condition (iii) requires that 
the normal component $\sigma$ of $s$ at the switching points $a_i$ behaves 
like the boundary values of a holomorphic disk with boundary on $Q \cup L_K$ 
when projected to $\C^2$ in these coordinates near $K$. 

To see this, let us reformulate condition (iii). As in
Section~\ref{ss:series}, to a complex valued polynomial
$p(t)=\sum_{k=1}^mp_kt^k$, $p_k\in\C^2$, we associate its reflection
$$
   r_*p(t) = (-i)p(-it) = \sum_{k=1}^m(-i)^{k+1}p_kt^k.
$$
Then two {\em real valued} polynomials $p(t)=\sum_{k=1}^mp_kt^k$ and
$q(t)=\sum_{k=1}^mq_kt^k$, $p_k,q_k\in\R^2$, satisfy
$r_*p=q$ if and only if for $1\leq k\leq m/2$ (on the left hand side)
resp.~$1\leq k\leq (m+1)/2$ (on the right hand side)
$$
   p_{2k}=q_{2k}=0 \text{ and } p_{2k-1}=(-1)^kq_{2k-1}.
$$
So in terms of the {\em normal Taylor polynomials} at the switching points
$$
   T^m\sigma_i(a_{i-1})(t) :=
   \sum_{k=1}^m\frac{\sigma_i^{(k)}(a_{i-1})}{k!}t^k, \qquad
   T^m\sigma_i(a_i)(t) := \sum_{k=1}^m\frac{\sigma_i^{(k)}(a_i)}{k!}t^k,
$$
condition (iii) is equivalent to the conditions
$$
   T^m\sigma_{2i}(a_{2i}) = r_*T^m\sigma_{2i+1}(a_{2i}),\qquad
   T^m\sigma_{2i-1}(a_{2i-1})=-r_*T^m\sigma_{2i}(a_{2i-1}).
$$
These are precisely the conditions in Section~\ref{ss:series}
describing the boundary behavior of holomorphic disks at a corner
going from the imaginary to the real axis (Case 1, corresponding to a
switch from $Q$ to $N$), resp.~from the real to the imaginary axis
(Case 2, corresponding to a switch from $N$ to $Q$).

\begin{remark}
(a) The case $m=3$ suffices for the purposes of this paper. In fact,
for $0$- and $1$-parametric families of strings we only need the
conditions on the first derivatives (the case $m=1$ considered in
Section~\ref{sec:string}), while for $2$-parametric families we also
need the conditions on the second and third derivatives). Explicitly,
condition (iii) for $m=3$ reads
\begin{equation}
\begin{gathered}\label{eq:m=3}
   \sigma_{2i}'(a_{2i}) = -\sigma_{2i+1}'(a_{2i}),\qquad
   \sigma_{2i-1}'(a_{2i-1})=\sigma_{2i}'(a_{2i-1}), \cr
   \sigma_{2i}''(a_{2i}) = \sigma_{2i+1}''(a_{2i}) =
   \sigma_{2i-1}''(a_{2i-1})=\sigma_{2i}''(a_{2i-1})=0, \cr
   \sigma_{2i}'''(a_{2i}) = \sigma_{2i+1}'''(a_{2i}),\qquad
   \sigma_{2i-1}'''(a_{2i-1})=-\sigma_{2i}'''(a_{2i-1}).
\end{gathered}
\end{equation}
(b) In Definition~\ref{def:string} one could add the condition that
all derivatives of the tangent components agree at switches (as it is
the case for boundaries of holomorphic disks). However, we will not
need such a condition and thus chose not to include it. Similarly, one
could have required all the $s_j$ to be $C^\infty$ rather than $C^m$.
\end{remark}

We denote by $\Sigma^\ell$ the space of broken strings with $2\ell$
switches. We make it a Banach manifold by equipping it
with the topology of $\R$ on the $a_j$ and the $C^m$-topology on the
$s_j$.
It comes with interior evaluation maps
$$
   \ev_i:(0,1)\times \Sigma^\ell\to Q \text{ resp.~}N,\qquad (t,s)\mapsto
   s_i\bigl((1-t)a_{i-1}+ta_i\bigr)
$$
and corner evaluation maps
$$
   T_i:\Sigma^\ell\to (\R^2)^{\lfloor \frac{m+1}{2}\rfloor},\qquad s\mapsto T^m\sigma_i(a_i) \cong
   \Bigl(\sigma_i^{(2k-1)}(a_i)\Bigr)_{1\leq k\leq \lfloor \frac{m+1}{2}\rfloor}.
$$
Moreover, concatenation at the base point $x_0$ yields a smooth map
$$
   \Sigma^\ell \times \Sigma^{\ell'}\mapsto \Sigma^{\ell+\ell'}.
$$

\subsection{Generic chains of broken strings}\label{ss:chains}

Now we define the generators of the string chain complex in degrees
$d\in\{0,1,2\}$. Set $\Delta_0:=\{0\}$ and let
$\Delta_d=\{(\lambda_1,\dots,\lambda_d)\in\R^d\mid \lambda_i\geq
0,\lambda_1+\dots+\lambda_d\leq 1\}$ denote the $d$-dimensional
standard simplex for $d\geq 1$. It is stratified by the sets where
some of the inequalities are equalities. Fix $m\geq 3$ as in the
previous subsection.

\begin{definition}\label{def:generic-chain}
A {\em generic $d$-chain} in $\Sigma^\ell$ is a smooth map $S:\Delta_d\to
\Sigma^\ell$ such that the maps $\ev_i\circ S:(0,1)\times \Delta_d\to Q$ and
$T_i\circ S:\Delta_d\to (\R^2)^{\lfloor \frac{m+1}{2} \rfloor}$ are jointly transverse to $K$
resp.~jet-transverse to $0$ (on all strata of $\Delta_d$).
\end{definition}

Let us spell out what this means for $m=3$ in the
cases $d=0,1,2$.

\underline{$d=0$}: A generic $0$-chain is a broken string $s=(s_1,\dots
s_{2\ell+1})$ such that

(0a) $\dot\sigma_i(a_i)\neq 0$ for all $i$;

(0b) $s_i$ intersects $K$ only at its end points.
\smallskip

\underline{$d=1$}: A generic 1-chain of broken strings is a smooth map
$$
   S:[0,1]\to\Sigma^\ell,\qquad \lambda\mapsto
   s^\lambda=(s_1^\lambda,\dots s_{2\ell+1}^\lambda)
$$
such that

(1a) $s^0$ and $s^1$ are generic strings;

(1b) $\dot\sigma_i^\lambda(a_i^\lambda)\neq 0$ for all $i,\lambda$;

(1c) for each $i$ the map
$$
   (0,1)\times(0,1)\to Q \text{ resp.~}N,\qquad (t,\lambda)\to
   s_i^\lambda\bigl((1-t)a_{i-1}^\lambda+ta_i^\lambda\bigr)
$$
meets $K$ transversely in finitely many points $(t_a,\lambda_a)$. Moreover, distinct such intersections (even for different $i$) appear at distinct parameter values $\lambda_a$.
\smallskip

\underline{$d=2$}: A generic 2-chain of broken strings is a smooth map
$$
   S:\Delta_2\to\Sigma^\ell,\qquad \lambda\mapsto
   s^\lambda=(s_1^\lambda,\dots s_{2\ell+1}^\lambda)
$$
such that

(2a) the $s^\lambda$ at vertices $\lambda\in \Delta_2$ are generic strings;

(2b) the restrictions of $S$ to edges of $\Delta_2$ are generic 1-chains;

(2c) for each $i$ the map
$$
   (0,1)\times\inn \Delta_2\to Q \text{ resp.~}N,\qquad (t,\lambda)\to
   s_i^\lambda\bigl((1-t)a_{i-1}^\lambda+ta_i^\lambda\bigr)
$$
is transverse to $K$; moreover, we assume that the projection of the
preimage of $K$ to $\Delta_2$ is an immersed submanifold $D_i \subset \Delta_2$ with
transverse double points;

(2d) for all $i,j$ the submanifolds $D_i,D_j\subset \Delta_2$ from
(2c) meet transversely in finitely many points;

(2e) for each $i$ the map
$$
   \inn \Delta_2\to\R^2,\qquad \lambda\mapsto
   \dot\sigma_i^\lambda(a_i^\lambda)
$$
meets $0$ transversely in finitely many points satisfying
$(\sigma_i^\lambda)^{(3)}(a_i^\lambda)\neq 0$; moreover, these points
do not meet the $D_j$.
\smallskip

We will see in the next subsection that the points in (2e) are limit
points of both $D_i$ and $D_{i+1}$.

\subsection{String operations}\label{ss:string-op}

\begin{figure}
\labellist
\small\hair 2pt
\pinlabel $q_1$ at 72 28
\pinlabel ${\color{blue} p_1}$ at 193 145
\endlabellist
\centering
\includegraphics[width=0.5\textwidth]{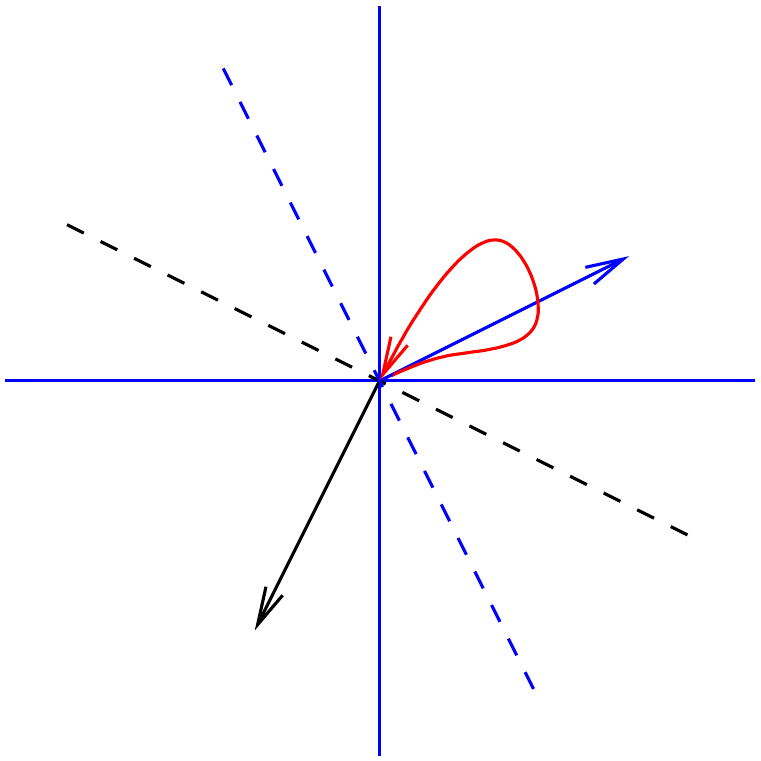}
\caption{
A spike with ends $(p,q)$.
}
\label{fig:spike}
\end{figure}

Now we define the relevant operations on generic chains of broken
strings. Let $\p$ denote the singular boundary operator, thus
$$
   \p\{s^\lambda\} := s^1-s^0,\qquad \p S := S|_{\p\Delta_2}
$$
for $1$-resp.~$2$-chains. For the definition of string coproducts we
need the following
\begin{definition}\label{def:spike}
Let $p(t)=\sum_{k=1}^mp_kt^k$ and $q(t)=\sum_{k=1}^mq_kt^k$,
$p_k,q_k\in\R^2$, be real polynomials with $\la p_1,q_1\ra<0$. A {\em
spike} with ends $(p,q)$ is a $C^m$-function $f:[a,b]\to D^2$ with the
following properties (see Figure~\ref{fig:spike}):

(S1) the Taylor polynomials to order $m$ of $f$ at $a$ resp.~$b$ agree
with $p$ resp.~$q$;

(S2) $\la f(t),p_1\ra>0$ and $\la f(t),q_1\ra<0$ for all
$t\in(a,b)$.
\end{definition}
\begin{remark}\label{rem:spikes-convex}
Note that the spikes with fixed ends $(p,q)$ and fixed or varying
$a<b$ form a convex (hence contractible) space.
\end{remark}

We choose a family of {\em preferred spikes
$\s_{p,q}:[0,1]\to D^2$} for all $(p,q)$ depending smoothly (with respect to
the $C^m$-topology) on the coefficients of $p$ and $q$.
Now we are ready to define the {\em string coproducts}
$\delta_N,\delta_Q$ on generic $d$-chains for $d\leq 2$.

\underline{$d=0$}: On $0$-chains set $\delta_N=\delta_Q=0$.

\underline{$d=1$}: For a $1$-chain
$\{s^\lambda\}_{\lambda\in[0,1]}$ let $(\lambda^j,b^j)$ be the finitely
many values for which $s_{2i}^{\lambda^j}(b^j)\in K$ for some $i=i(j)$.
Set
$$
   \delta_Q\{s^\lambda\} :=
   \sum_{j}\eps^j\Bigl(s_1^{\lambda^j},\dots,
   s^{\lambda^j}_{2i}|_{[a_{2i-1},b^j]},\s^j,
   \hat s^{\lambda^j}_{2i}|_{[b^j,a_{2i}]}, \dots, \hat
   s^{\lambda^j}_{2\ell+1}\Bigr),
$$
where $\s^j=\s(\cdot-b^j):[b^j,b^j+1]\to N$ is a shift of the preferred
spike $\s$ with ends $\bigl(r_*T^m\sigma_{2i}^{\lambda^j}(b_j),
T^m\sigma_{2i}^{\lambda^j}(b_j)\bigr)$ in the normal directions, with
constant value $s_{2i}^{\lambda^j}(b^j)$ along $K$.
The hat means shift by $1$ in the argument, and $\eps^j=\pm 1$
are the signs defined in Figure~\ref{fig:2}.
Loosely speaking, $\delta_Q$ inserts an {\em $N$-spike}
at all points where some $Q$-string meets $K$. The operation
$\delta_N$ is defined analogously, inserting a {\em $Q$-spike} where an
$N$-string meets $K$. Note that
by Definition~\ref{def:spike} the spikes stay in $N$ and meet $K$
only at their end points.
\comment{
\begin{figure}[h]
\begin{center}
\epsfbox{cord_02.eps}
\caption{The signs in the definition of $\delta_N$ and $\delta_Q$.}
\label{fig:2}
\end{center}
\end{figure}
}

\begin{figure}
\labellist
\small\hair 2pt
\pinlabel $U^j$ at 57 306
\pinlabel $\lambda^j$ at 37 276
\pinlabel $D_\delta$ at 307 292
\pinlabel $\R^2$ at 31 93
\pinlabel $D_\eps$ at 323 101
\pinlabel $\times (-\gamma,\gamma)$ at 100 265
\pinlabel $\times (-\gamma,\gamma)$ at 347 265
\pinlabel $\times (-\gamma,\gamma)$ at 361 65
\pinlabel $\psi\times\id$ at 199 286
\pinlabel $\cong$ at 199 257
\pinlabel $\sigma$ at 41 182
\pinlabel $\tilde{f}$ at 183 192
\pinlabel $\cong$ at 319 174
\pinlabel $\Phi\times\id$ at 362 174
\pinlabel $\Phi\circ\Psi$ at 198 67
\pinlabel $f$ at 181 4
\endlabellist
\centering
\includegraphics[width=250pt]{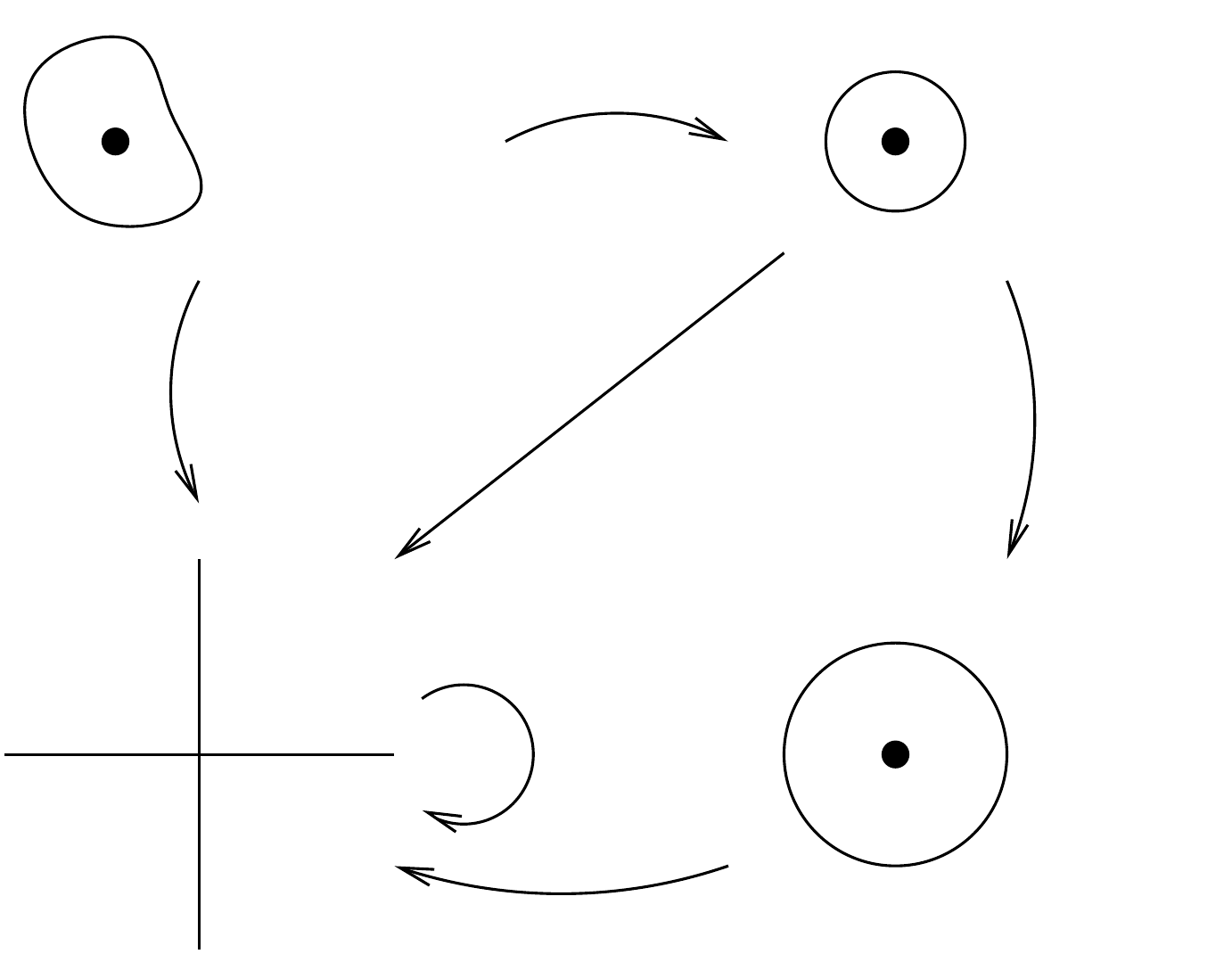}

\caption{
Construction of the map $f$.
}
\label{fig:f}
\end{figure}

\underline{$d=2$}: Finally, consider a generic $2$-chain $S:\Delta_2\to\Sigma^\ell$. Let
$\lambda^j\in\inn \Delta_2$ be the finitely many points where
$\dot\sigma_i^{\lambda^j}(a_i^{\lambda^j})=0$ for some $i=i(j)$.
For the following construction see Figure~\ref{fig:f}.
Let $\delta>0$ be
a number $\leq 1$ such that the map
$\psi:\lambda\mapsto \dot\sigma_i^{\lambda}(a_i^{\lambda})$ is a
diffeomorphism from a neighborhood $U^j$ of $\lambda^j$ onto the
$\delta$-disk $D_\delta\subset\R^2$ (such $\delta$ exists by condition (2e)
in Section~\ref{ss:chains}). We choose $U^j$ so small that it contains
no other $\lambda^i$.
Let $\gamma>0$ be
a number $\leq 1$ such that
$|\sigma^\lambda(t+a_i^\lambda)|\leq 1$ for all $|t|\leq\gamma$.
Consider the function $\sigma:U^j\times(-\gamma,\gamma)\to\R^2$ defined by
$$
   \sigma(\lambda,t) :=
   \begin{cases}
   \sigma_i^\lambda(t+a_i^\lambda) :& t<0, \cr
      -\sigma_{i+1}^\lambda(t+a_i^\lambda) :& t\geq 0 \text{ if $i$ is
      even}, \cr
      \sigma_{i+1}^\lambda(t+a_i^\lambda) :& t\geq 0 \text{ if
      $i$ is odd}.
   \end{cases}
$$
According to conditions~\eqref{eq:m=3}, the function $\sigma(\lambda,t)$ is
smooth in $\lambda$ and of class $C^2$ but not $C^3$ in $t$. Define
the function
$$
   \tilde f:D_\delta\times (-\gamma,\gamma)\to\R^2,\qquad (a,b,t)\mapsto
   \sigma\bigl(\psi^{-1}(a,b),t\bigr).
$$
By construction we have $\frac{\p\tilde f}{\p t}(a,b,0)=(a,b)$ for all
$(a,b)$. Moreover, by condition (2e) in Section~\ref{ss:chains} we
have $v^j:=(\sigma_i^{\lambda^j})^{(3)} (a_i^{\lambda^j})\neq 0$. Let
$\Psi$ be the
rotation of $\R^2$ which maps $v^j$ onto a vector
$(\mu,0)$ with $\mu>0$, let $\Phi:\R^2\to\R^2$ be
multiplication by $6/\mu$, and set $\eps:=6\delta/\mu$. Then the map
\begin{equation}\label{eq:f}
   f:=\Phi\circ\Psi\circ\tilde f\circ(\Phi^{-1}\times\id):D_\eps\times
   (-\gamma,\gamma)\to\R^2
\end{equation}
satisfies
\begin{gather*}
   f(a,b,0)=(0,0),\quad
   \frac{\p f}{\p t}(a,b,0)=(a,b),\cr
   \frac{\p^2 f}{\p t^2}(a,b,0)=(0,0),\quad
   \frac{\p f^3}{\p t^3}(0,0,0)=\pm(6,0)
\end{gather*}
for all $(a,b)$. Here the map $f$ is $C^2$ but not $C^3$, and the
statement about the third derivative $\frac{\p f^3}{\p t^3}(0,0,0)$
means that it equals $+(6,0)$ from the left and $-(6,0)$ from the
right. Therefore, $f$ has a Taylor expansion (again considered for
$t\leq 0$ and $t\geq 0$ separately)
\begin{equation}\label{eq:taylor}
   f(a,b,t) = \Bigl(at-\sgn(t)t^3,bt\Bigr) +
   O(|a||t|^3+|b||t|^3+|t|^4).
\end{equation}
Here to simplify notation we tacitly assume that the restrictions of
$f$ to $t\leq 0$ and $t\geq 0$ are $C^4$ rather than $C^3$. The
following argument carries over to the $C^3$ case if we replace
throughout $O(|t|^4)$ by $o(|t|^3)$.

Consider first the model case without higher order terms, i.e.~the
function
$$
   f^0(a,b,t) = \Bigl(at-\sgn(t)t^3,bt\Bigr).
$$
Note that the first component $at-\sgn(t)t^3$ of $f^0$ is exactly the
function that we encountered at the end of Section~\ref{ss:spikes}.
The zero set of $f^0$ consists of three strata
$$
   \{t=0\}\cup \{b=0,a>0,t=\sqrt{a}>0\}\cup
   \{b=0,a<0,t=-\sqrt{-a}<0\}.
$$
For $a>0$ and $b=0$ the function
$$
   f_a:[0,\sqrt{a}]\to\R^2,\qquad t\mapsto f^0(a,0,t)=(at-t^3,0)
$$
is a spike with ends satisfying
$$
   f_a'(0)=(a,0),\quad f_a'(\sqrt{a})=(-2a,0),\quad
   f_a'''(0)=f_a'''(\sqrt{a})=-6.
$$
Similarly, for $a<0$ the function
$$
   f_a:[-\sqrt{-a},0]\to\R^2,\qquad t\mapsto f^0(a,0,t)=(at+t^3,0)
$$
is a spike with ends satisfying
$$
   f_a'(0)=(a,0),\quad f_a'(-\sqrt{-a})=(-2a,0),\quad
   f_a'''(0)=f_a'''(-\sqrt{-a})=+6.
$$
So two families of spikes pointing in the same directions come
together from both sides along the $a$-axis $\{b=0\}$ and vanish at
$(a,b)=(0,0)$, see Figure~\ref{fig:spikes-vanishing}.
The following lemma states that this qualitative picture persists in
the presence of higher order terms.

\begin{figure}
\labellist
\small\hair 2pt
\pinlabel $\R\times\{0\}$ at 289 385
\pinlabel $-\eps$ at 165 171
\pinlabel $\eps$ at 407 171
\pinlabel $a$ at 522 182
\pinlabel $b$ at 369 345
\endlabellist
\centering
\includegraphics[width=0.8\textwidth]{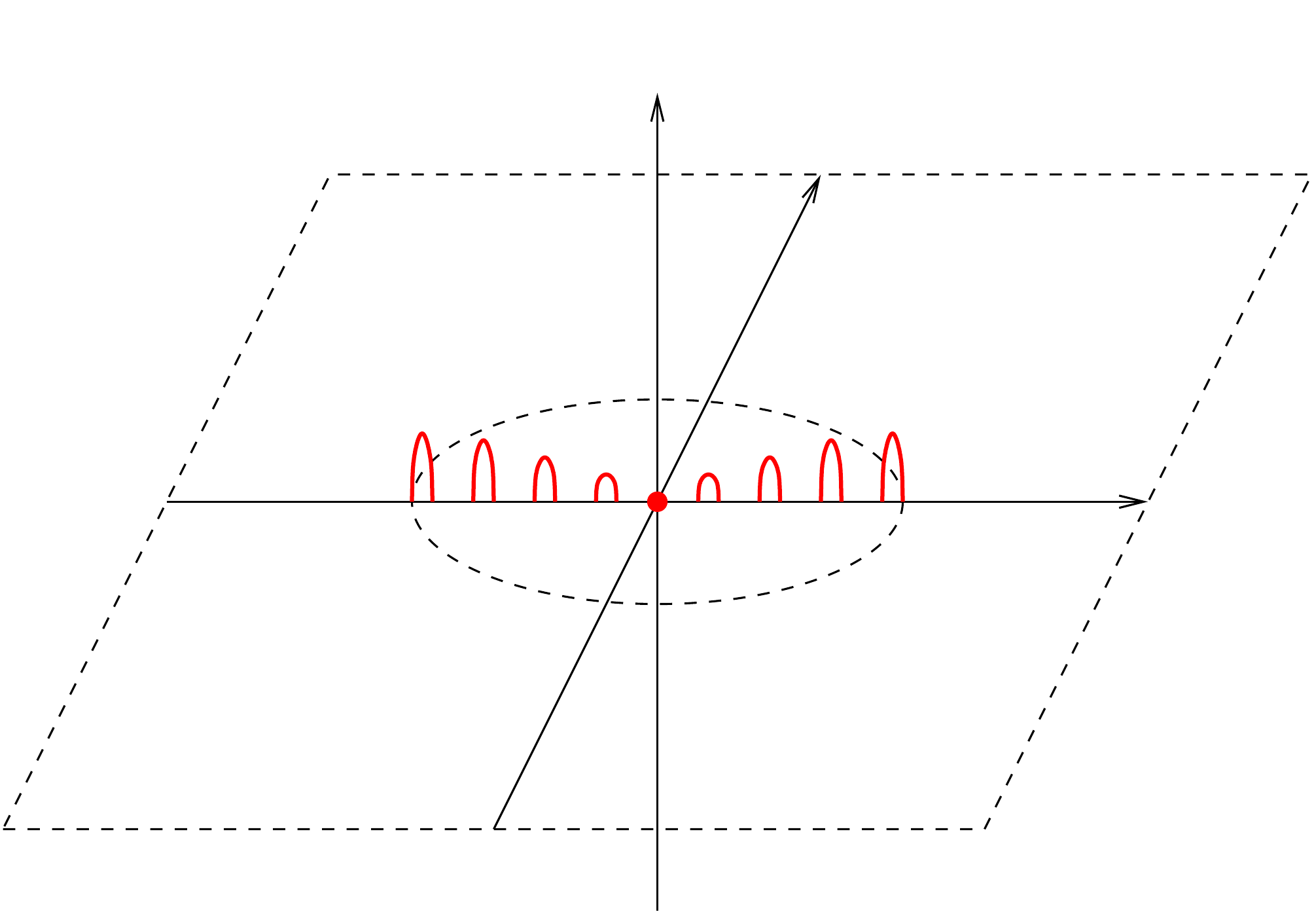}
\caption{
Two families of spikes vanishing at the origin.
}
\label{fig:spikes-vanishing}
\end{figure}

\begin{lemma}\label{lem:spike}
Let $f:D_\eps\times(-\gamma,\gamma)\to\R^2$ be a function
satisfying~\eqref{eq:taylor}. Then for $\eps$ and $\gamma$
sufficiently small there exist smooth functions $\beta(a,t)$ and
$\tau(a)$ for $a\in[-\eps,\eps]\setminus\{0\}$ such
that with $\beta(a):=\beta\bigl(a,\tau(a)\bigr)$ the zero set of $f$
in $D_\eps\times(-\gamma,\gamma)$ consists of three strata
$$
   \{t=0\}\cup \{b=\beta(a),a>0,t=\tau(a)>0\}\cup
   \{b=\beta(a),a<0,t=\tau(a)<0\}.
$$
The functions $\beta,\tau$ satisfy the estimates
$$
   \beta(a,t) = O(|a|t^2+|t|^3),\quad \tau(a)^2-a = O(a^{3/2}),\quad
   \beta(a) = O(a^{3/2}).
$$
Moreover, the functions
\begin{gather*}
   f_a:[0,\tau(a)]\to\R^2,\qquad t\mapsto
   f\bigl(a,\beta(a),t\bigr),\quad a>0,\cr
   f_a:[\tau(a),0]\to\R^2,\qquad t\mapsto
   f\bigl(a,\beta(a),t\bigr),\quad a<0
\end{gather*}
are spikes with ends satisfying
\begin{gather*}
   f_a'(0)=(a,0)+O(a^{3/2}),\quad f_a'\bigl(\tau(a)\bigr)=(-2a,0)+O(a^{3/2}).
\end{gather*}
\end{lemma}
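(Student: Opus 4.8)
The strategy is a quantitative implicit function theorem argument applied to the two components of $f$ separately. Write $f=(f^1,f^2)$ with
$$
f^1(a,b,t) = at-\sgn(t)t^3 + R^1(a,b,t),\qquad
f^2(a,b,t) = bt + R^2(a,b,t),
$$
where by \eqref{eq:taylor} the remainders satisfy $R^i = O(|a||t|^3+|b||t|^3+|t|^4)$, treated as $C^4$ functions of $t$ on each of the half-intervals $t\le 0$, $t\ge 0$. The zero set clearly contains the stratum $\{t=0\}$, so away from it we may divide: for $t\neq 0$ the equations $f^1=f^2=0$ are equivalent to
$$
a - \sgn(t)t^2 + \tfrac{R^1(a,b,t)}{t} = 0,\qquad
b + \tfrac{R^2(a,b,t)}{t} = 0,
$$
and $R^i/t = O(|a|t^2+|b|t^2+|t|^3)$ is again $C^3$ in $t$ on each half-interval (the factor of $t$ is absorbed harmlessly since $R^i$ vanishes to order $3$ in $t$). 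First I would solve the second equation for $b$: the map $b\mapsto b + R^2(a,b,t)/t$ has derivative $1+O(t^2)$, so for $\gamma$ small the implicit function theorem yields a unique smooth $\beta(a,t) = -R^2(a,\beta(a,t),t)/t = O(|a|t^2 + |t|^3)$ solving it, defined for $(a,t)\in[-\eps,\eps]\times((-\gamma,0)\cup(0,\gamma))$.

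Next, substitute $b=\beta(a,t)$ into the first equation, obtaining
$$
g(a,t) := a - \sgn(t)t^2 + \tfrac{1}{t}R^1\bigl(a,\beta(a,t),t\bigr) = 0.
$$
Consider $t>0$ (the case $t<0$ is symmetric under $t\mapsto -t$, $a\mapsto -a$). Then $g(a,t) = a - t^2 + O(|a|t^2+|t|^3)$, and $\partial g/\partial t = -2t + O(|a| t + t^2) = -2t(1+O(|a|+|t|))$, which for $\eps,\gamma$ small is nonzero away from $t=0$ and has a simple zero structure; more precisely $g$ is, for fixed $a>0$, strictly decreasing in $t$ near $t=\sqrt{a}$, with $g(a,t)\to a>0$ as $t\to 0^+$ inside the relevant range after the division. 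The cleanest route is to set $t = \sqrt{a}\,u$ and solve $a - a u^2 + O(a^{5/2})=0$, i.e. $1 - u^2 + O(a^{3/2}) = 0$, for $u$ near $1$: the derivative in $u$ is $-2 + O(a^{3/2})\neq 0$, so there is a unique smooth $u(a) = 1 + O(a^{3/2})$, giving $\tau(a) := \sqrt{a}\,u(a)$ with $\tau(a)^2 - a = a(u(a)^2-1) = O(a^{3/2})$. Then $\beta(a) := \beta(a,\tau(a)) = O(|a|\tau(a)^2 + |\tau(a)|^3) = O(a^{3/2})$, and by construction the zero set of $f$ off $\{t=0\}$ is exactly $\{b=\beta(a),\ t=\tau(a)\}$ with the stated sign constraints on $a$ and $t$. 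Smoothness of $\beta$ and $\tau$ in $a\in[-\eps,\eps]\setminus\{0\}$ is inherited from the implicit function theorem at each stage.

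It remains to check that $f_a(t) := f(a,\beta(a),t)$ is a spike in the sense of Definition~\ref{def:spike} with the asserted endpoint derivatives. From the Taylor expansion \eqref{eq:taylor} and $\beta(a)=O(a^{3/2})$ we compute, for $a>0$,
$$
f_a'(0) = (a,0) + O(a^{3/2}),\qquad
f_a'(\tau(a)) = \bigl(a - 3\tau(a)^2,\ 0\bigr) + O(a^{3/2}) = (-2a,0)+O(a^{3/2}),
$$
using $\tau(a)^2 = a + O(a^{3/2})$; the $b$-components vanish to the stated order because $f^2 = bt + O(|t|^3\cdot(\cdots))$ and $b=\beta(a)=O(a^{3/2})$. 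In particular $\langle f_a'(0), f_a'(\tau(a))\rangle = -2a^2 + O(a^{7/2}) < 0$ for $a$ small, verifying the angle condition on the ends. For condition (S2), the model function $f^0_a(t) = (at-t^3,0)$ has first component strictly positive on $(0,\sqrt a)$ with a single interior maximum, and $\langle f^0_a(t), f^0_a{}'(0)\rangle = a(at-t^3) > 0$, $\langle f^0_a(t), f^0_a{}'(\sqrt a)\rangle = -2a(at-t^3) < 0$ there; since $f_a$ is a $C^3$-small perturbation of $f^0_a$ (uniformly after rescaling $t = \sqrt{a}u$, where the perturbation is $O(a^{3/2})$ relative to the $O(1)$ model in $u$), these strict inequalities persist on $(0,\tau(a))$ for $\eps$ small. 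The case $a<0$ is identical after the reflection $t\mapsto -t$, $a\mapsto -a$, which exchanges Case-1 and Case-2 normal forms but preserves the spike conditions.

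**Main obstacle.** The delicate point is the behavior near the degenerate parameter $a=0$ and near $t=0$: the zero-set strata $\{t=\pm\sqrt{\mp a}\}$ collapse onto $\{t=0\}$ as $a\to 0$, and the function is only $C^2$ (not $C^3$) across $t=0$, so one cannot naively apply the implicit function theorem in $(a,b,t)$-space uniformly. The rescaling $t=\sqrt{a}u$ is what tames this — it blows up the relevant scale and turns the singular perturbation into a regular one in the $u$-variable — but one must carefully track that all the error estimates remain uniform in $a$ after this substitution, and that the functions $\beta(a,t)$ and $\tau(a)$ produced are genuinely smooth on $[-\eps,\eps]\setminus\{0\}$ (they need not extend smoothly, or even continuously with bounded derivatives, across $a=0$, which is why the statement excludes that point). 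Handling the one-sided $C^{4}$ versus $C^{3}$ regularity in $t$, as flagged in the remark following \eqref{eq:taylor}, is a routine but bookkeeping-heavy part of this.
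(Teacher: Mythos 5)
Your construction of $\beta(a,t)$ and $\tau(a)$ and the endpoint derivative computations follow the same route as the paper: solve the second component of \eqref{eq:taylor} for $b$ after dividing by $t$, substitute into the first component, and solve for $t$ near $\sqrt{a}$ (the paper does this directly, you via the rescaling $t=\sqrt{a}\,u$, which is a harmless and in fact clarifying variant). The gap is in your verification of the spike condition (S2). You assert that, since $f_a$ is a small perturbation of the model $f^0_a(t)=(at-t^3,0)$, the strict inequalities $\la f_a(t),f_a'(0)\ra>0$ and $\la f_a(t),f_a'(\tau(a))\ra<0$ on $(0,\tau(a))$ ``persist''. This is not a valid step as stated: the model quantities $a(at-t^3)$ and $-2a(at-t^3)$ are not uniformly bounded away from zero on the open interval --- they vanish at both endpoints --- so a perturbation which (even after your rescaling) is of size $O(a^{1/2})$ relative to the normalized model $u-u^3$ (note: after dividing by the model's natural scale $a^{3/2}$ the error is $O(a^{1/2})$, not $O(a^{3/2})$ as you claim) can a priori destroy the sign near $t=0$ and near $t=\tau(a)$. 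This endpoint degeneration is exactly the delicate point, and it is where the paper's proof does its real work: it proves the factored bounds $f^1_a(t)\geq 2\delta\, t(\tau^2-t^2)$ and $|f^2_a(t)|\leq D\,t(a+t)(\tau-t)$ with $\delta,D$ independent of $a,t$ (using that $\tfrac1t f^1_a$, viewed as a function of $t^2$, has a transversely cut out zero at $t=\tau$, and that $\tfrac1t f^2_a$ vanishes at $t=\tau$), and then checks directly, using $\tau=O(a^{1/2})$, that $\la f_a'(0),f_a(t)\ra \geq a^{3/2}t(\tau-t)\bigl(\delta-C(a+t)\bigr)>0$ for $0<t<\tau$ and $a$ small, and similarly at the other end.

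Your argument can be repaired inside your own rescaled framework, but an extra idea is needed: use that the perturbed inner products themselves vanish at $t=0$ and $t=\tau(a)$ (since $f_a(0)=f_a(\tau(a))=0$), with nondegenerate derivatives of the correct signs there coming from your formulas $f_a'(0)=(a,0)+O(a^{3/2})$ and $f_a'(\tau(a))=(-2a,0)+O(a^{3/2})$, and then run a zero-counting/transversality argument in the variable $u$: $C^1$-closeness to $u-u^3$ forces the only zeros in $[0,u(a)]$ to be the two endpoints, so the sign is constant on the interior and is determined by the derivative at $u=0$. Either this, or reproducing the paper's explicit factored estimates, is required; the bare appeal to persistence of strict inequalities under a $C^3$-small perturbation does not suffice.
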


\begin{proof}
We consider the case $a,t>0$, the case $a,t<0$ being
analogous. Setting the second component in~\eqref{eq:taylor} to zero
and dividing by $t$ yields $b=O(at^2+bt^2+t^3)$, which for
$t$ sufficiently small can be solved for $b=\beta(a,t)$ satisfying the
estimate $\beta(a,t) = O(at^2+t^3)$. Inserting this into the first
component in~\eqref{eq:taylor}, setting it to zero and dividing by $t$
yields
$$
   a-t^2=O\Bigl(at^2+\beta(a,t)t^2+t^3\Bigr) = O(at^2+t^3),
$$
which for $(a,t)$ sufficiently small can be solved for $t=\tau(a)$
satisfying the estimate $\tau(a)^2-a = O(a^{3/2})$. Inserting
$t=\tau(a)$ in $\beta(a,t)$ we obtain the estimate $\beta(a) =
O(a^{3/2})$. This proves the first assertions.

Now consider the function $f_a(t)=f\bigl(a,\beta(a),t\bigr)$ for
$t\in[0,\tau(a)]$ and $a>0$. Inserting $\beta(a) = O(a^{3/2})$ we find
\begin{align*}
   f_a(t) &= (at-t^3,0) + O\Bigl(\beta(a)t+at^3+\beta(a)t^3+t^4\Bigr)\cr
   &= (at-t^3,0) + O(a^{3/2}t+at^3+t^4),
\end{align*}
and therefore $f_a'(t)=(a-3t^2,0)+O(a^{3/2}+at^2+t^3)$. This immediately
gives $f_a'(0)=(a,0)+O(a^{3/2})$ and, using $\tau(a)=O(a^{1/2})$, also
$f_a'\bigl(\tau(a)\bigr)=(-2a,0)+O(a^{3/2})$.

It remains to prove that the functions $f_a$ are spikes in the sense
of Definition~\ref{def:spike}. Write in
components $f=(f^1,f^2)$ and $f_a=(f^1_a,f^2_a)$ and abbreviate
$\tau:=\tau(a)$. We claim that there exist constants $\delta,D>0$
independent of $a,t$ such
that for all $t\in[0,\tau]$ we have
$$
   f^1_a(t)\geq2\delta t(\tau^2-t^2),\qquad |f_a^2(t)|\leq
   Dt(a+t)(\tau-t).
$$
For the first estimate, note that
$$
   \frac{1}{t}f^1_a(t) = a-t^2+O(a^{3/2}+at^2+t^3),
$$
viewed as a function of $t^2$, has transversely cut out zero locus
$t=\tau$ and is therefore $\geq2\delta(\tau^2-t^2)$ for
some $\delta>0$. The second estimate holds because
$$
   \frac{1}{t}f^2_a(t) = O(a^{3/2}+at^2+t^3)
$$
vanishes at $t=\tau$, so $|f^2_a(t)|\leq Dt(a+t)(\tau-t)$ for some
constant $D$. Using these
estimates as well as $f_a'(0)=(a,0)+O(a^{3/2})$ and $\tau=O(a^{1/2})$
we compute with a generic constant $C$ (independent of $a,t$):
\begin{align*}
   \la f_a'(0),f_a(t)\ra &= \bigl(a+O(a^{3/2})\bigr)f_a^1(t) + \la O(a^{3/2}),f_a^2(t)\ra \cr
   &\geq a\delta t(\tau^2-t^2) - Ca^{3/2}t(\tau-t)(a+t) \cr
   &= at(\tau-t)\Bigl(\delta(\tau+t)-Ca^{1/2}(a+t)\Bigr) \cr
   &\geq a^{3/2}t(\tau-t)\bigl(\delta-C(a+t)\bigr),
\end{align*}
which is positive for $0<t<\tau$ and $a$ sufficiently small. An
analogous computation, using $f_a'(\tau)=(-2a,0)+O(a^{3/2})$, shows
$\la f_a'(\tau),f_a(t)\ra<0$, so $f_a$ is a spike.
\end{proof}

\begin{remark}\label{rem:spike-interpol}
The spikes from Lemma~\ref{lem:spike} can be connected to the spike
of the model function $f^0$ without higher order terms by rescaling:
For $s\in(0,1]$ set
\begin{align*}
   f^s(a,b,t) &:= \frac{1}{s^3}f(s^2a,s^2b,st) \cr
   &= \Bigl(at-\sgn(t)t^3,bt\Bigr) + sO(|a||t|^3+|b||t|^3+|t|^4) \cr
   & \overset{s\to 0}{\longrightarrow} \Bigl(at-\sgn(t)t^3,bt\Bigr).
\end{align*}
Thus for $|a|\leq \eps$ the corresponding family of spikes
$(f^s_a)_{s\in[0,1]}$ connects $f_a$ to the spike $f_a^0$.
\end{remark}

Now we return to the points $\lambda^j\in U^j$ and the corresponding maps
$f:D_\eps\times(-\gamma,\gamma)\to\R^2$ defined by~\eqref{eq:f}.
After shrinking $\eps,\gamma>0$ and replacing $U^j$ by
$(\Phi\circ\psi)^{-1}(D_\eps)\subset \Delta_2$ (where $\psi,\Phi$ are
the maps defined above), we may assume that $\eps,\gamma$ satisfy the
smallness requirement in Lemma~\ref{lem:spike} for each $j$.
Define
\begin{gather*}
   M_{\tilde\delta_Q} := \bigcup_i(\ev_{2i}\circ S)^{-1}(K)
   \setminus\bigcup_j\bigl(U^j\times(0,1)\bigr),\cr
   M_{\tilde\delta_N} := \bigcup_i(\ev_{2i-1}\circ S)^{-1}(K)
   \setminus\bigcup_j\bigl(U^j\times(0,1)\bigr).
\end{gather*}
By construction, $M_{\tilde\delta_Q}$ and $M_{\tilde\delta_N}$ are
1-dimensional submanifolds with boundary of $\Delta_2\times(0,1)$.
Define $\tilde\delta_QS:M_{\tilde\delta_Q}\to\Sigma^{\ell+1}$ by
inserting preferred $N$-spikes at all points where some $Q$-string
meets $K$ (via the same formula as the one above for $\delta_Q$ on
$1$-chains), and similarly for $\tilde\delta_NS$. See Figure~\ref{fig:string-relations}.

Note that the boundary $\p M_{\tilde\delta_Q}$ consists of
intersections with $\p\Delta_2$ and with the boundaries $\p U^j$. Thus each
$j$ contributes a unique point $\lambda^j_Q$ to $\p
M_{\tilde\delta_Q}$, which
corresponds in the above coordinates to $a=+\eps$ if the
associated index $i$ is odd and to $a=-\eps$ if $i$ is even. Similarly,
each $j$ contributes a unique point $\lambda^j_N$ to $\p
M_{\tilde\delta_N}$ which corresponds in the above coordinates to
$a=-\eps$ if the associated index $i$ is odd and to $a=+\eps$ if $i$ is
even. The broken strings
$\tilde\delta_QS(\lambda^j_Q)$ and $\tilde\delta_NS(\lambda^j_N)$ are
$C^m$-close for $|t|\geq\gamma$, and by Lemma~\ref{lem:spike} for
$|t|<\gamma$ they both have a $Q$-spike and an $N$-spike {\em with the
  same first derivatives at the ends}.
So, using convexity of the space of spikes with fixed ends
(Remark~\ref{rem:spikes-convex}, see also
Remark~\ref{rem:spike-interpol}), we can
connect them by a short $1$-chain $S^j:[0,1]\to\Sigma^{\ell+1}$ with
spikes in $[-\gamma,\gamma]$ (which we regard as $Q$-spikes.)

We define $\delta_QS:M_{\delta_Q}\to\Sigma^{\ell+1}$ to be $\tilde\delta_QS$
together with the $1$-chains $S^j$, and we set
$\delta_NS:=\tilde\delta_NS:M_{\delta_N}=M_{\tilde\delta_N}\to\Sigma^{\ell+1}$.
Recall that the $1$-dimensional submanifold
$M_{\tilde\delta_Q}\subset\Delta_2\times(0,1)$ is the union of the
transversely cut out preimages of $K$ under the evaluation maps
$\ev_{2i}\circ S:\Delta_2\times(0,1)\to Q$. Hence the coorientation of
$K\subset Q$ and the orientation of $\Delta_2\times(0,1)$ induce an
orientation on $M_{\tilde\delta_Q}$, and similarly for
$M_{\tilde\delta_N}$. (The induced orientations depend on orientation
conventions which will be fixed in the proof of
Proposition~\ref{prop:string-relations} below.)
We parametrize each connected component of $M_{\tilde\delta_Q}$ and
$M_{\tilde\delta_N}$ by the interval $\Delta_1=[0,1]$ proportionally
to arclength (with respect to the standard metric on
$\Delta_2\times(0,1)$ and in the direction of the orientation, where
for components diffeomorphic to $S^1$ we choose an arbitrary initial
point). So we can view $\delta_QS:M_{\delta_Q}\to\Sigma^{\ell+1}$ and
$\delta_NS:M_{\delta_N}\to\Sigma^{\ell+1}$ as generic $1$-chains,
where we orient the $1$-chains $S^j$ such that the points
$\tilde\delta_QS(\lambda^j_Q)$ appear with opposite signs in the
boundary of $S^j$ and $M_{\tilde\delta_Q}$.

\begin{prop}\label{prop:string-relations}
On generic chains of degree $2$, the operations $\p$, $\delta_Q$ and
$\delta_N$ satisfy the relations
\begin{gather*}
   \p^2=\delta_Q^2=\delta_N^2=\delta_Q\delta_N+\delta_N\delta_Q=0 ,\cr
   \p\delta_Q+\delta_Q\p + \p\delta_N+\delta_N\p = 0.
\end{gather*}
In particular, these relations imply
$$
   (\p+\delta_Q+\delta_N)^2=0.
$$
\end{prop}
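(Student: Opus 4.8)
The plan is to first reduce the displayed identity to the six pairwise relations: expanding the square and regrouping gives
$$
(\p+\delta_Q+\delta_N)^2 = \p^2+\delta_Q^2+\delta_N^2+(\delta_Q\delta_N+\delta_N\delta_Q)+(\p\delta_Q+\delta_Q\p)+(\p\delta_N+\delta_N\p),
$$
and the last two parenthesized terms are exactly those grouped together in the statement, so it suffices to prove the five asserted relations. On a generic $2$-chain $S\colon\Delta_2\to\Sigma^\ell$ each relation is an identity between $0$-chains of generic broken strings (one uses here that $\p S$, $\delta_QS$, $\delta_NS$ are themselves generic $1$-chains, as arranged in Section~\ref{ss:string-op}), so it is enough to check that every broken string occurs on both sides with the same total coefficient. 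The relation $\p^2=0$ is the classical cancellation of the singular boundary of a singular boundary: $\p\p S=\p(S|_{\p\Delta_2})$, and each vertex of $\Delta_2$ is the initial point of one edge $1$-chain and the terminal point of the next, so its two contributions cancel.

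For $\delta_Q^2=\delta_N^2=\delta_Q\delta_N+\delta_N\delta_Q=0$ I would use conditions (2c)--(2e): for each $i$ the locus $D_i\subset\Delta_2$ where $\ev_i\circ S$ meets $K$ is an immersed curve, and the $D_i$ have only transverse double points, which are precisely the parameters at which two interior string--$K$ crossings occur simultaneously. Since the $Q$- and $N$-strings of $\tilde\delta_QS$ and $\tilde\delta_NS$ are subarcs of the original strings, the compositions $\delta_Q^2S$, $\delta_N^2S$ and $(\delta_Q\delta_N+\delta_N\delta_Q)S$ are supported on these double points (double points among the $D_{2i}$ for $\delta_Q^2$, among the $D_{2i-1}$ for $\delta_N^2$, crossings $D_{2i}\cap D_{2i'-1}$ for the last). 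Each such point is reached along two of the relevant $1$-chains, and the two broken strings obtained by inserting spikes at both crossings coincide; the usual transverse double point argument shows they carry opposite induced orientations and cancel. The bridge $1$-chains $S^j$ contribute nothing: their spikes meet $K$ only at endpoints by (S2), and by the last clause of (2e) no $\lambda^j$ lies on any $D_i$, so after shrinking $\gamma$ no interior crossing occurs along an $S^j$.

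The mixed relation $\p\delta_Q+\delta_Q\p+\p\delta_N+\delta_N\p=0$ is the heart of the matter, and this is where the bridges $S^j$ are used. Recall $\delta_QS$ is carried by $M_{\delta_Q}=M_{\tilde\delta_Q}\cup\bigcup_jS^j$ and $\delta_NS$ by $M_{\delta_N}=M_{\tilde\delta_N}$. The boundary of $M_{\tilde\delta_Q}$ (resp.\ $M_{\tilde\delta_N}$) consists of its intersections with $\p\Delta_2\times(0,1)$ together with one point $\lambda^j_Q$ (resp.\ $\lambda^j_N$) on each $\p U^j$; by Lemma~\ref{lem:spike} the arc of $M_{\tilde\delta_Q}$ ending at $\lambda^j_Q$ and the arc of $M_{\tilde\delta_N}$ ending at $\lambda^j_N$ are the two collapsing families of spikes on the sides $a<0$ and $a>0$, and $S^j$ joins the corresponding broken strings, which agree up to $C^m$ outside $[-\gamma,\gamma]$ and there carry a $Q$-spike and an $N$-spike with matching first derivatives. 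Hence $M_{\tilde\delta_Q}\cup M_{\tilde\delta_N}\cup\bigcup_jS^j$ is a $1$-manifold whose only boundary is its intersection with $\p\Delta_2\times(0,1)$; those points are precisely the interior crossings of $Q$- and $N$-strings with $K$ along the edge $1$-chains of $\p S$, and at each of them the broken string produced by $\p(\delta_QS+\delta_NS)$ agrees with the one produced by $(\delta_Q+\delta_N)(\p S)$. Checking that these occur with opposite total coefficient is then the whole content of the relation; concretely, once the terms $\tilde\delta_QS(\lambda^j_Q)$ have cancelled (built into the choice of orientation of $S^j$), one must verify $\p(\delta_QS)=-\delta_Q(\p S)+\sum_j\eta_j\,\tilde\delta_NS(\lambda^j_N)$ and $\p(\delta_NS)=-\delta_N(\p S)+\sum_j\theta_j\,\tilde\delta_NS(\lambda^j_N)$ with $\eta_j+\theta_j=0$.

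I expect the only real obstacle to be the bookkeeping of orientations and signs --- none of the geometric steps above is deep, but each ``cancellation'' depends on conventions that must be pinned down: the coorientation of $K$ in $Q$ and the induced one in $N$ (related by the rotation $J$ of the footnote after Figure~\ref{fig:2}), the orientation of $\Delta_2\times(0,1)$ and the resulting orientations of $M_{\tilde\delta_Q},M_{\tilde\delta_N}$, the signs $\eps^j$ of Figure~\ref{fig:2} in the definition of $\delta_Q,\delta_N$, and the orientation of each bridge $S^j$. With these fixed, the remaining work is a local computation near each $\lambda^j$, using the normal form of Lemma~\ref{lem:spike} and conditions~\eqref{eq:m=3}, to check that the two sides' contributions at $\lambda^j$ cancel ($\eta_j+\theta_j=0$), together with the sign checks at the transverse double points and at the edge matchings above. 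This sign verification is the substantial part of the proof.
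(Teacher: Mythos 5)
Your geometric skeleton is the same as the paper's: $\p^2=0$ is immediate; the relations $\delta_Q^2=\delta_N^2=\delta_Q\delta_N+\delta_N\delta_Q=0$ come from the transverse (self-)intersection points of the projected curves $\pi(M_{\tilde\delta_Q})$ and $\pi(M_{\tilde\delta_N})$, each occurring twice with opposite signs (together with the remark, which you assert and the paper justifies by the uniqueness of preferred spikes with given ends, that the doubly spiked broken string does not depend on the order of insertion); and the mixed relation is read off from the boundary of $M_{\tilde\delta_Q}\cup M_{\tilde\delta_N}\cup\bigcup_j S^j$, whose boundary points over $\p\Delta_2$ match the terms of $\delta_Q\p S+\delta_N\p S$ and whose interior endpoints are the pairs $\lambda^j_Q$, $\lambda^j_N$ joined by the bridges. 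Your observation that the bridges contribute nothing to further applications of $\delta_Q,\delta_N$ is also consistent with the construction.

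The gap is exactly where you locate it: you defer the sign bookkeeping as ``the substantial part of the proof,'' and you frame it as a verification against orientation conventions you take to be already fixed (coorientation of $K$, orientation of $\Delta_2\times(0,1)$, the signs $\eps^j$ of Figure~\ref{fig:2}). But the orientations of $M_{\tilde\delta_Q}$ and $M_{\tilde\delta_N}$ were deliberately left unpinned in Section~\ref{ss:string-op} (the text states they will be fixed in the proof of Proposition~\ref{prop:string-relations}), and the paper's proof closes the argument precisely by fixing them now: (a) so that $\p\tilde\delta_QS+\tilde\delta_Q\p S$ (and its $N$-analogue) is supported only at the points $\lambda^j_Q$ on $\p U^j$, i.e.\ the edge contributions cancel against the $1$-chain operations defined via Figure~\ref{fig:2}; and (b) so that $\lambda^j_Q$ and $\lambda^j_N$ carry opposite boundary signs. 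Since the bridges $S^j$ were already oriented, when $\delta_QS$ was defined, so that $\tilde\delta_QS(\lambda^j_Q)$ cancels between $\p S^j$ and $\p M_{\tilde\delta_Q}$, it follows without any local computation near $\lambda^j$ that $\p\delta_QS+\delta_Q\p S=\sum_j\pm\,\tilde\delta_NS(\lambda^j_N)$ while $\p\delta_NS+\delta_N\p S$ is the same sum with opposite signs; your $\eta_j+\theta_j=0$ holds by construction. So to complete your proof you should either invoke this freedom and declare the conventions accordingly, or, if you insist on conventions fixed in advance, actually carry out the orientation analysis you postpone — as written, the mixed relation is not yet established.
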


\begin{proof}
Consider a generic $2$-chain $S:\Delta_2\to\Sigma^\ell$. We continue
to use the notation above and denote by
$\pi:\Delta_2\times(0,1)\to\Delta_2$ the projection. The relation
$\p^2S=0$ is clear. Points in $\delta_Q^2S$ correspond to
transverse self-intersections of $\pi(M_{\tilde\delta_Q})$, so each
point appears twice with opposite signs, hence $\delta_Q^2S=0$ and
similarly $\delta_N^2S=0$. Points in
$\delta_Q\delta_NS+\delta_N\delta_QS$ correspond to transverse
intersections of $\pi(M_{\tilde\delta_Q})$ and
$\pi(M_{\tilde\delta_N})$, so again each point appears twice with
opposite signs and the expression vanishes. Note that the broken
strings corresponding to these points have two preferred spikes
inserted at different places, so due to the uniqueness of preferred
spikes with given end points the broken strings do not depend on the
order in which the spikes are inserted.

\begin{figure}
\labellist
\small\hair 2pt
\pinlabel $S^j$ at 68 102
\pinlabel $\tilde\delta_NS$ at 42 35
\pinlabel $\tilde\delta_QS$ at 140 65
\pinlabel ${\color{blue} \Delta_2}$ at 99 227
\pinlabel ${\color{blue} U^j}$ at 63 139
\pinlabel ${\color{blue} \lambda^j_N}$ at 48 75
\pinlabel ${\color{blue} \lambda^j_Q}$ at 102 116
\endlabellist
\centering
\includegraphics[width=0.6\textwidth]{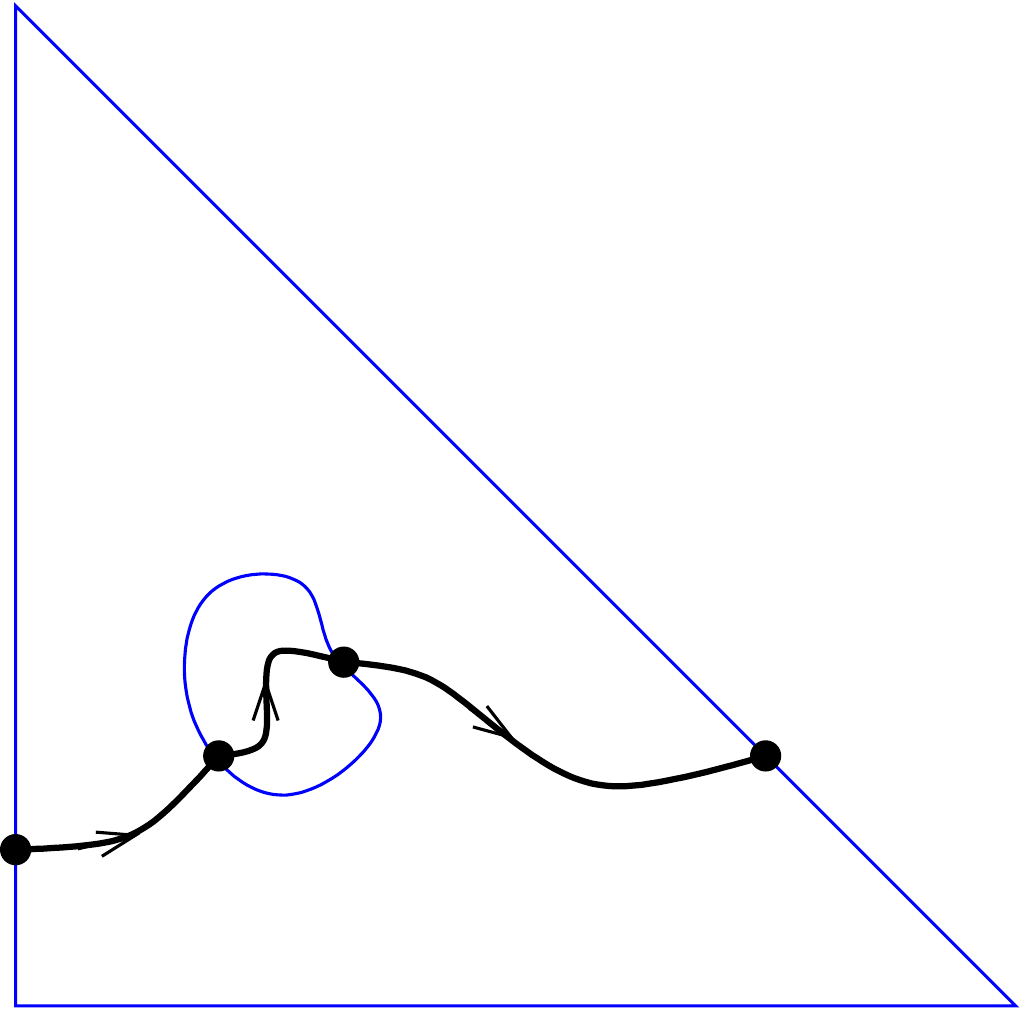}
\caption{
The definition of $\delta_QS=\tilde\delta_QS+S^j$ and $\delta_NS=\tilde\delta_NS$.
}
\label{fig:string-relations}
\end{figure}

In order to achieve $\p\delta_Q+\delta_Q\p + \p\delta_N+\delta_N\p =
0$, we choose the orientation conventions for $M_{\tilde\delta_Q}$ and
$M_{\tilde\delta_N}$ such that (see Figure~\ref{fig:string-relations}):
\begin{itemize}
\item $\p\tilde\delta_QS+\tilde\delta_Q\p S$ corresponds to the intersection
  points $\lambda^j_Q$ of $M_{\tilde\delta_Q}$ with the boundaries of the regions
  $U^j$, and similarly for $\p\tilde\delta_NS+\tilde\delta_N\p S$;
\item the sign of $\lambda^j_Q$ as a boundary point of
  $M_{\tilde\delta_Q}$ is opposite to the sign of $\lambda^j_N$ as a
  boundary point of $M_{\tilde\delta_N}$.
\end{itemize}
Due to the choice of the $1$-chains $S^j$, it follows that
$\p\delta_QS+\delta_Q\p S$ is the sum of the points
$\delta_NS(\lambda^j_N)$ with suitable signs, and
$\p\delta_NS+\delta_N\p S$ is the same sum with opposite signs, so the
total sum equals zero.
\end{proof}

\subsection{The string chain complex}

For $d=0,1,2$ and $\ell\geq 0$ let $C_d(\Sigma^\ell)$ be the free
$\Z$-module generated by generic $d$-chains in $\Sigma^\ell$, and set
$$
   C_d(\Sigma) := \bigoplus_{\ell=0}^\infty C_d(\Sigma^\ell),\qquad
   d=0,1,2.
$$
The string operations defined in Subsection~\ref{ss:string-op} yield
$\Z$-linear maps
$$
   \p:C_d(\Sigma^\ell)\to C_{d-1}(\Sigma^\ell),\qquad
   \delta_N,\delta_Q:C_d(\Sigma^\ell)\to
   C_{d-1}(\Sigma^{\ell+1}).
$$
The induced maps $\p,\delta_Q,\delta_N:C_d(\Sigma)\to C_{d-1}(\Sigma)$
satisfy the relations in Proposition~\ref{prop:string-relations}, in
particular
$$
   D:=\p+\delta_Q+\delta_N.
$$
satisfies $D^2=0$.
We call $\bigl(C_*(\Sigma),\p+\delta_Q+\delta_N\bigr)$ the {\em string
chain complex} of $K$, and we define the {\em degree $d$ string homology} of
$K$ as the homology of the resulting complex,
$$
   H_d^\str(K) :=
   H_d\Bigl(C_*(\Sigma),\p+\delta_Q+\delta_N\Bigr),\qquad d=0,1,2.
$$
Concatenation of broken strings at the base point $x_0$ (and the
canonical subdivision of $\Delta_1\times\Delta_1$ into two
$2$-simplices) yields products
$$
   \x:C_d(\Sigma^\ell)\times C_{d'}(\Sigma^{\ell'})\to
   C_{d+d'}(\Sigma^{\ell+\ell'}), \qquad d+d'\leq 2
$$
satisfying the relations
\begin{equation}\label{eq:x}
   (a\x b)\x c = a\x (b\x c),\qquad D(a\x b)=Da\x b+(-1)^{\deg a} a\x Db
\end{equation}
whenever $\deg a+\deg b+\deg c\leq 2$.
In particular, this gives $C_0(\Sigma)$ the structure of a
(noncommutative but strictly associative) algebra
over $\Z$ and $C_1(\Sigma),C_2(\Sigma)$ the structure of
bimodules over this algebra. These structures induce on
homology the structure of a $\Z$-algebra on $H_0^\str(K)$, and of
bimodules over this algebra on $H_1^\str(K)$ and $H_2^\str(K)$.
By definition, the isomorphism classes of the algebra $H_0^\str(K)$
and the modules $H_1^\str(K),H_2^\str(K)$ are clearly isotopy invariants
of the framed oriented knot $K$.

We can combine these invariants into a single graded algebra as follows.
For $d>2$, we define $C_d(\Sigma^\ell)$ to be the free $\Z$-module
generated by products $S_1\x \dots\x S_r$ of generic chains $S_i$ of
degrees $1\leq d_i\leq 2$ in $\Sigma^{\ell_i}$ such that
$d_1+\dots+d_r=d$ and $\ell_1+\dots+\ell_r=\ell$, modulo the submodule
generated by
$$
   S_1\x \dots\x S_r - S'_1\x \dots\x S'_{r'}
$$
for different decompositions of the same $d$-chain. Put differently,
this submodule is generated by
$$
   S_1\x\dots S_i\x S_{i+1}\x\dots \x S_r - S_1\x\dots (S_i\x
   S_{i+1})\x\dots \x S_r,
$$
where $S_i$ and $S_{i+1}$ are generic $1$-chains and $(S_i\x S_{i+1})$
is the associated generic $2$-chain. Note that for $d=2$ this
definition of $C_2(\Sigma^\ell)$ agrees with the earlier one.
We define $D=\p+\delta_Q+\delta_N$ on
$$
   C_d(\Sigma) := \bigoplus_{\ell=0}^\infty C_d(\Sigma^\ell),\qquad
   d\geq 0
$$
by the Leibniz rule. This is well-defined in view of the second equation
in~\eqref{eq:x} and satisfies $D^2=0$. Together with the product $\x$
this gives $C_*(\Sigma)$ the structure of a differential graded
$\Z$-algebra. The {\em total string homology}
$$
   H_*^\str(K) := H_*\Bigl(C_*(\Sigma),D\Bigr)
$$
inherits the structure of a graded $\Z$-algebra whose isomorphism
class is an invariant of the framed oriented knot $K$.

\begin{remark}
Our definition of string homology of $K$ in degrees $>2$ in terms of
product chains is motivated by Legendrian contact homology of $\Lambda
K$ when $Q=\R^{3}$ which is then generated by elements of degrees $\leq 2$. From the point
of view of string topology, it would appear more natural to define
string homology in arbitrary degrees in terms of higher dimensional
generic chains of broken strings in the sense of
Definition~\ref{def:generic-chain}. Similarly, for knot contact
homology in other ambient manifolds, e.g. for $Q=S^{3}$, there are
higher degree Reeb chords that contribute to the (linearized) contact
homology. It would be interesting to see whether such constructions
would carry additional information.  
\end{remark}

\subsection{Length filtration}\label{ss:length-filt}
Up to this point, the constructions have been fairly symmetric in the
$Q$-and $N$-strings. However, as we will see below, the relation to
Legendrian contact homology leads us to assign to $Q$-strings $s_{2i}$
their geometric length $L(s_{2i})$, and to $N$-strings length
zero. Thus we define the {\em length} of a broken string
$s=(s_1,\dots,s_{2\ell+1})$ by
$$
   L(s) := \sum_{i=1}^\ell L(s_{2i}),
$$
where we do not include in the sum those $s_{2i}$ that are
$Q$-spikes in the sense of Definition~\ref{def:spike}. 
We define the length of a generic $i$-chain $S:K\to\Sigma$ by
$$
   L(S) := \max_{k\in K}L\bigl(S(k)\bigr).
$$
Then the subspaces
$$
   \FF^\ell C_i(\Sigma):=\left\{\sum a_jS_j\in C_i(\Sigma)\mid
   L(S_j)\leq\ell \text{ whenever }a_j\neq 0\right\}
$$
define a filtration in the sense that $\FF^k C_i(\Sigma)\subset
\FF^\ell C_i(\Sigma)$ for $k\leq\ell$ and
$$
   D\Bigl(\FF^\ell C_i(\Sigma)\Bigr)\subset \FF^\ell C_{i-1}(\Sigma).
$$
This {\em length filtration} will play an important role in the proof
of the isomorphism to Legendrian contact homology in Section~\ref{sec:iso}.

\begin{remark}\label{rem:length-spikes}
The omission of the length of $Q$-spikes from the length of a broken
string ensures that the operation $\delta_N$, which inserts
$Q$-spikes, does not increase the length. Since $Q$-spikes do not
intersect the knot in their interior, they are not affected by
$\delta_Q$ and it follows that $D$ preserves the length filtration. 
\end{remark}


\section{The chain map from Legendrian contact homology to string homology}\label{sec:chain}

In this section we define a chain map $\Phi\colon
C_*(\RR)\to C_{\ast}(\Sigma)$ from a complex computing Legendrian
contact homology to the string chain complex defined in the previous section.
The boundary operator on $C_*(\RR)$ is
defined using moduli spaces of holomorphic disks in $\R\times
S^{\ast}Q$ with Lagrangian boundary condition $\R\times\Lambda_{K}$
and the map $\Phi$ is defined using moduli spaces of holomorphic disks
in $T^{\ast}Q$ with Lagrangian boundary condition $Q\cup L_{K}$, where
the boundary is allowed to switch back and forth between the two
irreducible components of the Lagrangian at corners as in Lagrangian
intersection Floer homology. We will describe these spaces and their
properties, as well as define the algebra and the chain map. In order
not to obscure the main lines of argument, we postpone the
technicalities involved in detailed proofs to Sections \ref{S:mdlisp}
-- \ref{sec:gluing}.

\subsection{Holomorphic disks in the symplectization}\label{sec:symp-disks}

Consider a contact $(2n-1)$-manifold $(M,\lambda)$ with a closed
Legendrian $(n-1)$-submanifold $\Lambda$. For the purposes of this
paper we only consider the case that $M=S^*Q$ is the cosphere bundle of
$Q=\R^3$ with its standard contact form $\lambda=p\,dq$
and $\Lambda=\Lambda_K$ is the unit conormal bundle of an oriented
framed knot $K\subset Q$, but the construction works more generally
for any pair $(M,\Lambda)$ for which 
$M$ has no contractible closed Reeb orbits, 
see Remark~\ref{rem:cont-hom} below.

Denote by $R$ the Reeb vector field of $\lambda$. A {\em Reeb chord}
is a solution $a\colon [0,T]\to M$ of $\dot a=R$ with $a(0),a(T)\in\Lambda$.
Reeb chords correspond bijectively to
{\em binormal chords} of $K$, i.e., geodesic segments meeting $K$
orthogonally at their endpoints. As usual, we assume throughout that
$\Lambda$ is chord generic, i.e., each Reeb chord corresponds to a
Morse critical point of the distance function on $K \x K$.

In order to define Maslov indices, one usually chooses for each Reeb
chord $a\colon [0,T]\to M$ {\em capping paths} connecting $a(0)$ and
$a(T)$ in $\Lambda$ to a base point $x_0\in\Lambda$. Then one can
assign to each $a$ completed by the capping paths a {\em Maslov index}
$\mu(a)$, see \cite[Appendix A]{CEL}. In the case under consideration
($M=S^{\ast}\R^3$ and $\Lambda=\Lambda_{K}$) the Maslov
class of $\Lambda$ equals $0$, so the Maslov index does not depend on
the choice of capping paths. It is given by $\mu(a)=\ind(a)+1$, where
$\ind(a)$ equals the index of $a$ as a critical point of the
distance function on $K\times K$, see \cite{EENS}.
We define the {\em degree} of a Reeb chord $a$ as
\[
   |a|:=\mu(a)-1=\ind(a),
\]
and the degree of a word $\mathbf{b}=b_1b_2\cdots b_m$ of Reeb
chords as
$$
   |\mathbf{b}| := \sum_{j=1}^{m}|b_j|.
$$
Given $a$ and $\mathbf{b}$, we write $\MM^{\rm sy}(a;\mathbf{b})$
for the moduli space of $J$-holomorphic disks $u\colon (D,\partial
D)\to(\R\times M,\R\times\Lambda)$ with one positive boundary puncture
asymptotic to the Reeb chord strip over $a$ at the positive end of the
symplectization, and $m$ negative boundary punctures asymptotic to the
Reeb chord strips over $b_1,\dots, b_m$ at the negative end of the
symplectization. Here $J$ is an $\R$-invariant almost complex
structure on $\R\times M$ compatible with $\lambda$.
For generic $J$, the moduli space $\MM^{\rm sy}(a;\mathbf{b})$ is a
manifold of dimension
\[
\dim(\MM^{\rm sy}(a;\mathbf{b}))=|a|-|\mathbf{b}|=|a|-\sum_{j=1}^{m}|b_j|,
\]
see Theorem \ref{t:sy}. In fact, the moduli spaces correspond to the
zero set of a Fredholm section of a Banach bundle that can be made
transverse by perturbing the almost complex structure, and there exist
a system of coherent (or gluing compatible) orientations of the
corresponding index bundles over the configuration spaces and this
system induces orientations on all the moduli spaces.

By our choice of almost complex structure, $\R$ acts on $\MM^{\rm
  sy}(a;\mathbf{b})$ by translations in the target $\R\times M$ and we write $\MM^{\rm
  sy}(a;\mathbf{b})/\R$ for the quotient, which is then an oriented
manifold of dimension $|a|-|\mathbf{b}|-1$.

Finally, we discuss the compactness properties of $\MM^{\rm
  sy}(a;\mathbf{b})/\R$. The moduli space $\MM^{\rm
  sy}(a;\mathbf{b})/\R$ is generally not compact but admits a
compactification by multilevel disks, where a multilevel disk is
a tree of disks with a top level disk in $\MM^{\rm
  sy}(a,\mathbf{b}^{1})$,
$\mathbf{b}^{1}=b^{1}_{1},\dots,b_{m_1}^{1}$, second level disks in $\MM^{\rm
  sy}(b_{i}^{1};\mathbf{b}^{2,i})$ attached at the negative
punctures of the top level disk, etc. See Figure~\ref{fig:sy-sy} below.
It follows from the
dimension formula above that the formal dimension of the total disk
that is the union of the levels in a multilevel disk is the sum of
dimensions of all its components. Consequently, for generic almost
complex structure, if $\dim(\MM^{\rm sy}(a;\mathbf{b}))=1$ then
$\MM^{\rm sy}(a;\mathbf{b})/\R$ is a compact 0-dimensional manifold,
and if $\dim(\MM^{\rm sy}(a;\mathbf{b}))=2$ then the boundary of
$\MM^{\rm sy}(a;\mathbf{b})/\R$ consists of two-level disks where each
level is a disk of dimension $1$ (and possibly trivial Reeb chord strips).

The simplest version of Legendrian contact homology would be defined
by the free $\Z$-algebra generated by the Reeb chords, with
differential counting rigid holomorphic disks. In the following
subsection we will define a refined version which also incorporates
the boundary information of holomorphic disks.

\subsection{Legendrian contact homology}\label{sec:leg}
In this subsection we define a version of Legendrian contact homology
that will be directly related to the string homology of Section~\ref{sec:string-ref}, see \cite{E_rsft} for a similar construction in rational symplectic field theory.
The usual definition of Legendrian contact homology is a quotient of
our version.
We keep the notation from Section~\ref{sec:symp-disks}.

Fix an integer $m\geq 3$. For points $x,y\in\Lambda$ we denote by
$P_{x,y}\Lambda$ the space of $C^m$ paths $\gamma:[a,b]\to\Lambda$ with
$\gamma(a)=x$ and $\gamma(b)=y$ whose first $m$ derivatives vanish at
the endpoints. Here the interval $[a,b]$ is allowed to vary. The
condition at the endpoints ensures that concatenation of such paths
yields again $C^m$ paths. Fix a base point $x_0\in\Lambda$ and denote
by $\Om_{x_0}\Lambda=P_{x_0x_0}\Lambda$ the Moore loop space based at $x_0$.

\begin{figure}
\labellist
\small\hair 2pt
\pinlabel ${\color{red} a_1}$ at 22 148
\pinlabel ${\color{red} a_2=a}$ at 111 148
\pinlabel ${\color{red} a_3}$ at 200 148
\pinlabel ${\color{red} b_1}$ at 38 4
\pinlabel ${\color{red} b_2}$ at 110 4
\pinlabel ${\color{red} b_3}$ at 182 4
\pinlabel ${\color{blue} \alpha_1}$ at 29 298
\pinlabel ${\color{blue} \alpha_2}$ at 46 219
\pinlabel ${\color{blue} \alpha_3}$ at 162 223
\pinlabel ${\color{blue} \alpha_4}$ at 140 318
\pinlabel ${\color{blue} \beta_1}$ at 26 114
\pinlabel ${\color{blue} \beta_2}$ at 75 87
\pinlabel ${\color{blue} \beta_3}$ at 146 87
\pinlabel ${\color{blue} \beta_4}$ at 196 114
\pinlabel ${\color{blue} x_0}$ at 93 359
\pinlabel ${\color{blue} u}$ at 109 110
\endlabellist
\centering
\includegraphics[width=0.5\textwidth]{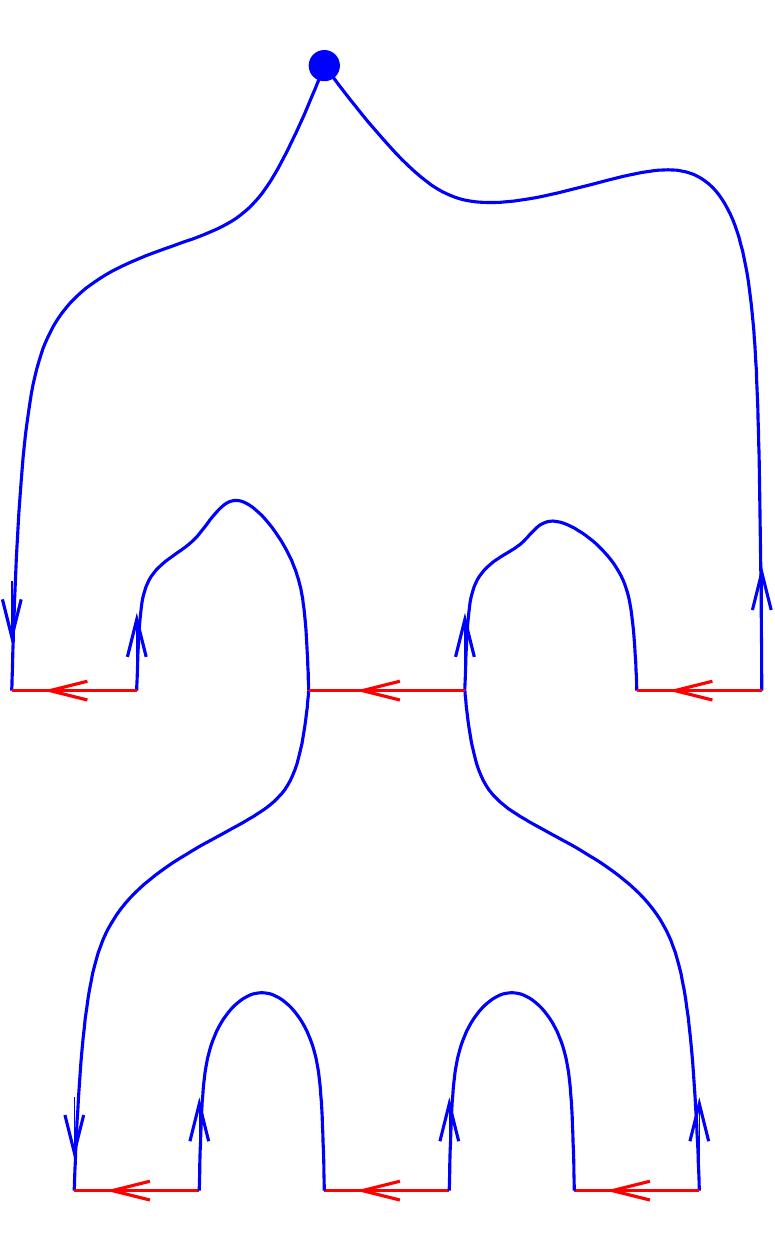}
\caption{The definition of $\p(u)$ and $\p(u) \,\cdot_{i}\, \mathbf{a}$.}
\label{fig:Reeb-string}
\end{figure}

\begin{definition}
A {\em Reeb string with $\ell$ chords} is an expression
$\alpha_1a_1\alpha_2a_2\cdots \alpha_\ell a_\ell \alpha_{\ell+1}$, where the
$a_i\colon[0,T_i]\to M$ are Reeb chords and the $\alpha_i$ are elements
in the path spaces
$$
   \alpha_1\in P_{x_0a_1(T_1)},\qquad
   \alpha_i\in P_{a_{i-1}(0)a_i(T_i)} \text{ for }2\leq i\leq \ell,\qquad
   \alpha_{\ell+1}\in P_{a_\ell(0)x_0}.
$$
\end{definition}
See the top of Figure~\ref{fig:Reeb-string}.
Note that the $\alpha_i$ and the {\em negatively traversed} Reeb chords
$a_i$ fit together to define a loop in $M$ starting and ending at $x_0$.
Concatenating all the $\alpha_i$ and $a_i$ in a Reeb string with the
appropriate capping paths, we can view each $\alpha_i$ as an element in
the based loop space $\Om_{x_0}\Lambda$. However, we will usually not take
this point of view.

Boundaries of holomorphic disks in the symplectization give rise to
Reeb strings as follows. Consider a holomorphic disk $u$ belonging to a moduli space
$\MM^{\rm sy}(a;\mathbf{b})$ as above, with Reeb chords
$a:[0,T]\to M$ and $b_i:[0,T_i]\to M$, $i=1,\dots,\ell$. Its boundary arcs in
counterclockwise order and orientation projected to $\Lambda$ define paths
$\beta_1,\dots,\beta_{\ell}$ in $\Lambda$ as shown in
Figure~\ref{fig:Reeb-string}, i.e.
$$
   \beta_1\in P_{a(T)b_1(T_1)},\qquad
   \beta_i\in P_{b_{i-1}(0)b_i(T_i)} \text{ for }2\leq i\leq \ell,\qquad
   \beta_{\ell+1}\in P_{b_\ell(0)a(0)}.
$$
We denote the alternating word of paths and Reeb chords obtained in
this way as the boundary of $u$ by
\begin{equation}\label{eq:pu}
   \p(u) := \beta_1b_1\beta_2b_2\cdots\beta_\ell b_\ell\beta_{\ell+1}.
\end{equation}
Note that the $\beta_i$ and the negatively traversed Reeb chords
$b_i$ fit together to define a path in $M$ from $a(T)$ to $a(0)$.
We obtain from $\p(u)$ a Reeb string if we extend $\beta_1$ and
$\beta_{\ell+1}$ to the base point $x_0$ by the capping paths of $a$.

For $\ell\geq 0$ we denote by $\RR^\ell$ the space of Reeb strings with $\ell$
chords, equipped with the $C^m$ topology on the path spaces. Note that
different collections of Reeb chords correspond to different components.
Concatenation at the base point gives
$$
   \RR := \amalg_{\ell\geq 0}\RR^\ell
$$
the structure of an H-space. Note that the sub-H-space
$\RR^0=\Om_{x_0}\Lambda$ agrees with the Moore based loop space with
its Pontrjagin product. Let
$$
   C(\RR) = \bigoplus_{d\geq 0}C_d(\RR)
$$
be singular chains in $\RR$ with integer coefficients. It carries two
gradings: the degree $d$ as a singular chain, which we will refer to
as the {\em chain degree}, and the degree $\sum_{i=1}^{\ell}|b_i|$ of
the Reeb chords, which we will refer to as the {\em chord degree}. For
sign rules we think of the {\em chain coming first and the Reeb chords last}.
The total grading is given by the sum of the two degrees.
Recall that it does not depend on the choice of capping paths.
Concatenation of Reeb strings at the base point and product of chains
gives $C(\RR)$ the structure of a (noncommutative but strictly
associative) graded ring. Note that it contains the subring
$$
   C(\RR^0) = C(\Om_{x_0}\Lambda).
$$
Next we define the differential
$$
   \p_\Lambda=\p^\sing+\p^\sy\colon C(\RR)\to C(\RR).
$$
Here $\p^\sing$ is the singular boundary and $\p^\sy$ is defined as
follows. Pick a generic compatible cylindrical almost complex
structure $J$ on the symplectization $\R\times M$. Consider a
punctured $J$-holomorphic disk $u\colon D\to\R\times M$ in $\MM^{\rm
  sy}(a;\mathbf{b})$. If the Reeb chord $a=a_i$ appears in a Reeb string
$\mathbf{a}=\alpha_{1}a_1\dots a_m\alpha_{m+1}$, then we can replace
$a_i$ by $\p(u)$ to obtain a new Reeb string which we denote by
$$
   \p(u) \,\cdot_{i}\, \mathbf{a} := \alpha_1a_1\cdots
   \wt\alpha_i\p(u)\wt\alpha_{i+1}\cdots a_\ell \alpha_{\ell+1}.
$$
Here $\p(u)$ is defined in~\eqref{eq:pu}
and the paths $\wt\alpha_i,\wt\alpha_{i+1}$ are the concatenations of
$\alpha_i,\alpha_{i+1}$ with the paths $\beta_1,\beta_{\ell+1}$ in $\p(u)$,
respectively. See Figure~\ref{fig:Reeb-string}.
For a chain $\mathbf{a}\in C(\RR)$ of Reeb strings of type
$\mathbf{a}=\alpha_{1}a_1\dots a_m\alpha_{m+1}$ we now define
$$
   \p^{\rm sy}(\mathbf{a})  :=  \quad
   \sum_{i=1}^\ell \!\! \sum_{\begin{smallmatrix}
   |a_i|-|\mathbf{b}|=1 \\ u\in\MM^{\rm sy}(a_i;\mathbf{b})/\R
   \end{smallmatrix}}
   \eps (-1)^{d+|a_1|+\cdots+|a_{i-1}|} \p(u)\,\cdot_{i}\,\mathbf{a},
$$
where $d$ is the chain degree of $\mathbf{a}$ and $\eps$ is the sign from the
orientation of $\MM^{\rm sy}(a_i;\mathbf{b})/\R$ as a
compact oriented 0-manifold (i.e., points with signs).
Note that $\p^{\rm sy}$ preserves the chain degree and
decreases the chord degree by $1$, whereas $\p^{\rm sing}$
preserves the chord degree and decreases the chain
degree by $1$. In particular, $\p_\Lambda$
has degree $-1$ with respect to the total grading.
The main result about the contact homology algebra that we need is
summarized in the following theorem.

\begin{thm}\label{thm:comt-hom}
The differential $\p_\Lambda\colon C(\RR)\to C(\RR)$ satisfies
$\p_\Lambda^2=0$ and the {\em Legendrian contact homology}
$$
   H^\cont(\Lambda) := \ker\p_\Lambda/\im\p_\Lambda
$$
is independent of all choices.
\end{thm}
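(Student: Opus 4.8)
The plan is to verify $\p_\Lambda^2=0$ by separating it according to bidegree, and then to deduce invariance from the standard symplectic field theory deformation argument. Since $\p^{\sing}$ preserves the chord degree and lowers the chain degree by one, while $\p^{\sy}$ preserves the chain degree and lowers the chord degree by one, the three summands
$$
   (\p^{\sing})^2,\qquad \p^{\sing}\p^{\sy}+\p^{\sy}\p^{\sing},\qquad (\p^{\sy})^2
$$
of $\p_\Lambda^2$ occupy distinct bidegrees and must therefore vanish individually. The first is zero because $\p^{\sing}$ is the ordinary singular boundary on $C(\RR)$. For the mixed term I would argue that, for a fixed Reeb chord $a_i$ and a fixed rigid disk $u\in\MM^{\sy}(a_i;\mathbf{b})/\R$, inserting the boundary word $\p(u)$ in the $i$-th slot is a smooth map $\RR^\ell\to\RR^{\ell'}$ (the arcs and Reeb chords composing $\p(u)$ are fixed), so it induces a chain map on singular chains that commutes with $\p^{\sing}$; since any singular simplex in $\RR^\ell$ has image in a single connected component, the tuple of Reeb chords --- hence the index set of the sum defining $\p^{\sy}$ --- is unchanged under $\p^{\sing}$. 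Comparing the two composites, the only discrepancy is the factor $(-1)^d$ versus $(-1)^{d-1}$ coming from the chain degree $d$ in the definition of $\p^{\sy}$, which forces $\p^{\sing}\p^{\sy}=-\p^{\sy}\p^{\sing}$.

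The substantive identity is $(\p^{\sy})^2=0$. Expanding it on a chain of Reeb strings yields terms in which two rigid disks $u,v$ have been glued in, and I would sort these by how the disks fit together: either $v$ is attached at a negative puncture of $u$, so that $(u,v)$ is a genuine two-level broken disk inserted at one slot, or $u$ and $v$ are glued at two distinct Reeb chords of the original string. In the second case each configuration is produced in exactly two ways --- attaching $u$ first or $v$ first --- and the coherent orientation system of Theorem~\ref{t:sy} should make the two signs opposite, so these terms cancel in pairs. In the first case, for each slot $i$ and each choice of total asymptotics with $|a_i|-|\mathbf{e}|=2$, the two-level configurations are precisely the boundary points of the compactified one-dimensional moduli space $\ol{\MM^{\sy}(a_i;\mathbf{e})}/\R$, by the compactness and gluing statements of Theorem~\ref{t:sy}; since a compact oriented $1$-manifold has signed boundary count zero, and the coherent orientations match the product of the two $\p^{\sy}$-signs with the boundary orientation up to the expected overall sign, these terms sum to zero. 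This uses crucially that $M=S^*\R^3$ has no contractible closed Reeb orbits, so that no disk or plane bubbling occurs in the compactification and only two-level breakings appear.

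For invariance, the degree $\mu(a)=\ind(a)+1$ does not depend on the capping paths because the Maslov class of $\Lambda_K$ vanishes (cf.\ \cite{CEL,EENS}), and a change of base point conjugates the whole construction by a fixed path, inducing an isomorphism of complexes. Independence of the generic cylindrical almost complex structure $J$ and of the chosen coherent orientations would be proved in the usual way: a generic path $\{J_t\}_{t\in[0,1]}$ from $J_0$ to $J_1$ gives, by counting rigid disks in the associated nonstationary moduli spaces, a chain map $(C(\RR),\p_\Lambda^{J_0})\to(C(\RR),\p_\Lambda^{J_1})$, and a generic homotopy between two such paths gives a chain homotopy; the boundary-arc decoration is simply transported along these cobordisms. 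Applying the same scheme to a Legendrian isotopy shows that $H^{\cont}(\Lambda_K)$ depends only on the isotopy class of $\Lambda_K$. Here I would follow \cite{EES2} (and \cite{E_rsft} for the decorated version) within the general framework of \cite{EGH}.

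The hard part will be the sign bookkeeping behind $(\p^{\sy})^2=0$: identifying the product of the two $\p^{\sy}$-signs with the boundary orientation of $\ol{\MM^{\sy}(a_i;\mathbf{e})}/\R$ furnished by the coherent orientation system, and checking the pairwise cancellation of the ``two disjoint disks'' terms. Both reduce to the orientation, transversality, compactness and gluing results for these moduli spaces that are deferred to Sections~\ref{S:mdlisp}, \ref{sec:trans} and~\ref{sec:gluing} and collected in Theorem~\ref{t:sy}; granting those, everything above is formal.
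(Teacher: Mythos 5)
Your verification of $\p_\Lambda^2=0$ is essentially the paper's argument: $(\p^{\sing})^2=0$ and the anticommutation of $\p^{\sing}$ with $\p^{\sy}$ are formal sign bookkeeping, and $(\p^{\sy})^2=0$ combines the Koszul cancellation of insertions at two distinct slots with the identification of the remaining terms with the oriented boundary of the compactified one-dimensional spaces $\MM^{\sy}(a;\mathbf{b})/\R$, exactly as described in Section~\ref{sec:symp-disks} and underwritten by Theorem~\ref{t:sy}. (The cancellation of the ``two disjoint disks'' terms is really the Koszul sign coming from the degree shift, not a separate orientation fact, but that is a minor point.)

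The genuine gap is in the invariance part. The step ``a generic homotopy between two such paths gives a chain homotopy'' is the continuation scheme from linear Floer theory, and it does not apply as stated to $(C(\RR),\p_\Lambda)$: the differential inserts disk boundaries into \emph{words} of Reeb chords (a derivation-type operation on a noncommutative object, here moreover enriched over singular chains on path spaces of $\Lambda$), so a generic one-parameter family of interpolating data does not produce a linear operator $K$ with $\Phi_1-\Phi_0=\p_\Lambda K+K\p_\Lambda$. The boundary of the relevant parametrized one-dimensional moduli spaces contains configurations in which the single index $-1$ disk is glued to several rigid disks, so at best one obtains a homotopy in the DGA sense (a $(\Phi_0,\Phi_1)$-derivation), and even granting that formalism one still has to prove that the cobordism maps are quasi-isomorphisms --- neither point is addressed in your sketch, and the references you lean on do not prove invariance by naive continuation either.

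The paper's proof never constructs chain homotopies. It runs a bifurcation analysis of a generic family $(\Lambda_s,J_s)$, isolating handle slides, action switches and birth/death moments. On parameter intervals without bifurcations it defines a chain map by a Morse--Bott-type count of ``disks with lines of Reeb chords''; at a bifurcation moment it defines a chain map by counting disks with boundary on a Lagrangian cobordism interpolating between the cylinders over $\Lambda_{s_j-\epsilon}$ and $\Lambda_{s_j+\epsilon}$. In every case the resulting map has the form $\Phi(c)=c+\Phi_0(c)$ with $\Phi_0$ strictly action-decreasing (at a birth/death this uses the normal form $\p^{\sy}a=b+\p^{\sy}_0(a)$ from \cite{EES1}), so it induces an isomorphism on the $E_2$-page of the action-filtration spectral sequence and hence is a quasi-isomorphism; this filtration argument is what replaces your homotopy step, and it also automatically takes care of the chain-level path-space decorations. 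To salvage your route you would need to import the DGA-homotopy machinery (extended to the decorated, chain-level setting) \emph{and} supply a separate argument for quasi-isomorphism, e.g.\ the same action-filtration trick or a stretching/composition argument for the reversed cobordism; as written, this part of the proposal does not go through.
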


\begin{proof}
In the case that we use it, for $M=S^*\R^{3}$ and $\Lambda=\Lambda_K$,
the proof is an easy adaptation of the one in \cite{EES2,EESori} and
\cite{Rizell}, see also \cite{EENS}.

Consider first the equation for the differential. The equation $\p_\Lambda^2=0$ follows from our description of the boundary of the moduli spaces $\MM^{\rm sy}(a;\mathbf{b})$ of dimension $2$ in Section~\ref{sec:symp-disks}, which shows that contributions to $(\p^{\rm  sy})^{2}$ are in oriented one-to-one correspondence with the boundary of an oriented 1-manifold and hence cancel out. The relations
$(\p^{\rm sing})^2=0$ and $\p^{\rm sing}\p^{\rm sy} + \p^{\rm  sy}\p^{\rm sing} =0$ are clear.

To prove the invariance statement we use a bifurcation method similar to \cite[section 4.3]{EESori}. Consider a generic $1$-parameter family $(\Lambda_{s},J_{s})$, $s\in S=[0,1]$, of Legendrian submanifolds and almost complex structures. By genericity of the family there is a finite set of points $s_{1}<s_{2}<\dots < s_{m}$ such that in $S\setminus\{s_{1},\dots,s_{m}\}$ all Reeb chords of $\Lambda_{s}$ are transverse, all Reeb chords have distinct actions, and all holomorphic disks determined by $(\Lambda_{s},J_{s})$ have dimension at least $1$ (i.e.~if we write $\MM^{\sy}_{s}$ for moduli spaces determined by $(\Lambda_{s},J_{s})$ then $\dim\MM^{\sy}_{s}(a,\mathbf{b})\ge 1$ if the moduli space is nonempty). Furthermore, the points $s_{j}$ are of three kinds:
\begin{itemize}
\item handle slides, where all Reeb chords are nondegenerate but where there is a transversely cut out disk of formal dimension $0$ (i.e., there exists one $\MM^{\sy}(a;\mathbf{b})$, with $\dim\MM^{\sy}(a;\mathbf{b})=0$ which contains one $\R$-family of $J_{s_{j}}$-holomorphic disks with boundary on $\Lambda_{s_{j}}$, and this disk is transversely cut out as a solution of the parameterized problem);
\item action switches, where two nondegenerate Reeb chords have the same action and their actions interchange;
\item birth/death moments where there is one degenerate Reeb chord at which two Reeb chords cancel through a quadratic tangency. 
\end{itemize}

To show invariance we first observe that if $[s',s'']\subset S$ is an interval which does not contain any $s_{j}$, then the Reeb chords of $\Lambda_{s}$, $s\in[s',s'']$ form 1-manifolds canonically identified with $[s',s'']$ and the actions of the different Reeb chord manifolds do not cross. Thus for Reeb chords $a,b_{1},\dots,b_{m}$ of $\Lambda_{s'}$ we get corresponding chords on $\Lambda_{s}$ for each $s\in [s',s'']=S'$ which we denote by the same symbols, suppressing the $s$-dependence below. We next define a chain map
\[
\Phi\colon C(\RR_{s'})\to C(\RR_{s''})
\]
which counts geometrically induced chains as follows. We introduce the notion of a {\em disk with lines of Reeb chords}. Such an object has a positive puncture at the Reeb chord $a$ over $s'$ and negative punctures at Reeb chords according to $\mathbf{b}$ over $s''$ and is given by a collection of disks $u_{1},\dots,u_{m}$ where the disk $u_{j}$ is a disk at $\sigma_{j}$, where $s'\le \sigma_{1}\le \sigma_{2}\le\dots\le \sigma_{m}\le s''$ and if $\sigma_{j}> \sigma_{k}$ for some $k$ then its positive Reeb chord is connected by a line in a Reeb chord manifold to a Reeb chord at the negative puncture of some $u_{r}$ for $\sigma_{r}< \sigma_{j}$. The collection of such objects naturally forms a moduli space, $\MM^{\sy}_{S'}(a;\mathbf{b})$, where we glue two disks when the length of the line connecting them goes to zero. We define the chain map $\Phi$ as
\[ 
\Phi(a) = \left[\MM^{\sy}_{S'}(a,\mathbf{b})\right].
\]  
The chain map equation $\p_{\Lambda_{s''}}\Phi=\Phi\p_{\Lambda_{s'}}$ follows immediately once one notices that the codimension one boundary of the moduli space consists of disks over the endpoints with lines of Reeb chords over $[s',s'']$ attached. (We point out that this construction is inspired by Morse-Bott arguments, compare \cite{EK}.)

Consider the filtration in $C(\RR)$ which associates to a chain of Reeb strings the sum of actions of its Reeb chords. By Stokes' theorem the differential respects the filtration.
The pure lines of Reeb chords (without disks) contribute to the map and show that
\[ 
\Phi(a) = a + \Phi_{0}(a),
\]
where the action of $\Phi_{0}(a)$ is strictly smaller than that of $a$. It follows that $\Phi$ induces an isomorphism on the $E_{2}$-page of the action spectral sequence and hence is a quasi-isomorphism.

In order to show invariance at the bifurcation moments we consider the deformation in a small interval around $[s_{j}-\epsilon,s_{j}+\epsilon]$. In this case we can construct a Lagrangian cobordism $L$ in the symplectization $\R\times M$ interpolating between the cylinders on $\Lambda_{s_{j}-\epsilon}$ and $\Lambda_{s_{j}+\epsilon}$, see \cite[Lemma A.2]{E_rsft}. If $a$ is a Reeb chord of $\Lambda_{s_{j}+\epsilon}$ and $\mathbf{b}$ is a word of Reeb chords of $\Lambda_{s_{j}-\epsilon}$ then let $\MM^{\sy,L}(a;\mathbf{b})$ denote the moduli space of holomorphic disks defined as $\MM^{\sy}(a,\mathbf{b})$, see Section \ref{sec:symp-disks}, but with boundary condition given by $L$ instead of $\R\times\Lambda$. (Note that since $L$ is not $\R$-invariant, $\R$ does generally not act on $\MM^{\sy,L}(a;\mathbf{b})$.)
We define a chain map 
\[ 
\Phi\colon C(\RR_{+})\to C(\RR_{-})
\]
between the algebras at the positive and the negative ends
as follows: $\Phi$ is the identity map on chains, and on Reeb chords $a$ of $\Lambda_{s_{j}-\epsilon}$ $\Phi$ is given by
\[ 
\Phi(a) = \sum_{\mathbf{b}}[\MM^{\sy,L}(a;\mathbf{b})],
\]
where $\mathbf{b}$ runs over all words of Reeb chords of $\Lambda_{s_j+\epsilon}$ and $[\MM^{\sy,L}(a;\mathbf{b})]$ denotes the chain of Reeb strings carried by the moduli space. SFT compactness and gluing as in \cite{E_rsft} shows that the chain map equation $\p_{\Lambda_{s_j+\epsilon}}\Phi=\Phi\p_{\Lambda_{s_j-\epsilon}}$ holds. It remains to show that $\Phi$ is a quasi-isomorphism. 

Consider first the case that $s_{j}$ is a handle slide. Taking $\epsilon$ sufficiently small we find that for each Reeb chord $a$ on $\Lambda_{s_{j}-\epsilon}$ there is a unique holomorphic strip connecting it to the corresponding Reeb chord $a$ on $\Lambda_{s_{j}+\epsilon}$. (These strips converge to trivial strips as $\epsilon\to 0$.) It follows that for each generator $c$ (chord or chain),
\[ 
\Phi(c)= c + \Phi_{0}(c),
\]
where the filtration degree of $\Phi_{0}(c)$ is strictly smaller than that of $c$. Thus $\Phi$ induces an isomorphism on the $E_{2}$-page of the action filtration spectral sequence and is hence a quasi-isomorphism.

Consider second the case of an action switch. In this case we find exactly as in the handle slide case that
\[ 
\Phi(c) = c + \Phi_{0}(c)
\]
for each $c$. The only difference is that now one action window contains two generators. Since the two Reeb chords have the same action but lie at a positive distance apart, it follows by monotonicity and Stokes' theorem that the chain map induces an isomorphism also in this action window. We find as above that $\Phi$ is a quasi-isomorphism.

Finally consider the case that $s_{j}$ is a birth moment where two new Reeb chords $a$ and $b$ are born (the death case is analogous). For $\epsilon>0$ sufficiently small we have
\[ 
\p^{\rm sy} a = b + \p^{\rm sy}_{0}(a),
\]
where the action of $\p^{\rm sy}_{0}(a)$ is strictly smaller than the action of $b$, see \cite[Lemma 2.14]{EES1}. As above we find that for any Reeb chord $c$ of $\Lambda_{s_{j}+\epsilon}$ we have
\[ 
\Phi(c) = c + \Phi_{0}(c).
\]
If we filter by small action windows that contain one Reeb chord each, except for one that contains both $a$ and $b$ (note that the action of $a$ approaches the action of $b$ as $\epsilon\to 0$) we find again that $\Phi$ gives an isomorphism on the $E_{2}$-page and hence is an isomorphism. We conclude that we can subdivide the interval $S$ into pieces with endpoints with quasi-isomorphic algebras. The theorem follows.
\end{proof}

According to Theorem~\ref{thm:comt-hom}, $(C(\RR),\p_\Lambda)$ is a (noncommutative but
strictly associative) differential graded (dg) ring containing the dg
subring
$$
   \Bigl(C(\RR^0),\p_\Lambda\Bigr) = \Bigl(C(\Om_{x_0}\Lambda),\p^{\rm sing}\Bigr).
$$
Thus $(C(\RR),\p_\Lambda)$ is a $(C(\RR^0),\p_\Lambda)$-NC-algebra in the
sense of the following definition.

\begin{definition}
Let $(R,\p)$ be a dg ring. An {\em $(R,\p)$-NC-algebra} is a dg ring
$(S,\p_S)$ together with a dg ring homomorphism $(R,\p)\to(S,\p_S)$.
\end{definition}

It follows that the Legendrian contact homology $H^\cont(\Lambda)$ is
an NC-algebra over the graded ring
$$
   H_*(\Om_{x_0}\Lambda,\p^{\rm sing}) \cong \Z\pi_1(\Lambda) \cong
   \Z[\lambda^{\pm 1},\mu^{\pm 1}].
$$
Here we have used that in our situation $\Lambda\cong T^2$ is a
$K(\pi,1)$, so all the homology of its based loop space is
concentrated in degree zero and agrees with the group ring of its
fundamental group $\pi_1(\Lambda)\cong\Z^2$.

{\bf Relation to standard Legendrian contact homology. }
Recall that $C(\RR)$ is a double complex with bidegree (chain degree,
chord degree), horizontal differential $\p^\sing$, and vertical
differential $\p^\sy$. As observed above, the first page of the
spectral sequence corresponding to the chord degree is concentrated in
the $0$-th column and given by
$$
   \Bigl(\AA:=H_0(\RR,\p^{\rm sing}),\p^{\rm sy}\Bigr).
$$
Generators of $\AA$ are words $\alpha_1a_1\alpha_2a_2\cdots
\alpha_\ell a_\ell \alpha_{\ell+1}$ consisting of Reeb chords $a_i$ and
{\em homotopy classes} of paths $\alpha_i$ satisfying the same boundary
conditions as before. Note that $\AA$ is an NC-algebra over the
subring $\AA^0=H_0(\RR^0)\cong\Z\pi_1(\Lambda)$ (on which $\p_\Lambda$
vanishes), and $\AA^k=H_0(\RR^k)$ is the $k$-fold tensor product of the
bimodule $\AA^1$ over the ring $\AA^0$.

We denote by
$$
   \bar\AA := \AA/\II
$$
the quotient of $\AA$ by the ideal $\II$ generated by the commutators
$[a,\beta]$ of Reeb chords $a$ and $\beta\in\pi_1(\Lambda)$. Since
$\p_\Lambda(\II)\subset\II$, the differential descends to a differential
$\bar\p^{\rm sy}:\bar\AA\to\bar\AA$ whose homology
$$
   \bar H^\cont(\Lambda) := \ker\bar\p^{\rm sy}/\im\bar\p^{\rm sy}
$$
is the usual Legendrian contact homology as defined in~\cite{EES1}.

{\bf Length filtration. }
The complex $(C(\RR),\p_\Lambda)$ is filtered by the {\em length}
$$
   L(\alpha_1a_1\alpha_2a_2\cdots \alpha_\ell a_\ell \alpha_{\ell+1}) :=
   \sum_{i=1}^\ell L(a_i),
$$
where $L(a)=\int_a\lambda$ denotes the action of a Reeb chord $a$,
which agrees with its period and also with the length of the
corresponding binormal cord. The length is preserved by the singular
boundary operator $\p^{\rm sing}$ and strictly decreases under
$\p^{\rm sy}$.

\begin{remark}\label{rem:cont-hom}
The construction of Legendrian contact homology in this subsection
works for any pair $(M,\Lambda)$ such that $M$ has no contractible
closed Reeb orbits. 
Examples include cosphere bundles $S^*Q$ of
$n$-manifolds $Q$ with a metric of nonpositive curvature that are
convex at infinity, with $\Lambda=\Lambda_K$ the unit conormal bundle
of a closed connected submanifold $K\subset Q$.
However, if $\Lambda$ is not a $K(\pi,1)$, then the coefficient ring
$H_*(\Om_{x_0}\Lambda,\p^{\rm sing})$ will not be equal to the group
ring of its fundamental group but contain homology in higher degrees.
\end{remark}

\subsection{Switching boundary conditions, winding numbers, and length}\label{ss:switching}
We continue to consider $Q=\R^{3}$ equipped with the flat metric
and an oriented framed knot $K\subset Q$. In addition, we assume from
now on that {\em $K$ is real analytic}; this can always be achieved
by a small perturbation of $K$ not changing its knot type. We equip $T^{\ast} Q$
with an almost complex structure $J$ which agrees with an $\R$-invariant
almost complex structure on the symplectization of $S^{\ast}Q$ outside
a finite radius disk sub-bundle of $T^{\ast} Q$ and with the standard
almost complex structure $J_\st$ on $T^{\ast}Q$ inside the disk
sub-bundle of half that radius. An explicit formula for such $J$ is given
in Section~\ref{sec:length-estimates2}. We point out that the
canonical isomorphism $(T^{\ast}Q,J_\st)\cong (\C^{3},i)$ identifies
the fibre with $\R^3$ and the zero section with $i\R^3$. Recall that $L=Q\cup L_K$.

Let $D$ be the closed unit disk with a boundary puncture at
$1\in\partial D$ and let $u\colon (D,\partial D)\to (T^{\ast}Q,L)$ be
a holomorphic disk with one positive puncture and switching boundary
conditions. This means that the map $u$ is asymptotic to a Reeb chord
at infinity at the positive puncture $1$ and that it is smooth outside an
additional finite number of boundary punctures where the boundary
switches, i.e., jumps from one irreducible component of $L$ to
another (which may be the same one). At these additional boundary punctures, the holomorphic disk
is asymptotic to some point in the clean intersection $K\subset L$,
i.e., it looks like a corner of a disk in Lagrangian intersection
Floer homology.

The real analyticity of $K$ allows us to get explicit local forms for
holomorphic disks near corners. We show in Lemma~\ref{l:knotnbhd} that
there are holomorphic coordinates
\[
   \R\times (0,0)\subset U\subset \C \times \C^{2},
\]
in which $K$ corresponds to $\R\times (0,0)$,
the $0$-section $Q$ corresponds to $\R\times \R^{2}$, and the conormal
$L_K$ to $\R\times i\R^{2}$.

Consider now a neighborhood of a switching point of a holomorphic disk
$u$ on the boundary of $D$, where we use $z$ in a half-disk
$D_\eps^+$ around $0$ in the upper half-plane as a local coordinate
around the switching point in the source. According to
Section~\ref{ss:series}, $u$ admits a Taylor
expansion around $0$, with $u=(u_1,u_2)\in \C\times\C^{2}$:
\begin{equation}\label{eq:Taylorswitch}
   u_1(z) = \sum_{k\in\N} b_k z^{k},\qquad
   u_{2}(z) = \sum_{k\in\frac12\N} c_k z^{k}.
\end{equation}
Here compared to Section~\ref{ss:series} we have divided the indices
by $2$, so the $b_k$ and $c_k$ correspond to the $a_{2k}$ in
Section~\ref{ss:series}.
The coefficients $b_j$
are real constants, reflecting smoothness of the tangent component of
$u$. The $c_k$ satisfy one of the conditions in
Remark~\ref{rem:cases1-4}, i.e., they are either all real or all
purely imaginary vectors in $\C^{2}$, and the indices are either all
integers or all half-integers.

Equivalently (and more adapted to the analytical study in Sections
\ref{S:mdlisp} -- \ref{sec:gluing}) one can use $z$ in a neighborhood
of infinity in the strip $\R\times [0,1]$ as a local coordinate in the
source. Composing the Taylor expansions~\eqref{eq:Taylorswitch} with
the biholomorphism
\begin{equation}\label{eq:chi}
   \chi:\R_{\geq 0}\times[0,1]\stackrel{\cong}\longrightarrow D^+, \qquad z\mapsto -\exp(-\pi z)
\end{equation}
(see Figure~\ref{fig:exp})
\begin{figure}
\labellist
\small\hair 2pt
\pinlabel $1$ at 6 92
\pinlabel $-1$ at 307 6
\pinlabel $1$ at 451 6
\endlabellist
\centering
\includegraphics[width=\textwidth]{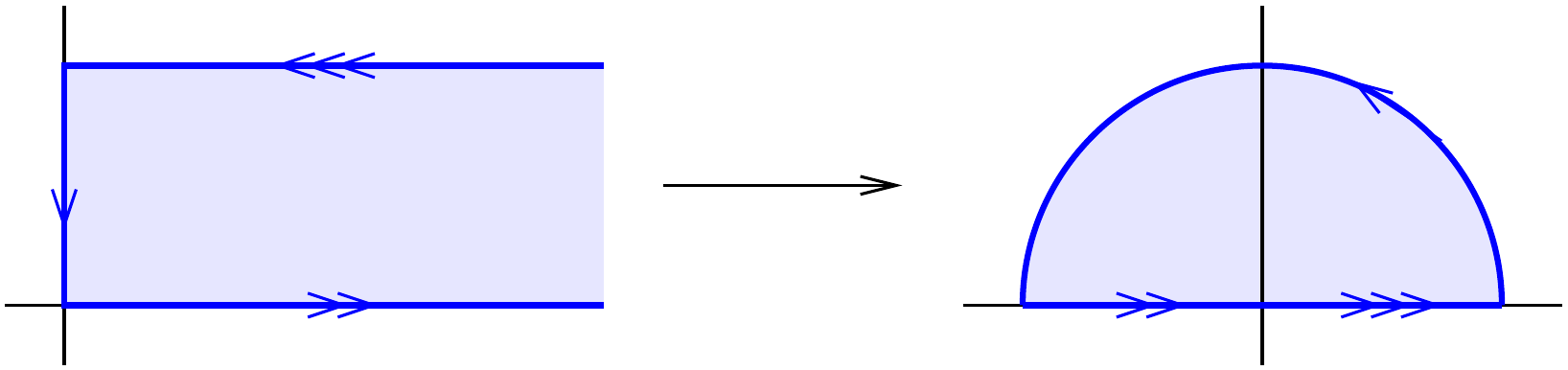}
\caption{
The biholomorphism $\chi$.
}
\label{fig:exp}
\end{figure}
one gets instead the Fourier expansions
\begin{equation}\label{eq:Fourierswitch}
   u_1(z) = \sum_{k\in\N} (-1)^kb_k e^{-k\pi z},\qquad
   u_{2}(z) =  \sum_{k\in\frac12\N} (-1)^kc_k e^{-k\pi z}.
\end{equation}
Recall from Section~\ref{ss:winding} that the \emph{local winding
  number} at the switch is the positive half-integer or integer which is the
index of the first non-vanishing Fourier coefficients in the expansion
of $u_{2}$ in~\eqref{eq:Fourierswitch}.
The sum of the local winding numbers at all switching boundary
punctures is the \emph{total winding number} of the disk.
Since the number of switches from $L_K$ to $Q$ equals that from $Q$ to
$L_K$, the total winding number is an integer.

The following technical result, which is a special case of
\cite[Theorem 1.2]{CEL}, will play a crucial role in the sequel.

\begin{thm}[\cite{CEL}]\label{thm:finite}
For a cord-generic real analytic knot $K\subset\R^3$ the total winding number, 
and in particular the number of switches, of any holomorphic disk $u\colon
(D,\partial D)\to (T^{\ast}Q,L)$ with one positive puncture is
uniformly bounded by a constant $\kappa$.
\end{thm}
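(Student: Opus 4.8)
The plan is to prove a uniform bound on the total winding number of holomorphic disks $u\colon (D,\partial D)\to(T^*Q,L)$ with one positive puncture, via an area/energy argument combined with the monotonicity and winding-number estimates established earlier in the paper. Since Theorem~\ref{thm:finite} is quoted as a special case of \cite[Theorem~1.2]{CEL}, the point is to reduce it to estimates I already have available in the excerpt, namely the relation between winding numbers and the integral $\frac{1}{\pi}\int_\gamma f^*d\theta$ from Section~\ref{ss:winding} (Lemma~\ref{lem:wind} and Corollary~\ref{cor:wind}) together with the symplectic-area control coming from the exactness of $L=Q\cup L_K$ and Stokes' theorem.

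First I would set up the energy bound. The Lagrangian $L = Q\cup L_K$ is exact in $T^*Q$ (both the zero section and the conormal are exact), so the $\omega$-energy of a holomorphic disk with one positive puncture asymptotic to a Reeb chord $a$ is controlled by the action $L(a)=\int_a\lambda$ of that chord, plus a bounded contribution from the corners (which carry zero action since they are asymptotic to points of $K$). Because $K$ is chord-generic there are only finitely many Reeb chords, so there is a universal constant $A_0$ bounding the action of the positive puncture, hence a universal bound $E(u)\le A_0$ on the energy of any such disk. This is the step where the hypothesis $Q=\R^3$ (flat metric, explicit $J$ of Section~\ref{sec:length-estimates2}) is convenient, though in principle any metric convex at infinity with no contractible Reeb orbits would do.

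Next I would localize near each switching puncture. Using the holomorphic coordinates of Lemma~\ref{l:knotnbhd} in which $K$ is $\R\times\{0\}\subset\C\times\C^2$, $Q$ is $\R\times\R^2$, and $L_K$ is $\R\times i\R^2$, the normal component $u_2$ of the disk restricted to a half-disk $D_\varepsilon^+$ around a switch is exactly a map as in Corollary~\ref{cor:wind}: holomorphic, nonconstant, sending $(-\varepsilon,\varepsilon)$ into $\R^2\cup i\R^2$. Then Corollary~\ref{cor:wind} gives, for a generic projection direction $v_0$, that $\frac{1}{\pi}\int_{\gamma}f_{v_0}^*d\theta$ dominates the sum of local winding numbers of the switches captured inside that half-disk; and this angular integral is in turn controlled (via the standard bound $\int f^*d\theta \le C\cdot\text{(area enclosed)}$, a consequence of the isoperimetric inequality together with the monotonicity of holomorphic curves, e.g. the argument in \cite{CEL}) by the local $\omega$-area of $u$ in that neighborhood of $K$. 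Summing over all switches and using that the total area is $\le A_0$, I obtain $w_{\mathrm{total}}(u)\le \kappa$ for a constant $\kappa$ depending only on $K$, $J$, and the neighborhood size — independent of $u$. Since each switch contributes local winding number at least $\tfrac12$, the number of switches is bounded by $2\kappa$.

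\textbf{The main obstacle} is the middle step: turning the winding number into something controlled by symplectic area uniformly in $u$. The subtlety is that the angular-integral-versus-area estimate needs a \emph{fixed} size $\varepsilon$ of the coordinate neighborhood of $K$ (coming from Lemma~\ref{l:knotnbhd}) that works simultaneously for all disks, and one must rule out the possibility that winding concentrates in arbitrarily small scales where the linear model degenerates — this is precisely where real analyticity of $K$ enters, via the Schwarz reflection / unique continuation argument already invoked for finiteness of zeros in Section~\ref{ss:winding}, ensuring the $c_k$-expansions are genuine convergent power series with controlled remainder. This is exactly the content of \cite[Theorem~1.2]{CEL}, so rather than reproving it in full I would cite that result for the uniform constant and simply indicate how the pieces assembled above — exactness giving the energy bound, Corollary~\ref{cor:wind} giving winding $\le$ angular integral, and monotonicity giving angular integral $\le$ area — combine to yield the stated bound $\kappa$.
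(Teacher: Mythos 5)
Your final logical structure coincides with the paper's own treatment: the paper does not reprove this bound but quotes it directly as a special case of \cite[Theorem 1.2]{CEL}, adding only the remark --- which your first paragraph reproduces --- that the energy hypothesis of that theorem is automatic here, since the energy of a disk with one positive puncture equals the action of the Reeb chord at that puncture and chord-genericity leaves only finitely many Reeb chords. Since you too ultimately cite \cite[Theorem 1.2]{CEL} for the uniform estimate, your proposal is correct as written. One caution about the sketched mechanism in your middle step, though: the inequality ``angular integral $\leq C\cdot(\text{enclosed area})$'' is not true for holomorphic maps, so symplectic area does not control winding --- the local model $z\mapsto \epsilon z^N$ on a fixed half-disk has winding of order $N$ while its area is $O(N\epsilon^2)$, which can be made arbitrarily small. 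Corollary~\ref{cor:wind} only converts local winding numbers into the angular integral of a projection over a circle of fixed radius, and bounding that quantity uniformly over all disks (ruling out winding concentrating at small scales, where real analyticity of $K$ enters) is precisely the nontrivial content of \cite{CEL}; it is established there by a compactness-and-contradiction argument, not by an isoperimetric or monotonicity estimate. So keep the citation as the actual proof and do not present the area heuristic as the mechanism, since that step, taken literally, would fail.
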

\begin{remark}
The necessary energy bound appearing in the corresponding statement 
in \cite{CEL} is automatic here, since in our present situation the energy 
is given by the action of the Reeb chord at the positive puncture, which only
varies in a finite set.
\end{remark}

In view of this result, when we discuss compactness we need only
consider sequences of holomorphic disks with a {\em fixed} finite number of
switches, each of fixed winding number. As we prove in
Section~\ref{S:mdlisp}, each moduli space of such holomorphic disks is
for generic data a manifold that admits a natural
compactification as a manifold with boundary with corners. We
will specifically need such moduli spaces of dimension $0$, $1$, or
$2$ and we give brief descriptions in these cases.

Let $a$ be a Reeb chord of $\Lambda_{K}$. Let $q_{1},\dots,q_{m}$ be
punctures in $\partial D$ and let $\mathbf{n}=(n_1,\dots,n_{m})$ be a
vector of local winding numbers, so
$n_j\in\left\{\tfrac12,1,\tfrac32,2,\dots\right\}$ is the local
winding number at $q_j$. We write $\MM(a;\mathbf{n})$ for the moduli
space of holomorphic disks with positive puncture at the Reeb chord
$a$ and switching punctures at $q_1,\dots,q_{m}$ with winding numbers
according to $\mathbf{n}$. Define the nonnegative integer
\[
   |\mathbf{n}|:=\sum_{j=1}^{m}2(n_j-\tfrac12)\ge 0.
\]

\begin{theorem}\label{t:dim-moduli}
For generic almost complex structure $J$, the moduli space
$\MM(a;\mathbf{n})$ is a manifold of dimension
\[
   \dim \MM(a;\mathbf{n})=|a|-|\mathbf{n}|.
\]
Furthermore, the choice of a spin structure on $L_{K}$ together with
the spin structure on $\R^{3}$ induces a natural orientation on
$\MM(a;\mathbf{n})$.
\end{theorem}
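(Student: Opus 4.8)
The plan is to set up the moduli space $\MM(a;\mathbf{n})$ as the zero set of a Fredholm section of a Banach bundle over an appropriate configuration space, compute the Fredholm index, establish transversality for generic $J$, and then extract the orientation from a determinant line bundle trivialization. First I would fix the functional-analytic framework: the domain is a disk $D$ with one positive puncture (asymptotic to the Reeb chord $a$, modeled on the strip $\R_{\ge 0}\times[0,1]$ near that puncture) together with $m$ boundary punctures $q_1,\dots,q_m$ at which the boundary condition switches between $Q$ and $L_K$ and the map is asymptotic to a point of $K$ with prescribed local winding number $n_j$. Using the local normal form of Lemma~\ref{l:knotnbhd} and the power/Fourier series analysis of Section~\ref{ss:series}--\ref{ss:winding}, the behavior at each switching puncture $q_j$ is governed by one of Cases~1--4 of Remark~\ref{rem:cases1-4} with leading exponent $n_j$; this dictates the correct weighted Sobolev spaces (with exponential weights at the punctures chosen so that the winding number is exactly pinned, i.e.\ between $n_j$ and the next allowed half-integer). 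One defines $\BB$ to be the Banach manifold of $W^{k,p}_{\delta}$-maps with these asymptotics and boundary conditions (modulo the positions of the $q_j$, which are free moduli, and modulo the one-dimensional automorphism group of $D$ with one puncture, after fixing it), and $\EE\to\BB$ the bundle with fiber $W^{k-1,p}_\delta(u^*T(T^*Q)\otimes\Omega^{0,1})$; the section is $\bar\partial_J$.

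Next I would compute the index. The linearized operator $D\bar\partial_J$ is a Cauchy--Riemann operator on the disk with Lagrangian boundary conditions that jump at the punctures. Its index is computed by the standard Riemann--Roch formula for surfaces with boundary, summing a bulk Maslov-type contribution with correction terms at each puncture. The positive puncture at $a$ contributes (through the Conley--Zehnder/Maslov index of the Reeb chord, with the capping-path conventions of Section~\ref{sec:leg}) the term $|a|+$const; each switching puncture $q_j$ with winding number $n_j$ contributes $-2(n_j-\tfrac12)$ relative to the ``no winding'' case $n_j=\tfrac12$ (a switch of minimal winding is index-neutral, reflecting that broken strings with simple transverse intersections have the expected dimension), while the freedom to move $q_j$ along $\partial D$ adds back $+1$ per puncture and the reparametrization group removes the appropriate constant; bookkeeping these against the Maslov class of $L_K$ (which vanishes, as noted in Section~\ref{sec:symp-disks}) yields $\dim\MM(a;\mathbf n)=|a|-|\mathbf n|$ with $|\mathbf n|=\sum_j 2(n_j-\tfrac12)$. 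I would organize this computation by first treating a single switching puncture model (the functions $f_\eps$, $f_{\delta,\eps}$ of Section~\ref{ss:spikes} as local models) and then gluing the index contributions.

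Then transversality: for generic $J$ (in the class of $J$ described in Section~\ref{ss:switching}, agreeing with a cylindrical structure outside a disk bundle and with $J_\st$ inside a smaller one) the universal moduli space is cut out transversally, so $\MM(a;\mathbf n)$ is a manifold of the index dimension; this is the standard Sard--Smale argument, with the only subtlety being that one must perturb $J$ away from the region where it is pinned down, which is harmless because holomorphic disks are somewhere-injective in the perturbation region (here one uses that $K$ is real analytic and the disks are nonconstant, so they cannot be entirely contained in the fixed region — this is exactly where the finiteness Theorem~\ref{thm:finite} and the winding-number analysis enter). Finally, the orientation: a spin structure on $L_K$ together with the (unique) spin structure on $\R^3$ trivializes the relevant determinant line bundle over the space of boundary conditions; following the scheme of \cite{EESori} for Lagrangian boundary conditions that switch between two spin Lagrangians along a clean intersection, one obtains a coherent orientation of $\det D\bar\partial_J$, hence of $\MM(a;\mathbf n)$ — the clean intersection locus $K$ being cooriented fixes the gluing signs at switches. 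I expect the main obstacle to be the index computation at the switching punctures: getting the correct weighted-Sobolev setup so that the contribution of a winding-$n_j$ corner is exactly $-2(n_j-\tfrac12)$ requires careful matching of the analytic asymptotics (the half-integer Fourier exponents of Case~1 and~2) with the allowed weights, and a clean comparison with the $n_j=\tfrac12$ base case; the transversality and orientation steps, while technical, follow established patterns.
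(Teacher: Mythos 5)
Your overall framework is the same one the paper uses: a weighted Sobolev configuration space whose exponential weights at the switching punctures pin the local winding numbers (the paper's Section on configuration spaces adds finite-dimensional jet-space factors $J^{(r_j)}(K)$ at those punctures), an index computation for Cauchy--Riemann operators with switching Lagrangian boundary conditions giving $|a|-|\mathbf{n}|$ (the paper simply quotes \cite[Theorem A.1]{CEL} for this, together with the standard weight/index relation), and orientations from spin structures via capping operators at the Reeb chord and at the Lagrangian intersection punctures in the style of \cite{EESori}. So the route is not different in substance; the issue is with how you close the transversality step.

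The genuine gap is your justification of transversality. You assert that perturbing $J$ in the free region is "harmless because holomorphic disks are somewhere-injective in the perturbation region," deducing this from nonconstancy and real analyticity of $K$. That inference does not work: a nonconstant disk with boundary on $Q\cup L_K$ can be multiply covered, or can fail to have any injective point in the region where $J$ may be perturbed, and neither the winding-number analysis nor Theorem~\ref{thm:finite} excludes this. The paper's Lemma~\ref{l:tv} is devoted precisely to this difficulty: it perturbs $J$ near the Reeb chord strip at the positive puncture, notes that the preimage of the chord is finite by monotonicity, and in the non-injective case runs the multiplicity and analytic-continuation argument of \cite[Lemma 4.5]{EES2} (comparing the covering multiplicities of a boundary arc and of its complement to show that a putative cokernel element cannot cancel the perturbation near the positive puncture). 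Without such an argument, or an a priori somewhere-injectivity statement that you have not supplied, the Sard--Smale step is incomplete. A secondary omission: for disks whose domains are unstable (few boundary punctures) the automorphism group --- which is two-dimensional for a disk with one boundary puncture, not one-dimensional as you state --- must be dealt with by the gauge-fixing/stabilization with marked points near the positive puncture (using the emptiness of the moduli spaces $\MM(a,1;\mathbf{n})$ with higher-order asymptotics) that the paper carries out; this is needed to obtain the smooth manifold structure claimed in the theorem in those cases.
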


\begin{proof}
This is a consequence of \cite[Theorem A.1]{CEL} and Lemma \ref{l:tv} below.
\end{proof}

Note that, due to Theorem~\ref{thm:finite}, any moduli space
$\MM(a;\mathbf{n})$ is empty if $\mathbf{n}$ has more than $\kappa$
components, i.e., there are more than $\kappa$ switches.

\subsection{Moduli spaces of dimension zero and one}\label{s:mswitchdim0to1}
For moduli spaces of dimension $\le 1$ with positive puncture at a
Reeb chord of degree $\le 1$, we have the following. Theorem
\ref{t:dim-moduli} implies that if $|a|=0$ then $\MM(a;\mathbf{n})$ is
empty if $|\mathbf{n}|>0$ and is otherwise a compact oriented
$0$-manifold. Likewise, if $|a|=1$ then $\MM(a;\mathbf{n})$ is empty
if $|\mathbf{n}|> 1$ and is an oriented $0$-manifold if
$|\mathbf{n}|=1$. Note that $|\mathbf{n}|=1$ implies that there is
exactly one switch with winding number $1$ and that the winding
numbers at all other switches equal $\tfrac12$. Finally, if all
entries in $\mathbf{n}$ equal $\tfrac12$ then
$\dim(\MM(a;\mathbf{n}))=1$.

It follows by Theorem \ref{t:[1,0]} that the 1-dimensional moduli
spaces of disks with switching boundary condition admit natural
compactifications to 1-manifolds with boundary. The next result
describes the disk configurations corresponding to the boundary of
these compact intervals.

\begin{prop}\label{prop:simpleboundary}
If $a$ is a Reeb chord of degree $|a|=1$ and if all entries of
$\mathbf{n}$ equal $\tfrac12$, then the oriented boundary of
$\MM(a;\mathbf{n})$ consists of the following:
\begin{description}
\item[$(Lag)$] Moduli spaces $\MM(a;\mathbf{n}')$, where $\mathbf{n}'$ is
  obtained from $\mathbf n$ by removing two consecutive
  $\tfrac12$-entries and inserting in their place a $1$.
\item[$(sy)$] Products of moduli spaces
\[
\MM^{\rm sy}(a;\mathbf{b})/\R \;\times \; \Pi_{b_j\in\mathbf{b}}\, \MM(b_j;\mathbf{n}_j),
\]
where $\mathbf{n}$ equals the concatenation of the $\mathbf{n}_j$.
\end{description}
\end{prop}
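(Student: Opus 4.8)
The plan is to analyze the boundary of the compactified $1$-dimensional moduli space $\ol\MM(a;\mathbf{n})$ by SFT-type compactness together with the special local analysis near switches developed in Sections~\ref{ss:series} and~\ref{ss:switching}. The starting point is Theorem~\ref{t:[1,0]} (assumed), which provides the compactification as a manifold with boundary. A sequence of disks $u_k \in \MM(a;\mathbf{n})$ can degenerate in the limit in essentially two ways: either conformal structure degenerates so the domain breaks (Gromov/SFT breaking in the symplectization direction), or two or more switching punctures collide on $\p D$. Theorem~\ref{thm:finite} is crucial here: it bounds the total winding number and hence the number of switches uniformly by $\kappa$, so no bubbling off of spheres or of disks carrying unbounded complexity can occur, and the only source of noncompactness is the two phenomena above. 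One then has to rule out, or rather account for, each codimension-one degeneration.

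First I would treat the $(sy)$ stratum. When the neck of the domain pinches, SFT compactness produces a building whose top level is a holomorphic disk in $\R\times S^*Q$ with boundary on $\R\times\Lambda_K$ (this is where the metric is $\R$-invariant at infinity, by the choice of $J$ in Section~\ref{ss:switching}), with positive puncture $a$ and negative punctures at Reeb chords $\mathbf{b}$, and whose lower levels are holomorphic disks in $T^*Q$ with boundary on $L=Q\cup L_K$, positive punctures at the $b_j$, and their own switching punctures. The switches of the original disk distribute among the lower-level pieces, so $\mathbf{n}$ is the concatenation of the $\mathbf{n}_j$. A dimension count using $\dim\MM^{\rm sy}(a;\mathbf{b})/\R = |a|-|\mathbf{b}|-1$ and $\dim\MM(b_j;\mathbf{n}_j) = |b_j|-|\mathbf{n}_j|$, together with $|\mathbf{n}| = \sum_j |\mathbf{n}_j| = 0$ in our situation, forces the top level to be rigid modulo translation and each lower piece to be $0$-dimensional, which is exactly the product in $(sy)$. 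The top level cannot itself break further without violating rigidity, and no lower piece can break (same count), so these are genuine boundary points; the orientation statement follows from the coherent orientations of Theorem~\ref{t:dim-moduli} and the gluing theorem.

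Next I would treat the $(Lag)$ stratum, which is the characteristic new feature. When two consecutive switching punctures $q_j, q_{j+1}$ (each of winding number $\tfrac12$) collide, one rescales near the collision point. In the local coordinates of Lemma~\ref{l:knotnbhd}, where $K = \R\times\{0\}$, $Q = \R\times i\R^2$, $L_K = \R\times\R^2$, the normal component $u_2$ is, by the discussion in Section~\ref{ss:spikes}, modeled on the family $f_\eps(z) = \sqrt{z(z-\eps)}$: two switches of type $\tfrac12$ separated by a parameter $\eps$, which as $\eps\to 0$ degenerate to a single zero, i.e., a single switch with local winding number $1$. This is precisely the content of the model families~\eqref{eq:1-dimmodel}. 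Thus the limiting configuration is a holomorphic disk in $\MM(a;\mathbf{n}')$ where $\mathbf{n}'$ has a single $1$ in place of the two $\tfrac12$'s; by Theorem~\ref{t:dim-moduli} this moduli space has dimension $|a| - |\mathbf{n}'| = 1 - 1 = 0$, matching the boundary of a $1$-manifold. Conversely, every such configuration arises as a limit, by a gluing construction (part of Theorem~\ref{t:[1,0]}) that desingularizes the winding-number-$1$ switch into a nearby pair of $\tfrac12$-switches, parametrized by the $\eps$ in~\eqref{eq:1-dimmodel}.

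The main obstacle is the last gluing step in the $(Lag)$ case: showing that a disk with a winding-number-$1$ switch is the genuine boundary of a one-parameter family of disks with two $\tfrac12$-switches, with the expected orientation. This is not a standard neck-stretching gluing but a gluing in which the Lagrangian boundary condition itself changes its combinatorial type, and it requires the explicit corner analysis of Section~\ref{sec:holo} as the local model, a Banach-space implicit function theorem with the $C^m$ matching conditions of Definition~\ref{def:string}, and careful bookkeeping of orientations coming from the spin structures. I would establish these details in Sections~\ref{S:mdlisp}--\ref{sec:gluing}, where Theorem~\ref{t:[1,0]} is proved, and here simply invoke that theorem together with the model computations above; the remaining content of the proposition is then the dimension count, which I carried out above, plus the observation via Theorem~\ref{thm:finite} that no other degenerations are possible.
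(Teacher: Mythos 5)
Your proposal is correct and follows essentially the same route as the paper: the paper's proof simply invokes Theorem~\ref{t:[1,0]} (compactness, transversality, the dimension formula, and gluing proved in Sections~\ref{S:mdlisp}--\ref{sec:gluing}), identifying the $(Lag)$ boundary with two colliding $\tfrac12$-switches (the local model~\eqref{eq:1-dimmodel}) and the $(sy)$ boundary with a two-level splitting into an $\R$-invariant symplectization disk and rigid cobordism disks, exactly as you argue. Your expanded dimension count and appeal to Theorem~\ref{thm:finite} are just a more detailed version of the same argument.
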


\begin{figure}
\labellist
\small\hair 2pt
\pinlabel $\frac{1}{2}$ at 18 14
\pinlabel $\frac{1}{2}$ at 54 14
\pinlabel $\frac{1}{2}$ at 90 14
\pinlabel $\frac{1}{2}$ at 126 14
\pinlabel $\frac{1}{2}$ at 162 14
\pinlabel $\frac{1}{2}$ at 198 14
\pinlabel $\frac{1}{2}$ at 342 14
\pinlabel $1$ at 387 14
\pinlabel $\frac{1}{2}$ at 432 14
\pinlabel $\frac{1}{2}$ at 486 14
\pinlabel $\frac{1}{2}$ at 522 14
\pinlabel ${\color{blue} N}$ at 34 150
\pinlabel ${\color{blue} N}$ at 181 150
\pinlabel ${\color{blue} N}$ at 358 150
\pinlabel ${\color{blue} N}$ at 505 150
\pinlabel ${\color{blue} N}$ at 71 102
\pinlabel ${\color{blue} N}$ at 143 102
\pinlabel ${\color{blue} N}$ at 459 111
\pinlabel ${\color{red} Q}$ at 36 54
\pinlabel ${\color{red} Q}$ at 108 54
\pinlabel ${\color{red} Q}$ at 180 54
\pinlabel ${\color{red} Q}$ at 387 54
\pinlabel ${\color{red} Q}$ at 504 54
\pinlabel ${\color{red} a}$ at 108 231
\pinlabel ${\color{red} a}$ at 432 231
\endlabellist
\centering
\includegraphics[width=\textwidth]{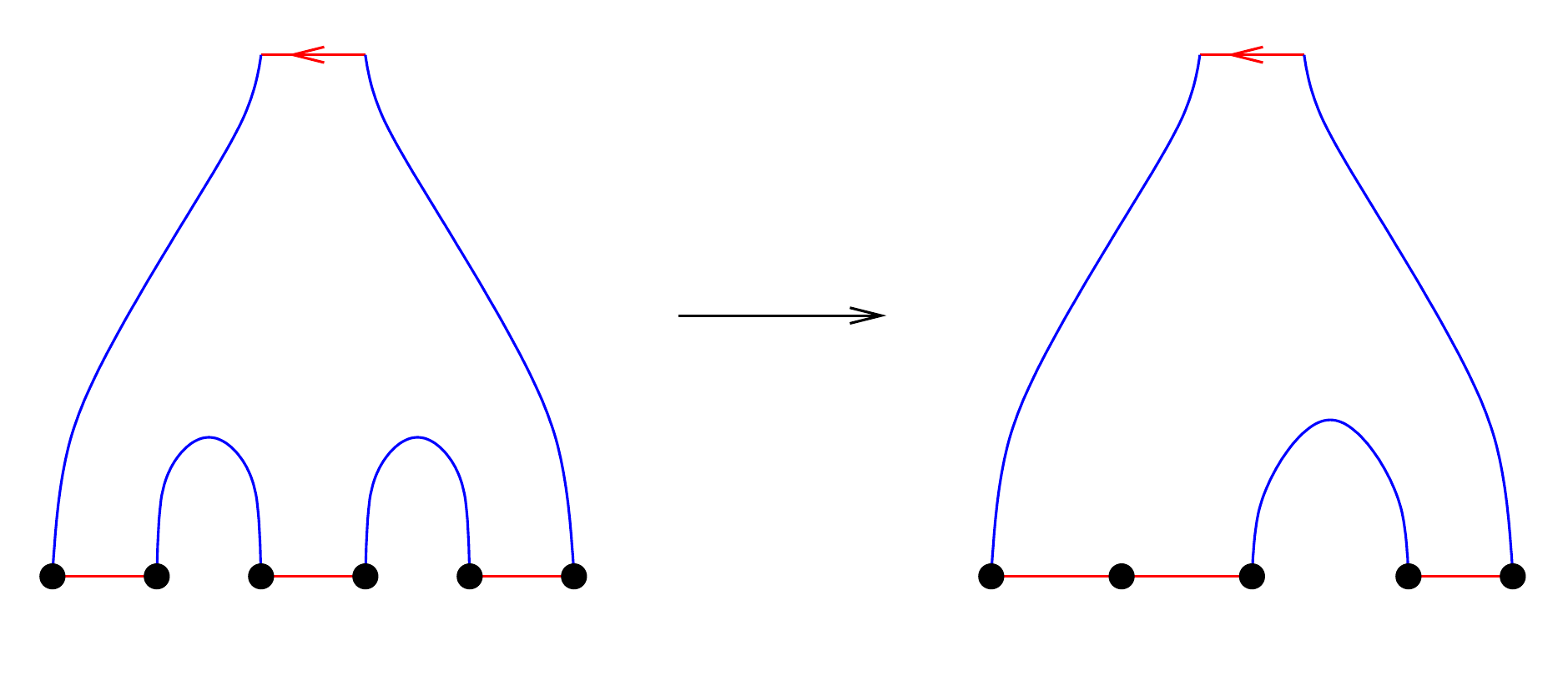}
\caption{
Type $(Lag)$ boundary where an $N$-string disappears.
}
\label{fig:Q-boundary}
\end{figure}

\begin{figure}
\labellist
\small\hair 2pt
\pinlabel $\frac{1}{2}$ at 18 14
\pinlabel $\frac{1}{2}$ at 54 14
\pinlabel $\frac{1}{2}$ at 90 14
\pinlabel $\frac{1}{2}$ at 126 14
\pinlabel $\frac{1}{2}$ at 162 14
\pinlabel $\frac{1}{2}$ at 198 14
\pinlabel $\frac{1}{2}$ at 342 14
\pinlabel $\frac{1}{2}$ at 378 14
\pinlabel $\frac{1}{2}$ at 486 14
\pinlabel $\frac{1}{2}$ at 522 14
\pinlabel $1$ at 432 73
\pinlabel ${\color{blue} N}$ at 34 150
\pinlabel ${\color{blue} N}$ at 181 150
\pinlabel ${\color{blue} N}$ at 358 150
\pinlabel ${\color{blue} N}$ at 505 150
\pinlabel ${\color{blue} N}$ at 71 102
\pinlabel ${\color{blue} N}$ at 143 102
\pinlabel ${\color{blue} N}$ at 432 111
\pinlabel ${\color{red} Q}$ at 36 54
\pinlabel ${\color{red} Q}$ at 108 54
\pinlabel ${\color{red} Q}$ at 180 54
\pinlabel ${\color{red} Q}$ at 360 54
\pinlabel ${\color{red} Q}$ at 504 54
\pinlabel ${\color{red} a}$ at 108 231
\pinlabel ${\color{red} a}$ at 432 231
\endlabellist
\centering
\includegraphics[width=\textwidth]{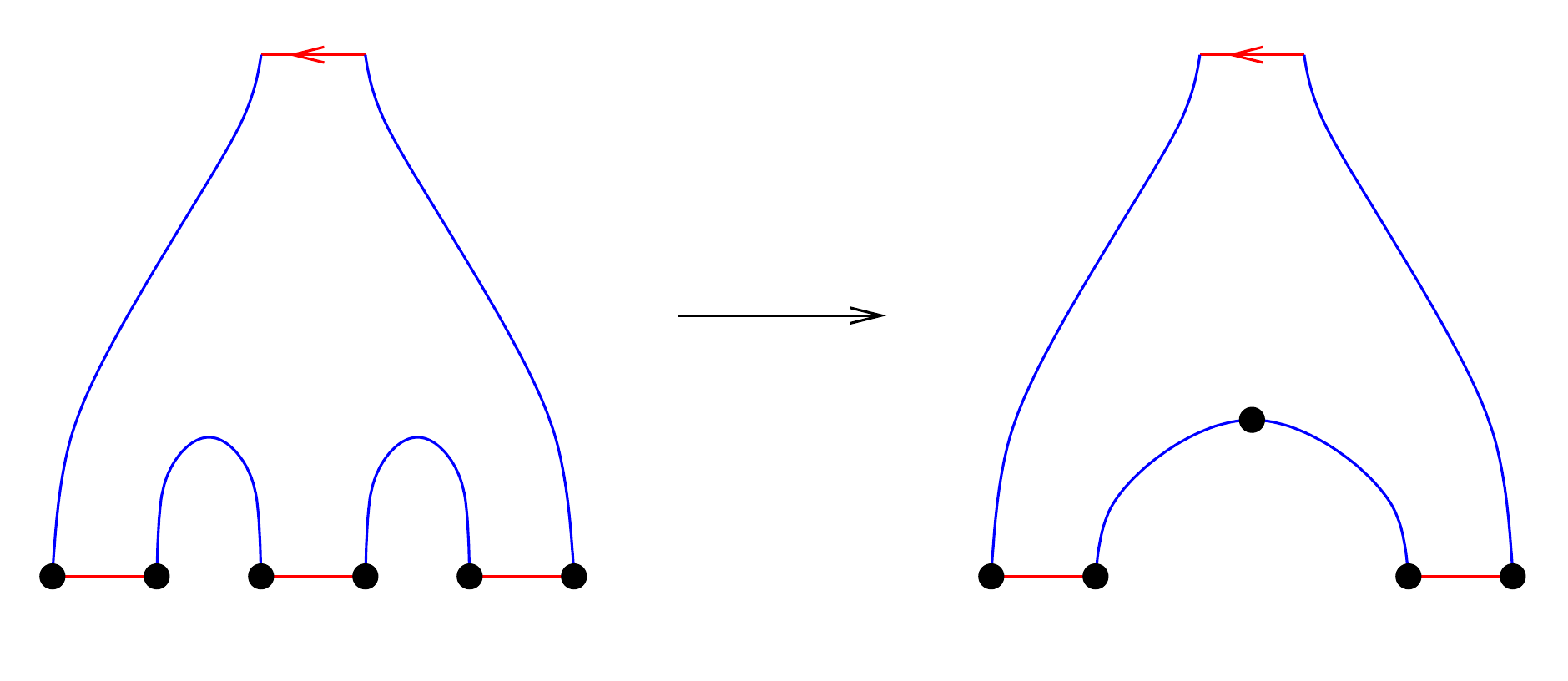}
\caption{
Type $(Lag)$ boundary where a $Q$-string disappears.
}
\label{fig:N-boundary}
\end{figure}

\begin{figure}
\labellist
\small\hair 2pt
\pinlabel $\frac{1}{2}$ at 18 49
\pinlabel $\frac{1}{2}$ at 54 49
\pinlabel $\frac{1}{2}$ at 90 49
\pinlabel $\frac{1}{2}$ at 126 49
\pinlabel $\frac{1}{2}$ at 162 49
\pinlabel $\frac{1}{2}$ at 198 49
\pinlabel $\frac{1}{2}$ at 342 14
\pinlabel $\frac{1}{2}$ at 378 14
\pinlabel $\frac{1}{2}$ at 414 14
\pinlabel $\frac{1}{2}$ at 450 14
\pinlabel $\frac{1}{2}$ at 486 14
\pinlabel $\frac{1}{2}$ at 522 14
\pinlabel ${\color{blue} N}$ at 39 184
\pinlabel ${\color{blue} N}$ at 71 136
\pinlabel ${\color{blue} N}$ at 143 136
\pinlabel ${\color{blue} N}$ at 178 184
\pinlabel ${\color{blue} N}$ at 348 120
\pinlabel ${\color{blue} N}$ at 444 120
\pinlabel ${\color{blue} N}$ at 465 99
\pinlabel ${\color{blue} N}$ at 526 99
\pinlabel ${\color{blue} N}$ at 396 99
\pinlabel ${\color{blue} \R \times \Lambda_K}$ at 364 240
\pinlabel ${\color{blue} \R \times \Lambda_K}$ at 507 240
\pinlabel ${\color{blue} \R \times \Lambda_K}$ at 440 218
\pinlabel ${\color{red} Q}$ at 37 90
\pinlabel ${\color{red} Q}$ at 108 90
\pinlabel ${\color{red} Q}$ at 179 90
\pinlabel ${\color{red} Q}$ at 359 54
\pinlabel ${\color{red} Q}$ at 431 54
\pinlabel ${\color{red} Q}$ at 503 54
\pinlabel ${\color{red} a}$ at 108 270
\pinlabel ${\color{red} a}$ at 432 304
\pinlabel ${\color{red} b_1}$ at 400 184
\pinlabel ${\color{red} b_2}$ at 485 184
\endlabellist
\centering
\includegraphics[width=\textwidth]{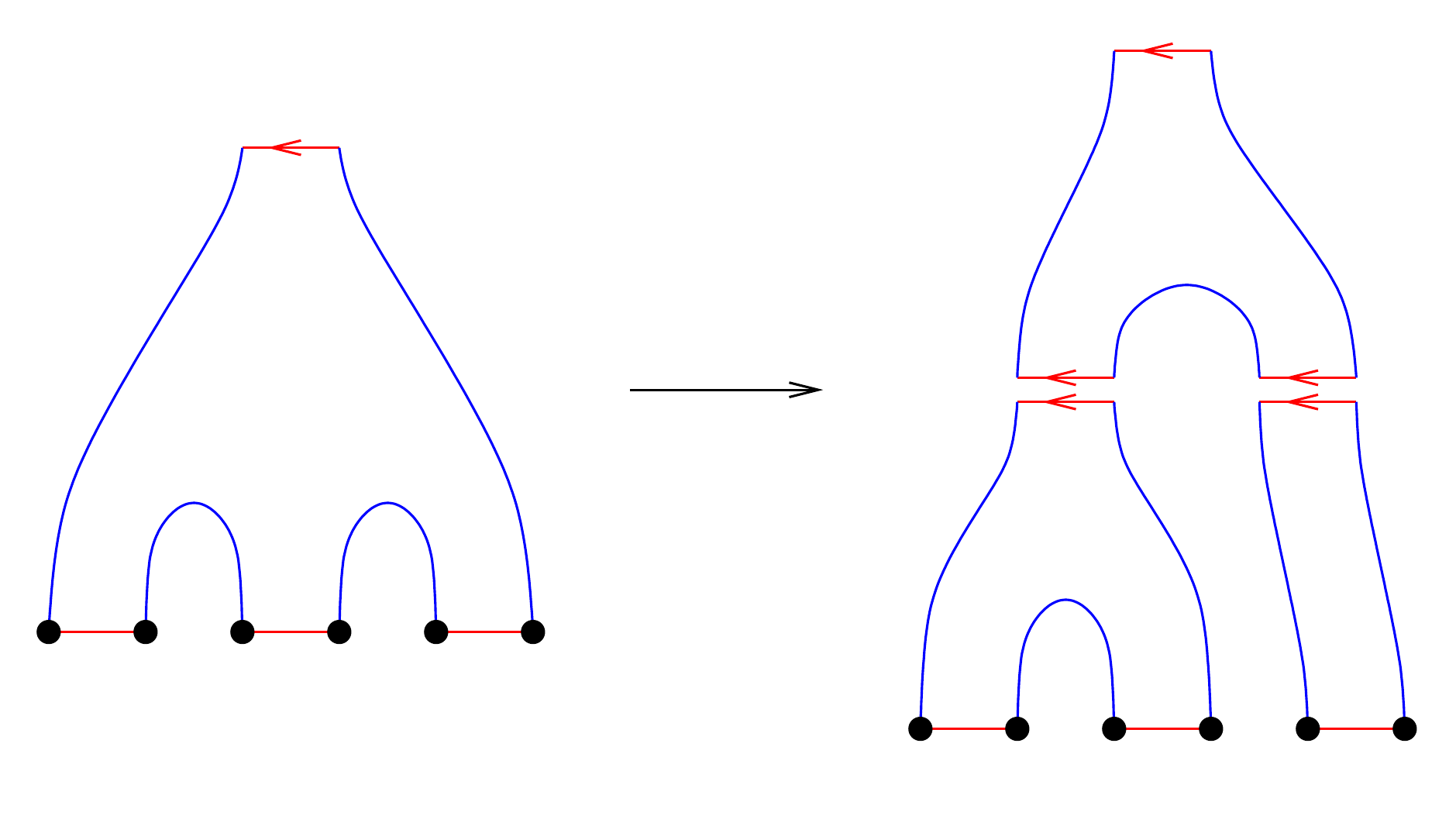}
\caption{
Type $(sy)$ boundary.
}
\label{fig:symp-boundary}
\end{figure}

\begin{proof}
This is a consequence of Theorem~\ref{t:[1,0]}. To motivate the result,
note that the first type of boundary corresponds to two switches
colliding, see Figures~\ref{fig:Q-boundary} and~\ref{fig:N-boundary}.
The second type corresponds to a splitting into
a two level curve with one $\R$-invariant level (of dimension 1) in
the symplectization and one rigid curve (of dimension 0) in
$T^{\ast}Q$, see Figure~\ref{fig:symp-boundary}.
By transversality, compactness, and the dimension formula
this accounts for all the possible boundary phenomena, and by a gluing
argument we find that any such configuration corresponds to a unique
boundary point.
\end{proof}

We conclude this subsection by giving an alternate interpretation of
the first boundary phenomenon in Proposition~\ref{prop:simpleboundary}.
Let $\MM^{\ast}(a;\mathbf{n})$ denote the moduli space corresponding
to $\MM(a;\mathbf{n})$, but with one extra marked point on
the boundary of the disk. Then $\MM^{\ast}(a;\mathbf{n})$ fibers over
$\MM(a;\mathbf{n})$ with fiber $\partial D-\{1,q_1,\dots,q_{m}\}$ and
there is an evaluation map $\ev\colon\MM^{\ast}(a;\mathbf{n})\to
L$. It follows from Theorem \ref{t:emb} that for $|a|=1$ and $|\mathbf{n}|=0$
(and generic data),
$\ev^{-1}(K)$ is a transversely cut out oriented $0$-manifold that
projects injectively into $\MM(a;\mathbf{n})$. We denote its image
by
$$
   \delta\MM(a;\mathbf{n}).
$$
As the notation suggests, this space will be the natural domain for
the string operations $\delta=\delta_Q+\delta_N$.

\begin{prop}\label{prop:b=intersdim1}
If $a$ is a Reeb chord of degree $|a|=1$ and if all entries
$\mathbf{n}$ equal $\tfrac12$, then there is a natural
orientation preserving identification
between $\delta\MM(a;\mathbf{n})$ and $\MM(a;\mathbf{n}'')$, where
$\mathbf{n}''$ is obtained from $\mathbf{n}$ by inserting in
$\mathbf{n}$ a new entry equal to $1$ at the position given by the
marked point.
\end{prop}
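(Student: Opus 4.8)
The plan is to build an explicit bijection between the two $0$-dimensional moduli spaces using the local normal forms near switching punctures from Section~\ref{ss:series}, and then to match orientations by a purely local computation at the extra puncture. The numerical check that both spaces are zero-dimensional is immediate: $\dim\MM(a;\mathbf{n}'')=|a|-|\mathbf{n}''|=1-1=0$ (the inserted entry $1$ contributes $2(1-\tfrac12)=1$ to $|\mathbf{n}''|$), while $\dim\ev^{-1}(K)=\dim\MM^{\ast}(a;\mathbf{n})-\codim_L K=2-2=0$.

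First I would recall that, by Remark~\ref{rem:cases1-4}, a switching puncture of \emph{integer} local winding number is either of Case~3 (an $N$-to-$N$ switch) or Case~4 (a $Q$-to-$Q$ switch): at such a puncture the boundary of the disk does not change irreducible component of $L$, it merely touches $K$ from one side. Concretely, in the holomorphic coordinates near $K$ provided by Lemma~\ref{l:knotnbhd}, the normal component $u_2$ of a disk in $\MM(a;\mathbf{n}'')$ near its winding-number-$1$ puncture has an expansion $u_2(z)=c_2z^2+O(z^3)$ with $c_2\neq0$ lying entirely in $\R^2$ or entirely in $i\R^2$; hence $u_2$ extends by Schwarz reflection across the puncture as a holomorphic map with a zero of order exactly $2$ there, and the extended disk has boundary on a single component of $L$ and satisfies $u(p)\in K$ at the (now smooth) point $p$. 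Marking this point turns such a disk into an element of $\delta\MM(a;\mathbf{n})\subset\MM^{\ast}(a;\mathbf{n})$. Conversely, given $u\in\MM(a;\mathbf{n})$ and a boundary point $p$ with $u(p)\in K$, the boundary arc through $p$ lies entirely in $Q$ or entirely in $N$, so near $p$ the map $u$ is of Case~4 or Case~3 and $u_2$ has an even-order zero at $p$; transversality of $\ev$ to $K$ (Theorem~\ref{t:emb}) forces this zero to have order exactly $2$, i.e.\ local winding number $1$, so declaring $p$ a switching puncture of winding number $1$ produces an element of $\MM(a;\mathbf{n}'')$ with the inserted entry placed on the arc containing $p$. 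Unique continuation makes the filling-in canonical, so these two constructions are mutually inverse and give the asserted identification of sets (indeed of $0$-manifolds).

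The main obstacle is the orientation claim: one must show that, under this identification, the orientation of $\MM(a;\mathbf{n}'')$ coming from the spin structures on $L_K$ and $\R^3$ (Theorem~\ref{t:dim-moduli}) agrees with the orientation of $\delta\MM(a;\mathbf{n})=\ev^{-1}(K)$ induced by the coorientation of $K\subset L$ together with the spin-structure orientation of $\MM^{\ast}(a;\mathbf{n})$. Away from the puncture $p$ the two linearized deformation problems are literally identical, so by the excision/gluing formalism for determinant lines used in Section~\ref{sec:trans} the comparison reduces to a model computation: the linearized $\bar\partial$-operator on $\C^2$-valued maps near $p$ with a forced simple zero subject to the evaluation constraint $\ev(p)\in K$, versus the one with boundary data switching Case~4$\leftrightarrow$Case~3 and a forced second-order zero at $p$. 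I would identify the determinant lines of these two model operators compatibly with the doubling/reflection construction of Section~\ref{ss:series}, exactly as the orientation conventions of Figure~\ref{fig:2} and the spin structure on $L_K$ dictate; this model computation is where the claimed sign is produced, and I expect it to come out positive by the same bookkeeping that governs the type $(Lag)$ boundary terms in Proposition~\ref{prop:simpleboundary}.
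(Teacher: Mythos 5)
Your argument is essentially the paper's: there, too, the identification is made via the Taylor/Fourier dictionary of Section~\ref{ss:series} at the extra puncture (through Theorem~\ref{t:emb} and the local model of Section~\ref{ss:branchmark}), matching a marked point on $K$ with simple normal vanishing to a winding-number-$1$ puncture, and the sign is likewise deferred to the orientation conventions of Section~\ref{ss:signsandchainmap}. Only note that your ``order exactly $2$'' statements refer to the quadrant coordinates of Section~\ref{ss:series}; in the half-disk/strip expansions~\eqref{eq:Taylorswitch}, whose indices are halved, the condition is $c_{1/2}=0$ and $c_1\neq 0$, i.e.\ a simple zero of the normal component at the marked point.
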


\begin{proof}
This is a consequence of Theorem~\ref{t:emb}. Here is the idea.
Consider local coordinates around the marked point in the source and
around $K$ in the target. Then the Taylor
expansions~\eqref{eq:Taylorswitch} with $c_{\frac12}=0$ and $c_{1}\ne
0$ give the map in $\delta\MM(a;\mathbf{n})$ with the marked point
corresponding to $0$. The corresponding Fourier expansions
\eqref{eq:Fourierswitch} present the map as an element in
$\MM(a;\mathbf{n}'')$, where the marked point is replaced by a
puncture. Conversely, translating the Fourier picture to the Taylor
picture proves the other inclusion and hence equality holds. See Section \ref{ss:signsandchainmap} for a discussion of orientations of the moduli spaces involved.
\end{proof}

\subsection{Moduli spaces of dimension two}\label{s:mswitchdim2}
For moduli spaces $\MM(a;\mathbf{n})$ with positive puncture at a Reeb
chord $a$ of degree $|a|=2$, Theorem~\ref{t:dim-moduli} implies the
following:
\begin{itemize}
\item If $|\mathbf{n}|>2$ then $\MM(a;\mathbf{n})=\varnothing$.
\item If $|\mathbf{n}|=2$ then $\MM(a;\mathbf{n})$ is a compact
  $0$-dimensional manifold. This can happen in two ways: either
  exactly one entry in $\mathbf{n}$ equals $\frac32$, or exactly two
  entries equal $1$ and all others equal $\frac12$.
\item If $|\mathbf{n}|=1$ then $\MM(a;\mathbf{n})$ is an oriented
  1-manifold, exactly one entry in $\mathbf{n}$ equals $1$ and all others
  equal $\frac12$.
\item If $|\mathbf{n}|=0$ then $\MM(a;\mathbf{n})$ is an oriented
  2-manifold and all entries in $\mathbf{n}$ equal $\frac12$.
\end{itemize}

It follows by Theorem \ref{t:[2,0]} that the 2-dimensional moduli
spaces of disks with switching boundary condition admit natural
compactifications to 2-manifolds with boundary and corners. The next
result describes the disk configurations corresponding to the boundary
and corner points of these compact surfaces, see
Figures~\ref{fig:Lag-Lag-1}, \ref{fig:Lag-Lag-2}, \ref{fig:sy-Lag}
and~\ref{fig:sy-sy}.

\begin{figure}
\labellist
\small\hair 2pt
\pinlabel $1$ at 164 172
\pinlabel $1$ at 88 95
\pinlabel $1$ at 232 95
\pinlabel $1$ at 190 10
\endlabellist
\centering
\includegraphics[height=0.5\textwidth]{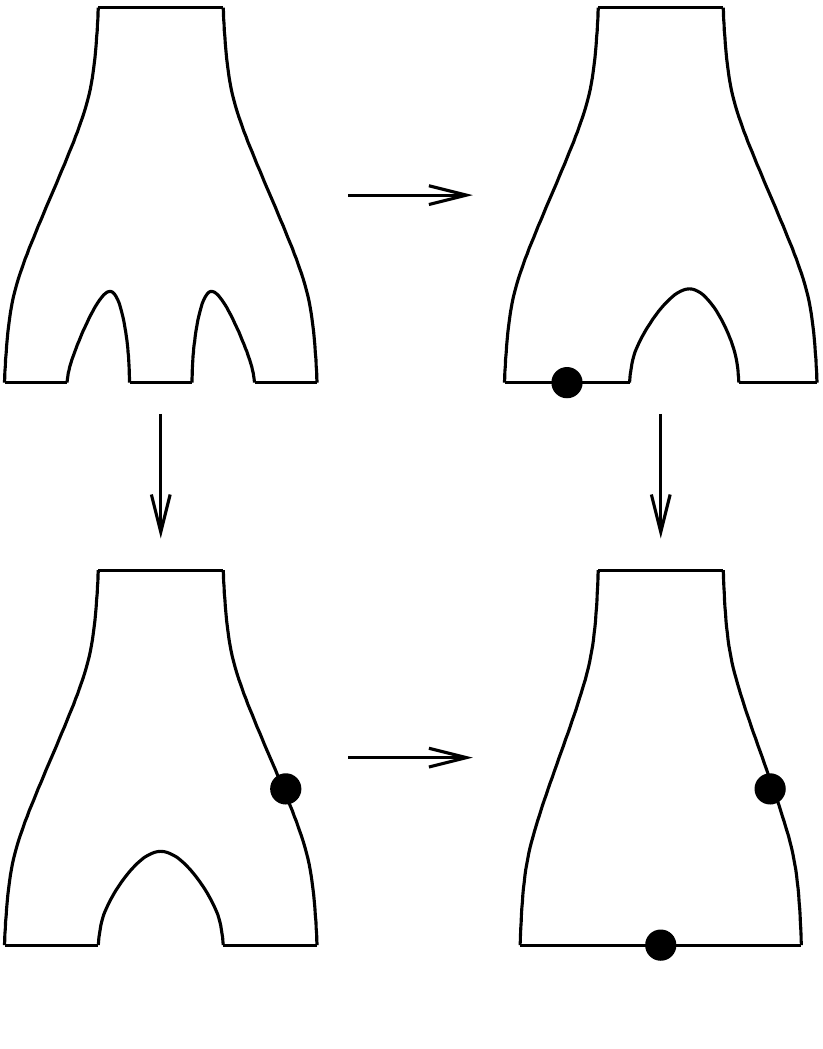}
\caption{
Type $(Lag|Lag)^1$ corner.
}
\label{fig:Lag-Lag-1}
\end{figure}

\begin{figure}
\labellist
\small\hair 2pt
\pinlabel $1$ at 164 172
\pinlabel $1$ at 47 42
\pinlabel $3/2$ at 181 10
\endlabellist
\centering
\includegraphics[height=0.5\textwidth]{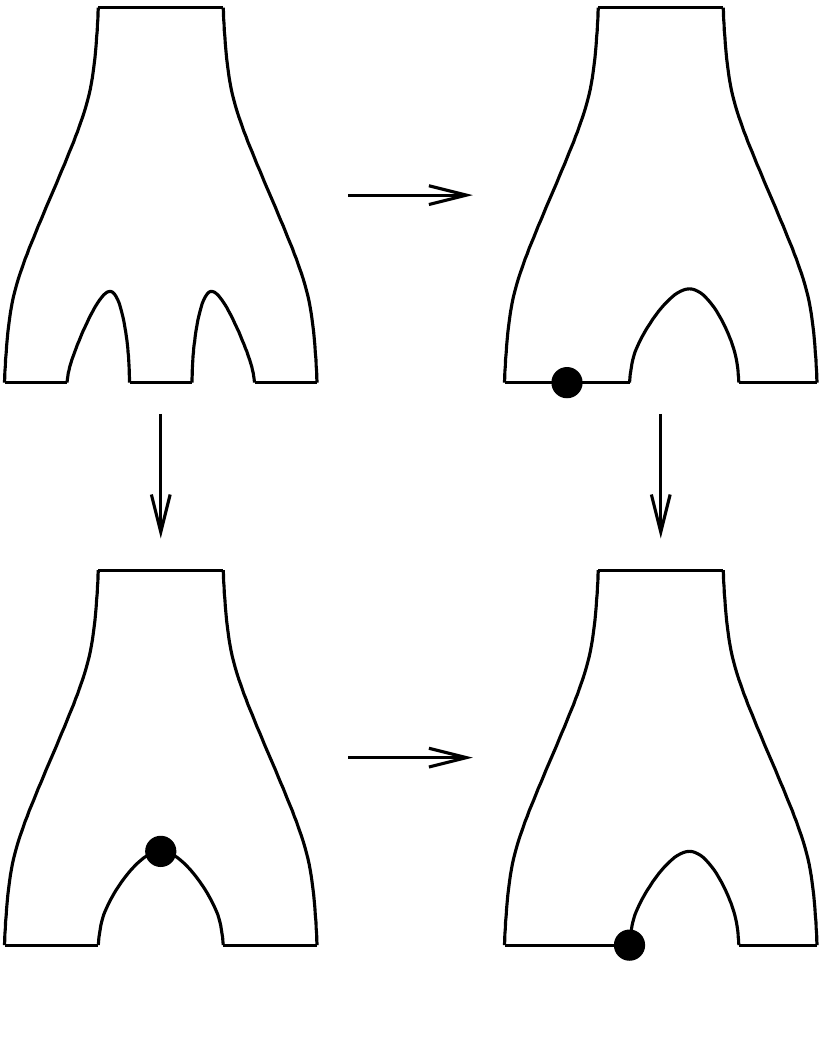}
\caption{
Type $(Lag|Lag)^2$ corner.
}
\label{fig:Lag-Lag-2}
\end{figure}

\begin{figure}
\labellist
\small\hair 2pt
\pinlabel $1$ at 19 20
\pinlabel $1$ at 167 3
\endlabellist
\centering
\includegraphics[height=0.6\textwidth]{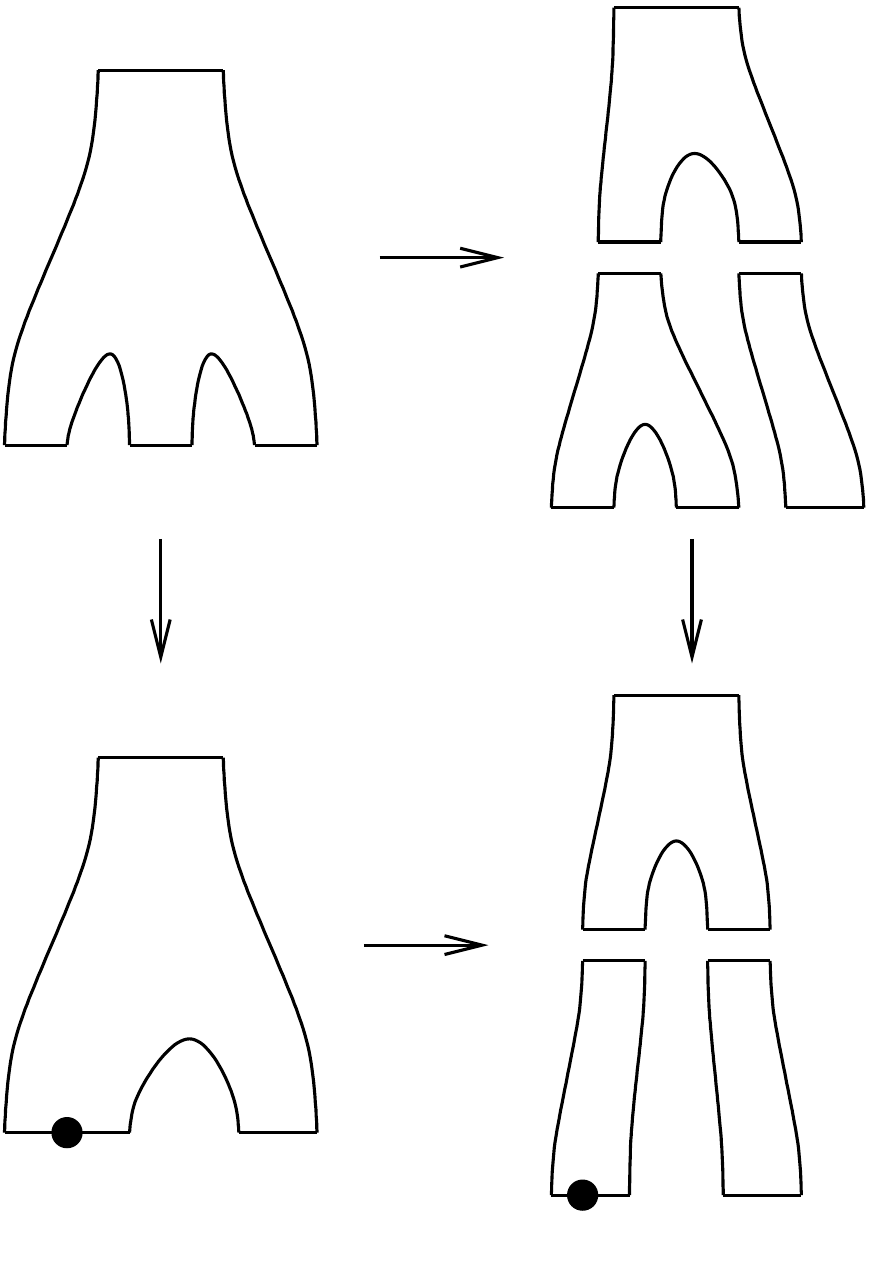}
\caption{
Type $(sy|Lag)$ corner.
}
\label{fig:sy-Lag}
\end{figure}

\begin{figure}
\labellist
\small\hair 2pt
\pinlabel $T$ at 232 84
\endlabellist
\centering
\includegraphics[height=0.7\textwidth]{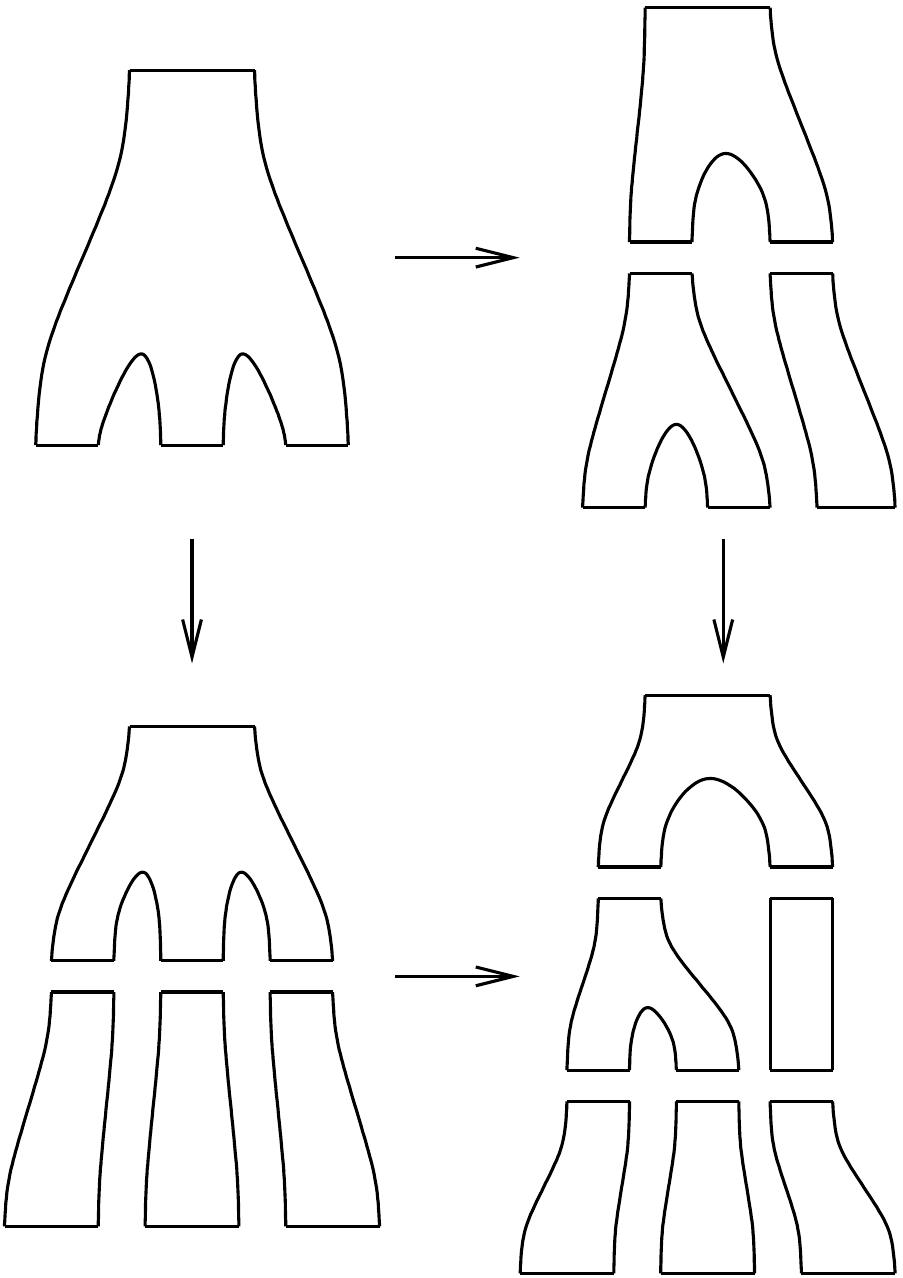}
\caption{
Type $(sy|sy)$ corner.
}
\label{fig:sy-sy}
\end{figure}

\begin{prop}\label{prop:hardboundary}
If $a$ is a Reeb chord of degree $|a|=2$ and if all entries of
$\mathbf{n}$ equal $\tfrac12$, then the 1-dimensional boundary segments
in the boundary of $\MM(a;\mathbf{n})$ consist of the following
configurations:
\begin{description}
\item[$(Lag)$] Moduli spaces $\MM(a;\mathbf{n}')$, where $\mathbf{n}'$ is
  obtained from $\mathbf{n}$ by removing two consecutive
  $\tfrac12$-entries and inserting in their place a $1$.
\item[$(sy)$] Products of moduli spaces
\[
\MM^{\rm sy}(a;\mathbf{b})/\R \;\times \; \Pi_{b_j\in\mathbf{b}}\, \MM(b_j;\mathbf{n}_j),
\]
where $\mathbf{n}$ equals the concatenation of the $\mathbf{n}_j$.
\end{description}
The corner points in the boundary consists of the following configurations:
\begin{description}
\item[$(Lag|Lag)^1$] Moduli spaces $\MM(a;\mathbf{n}')$, where $\mathbf{n}'$ is
  obtained from $\mathbf{n}$ by removing two pairs of consecutive
  $\tfrac12$-entries and inserting $1$'s in their places.
\item[$(Lag|Lag)^2$] Moduli spaces $\MM(a;\mathbf{n}'')$, where $\mathbf{n}''$ is
  obtained from $\mathbf{n}$ by removing three consecutive
  $\frac12$-entries and inserting a $\frac32$ in their place.
\item[$(sy|Lag)$] Products of moduli spaces
\[
\MM^{\rm sy}(a;\mathbf{b})/\R \;\times \; \Pi_{b_j\in\mathbf{b}}\, \MM(b_j;\mathbf{n}_j),
\]
where the concatenation of the $\mathbf{n}_j$ gives $\mathbf{n}$ with
one consecutive pair of $\frac12$-entries removed and a $1$ inserted
in their place.
\item[$(sy|sy)$] Products of moduli spaces
\[
\MM^{\rm sy}(a;\mathbf{b})/\R \;\times \; \prod_{b_j\in\mathbf{b}}\,
\Bigl(\MM^{\rm sy}(b_j;\mathbf{c}_j)/\R\;\times\prod_{c_{jk}\in\mathbf{c_j}}\,
\MM(c_{jk};\mathbf{n}_{jk})\Bigr),
\]
where $\mathbf{n}$ equals the concatenation of the $\mathbf{n}_{jk}$,
and all but one of the $\MM^{\rm sy}(b_j;\mathbf{c}_j)$ are trivial
strips over the Reeb chords $b_j$.
\end{description}
\end{prop}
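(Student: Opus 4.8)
The plan is to run the argument of Proposition~\ref{prop:simpleboundary} one dimension higher, with Theorem~\ref{t:[2,0]} supplying the manifold-with-corners structure of the compactification $\overline{\MM}(a;\mathbf{n})$. By Theorem~\ref{thm:finite} we may fix a finite bound on the number of switches, each of winding number $\tfrac12$, so the relevant configuration space is finite-dimensional and SFT-type compactness applies. First I would enumerate the possible limits of a sequence in $\MM(a;\mathbf{n})$. In the symplectization direction a holomorphic building can break off one or more levels over $\R\times\Lambda_K$ (the $(sy)$-type degenerations). Along the boundary, two or more consecutive switching punctures can collide, producing a switch of higher winding number; locally near such a collision the disk is modeled, after projecting to the normal directions as in Section~\ref{ss:switching}, by the model families $f_\eps$ and $f_{\delta,\eps}$ of Section~\ref{ss:spikes} (two, resp.\ three, of the real points coming together), so a colliding pair produces winding number $1$ and a colliding triple produces winding number $\tfrac32$. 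Mixed limits combine these two mechanisms.

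Next I would use the dimension formula of Theorem~\ref{t:dim-moduli} together with transversality to sort these degenerations by codimension. Since $|a|=2$ and all entries of $\mathbf{n}$ equal $\tfrac12$, $\MM(a;\mathbf{n})$ has dimension $2$. A single pair-collision raises the effective value of $|\mathbf{n}|$ by $1$, and a single broken-off symplectization level introduces one factor of $1/\R$; each operation drops the dimension by one, giving the two codimension-one boundary strata $(Lag)$ and $(sy)$. Codimension-two strata arise from performing two such independent operations or from a single triple-collision: two disjoint pair-collisions yield $(Lag|Lag)^1$, a triple-collision yields $(Lag|Lag)^2$, a symplectization level together with a pair-collision on one of the resulting disks yields $(sy|Lag)$, and a two-step breaking in the symplectization with exactly one nontrivial intermediate level (the rest trivial Reeb-chord strips) yields $(sy|sy)$. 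In each case the identities $\dim\MM^{\rm sy}(\cdot;\cdot)/\R=|\cdot|-|\cdot|-1$ and $\dim\MM(b;\mathbf{n})=|b|-|\mathbf{n}|$ give total dimension $0$, and the constraint $|a|=2$ together with transversality rules out every further configuration of codimension $\le 2$ (a switch of winding number $\ge 2$, or three or more nontrivial symplectization levels, would be codimension $\ge 3$).

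Finally, a gluing argument---the content of Theorem~\ref{t:[2,0]}---shows that each listed configuration is the limit of a unique local family of honest disks and that the resulting charts fit together to give $\overline{\MM}(a;\mathbf{n})$ the structure of a compact surface with boundary and corners whose strata are exactly as stated. I expect the main obstacle to be the gluing analysis at the two genuinely new corner types. For $(Lag|Lag)^2$ one must glue three colliding switching punctures into a single winding-number-$\tfrac32$ puncture and show that the gluing parameters extend smoothly up to the corner; this uses the refined local models of Section~\ref{ss:spikes} and the $C^2$-but-not-$C^3$ boundary behavior analyzed there. For $(sy|Lag)$ one must verify compatibility of the SFT neck-stretching gluing with the Lagrangian-corner gluing, i.e.\ that the two gluing regions can be treated simultaneously. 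The $(Lag|Lag)^1$ and $(sy|sy)$ corners are, by contrast, essentially products of codimension-one gluings already established in the proof of Proposition~\ref{prop:simpleboundary}, so these present no new difficulty beyond bookkeeping of orientations.
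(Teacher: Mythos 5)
Your proposal is correct and follows essentially the same route as the paper: the paper's proof simply invokes Theorem~\ref{t:[2,0]} (whose content is exactly the compactness, transversality and gluing analysis you outline) and then describes local corner coordinates as the lengths of the short boundary segments and/or the gluing parameters of the multi-level curves, matching your discussion of the $(Lag|Lag)^1$, $(Lag|Lag)^2$, $(sy|Lag)$ and $(sy|sy)$ strata via the local models of Section~\ref{ss:spikes}. The only difference is one of emphasis, with your write-up spelling out the codimension bookkeeping that the paper leaves implicit.
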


\begin{proof}
This is a consequence of Theorem~\ref{t:[2,0]}. The descriptions of the
boundary segments are analogous to the boundary phenomena of
Proposition~\ref{prop:simpleboundary}. At a type $(Lag|Lag)^1$ corner we
have two pairs of switches colliding. Local coordinates in the moduli
space around this configuration can be taken as the lengths of the
corresponding short boundary segments, which is a product of two
half-open intervals. At a type $(Lag|Lag)^2$ corner there are likewise
two short boundary segments that give local coordinates on the moduli
space, see Figure \ref{fig:Lag-Lag-2}.
At a type $(sy|Lag)$ corner the two
parameters are the length of the short boundary segment and the gluing
parameter for the two-level curve.
Finally, at a type $(sy|sy)$ corner the two
parameters are the two gluing parameter for the three-level curve.
\end{proof}

We next give alternate interpretations of the boundary phenomena in
Proposition~\ref{prop:hardboundary}.
Recall the notation $\MM^{\ast}(a;\mathbf{n})$ for the moduli space
corresponding to $\MM(a;\mathbf{n})$ in which the disks have an
additional free marked point $*$ on the boundary. It comes with an
evaluation map $\ev\colon \MM^{\ast}(a;\mathbf{n})\to L$ and a
projection $\pi:\MM^{\ast}(a;\mathbf{n})\to \MM(a;\mathbf{n})$
forgetting the marked point, and we denote
$\delta\MM(a;\mathbf{n})=\ev^{-1}(K)$.

\begin{prop}\label{prop:b=intersdim2}
If $a$ is a Reeb chord of degree $|a|=2$ and if all entries
$\mathbf{n}$ equal $\tfrac12$, then there is a natural orientation
preserving identification
between $\delta\MM(a;\mathbf{n})$ and $\MM(a;\mathbf{n}'')$, where
$\mathbf{n}''$ is obtained from $\mathbf{n}$ by inserting in
$\mathbf{n}$ a new entry equal to $1$ at the position given by the
marked point.

The moduli space $\delta\MM(a;\mathbf{n})\subset
\MM^\ast(a;\mathbf{n})$ is an embedded curve with boundary. Its
boundary consists of transverse intersections with the boundary of
$\MM^\ast(a;\mathbf{n})$, corresponding to degenerations of type
$(sy|Lag)$ and $(Lag|Lag)^1$ involving the marked point $*$, and to points
in the interior of $\MM^\ast(a;\mathbf{n})$, corresponding to
degenerations of type $(Lag|Lag)^2$ involving the marked point $*$.

The projection $\pi(\delta\MM(a;\mathbf{n}))\subset
\MM(a;\mathbf{n})$ is an immersed curve with boundary and transverse
self-intersections. Its boundary consists of transverse intersections
with the boundary of $\MM^\ast(a;\mathbf{n})$. See Figure~\ref{fig:delta-M}.
\end{prop}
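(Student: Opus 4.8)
The plan is to reduce the whole proposition to Theorem~\ref{t:emb}, using the degeneration analysis already carried out for the lower strata and taking Proposition~\ref{prop:b=intersdim1} as the model for the degree-one case. First I would establish the identification $\delta\MM(a;\mathbf{n})\cong\MM(a;\mathbf{n}'')$ by the same purely local argument as in Proposition~\ref{prop:b=intersdim1}: in the holomorphic coordinates of Lemma~\ref{l:knotnbhd} around $K\subset T^*Q$ and a half-disk coordinate $z\in D_\eps^+$ around the marked point $*$ in the source, a point of $\delta\MM(a;\mathbf n)$ is a disk $u\in\MM(a;\mathbf n)$ whose normal component in the Taylor expansion~\eqref{eq:Taylorswitch} has $c_{1/2}=0$, hence (for generic data) first nonzero coefficient $c_1$; passing to the Fourier expansion~\eqref{eq:Fourierswitch} via the biholomorphism $\chi$ of~\eqref{eq:chi} promotes $*$ to a switching puncture of local winding number $1$, i.e.\ a point of $\MM(a;\mathbf n'')$, and reading a winding-$1$ Fourier expansion back as a Taylor expansion is the inverse construction. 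That $\ev^{-1}(K)$ is cut out transversally — so that $\delta\MM(a;\mathbf n)$ is a $1$-manifold, consistent with $\dim\MM^\ast(a;\mathbf n)=|a|-|\mathbf n|+1=3$ and $\codim_L K=2$ — and that the induced orientation agrees with that of $\MM(a;\mathbf n'')$ are part of Theorem~\ref{t:emb} (signs are handled in Section~\ref{ss:signsandchainmap}).

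Next I would note that $\ev(*)\in K$ records the point of $K$ hit by $u$, so two distinct elements of $\delta\MM(a;\mathbf n)$ lying over the same disk $u$ carry distinct marked points; hence the inclusion $\delta\MM(a;\mathbf n)\hookrightarrow\MM^\ast(a;\mathbf n)$ is an embedding of a compact $1$-manifold with boundary. Forgetting $*$, the map $\pi|_{\delta\MM(a;\mathbf n)}$ is an immersion into the surface $\MM(a;\mathbf n)$ whose failure of injectivity occurs exactly at disks meeting $K$ at two distinct boundary points; for generic $J$ these form an isolated set (two evaluation maps jointly transverse to $K$, again from Theorem~\ref{t:emb}), at which the two local branches of $\pi(\delta\MM(a;\mathbf n))$ cross transversally.

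For the boundary I would view $\delta\MM(a;\mathbf n)$ as $\MM(a;\mathbf n'')$ and apply Theorem~\ref{t:[2,0]}: this is a compact $1$-manifold whose boundary points, by the degeneration analysis underlying Propositions~\ref{prop:simpleboundary} and~\ref{prop:hardboundary} (now with $|a|=2$ and $|\mathbf n''|=1$), are of type $(Lag)$, where two adjacent entries of $\mathbf n''$ collide, or of type $(sy)$, where a rigid disk splits off in the symplectization. I would translate each back to a configuration in $\MM^\ast(a;\mathbf n)$ under the dictionary ``$*$ on $K$'' $\leftrightarrow$ ``winding-$1$ puncture'', keeping track of whether the underlying $\mathbf n$-disk degenerates: (i) two $\tfrac12$-switches of the $\mathbf n$-disk colliding while $*$ stays on $K$ is a $(Lag|Lag)^1$ degeneration, with the underlying disk on a $(Lag)$-face of $\MM(a;\mathbf n)$, hence a transverse intersection of $\delta\MM(a;\mathbf n)$ with $\partial\MM^\ast(a;\mathbf n)$; (ii) a symplectization level splitting off with $*$ on one piece is a $(sy|Lag)$ degeneration, again a transverse intersection with $\partial\MM^\ast(a;\mathbf n)$; (iii) the marked point $*$ running into an adjacent $\tfrac12$-switch to produce winding $\tfrac32$ is a $(Lag|Lag)^2$ degeneration involving $*$ in which only $*$ and that switch collide, so by Theorem~\ref{t:emb} the limit is an interior point of $\MM^\ast(a;\mathbf n)$. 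Projecting by $\pi$, the endpoints of types (i), (ii) land on the $(Lag)$- and $(sy)$-faces and those of type (iii) on the $(Lag|Lag)^2$-corners of $\MM(a;\mathbf n)$, so $\pi(\delta\MM(a;\mathbf n))$ is an immersed curve with transverse double points whose boundary consists of transverse intersections with $\partial\MM(a;\mathbf n)$, matching Figure~\ref{fig:delta-M}.

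The main obstacle will be the precise structure of the compactification near a $(Lag|Lag)^2$ degeneration: showing that when the marked point collides with a simple switch the limit is genuinely an interior point of $\MM^\ast(a;\mathbf n)$, that $\delta\MM(a;\mathbf n)$ really terminates there with the claimed transversality, and that all of this is orientation-compatible. This is exactly the input provided by Theorem~\ref{t:emb} via the local models and gluing of Sections~\ref{S:mdlisp}--\ref{sec:gluing}; everything else in the proposition is bookkeeping on top of that.
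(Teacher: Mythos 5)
Your overall route is the paper's: identify $\delta\MM(a;\mathbf{n})$ with $\MM(a;\mathbf{n}'')$ via Taylor/Fourier expansions exactly as in Proposition~\ref{prop:b=intersdim1}, get embeddedness and transverse double points from transversality of the evaluation map, and read off the boundary of $\delta\MM(a;\mathbf{n})\cong\MM(a;\mathbf{n}'')$ from the compactification of the $[2,1]$ moduli spaces (which, incidentally, is Theorem~\ref{t:[2,1]}, not Theorem~\ref{t:[2,0]}; and the structural input for this proposition is Theorem~\ref{t:imm}, not Theorem~\ref{t:emb}, which only treats the $[1,1]\to[1,0]$ case).

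However, your handling of the $(Lag|Lag)^2$ degenerations involving $*$ has a genuine gap. First, these endpoints do \emph{not} project to the $(Lag|Lag)^2$ corners of $\ol\MM(a;\mathbf{n})$: the $(d2)$ corners of Theorem~\ref{t:[2,0]} are configurations where \emph{three} consecutive $\tfrac12$-switches of the underlying disk collide (with a $4$-punctured constant disk attached), whereas here only the marked point and a \emph{single} $\tfrac12$-switch collide, producing a $[2,\tfrac32]$-disk with a $3$-punctured constant disk; by Theorem~\ref{t:imm} such $[2,\tfrac32]$-disks are \emph{interior} points of $\ol\MM(a;\mathbf{n})$. Your text is in fact internally inconsistent on this point: you first (correctly) call these limits interior points of $\MM^\ast(a;\mathbf{n})$ and then send their projections to boundary corners. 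Second, and more importantly, you omit the argument that makes the third paragraph of the proposition true: these endpoints of $\delta\MM(a;\mathbf{n})$ occur in \emph{pairs}, corresponding to the marked point $*$ approaching the corner from the left or from the right (the two configurations are distinguished by the position of $*$ on the $3$-punctured constant disk attached at the weight-$\tfrac32$ corner). Under $\pi$ the two endpoints of a pair map to the same interior point of $\MM(a;\mathbf{n})$ and the two branches fit together smoothly there (last statement of Theorem~\ref{t:imm}), so $\pi(\delta\MM(a;\mathbf{n}))$ acquires \emph{no} boundary in the interior, and its only boundary points are the transverse intersections with the $(sy)$- and $(Lag)$-faces coming from your cases (i) and (ii). Without this pairing/smooth-fitting step (and with your misplacement of the projected endpoints), the claim that the boundary of the projection consists solely of transverse intersections with the boundary of the ambient moduli space is not established; note also that the local model is the one of Section~\ref{ss:spikes} with two spike families meeting, i.e.\ Figure~\ref{fig:spikes-vanishing}, which is exactly the phenomenon this pairing encodes.
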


\begin{figure}
\labellist
\small\hair 2pt
\pinlabel ${\color{blue} (sy|sy)}$ at 0 162
\pinlabel ${\color{blue} (sy)}$ at 69 229
\pinlabel ${\color{blue} (sy|Lag)}$ at 108 318
\pinlabel ${\color{blue} (Lag)}$ at 213 294
\pinlabel ${\color{blue} (Lag|Lag)^2}$ at 323 316
\pinlabel ${\color{blue} (sy)}$ at 68 91
\pinlabel ${\color{blue} (sy|Lag)}$ at 107 12
\pinlabel ${\color{blue} (Lag)}$ at 215 30
\pinlabel ${\color{blue} (Lag|Lag)^1}$ at 324 12
\pinlabel ${\color{red} (Lag|Lag)^2}$ at 146 115
\endlabellist
\centering
\includegraphics[width=0.7\textwidth]{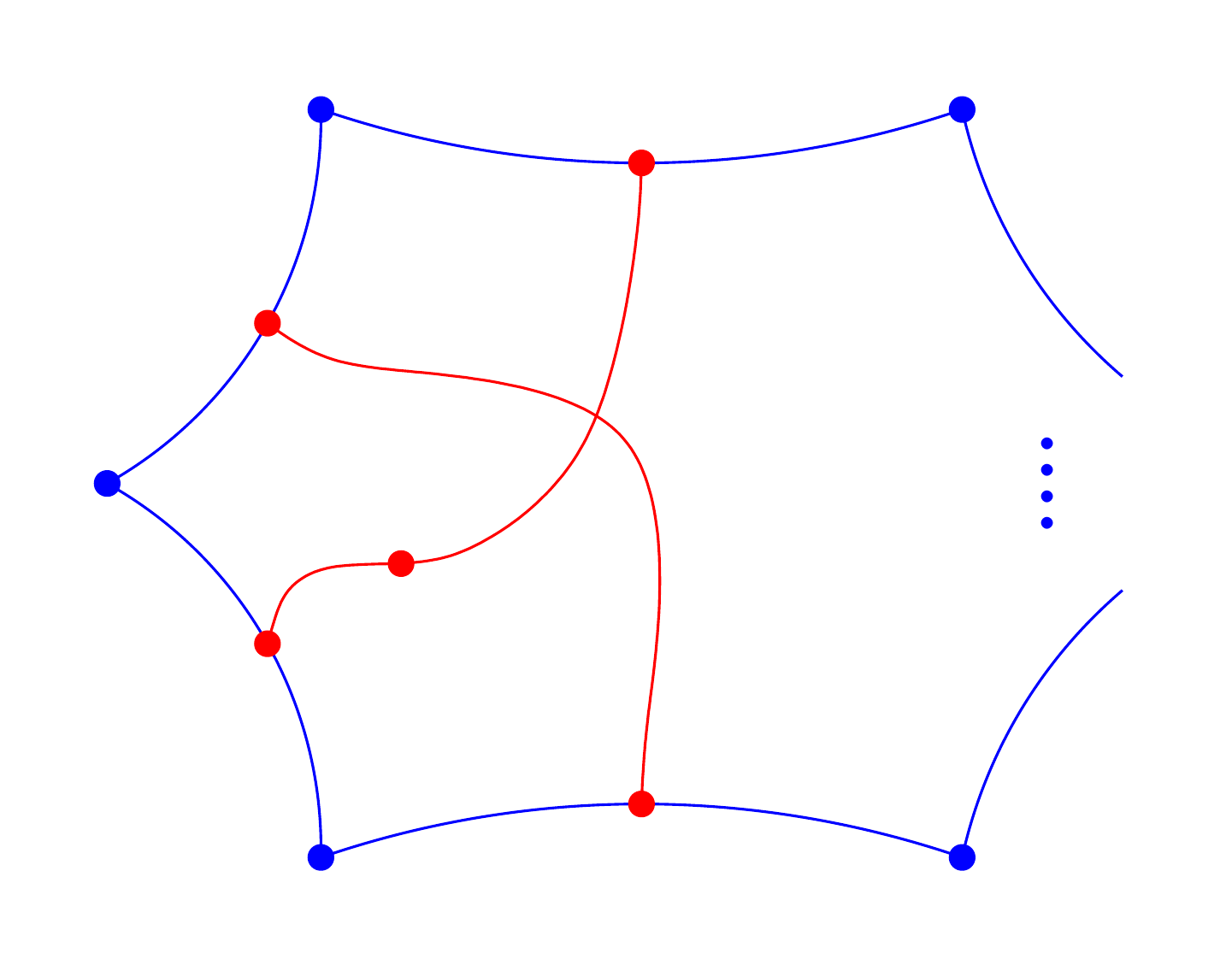}
\caption{
The $2$-dimensional moduli space $\MM(a;\mathbf{n})$ and the immersed
curve $\pi(\delta\MM(a;\mathbf{n}))$.
}
\label{fig:delta-M}
\end{figure}

\begin{proof}
This is a consequence of Theorem~\ref{t:imm}. Here is a sketch.
The proof of the first statement is analogous to that of
Proposition~\ref{prop:b=intersdim1}, looking at Taylor and Fourier
expansions. That $\delta\MM(a;\mathbf{n})$ is an embedded curve with
boundary follows from transversality of the evaluation map $\ev\colon
\MM^{\ast}(a;\mathbf{n})\to L$ to the knot $K$, which holds for
generic almost complex structure. More refined transversality
arguments show that the projection $\pi(\delta\MM(a;\mathbf{n}))$ is
an immersed curve with transverse self-intersections corresponding to
holomorphic disks that meet the knot twice at non-corner points on their
boundary.

For the other statements, note that each stratum of
$\delta\MM(a;\mathbf{n})$ corresponds to a moduli space
$\MM(a;\mathbf{n'})$, where $\mathbf{n'}$ is obtained from
$\mathbf{n}$ by inserting an entry $1$ corresponding to the marked
point $*$. It follows from Proposition~\ref{prop:hardboundary} that
boundary points of $\delta\MM(a;\mathbf{n})$ correspond to
degenerations of types $(sy|Lag)$, $(Lag|Lag)^1$ and $(Lag|Lag)^2$
involving the point $*$. The first two correspond to transverse
intersections of $\pi(\delta\MM(a;\mathbf{n}))$ with boundary strata
of $\MM(a;\mathbf{n})$ of types $(sy)$ and $(Lag)$, respectively.
A dimension argument shows that degenerations of type $(Lag|Lag)^2$
involving the point $*$ cannot meet the boundary of
$\MM^\ast(a;\mathbf{n})$, so they correspond to boundary points of
$\delta\MM(a;\mathbf{n})$ in the interior of
$\MM^\ast(a;\mathbf{n})$. They appear in pairs corresponding
to holomorphic disks in which the marked point $*$ has approached a corner
from the left or right to form a new corner of weight $3/2$. In
$\delta\MM(a;\mathbf{n})$ the two configurations on a pair are
distinct (formally, they are distinguished by the position of the
marked point $*$ on the $3$-punctured constant disk attached at the
weight $3/2$ corner), so they give actual boundary points. In the
projection $\pi(\delta\MM(a;\mathbf{n}))$ the two configuration become
equal and thus give an interior point, hence
$\pi(\delta\MM(a;\mathbf{n}))$ has no boundary points in the interior
of $\MM(a;\mathbf{n})$.

See Section \ref{ss:signsandchainmap} for a discussion of orientations of the moduli spaces involved in these arguments.
\end{proof}

\comment{
In order to define the chain map from $\AA$ to $\CC$ we need singular chains rather than moduli spaces as parameter spaces for broken strings. To this end, if $\mathbf{n}$ have all entries equal to $\frac12$ then we fix for each moduli space $\MM(c;\mathbf{n})$ of dimension $\le 2$ a triangulation that is compatible with the boundary and the corner structure, and so that the simplices are transverse to the stratified subsets $\MM'(c;\mathbf{n})$. We write $\MM^{\Delta}(c;\mathbf{n})$ for the compactified and triangulated moduli space corresponding to $\MM(c;\mathbf{n})$.

\begin{remark}\label{rmk:triangulation}
More concretely the triangulated moduli spaces have the following properties:
\begin{itemize}
\item[$(\Delta_{0})$] If $|a|=0$ then $\MM^{\Delta}(a;\mathbf{n})$ is a finite collection of oriented $0$-simplices.
\item[$(\Delta_{1})$] If $|a|=1$ then $\MM^{\Delta}(a;\mathbf{n})$ is a finite collection of oriented $1$-simplices with boundary points corresponding to the disk configurations described in Proposition~\ref{prop:simpleboundary} and containing the oriented $0$-manifold $\MM'(a,\mathbf{n})$ in its interior.
\item[$(\Delta_{2})$] If $|a|=2$ then $\MM^{\Delta}(a;\mathbf{n})$ is a finite collection of oriented $2$-simplices. The boundary of the sum of all the collection corresponds to a triangulation of the moduli spaces of the disk configurations in the boundary described by Proposition~\ref{prop:hardboundary}, where corners appear as $0$-simplices and boundary edges are unions of $1$-simplices. Furthermore the stratified curve $\MM'(a;\mathbf{n})$ is transverse to the triangulation also at the boundary. (Double points and endpoints lie in the interior of the 2-simplices, the curve does not meet $0$-simplices and is transverse to all $1$-simplices.)
\end{itemize}
\end{remark}
}

\subsection{The chain map}
We can summarize the description of the moduli spaces of punctured
holomorphic disks with switching boundary conditions in the preceding
subsections as follows. For all Reeb chords $a$ and all integers $\ell\geq
0$ the compactified moduli spaces
$$
   \ol\MM_\ell(a) :=
   \ol\MM(a;\underbrace{\tfrac12,\dots,\tfrac12}_{2\ell})
$$
are compact oriented manifolds with boundary and corners of dimension $|a|$ whose
codimension $1$ boundaries satisfy the relations
\begin{equation}\label{eq:domain-ell}
      \p\ol\MM_\ell(a) = \ol\MM_\ell(\p_\Lambda a) \cup
      -\delta\ol\MM_{\ell-1}(a),
\end{equation}
where $\p_\Lambda a=\p^{\rm sy}a$ and $\delta\ol\MM_{\ell-1}(a)$ is the closure in $\ol\MM_{\ell-1}(a)$ of the set 
$$
\delta\MM_{\ell-1}(a) := \delta\MM(a;\underbrace{\tfrac12,\dots,\tfrac12}_{2\ell-2})
$$ 
introduced in Proposition~\ref{prop:b=intersdim2}.
Again we refer to Section \ref{ss:signsandchainmap} for a description of the orientations involved.

\begin{prop}\label{prop:chain-map-ell}
There exist smooth triangulations of the spaces $\ol\MM_\ell(a)$ and
generic chains of broken strings
$$
   \Phi_\ell(a): \ol\MM_\ell(a)\to\Sigma^\ell
$$
(understood as singular chains by summing up their restrictions to the
simplices of the triangulations) satisfying the relations
\begin{equation}\label{eq:chain-map-ell}
      \p\Phi_\ell(a) = \Phi_\ell(\p_\Lambda a)
      -(\delta_Q+\delta_N)\Phi_{\ell-1}(a).
\end{equation}
\end{prop}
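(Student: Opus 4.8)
The plan is to define $\Phi_\ell(a)$ geometrically as the ``boundary--string'' map on the compactified moduli space, to make it generic and triangulate it compatibly by induction, and then to read off the identity \eqref{eq:chain-map-ell} from the description of $\p\ol\MM_\ell(a)$ in \eqref{eq:domain-ell} together with the alternate descriptions of the boundary strata in Propositions~\ref{prop:simpleboundary}, \ref{prop:b=intersdim1}, \ref{prop:hardboundary} and~\ref{prop:b=intersdim2}.

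First I would construct the underlying continuous map $\ol\MM_\ell(a)\to\Sigma^\ell$. A disk $u\in\MM_\ell(a)$ has $2\ell+1$ boundary arcs separated by the $2\ell$ switching punctures and the positive puncture; the arcs on the zero section $Q$ are taken as $Q$-strings, and the arcs on $L_K$, composed with the fixed identification of a neighbourhood of $K$ in $L_K$ with $N$ coming from the holomorphic coordinates of Lemma~\ref{l:knotnbhd} and the bundle structure at infinity, are taken as $N$-strings. Near the positive puncture the boundary escapes to infinity along $\R\times\Lambda_K$; closing it up by the capping paths of $a$ (joining $a(0),a(T)$ to the common base point $x_0\in\Lambda_K=\p N$) produces a based loop, i.e.\ an element of $\Sigma^\ell$. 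Conditions (i) and (ii) of Definition~\ref{def:string} are then immediate, and condition (iii) holds \emph{exactly} because, in local coordinates near a switch, the normal component of $u$ has the Taylor expansion of Case~1 (a switch $Q\to N$) or Case~2 (a switch $N\to Q$) of Section~\ref{ss:series}, whose reflection symmetry is precisely the reformulation of (iii) via $r_*$ (cf.\ Remark~\ref{rem:cases1-4}). Continuity of this map up to the corners of $\ol\MM_\ell(a)$ is where the gluing and asymptotics analysis of Sections~\ref{S:mdlisp}--\ref{sec:gluing} enters.

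Next I would arrange genericity and choose the triangulations. For generic $J$ the evaluation maps $\ev_i$ on the moduli spaces are transverse to $K$ and the loci $\delta\MM$, their projections and self-intersections are transversely cut out (Section~\ref{sec:trans} and Propositions~\ref{prop:b=intersdim1}, \ref{prop:b=intersdim2}), so the boundary--string map already lands in generic chains in the sense of Definition~\ref{def:generic-chain}; a small perturbation of the triangulation handles jet-transversality of the corner evaluations. The triangulations of the $\ol\MM_\ell(a)$ are then chosen by induction on $\dim\ol\MM_\ell(a)=|a|$ and on $\ell$, compatibly with the corner stratification and with the boundary identifications of \eqref{eq:domain-ell}: on the stratum $\ol\MM_\ell(\p_\Lambda a)$ one uses the triangulation already fixed there (this lowers $|a|$), and on $\delta\ol\MM_{\ell-1}(a)$ one uses, via the identifications $\delta\MM(a;\mathbf n)\cong\MM(a;\mathbf n'')$ of Propositions~\ref{prop:b=intersdim1} and~\ref{prop:b=intersdim2}, the triangulation coming from $\ol\MM_{\ell}$ with a forced weight-$1$ corner (this lowers $\ell$). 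The genuinely delicate point is matching the \emph{image} of this last stratum with $(\delta_Q+\delta_N)\Phi_{\ell-1}(a)$: in the limit producing a weight-$1$ corner the disk develops a spike in a normal direction (the local model $f_\eps$ of Section~\ref{ss:spikes}), but a priori not the pre-chosen preferred spike $\s_{p,q}$ of Section~\ref{ss:string-op}. Using convexity of the space of spikes with fixed ends (Remark~\ref{rem:spikes-convex}) one homotopes the boundary--string map so that it inserts exactly the preferred spikes, and for the two-dimensional moduli spaces one absorbs the remaining discrepancy into the auxiliary $1$-chains $S^j$ used to define $\delta_Q$ on $2$-chains, exactly as in the $d=2$ part of Section~\ref{ss:string-op}; this is why \eqref{eq:chain-map-ell} can be made to hold strictly rather than up to homotopy.

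Finally, the chain-map identity. Since $\Phi_\ell(a)$ is the sum of its restrictions to the simplices of the triangulation, $\p\Phi_\ell(a)$ is the restriction of $\Phi_\ell(a)$ to $\p\ol\MM_\ell(a)$ (interior faces cancel in pairs). By \eqref{eq:domain-ell} this boundary is $\ol\MM_\ell(\p_\Lambda a)\cup-\delta\ol\MM_{\ell-1}(a)$; the restriction to the first piece is $\Phi_\ell(\p_\Lambda a)$ by inductive compatibility of triangulations, and the restriction to the second piece is $-(\delta_Q+\delta_N)\Phi_{\ell-1}(a)$ by the identifications of Propositions~\ref{prop:b=intersdim1} and~\ref{prop:b=intersdim2} together with the spike-matching above, a crossing of a $Q$-string with $K$ contributing via $\delta_Q$ and a crossing of an $N$-string via $\delta_N$. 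Signs are checked against the orientation conventions of Section~\ref{ss:signsandchainmap}, and the self-consistency of the scheme (in particular that the interior $(Lag|Lag)^2$ boundary points of $\delta\MM$ match the internal $S^j$-structure of $\delta_Q$ on $2$-chains) is precisely the relation $D^2=0$ on the target already established in Proposition~\ref{prop:string-relations}. I expect the main obstacle to be exactly this last reconciliation: carrying out the coherent inductive choice of triangulations and spike-homotopies so that the geometric boundary--string map agrees on the nose with the combinatorially defined $\p+\delta_Q+\delta_N$, rather than merely up to chain homotopy.
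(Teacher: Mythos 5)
Your proposal is correct and follows essentially the same route as the paper: define $\Phi_\ell(a)$ on the interior of the moduli space as the boundary string of the disk (with condition (iii) coming from the Taylor expansions at switches), prescribe it on $\p\ol\MM_\ell(a)$ by the right-hand side of \eqref{eq:chain-map-ell}, and interpolate in a collar, inductively over $|a|$ and $\ell$ with compatibly chosen triangulations and with signs from the orientation discussion. The one imprecision is that $\wt\Phi_\ell(a)$ does not extend continuously to $\ol\MM_\ell(a)$ at all (the spike shrinks away, so the limit has fewer switches), but your spike-homotopy/interpolation step -- justified by the local models of Remark~\ref{r:breakingmodelclose} and convexity of spikes with fixed ends -- is exactly the paper's remedy, so the argument proceeds as you describe.
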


\begin{proof}
The idea of the proof is very simple: After connecting the end points
of $a$ to the base point $x_0$ by capping paths, a suitable
parametrization (explained below)
of the boundary of a holomorphic disk $u\in\MM_\ell(a)$ determines a
broken string $\p(u)\in\Sigma^\ell$. Thus we get maps
$$
   \wt\Phi_\ell(a):\MM_\ell(a) \to \Sigma^\ell,\qquad u\mapsto\p(u)
$$
and the relations~\eqref{eq:chain-map-ell} should follow
from~\eqref{eq:domain-ell}. However, the map $\wt\Phi_\ell(a)$ in general does not
extend to the compactification $\ol\MM_\ell$ as a map to $\Sigma^\ell$
because on the boundary some $Q$- or $N$-string can disappear in the
limit. We will remedy this by suitably modifying the maps
$\wt\Phi_\ell(a)$ near the boundaries (inserting spikes).

Before doing this, let us discuss parametrizations of the broken
string $\p(u)$ for $u\in\MM_\ell(a)$. Near a switch we can pick
holomorphic coordinates on the domain (with values in the upper
half-disk) and the target (provided by Lemma~\ref{l:knotnbhd})
in which the normal projection of $u$ consists of two holomorphic
functions near a corner as in Section~\ref{sec:holo}. The discussion
in that section shows that in these coordinates $\p(u)$ satisfies the
matching conditions on the $m$-jets required in the definition a
broken string. We take near each corner a parametrization of $\p(u)$
induced by such holomorphic coordinates and extend them arbitrarily
away from the corners to make $\p(u)$ a broken string in the sense of
Definition~\ref{def:string}. Note that the space of such
parametrizations is contractible.

Now we proceed by induction over $|a|=0,1,2$.

{\bf Case $|a|=0$:}
In this case $\ol\MM_\ell(a)$ consists of finitely many oriented points and we
set $\Phi_\ell(a)(u):=\p(u)$ (picking a parametrization of the
boundary as above).
\smallskip

{\bf Case $|a|=1$:}
We proceed by induction on $\ell=0,1,\dots$. For $\ell=0$, on the
boundary $\p\ol\MM_0(a) = \ol\MM_0(\p_\Lambda a)$ we are already
given the map $\Phi_0(\p_\Lambda a)$. We extend it to a map
$\Phi_0(a):\ol\MM_0(a) \to \Sigma^0$ by sending $u$ to $\p(u)$ with
parametrizations matching the given ones on $\p\ol\MM_0(a)$, so that
$\p\Phi_0(a) = \Phi_0(\p_\Lambda a)$ holds.

Now suppose that we have already defined $\Phi_0(a),\dots,\Phi_{\ell-1}(a)$
such that the relations~\eqref{eq:chain-map-ell} hold up to
$\ell-1$. 
According to \eqref{eq:domain-ell}, the boundary $\p \ol\MM_\ell(a)$ 
is identified with the union of domains of the maps on the right hand side of~\eqref{eq:chain-map-ell}. On the other hand, on the interior $\MM_\ell(a)$ we are given the map $\wt{\Phi}_\ell(a)$ described above. Furthermore, by Proposition~\ref{prop:simpleboundary} and Remark \ref{r:breakingmodelclose},
elements $u$ close to the boundary points in $\delta\ol\MM_{\ell-1}(a) \subset 
\p\ol\MM_\ell(a)$ have spikes (shrinking as $u$ tends to the boundary)
roughly in the same direction as those on the boundary. So near $\p\ol\MM_\ell(a)$ we can interpolate 
between the map on the boundary given by the right hand side of~\eqref{eq:chain-map-ell} and the map $\wt\Phi_\ell(a)$ on the 
interior to obtain a map $\Phi_\ell(a):\ol\MM_\ell(a)\to\Sigma^\ell$
satisfying~\eqref{eq:chain-map-ell}. Since the modification of
$\wt\Phi_\ell(a)$ can be done away from the finite set
$\delta\ol\MM_\ell(a) \subset \MM_\ell(a)$, 
$\Phi_\ell(a)$ is a generic
$1$-chain of broken strings. This concludes the inductive step. Since
we are dealing with $1$-chains, a smooth triangulation just
amounts to a parametrization of the components of $\ol\MM_\ell(a)$ by
intervals whose boundary points avoid the set $\delta\ol\MM_\ell(a)$.
\smallskip

{\bf Case $|a|=2$:}
We proceed again by induction on $\ell=0,1,\dots$. For $\ell=0$, we
again define $\Phi_0(a):\ol\MM_0(a) \to \Sigma^0$ by sending $u$ to
$\p(u)$, with parametrizations matching the given ones on
$\p\ol\MM_0(a)$, so that $\p\Phi_0(a) = \Phi_0(\p_\Lambda a)$ holds.

Now suppose that we have already defined $\Phi_0(a),\dots\Phi_{\ell-1}(a)$
and triangulations of their domains such that they are generic
$2$-chains of broken strings and the relations~\eqref{eq:chain-map-ell} hold up to $\ell-1$. 
As in the case of 1-chains, the boundary $\p\ol\MM_\ell(a)$ is identified via~\eqref{eq:domain-ell} with the union of domains of the right hand side of~\eqref{eq:chain-map-ell}, so as before we define $\Phi_\ell(a)$ on that boundary via the maps $\Phi_\ell(\p_\Lambda a)$ resp.~$\delta\Phi_{\ell-1}(a)$. By induction hypothesis, these maps coincide at corner points. Note that the map
$\delta\Phi_{\ell-1}(a)$ inserts spikes at the intersection points
with the knot. According to Proposition~\ref{prop:hardboundary} and Remark \ref{r:breakingmodelclose},
elements $u$ close to the codimension one boundary strata $\delta\ol\MM_{\ell-1}(a)$ have spikes roughly in the same direction as those on the boundary (shrinking in size as $u$ tends to the boundary). Elements close to a corner point where
two boundary strata of $\delta\ol\MM_{\ell-1}(a)$ meet have two spikes
roughly in the same directions as those on the nearby boundary strata (which
both shrink as $u$ tends to the corner point), see Remark \ref{r:breakingmodelclose}.
So we can interpolate between the given map on the boundary $\p\ol\MM_\ell(a)$ and the map $\wt\Phi_\ell(a)$ on the interior $\MM_\ell(a)$ to
obtain a map $\Phi_\ell(a):\ol\MM_\ell(a)\to\Sigma^\ell$
satisfying~\eqref{eq:chain-map-ell}.

Recall that $\delta\ol\MM_\ell(a)$ is an immersed $1$-dimensional
submanifold with finitely many transverse self-intersections in the
interior, and which meets the boundary transversely away from the
corners. The modification of $\wt\Phi_\ell(a)$ can be done away from
the finite set of self-intersections of $\delta\ol\MM_\ell(a)$ in the
interior. Moreover, the modification of $\wt\Phi_\ell(a)$ near the
boundary only involves inserting spikes at switching points of broken
strings, which can be performed away from the finitely many interior
intersection points of the broken strings with the knot and thus does
not affect $\delta\ol\MM_\ell(a)$.

We pick a smooth triangulation of $\ol\MM_\ell(a)$ transverse to
$\delta\ol\MM_\ell(a)$ (i.e., transverse to its $1$-dimensional strata
as well as its self-intersection points) and inducing the given
triangulation on the boundary. By the discussion in the preceding
paragraph, $\Phi_\ell(a)$ (interpreted as the sum over its restriction
to simplices) is a generic $2$-chain of broken strings.
This concludes the inductive step and thus the proof of
Proposition~\ref{prop:chain-map-ell}.
\end{proof}

Given a Reeb chord $a$, we define
$$
   \Phi(a) := \sum_{\ell=0}^\kappa\Phi_\ell(a)\in
   C(\Sigma)=\bigoplus_{\ell=0}^\infty C(\Sigma^\ell).
$$
Here $\kappa$ is the constant from the Finiteness Theorem~\ref{thm:finite}.
The relation~\eqref{eq:chain-map-ell} for the chains $\Phi_\ell(a)$
translates into
\begin{equation}\label{eq:chain-map}
      \p\Phi(a) = \Phi(\p^\sy a)
      -\delta\Phi(a),\qquad \delta=\delta_Q+\delta_N.
\end{equation}
Given a $d$-simplex of Reeb strings $\mathbf{a}=\alpha_{1}a_1\dots
a_m\alpha_{m+1}:\Delta\to\RR^m$ we define
$$
   \Phi(\mathbf{a}) := \alpha_{1}\Phi(a_1)\dots
   \alpha_m\Phi(a_m)\alpha_{m+1}\in C(\Sigma).
$$
Here the boundary arcs are concatenated in the obvious way to obtain
broken strings. For singular simplices $\Delta_i$ appearing as
domains in $\Phi(a_i)$, the corresponding term in $\Phi(\mathbf{a})$
has by our orientation convention the domain
$$
   \Delta\times\Delta_1\times\cdots\times\Delta_m
$$
in this order of factors.

\begin{thm}\label{t:Phichainmap}
The map $\Phi$ is a chain map from $(C_*(\RR),\p_\Lambda)$ to
$(C_*(\Sigma),\partial+\delta_{Q}+\delta_{N})$.
\end{thm}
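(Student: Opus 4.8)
The plan is to verify $D\Phi(\mathbf a)=\Phi(\p_\Lambda\mathbf a)$ directly on a singular simplex $\mathbf a\colon\Delta\to\RR^m$ by expanding both sides through the product structure and invoking the single--Reeb--chord relation~\eqref{eq:chain-map} of Proposition~\ref{prop:chain-map-ell}. First I would recall the relevant algebraic structures: $(C(\RR),\p_\Lambda)$ is a strictly associative dg ring under concatenation of Reeb strings at $x_0$; $(C_*(\Sigma),D)$ with $D=\p+\delta_Q+\delta_N$ is a dg ring under concatenation of broken strings at $x_0$, the Leibniz rule~\eqref{eq:x} holding for $D$ and, since $\p$ satisfies Leibniz by itself, also for $\delta:=\delta_Q+\delta_N$; and $\Phi$ is a ring homomorphism, since for $\mathbf a=\alpha_1a_1\cdots a_m\alpha_{m+1}$ and $\mathbf b=\beta_1b_1\cdots b_k\beta_{k+1}$ the concatenation $\mathbf a\mathbf b=\alpha_1a_1\cdots a_m(\alpha_{m+1}\beta_1)b_1\cdots b_k\beta_{k+1}$ gives $\Phi(\mathbf a\mathbf b)=\Phi(\mathbf a)\Phi(\mathbf b)$, the junction of the two $N$-strings $\alpha_{m+1},\beta_1$ producing no switch. (All sums in sight are finite by the Finiteness Theorem~\ref{thm:finite}.) By linearity it then suffices to treat a single simplex.

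Concretely, I would write $\Phi(\mathbf a)=\alpha_1\Phi(a_1)\alpha_2\cdots\Phi(a_m)\alpha_{m+1}$ as a chain carried by $\Delta\times\Delta_1\times\cdots\times\Delta_m$ (after the standard prism subdivision), where $\Delta_i$ parametrizes $\Phi(a_i)$ and only the factors $\alpha_i$ depend on the parameter in $\Delta$, the chords $a_i$ being fixed on the connected simplex $\Delta$. Its singular boundary splits into the boundary in the $\Delta$-direction, which equals $\Phi(\p^{\rm sing}\mathbf a)$ because $\Phi$ commutes with restriction of $\mathbf a$ to faces, plus the boundary in each $\Delta_i$-direction, contributing $\sum_i\varepsilon_i\,\alpha_1\cdots\alpha_i\,\bigl(\p\Phi(a_i)\bigr)\,\alpha_{i+1}\cdots\alpha_{m+1}$ with Koszul signs $\varepsilon_i$. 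Substituting $\p\Phi(a_i)=\Phi(\p^{\rm sy}a_i)-\delta\Phi(a_i)$ from~\eqref{eq:chain-map}, and using that $\delta$ inserts spikes only where $Q$- or $N$-strings meet $K$ while the $\alpha_i$, being images of paths in $\Lambda_K\cong\p N$, are disjoint from $K$ --- so that $\delta\Phi(\mathbf a)=\sum_i\varepsilon_i\,\alpha_1\cdots\alpha_i\,\bigl(\delta\Phi(a_i)\bigr)\,\alpha_{i+1}\cdots\alpha_{m+1}$ with the same signs $\varepsilon_i$, as $\delta$ has the same chain degree $-1$ as $\p$ --- the $\delta\Phi(a_i)$-terms cancel in $D\Phi(\mathbf a)=\p\Phi(\mathbf a)+\delta\Phi(\mathbf a)$, leaving
\[
   D\Phi(\mathbf a)=\Phi(\p^{\rm sing}\mathbf a)+\sum_i\varepsilon_i\,\alpha_1\cdots\alpha_i\,\Phi(\p^{\rm sy}a_i)\,\alpha_{i+1}\cdots\alpha_{m+1}.
\]
Since $\p^{\rm sy}\mathbf a$ is the signed sum over $i$ of the Reeb strings obtained by replacing $a_i$ with $\p(u)$ for rigid $u\in\MM^{\rm sy}(a_i;\mathbf b)/\R$, and the arcs of $\p(u)$ are again $N$-strings, multiplicativity of $\Phi$ identifies the last sum with $\Phi(\p^{\rm sy}\mathbf a)$; hence $D\Phi(\mathbf a)=\Phi(\p^{\rm sing}\mathbf a+\p^{\rm sy}\mathbf a)=\Phi(\p_\Lambda\mathbf a)$.

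The hard part will be the sign bookkeeping. The geometric content is exhausted by~\eqref{eq:chain-map} and the elementary remark that the path factors $\alpha_i$ avoid $K$; what remains is to check that the Koszul sign $\varepsilon_i$ produced by the prism decomposition of $\Delta\times\Delta_1\times\cdots\times\Delta_m$ matches the explicit sign $(-1)^{d+|a_1|+\cdots+|a_{i-1}|}$ built into $\p^{\rm sy}$, consistently with the orientation convention (chain first, Reeb chords last) used on both sides and with the orientations of the compactified moduli spaces $\ol\MM_\ell(a)$ and the boundary identification~\eqref{eq:domain-ell} fixed in Section~\ref{ss:signsandchainmap}. I expect that once these conventions are pinned down the identities above hold on the nose, so that no input beyond Proposition~\ref{prop:chain-map-ell} and the dg-ring structure of $(C(\RR),\p_\Lambda)$ is needed.
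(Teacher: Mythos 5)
Your proposal is correct and follows essentially the same route as the paper: expand the singular boundary of the product chain $\Delta\times\Delta_1\times\cdots\times\Delta_m$ by the Leibniz rule, substitute the per-chord relation $\p\Phi(a_i)=\Phi(\p^{\sy}a_i)-\delta\Phi(a_i)$ from Proposition~\ref{prop:chain-map-ell}, and use multiplicativity of $\Phi$ together with the fact that $\delta$ only acts on the $\Phi(a_i)$ factors to identify the result with $\Phi(\p^{\sing}\mathbf a)+\Phi(\p^{\sy}\mathbf a)-\delta\Phi(\mathbf a)$. The sign bookkeeping you flag is exactly the paper's convention $*=d+|a_1|+\cdots+|a_{i-1}|$ built into $\p^{\sy}$, so nothing further is needed.
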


\begin{proof}
Using~\eqref{eq:chain-map} we compute for $\mathbf{a}\in C_d(\RR)$
as above, with $*=d+|a_1|+\cdots+|a_{i-1}|$:
\begin{align*}
   \p\Phi(\mathbf{a})
   &= \Phi(\p^\sing\mathbf{a}) +
   \sum_{i=1}^m(-1)^{*} \alpha_{1}\Phi(a_1)\alpha_2\cdots
   \p\Phi(a_i)\cdots\alpha_m\Phi(a_m)\alpha_{m+1} \cr
   &= \Phi(\p^\sing\mathbf{a}) +
   \sum_{i=1}^m(-1)^{*} \alpha_{1}\Phi(a_1)\alpha_2\cdots
   \Bigl(\Phi(\p^\sy
   a_i)-\delta\Phi(a_i)\Bigr)\cdots\alpha_{m+1} \cr
   &= \Phi(\p^\sing\mathbf{a}) +
   \Phi(\p^\sy \mathbf{a})-\delta\Phi(\mathbf{a}).
\end{align*}
Since $\p_\Lambda=\p^\sing+\p^\sy$, this proves the theorem.
\end{proof}

{\bf Compatibility with length filtrations. }
Holomorphic disks with switching boundary conditions have a length
decreasing property that leads to the chain map $\Phi$ respecting the
length (or action) filtration, which is central for our isomorphism
proof. Let $u\in\MM(a;\mathbf{n})$ be a holomorphic disk with $k$
boundary segments that map to $Q$. Let $\sigma_1,\dots,\sigma_{k}$ be
the corresponding curves in $Q$ and let $L(\sigma_{i})$ denote the
length of $\sigma_{i}$. Recall that the Reeb chord $a$ is the lift of
a binormal chord on the link $K$ and that the action $\int_{a} pdq$ of
$a$ equals the length of the underlying chord in $Q$, which we write
as $L(a)$. In Section~\ref{sec:length-estimates2} we utilize the
positivity of a scaled version of the contact form on holomorphic
disks to show the following result (Proposition~\ref{prop:length-estimate}).

\begin{prop}\label{prop:disklength}
If $u\in \MM(a;\mathbf{n})$ is as above then
\[
   \sum_{i=1}^{k} L(\sigma_{i}) \leq L(a),
\]
with equality if and only if $u$ is a trivial half strip over a binormal chord.
\end{prop}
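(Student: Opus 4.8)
The plan is to derive the inequality from Stokes' theorem applied to a ``scaled contact form'' on $T^*Q$, and to extract the equality case from the positivity that makes Stokes work. Recall that $\om=d(p\,dq)$ and that $p\,dq$ restricts to $0$ on both $Q$ (the zero section) and $L_K$ (a conormal bundle). The primitive $p\,dq$ is closed on $Q$, hence useless for detecting lengths; instead one uses the $1$-form $\alpha$ obtained by giving the contact form $p\,dq|_{S^*Q}$ its standard normalization at \emph{every} cotangent radius, i.e.\ the radial pullback $\alpha=\tfrac{p\,dq}{|p|}$ of the contact form to $T^*Q\setminus Q$, possibly adjusted by a radial cutoff $f(|p|)$ in the transition region where $J$ is neither $J_\st$ nor cylindrical (this is the ``scaled version of the contact form''). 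The facts about $\alpha$ I would establish are: (a) for the almost complex structure $J$ of Section~\ref{sec:length-estimates2}, $d\alpha(v,Jv)\ge 0$ for all tangent vectors $v$, with equality only if $v$ lies in the rank-two distribution $\Xi$ spanned by the Liouville and Reeb directions; (b) $\alpha$ still restricts to $0$ on $L_K$, being a function multiple of $p\,dq$ and $L_K$ conical; and (c) the boundary behavior of $u^*\alpha$ described next.

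For (c) and the estimate I would use the local description of $u$ near its $Q$-arcs and corners from Sections~\ref{ss:series}--\ref{ss:switching}. Near a boundary arc $\sigma_i$ mapping to $Q$, in $J_\st$-coordinates with $Q=i\R^3$ one has $u(s+it)=i\sigma_i(s)-t\,\dot\sigma_i(s)+O(t^2)$ on the upper half-disk, so $p=\operatorname{Re}u=-t\dot\sigma_i(s)+O(t^2)$ and $p/|p|\to-\dot\sigma_i/|\dot\sigma_i|$ along the arc; hence $u^*\alpha$ extends continuously up to the arc with restriction $-|\dot\sigma_i(s)|\,ds$, so the arc contributes $-L(\sigma_i)$ to $\int_{\partial D}u^*\alpha$ (with the boundary orientation of $\partial D$). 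The same expansion shows $\operatorname{span}(\partial_s u,\partial_t u)$ is asymptotically tangent to $\Xi$ near the arc, so the a priori $|p|^{-1}$ blow-up of $u^*d\alpha$ there cancels and $\int_D u^*d\alpha$ is a convergent integral, nonnegative by (a). At the positive puncture the usual SFT asymptotics, together with $\alpha$ being the contact form in the cylindrical region, contribute $+\int_a p\,dq=+L(a)$; the $L_K$-arcs contribute $0$ by (b); and corners contribute nothing in the limit. Stokes' theorem (applied to exhaustions of $D$ avoiding $Q$ and the puncture, then passing to the limit using the continuity and integrability just noted) then gives
\[
0\ \le\ \int_D u^*d\alpha\ =\ \int_{\partial D}u^*\alpha\ =\ L(a)-\sum_{i=1}^{k}L(\sigma_i),
\]
which is the claimed inequality.

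For the equality case, if $\sum_i L(\sigma_i)=L(a)$ then $\int_D u^*d\alpha=0$; since its integrand $d\alpha(\partial_s u,J\partial_s u)$ is pointwise nonnegative by (a), it vanishes identically, so $\operatorname{span}(\partial_s u(z),\partial_t u(z))\subseteq\Xi_{u(z)}$ for every interior $z$. The distribution $\Xi$ is integrable (in the cylindrical model $\Xi=\operatorname{span}(\partial_\tau,R)$ with $[\partial_\tau,R]=0$), with leaves of the form $\R\times(\text{Reeb chord})$; a connected $J$-holomorphic disk tangent to $\Xi$ with boundary on $\R\times\Lambda_K$ therefore lies in a single such leaf, and being asymptotic at its puncture to $a$ it is the trivial half-strip over $a$. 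Near $Q$, where $\Xi$ is the complex line $\operatorname{span}_\C(\dot c(0))$ with $c$ the binormal chord underlying $a$ (using that $\dot c(0)$ is normal to $K$), this strip is the affine holomorphic disk $w\mapsto i\,c(0)+w\,\dot c(0)$. Conversely this trivial half-strip is $J$-holomorphic, its unique $Q$-arc is $c$ itself, and $L(c)=L(a)$, so it realizes equality.

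The main obstacle is (a): arranging $d\alpha(v,Jv)\ge 0$ globally. Writing $\alpha=f(|p|)\tfrac{p\,dq}{|p|}$ one has $d\alpha=f'\,d|p|\wedge\tfrac{p\,dq}{|p|}+f\,d\bigl(\tfrac{p\,dq}{|p|}\bigr)$; for $J_\st$ a direct computation shows both $d|p|\wedge\tfrac{p\,dq}{|p|}$ and $d\bigl(\tfrac{p\,dq}{|p|}\bigr)$ are nonnegative on $J_\st$-complex lines (Cauchy--Schwarz for the second), and in the cylindrical region one has the standard positivity of the contact form in a symplectization; the real work is to choose $J$ (and, if needed, the cutoff $f$) in the transition annulus so these positivity properties persist, which is exactly why Section~\ref{sec:length-estimates2} opens with an explicit formula for $J$. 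The remaining technical point, the legitimacy of Stokes' theorem across the non-smooth locus $\{p=0\}$ of $\alpha$, is precisely what the local expansions in (c) are designed to handle.
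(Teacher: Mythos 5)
Your proposal is correct and follows the same core strategy as the paper's proof of Proposition~\ref{prop:length-estimate}: integrate the normalized form $p\,dq/|p|$ over the disk, use nonnegativity of its differential on $J$-complex lines, read off $+L(a)$ at the puncture, $-\sum_i L(\sigma_i)$ along the $Q$-arcs (via the local expansion that uses $J$ being standard along the zero section), $0$ along $L_K$, and characterize equality by tangency to $\operatorname{span}\{p\,\p_p,R\}$. The one genuine difference is how the singularity of $p\,dq/|p|$ along the zero section is treated. You work with the singular form directly and must therefore justify that $u^*d\alpha$ is integrable (the $1/t$ cancellation near the interior of each $Q$-arc, which you correctly identify, plus integrability near the switching corners where $|p|\circ u\sim|z|^{1/2}$ and $|du|\sim|z|^{-1/2}$) and that Stokes' theorem survives the limit over exhausting domains; this works, but it is exactly the analysis the paper avoids by replacing $\alpha$ with $\lambda_\tau=\tau(|p|)\,p\,dq/|p|$ for a cutoff $\tau$ vanishing near $|p|=0$, which is globally smooth and vanishes near $Q$, and by applying Stokes only on the compact truncated domain $\Sigma_\delta$ (removing strip neighborhoods of the $Q$-arcs and the punctures) before letting $\delta\to0$. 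Your version buys a cleaner equality statement (tangency to $\Xi$ on all of the interior at once, rather than only where $\tau>0$, $\tau'=0$, followed by unique continuation), at the cost of the singular-integral bookkeeping. One smaller correction: the positivity (a) is not really an obstacle concentrated in a transition annulus. Condition (iii) of Definition~\ref{def:admissible} — $J$ preserves $\operatorname{span}\{p\,\p_p,R\}\oplus\xi$ and is compatible with $d\lambda_1$ on $\xi$ — holds everywhere outside the zero section, and Lemma~\ref{lem:nonneg} deduces $d\lambda_\tau(v,Jv)\ge0$ together with the equality characterization directly from it, with no case analysis by region; your separate computations for $J_\st$ and for the cylindrical end are subsumed by this. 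If you keep your route, the items still to be verified are the convergence of $\int_D u^*d\alpha$ near the corners and the boundary-limit step in Stokes; both are fine with the asymptotic expansions of Sections~\ref{ss:series} and~\ref{ss:switching}, but you should state them explicitly.
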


Recall that both chain complexes $(C_*(\RR),\p_\Lambda)$ and
$(C_*(\Sigma),\partial+\delta_{Q}+\delta_{N})$ carry length
filtrations that were defined in Sections~\ref{sec:leg}
and~\ref{ss:length-filt}, respectively. Recall also that the length
filtration on $C_*(\Sigma)$ does not count the lengths of
$Q$-spikes. Hence the insertion of $Q$-spikes in the definition of the
chain map $\Phi$ does not increase length and 
Proposition~\ref{prop:disklength} implies

\begin{cor}\label{cor:respect-length}
The chain map $\Phi$ in Theorem~\ref{t:Phichainmap} respects the
length filtrations, i.e., it does not increase length.
\end{cor}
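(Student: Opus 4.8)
The plan is to combine the geometric length estimate of Proposition~\ref{prop:disklength} with a careful bookkeeping of which strings in the broken strings $\p(u)$ are counted by the length filtration $\FF^\bullet C_*(\Sigma)$. First I would recall the two filtrations in play: on the contact side, $L(\alpha_1 a_1\cdots a_m\alpha_{m+1}) = \sum_i L(a_i)$, where $L(a_i)=\int_{a_i}p\,dq$ is the action of the Reeb chord $a_i$; on the string side, $L(s) = \sum_i L(s_{2i})$, where the sum runs only over those $Q$-strings $s_{2i}$ that are \emph{not} $Q$-spikes in the sense of Definition~\ref{def:spike}. So it suffices to show that for each Reeb chord $a$ and each $\ell$, the chain $\Phi_\ell(a)\in C(\Sigma^\ell)$ is supported in filtration level $\le L(a)$, and then extend multiplicatively.

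The key step is the estimate at the level of a single moduli space. Fix a holomorphic disk $u\in\MM_\ell(a)=\MM(a;\tfrac12,\dots,\tfrac12)$ and let $\sigma_1,\dots,\sigma_k$ be its boundary arcs mapping to $Q$; these are precisely the $Q$-strings of the broken string $\wt\Phi_\ell(a)(u)=\p(u)$ (up to the contribution of the capping paths, which lie in $\Lambda_K\subset S^*Q$ and hence in $N$, contributing no length). By Proposition~\ref{prop:disklength}, $\sum_{i=1}^k L(\sigma_i)\le L(a)$. Therefore $L\bigl(\p(u)\bigr) = \sum_i L(\sigma_i)\le L(a)$ for every $u\in\MM_\ell(a)$, which shows $\wt\Phi_\ell(a)$ takes values in the subspace of $\Sigma^\ell$ of length $\le L(a)$. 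Taking the maximum over the (compact) triangulated domain $\ol\MM_\ell(a)$ shows $L(\Phi_\ell(a))\le L(a)$, \emph{provided} the modification of $\wt\Phi_\ell(a)$ near $\p\ol\MM_\ell(a)$ carried out in the proof of Proposition~\ref{prop:chain-map-ell} does not increase length.

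This last point is exactly where the exclusion of $Q$-spikes from the length is used, and I expect it to be the only genuine subtlety. The modification near the boundary consists of two kinds of operations: (a) interpolating $Q$- and $N$-strings toward their boundary limits within $\Sigma^\ell$ (a homotopy of strings, which one can arrange to be length-nonincreasing, or at worst to stay within the closed filtration level since $\ol\MM_\ell(a)$ is compact and the limiting configurations already satisfy the bound by continuity of $L$ on broken strings of fixed combinatorial type), and (b) inserting small spikes to account for strings that shrink to points in the limit. An $N$-spike, inserted by $\delta_Q$, lies in $N$ and contributes length zero. A $Q$-spike, inserted by $\delta_N$, is by definition (Remark~\ref{rem:length-spikes}) \emph{not} counted in $L$. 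Hence neither type of insertion increases the length of the broken string, and continuity of $L$ on each stratum handles the interpolation. Combining, $L(\Phi_\ell(a))\le L(a)$ for all $\ell$, so $L(\Phi(a))=\max_\ell L(\Phi_\ell(a))\le L(a)$. Finally, for a simplex of Reeb strings $\mathbf{a}=\alpha_1 a_1\cdots a_m\alpha_{m+1}$ we have $\Phi(\mathbf{a})=\alpha_1\Phi(a_1)\cdots\alpha_m\Phi(a_m)\alpha_{m+1}$, whose underlying broken strings concatenate the $Q$-strings of the $\Phi(a_i)$ (the $\alpha_j$ being $N$-strings of length zero), so $L(\Phi(\mathbf{a}))\le \sum_i L(\Phi(a_i))\le \sum_i L(a_i) = L(\mathbf{a})$. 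Thus $\Phi\bigl(\FF^c C_*(\RR)\bigr)\subset\FF^c C_*(\Sigma)$ for every $c$, which is the assertion of the corollary.
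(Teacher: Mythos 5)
Your proof is correct and follows essentially the same route as the paper: the paper's own (very brief) argument likewise combines Proposition~\ref{prop:disklength} with the observation that the string-side length filtration omits $Q$-spikes, so the spike insertions in the construction of $\Phi$ cost nothing. Your additional bookkeeping (capping paths and $N$-strings having length zero, multiplicativity over Reeb strings, continuity/compactness for the boundary interpolation) just spells out details the paper leaves implicit.
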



\section{Proof of the isomorphism in degree zero}\label{sec:iso}

In the previous section we have constructed a chain map
$\Phi:(C_*(\RR),\p_\Lambda)\to (C_{\ast}(\Sigma),D)$, where $D=\partial+\delta_{Q}+\delta_{N}$. 
In this section we finish the proof of Theorem~\ref{thm:main} by showing that
the induced map $\Phi_*:H_0(\RR,\p_\Lambda)\to H_0(\Sigma,D)$ in degree zero is
an isomorphism.
Whereas the results in the previous section hold for any
$3$-manifold $Q$ with a metric of nonpositive curvature which is convex at
infinity, in this section we need to restrict to the case $Q=\R^3$
with its Euclidean metric. This restriction will allow us to obtain crucial 
control over the straightening procedure for $Q$-strings described in Proposition~\ref{prop:pl-lin} (see the comment in Remark~\ref{rem:R3} below).

As a first step, we will slightly extend the definition of broken
strings to include piecewise linear $Q$-strings.
A relatively simple approximation result will show that the inclusion
of broken strings with piecewise linear $Q$-strings into all broken
strings induces an isomorphism on string homology in degree 0.

The central piece of the argument will then consist of deforming the
complex of broken strings with piecewise linear $Q$-strings into the
subcomplex of those with {\em linear} $Q$-strings.

It is important that both of these reduction steps can be done {\em
  preserving the length filtration on $Q$-strings}. The final step of
the argument then consists of comparing the contact homology
$H_0(\RR,\p_\Lambda)$ with the homology of the chain
complex of broken strings with linear $Q$-strings. At this stage, we
will use the length filtrations to reduce to the comparison of homology in
degrees 0 and 1 in small length windows containing at most one
critical value.

\subsection{Approximation by piecewise linear $Q$-strings}\label{ss:pl}

In the following we enlarge the space of broken $C^m$-strings
$\Sigma$, keeping the same notation, to allow for $Q$-strings to be
{\em piecewise $C^m$}. 
Here a $Q$-string $s_{2i}:[a_{2i-1},a_{2i}]\to Q$ is called piecewise $C^m$
if there exists a subdivision $a_{2i-1}=b_0<b_1<\cdots<b_r=a_{2i}$
such that the restriction of $s_{2i}$ to each subinterval $[b_{j-1},b_j]$ is $C^m$.  
For a generic $d$-chain $S:\Delta_d\to\Sigma^\ell$ ($d=0,1,2$) we require that
the number of subdivision points on each $Q$-string is constant over
the simplex $\Delta_d$. The subdivision points can vary smoothly over
$\Sigma_d$ but have to remain distinct. If for some subdivision point $b_j$
the two $C^m$-strings meeting at $b_j$ fit together in a
$C^m$-fashion for all $\lambda\in\Delta_d$, then we identify
$S$ with the generic $d$-chain obtained by removing the subdivision
point $b_j$. 

We allow $Q$-strings in a generic $d$-chain $S$ to meet the knot $K$
at a subdivision point $b_j$, provided at such a point the
derivatives from both sides satisfy the genericity conditions in
Definition~\ref{def:generic-chain}. 
If this occurs for some parameter value $\lambda^*\in\Delta_2$ in a
generic $2$-chain, then we require in addition 
that the corresponding $Q$-string meets $K$ at the subdivision point $b_j(\lambda)$ for all
$\lambda$ in the component of $\lambda^*$ in the domain $M_{\delta_Q}$
of $\delta_QS$ defined in Section~\ref{ss:string-op}. 
These conditions ensure that the operator 
$D=\partial+\delta_{Q}+\delta_{N}$ extends to generic chains of piecewise
$C^m$ strings satisfying the relations in Proposition~\ref{prop:string-relations}.

The subspace $\Sigma_\pl\subset\Sigma$ of broken strings whose $Q$-strings are {\em piecewise linear} give rise to an inclusion of a $D$-subcomplex
\begin{equation}\label{eq:ipl}
   C_*(\Sigma_\pl)\stackrel{i_\pl}\hookrightarrow C_*(\Sigma).
\end{equation}
For this to hold, we choose the $Q$-spikes inserted under the map
$\delta_N$ to be degenerate $3$-gons, i.e., short segments orthogonal
to the knot traversed back and forth. Then $C_*(\Sigma_\pl)$ becomes a
$D$-subcomplex. 

We will also consider the subspace $\Sigma_\lin \subset \Sigma_\pl$ of broken 
closed strings whose $Q$-strings are (essentially) {\em linear}: any two 
points $x_1,x_2 \in K$ determine a unique line segment $[x_1,x_2]$ in $\R^3$ 
connecting them. For technical reasons, special care has to be taken when 
such a linear $Q$-string becomes very short. Indeed, near the diagonal 
$\Delta \subset K \times K$  we deform the segments to piecewise linear 
strings with one corner in such a way that at each 
point of the diagonal, instead of a segment of length zero we have a 
degenerate 3-gon as above, i.e., a short spike in direction of the 
curvature of the knot (which we assume vanishes nowhere). 
Now $\Sigma_\lin \subset \Sigma_\pl$ consists of all broken 
closed strings whose $Q$-strings are constant speed parametrizations of such 
(possibly deformed) segments. In this way, 
\begin{equation}\label{eq:ilin}
   C_*(\Sigma_\lin)\stackrel{i_\lin}\hookrightarrow C_*(\Sigma_\pl)
\end{equation}
will be an inclusion of a $D$-subcomplex.

Recall from Section~\ref{ss:length-filt} that these complexes are
filtered by the length $L(\beta)$, i.e.~the maximum of the total
length of $Q$-strings over all parameter values of the chain,
where in the length we do not count $Q$-spikes.
With these notations, we have the following approximation result.

\begin{prop}\label{prop:pl}
There exist maps
$$
   \Bf_0:C_0(\Sigma) \to C_0(\Sigma_\pl),\qquad
   \Bf_1:C_1(\Sigma) \to C_1(\Sigma_\pl)
$$
and
$$
   \BH_0:C_0(\Sigma) \to C_1(\Sigma),\qquad
   \BH_1:C_1(\Sigma) \to C_2(\Sigma)
$$
satisfying with the map $i_\pl$ from~\eqref{eq:ipl}:
\begin{enumerate}
\item $\Bf_0i_\pl=\id$ and $D\BH_0= i_\pl\Bf_0-\id$;
\item $\Bf_1i_\pl=\id$ and $\BH_0D + D\BH_1 = i_\pl\Bf_1-\id$;
\item $\Bf_0$, $\BH_0$, $\Bf_1$ and $\BH_1$ are (not necessarily strictly) length-decreasing.
\end{enumerate}
\end{prop}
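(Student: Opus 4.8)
The plan is to construct $\Bf_0,\Bf_1$ as fibrewise ``straightening'' operators which replace the $Q$-strings of a (family of) broken string(s) by a sufficiently fine piecewise linear approximation, and $\BH_0,\BH_1$ as the corresponding affine straightening homotopies, one chain degree higher. As with the definition of $\delta=\delta_Q+\delta_N$ on $2$-chains in Section~\ref{ss:string-op}, the construction proceeds by induction on the chain degree $d=0,1$, the choices made on a generic $d$-chain being required to restrict on each face to those already fixed, so that the relations (i) and (ii) hold strictly on the chain level rather than merely up to homotopy. The length estimate (iii) will rest on the fact --- special to the flat metric on $Q=\R^3$, cf.\ Remark~\ref{rem:R3} --- that affine interpolation in $\R^3$ never increases the length of a path, and that replacing a path by a piecewise linear approximation with nodes on the path does not increase its length either.

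First consider a generic broken string $s=(s_1,\dots,s_{2\ell+1})$. On a small collar near each endpoint of each $Q$-string $s_{2i}$ we leave $s_{2i}$ unchanged --- making, if necessary, a length-neutral modification of the adjacent $N$-strings near the switches so that all the matching conditions remain compatible with linear pieces --- and on the complementary middle interval, which by genericity of $s$ lies at positive distance from $K$, we replace $s_{2i}$ by a reparametrized piecewise linear approximation $\mathrm{PL}(s_{2i})$, chosen $C^0$-close enough to $s_{2i}$ that $\mathrm{PL}(s)$ is again generic and the affine homotopy $s^t:=(1-t)\,s+t\,\mathrm{PL}(s)$, performed on the middle intervals only, stays in a tubular neighbourhood of $s$ disjoint from $K$ away from the fixed endpoints. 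Set $\Bf_0(s):=\mathrm{PL}(s)$ and let $\BH_0(s)$ be the generic $1$-chain $\{s^t\}_{t\in[0,1]}$; if $s$ is already piecewise linear we take $\mathrm{PL}(s)=s$ and $\BH_0(s)$ degenerate, so that $\Bf_0i_\pl=\id$. Since no string of $\BH_0(s)$ meets $K$ in its interior we have $\delta_Q\BH_0(s)=\delta_N\BH_0(s)=0$ and therefore $D\BH_0(s)=\partial\BH_0(s)=i_\pl\Bf_0(s)-s$, which is (i); moreover convexity of the length functional along affine homotopies in $\R^3$ gives $L(s^t_{2i})\le(1-t)L(s_{2i})+t\,L(\mathrm{PL}(s_{2i}))\le L(s_{2i})$, which is (iii) in this degree. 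Extend $\Bf_0$ and $\BH_0$ to $C_0(\Sigma)$ by $\Z$-linearity.

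For a generic $1$-chain $\{s^\lambda\}_{\lambda\in[0,1]}$ we perform the same straightening fibrewise, now choosing the collars and the piecewise linear approximations to vary smoothly in $\lambda$ and to restrict on $\lambda=0,1$ to the choices already fixed by $\Bf_0$ (and to $i_\pl$ if $\{s^\lambda\}$ is itself piecewise linear); this yields $\Bf_1\{s^\lambda\}$ and the $2$-chain $\BH_1\{s^\lambda\}:=\{s^{\lambda,t}\}$. In contrast to the previous case, the family $\{s^\lambda\}$ crosses $K$ in its interior at finitely many parameters, and the $2$-parameter family $s^{\lambda,t}$ may sweep across $K$, so $\delta_Q\BH_1$ and $\delta_N\BH_1$ are in general nonzero. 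The identity $\BH_0D+D\BH_1=i_\pl\Bf_1-\id$ is then read off from the boundary of the prism $[0,1]_t\times[0,1]_\lambda$: the faces $t=0$ and $t=1$ contribute $-\id$ and $i_\pl\Bf_1$, the faces $\lambda=0,1$ contribute $\BH_0\partial$, and --- provided the straightening homotopy is chosen, exactly as in Section~\ref{ss:string-op}, so that the $K$-crossing locus of $s^{\lambda,t}$ is transverse to the slices $\{t=\mathrm{const}\}$ and limits onto the crossing locus of $\{s^\lambda\}$ as $t\to0$ --- the spike-insertion loci contribute $\BH_0(\delta_Q+\delta_N)$. Genericity of $\Bf_1\{s^\lambda\}$ as a $1$-chain and of $\BH_1\{s^\lambda\}$ as a $2$-chain is then achieved by a further small perturbation rel the boundary, and (iii) holds once more by convexity of length along the affine straightening. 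A final $\Z$-linear extension completes the construction.

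The main obstacle is the bookkeeping in this inductive step: one must arrange the $\lambda$-families of collars and approximations coherently enough that the spike data --- signs, and the preferred spikes of Definition~\ref{def:spike} --- produced along the $K$-crossing locus of the straightening homotopy agree on the nose with those produced by $\BH_0\circ(\delta_Q+\delta_N)$ on the boundary, so that (ii) holds strictly; this exploits the convexity of the space of spikes with fixed ends (Remark~\ref{rem:spikes-convex}) in the same manner as the proof of Proposition~\ref{prop:string-relations}. One must also verify that these coherent generic choices can be made simultaneously length-non-increasing and compatible with the matching conditions at switches --- the freedom to modify the $N$-strings is harmless since they have length zero, and the decisive point is that in the flat metric the affine straightening is automatically length-non-increasing, which would fail for a general ambient $3$-manifold and is precisely why the argument (and Theorem~\ref{thm:main}) is restricted here to $Q=\R^3$, cf.\ Remark~\ref{rem:R3}.
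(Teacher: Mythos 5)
Your degree-$0$ construction is essentially the paper's (affine homotopy onto a fine piecewise linear approximation, with the subdivision so fine that the homotopy acquires no interior intersections with $K$, so $\delta_Q\BH_0=\delta_N\BH_0=0$), but for degree $1$ there is a genuine gap at the decisive step. Relation (ii) forces the exact chain-level cancellation $(\delta_Q\BH_1+\BH_0\delta_Q)\beta=(\delta_N\BH_1+\BH_0\delta_N)\beta=0$: nothing else on either side can absorb these terms. Your conditions --- that the $K$-crossing locus of $s^{\lambda,t}$ be transverse to the slices $\{t=\mathrm{const}\}$ and limit onto the crossing locus of $\{s^\lambda\}$ as $t\to 0$ --- do not give this. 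If the crossing locus is a nonconstant graph $\lambda=\lambda(t)$ emanating from $(\lambda_j,0)$, then $\delta_Q\BH_1\beta$ is the $1$-chain of spiked straightenings of the strings $s^{\lambda(t)}$, which is \emph{not} the chain $\BH_0\delta_Q\beta$ (the straightening homotopy of the single spiked string at $\lambda_j$); the two are merely homotopic, so (ii) would hold only up to a correction term, defeating the point of the statement. The paper's mechanism, which you do not supply, is to pin the crossing: at each $\lambda_j$ the interior intersection point with $K$ is itself taken as one of the subdivision points $p_i(\lambda_j)$, so it is a node of the piecewise linear approximation and hence fixed by the affine homotopy; the crossing locus of $\BH_1\beta$ is then exactly $\{\lambda_j\}\times[0,1]$, and with $\BH_0$ on $\delta_Q\beta$ defined using the compatible subdivision (the split point being an endpoint of the two new $Q$-strings, the spike and the nearby $N$-strings being rotated consistently with the preferred-spike family), the chains $\delta_Q\BH_1\beta$ and $-\BH_0\delta_Q\beta$ coincide on the nose, and likewise for $\delta_N$ since the $N$-string crossings are simply continued along the homotopy. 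Your closing ``further small perturbation rel the boundary'' to restore genericity makes matters worse: such a perturbation moves the interior crossing locus and hence changes $\delta_Q\BH_1\beta$, destroying precisely the strict identity you need; genericity must instead be built into the choice of subdivision paths $p_i(\lambda)$ from the start.

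A secondary problem: as written, $\Bf_0(s)$ keeps a collar of the original (generally non-linear) $Q$-string near each endpoint, so it does not lie in $\Sigma_\pl$ and $\Bf_0$ does not take values in $C_0(\Sigma_\pl)$; the same issue propagates to $\Bf_1$. The fix is the paper's: subdivide all the way to the endpoints (the endpoints are among the $p_i$), take the genuinely piecewise linear string through these nodes, and absorb the change of boundary derivatives by slightly rotating the adjacent $N$-strings --- which is harmless for (iii) since $N$-strings carry no length --- rather than by retaining smooth collars.
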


\begin{proof}
We first define $\Bf_0$ and $\BH_0$. Given $\beta\in C_0(\Sigma)$, we
pick finitely many subdivision points $p_i$ on the $Q$-strings in $\beta$ (which
include all end points) and define $\BH_0\beta$ to be the straight
line homotopy from $\beta$ to the broken string $\Bf_0\beta$ whose
$Q$-strings are the piecewise linear strings connecting the $p_i$. We
choose the subdivision so fine that the $Q$-strings in $\BH_0\beta$
remain transverse to $K$ at the end points and do not meet $K$ in the
interior. The $N$-strings are just slightly rotated near the end
points to match the new $Q$-strings, without creating intersections
with $K$. Then $\BH_0\beta$ is a generic $1$-chain in $\Sigma$
satisfying
$$
   \p\BH_0\beta=\Bf_0\beta-\beta,\qquad
   \delta_Q\BH_0\beta=\delta_N\BH_0\beta=0.
$$
If $\beta$ is already piecewise linear we include the corner points in
the subdivision to ensure $\Bf_0\beta=\beta$, so that condition (i)
holds.

To define $\Bf_1$ and $\BH_1$, consider a generic $1$-simplex
$\beta:[0,1]\to\Sigma$. We pick finitely many smooth paths of
subdivision points $p_i(\lambda)$ on the $Q$-strings in
$\beta(\lambda)$ (which
include all end points) and define $\BH_1\beta$ to be the straight
line homotopy from $\beta$ to the $1$-simplex $\Bf_1\beta$ whose
$Q$-strings are the piecewise linear strings connecting the
$p_i(\lambda)$.
Here we choose the $p_i(\lambda)$ to agree with the ones in the
definition of $\BH_0$ at $\lambda=0,1$ as well as at the finitely many
values $\lambda_j$ where some $Q$-string intersects the knot in its
interior (so at such $\lambda_j$ the intersection point with $K$ is
included among the $p_i(\lambda_j)$). 
Note that for this we may first have to add new subdivision points on
the $Q$-strings on $\beta(\lambda)$ for $\lambda=0,1,\lambda_j$, which
is allowed due to the identification above. 
Moreover, we
choose the subdivision so fine that the $Q$-strings in $\BH_1\beta$
remain transverse to $K$ at the end points and meet $K$ in the
interior exactly at the values $\lambda_j$ above. The $N$-strings are
just slightly rotated near the end points to match the new
$Q$-strings, without creating new intersections with $K$ besides the
ones already present in $\beta$ that are continued along the
homotopy. Then $\BH_1\beta$ is a generic $2$-chain in $\Sigma$
satisfying
$$
   (\p\BH_1+\BH_0\p)\beta=\Bf_1\beta-\beta,\qquad
   (\delta_Q\BH_1+\BH_0\delta_Q)\beta=(\delta_N\BH_1+\BH_0\delta_N)\beta=0.
$$
If $\beta$ is already piecewise linear we include the corner points in
the subdivision to ensure $\Bf_1\beta=\beta$, so that condition (ii)
holds.
\end{proof}


\subsection{Properties of triangles for generic knots}\label{ss:triangles}
In our arguments, we will assume that the knot $K$ is generic. In particular,
we will use that it has the properties listed in the following lemma.

\begin{lemma}\label{lem:generic.knots}
A generic knot $K\subset\R^3$ has the following properties:
\begin{enumerate}
\item \label{planes} There exists an $S\in\N$ such that each plane
  intersects $K$ at most $S$ times.
\item The set $T \subset K$ of points whose tangent lines meet the
  knot again is finite (and each such tangent line meets the knot in
  exactly one other point).
\end{enumerate}
\end{lemma}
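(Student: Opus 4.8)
The plan is to prove both statements by general-position arguments, treating $K$ as a point in the space of smooth embeddings $S^1 \hookrightarrow \R^3$ and showing that each property holds on a residual (hence dense) subset. For part (1), I would consider the evaluation map on triples (or rather on ordered $(S+1)$-tuples) of points of $K$ and use the fact that, for $K$ in general position, the condition ``$k$ points of $K$ are coplanar'' cuts out a subset of positive codimension once $k$ is large enough; since $K$ is compact, a uniform bound $S$ then exists. More concretely, I would first establish, via a standard jet-transversality argument (Thom's transversality theorem applied to the map $K^4 \to \R$ given by the determinant of the matrix built from four points, or equivalently to the map $K^3 \to \R^3$ sending $(x,y,z)$ to the affine plane they span, composed with evaluation against a fourth point), that for generic $K$ no four points of $K$ lie in a plane that is ``tangentially degenerate'' in a bad way, and then a dimension count shows that a plane meeting $K$ in too many points would force a non-generic incidence. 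The cleanest route: for generic $K$, the Gauss-type map recording coplanar quadruples is transverse to the diagonal strata, so the set of planes meeting $K$ in $\geq 4$ points is at most $1$-dimensional, those meeting it in $\geq 5$ points is at most $0$-dimensional, and so on; the process terminates, giving the bound $S$. (Alternatively one can invoke the classical fact that a generic smooth closed curve in $\R^3$ has finite ``plane-section complexity''; but I would prefer to give the transversality argument.)

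For part (2), I would look at the tangent-line incidence condition directly. Consider the map
\[
F : \{(s,t) \in S^1 \times S^1 \mid s \neq t\} \longrightarrow \R, \qquad F(s,t) = \bigl(K(t) - K(s)\bigr) \cdot \bigl(K'(s) \times w(s)\bigr)
\]
for a suitable frame, or more invariantly: the point $K(s)$ has a tangent line $L_s = K(s) + \R K'(s)$, and I want to understand the set of $(s,t)$ with $s \neq t$ and $K(t) \in L_s$. This is the vanishing locus of a map $G : (S^1 \times S^1) \setminus \Delta \to \R^2$ (project $K(t) - K(s)$ onto the normal plane to $K'(s)$). For generic $K$, $G$ is transverse to $0$, so $G^{-1}(0)$ is a $0$-dimensional submanifold of the $2$-dimensional domain; combined with properness near the diagonal (which one handles by a separate local computation showing that $K(t) \in L_s$ with $t$ near $s$ forces, to leading order, the curvature to vanish, excluded for generic $K$, or by a blow-up at $\Delta$), this $0$-manifold is finite. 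The finiteness of $T$ (the set of $s$) then follows since $T$ is the image of $G^{-1}(0)$ under projection to the first factor. The ``exactly one other point'' clause is again generic: a tangent line meeting $K$ in two further points corresponds to $G^{-1}(0)$ having two points with the same first coordinate, which is a codimension-$1$ condition in the already $0$-dimensional set, hence generically empty. I would also need to rule out that a tangent line is tangent at a second point (a line bitangent to $K$ meeting the conditions of (2)); this is handled by the same transversality package, bumping the map up to a jet-transversality statement for $1$-jets.

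The main obstacle is the behavior near the diagonal $\Delta \subset S^1 \times S^1$, which is where the naive transversality counts break down: as $t \to s$, the condition $K(t) \in L_s$ is automatically satisfied to first order, so $G$ vanishes along $\Delta$ and one cannot directly apply transversality to conclude the zero set is $0$-dimensional. The fix is to divide out the tautological vanishing: write $K(t) - K(s) = (t-s)K'(s) + \tfrac{(t-s)^2}{2}K''(s) + O((t-s)^3)$, and observe that the normal-plane projection of $K(t) - K(s)$, divided by $(t-s)^2$, extends smoothly across $\Delta$ with value $\tfrac12$ times the normal component of $K''(s)$, i.e.\ (up to scale) the curvature vector. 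So after this rescaling the relevant map $\widetilde G$ is defined on all of $S^1 \times S^1$, and for generic $K$ (in particular for $K$ with nowhere-vanishing curvature, which is itself a generic and open condition) $\widetilde G$ does not vanish on $\Delta$; hence $\widetilde G^{-1}(0)$ is a compact $0$-manifold, therefore finite, and equals $G^{-1}(0)$ away from $\Delta$. An entirely parallel rescaling (dividing by appropriate powers of the relevant differences) handles the coplanarity locus in part (1) near the various diagonals of $K^3$ and $K^4$. Once the diagonal issue is dispatched this way, the rest is a routine application of the parametric transversality theorem to a finite collection of evaluation maps, taking $K$ in a suitable Baire space of embeddings.
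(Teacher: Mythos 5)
Your treatment of part (ii) is essentially sound and comparable to the paper's: the paper intersects the tangent developable surface $\tau_K$ with $K$ and uses nowhere-vanishing curvature to control the region near the point of tangency, whereas you work with the incidence map $G(s,t)$ on $S^1\times S^1\setminus\Delta$ and rescale at the diagonal; both rest on the same generic conditions, and your codimension counts for the ``exactly one other point'' clause are fine.

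Part (i), however, has a genuine gap. The dimension count you lean on is false: a plane meeting $K$ transversally in $k$ points still does so after a small perturbation of the plane, so planes meeting $K$ in at least $k$ points form a set with nonempty interior (hence $3$-dimensional) in the space of planes whenever one transversal such plane exists. It is not ``at most $1$-dimensional'' for $k\ge 4$, it does not drop dimension as $k$ grows, and so the process you describe never terminates and cannot produce the bound $S$. Equivalently, the coplanarity locus in $K^k$ has dimension $3$ for every $k\ge 3$, so no multi-point transversality statement by itself excludes planes with many intersection points --- indeed such planes exist, and the content of the lemma is precisely that their number of intersections is uniformly bounded. The mechanism that actually produces $S$ is local, at a single point: for generic $K$ the derivatives $\dot\gamma,\gamma^{(2)},\gamma^{(3)},\gamma^{(4)}$ span $\R^3$ at every parameter value (the degenerate quadruples form a codimension-$2$ subset of the $4$-jet space, so jet transversality lets the $1$-dimensional domain avoid it); a Rolle-type limiting argument then shows there is $\eps>0$ such that no plane meets $\gamma$ five times in any interval of length $\eps$ --- otherwise shrinking quintuples of intersection times would converge to a point where all four derivatives are orthogonal to the limiting normal --- and compactness of $K$ gives $S=4L/\eps$. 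Note also that your fallback of rescaling the four-point coplanarity map at the deepest diagonal of $K^4$ cannot be repaired: four collapsing coplanar points limit to a point where $\dot\gamma,\gamma^{(2)},\gamma^{(3)}$ are coplanar, i.e.\ a torsion zero, which is a codimension-$1$ condition and therefore occurs at isolated points of every generic knot; one must go one order deeper (five collapsing points, i.e.\ the $4$-jet condition above) to reach a stratum of codimension $2$ that genericity can actually exclude.
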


\begin{proof}
We prove part (i). For a generic knot $K$ parametrized by
$\gamma:S^1=\R/L\Z\to\R^3$, the first
four derivatives $(\dot\gamma,\gamma^{(2)},\gamma^{(3)},\gamma^{(4)})$
span $\R^3$ at each $t\in S^1$. (For this, use the jet transversality
theorem~\cite[Chapter 3]{Hir} to make the corresponding map $S^1\to(\R^3)^4$ transverse to
the codimension two subset consisting of quadruples of vectors that
lie in a plane.) It follows that there exists an $\eps>0$ such that
$\gamma$ meets each plane at most $4$ times on a time interval of
length $\eps$. (Otherwise, taking a limit of quintuples of times
mapped into the same plane whose mutual distances shrink to zero, we
would find in the limit an order four tangency of $\gamma$ to a plane,
which we have excluded.) Hence $\gamma$ can meet each plane at most
$4L/\eps$ times.

The proof of part (ii) is contained in the proof of
Lemma~\ref{lem:2-gons}(b) below. It relies on choosing $K$ such that
its curvature vanishes nowhere.
\end{proof}

\comment{
In part (ii) we introduced the finite subset $T \subset K$ of points
whose tangent lines meet the knot again. We denote by $B \subset K$
the subset of these ``other points'' on tangent lines to $K$.
Standard transversality arguments yield

\begin{lemma}
Let $\sigma:[0,1] \to \Sigma^\ell$ be a $1$-simplex. With an arbitrarily small
perturbation, we can achieve that for each $Q$-string $s_{2i}$ of $\sigma$ the evaluation map
$$
\lambda \mapsto \bigl(s_{2i}(a_{2i-1}(\lambda)),s_{2i}(a_{2i}(\lambda))\bigr)
$$
is transverse to the diagonal in $K \times K$ as well as to the sets
$T \times K \cup K \times T$ and $B \times K \cup K \times B$ of end
points of segments which are tangent to the knot at one end point.
\hfill$\square$
\end{lemma}
}

Now we consider the space of triangles in $\R^3$ with pairwise
distinct corners $x_1,x_2,x_3$ such that $x_1$ and $x_3$ lie on the
knot $K$. Using an arclength parametrization $\gamma:S^1=\R/L\Z\to K$
we identify this space with the open subset
$$
   \TT=\{(s,x_2,r)\in S^1\times\R^3\times S^1\mid
   x_1=\gamma(s),\;x_2,\;x_3=\gamma(r)\text{ are distinct}\}.
$$
We parametrize each triangle $[x_1,x_2,x_3]$ by the map (see
Figure~\ref{fig:triangle-par})
$$
   [0,1]^2\to\R^3,\qquad (u,t)\mapsto (1-t)x_1+t\bigl((1-u)x_2+ux_3\bigr).
$$
\begin{figure}
\labellist
\small\hair 2pt
\pinlabel ${\color{blue} x_1 = \gamma(s)}$ at 60 109
\pinlabel ${\color{blue} v_2}$ at 145 182
\pinlabel ${\color{blue} x_2}$ at 234 235
\pinlabel ${\color{blue} v_3}$ at 188 79
\pinlabel ${\color{blue} x_3 = \gamma(r)}$ at 328 56
\pinlabel ${(1-u)x_2+u x_3}$ at 315 172
\pinlabel ${(1-t)x_1+t((1-u)x_2+u x_3)}$ at 127 27
\endlabellist
\centering
\includegraphics[width=0.8\textwidth]{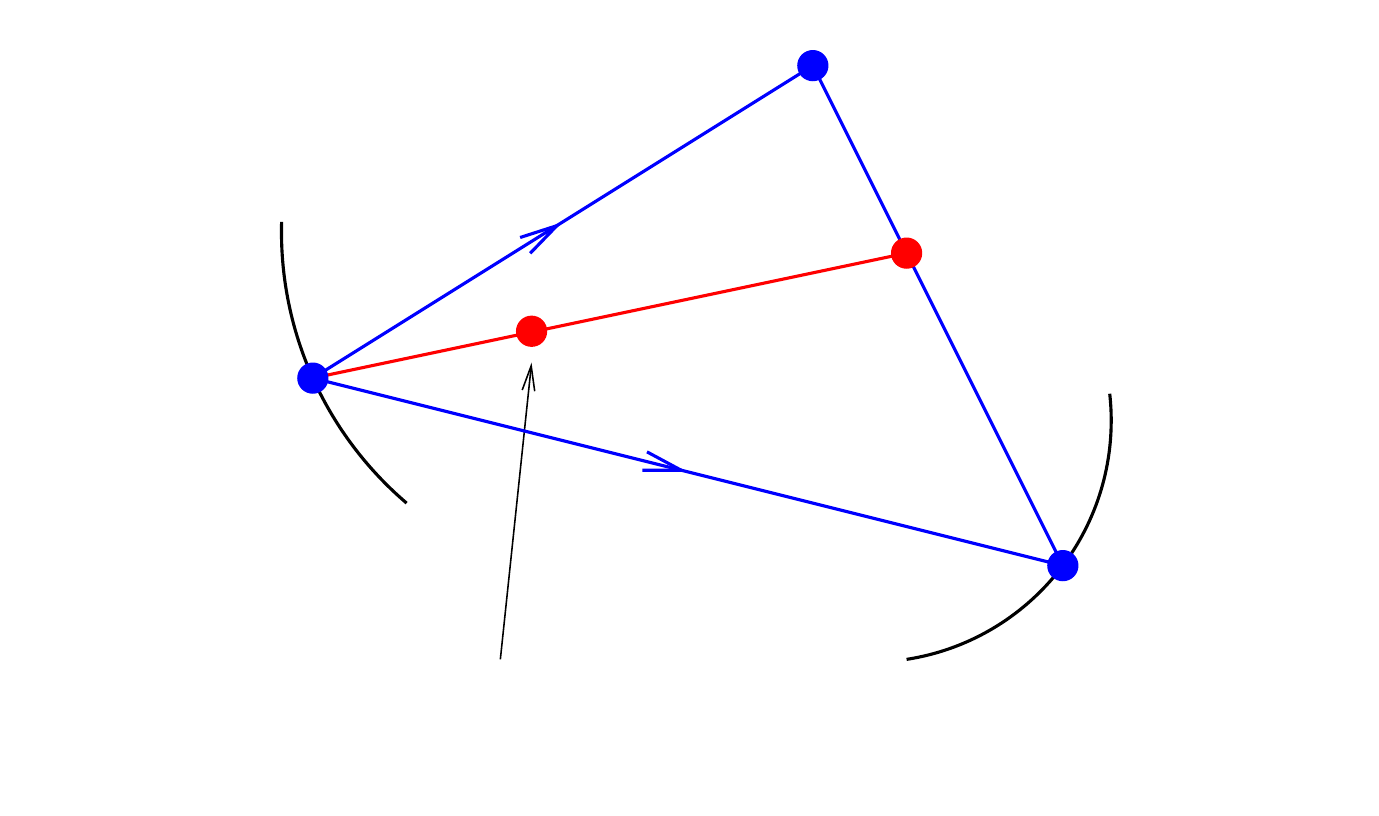}
\caption{
Parametrization of a triangle.
}
\label{fig:triangle-par}
\end{figure}

\begin{lemma}\label{lem:triangles}
For a generic $1$-parameter family of triangles $\beta:[0,1]\to\TT$,
$\lambda\mapsto(s^\lambda,x_2^\lambda,r^\lambda)$ the following
holds.

(a) The evaluation map
$$
   \ev_\beta:[0,1]^3\to\R^3,\qquad (\lambda,u,t)\mapsto
   (1-t)x_1^\lambda+t\bigl((1-u)x_2^\lambda+ux_3^\lambda\bigr)
$$
is transverse to $K$ on its interior, where we have set
$x_1^\lambda=\gamma(s^\lambda)$ and $x_3^\lambda=\gamma(r^\lambda)$.

(b) The map $(\lambda,u)\mapsto \frac{\p\ev_\beta}{\p
  t}(\lambda,u,0)$ meets the tangent bundle to $K$ transversely in
finitely many points. At these points the triangle is tangent to the
knot at $x_1^\lambda$ but not contained in its osculating plane.

(c) The points in (b) compactify the set 
$\ev_\beta^{-1}(K) \cap [0,1]^2 \times (0,1]$ 
to an embedded curve in $[0,1]^3$ transverse to the boundary. Its
image in $[0,1]^2$ under the projection
$(\lambda,u,t)\mapsto(\lambda,u)$ is an immersed curve with transverse
self-intersections.
\end{lemma}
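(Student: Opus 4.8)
The plan is to derive all three statements from the parametric transversality theorem, together with the finiteness facts in Lemma~\ref{lem:generic.knots}; the only subtle point is the behavior of the curve $\ev_\beta^{-1}(K)$ near the face $\{t=0\}$, where the entire slice maps into $K$. For (a), introduce the universal evaluation map
$$
\mathrm{EV}\colon\TT\times(0,1)^2\to\R^3,\qquad (s,x_2,r,u,t)\mapsto(1-t)\gamma(s)+t\bigl((1-u)x_2+u\gamma(r)\bigr).
$$
Varying $x_2$ only gives $\p_{x_2}\mathrm{EV}=t(1-u)\,\id$, which is invertible for $(u,t)\in(0,1)^2$, so $\mathrm{EV}$ is a submersion, in particular transverse to $K$; parametric transversality then makes $\ev_\beta$ transverse to $K$ on $(0,1)^3$ for generic $\beta$.

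For (b), write $w(\lambda,u):=\frac{\p\ev_\beta}{\p t}(\lambda,u,0)=(1-u)x_2^\lambda+ux_3^\lambda-x_1^\lambda$ and let $\bar w$ be its image in the normal plane $N_{x_1^\lambda}K=\R^3/\R\dot\gamma(s^\lambda)$. The associated universal section of the rank $2$ bundle over $\TT\times[0,1)$ is again submersive in the $x_2$-directions (using $u<1$), so for generic $\beta$ the map $(\lambda,u)\mapsto\bar w$ is transverse to the zero section; as the domain is $2$-dimensional and the target has rank $2$, the zero set is finite and cut out transversally, and at each such point $w$ is parallel to $\dot\gamma(s^\lambda)$, i.e.~the triangle is tangent to $K$ at $x_1^\lambda$. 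These points lie in the interior of $[0,1]^2$: the locus $u=1$ is excluded because, for generic $\beta$, the curve $\lambda\mapsto x_1^\lambda$ avoids the finite set $T$ of Lemma~\ref{lem:generic.knots}(ii), so no chord $[x_1^\lambda,x_3^\lambda]$ is tangent to $K$ at $x_1^\lambda$; the locus $u=0$ and the endpoints $\lambda=0,1$ are excluded by genericity of $\beta$. The condition ``not contained in the osculating plane'' asks that the opposite-edge direction $x_3^\lambda-x_2^\lambda$ not lie in $\operatorname{span}(\dot\gamma,\ddot\gamma)$ at $x_1^\lambda$; this is an open condition which at each of the finitely many tangency points fails only in codimension $1$ in the space of families, hence holds for generic $\beta$ (it is the geometric counterpart of condition (2e) of Section~\ref{ss:chains}).

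The substance is in (c). Fix a point $(\lambda_0,u_0,0)$ as in (b) and choose Euclidean coordinates on $\R^3$ centered at $x_1^{\lambda_0}$ with $\dot\gamma=e_1$, osculating plane $\operatorname{span}(e_1,e_2)$, binormal $e_3$; write $K$ near the origin as a graph $\sigma\mapsto(\sigma,f(\sigma),g(\sigma))$ with $f''(0)=\kappa>0$ (a generic knot may be assumed to have nowhere vanishing curvature) and $g(\sigma)=O(\sigma^3)$. Put $\sigma_1=(x_1^\lambda)_1$ and $\mu=(1-u)x_2^\lambda+ux_3^\lambda$, so $\ev_\beta=(1-t)x_1^\lambda+t\mu$. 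Matching the first coordinate is automatic at $t=0$, and the remaining two equations, divided by $t$, take the form
$$
\Phi_2(\lambda,u,t)=(\mu_2-f(\sigma_1))-(\mu_1-\sigma_1)f'(\sigma_1)-\tfrac{t}{2}(\mu_1-\sigma_1)^2f''(\sigma_1)+O(t^2)=0,
$$
and likewise $\Phi_3$ with $f$ replaced by $g$; these are smooth up to $t=0$, and $\{\Phi_2=\Phi_3=0,\ t>0\}=\ev_\beta^{-1}(K)\cap[0,1]^2\times(0,1]$. At $t=0$ one has $\Phi_j|_{t=0}=w_j-w_1 h_j'(\sigma_1)$ (with $h_2=f$, $h_3=g$), so $\{\Phi_2=\Phi_3=0,\ t=0\}$ is exactly the tangency set from (b), and $(\Phi_2,\Phi_3)|_{t=0}$ is precisely $\bar w$ expressed in the $(e_2,e_3)$-basis of the normal plane; hence the Jacobian of $(\Phi_2,\Phi_3)$ in $(\lambda,u)$ at $(\lambda_0,u_0,0)$ is the matrix whose invertibility is the transversality in (b). Moreover $\p_t\Phi_3|_{t=0}=-\tfrac12 w_1^2 g''(0)=0$ while $\p_t\Phi_2|_{t=0}=-\tfrac12 w_1^2\kappa\neq0$, so $dt,d\Phi_2,d\Phi_3$ are linearly independent there; thus $\{\Phi_2=\Phi_3=0\}$ is a smooth embedded arc crossing $\{t=0\}$ transversally, with a single branch in $t>0$ compactifying onto the point of (b). Together with (a) in the interior and the standard transversality of $\ev_\beta$ to $K$ along the remaining faces of $[0,1]^3$ (once more invoking Lemma~\ref{lem:generic.knots}(ii) at $u=1$ to control extra secant intersections), this shows that $\overline{\ev_\beta^{-1}(K)\cap[0,1]^2\times(0,1]}$ is an embedded curve in $[0,1]^3$ transverse to the boundary. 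Finally, for generic $\beta$ this curve is nowhere tangent to $\p_t$ in the interior, and at its $\{t=0\}$ endpoints it is transverse to that face, hence not $\p_t$-vertical; so its projection to $[0,1]^2$ is an immersion, and a last generic perturbation makes the self-intersections transverse.

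The \textbf{main obstacle} is precisely this local analysis near $\{t=0\}$: since every triangle collapses to its apex $x_1^\lambda\in K$ as $t\to0$, the naive preimage $\ev_\beta^{-1}(K)$ contains the entire face $\{t=0\}$, and one must show that the geometrically meaningful part $\{t>0\}$ limits onto the isolated tangency points of (b), and does so transversally. The division-by-$t$ device, the Frenet normal form for $K$, and the nowhere vanishing curvature are what make this work; everything else is a routine application of parametric transversality and the finiteness statements of Lemma~\ref{lem:generic.knots}.
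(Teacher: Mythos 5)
Your proposal is correct in substance and reaches the same conclusions, but by a route that differs from the paper's in both (b) and (c). For (b), you phrase tangency as the vanishing of the normal projection $\bar w$ of $(1-u)x_2^\lambda+ux_3^\lambda-x_1^\lambda$, viewed as a section of a rank-two bundle over $\TT\times[0,1)$, and get finitely many transverse zeros by parametric transversality; the paper instead works only on $\TT$ with the scalar function $F(s,x_2,r)=\la\dot\gamma(s),\nu\ra$ (tangency of the \emph{plane} of the triangle), shows $F^{-1}(0)$ is cut out transversely away from the codimension-two locus $\TT_0$ where $\gamma(r)-\gamma(s)$ is parallel to $\dot\gamma(s)$, and lets a generic path meet $F^{-1}(0)$ transversely while avoiding $\TT_0$ and the codimension-two set of triangles lying in the osculating plane. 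For (c), you divide the two ambient incidence equations (in Frenet graph coordinates for $K$) by the triangle parameter $t$ and apply the implicit function theorem in $(\lambda,u,t)$, obtaining the zero set near a tangency point as a graph over $t$; the paper divides the single plane-incidence equation $\la\gamma(s^\lambda+s)-\gamma(s^\lambda),\nu^\lambda\ra=0$ by the knot parameter $s$, solves $s=s(\lambda)$ using $b=\la\ddot\gamma,\nu\ra\neq 0$ and $c=\frac{d}{d\lambda}\la\dot\gamma,\nu\ra\neq 0$, and then determines from the sign of $s$ (relative to the direction of $\dot\gamma(s^0)$ in the cone of the triangle) that the intersection point enters the triangle from one side only, which is how the boundary point of the curve appears there. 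Your version extracts transversality to the face $\{t=0\}$ and the behaviour of the projection directly from Jacobian data; both versions hinge on the same nondegeneracies, namely the transversality of (b) and the non-osculating/nonvanishing-curvature condition.

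Two justifications should be repaired, though neither affects the architecture. First, to exclude $u=1$ you assert that for generic $\beta$ the path $\lambda\mapsto x_1^\lambda$ avoids the finite set $T$; a generic path in $K\cong S^1$ need not avoid a finite set (it may be forced to cross it). The correct statement -- and the one the paper uses -- is that the locus in $\TT$ where $x_3-x_1$ is parallel to $\dot\gamma(s)$ has codimension two by Lemma~\ref{lem:generic.knots}(ii) (finitely many admissible $s$, and for each of these finitely many admissible $r$, with $x_2$ free), so a generic path in $\TT$ avoids it; your conclusion stands with this substitution. Second, the inference ``transverse to the face $\{t=0\}$, hence not $\p_t$-vertical'' is backwards: a vertical tangent is as transverse to $\{t=0\}$ as possible. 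What makes the projection to $(\lambda,u)$ immersive at the endpoint is precisely $\p_t\Phi\neq 0$ there, i.e.\ your computation $\p_t\Phi_2|_{t=0}=-\tfrac12 w_1^2\kappa\neq 0$, combined with the invertible $(\lambda,u)$-Jacobian: the curve is a graph $(\lambda,u)=\varphi(t)$ with $\varphi'(0)=-\bigl[D_{(\lambda,u)}\Phi\bigr]^{-1}\p_t\Phi\neq 0$. (Relatedly, the linear independence of $dt,d\Phi_2,d\Phi_3$ already follows from the invertible $(\lambda,u)$-Jacobian alone; the $\p_t$ computation is what the immersion claim needs, not the transversality to the face.) Finally, your identification $\{\Phi_2=\Phi_3=0,\,t>0\}=\ev_\beta^{-1}(K)\cap\{t>0\}$ is set up only near a point of (b); to conclude the compactification statement one should note that the same local graph coordinates at an \emph{arbitrary} point of the face $\{t=0\}$ show that accumulation points of $\ev_\beta^{-1}(K)\cap\{t>0\}$ on that face must be tangency points -- a one-line addition that your framework already provides (the paper is equally brief on this point).
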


\begin{proof}
Part (a) follows from standard transversality arguments. For part (b)
we introduce
$$
   v_2:=x_2-x_1,\qquad v_3:=x_3-x_1,\qquad \nu:=\frac{v_2\times
     v_3}{|v_2\times v_3|}.
$$
Thus $v_2,v_3$ are tangent to the sides of the triangle at $x_1$ and
$\nu$ is a unit normal vector to the triangle. So the space of
triangles that are tangent to the knot at $x_1$ is the zero set of the
map
$$
   F:\TT\to\R,\qquad (s,x_2,r)\mapsto\la\dot\gamma(s),\nu\ra =
   \frac{\la v_2,v_3\times\dot\gamma(s)\ra}{|v_2\times v_3|}.
$$
The last expression shows that along the zero set the variation of $F$
in direction $x_2$ (or equivalently $v_2$) is nonzero provided that
$v_3\times\dot\gamma(s)\neq 0$. So $F^{-1}(0)$ is a transversely cut
out hypersurface in $\TT$ outside the set $\TT_0$ where
$v_3=\gamma(r)-\gamma(s)$ and $\dot\gamma(s)$ are collinear. By
Lemma~\ref{lem:generic.knots}(ii) the set $\TT_0$ has codimension
$2$. Hence a generic curve $\beta:[0,1]\to\TT$ avoids the set $\TT_0$
and intersects $F^{-1}(0)$ transversely, which implies the first
statement in (b). The second statement in (b) follows similarly from
the fact that the set of triangles contained in the osculating plane
at $x_1$ has codimension $2$ in $\TT$.

For part (c), consider a point $(\lambda_0,u_0)$ as in (b). To
simplify notation, let us shift the parameter interval such that
$\lambda_0=0$ is an interior point. Then with the obvious notation
$\nu^\lambda$ etc the following conditions hold at $\lambda=0$:
$$
   a:=\la\dot\gamma(s^0),\nu^0\ra = 0,\quad
   b:=\la\ddot\gamma(s^0),\nu^0\ra \neq 0,\quad
   c:=\frac{d}{d\lambda}\bigl|_{\lambda=0}\la\dot\gamma(s^\lambda),\nu^\lambda\ra \neq 0.
$$
Here the first condition expresses the fact that the triangle is
tangent to the knot at $x_1^0$, the second on that the triangle is not
contained in the osculating plane, and the third one the
transversality of the map in (b) to the tangent bundle of
$K$. Intersections of $K$ with triangles $\beta(\lambda)$ for
$\lambda$ close to zero can be written in the form
$\gamma(s^\lambda+s)$ with $s=O(\lambda)$ and must satisfy the
equation
$$
   0 = \Bigl\la \gamma(s^\lambda+s)-\gamma(s^\lambda),\nu^\lambda\Bigr\ra
   = \Bigl\la s\dot\gamma(s^\lambda)+\frac{1}{2}s^2\ddot\gamma(s^\lambda) +
   O(s^3),\nu^\lambda\Bigr\ra.
$$
Ignoring the trivial solution $s=0$, we divide by $s$ and obtain using $s=O(\lambda)$:
\begin{align*}
   0 &= \Bigl\la \dot\gamma(s^\lambda)+\frac{1}{2}s\ddot\gamma(s^\lambda) +
   O(s^2),\nu^\lambda\Bigr\ra \cr
   &= \Bigl\la \dot\gamma(s^0)+\ddot\lambda\gamma(s^0)+\frac{1}{2}s\ddot\gamma(s^0) +
   O(\lambda^2),\nu^0+\lambda\dot\nu^0+O(\lambda^2)\Bigr\ra \cr
   &= \la\dot\gamma(s^0),\nu^0\ra +
   \lambda\Bigl[\la\ddot\gamma(s^0),\nu^0\ra +
     \la\dot\gamma(s^0),\dot\nu^0\ra + O(\lambda)\Bigr] +
   s\Bigl[\frac{1}{2}\la\ddot\gamma(s^0),\nu^0\ra + O(\lambda)\Bigr]\cr
   &= a + \lambda\Bigl[b+O(\lambda)\Bigr] +
   s\Bigl[\frac{1}{2}c+O(\lambda)\Bigr].
\end{align*}
Since $a=0$ and and $b,c$ are nonzero, this equation has for each
$\lambda$ a unique solution $s$ of the form
$$
   s = -\frac{2b}{c}\lambda + O(\lambda^2).
$$
Now recall that by hypothesis $\dot\gamma(s^0)$ is a multiple of
$(1-u^0)v_2^0+u^0v_3^0$. If it is a positive (resp.~negative)
multiple, then only solutions with $s>0$ (resp.~$s<0$) will lie in the
triangle. So in either case the solutions describe a curve with
boundary and part (c) follows.
\end{proof}

\begin{remark}\label{rem:trianges-par}
Lemma~\ref{lem:triangles} shows that, given a generic $1$-parameter
family of triangles $\beta:[0,1]\to\TT$, the associated $2$-parameter
family $(\lambda,u)\mapsto\ev_\beta(\lambda,u,\cdot)$ can be
reparametrized in $t$ to look like the $Q$-strings in a generic
$2$-chain of broken strings. To see the last condition (2e) in
Definition~\ref{def:generic-chain}, consider a parameter value
$(\lambda,u)$ as in Lemma~\ref{lem:triangles}(b). Since the triangle
is not contained in the osculating plane at $x_1^\lambda$, the linear
string $t\mapsto\ev_\beta(\lambda,u,t)$ deviates quadratically from
the knot, so its projection normal to the knot has nonvanishing second
derivative at $t=0$. Hence we can reparametrize it to make its second
derivative vanish and its third derivative nonzero as required in
condition (2e). We will ignore these reparametrizations in the
following.
\end{remark}

\begin{remark}
Lemma~\ref{lem:triangles} remains true (with a simpler proof) if in
the definition of the space of triangles $\TT$ we allow $x_3$ to move
freely in $\R^3$ rather than only on the knot; this situation will
also occur in the shortening process in the next subsection.

Let us emphasize that in the space $\TT$ we require the points
$x_1,x_2,x_3$ to be distinct. Now in a generic $1$-parameter family of
triples $(x_1,x_2,x_3)$ with $x_1,x_3\in K$ the points $x_1,x_3$ may
meet for some parameter values, so this situation is not covered by
Lemma~\ref{lem:triangles}. See Remark~\ref{rem:crossing} below on how to
deal with this situation.
\end{remark}

\subsection{Reducing piecewise linear $Q$-strings to linear ones}
In this subsection we deform chains in $\Sigma_\pl$ to chains in
$\Sigma_\lin$, not increasing the length of $Q$-strings in the
process. The main result of this subsection is

\begin{prop}\label{prop:pl-lin}
For a generic knot $K$ there exist maps
$$
   \Bf_0:C_0(\Sigma_\pl) \to C_0(\Sigma_\lin),\qquad
   \Bf_1:C_1(\Sigma_\pl) \to C_1(\Sigma_\lin)
$$
and
$$
   \BH_0:C_0(\Sigma_\pl) \to C_1(\Sigma_\pl),\qquad
   \BH_1:C_1(\Sigma_\pl) \to C_2(\Sigma_\pl)
$$
satisfying with the map $i_\lin$ from~\eqref{eq:ilin}:
\begin{enumerate}
\item $\Bf_0i_\lin=\id$ and $D\BH_0= i_\lin\Bf_0-\id$;
\item $\Bf_1i_\lin=\id$ and $\BH_0D + D\BH_1 = i_\lin\Bf_1-\id$;
\item $\Bf_0$, $\BH_0$, $\Bf_1$ and $\BH_1$ are (not necessarily strictly) length-decreasing.
\end{enumerate}
\end{prop}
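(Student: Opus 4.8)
The plan is to construct the four maps by a ``straightening homotopy'' on $Q$-strings, handling $0$-chains first and then $1$-chains, and to keep everything length-decreasing by exploiting that the straight segment between two points of $\R^3$ is the unique shortest path.

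\textbf{Step 1: Straightening a single piecewise linear $Q$-string.}
Given a piecewise linear $Q$-string $s_{2i}$ with corners $x_1=s_{2i}(a_{2i-1}),\,x_2,\dots,x_{r-1},\,x_r=s_{2i}(a_{2i})$ (where $x_1,x_r\in K$), I would define a homotopy that successively replaces the first two legs $[x_1,x_2]\cup[x_2,x_3]$ by the family of triangles $[x_1,(1-\lambda)x_2+\lambda(\text{point on }[x_1,x_3]),x_3]$, i.e.\ slides the vertex $x_2$ along until the pair of segments becomes a single segment $[x_1,x_3]$, then repeat with the new first corner. By the triangle inequality this never increases $\sum L(\text{legs})$. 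Iterating over all corners produces a canonical homotopy from $s_{2i}$ to the linear (or, near the diagonal, once-broken ``spike'') $Q$-string. The near-diagonal modification described in the statement (where a short segment is replaced by a degenerate $3$-gon along the curvature vector, which is assumed nonvanishing) ensures the target genuinely lies in $\Sigma_\lin$ and the process is continuous as the segment length $\to 0$; here I use Lemma~\ref{lem:triangles} and Remark~\ref{rem:trianges-par} to know that in generic families these triangle homotopies, reparametrized in $t$, are generic chains of broken strings in the sense of Definition~\ref{def:generic-chain}, and in particular satisfy condition (2e) because the triangle is generically not contained in the osculating plane at $x_1$.

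\textbf{Step 2: Assembling $\Bf_0$ and $\BH_0$.}
For a generic $0$-chain $\beta\in C_0(\Sigma_\pl)$ I apply Step 1 simultaneously to all $Q$-strings of $\beta$ (the $N$-strings are rotated near the switches to keep the matching conditions (ii), (iii), as in the proof of Proposition~\ref{prop:pl}), obtaining $\Bf_0\beta\in C_0(\Sigma_\lin)$ and a $1$-chain $\BH_0\beta\in C_1(\Sigma_\pl)$ realizing the homotopy. By construction the homotopy is stationary (it inserts no corners and meets $K$ transversally at endpoints only) so $\delta_Q\BH_0\beta=\delta_N\BH_0\beta=0$ and $\p\BH_0\beta=\Bf_0\beta-\beta$, giving $D\BH_0=i_\lin\Bf_0-\id$. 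If $\beta$ is already in $\Sigma_\lin$, the triangle slides are constant, so $\Bf_0 i_\lin=\id$. Length monotonicity is immediate from the triangle inequality, using Remark~\ref{rem:length-spikes} and the fact that $Q$-spikes are not counted in the length filtration.

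\textbf{Step 3: Assembling $\Bf_1$ and $\BH_1$, and the main obstacle.}
For a generic $1$-simplex $\beta:[0,1]\to\Sigma_\pl$ I run the same straightening, now in families. The subtlety — and I expect this to be the hard part — is that the $2$-chain $\BH_1\beta$ obtained by sweeping the triangle homotopy over $\lambda$ must be a \emph{generic} $2$-chain, which requires the transversality package of Lemma~\ref{lem:triangles}: the evaluation map must meet $K$ transversally in the interior, the tangency locus (condition (2e)) must be finite with nonvanishing third derivative and disjoint from the $D_j$, and all these must be compatible at the finitely many $\lambda_j$ where a $Q$-string of $\beta$ already meets $K$ (these are carried along the homotopy, as in the proof of Proposition~\ref{prop:pl}). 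One must also match $\BH_1$ with $\BH_0$ on the boundary $\lambda\in\{0,1\}$ and handle the degeneration where two consecutive corners $x_1,x_r\in K$ of a $Q$-string collide during a family (Remark~\ref{rem:crossing}); near the diagonal the built-in curvature-spike deformation resolves this. Granting these genericity statements (which follow from the standard jet-transversality arguments invoked in Lemma~\ref{lem:triangles} together with Lemma~\ref{lem:generic.knots}), the chain-homotopy identities $\BH_0 D + D\BH_1 = i_\lin\Bf_1 - \id$ and $\Bf_1 i_\lin=\id$ follow by bookkeeping exactly as in Proposition~\ref{prop:pl}, and length monotonicity is again the triangle inequality. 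The key point where $Q=\R^3$ with its Euclidean metric is essential (Remark~\ref{rem:R3}) is precisely that ``straightening'' is canonical and length-decreasing; on a curved $3$-manifold geodesic segments need not be unique nor give a convex homotopy, which is why this step does not generalize.
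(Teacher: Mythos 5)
There is a genuine gap, and it sits at the center of your Step 2. You claim that the straightening homotopy $\BH_0\beta$ ``inserts no corners and meets $K$ transversally at endpoints only, so $\delta_Q\BH_0\beta=\delta_N\BH_0\beta=0$.'' This is false in general: unlike the subdivision homotopy of Proposition~\ref{prop:pl}, which can be made arbitrarily $C^0$-small by refining the subdivision, the triangle swept out when you slide a corner of a piecewise linear $Q$-string is a macroscopic object and will in general intersect the knot in its interior. Each such crossing contributes a nonzero $\delta_Q$ term (and, in $1$-parameter families, tangencies of the swept segment to $K$ at an endpoint contribute $\delta_N$ terms), so $D\BH_0\neq \p\BH_0$ and your identity $D\BH_0=i_\lin\Bf_0-\id$ does not follow from $\p\BH_0\beta=\Bf_0\beta-\beta$. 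Worse, the $\delta_Q$ terms are new broken strings with an extra $Q$-string, which themselves must be straightened, and their straightening can again cross $K$; so $\Bf_0,\BH_0$ cannot be defined in one pass but only recursively, and one must prove the recursion terminates. The paper does this by a lexicographic induction on a complexity $(M(\beta),I(\beta))$, where $M$ is the number of corners and $I$ is the number of interior intersections of the \emph{first} triangle with $K$, bounded a priori by Lemma~\ref{lem:generic.knots}(i); the key point is that when a crossing produces a $\delta_Q$ term, the new first triangle is \emph{contained} in the old one (this containment, not merely ``straightening is canonical and length-decreasing,'' is where the flatness of $\R^3$ enters, cf.\ Remark~\ref{rem:R3}), so $I$ strictly drops while $M$ is unchanged, and when a triangle is fully swept $M$ drops. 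Your proposal has no substitute for this termination mechanism.

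A secondary consequence is that your Step 3 bookkeeping is also incomplete: once $\delta_Q h$ and $\delta_N h$ are nonzero, the identities $D\BH_0=i_\lin\Bf_0-\id$ and $\BH_0D+D\BH_1=i_\lin\Bf_1-\id$ must be verified through the recursion (in the paper: $f\beta:=Dh\beta+\beta$, $\BH_0\beta:=\BH_0 f\beta+h\beta$, and the analogous computation with $f_1=h_0D+Dh_1+\id$ for $1$-chains), rather than read off from $\p$ alone. Your transversality remarks (Lemma~\ref{lem:triangles}, condition (2e), the near-diagonal curvature spike of Remark~\ref{rem:crossing}) and the length estimate via the triangle inequality are fine, but they address the easier part of the argument; the essential content you are missing is the handling of the knot crossings created by the sweep and the induction that makes the resulting recursive definition well-defined.
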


\begin{proof}
We assume that $K$ satisfies the genericity properties in
Section~\ref{ss:triangles}.
We first construct the maps $\BH_0$ and $\Bf_0$.

For each simplex $\beta \in C_0^\pl(\Sigma)$ we denote by $M(\beta)$
the total number of corners in the $Q$-strings of $\beta$, not
counting the corners in $Q$-spikes (which are by definition $3$-gons). Connecting
each corner to the starting point of its $Q$-string, we obtain $M(\beta)$
triangles connecting the various $Q$-strings to the segments between
their end points.
We define the {\em complexity} of $\beta \in C_0^\pl(\Sigma)$ to
be the pair of nonnegative integers
$$
   c(\beta) := (M(\beta), I(\beta)),
$$
where $I(\beta)$ is the number of interior intersection points of the
first triangle with $K$ (we set $I(\beta)=0$ in the case $M(\beta)=0$,
i.e. if there are no triangles). Note that by part (\ref{planes}) of
Lemma~\ref{lem:generic.knots} we know that $I$ is bounded a priori by
a fixed constant $S=S(K)$. We define the maps $\BH_0$ and $\Bf_0$ by
induction on the lexicographical order on complexities $c(\beta)$. For
$c(\beta)=(0,0)$ we set $\Bf_0\beta=\beta$ and $\BH_0\beta=0$.

For the induction step, let $\beta\in C^\pl_0(\Sigma)$ be a
$0$-simplex and assume that $\Bf_0$ and $\BH_0$ satisfying (i) and (iii)
have been defined for all simplices of complexities
$c<c(\beta)$. Let the first triangle of $\beta$ have vertices
$x_1,x_2,x_3$, where $x_1$ is the starting point of the first
$Q$-string which is not a segment, and $x_2$ and $x_3$ are the next
two corners on that $Q$-string ($x_3$ might also be the end point).
Since there are only finitely many intersections of the knot
$K$ with the interior of the triangle (and none with its sides), we
can find a segment connecting $x_2$ to a point $x_3'$ on the segment
$x_1x_3$ which is so close to $x_3$ that the triangle $x_2x_3'x_3$
does not contain any intersection points with the knot.
Let $h\beta\in C_1^\pl(\Sigma)$ be the $1$-simplex obtained by
sweeping the first triangle by the family of segments from $x_1$ to a
varying point $(1-u)x_2+ux_3'$ on the segment $[x_2,x_3']$, followed by the segment from that
point to $x_3$ and the remaining segments to $x_4$ etc. See
Figure~\ref{fig:triangle-new} (the point $y$ and the shaded region
play no role here and are included for later use).
\begin{figure}[h]
\labellist
\small\hair 2pt
\pinlabel $x_2$ at 97 339
\pinlabel $\beta$ at 52 208
\pinlabel $x_1$ at 7 9
\pinlabel $x_3'$ at 302 117
\pinlabel $x_3$ at 406 182
\pinlabel $x_4$ at 410 99
\pinlabel ${\color{blue} (1-u)x_2+ux_3'}$ at 308 278
\pinlabel ${\color{blue} f\beta}$ at 218 81
\pinlabel ${\color{red} y}$ at 118 186
\endlabellist
\centering
\includegraphics[width=0.5\textwidth]{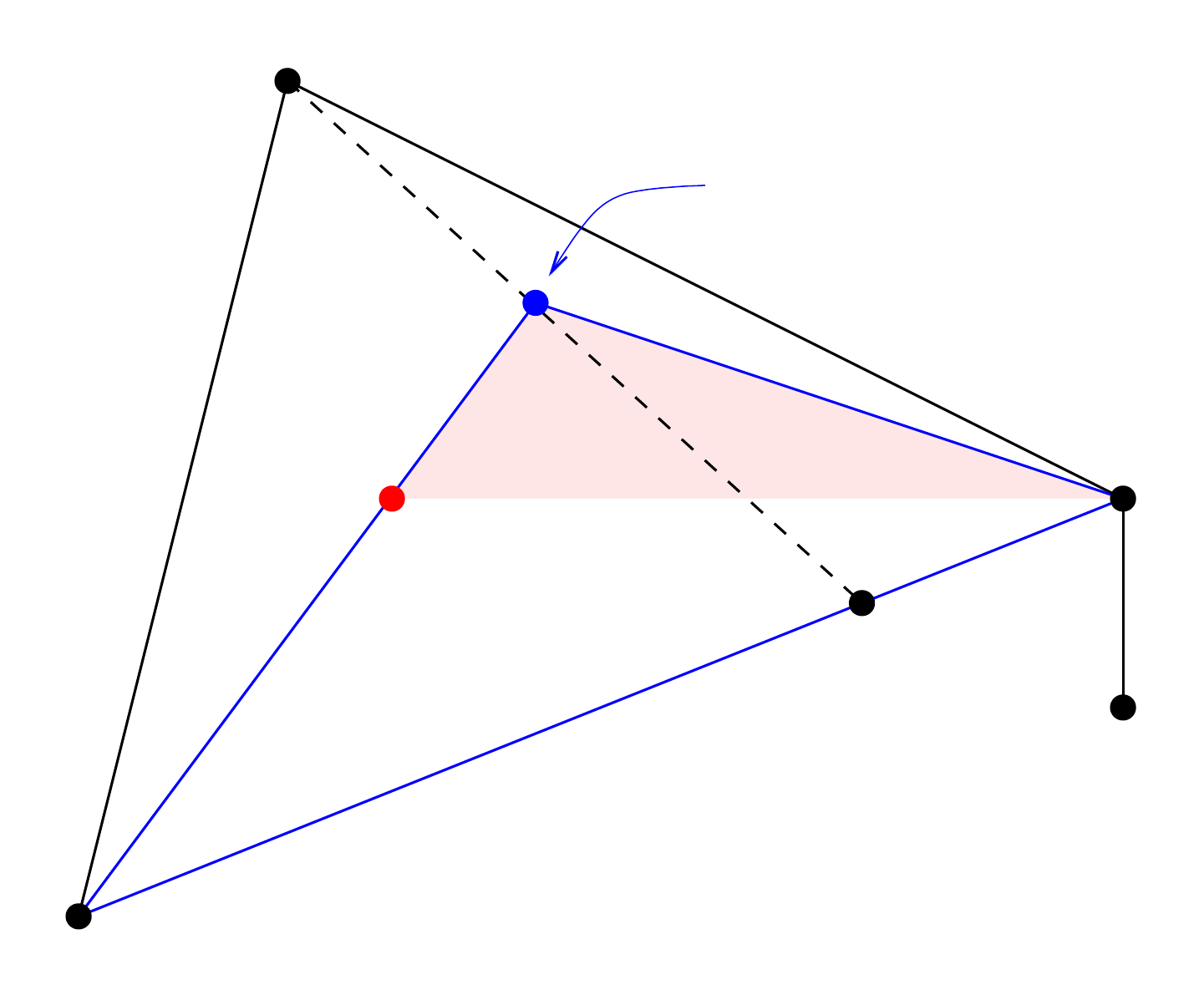}
\caption{Reducing the number of corner points.}
\label{fig:triangle-new}
\end{figure}
The $N$-string ending at $x_1$ (and if there is one, also the
$N$-string starting at $x_3$) is ``dragged along'' without creating
intersections with $K$, and all remaining $N$-and $Q$-strings remain unchanged
in the process.

The $1$-simplex $h\beta$ has boundary $\p(h\beta)=\beta'-\beta$, where
$\beta'$ is the $0$-simplex at the end of the sweep with first segment
$[x_1,x_3]$. We define
$$
   f\beta:= Dh\beta+\beta = \beta'+\delta_Qh\beta+\delta_Nh\beta.
$$
By construction we have $\delta_Nh\beta=0$ and $M(\beta')<M(\beta)$,
hence $c(\beta')<c(\beta)$. The domain of $\delta_Qh\beta$ consists of
those finitely many points where the triangle intersects $K$ in its
interior, so that $\delta_Qh\beta$ consists of broken strings with one
more $Q$-string (which is linear) and with the same total number of
corners as $\beta$. But since the new first triangle is contained in
the original first triangle for $\beta$, and one of the intersection
points is now the starting point of the new $Q$-string, we have
$I(\delta_Qh\beta)<I(\beta)$. Altogether we see that
$c(f\beta)<c(\beta)$, so by induction hypothesis $\Bf_0$ and $\BH_0$ are
already defined on $f\beta$. We set
$$
   \Bf_0\beta:= \Bf_0f\beta \quad \text{\rm and} \quad \BH_0\beta:= \BH_0f\beta +h\beta
$$
and verify that indeed (using condition (i) on $f\beta$)
$$
D\BH_0\beta = D\BH_0f\beta +Dh\beta = \Bf_0f\beta -f\beta+f\beta-\beta = \Bf_0\beta-\beta,
$$
so condition (i) continues to hold.
Condition (iii) holds by induction hypothesis in view of $L(f\beta) \le L(\beta)$
and $L(h\beta)\leq L(\beta)$. Since every $\beta\in C_0^\pl(\Sigma)$
has finite complexity, this finishes the definition of $\Bf_0$ and
$\BH_0$.

We next construct the maps $\BH_1$ and $\Bf_1$, following the same
strategy. For this, we first extend the notion of complexity $c=(M,I)$
to 1-chains with piecewise linear $Q$-strings. For a $1$-simplex
$\beta:[0,1]\to\Sigma^\pl$, we set
$$
   M(\beta) := \max_{\lambda\in[0,1]}M(\beta(\lambda)),\qquad
   I(\beta) := \max_{\lambda\in[0,1]}I(\beta(\lambda)).
$$
Note that $I(\beta)$ is still bounded by the constant $S=S(K)$ in
Lemma~\ref{lem:generic.knots}. Note also that, according to our
definition of chains of piecewise linear strings, the number
$M(\beta(\lambda))$ of corner points of $Q$-strings in
$\beta(\lambda)$ is actually constant equal to the maximal number
$M(\beta)$.
Observe that with this definition of complexity for $1$-chains, the
maps $h_0:=h$ and $\BH_0$ used in the argument for $0$-chains do not
increase complexity.

Again our definition of $\Bf_1$ and $\BH_1$ proceeds by induction on
the lexicographic order on complexity. For simplices $\beta\in
C_1^\pl(\Sigma)$ with $M=0$ we set $\Bf_1 \beta=\beta+ \BH_0D\beta$
and $\BH_1 \beta=0$. Then (ii)
holds by construction,
and (iii) holds since $\BH_0$ and $D$ are length-decreasing.

For the induction step, let $\beta\in C_1^\pl(\Sigma)$ be a
$1$-simplex, and assume that $\Bf_1$ and $\BH_1$ satisfying (ii)
and (iii) have been defined for all $1$-simplices of complexity
$c<c(\beta)$. Using a parametrized version of sweeping the first
triangle, we obtain a $2$-chain $h_1\beta\in C_2^\pl(\Sigma)$. By
construction its boundary satisfies $\p
h_1\beta+h_0\p\beta=\beta'-\beta$, where $\beta'$ is the $1$-simplex at
the end of the sweep with first segment $[x_1,x_3]$, see Figure~\ref{fig:Hbeta}.
\begin{figure}[h]
\labellist
\small\hair 2pt
\pinlabel $0$ at 12 297
\pinlabel $h_1\partial\beta$ at 1 160
\pinlabel $1$ at 12 64
\pinlabel $u$ at 12 32
\pinlabel $\beta$ at 417 297
\pinlabel $h_1\partial\beta$ at 410 179
\pinlabel $\beta'$ at 412 40
\pinlabel $Z\beta$ at 100 194
\pinlabel ${\color{blue} \delta_N\beta}$ at 206 297
\pinlabel ${\color{blue} \delta_N h_1 \beta}$ at 105 125
\pinlabel ${\color{blue} \delta_N h_1 \beta}$ at 235 169
\pinlabel ${\color{red} \delta_Q \beta}$ at 134 297
\pinlabel ${\color{red} \delta_Q \beta}$ at 294 297
\pinlabel ${\color{red} \delta_Q h_1 \beta}$ at 88 250
\pinlabel ${\color{red} \delta_Q h_1 \beta}$ at 283 228
\pinlabel ${\color{red} \delta_Q h_1 \partial \beta}$ at 418 209
\pinlabel ${\color{red} \delta_Q h_1 \partial \beta}$ at 418 151
\pinlabel ${\color{red} \delta_Q h_1 \beta}$ at 312 139
\pinlabel ${\color{red} \delta_Q \beta'}$ at 279 40
\endlabellist
\centering
\includegraphics[width=0.9\textwidth]{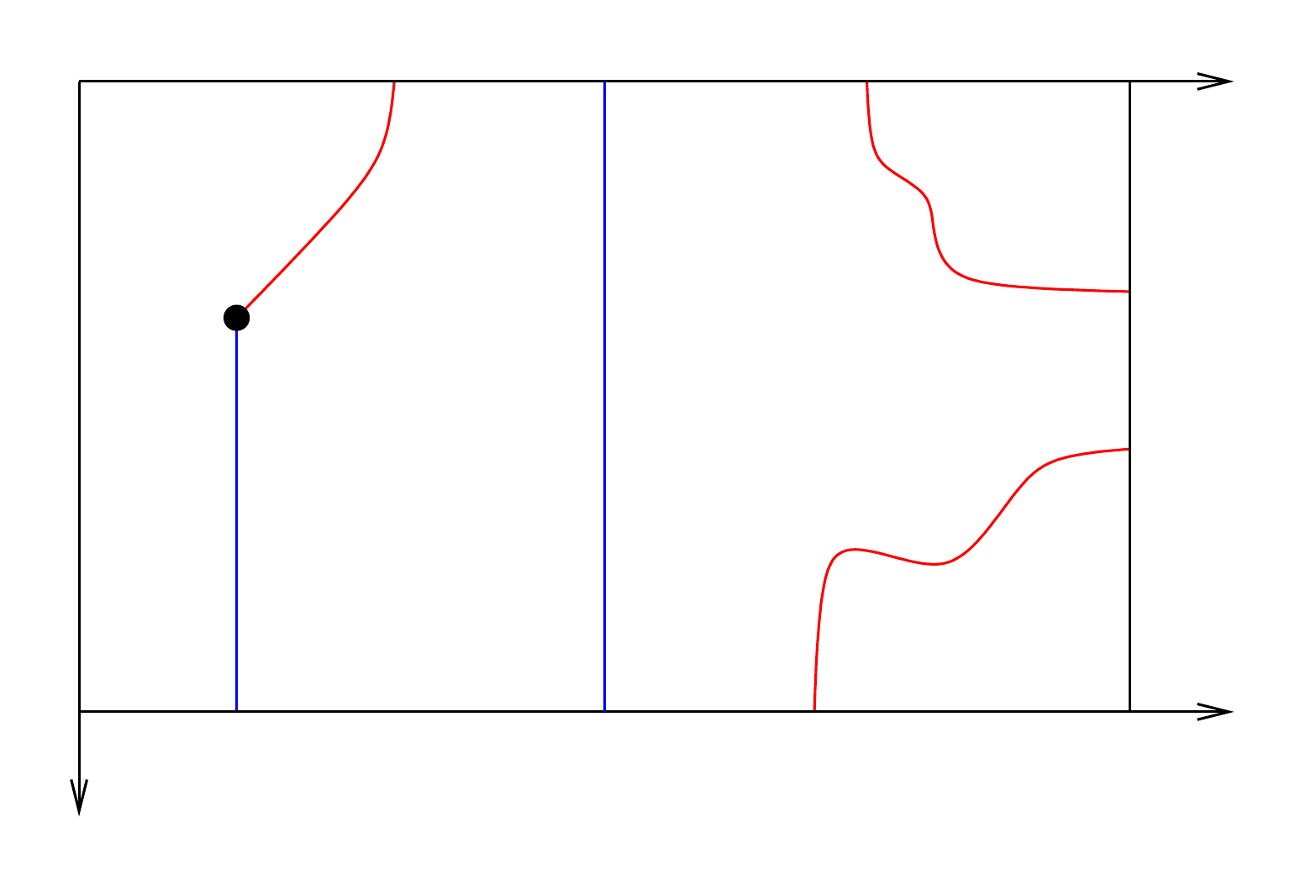}
\caption{The domain of $h_1\beta$.}
\label{fig:Hbeta}
\end{figure}
We now define
\begin{align*}
   f_1\beta :&= Dh_1\beta + h_0D\beta + \beta \cr
   &= \beta' + (\delta_Qh_1+h_0\delta_Q)\beta + (\delta_Nh_1+h_0\delta_N)\beta.
\end{align*}
We claim that $c(f_1\beta)<c(\beta)$. To see this, we need to show
that the three terms on the right hand side of the last displayed
equation have complexity lower that $c(\beta)$. For $\beta'$ this
holds because its $Q$-strings have one fewer corner,
i.e.~$M(\beta')<M(\beta)$.
The domain of $(\delta_Qh_1+h_0\delta_Q)\beta$ consists of
the finitely many curves in which the first triangle intersects $K$ at
an interior point $y$,
so that $(\delta_Qh_1+h_0\delta_Q)\beta$ consists of broken strings
with one more $Q$-string (which is linear) and with the same total number of
corners as $\beta$. But {\em since the new first triangle (the shaded region
in Figure~\ref{fig:triangle-new}) is contained in
the original first triangle} for each parameter value in $\beta$, and one of the intersection
points is now the starting point of the new $Q$-string, we have
$I((\delta_Qh_1+h_0\delta_Q)\beta)<I(\beta)$.
The domain of $(\delta_Nh_1+h_0\delta_N)\beta$ consists of
the finitely many straight line segments $[u,1]\times\{\lambda\}$
emanating from the parameter values $(u,\lambda)$ corresponding to the
tangencies of the triangle $[x_1,x_2,x_3]$ to the knot at $x_1$, see
Figure~\ref{fig:Hbeta} where one such point of tangency is shown as
$Z\beta$. So $(\delta_Nh_1+h_0\delta_N)\beta$ consists of broken strings
with one more $Q$-spike
and with the same total number of corners as $\beta$. But since the
new triangle with corners $x_1,(1-u)x_2+ux_3',x_3$ is contained in
the original first triangle at parameter value $\lambda$, and one of
the intersection points with the knot is the corner point $x_1$ of the new
triangle (which does not count towards $I$), we have
$I((\delta_Qh_1+h_0\delta_Q)\beta)<I(\beta)$ and the claim is proved.

According to the claim, $\Bf_1$ and $\BH_1$ are defined on $f_1\beta$ and we set
$$
\Bf_1\beta := \Bf_1 f_1\beta \quad \text{and} \quad \BH_1\beta :=\BH_1f_1\beta +h_1\beta.
$$
To distinguish the proposed extensions from the maps given by
induction hypothesis, we temporarily call the extended versions
$\HH_1$ and $\FF_1$, so we can write
$$
\FF_1 := \Bf_1 f_1 \quad \text{and} \quad \HH_1 :=\BH_1f_1 +h_1
$$
without ambiguity. Recall also that in this notation $\HH_0=\BH_0
f_0+h_0$. Now using $f_1=h_0D+Dh_1+\id$ we compute
\begin{align*}
D\HH_1 +\HH_0D &= D\BH_1 f_1 +Dh_1+\BH_0 f_0D +h_0D\\
&= (\Bf_1f_1-f_1-\BH_0Df_1) + (f_1-\id-h_0D) + \BH_0f_0D+h_0D\\
&= \FF_1-\id +\BH_0(f_0D-Df_1).
\end{align*}
Using $f_1=h_0D+Dh_1+\id$ again and $f_0=Dh_0+\id$, we find
$Df_1=Dh_0D+D = (Dh_0+\id)D = f_0D$,
so that the last term in the displayed equation vanishes and the
extensions $\HH_1,\FF_1$ have the required properties.
This completes the induction step and hence the proof of
Proposition~\ref{prop:pl-lin}.
\end{proof}

\begin{remark}\label{rem:crossing}
If in a $1$-simplex $\beta$ as in the preceding proof the third point
$x_3$ of the first triangle is the end point of the corresponding
$Q$-string and thus constrained to lie on the knot, then the points
$x_1$ and $x_3$ can cross each other for some parameter values
$\lambda$ in the chain. The homotopy $h_1\beta$ then shrinks the
corresponding degenerate triangle at parameter $\lambda$ to a constant
$Q$-string, which according to our convention from Section~\ref{ss:pl}
we interpret as a linear $Q$-spike in the direction of the degenerate triangle.
Incidentally, the segment $[x_2,x_3]$ is always short throughout the
shortening process, so if $x_1$ and $x_3$ agree then the triangle is
already a linear $Q$-spike without further shrinking.
\end{remark}

\begin{remark}
Definition~\ref{def:spike} implies that if a $Q$-string in $\beta$ in
the preceding proof is a (piecewise linear) $Q$-spike, then it never
intersects the knot in its interior and remains a $Q$-spike throughout
the shortening process (which ends with a degenerate triangle as in
Remark~\ref{rem:crossing}). This property ensures that $\BH_0$ and
$\BH_1$ indeed do not increase length, which does not count $Q$-spikes.
\end{remark}

\begin{remark}\label{rem:R3}
The proof relies crucially on the (trivial) fact that the new
triangle $[y,(1-u)x_2+ux_3',x_3]$ (the shaded region
in Figure~\ref{fig:triangle-new})
obtained by splitting the $Q$-string at an intersection point $y$ with $K$
is contained in the old triangle $[x_1,x_2,x_3]$. {\em This is the only place where we use that
the metric is Euclidean}; the rest of the proof works equally well for
any metric of nonpositive curvature.
\end{remark}

\subsection{Properties of linear $Q$-strings for generic knots}
Now we consider the space of {\em $2$-gons}, i.e., straight line
segments starting and ending on the knot. This
space is canonically identified with $K\times K$ by
associating to each $2$-gon its endpoints on $K$. We consider the
squared distance function
$$
   E:K\times K\to\R,\qquad E(x,y)=\frac{1}{2}|x-y|^2.
$$

\begin{lemma}\label{lem:2-gons}
For a generic knot $K\subset\R^3$ the following holds for the space
$K\times K$ of $2$-gons (see Figure~\ref{fig:2-gons}).
\begin{figure}[h]
\labellist
\small\hair 2pt
\pinlabel $0$ at 3 3
\pinlabel $L$ at 268 3
\pinlabel $L$ at 3 268
\pinlabel $s$ at 317 19
\pinlabel $t$ at 17 323
\pinlabel $K\times K$ at 294 288
\pinlabel ${\color{blue} -\nabla E}$ at 72 207
\pinlabel ${\color{red} S_Q}$ at 124 230
\pinlabel ${\color{red} S_Q}$ at 231 129
\endlabellist
\centering
\includegraphics[width=0.5\textwidth]{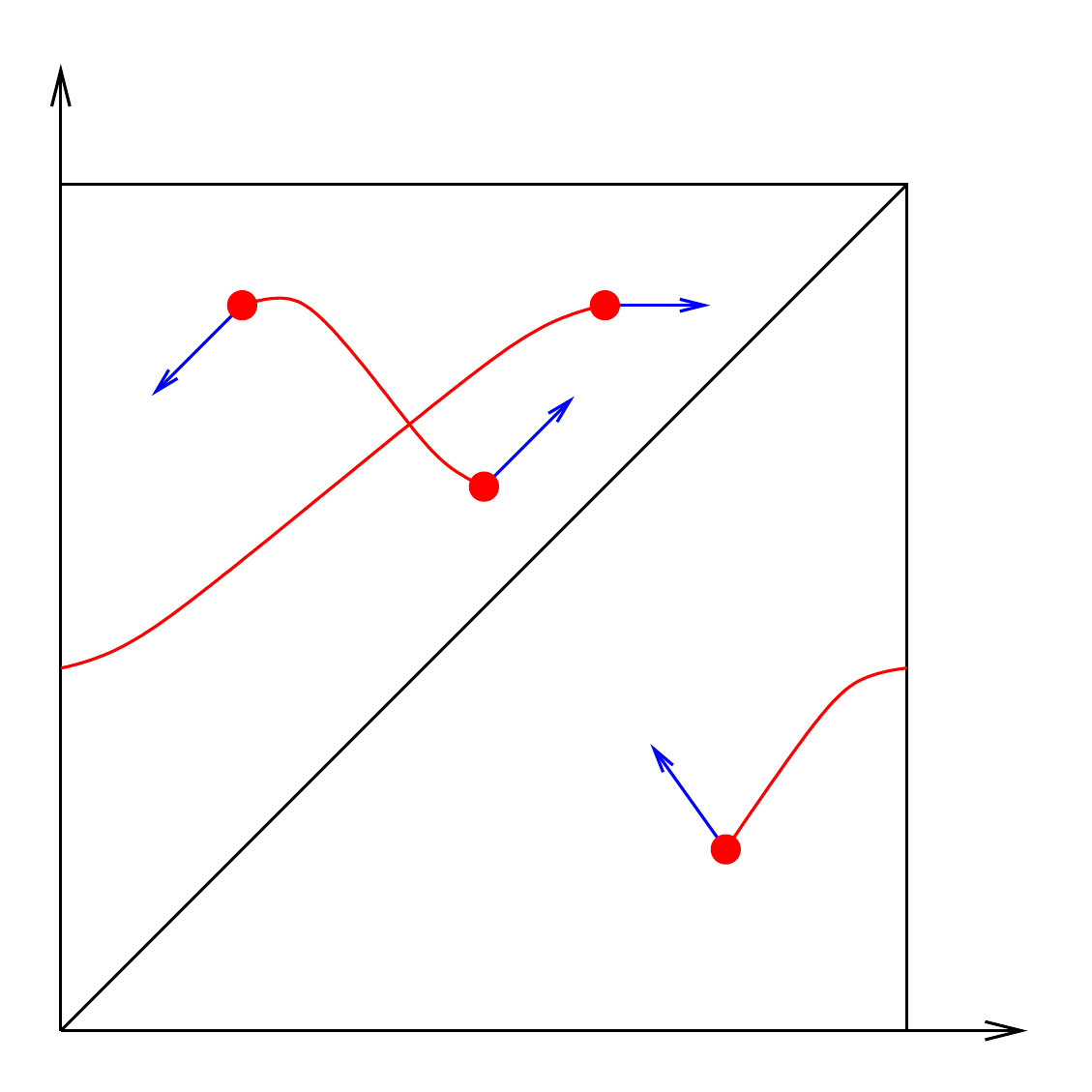}
\caption{The space of $2$-gons.}
\label{fig:2-gons}
\end{figure}

(a) $E$ attains its minimum $0$ along the diagonal, which is a
Bott nondegenerate critical manifold; the other critical points are
nondegenerate binormal chords of index $0,1,2$.

(b) The subset $S_Q\subset K\times K$ of $2$-gons meeting $K$ in their
interior is a $1$-dimensional submanifold with boundary consisting of
finitely many $2$-gons tangent to $K$ at one endpoint, and with
finitely many transverse self-intersections consisting of finitely
many $2$-gons meeting $K$ twice in their interior.

(c) The negative gradient $-\nabla E$ is not pointing into $S_Q$ at
the boundary points.
\end{lemma}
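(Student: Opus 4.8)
The plan is to handle the three parts separately, the geometric transversality statement in (b) being the technical core, (a) being routine and (c) a short local computation.

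For part (a): since $E\geq 0$ with equality exactly on the diagonal $\Delta=\{(x,x)\}$, the minimum set is $\Delta$. In an arclength parametrization $\gamma:S^1\to K$ one has $\partial_sE=\langle\gamma(s)-\gamma(t),\dot\gamma(s)\rangle$ and $\partial_tE=-\langle\gamma(s)-\gamma(t),\dot\gamma(t)\rangle$, so the critical points off $\Delta$ are exactly the binormal chords. Differentiating once more and using $\gamma(s)-\gamma(t)=0$ on $\Delta$ gives the Hessian $\left(\begin{smallmatrix}1&-1\\-1&1\end{smallmatrix}\right)$ along $\Delta$, whose kernel is precisely $T\Delta$ and which is positive transverse to $\Delta$; hence $\Delta$ is a Bott-nondegenerate critical manifold of index $0$. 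Nondegeneracy of the remaining critical points is the chord-genericity assumption already in force in Section~\ref{ss:switching} (equivalently, achieved by a $C^\infty$-small perturbation of $K$), their Morse index lies in $\{0,1,2\}$ since $\dim(K\times K)=2$, and their critical value is strictly positive, so they are disjoint from $\Delta$.

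For part (b): introduce the incidence space $W=\{(x,y,z)\in K^3\mid z \text{ lies in the open segment }(x,y),\ x,y,z \text{ pairwise distinct}\}$, cut out of the open set of pairwise-distinct triples in $K^3$ by the two scalar equations saying $z$ lies on the line through $x,y$. By jet-transversality (as in Lemma~\ref{lem:generic.knots}), $0$ is a regular value for generic $K$, so $W$ is a $1$-manifold, and $S_Q$ is the image of $W$ under $p\colon(x,y,z)\mapsto(x,y)$. The map $p|_W$ is immersive except where $K$ is tangent at $z$ to the chord $[x,y]$, which forces the tangent line to $K$ at $z$ to hit $K$ in the two further points $x,y$ and is excluded by Lemma~\ref{lem:generic.knots}(ii); it is injective except at $2$-gons meeting $K$ twice in their interior, which is a codimension-$2$ (hence finite) phenomenon whose transversality is again generic, producing the transverse self-intersections. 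The boundary of $S_Q$ consists of the ends of the arcs of $W$, where the interior intersection point $z$ collides with an endpoint; passing to the limit the chord becomes tangent to $K$ at that endpoint, and conversely each such tangent $2$-gon is a genuine boundary point. Finiteness of the boundary follows from Lemma~\ref{lem:generic.knots}(ii) once more, and $S_Q$ avoids a neighborhood of $\Delta$ because a sufficiently short secant of a curve with nowhere-vanishing curvature does not re-meet the curve.

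For part (c): fix a boundary point $(x_0,y_0)$ of $S_Q$, a $2$-gon tangent to $K$ at $x_0=\gamma(s_0)$, and take coordinates with $x_0=0$, $\dot\gamma(s_0)=e_1$, curvature vector a positive multiple of $e_3$ and binormal $e_2$; then $y_0$ lies on the $e_1$-axis. Writing $x=\gamma(s_0+\sigma)$, $y=\gamma(t_0+\tau)$ and solving the collinearity equations for the interior intersection $z=\gamma(s_0+\rho)$, a short Taylor expansion (in the spirit of the proof of Lemma~\ref{lem:triangles}(c)) shows that the nontrivial solution branch has $\rho\approx-\sigma$, parameter $\mu\approx -2\sigma/|y_0-x_0|$, and exists only for $\sigma$ of the one sign making $x$ move \emph{away} from $y_0$ along $K$, and that along this branch $\tau=O(\sigma^2)$; thus the arc of $S_Q$ emanating from $(x_0,y_0)$ is tangent to the $\partial_s$-direction and lies on that side. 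On the other hand $-\partial_sE$ at $(x_0,y_0)$ equals $\langle y_0-x_0,\dot\gamma(s_0)\rangle=\pm|y_0-x_0|\neq 0$, with the sign "towards $y_0$", so the negative gradient flow through $(x_0,y_0)$ instantly moves $x$ towards $y_0$ — i.e.\ off the $S_Q$-arc. Hence $-\nabla E$ does not point into $S_Q$, which is the assertion.

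The step I expect to be the main obstacle is the transversality bookkeeping in (b): one must arrange, by a single generic perturbation of $K$ compatible with the earlier genericity hypotheses (Lemma~\ref{lem:generic.knots} and the chord-genericity of Section~\ref{ss:switching}), that $W$ is simultaneously a smooth $1$-manifold, that $p|_W$ is immersive off the finite tangency locus, injective off the finite double-secant locus, and transverse to itself there, and that the ends of $W$ match up exactly with the tangent $2$-gons. Once this is in place, the local analysis in (c) is elementary and (a) is standard.
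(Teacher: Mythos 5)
Your part (a) and the routine transversality in (b) are fine and essentially coincide with the paper's argument (the paper excludes tangency of the chord at an interior intersection point by a codimension count rather than by Lemma~\ref{lem:generic.knots}(ii), but your route is equally valid). The genuine gap is at the boundary structure of $S_Q$. You assert that ``each such tangent $2$-gon is a genuine boundary point'' and that $S_Q$ is a $1$-dimensional submanifold with boundary there, but the only evidence offered is that the ends of the incidence curve $W$ limit onto tangent $2$-gons. That does not yet produce a boundary chart: an end of a $1$-manifold converging to a point need not close up to a smooth (or even topologically embedded) half-open arc --- it could a priori spiral in or approach with oscillating direction --- and nothing in your argument rules out two distinct ends of $W$ (e.g.\ the two sides $\sigma<0$ and $\sigma>0$ in your notation, or two unrelated arcs) limiting to the \emph{same} tangent $2$-gon, in which case that point would not be a boundary point of a manifold with boundary. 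What is actually needed, and what the paper proves by an explicit local computation, is that each tangent $2$-gon $[p,q]$ is the endpoint of a \emph{unique} local embedded curve of interior-intersecting $2$-gons, smooth up to the endpoint and approaching from one definite side; concretely, in affine coordinates adapted to $[p,q]$ one shows that the ``distance to $K$ near $q$'' function has the form $f_\xi(\eta)=\kappa(\xi+\eta)+O(\xi^2+\eta^2)$ and admits a unique zero $\eta(\xi)$, which exists only for $\xi<0$.

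The Taylor expansion you sketch inside (c) is exactly this computation, but as stated it is insufficient: its conclusions (unique one-sided branch with $\rho\approx-\sigma$, $\mu\approx-2\sigma/|y_0-x_0|$, $\tau=O(\sigma^2)$) require that the tangent line to $K$ at the far endpoint $y_0$ be \emph{transverse to the osculating plane} of $K$ at the tangency point $x_0$ --- in your coordinates $\langle\dot\gamma(t_0),e_2\rangle\neq 0$ --- and you neither state this hypothesis nor explain why it holds for generic $K$. If that tangent lies in the osculating plane, the leading term of the binormal-direction equation degenerates and existence, uniqueness and one-sidedness of the branch can all fail. The paper obtains this extra genericity from the same transversality that yields finiteness of tangent $2$-gons: the tangent plane of the tangential variety $\tau_K$ at the point $q$ on the tangent line through $p$ is the translated osculating plane at $p$, so transversality of $\tau_K\setminus N(\delta)$ with $K$ says precisely that $\dot\gamma(t_0)$ is not contained in it. To close your argument you should add this transversality statement to your genericity assumptions and carry out the local expansion (under that hypothesis) as part of (b), since both the submanifold-with-boundary claim there and the one-sidedness you use in (c) rest on it; with that inserted, your proof is correct and agrees in substance with the paper's.
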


\begin{proof}
(a) In terms of an arclength parametrization $\gamma$ of $K$ we write
the energy as a function $E(s,t)=\frac{1}{2}|\gamma(s)-\gamma(t)|^2$.
We compute its partial derivatives
\begin{equation}\label{eq:partial-E}
\begin{gathered}
   \frac{\p E}{\p s}= \la\gamma(s)-\gamma(t),\dot\gamma(s)\ra ,\qquad
   \frac{\p E}{\p t}= \la\gamma(t)-\gamma(s),\dot\gamma(t)\ra ,\cr
   \frac{\p^2E}{\p s^2}= |\dot\gamma(s)|^2+\la\gamma(s)-\gamma(t),\ddot\gamma(s)\ra ,\qquad
   \frac{\p^2E}{\p s\p t}= -\la\dot\gamma(s),\dot\gamma(t)\ra ,\cr
   \frac{\p^2E}{\p t^2}=|\dot\gamma(t)|^2+\la\gamma(t)-\gamma(s),\ddot\gamma(t)\ra.
\end{gathered}
\end{equation}
We see that critical points of $E$ are points on the diagonal $s=t$
and binormal chords (where $s\neq t$), and the Hessian of $E$ at $s=t$ equals
$\left(\begin{smallmatrix}1&-1\\-1&1\end{smallmatrix}\right)$. Its
kernel is the tangent space to the diagonal and it is positive
definite in the transverse direction. This proves Bott nondegeneracy
of the diagonal. Nondegeneracy of the binormal chords is achieved by a
generic perturbation of $K$.

\begin{figure}
\labellist
\small\hair 2pt
\pinlabel $K$ at 240 304
\pinlabel $K$ at 390 317
\pinlabel $p$ at 143 144
\pinlabel $p_\xi$ at 81 194
\pinlabel $p_\eta$ at 200 182
\pinlabel $\dot{\gamma}(s)$ at 189 144
\pinlabel $q$ at 342 162
\pinlabel ${\color{blue} P}$ at 85 75
\pinlabel ${\color{blue} Q}$ at 337 18
\pinlabel ${\color{red} \ell_{\xi,\eta}}$ at 261 208
\endlabellist
\centering
\includegraphics[width=0.7\textwidth]{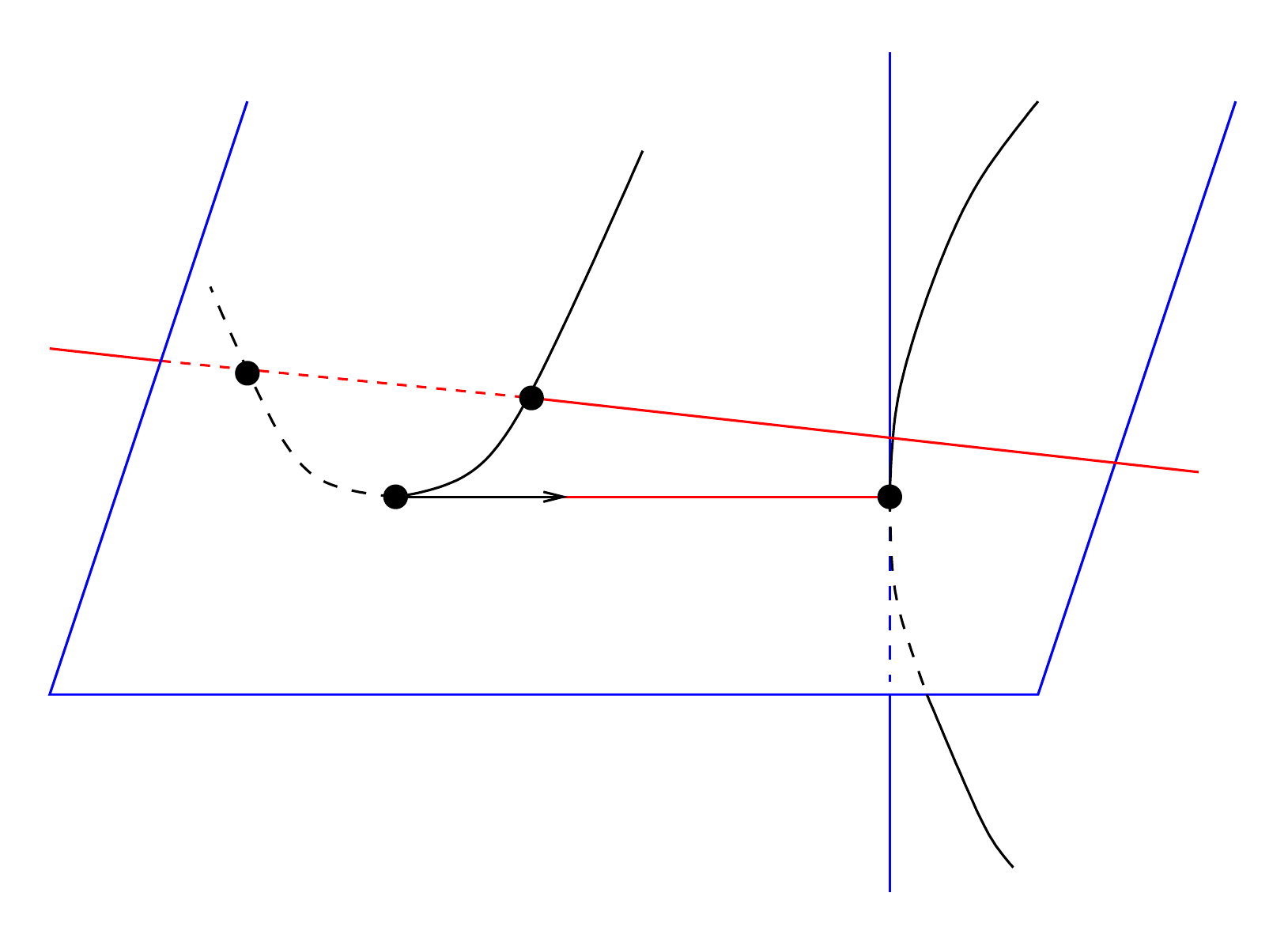}
\caption{A $2$-gon becoming tangent to $K$ at an endpoint.}
\label{fig:tangency}
\end{figure}

(b) We choose $K$ so that its curvature is nowhere $0$ (which holds
generically). Then there exists $\delta>0$ such that no $2$-gon of
positive length $<\delta$ intersects the knot in an interior
point. Consider the tangential variety $\tau_{K}$ of $K$ (where
$\gamma\colon [0,L]\to\R^{3}$ is a parametrization of $K$)
\[
\tau_{K}:=\left\{\gamma(s)+r\dot \gamma(s)\mid s\in[0,L], r\in\R\right\}\subset\R^3.
\]
Since the curvature of $K$ is nowhere zero, there exists $\delta>0$
such that for each $s$ the line segment $\{\gamma(s)+r\dot \gamma(s)\mid
r\in(-\delta,\delta)\}$ intersects $K$ only at $r=0$. Let $N(\delta)$
denote the union of these line segments. After small
perturbation, the surface $\tau_{K}\setminus N(\delta)$ intersects $K$
transversely. This shows that there are finitely many 2-gons that are
tangent to $K$ at one endpoint and that this is a transversely cut out
$0$-manifold. Moreover, transversality implies that for each $2$-gon
that is tangent to $K$ at one endpoint $p$, the tangent line $Q$ to $K$
at the other endpoint $q$ does not lie in the osculating plane $P$ (the
plane spanned by the first two derivatives of $\gamma$) at $p$; see
Figure~\ref{fig:tangency}.

We claim that the $2$-gon $[p,q]$ is the boundary point of a unique
local embedded curve of $2$-gons intersecting $K$ in their interior.
To see this, we choose affine coordinates $(x,y,z)$ on $\R^3$ in which
$p=(0,0,0)$, $q=(1,0,0)$, $P$ is the $(x,y)$-plane, and $Q$ is
parallel to the $z$-axis. Then $K$ can be written near $p$ as a graph
over the $x$-axis in the form
$$
   y=\kappa x^2+O(x^3),\quad z=O(x^3),
$$
and near $q$ as a graph over the $z$-axis in the form
$$
   x=1+O(z^2),\quad y=O(z^2). 
$$
Here $2\kappa\neq0$ is the curvature of $K$ at $p$, and after a
further reflection we may assume that $\kappa>0$. We fix a small
$\eps>0$ (to be chosen later) and consider points $\xi,\eta$ on the
$x$-axis with $-\eps<\xi<\eta<2\eps$. Let $p_\xi,p_\eta$ be the points
of $K$ near $p$ with $x$-coordinates $\xi,\eta$ and let $\ell_{\xi,\eta}$ be the line
through $p_\xi$ and $p_\eta$. Let $\pi(x,y,z)=(x,z)$ be the projection
onto the $(x,z)$-plane. Since the line $\ell_{\xi,\eta}$ is close to
the $x$-axis and $K$ is tangent to the $z$-axis at $q$, the
projected curves $\pi(\ell_{\xi,\eta})$ and $\pi(K)$ intersect in a
unique point $r_{\xi,\eta}$ in the $(x,z)$-plane near $\pi(q)=(1,0)$.  
Let $f_\xi(\eta)$ denote the difference in the $y$-values between the
points of $K$ and $\ell_{\xi,\eta}$ lying over $r_{\xi,\eta}$. Thus
$f_\xi(\eta)$ is the ``distance in the $y$-direction'' between
$\ell_{\xi,\eta}$ and $K$ near $q$. To compute the function
$f_\xi(\eta)$, note that the slope of the line
through the points $(\xi,\kappa\xi^2)$ and $(\eta,\kappa\eta^2)$ on
the parabola $y=\kappa x^2$ equals $\kappa(\xi+\eta)$, so the $y$-value of this line
at $x=1$ is of the form $\kappa(\xi+\eta) + O(\xi^2+\eta^2)$. The
linear term persists for the function $f_\xi(\eta)$, hence
$$
   f_\xi(\eta) = \kappa(\xi+\eta) + O(\xi^2+\eta^2).
$$
For $\eps$ sufficiently small, we see that if $\xi\geq 0$, then
$f_\xi(\eta)>0$ for all $\eta\in(\xi,2\eps)$. Suppose therefore that
$\xi<0$. Then for $\eps$ sufficiently small we have 
$f_\xi(0)=\kappa\xi+O(\xi^2)<0$, $f_\xi(-2\xi)=-\kappa\xi+O(\xi^2)>0$,
and $f_\xi'(\eta)=\kappa+O(|\xi|+|\eta|)>0$. Thus for every
$\xi\in(-\eps,0)$ there exists a unique $\eta(\xi)\in(\xi,2\eps)$ such that
$f_\xi(\eta(\xi))=0$, i.e., the line $\ell_{\xi,\eta(\xi)}$ intersects
$K$ near $q$. Moreover, the point $\eta(\xi)$ depends smoothly on
$\xi$ and satisfies $0<\eta(\xi)<-2\xi$. This shows that the $2$-gons
with endpoints near $p,q$ intersecting $K$ in their interior form a
smooth curve parametrized by $\xi\in(-\eps,0)$, consisting of the
corresponding segments of the lines $\ell_{\xi,\eta(\xi)}$. As this curve
extends smoothly to $\xi=0$ by the $2$-gon $[p,q]$, the claim is proved.

So we have shown that the subset $S_Q\subset K\times K$ avoids a
neighborhood of the diagonal and is a $1$-manifold with boundary near
the finitely many $2$-gons that are tangent to $K$ at an
endpoint. Away from these sets, a generic perturbation of $K$ makes
the evaluation map at the interior of the $2$-gons transverse to
$K$. Since the condition that a chord meets $K$ in the interior is
codimension one, and the condition that the tangent line at the
intersection is parallel to the chord is of codimension three and can
thus be avoided for generic $K$, we conclude that (b) holds.

(c) Consider a boundary point of $S_Q$, i.e., a $2$-gon $[p,q]$
tangent to $K$ at one endpoint, say at $p$. Let $p=\gamma(s)$ and
$q=\gamma(t)$ for an arclength parametrization of $K$ such that
$\dot\gamma(s)$ is a positive multiple of $q-p$; see Figure~\ref{fig:tangency}.
By equation~\eqref{eq:partial-E} we have $\frac{\p E}{\p s}= \la
p-q,\dot\gamma(s)\ra<0$, so the parameter $s$ strictly increases in the direction of
$-\nabla E$. On the other hand, the description in (b) shows that $s$
strictly decreases as we move into $S_Q$. Hence $-\nabla E$ is not
pointing into $S_Q$ at $[p,q]$.
\end{proof}

\comment{
The next lemma describes the familiar Morse theoretic properties of
the space of $2$-gons.

\begin{lemma}\label{lem:eps}
For a generic knot $K\subset\R^3$, for there exists a constant $\eps>0$
with the following properties.

(a) For each index $0$ binormal chord $c$, the connected component of
$c$ in the space of $2$-gons of
length in $[L(c)-\eps,L(c)+\eps]$ deformation retracts under the flow
of $-\nabla E$ onto $c$.

(b) For each index $1$ binormal chord $c$, the connected component of
the unstable manifold $U_c$ of $c$ in the space of paths of $2$-gons of
length $\leq L(c)+\eps$ with boundary of length $\leq L(c)-\eps$
deformation retracts under the flow of $-\nabla E$ onto $U_c$.
\end{lemma}

\begin{proof}
According to Lemma~\ref{lem:2-gons}, for generic $K$, the function
$E:K\times K\to\R$ has a Bott nondegenerate minimum along the diagonal and Morse
critical points outside the diagonal. Since the length $L=\sqrt{2E}$
is strictly decreasing along the flow of $-\nabla E$, the lemma
follows by standard Morse theoretic arguments.
\end{proof}
}

More generally, for an integer $\ell\geq 1$ we consider the space
$(K\times K)^\ell$ of $\ell$-tuples of $2$-gons with the energy and
length functions $E^\ell,L^\ell:(K\times K)^\ell\to\R$,
\begin{gather*}
   E^\ell(x_1,y_1,\dots,x_\ell,y_\ell) :=
   \frac{1}{2}\sum_{i=1}^\ell|x_i-y_i|^2, \cr
   L^\ell(x_1,y_1,\dots,x_\ell,y_\ell) := \sum_{i=1}^\ell|x_i-y_i|.
\end{gather*}
As a consequence of Lemma~\ref{lem:2-gons}, $E^\ell$ is a Morse-Bott
function whose critical manifolds are products $C_1\times\dots\times
C_\ell$ of critical manifolds of $E$, so each $C_i$ is either a
binormal chord or the corresponding diagonal. Note that the symmetric
group $S_\ell$ acts on $(K\times K)^\ell$ preserving $E^\ell$ as well as
the product metric.

For $a>0$ we denote by $M^a\subset(K\times K)^\ell$ the collection of
tuples $c=(c_1,\dots,c_\ell)$ of binormal chords of total length
$L(c)=a$, and by $W^a$ the disjoint union of the unstable manifolds of
points in $M^a$ under the flow of $-\nabla E^\ell$ (here $M^a$ and
thus $W^a$ may be empty). Let $\phi^T:(K\times K)^\ell\to (K\times
K)^\ell$ be the time-$T$ map of the flow of $-\nabla E^\ell$.

\begin{lemma}\label{lem:eps}
For a generic knot $K\subset\R^3$ and each $a>0$ there exist
$\eps_a>0$ and $T_a>0$ with the following property. For each
$\eps<\eps_a$, $T\geq T_a$ and $\ell\in\N$ we have
$$
   \phi^T(\{L^\ell\leq a+\eps\})\subset\{L^\ell\leq a-\eps\}\cup V^a,
$$
where $V^a$ is a tubular neighborhood of $W^a\cap\{L^\ell\geq a-\eps\}$ in
$\{a-\eps\leq L^\ell\leq a+\eps\}$. Moreover, tuples of $Q$-strings in
$V^a$ do not intersect the knot $K$ in their interior.
\end{lemma}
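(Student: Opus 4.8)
The plan is to exploit the product structure of the negative gradient flow of $E^{\ell}$ to reduce everything to a single factor $K\times K$, where the statement is a standard Morse--theoretic deformation result, and then to recombine the factors; the only delicate point is that all constants must be chosen uniformly in $\ell$.

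First I would observe that the metric on $(K\times K)^{\ell}$ is the product metric and $E^{\ell}=\sum_{i=1}^{\ell}E$, so the time-$T$ map is the $\ell$-fold product $\phi^{T}=\phi_{1}^{T}\times\cdots\times\phi_{1}^{T}$ of the time-$T$ map $\phi_{1}^{T}$ of $-\nabla E$ on a single factor. In particular both $E^{\ell}$ and $L^{\ell}$ are non-increasing along $\phi^{T}$, and $L^{\ell}(\phi^{T}x)=\sum_{i}L(\phi_{1}^{T}x_{i})$. Let $\ell_{0}>0$ be the length of the shortest binormal chord of $K$ (positive by Lemma~\ref{lem:2-gons}(a) after a generic perturbation making the binormal chords nondegenerate). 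Every critical point of $E^{\ell}$ is a product of binormal chords and diagonal factors, whose $L^{\ell}$-value is the sum of the lengths of its non-diagonal factors; hence the set $D$ of such critical lengths that are $\le a+1$ is finite, because each involves at most $(a+1)/\ell_{0}$ non-diagonal factors. I would then choose $\eps_{a}>0$ so small that the window $[a-\eps_{a},a+\eps_{a}]$ meets $D$ at most in $\{a\}$.

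Next I would establish the single-factor deformation lemma on the compact manifold $K\times K$ with the Morse--Bott function $E$: there is a time $T_{a}$ and a family of arbitrarily small disjoint tubular neighborhoods of the unstable manifolds of the binormal chords, depending only on $K$ and $\eps_{a}$, such that for every $x$ with $L(x)\le a+\eps_{a}$ either $L(\phi_{1}^{T_{a}}x)\le a-\eps_{a}$, or $\phi_{1}^{T_{a}}x$ lies in the chosen tubular neighborhood of the unstable manifold of a binormal chord $c$ with $|L(\phi_{1}^{T_{a}}x)-L(c)|$ as small as we wish. This combines four routine facts: $L$ decreases at a uniform positive rate outside any fixed neighborhood of $\Crit E$; $L$ is monotone along the flow; after leaving a small ball around a critical point a trajectory shadows the corresponding unstable manifold (inclination lemma); and $[a-\eps_{a},a+\eps_{a}]$ contains no critical length of $E$ except possibly $a$. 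In addition I would record the one quantitative input needed for the product step, namely that the Bott-nondegenerate minimum of $E$ along the diagonal (Lemma~\ref{lem:2-gons}(a)) yields constants $c_{0},\beta_{0}>0$ with $L(\phi_{1}^{t}x)\le L(x)e^{-c_{0}t}$ whenever $L(x)\le\beta_{0}$.

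Then I would assemble the product statement. Given $x=(x_{1},\dots,x_{\ell})$ with $L^{\ell}(x)\le a+\eps\le a+\eps_{a}$, call an index $i$ long if $L(x_{i})\ge\beta$ for a fixed small $\beta\le\min(\beta_{0},\ell_{0})$ and short otherwise; then at most $\lceil(a+1)/\beta\rceil=:K_{0}$ indices are long, a bound independent of $\ell$. Applying $\phi^{T}$ with $T=T_{a}$ large, the short factors contribute a total length at most $e^{-c_{0}T}\sum_{i}L(x_{i})\le(a+1)e^{-c_{0}T}$, which is made $<\eps_{a}$ by enlarging $T_{a}$ once and for all---uniformly in $\ell$, since the contraction is multiplicative and hence outweighs the number of short factors. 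Each long factor, by the single-factor lemma, either has flowed below length $a-\eps_{a}$ or is close in length and position to a binormal chord $c_{i}$. If $L^{\ell}(\phi^{T}x)\le a-\eps$ we are done; otherwise every long factor is near some $c_{i}$, and since there are at most $K_{0}$ of them, $\sum_{i}L(c_{i})$ differs from $L^{\ell}(\phi^{T}x)\in(a-\eps,a+\eps)$ by an amount we can make smaller than the gap from $a$ to $D\setminus\{a\}$ (shrink $\eps_{a}$, $K_{0}$ being fixed); as $\sum_{i}L(c_{i})\in D$ this forces $\sum_{i}L(c_{i})=a$. Hence $c=(c_{1},\dots,c_{\ell})$, completed by diagonal factors at the short indices, lies in $M^{a}$, and $\phi^{T}x$ lies in the product of the tubular neighborhoods of the $W_{c_{i}}$, which is a tubular neighborhood of $W_{c}\subseteq W^{a}$; intersecting with $\{L^{\ell}\ge a-\eps\}$ defines $V^{a}$. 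For the final assertion I would use that for generic $K$ no binormal chord of length $\le a+1$ meets $K$ at an interior point---a positive-codimension condition, excluded exactly as in the transversality arguments of Lemma~\ref{lem:2-gons}(b)---so a small neighborhood of such a chord in $K\times K$ consists of $2$-gons meeting $K$ only at their endpoints, while $2$-gons near the diagonal are shorter than the constant $\delta$ from the proof of Lemma~\ref{lem:2-gons}(b) and hence miss the interior of $K$ by nonvanishing curvature. Since $W^{a}\cap\{L^{\ell}\ge a-\eps\}$ collapses onto $M^{a}$ as $\eps\to0$, after a last shrinking of $\eps_{a}$ the neighborhood $V^{a}$ consists only of tuples of $Q$-strings disjoint from the interior of $K$. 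The hard part throughout is this uniformity in $\ell$ of $T_{a}$ and $\eps_{a}$; the three mechanisms above---reduction to the single flow $\phi_{1}^{t}$, multiplicative contraction near the diagonal, and finiteness of $D$---are precisely what make it go through.
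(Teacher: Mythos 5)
Your overall strategy is the same as the paper's: reduce to the single factor $K\times K$, use that only finitely many sums of binormal chord lengths can occur below $a+1$ to choose $\eps_a$ uniformly in $\ell$, use the Bott-nondegeneracy of the diagonal to contract the short factors at a uniform exponential rate, and use genericity of $K$ (chords missing the interior of $K$, short $2$-gons missing it by the curvature argument of Lemma~\ref{lem:2-gons}) for the last claim. The bookkeeping you set up -- the bound $K_0$ on the number of long factors, the gap in the finite set $D$ of critical sums -- is exactly what makes the uniformity in $\ell$ work, and the paper leaves this implicit. However, the central assembly step has a genuine gap.

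Your single-factor lemma is calibrated at the fixed level $a$: it only asserts that a factor either has length $\le a-\eps_a$ at time $T_a$ or lies near the unstable manifold of a chord whose length is (close to) $a$. For a long factor whose length is, or drops, below $a-\eps_a$ -- which is the typical situation as soon as $\ell\ge 2$, since the individual lengths sum to at most $a+\eps$ -- the first alternative holds vacuously and the lemma localizes nothing. Consequently the inference ``if $L^\ell(\phi^T x)>a-\eps$ then every long factor is near some $c_i$'' does not follow from what you proved. Concretely, take $\ell=2$ and both factors on or near the stable manifolds of chords of length $a/2$: each factor has length about $a/2\le a-\eps_a$ at all times, so your dichotomy is silent about both, yet the total length stays near $a$ forever and the conclusion $\phi^T x\in V^a$ is precisely what must be shown. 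What is needed per factor is a level-free shadowing statement, uniform over initial conditions: for every $\delta>0$ there is $T$ such that for all $T'\ge T$ and all $x\in K\times K$, the point $\phi_1^{T'}x$ lies within $\delta$ of $W^u(C)$ for some critical component $C$ of $E$ (a binormal chord or the diagonal) with $L(\phi_1^{T'}x)-\delta\le L(C)\le L(x)$. This is true (compactness of $K\times K$, the Morse--Bott property and a $\lambda$-lemma or backward-limit compactness argument), but it is strictly stronger than the level-$a$ deformation lemma you invoke. With it, your accounting does pin $\sum_i L(C_i)=a$ via the gap in $D$ (short factors cannot be near chords since $\beta\le\ell_0$, so at most $K_0$ chord factors occur), and the rest of your argument, including the statement that $Q$-strings in $V^a$ miss the interior of $K$, goes through; the result is then an expanded version of the paper's appeal to ``the familiar argument from Morse theory.''
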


\begin{proof}
Note that on $K\times K$ the length and energy are related by
$L=\sqrt{2E}$, so they have the same critical points and $L$
is strictly decreasing under the flow of $-\nabla E$ outside the
critical points. Since the flow of $-\nabla E^\ell$ is the product of
the flows of $E$ in each factor, the same relation holds for any
$\ell\in\N$: $L^\ell$ and $E^\ell$ have the same critical points and
$L^\ell$ is strictly decreasing under the flow of $-\nabla E^\ell$
outside the critical points.

Next recall from above that $E^\ell$ is a Morse-Bott function. In
particular, the set of critical values of $E^\ell$, and thus also of
$L^\ell$, is discrete. Given $a\in\R$, we pick $\eps_a>0$ such that
$a$ is the only critical value of $L^\ell$ in the interval
$[a-\eps_a,a+\eps_a]$. (Since only finitely many binormal chords can
appear in tuples of critical points of total length $a$, the constant
$\eps_a$ can be chosen independently of $\ell$.) For $\eps<\eps_a$,
the familiar argument from Morse theory shows that
$\phi^T(\{L^\ell\leq a+\eps\}\subset\{L^\ell\leq a-\eps\}\cup V^a_{\eps,T}$,
where $V^a_{\eps,T}$ for large $T$ are tubular neighborhoods of
$W^a\cap\{L^\ell\geq a-\eps\}$ in $\{a-\eps\leq L^\ell\leq a+\eps\}$
that shrink to $W^a\cap\{L^\ell\geq a-\eps\}$ as $T\to\infty$.

For the last statement recall that, for a generic knot $K$, binormal
chords do not meet $K$ in their interior. So for each $\ell\in\N$
there exists a neighborhood $U^a$ of $M^a$ in $(K\times K)^\ell$ such
that tuples of $Q$-strings in $U^a$ do not intersect $K$ in their
interior. We pick $T_a$ large enough and $\eps_a$ small enough so
that $V^a_{\eps_a,T_a}$ is contained in $U^a$. By the argument as in
the previous paragraph, the constants $\eps_a$ and $T_a$ can be chosen
independently of $\ell$ and the lemma is proved.
\end{proof}

\comment{
Lemma~\ref{lem:eps} now implies

\begin{cor}\label{cor:eps-a}
For a generic knot $K\subset\R^3$ and $\ell\in\N$, each critical value
of $E^\ell$ corresponds to a unique $S_\ell$-orbit of critical
points. Moreover, for each $a\in\R$ there exists a constant $\eps_a>0$
with the following properties.

(a) If $a$ is not a critical value, then the space $\{L\leq a+\eps_a\}$
of $\ell$-tuples of $2$-gons of total length $\leq a+\eps$ deformation
retracts under the flow of $-\nabla E^\ell$ onto $\{L\leq a-\eps_a\}$.

(b) For each tuple $c=(c_1,\dots,c_\ell)$ of binormal cords of total
index $0$ and $L(c)=a$, the connected component of $c$ in the space
$\{a-\eps_a\leq L\leq a+\eps_a\}$ deformation retracts under the flow
of $-\nabla E^\ell$ onto $c$.

(c) For each tuple $c=(c_1,\dots,c_\ell)$ of binormal cords of total
index $1$ and $L(c)=a$, the connected component of
the unstable manifold $U_c$ of $c$ in the space of paths in $\{L\leq
a+\eps_a\}$ with boundary in $\{L\leq a-\eps_a\}$
deformation retracts under the flow of $-\nabla E^\ell$ onto $U_c$.
\hfill$\square$
\end{cor}
}

\subsection{Shortening linear $Q$-strings}\label{ss:chain-hom}
We will need some homological algebra.
Suppose we have the following algebraic situation:
\begin{itemize}
\item a chain complex $(\CC,D=\p+\delta)$ satisfying the
  relations
$$
\p^2=\delta^2=\p\delta+\delta\p=0, \quad\text{\rm and}
$$
\item a chain map $f:(\CC,\p)\to(\CC,\p)$ and a chain homotopy $H:(\CC,\p)\to(\CC,\p)$
  satisfying
\begin{equation}\label{eq:iso1}
   \p H+H\p = f-\id,
\end{equation}
such that for every $c\in\CC$ there exists a positive integer $S(c)$ with
\begin{equation}\label{eq:iso3}
   (\delta H)^{S(c)}(c)=0.
\end{equation}
\end{itemize}
In our applications below, we will have $\delta= \delta_Q+\delta_N$, and the
equation $\delta^2=0$ will follow from
$$
\delta_Q^2=\delta_N^2=[\delta_Q,\delta_N]=0,
$$
which is part of the statement that $D^2=0$ in our chain complex.
Here, as usual, we denote the graded commutator of two maps $A,B$ by
$$
   [A,B] := AB - (-1)^{|A||B|}BA.
$$
Set $H_0:=H$ and $f_0:=f$, and more generally for $d\geq 1$ define the maps
\begin{equation}\label{eq:Hd}
   H_d:= H (\delta H)^d, \quad
   f_d := \sum_{i=0}^{d} (H \delta)^i f (\delta H)^{d-i}.
\end{equation}
It is also convenient to set $H_{-1}=0$. Note that the maps $f_d$
satisfy the recursion relation $f_{d+1}=f_d \delta H + H_d\delta f$.

\begin{lemma}
For each $d\geq 1$ we have
\begin{equation}\label{eq:iso4}
   [\p,H_d]+[\delta,H_{d-1}]=f_d.
\end{equation}
\end{lemma}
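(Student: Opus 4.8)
The plan is to prove the identity \eqref{eq:iso4} by induction on $d$, using the defining recursions $H_d = H(\delta H)^d$ and $f_{d+1} = f_d\delta H + H_d\delta f$ together with the basic relations $\partial^2 = \delta^2 = [\partial,\delta] = 0$ and the base case \eqref{eq:iso1}, which reads $[\partial, H_0] = f_0 - \id$ (recall $H_{-1} = 0$, so \eqref{eq:iso4} for $d=0$ is exactly \eqref{eq:iso1} up to the $f_0 = f$, $H_0 = H$ convention). First I would rewrite \eqref{eq:iso4} in the suggestive form $[\partial + \delta,\, H_d] = f_d + [\partial, H_{d-1}] - [\partial + \delta, H_{d-1}] + \dots$; but cleaner is to just carry out the inductive step directly.

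For the inductive step, assume \eqref{eq:iso4} holds for $d$ (and all smaller indices). I would compute $[\partial, H_{d+1}]$ by writing $H_{d+1} = H_d\delta H$ and expanding the graded commutator:
\begin{equation}\label{eq:plan1}
   [\partial, H_d\delta H] = [\partial, H_d]\,\delta H + (-1)^{|H_d|}H_d[\partial,\delta]H + (-1)^{|H_d|+|\delta|}H_d\delta[\partial, H].
\end{equation}
The middle term vanishes since $[\partial,\delta]=0$. Using the degree bookkeeping ($H$ has degree $+1$ as a chain homotopy, $\delta$ has degree $-1$, so $H_d$ has degree $+1$ and the signs work out as in the Leibniz rule $D(a\x b) = Da\x b + (-1)^{\deg a}a\x Db$ from \eqref{eq:x}), the first term becomes $\bigl(f_d - [\delta, H_{d-1}]\bigr)\delta H$ by the induction hypothesis, and the last term becomes $-H_d\delta[\partial, H] = -H_d\delta(f_0 - \id) = -H_d\delta f + H_d\delta$ (using the base case $[\partial,H] = f - \id$ and $\delta^2 = 0$ so that there is no issue when $\delta$ commutes past). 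Meanwhile $[\delta, H_d] = [\delta, H_{d-1}\delta H] = [\delta, H_{d-1}]\delta H \pm H_{d-1}\delta[\delta, H]$, and I would similarly need the relation $[\delta, H] = \delta H - H\delta$, which is tautological, so $[\delta, H_d]$ telescopes. Adding $[\partial, H_{d+1}] + [\delta, H_d]$, the terms $[\delta, H_{d-1}]\delta H$ cancel, and what remains assembles into $f_d\delta H + H_d\delta f = f_{d+1}$ by the recursion relation for $f_{d+1}$. I will need to double-check all the Koszul signs, since $\partial$, $\delta$, $H$, and $f$ carry degrees $-1$, $-1$, $+1$, $0$ respectively, and the sign conventions ("chain coming first") must be applied consistently; this is the one genuinely fiddly point.

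The main obstacle I anticipate is precisely the sign management in the graded commutator expansions — getting \eqref{eq:plan1} and its $\delta$-analogue right, and verifying that the cross terms $H_{d-1}\delta[\delta, H]$ and the leftover pieces cancel with the correct signs rather than adding up. A clean way to sidestep part of this is to observe that $[\partial, -]$ and $[\delta, -]$ are graded derivations of the (noncommutative) algebra of operators, and to track everything through $D = \partial + \delta$: one checks $[D, H_d] = [D, H_{d-1}\delta H]$ expands by the derivation property, the $[D,\delta] = [\partial,\delta] = 0$ terms drop, and one is left reconciling $[D, H]$ with $f - \id + (\text{terms involving }\delta)$. I would set up a short lemma recording the signs of $[\partial, AB]$ for homogeneous operators $A,B$ once and for all, then apply it mechanically. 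Once the $d=1$ case is verified by hand as a sanity check (it reads $[\partial, H\delta H] + [\delta, H] = f\delta H + H\delta f$, which is directly checkable from $[\partial,H] = f-\id$ and $[\partial,\delta]=0$), the general inductive step follows the same pattern verbatim.
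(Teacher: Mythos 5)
Your plan is essentially the paper's own proof: induction on $d$, writing $H_{d+1}=H_d\delta H$, expanding $[\p,H_{d+1}]$, and then using the base case \eqref{eq:iso1}, the induction hypothesis, $[\p,\delta]=0$, $\delta^2=0$, and the recursion $f_{d+1}=f_d\delta H+H_d\delta f$. The only thing to repair is the sign you attach to the last term in your prose: since $H_d$ and $\delta$ are both odd, $(-1)^{|H_d|+|\delta|}=+1$, so that term is $+H_d\delta[\p,H]=H_d\delta f-H_d\delta$ — which is exactly what your own displayed Leibniz expansion gives — whereas with the minus sign you then use, the sum $[\p,H_{d+1}]+[\delta,H_d]$ comes out as $f_d\delta H-H_d\delta f+2H_d\delta$ rather than $f_{d+1}$, and the induction does not close. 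With the correct $+$ sign the computation finishes as you describe and as in the paper: $[\delta,H_{d-1}]\delta H=\delta H_{d-1}\delta H+H_{d-1}\delta^2H=\delta H_d$, so $-[\delta,H_{d-1}]\delta H-H_d\delta=-[\delta,H_d]$, and the remaining terms $f_d\delta H+H_d\delta f$ assemble into $f_{d+1}$ by the recursion. One further small point: under the paper's graded convention $[\delta,H]=\delta H+H\delta$ (both operators are odd), not $\delta H-H\delta$; but you never actually need to expand $[\delta,H_d]$ — the identification above via $\delta^2=0$ is all that is required.
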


\begin{proof}
We prove this by induction on $d$. The case $d=1$ is an immediate
consequence of \eqref{eq:iso1} and $[\delta,\p]=0$. For the induction step we observe that
\begin{align*}
   [\p,H_{d+1}] &= \p H_d\delta H + H_d\delta H\p\cr
   &= [\p,H_d]\delta H - H_d \p \delta H -H_d\delta \p H + H_d\delta f -H_d\delta\cr
   &= f_d \delta H -[\delta,H_{d-1}] \delta H + H_d\delta f -H_d\delta\cr
   &= f_{d+1} -\delta H_d - H_{d-1}\delta^2H - H_d\delta\cr
   &= f_{d+1} -[\delta,H_d].
\end{align*}
Here in the second equality we have used \eqref{eq:iso1}, in the third
equality the induction hypothesis and $[\delta,\p]=0$, in the fourth
equality the recursion relation above,
and in the fifth equality we have used $\delta^2=0$.
\end{proof}

In view of equation~\eqref{eq:iso3}, for each $c\in\CC$ we have
$H_dc=0$ and $f_dc=0$ for
$$
   d\geq S(c)+\max\bigl\{S(fc),S(f\delta H c),\dots,S(f(\delta H)^{S(c)-1}c)\bigr\}.
$$
So the sums
\begin{equation}\label{eq:BH}
   \BH:= \sum_{d=0}^\infty H_d,\qquad
   \Bf:= \sum_{d=0}^\infty f_d
\end{equation}
are finite on every $c\in\CC$. Summing up equation~\eqref{eq:iso4}
for $d=1,\dots,e$ and using equation~\eqref{eq:iso1}, we obtain
$$
   [\p,H_e] + [D,H_0+\cdots+H_{e-1}] = f_0+\cdots+f_e-\id
$$
for all $e$, and hence
\begin{equation*}
   [D,\BH] =\Bf -\id.
\end{equation*}
This concludes the homological algebra discussion.

We now apply this construction to the space $\Sigma_\lin$ of broken
strings with linear $Q$-strings as follows. We fix a large time
$T>0$ and consider a generic $i$-chain $\beta$ in $\Sigma_\lin$, for
$i=0,1$. Moving the $Q$-strings in $\beta$ by the flow of $-\nabla E$
for times $t\in[0,T]$ we obtain an $(i+1)$-chain in $(K\times
K)^\ell$. We make this an $(i+1)$-chain $H^T\beta$ in $\Sigma_\lin$ by dragging
along the $N$-strings without creating new intersections with the
knot. In the case $i=1$, we moreover grow new $N$-spikes starting
from the finitely many points $Z\beta$ where some $Q$-string becomes
tangent to the knot at one end point, as shown in
Figure~\ref{fig:Hbeta}. We define $f^T\beta$ as the boundary component
of $H^T\beta$ at time $T$.

\begin{remark}
Technically, we should be careful to arrange that $H$ maps generic
chains to generic chains. This is easy for $0$-chains, but some care
should be taken for $1$-chains, especially near the points $Z\beta$
where some $Q$-string becomes tangent to $K$ at one of its end
points.
\end{remark}

\begin{prop}
For a generic knot $K$, the operations defined above yield for $i=0,1$
maps
$$
   f^T:C_i(\Sigma_\lin)\to C_i(\Sigma_\lin),\qquad
   H^T:C_i(\Sigma_\lin)\to C_{i+1}(\Sigma_\lin)
$$
satisfying conditions~\eqref{eq:iso1} and~\eqref{eq:iso3}.
\end{prop}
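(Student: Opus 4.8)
The plan is to fix the knot $K$ generic in the sense of Section~\ref{ss:triangles} (so that Lemmas~\ref{lem:generic.knots}, \ref{lem:2-gons} and~\ref{lem:eps} apply), and then to verify conditions~\eqref{eq:iso1} and~\eqref{eq:iso3} separately. A preliminary point is that $H^T$ and $f^T$ send generic chains to generic chains. For $i=0$ this follows from Lemma~\ref{lem:2-gons}: flowing a broken string by $-\nabla E$ keeps each $Q$-string transverse to $K$ at its endpoints, makes the interior intersections with $K$ transverse in the flow parameter, and near the diagonal keeps the deformed spikes (pointing along the nowhere-vanishing curvature) disjoint from $K$ in their interior. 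For $i=1$ one additionally uses Lemma~\ref{lem:2-gons}(b),(c) to see that the parameters $Z\beta$ at which a $Q$-string becomes tangent to $K$ at an endpoint are finitely many and isolated, that $-\nabla E$ does not point into $S_Q$ there, and that after the perturbation noted in the Remark preceding this Proposition the grown $N$-spikes make $H^T\beta$ a generic $2$-chain.

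Condition~\eqref{eq:iso1} is the standard identity for the boundary of the track of a flow. For $i=0$ the singular boundary of the $1$-chain $H^T\beta$ consists only of its two ends, $-\beta$ at $t=0$ and $f^T\beta$ at $t=T$, while $H^T\partial\beta=0$, so $\partial H^T\beta+H^T\partial\beta=f^T\beta-\beta$. For $i=1$ the singular boundary of the $2$-chain $H^T\beta$ has the four product faces $-\beta$ (at $t=0$), $f^T\beta$ (at $t=T$), and the two slices over $\lambda=0,1$ which assemble to $-H^T(\partial\beta)$; the only further faces come from the $N$-spike regions attached along the finitely many curves through the points $Z\beta$, and these are arranged, by the same sign bookkeeping used for the $h_1$-construction in the proof of Proposition~\ref{prop:pl-lin} (cf.\ Figure~\ref{fig:Hbeta}), to cancel against the matching faces of $H^T(\partial\beta)$. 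Hence $\partial H^T\beta+H^T\partial\beta=f^T\beta-\beta$, which is~\eqref{eq:iso1}.

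Condition~\eqref{eq:iso3} is the real content, and I expect it to be the main obstacle: one must show $\delta H^T=(\delta_Q+\delta_N)H^T$ is ``nilpotent on each chain''. The strategy is a complexity induction in the spirit of the proof of Proposition~\ref{prop:pl-lin}, driven by three facts. First, both $H^T$ and $\delta$ are non-increasing for the length $L$ of $Q$-strings: flowing by $-\nabla E=-\nabla\tfrac{1}{2}|x-y|^2$ shortens every $Q$-string; $\delta_Q$ replaces a linear $Q$-string by two linear \emph{proper} sub-segments of the same total length, each individually strictly shorter, hence not increasing the number of interior $K$-intersections; and $\delta_N$ only inserts $Q$-spikes, which by Definition~\ref{def:spike} never meet $K$ in their interior and by Remark~\ref{rem:length-spikes} do not count toward $L$. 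Moreover $Q$-spikes lie on the diagonal of $K\times K$ and so are unmoved by the flow, and $\delta_Q$ inserts only $N$-spikes meeting $K$ at their foot point; so once a $Q$-spike or an interior $N$-string--$K$ intersection is present it is inert under further $H^T$ and $\delta_Q$, while $\delta_N$ strictly reduces the finite number of interior $N$-string--$K$ intersections. Second, by Lemma~\ref{lem:2-gons}(b) there is $\delta_0=\delta_0(K)>0$ with the property that no $2$-gon of length $<\delta_0$ meets $K$ in its interior, and by Lemma~\ref{lem:generic.knots}(i) a segment meets $K$ at most $S=S(K)$ times; so a $Q$-string admits at most $S$ successive $\delta_Q$-splittings before all its pieces have length $<\delta_0$ and never meet $K$ again. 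Third, Lemma~\ref{lem:eps}, applied with $T$ chosen once and for all larger than $T_a$ for each of the finitely many critical values $a$ of the length functions $L^\ell$ that are at most $L(c)$ (the only ones ever appearing, since $L$ is non-increasing under $H^T$ and $\delta$), shows that one application of $H^T$ moves every term either strictly below the next-lower critical value of $L^\ell$ or into the region $V^a$, where $Q$-strings are disjoint from $K$ in their interior; the latter terms contribute nothing to $\delta_Q$, and the splittings on the former terms decrease the length stratum and feed the induction. Assembling these into a bounded lexicographic complexity on chains in $\Sigma_\lin$ --- the analogue here of the pair $c(\beta)=(M(\beta),I(\beta))$ of Proposition~\ref{prop:pl-lin}, now also recording the number of relevant critical values of $L^\ell$ below $L(c)$ and the number of interior $N$-string--$K$ intersections --- and checking that $\delta H^T$ strictly decreases it yields $(\delta H^T)^{S(c)}c=0$. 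Making this complexity precise, and in particular checking that the transient $K$-crossings along the flow tracks are uniformly controlled, is the step I would expect to require the most care.

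Once~\eqref{eq:iso1} and~\eqref{eq:iso3} are established, the homological-algebra discussion of this subsection applies verbatim: the finite sums $\BH=\sum_{d\ge 0}H_d$ and $\Bf=\sum_{d\ge 0}f_d$ from~\eqref{eq:BH} are well defined on every chain, satisfy $[D,\BH]=\Bf-\id$ on $C_*(\Sigma_\lin)$ in degrees $0$ and $1$, and inherit length-non-increase from $H^T$ and $f^T$, which is exactly what is needed for the subsequent comparison with Legendrian contact homology in small length windows.
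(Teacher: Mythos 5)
Your treatment of genericity and of condition~\eqref{eq:iso1} is fine and matches the paper (which disposes of~\eqref{eq:iso1} as ``clear by construction''), but your argument for~\eqref{eq:iso3} has a genuine gap, and it sits exactly at the point you yourself flag as ``the step requiring the most care.'' Two of your intermediate claims do not hold. First, the assertion that a $Q$-string ``admits at most $S$ successive $\delta_Q$-splittings before all its pieces have length $<\delta_0$ and never meet $K$ again'' is false: splitting a segment at an interior intersection point preserves total length, and if the intersection point is close to an endpoint (which happens precisely near the tangency locus $\p S_Q$ of Lemma~\ref{lem:2-gons}(b)) the long piece is barely shorter than its parent; moreover, after splitting, the pieces are flowed again by $-\nabla E$ and can acquire \emph{new} interior intersections with $K$ that were not present in the original segment, so the bound $S$ from Lemma~\ref{lem:generic.knots}(i) on intersections of a fixed segment does not bound the number of future splitting events of its descendants. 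Second, your use of Lemma~\ref{lem:eps} only controls where the time-$T$ endpoint $\phi^T$ lands (below the next critical value or in $V^a$), whereas $\delta_Q H^T$ is computed from crossings of $S_Q$ at \emph{intermediate} times along the flow track; a term whose endpoint lies in $V^a$ or below the next critical level can perfectly well have been split at an early time of the flow, producing a new term whose length is still in the same critical window. Consequently the lexicographic complexity you propose (critical-value stratum, intersection counts) is not shown to strictly decrease under $\delta H^T$, and the induction does not close: nothing in your argument excludes an infinite sequence of splittings with length decrements tending to zero, accumulating at the tangency locus.

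The missing ingredient is precisely Lemma~\ref{lem:2-gons}(c), which you cite only for genericity but never use quantitatively. The paper's proof runs as follows: since $-\nabla E$ does not point into $S_Q$ at its finitely many boundary points (the $2$-gons tangent to $K$ at an endpoint), there are a neighborhood $U$ of $\p S_Q$ and an $\eps>0$ such that any $2$-gon in $U\cap S_Q$ — and also the longer piece produced by splitting it — must lose at least $\eps$ of length under the flow before it can meet $S_Q$ again; while for crossings in the compact set $S_Q\setminus U$ the intersection point stays a definite distance from both endpoints, so both pieces are shorter than their parent by a fixed $\delta>0$. Hence every application of $\delta_Q H^T$ (equivalently $H^T\delta_Q$) decreases the total length of $Q$-strings by at least $\min(\eps,\delta)$, and finiteness of $L(\beta)$ gives~\eqref{eq:iso3} directly, with no need for the critical-value bookkeeping of Lemma~\ref{lem:eps} (which the paper reserves for the later computation of $H_i^{[a-\eps,a+\eps)}(\Sigma_\lin)$). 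To repair your proof you would need to establish this uniform per-event length decrement; once it is in hand, the rest of your write-up (inertness of $Q$-spikes under $\delta_Q$ and $H^T$, the homological algebra producing $\BH$ and $\Bf$) goes through as you describe.
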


\begin{proof}
Standard transversality arguments show that $f^T$ and $H^T$ map
generic chains to generic chains, provided that we impose suitable
genericity conditions on generic chains with respect to linear
strings. Now condition~\eqref{eq:iso1} is clear by construction.

For condition~\eqref{eq:iso3}, we use Lemma~\ref{lem:2-gons}(c).
It implies that there exists a neighborhood $U\subset K\times K$ of
the finitely many $2$-gons $\p S_Q$ that are tangent to $K$ at one end
point and an $\eps>0$ with the following property: Each $2$-gon in
$U\cap S_Q$ decreases in length by at least $\eps$ under the flow of
$-\nabla E$ before it meets $S_Q$ again, and the same holds for the
longer $2$-gon resulting from splitting it at its intersection with
the knot. On the other hand, if a $2$-gon in $S_Q\setminus U$ is split
at its intersection with the knot, then both pieces are shorter by at
least some fixed amount $\delta>0$. Hence each application of
$H^T\delta_Q$ decreases the total length of $Q$-strings by at least
$\min(\eps,\delta)$, and since $L(\beta)$ is finite this can happen
only finitely many times.
\end{proof}

Applying definition~\eqref{eq:BH} to the maps $f^T$ and $H^T$, we
obtain for $i=0,1$ length decreasing maps
$$
   \Bf^T:C_i(\Sigma_\lin)\to C_i(\Sigma_\lin),\qquad
   \BH^T:C_i(\Sigma_\lin)\to C_{i+1}(\Sigma_\lin)
$$
satisfying
\begin{equation}\label{eq:HT}
   D\BH^T_0 =\Bf^T_0 -\id,\qquad \BH^T_0D+D\BH^T_1 =\Bf^T_1 -\id
\end{equation}
We now use these maps to compute the homology of
$(C_i(\Sigma_\lin),D)$ in small length intervals.
For $a\in\R$ and $i=0,1$ we denote by $\AA_i^a$ the free $\Z$-module
generated by words $\gamma_1c_1\dots\gamma_\ell c_\ell\gamma_{\ell+1}$,
$\ell\geq 0$, where $c_1,\dots,c_\ell$ are binormal chords of total
length $a$ and of total index $i$, and the $\gamma_j$ are homotopy
classes of paths in $\p N$ connecting the $c_j$ to broken strings and
not intersecting $K$ in their interior. We define linear maps
$$
   \Theta:\AA_i^a\to H_i^{[a-\eps,a+\eps)}(\Sigma_\lin,D)
$$
as follows. For $i=0$, $\Theta$ sends $\gamma_1c_1\dots\gamma_\ell
c_\ell\gamma_{\ell+1}$ to the homology class of the broken string
$\wt\gamma_1c_1\dots\wt\gamma_\ell c_\ell\wt\gamma_{\ell+1}$, where
$\tilde\gamma_j$ are representatives of the classes $\gamma_j$.
For $i=1$, consider a word $\gamma_1c_1\dots\gamma_\ell
c_\ell\gamma_{\ell+1}$ with exactly one binormal chord $c_k$ of
index $1$ and all others of index $0$. Then $\Theta$ sends this word
to the homology class of the $1$-chain $\wt\gamma_1c_1\dots\wt
c_k\dots\wt\gamma_\ell c_\ell\wt\gamma_{\ell+1}$, where
$\tilde\gamma_j$ are representatives of the classes $\gamma_j$ and
$\wt c_k$ is the unstable manifold of $c_k$ in $(K\times K)\cap\{L\geq
a-\eps\}$, viewed as a $1$-chain by fixing some parametrization.

\begin{cor}\label{cor:lin-hom}
For $a\in\R$ let $\eps_a$ be the constant from Lemma~\ref{lem:eps}.
Then for each $\eps<\eps_a$ the map $\Theta:\AA_i^a\to
H_i^{[a-\eps,a+\eps)}(\Sigma_\lin,D)$ is an isomorphism for $i=0$ and
surjective for $i=1$.
\end{cor}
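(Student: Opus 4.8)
The plan is to use the length-decreasing chain homotopy $(\Bf^T,\BH^T)$ from equation~\eqref{eq:HT} to reduce the computation of $H_i^{[a-\eps,a+\eps)}(\Sigma_\lin,D)$ to a Morse-theoretic computation in a neighborhood of the critical locus $W^a$ of the length function $E^\ell$ on $(K\times K)^\ell$. First I would observe that since $\Bf^T$ and $\BH^T$ are length-decreasing and satisfy~\eqref{eq:HT}, they descend to the subquotient complexes $C_i^{[a-\eps,a+\eps)}(\Sigma_\lin,D)$ computing $H_i^{[a-\eps,a+\eps)}$, and there $\Bf^T$ is chain-homotopic to the identity. So it suffices to understand the image of $\Bf^T$, or more precisely to identify the homology with the ``Morse-theoretic part'' at length level $a$. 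By Lemma~\ref{lem:eps}, for $\eps<\eps_a$ and $T$ large the time-$T$ flow pushes everything of length $\le a+\eps$ either into length $\le a-\eps$ (hence to zero in the subquotient) or into a tubular neighborhood $V^a$ of $W^a\cap\{L^\ell\ge a-\eps\}$, and crucially tuples of $Q$-strings in $V^a$ do not meet $K$ in their interiors, so $\delta_Q$ vanishes on chains supported in $V^a$. This means that on the relevant subquotient the operator $D$ effectively reduces to $\partial+\delta_N$ on chains of broken strings whose $Q$-strings lie near the binormal chords.

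Next I would analyze this reduced complex. A chain supported near $V^a$ consists of broken strings whose $Q$-strings are near a fixed tuple $(c_1,\dots,c_\ell)$ of binormal chords of total length $a$; the $N$-strings connecting them carry no length and are recorded up to homotopy in $\partial N$ rel endpoints (not meeting $K$), which is exactly the data of the words $\gamma_1 c_1\cdots\gamma_\ell c_\ell\gamma_{\ell+1}$ generating $\AA_i^a$. The remaining freedom in the $Q$-strings is captured by the unstable manifolds of the $c_i$ under $-\nabla E^\ell$ inside $\{L^\ell\ge a-\eps\}$; since each binormal chord has index $0$, $1$, or $2$ and we only look at chain-degree $i\in\{0,1\}$, a total-index-$i$ critical tuple contributes a single cell of dimension $i$ (one index-$1$ chord and the rest index $0$ when $i=1$; all index $0$ when $i=0$). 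This is precisely the content of the definition of the map $\Theta$. For $i=0$: a broken string of length in $[a-\eps,a+\eps)$ flows under $\Bf^T$ to something of length $<a-\eps$ (zero in the subquotient) or to a broken string with $Q$-strings exactly the binormal chords $c_j$, i.e.\ into the image of $\Theta$; this gives surjectivity. Injectivity for $i=0$ follows because a $1$-chain in $\Sigma_\lin$ bounding such a configuration, after applying $\BH^T$, must flow down below length $a-\eps$ except near the unstable manifolds of index-$1$ chords, and there the standard Morse argument (the attaching maps of the $1$-cells) shows the only relations among the $\Theta$-images come from index-$1$ critical points — which do not identify distinct index-$0$ generators because the flow strictly decreases $L^\ell$ away from critical points and $E^\ell$ is Morse–Bott with discrete critical values (Lemma~\ref{lem:eps}). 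For $i=1$: any generic $1$-chain flows under $\Bf^T$ into a union of unstable-manifold $1$-cells $\wt c_k$ of index-$1$ chords (the $N$-spikes grown at tangency points $Z\beta$ are what make $\delta_N$ of the homotopy vanish in the subquotient), exhibiting it as a sum of $\Theta$-images; this gives surjectivity for $i=1$.

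The main obstacle I expect is the bookkeeping in the $i=1$ surjectivity and $i=0$ injectivity steps: one must check carefully that when a $Q$-string in a $1$-chain becomes tangent to $K$ at an endpoint (the points $Z\beta$ of Figure~\ref{fig:Hbeta}), the term $\delta_N H^T\beta$ it produces is itself supported in $V^a$ and of length $<a$ or again reducible, so that it contributes nothing new to the subquotient homology — this is exactly why Lemma~\ref{lem:2-gons}(b)(c) (the structure of $S_Q$ near its boundary, and $-\nabla E$ not pointing into $S_Q$ there) was proved, and why condition~\eqref{eq:iso3} holds for $H^T$. One also needs that the finitely many self-intersections of $S_Q$ and interior tangencies do not interfere, which is guaranteed by genericity of $K$. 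Once these finiteness and transversality points are in hand, the identification of the reduced complex with the Morse complex of $E^\ell$ on $(K\times K)^\ell$ truncated to degrees $0,1$ is routine, and the statement that $\Theta$ is an isomorphism in degree $0$ and surjective in degree $1$ follows. (Surjectivity, rather than bijectivity, in degree $1$ is all that is needed because index-$2$ binormal chords, whose unstable manifolds would give the relations among index-$1$ generators, are not yet accounted for at this stage — they enter only when comparing with $H_0^\cont$ via the chain map $\Phi$ in the next subsection.)
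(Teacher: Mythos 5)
Your overall route is the same as the paper's: replace a relative cycle by $\Bf^T\beta$ using the length-decreasing maps of~\eqref{eq:HT}, invoke Lemma~\ref{lem:eps} to conclude that after flowing everything either drops below length $a-\eps$ or lies in the region $V^a$ where strings no longer meet $K$, and then identify the local homology near the unstable manifolds with $\AA_i^a$. However, two steps of the actual argument are missing from your plan. First, the critical tuples of $E^\ell$ at length level $a$ are not only words of binormal chords: since the diagonal of $K\times K$ is a Bott minimum, a critical tuple may contain constant $2$-gons, i.e.\ $Q$-spikes, which carry no length. A chain pushed into $V^a$ can therefore accumulate on such mixed tuples, which are not generators of $\AA_i^a$; the paper removes them by using $\delta_N$ to replace each $Q$-spike by a difference of $N$-strings, producing a homologous cycle without $Q$-spikes before the local analysis. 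Your identification of chains supported near $V^a$ with words $\gamma_1c_1\cdots\gamma_\ell c_\ell\gamma_{\ell+1}$ in binormal chords silently assumes this step.

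Second, your assertion that a generic $1$-chain flows under $\Bf^T$ into a union of unstable-manifold $1$-cells of index-$1$ chords is not justified as stated: Lemma~\ref{lem:eps} only places the chain in $V^a$, which also contains neighborhoods of the unstable manifolds of words of total index $\ge 2$ (and, for $i=1$, of total index $0$). The paper disposes of the higher-index words by noting that their stable manifolds have codimension $>1$, so a generic $1$-chain can be perturbed off them and then flows below $a-\eps$; the index-$0$ words are handled by contractibility of the corresponding neighborhoods rel $\{L\le a-\eps\}$. These case distinctions, together with the observation that inside $V^a$ the relevant cycles are cycles for the singular boundary $\partial$ alone (neither $\delta_Q$ nor $\delta_N$ acts there), constitute the substance of the proof rather than a routine truncation of a Morse complex to degrees $0,1$. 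Incidentally, the ``main obstacle'' you single out --- the tangency points $Z\beta$, Lemma~\ref{lem:2-gons}(b),(c) and condition~\eqref{eq:iso3} --- is an input already established in the construction of $\Bf^T,\BH^T$ before the corollary, so the real work lies in the two points above; also, the paper's reason for claiming only surjectivity in degree $1$ is simply that $\Bf^T$ has not been defined on $2$-chains, not the heuristic about index-$2$ chords that you offer.
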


\begin{proof}
We first consider the case $i=1$.
Fix $\eps<\eps_a$ and $T>T_a$, where $\eps_a,T_a$ are the constants from
Lemma~\ref{lem:eps}. Consider a relative $1$-cycle $\beta\in
C_1^{[a-\eps,a+\eps)}(\Sigma_\lin)$. In view of~\eqref{eq:HT}, $\beta$
is homologous to $\Bf^T\beta$. Recall from its definition
in~\eqref{eq:Hd} and~\eqref{eq:BH} that each tuple of $Q$-strings
appearing in $\Bf^T\beta$ is obtained by flowing some tuple of
$Q$-strings for time $T$ (and maybe applying $H\delta_Q$ several times
to the resulting tuple). Now we distinguish two cases.

{\em Case 1: }$a$ is not the length of a word of binormal chords.
Then in Lemma~\ref{lem:eps} the set $V^a$ is empty and it follows that
all tuples of $Q$-strings in $\Bf^T\beta$ have length at most
$a-\eps$. This shows that $H_1^{[a-\eps,a+\eps)}(\Sigma_\lin)=0$ and
the map $\Theta$ is an isomorphism.

{\em Case 2: }$a$ is the length of a word of binormal chords.
For simplicity, let us assume that up to permutation there is only one
word $w$ of length $a$ (the general case differs just in notation).
By Lemma~\ref{lem:eps}, $\Bf^T\beta$ is a finite sum
$\beta_1'+\beta_2'+\dots$ of relative $1$-cycles $\beta_\ell'$ in
tubular neighborhoods $V^a$ of the unstable manifolds
$W^a\cap\{L^\ell\geq a-\eps\}$ of critical $\ell$-tuples of length $a$.
Recall that critical $\ell$-tuples consist of binormal chords and
$Q$-spikes (corresponding to constant $2$-gons). Using the operation
$\delta_N$, we can replace $Q$-spikes by differences of $N$-strings to
obtain a relative $1$-cycle $\beta''$ in $V^a$ homologous to
$\Bf^T\beta$ which contains no $Q$-spikes. So each $1$-simplex
$\beta_j''$ in $\beta''$ is a relative $1$-chain whose $Q$-strings lie
in the tubular neighborhood $V_j$ of the unstable manifold of some
permutation $w_j$ of $w$. Then the $N$-strings in $\beta''$ do not intersect
the knot in their interior, and by Lemma~\ref{lem:eps} neither do the
$Q$-strings. Thus each $\beta_j''$ is a relative cycle in $V_j$ with
respect to the singular boundary $\p$. We distinguish $2$ subcases.

(i) If the total degree of the word $w$ is bigger than $1$, then its stable
manifold for the flow of $-\nabla E$ has codimension bigger than
$1$. So, after a small perturbation, each $\beta_j''$ will avoid the stable
manifold of $w_j$ and will therefore have length at most $a-\eps$ for
sufficiently large $T$. This shows that, as in Case 1, both
groups vanish and $\Theta$ is an isomorphism.

(ii) If the degree of the word $w$ is $0$, then its unstable manifold is a
point and thus each $V_j$ is contractible relative to $\{L\leq a-\eps\}$.
It follows that each relative cycle $\beta_j''$ is $\p$-exact, and
since no $\delta_Q$ and $\delta_N$ occurs also $D$-exact. Again we see
that both groups vanish and $\Theta$ is an isomorphism.

(iii) If the degree of the word $w$ is $1$, then each $V_j$
deformation retracts relative to $\{L\leq a-\eps\}$ onto the
$1$-dimensional unstable manifold $\wt w_j$ of $w_j$.
It follows that each relative cycle $\beta_j''$ is $\p$-homologous,
and since no $\delta_Q$ and $\delta_N$ occurs also $D$-homologous,
to a multiple of the $1$-chain of $Q$-strings $\wt w_j$ connected by
suitable $N$-strings. By definition of $\Theta$, this shows that the
$D$-homology class $[\beta'']=[\beta]$ lies in the image of $\Theta$.
So $\Theta$ is surjective, which concludes the case $i=1$.

In the case $i=0$, the proof of surjectivity is analogous but simpler
than in the case $i=1$. For injectivity one considers $\Bf^T\beta$ for
a $1$-chain $\beta$ in $\Sigma_\lin$ with $D\beta=\alpha$ for a given
$0$-chain $\alpha$ and argues similarly. Note that this last step does
not work to prove injectivity for $i=1$ because it would require
considering $\Bf^T\beta$ for a $2$-chain $\beta$, which we have not
defined (although this should of course be possible).
\end{proof}

\subsection{Proof of the isomorphism}

Let $\Phi:(C_*(\RR),\p_\Lambda)\to (C_{\ast}(\Sigma),D)$ be the chain
map constructed in the previous section. We now use the fact
(Corollary~\ref{cor:respect-length}) that
the map $\Phi$ preserves the length filtrations. Thus for $a<b<c$ we
have the commuting diagram with exact rows of length filtered homology
groups
\begin{equation*}
\begin{CD}
   H_1^{[b,c)}(\RR) @>>> H_0^{[a,b)}(\RR) @>>> H_0^{[a,c)}(\RR)
         @>>> H_0^{[b,c)}(\RR) @>>> 0 \\
   @VV{\Phi_*}V @VV{\Phi_*}V @VV{\Phi_*}V @VV{\Phi_*}V @VVV \\
   H_1^{[b,c)}(\Sigma) @>>> H_0^{[a,b)}(\Sigma) @>>> H_0^{[a,c)}(\Sigma)
         @>>> H_0^{[b,c)}(\Sigma) @>>> 0\,.
\end{CD}
\end{equation*}
The main result of this section asserts that $\Phi_*$ is an
isomorphism (resp.~surjective) for sufficiently small action intervals:

\begin{prop}\label{prop:rel-hom}
For each $a\in\R$ there exists an $\eps_a>0$ such that for each
$\eps<\eps_a$ the map
$$
   \Phi_*:H_0^{[a-\eps,a+\eps)}(\RR) \to H_0^{[a-\eps,a+\eps)}(\Sigma)
$$
is an isomorphism and the map
$$
   \Phi_*:H_1^{[a-\eps,a+\eps)}(\RR) \to H_1^{[a-\eps,a+\eps)}(\Sigma)
$$
is surjective.
\end{prop}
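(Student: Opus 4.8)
The plan is to reduce the statement to the computation of the homology of the linear string complex in a small length window (Corollary~\ref{cor:lin-hom}) via the two length-preserving homotopy equivalences established in Propositions~\ref{prop:pl} and~\ref{prop:pl-lin}, and then to identify the induced map with the map $\Theta$ of Corollary~\ref{cor:lin-hom}. First I would combine the chain homotopies $\Bf_0,\BH_0$ of Proposition~\ref{prop:pl} and $\Bf_0,\BH_0$ of Proposition~\ref{prop:pl-lin} (and their degree-one analogues) into a single length-decreasing chain homotopy equivalence between $(C_*(\Sigma),D)$ and $(C_*(\Sigma_\lin),D)$. Since all the maps involved are (not necessarily strictly) length-decreasing, they descend to the length-filtered quotient complexes $C_*^{[a-\eps,a+\eps)}$, and therefore induce isomorphisms
$$
   H_i^{[a-\eps,a+\eps)}(\Sigma)\;\cong\;H_i^{[a-\eps,a+\eps)}(\Sigma_\lin)
$$
for $i=0,1$, for every $\eps>0$ and every $a$. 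In particular it suffices to prove the proposition with $C_*(\Sigma)$ replaced by $C_*(\Sigma_\lin)$.

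Next I would choose $\eps_a$ to be the minimum of the constant from Lemma~\ref{lem:eps} (used in Corollary~\ref{cor:lin-hom}) and a constant small enough that $a$ is the only critical value of the Reeb-chord length function $L$ on $C_*(\RR)$ in $[a-\eps_a,a+\eps_a]$; such a constant exists because the set of lengths of words of binormal chords of bounded total length is discrete and, by the Finiteness Theorem~\ref{thm:finite}, only boundedly many chords can occur. Then in each such window the complex $(C_*^{[a-\eps,a+\eps)}(\RR),\p_\Lambda)$ has $\p^{\rm sy}$ vanishing on the relevant generators (it strictly decreases length), so its homology in degrees $0$ and $1$ is computed purely by the singular differential $\p^{\rm sing}$ on Reeb strings whose chords have total length exactly $a$. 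Using that $\Om_{x_0}\Lambda_K\simeq \Lambda_K=T^2$ is a $K(\pi,1)$, its based-loop-space homology is concentrated in degree $0$, so $H_i^{[a-\eps,a+\eps)}(\RR)$ is exactly the module $\AA_i^a$ of words $\gamma_1c_1\cdots\gamma_\ell c_\ell\gamma_{\ell+1}$ with binormal chords of total length $a$ and total index $i$, modulo nothing further for $i=0$ and with the usual description for $i=1$. Under this identification the composite
$$
   \AA_i^a\;=\;H_i^{[a-\eps,a+\eps)}(\RR)\xrightarrow{\ \Phi_*\ }
   H_i^{[a-\eps,a+\eps)}(\Sigma)\;\cong\;H_i^{[a-\eps,a+\eps)}(\Sigma_\lin)
$$
is exactly the map $\Theta$ of Corollary~\ref{cor:lin-hom}: a word $\gamma_1c_1\cdots$ is sent by $\Phi$ to the broken string obtained by replacing each trivial half-strip over the binormal chord $c_j$ (these are the unique elements of $\MM_0(c_j)$ realizing equality in Proposition~\ref{prop:disklength}, hence length $a$) by the corresponding linear $Q$-string $c_j$ and the arcs $\gamma_j$ by $N$-strings, while all nontrivial disks contribute strings of strictly smaller length and hence vanish in the quotient. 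For $i=1$ one checks that the image of the index-$1$ generator is the unstable-manifold $1$-chain $\wt c_k$ appearing in the definition of $\Theta$, using Lemma~\ref{lem:2-gons}(a) which identifies the local unstable manifold of a binormal chord with a disk of the correct dimension. Then Corollary~\ref{cor:lin-hom} gives precisely that $\Theta$ is an isomorphism for $i=0$ and surjective for $i=1$, which is the assertion of the proposition.

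The main obstacle is the last identification: verifying carefully that the chain map $\Phi$, followed by the two straightening homotopies and the quasi-isomorphism from $\RR$ to its degree-$(0,\ast)$ part, really does coincide with $\Theta$ on the nose (or at least up to a length-decreasing correction, which is harmless in the quotient). This requires tracking orientations through Proposition~\ref{prop:chain-map-ell} and the identifications of Section~\ref{ss:signsandchainmap}, and checking that the only disks of maximal length in $\MM_\ell(c)$ are the concatenations of trivial half-strips, so that everything else lands in the strictly-shorter subcomplex; the equality case of Proposition~\ref{prop:disklength} together with the Finiteness Theorem is exactly what makes this bookkeeping finite and tractable. A secondary point requiring care is that the homotopies of Propositions~\ref{prop:pl} and~\ref{prop:pl-lin} are only defined in degrees $0$ and $1$, so one must argue the degree-$1$ surjectivity statement without a degree-$2$ homotopy—but this is exactly the asymmetry already present in Corollary~\ref{cor:lin-hom}, so no new input is needed.
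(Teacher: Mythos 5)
Your overall strategy coincides with the paper's: straighten to $\Sigma_\lin$ via the length-decreasing maps of Propositions~\ref{prop:pl} and~\ref{prop:pl-lin}, identify $H_i^{[a-\eps,a+\eps)}(\RR)$ with $\AA_i^a$ (your justification via vanishing of $\p^{\sy}$ in the window and asphericity of $\Lambda_K\cong T^2$ is exactly the paper's $\Gamma$), use Proposition~\ref{prop:disklength} to see that only trivial half-strips survive at top length, and conclude from Corollary~\ref{cor:lin-hom}. However, one step --- precisely the one the paper has to treat with care --- is stated too strongly: the claim that the straightening maps induce isomorphisms $H_i^{[a-\eps,a+\eps)}(\Sigma)\cong H_i^{[a-\eps,a+\eps)}(\Sigma_\lin)$ for $i=0,1$. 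Since the homotopies exist only through degree $1$ (there is no $\Bf_2$ or $\BH_2$), from $i\Bf_1D=D(\id+\BH_1D)$ one only gets $\Bf_1(\im D)\subset i^{-1}(\im D)$, not $\Bf_1(\im D)\subset\im D^{\lin}$; hence $\Bf_1$ does not descend to a map $H_1(\Sigma)\to H_1(\Sigma_\lin)$ at all, but rather to the modified group $\wh H_1(\Sigma_\lin)=\ker D^{\lin}/i^{-1}(\im D)$, and it is this quotient of $H_1(\Sigma_\lin)$ that the maps identify with $H_1^{[a-\eps,a+\eps)}(\Sigma)$. Your closing remark attributes the needed degree-$1$ asymmetry to Corollary~\ref{cor:lin-hom}, but that corollary's asymmetry (only surjectivity of $\Theta$) is a different matter; the missing ingredient is this quotient. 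The repair is formal and costs nothing for the statement you need: writing $\Pi:H_1(\Sigma_\lin)\to\wh H_1(\Sigma_\lin)$ for the projection, surjectivity of $\Phi_*$ in degree $1$ follows from surjectivity of $\Pi$ and of $\Theta$ once one checks that $\Bf_1\circ\Phi$ agrees on homology with $\Pi\circ\Theta\circ\Gamma$ --- which is the diagram the paper verifies.

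On that last verification, "exactly the map $\Theta$" should also be weakened to "homologous in the window complex to $\Theta\circ\Gamma$". For an index-$1$ chord $c$ the image is a $1$-chain whose $Q$-strings are either $C^1$-close to the chord or of total length strictly below $a$; the phrase "up to a length-decreasing correction, harmless in the quotient" is only harmless because those corrections actually have length $<a-\eps$ (using the strict inequality in Proposition~\ref{prop:disklength}, compactness of the finitely many compactified moduli spaces $\ol\MM_\ell(c)$ with $\ell\le\kappa$, and discreteness of critical lengths to choose $\eps_a$ below the resulting gap), so that they vanish identically in the quotient complex, and the remaining $C^1$-close part is $\p$-homologous to the unstable-manifold chain by the Morse-theoretic retraction of Lemma~\ref{lem:eps}. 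With these two adjustments your argument is the paper's proof.
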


This proposition implies Theorem~\ref{thm:main} as follows.

Since $H_0(\RR) = \lim_{R\to\infty}H_0^{[0,R)}(\RR)$ and $H_0(\Sigma) =
\lim_{R\to\infty}H_0^{[0,R)}(\Sigma)$, it suffices to show that
$\Phi_*:H_0^{[0,R)}(\RR)\to H_0^{[0,R)}(\Sigma)$ is an isomorphism for
each $R>0$. Now the compact interval $[0,R]$ is covered by finitely
many of the open intervals $(a-\eps_a,a+\eps_a)$, with $a\in[0,R]$ and
$\eps_a$ as in Proposition~\ref{prop:rel-hom}. Thus, according to
Proposition~\ref{prop:rel-hom}, there exists a partition
$0=r_0<r_1<\cdots<r_N=R$ such that the maps
$\Phi_*:H_0^{[r_{i-1},r_i)}(\RR)\to H_0^{[r_{i-1},r_i)}(\Sigma)$ are
isomorphisms and
$\Phi_*:H_1^{[r_{i-1},r_i)}(\RR)\to H_1^{[r_{i-1},r_i)}(\Sigma)$
are surjective for all $i=1,\dots,N$. To prove by induction that
$\Phi_*:H_0^{[0,r_i)}(\RR)\to H_0^{[0,r_i)}(\Sigma)$ is an isomorphism for
each $i=1,\dots,N$, consider the commuting diagram above with
$a=0$, $b=r_{i-1}$ and $c=r_i$. By induction hypothesis for $i-1$
the second, fourth and fifth vertical maps are isomorphisms and the
first one is surjective, so by the five lemma the third vertical map is an isomorphism as well.
This proves the inductive step and hence Theorem~\ref{thm:main}.

\begin{proof}[Proof of Proposition~\ref{prop:rel-hom}]
Let us denote the maps provided by Proposition~\ref{prop:pl} by
$\Bf_i^\pl,\BH_i^\pl$ and the maps in Proposition~\ref{prop:pl-lin} by
$\Bf_i^\lin,\BH_i^\lin$, $i=0,1$. A short computation shows that the
maps
\begin{align*}
   \Bf_i &:= \Bf_i^\lin\circ\Bf_i^\pl:C_i(\Sigma) \to
   C_i(\Sigma_\lin), \cr
   \BH_i &:= \BH_i^\pl+i_\pl\circ\BH_i^\lin\circ\Bf_i^\pl:C_i(\Sigma) \to C_{i+1}(\Sigma)
\end{align*}
for $i=0,1$ satisfy with the map $i:=i_\pl\circ
i_\lin:C_*(\Sigma_\lin)\hookrightarrow C_*(\Sigma)$:
\begin{enumerate}
\item $\Bf_0i=\id$ and $D\BH_0= i\Bf_0-\id$;
\item $\Bf_1i=\id$ and $\BH_0D + D\BH_1 = i\Bf_1-\id$;
\item $\Bf_0$, $\BH_0$, $\Bf_1$ and $\BH_1$ are (not necessarily strictly) length-decreasing.
\end{enumerate}
Conditions (i) and (ii) imply $D\Bf_1=\Bf_0D$ and
$i\Bf_1D=D(\id+\BH_1D)$, and therefore
$$
   \Bf_0(\im D)\subset\im D,\qquad
   \Bf_1(\ker D)\subset\ker D,\qquad
   \Bf_1(\im D)\subset i^{-1}(\im D).
$$
Hence the $\Bf_i$ define chain maps between the chain complexes (where
the left horizontal maps are the obvious inclusions)
\begin{equation*}
\begin{CD}
   \im D @>>> C_1(\Sigma) @>D>> C_0(\Sigma) \\
    @VV{\Bf_1}V @VV{\Bf_1}V @VV{\Bf_0}V \\
   i^{-1}(\im D) @>>> C_1(\Sigma_\lin) @>{D^\lin}>> C_0(\Sigma_\lin)\,.
\end{CD}
\end{equation*}
Note that the upper complex computes the homology groups
$H_0(\Sigma)$ and $H_1(\Sigma)$, while the lower complex has homology
groups $H_0(\Sigma^\lin)$ and
$$
   \wh H_1(\Sigma^\lin) := \ker D^\lin/i^{-1}(\im D).
$$
Conditions (i) and (ii) show that $\Bf_0,\Bf_1$ induce
isomorphisms between these homology groups (with inverses $i_*$), and
in view of condition (iii) the same holds for length filtered homology
groups. Setting
$$
   \Psi:=\Bf_i\circ\Phi:(C_i(\RR),\p_\Lambda)\to
   (C_i(\Sigma^\lin),D),\qquad i=0,1,
$$
it therefore suffices to prove:
{\em For each $a\in\R$ there exists an $\eps_a>0$ such that for each
$\eps<\eps_a$ the map
$$
   \Psi_*:H_0^{[a-\eps,a+\eps)}(\RR) \to H_0^{[a-\eps,a+\eps)}(\Sigma^\lin)
$$
is an isomorphism and the map
$$
   \Psi_*:H_1^{[a-\eps,a+\eps)}(\RR) \to \wh H_1^{[a-\eps,a+\eps)}(\Sigma^\lin)
$$
is surjective.}

We take for $\eps_a$ the constant from Lemma~\ref{lem:eps} and
consider $\eps<\eps_a$. Then we have canonical isomorphisms
$$
   \Gamma:H_i^{[a-\eps,a+\eps)}(\RR) \cong \AA^a_i,\qquad i=0,1
$$
to the groups $\AA^a_i$ introduced in the previous subsection. Recall
the maps $\Theta:\AA_i^a\to H_i^{[a-\eps,a+\eps)}(\Sigma_\lin,D)$ from
Corollary~\ref{cor:lin-hom} which are an isomorphism for $i=0$ and
surjective for $i=1$.

We consider first the case $i=0$. By
Proposition~\ref{prop:disklength}, for a binormal chord $c$ of
index $0$ and length $a$ the moduli space of holomorphic disks with positive puncture
$c$ and switching boundary conditions contains one component corresponding
to the half-strip over $c$, and on all other components the
$Q$-strings in the boundary have total length less than $a-\eps$.
This shows that the map
$\Psi_*:H_0^{[a-\eps,a+\eps)}(\RR) \to H_0^{[a-\eps,a+\eps)}(\Sigma^\lin)$
agrees with $\Theta\circ\Gamma$ and is therefore an isomorphism.

For $i=1$ we have a diagram
\begin{equation*}
\begin{CD}
   H_1^{[a-\eps,a+\eps)}(\RR) @>{\Psi_*}>>\wh H_1^{[a-\eps,a+\eps)}(\Sigma^\lin) \\
   @V{\cong}V{\Gamma}V @AA{\Pi}A \\
   \AA^a_1 @>{\Theta}>> H_1^{[a-\eps,a+\eps)}(\Sigma^\lin),
\end{CD}
\end{equation*}
where $\Pi:H_1(\Sigma^\lin) = \ker D^\lin/\im D^\lin \to \ker
D^\lin/i^{-1}(\im D) = \wh H_1(\Sigma^\lin)$ is the canonical
projection. Since $\Pi$ and $\Theta$ are surjective, surjectivity
of $\Psi_*$ follows once we show that the diagram commutes.

To see this, consider a word $w=b_1\cdots b_kc$ of binormal chords of
indices $|b_i|=0$ and $|c|=1$ and total length $a$. The $1$-dimensional moduli space
of holomorphic strips with positive puncture asymptotic to $c$ and one
boundary component on the zero section contains a unique component
$\MM_c$ passing through the trivial strip over $c$. By
Proposition~\ref{prop:disklength}, for each other element in
$\MM_c$ the boundary on the zero section has length strictly less
than $L(c)$. So, for $\eps$ sufficiently small, the moduli space
represents a generator of the local first homology at $c$. Since on all
other components of the moduli space the $Q$-strings in the boundary
have total length less than $a-\eps$, the product of $\MM_c$ with the
half-strips over the $b_j$ gives $\Phi(w)\in
C_1^{[a-\eps,a+\eps)}(\Sigma)$. Its image $\Psi(w)=\Bf_1\circ\Phi(w)\in
C_1^{[a-\eps,a+\eps)}(\Sigma_\lin)$ is obtained from $\Phi(w)$ by
shortening the $Q$-strings to linear ones. Since the tuples of
$Q$-strings in $\Phi(w)$ were either $C^1$-close to $w$ (depending on $\eps$)
or had total length less that $a-\eps$, the same holds for
$\Psi(w)$. Hence $\Psi(w)$ is homologous (with respect to $\p$, and
therefore with respect to $D$) in $C_1^{[a-\eps,a+\eps)}(\Sigma_\lin)$
to the unstable manifold of $w$ in $\Sigma_\lin$, which by definition
equals $\Pi\circ\Theta\circ\Gamma(w)$.

In the previous argument we have ignored the $N$-strings, always
connecting the ends of $Q$-strings to the base point by capping
paths. More generally, a generator of $H_1^{[a-\eps,a+\eps)}(\RR)\cong
\AA^a_1$ is given by a word $\gamma_1c_1\cdots\gamma_\ell
c_\ell\gamma_{\ell+1}$, where the $c_j$ are binormal chords with one
of them of index $1$ and all others of index $1$, and the $\gamma_j$
are homotopy classes of $N$-strings connecting the end points and not
intersecting $K$ in the interior. Now we apply the same arguments as
above to the $Q$-strings, dragging along the $N$-strings, to prove
commutativity of the diagram. This concludes the proof of
Proposition~\ref{prop:rel-hom}, and thus of Theorem~\ref{thm:main}.
\end{proof}

\section{Properties of holomorphic disks}\label{S:mdlisp}

In this section we begin our analysis of the holomorphic disks
involved in the definition of the chain map from Legendrian contact
homology to string homology. For the remainder of the paper, we
consider the following setup:
\begin{itemize}
\item $Q$ is a real analytic Riemannian $3$-manifold without
  closed geodesics and convex at infinity (the main example being
  $Q=\R^3$ with the flat metric);
\item $K\subset Q$ is a real analytic knot with nondegenerate binormal
  chords;
\item $L_K\subset T^{\ast}Q$ is the conormal bundle, $Q\subset
T^{\ast}Q$ is the $0$-section, and
$$
   L = L_K\cup Q
$$
is the singular Lagrangian with clean intersection $L_K\cap Q=K$.
\end{itemize}
The reader will notice that much of the discussion naturally extends
to higher dimensional manifolds $Q$ and submanifolds $K\subset Q$.

\subsection{Almost complex structures}\label{s:acs}
Consider
the subsets
$$
   S^*Q = \{(q,p)\;\bigl|\;|p|=1\}\subset
   D^*Q = \{(q,p)\;\bigl|\;|p|\leq 1\}\subset T^*Q
$$
of the cotangent bundle. The canonical isomorphism
\begin{equation*}
   \R\times S^*Q\to T^*Q\setminus Q,\qquad
   \bigl(s,(q,p)\bigr)\mapsto (q,e^s p)
\end{equation*}
intertwines the $\R$-actions given by translation resp.~rescaling. Let
$\lambda=p\,dq$ be the canonical Liouville form on $T^*Q$ with
Liouville vector field $p\p_p$. Its
restriction $\lambda_1$ to $S^*Q$ is a contact form with contact
structure $\xi=\ker\lambda_1$ and Reeb vector field $R$. We denote the
$\R$-invariant extensions of $\lambda_1,\xi,R$ to $T^*Q\setminus Q$ by
the same letters. In geodesic normal coordinates $q_i$ and dual
coordinates $p_i$ they are given by
$$
   \lambda_1=\frac{p\,dq}{|p|},\qquad
   R = \sum_i p_i\frac{\p}{\p q_i},\qquad
   \xi_{(q,p)} = \ker\lambda_1\cap\ker(p\,dp) = {\rm span}\Bigl\{R,p\frac{\p}{\p p}\Bigr\}^{\perp_{d\lambda_1}}.
$$
Around each Reeb chord $c:[0,T]\to S^*Q$ with end points on
$\Lambda_K=L_K\cap S^*Q$ we pick a neighborhood
$U\times(-\eps,T+\eps)\subset S^*Q$, where $U$ is a neighborhood of
the origin in $\C^{2}$, with the following properties:
\begin{itemize}
\item the Reeb chord $c$ corresponds to $\{0\}\times[0,T]$;
\item the Reeb vector field $R$ is parallel to $\p_t$, where $t$ is
  the coordinate on $(-\eps,T+\eps)$ and the contact planes project
  isomorphically onto $U$ along $R$;
\item along $\{0\}\times(-\eps,T+\eps)$ the contact planes agree with
  $\C^2\times\{0\}$ and the form $d\lambda_1$ with $\om_{\rm st}=dx_1\wedge
  dy_1+dx_2\wedge dy_2$;
\item the Legendrian $\Lambda_{K}$ intersects $U\times(-\eps,T+\eps)$ in two
linear subspaces contained in $U\times\{0\}$ and $U\times\{T\}$,
respectively, whose projections to $U$ are transversely intersecting
Lagrangian subspaces of $(\C^2,\om_{\rm st})$.
\end{itemize}

\begin{definition}\label{def:admissible}
An almost complex structure $J$ on $T^*Q$ is called {\em admissible}
if it has the following properties.
\begin{enumerate}
\item $J$ is everywhere compatible with the symplectic form $dp\wedge
  dq$. Moreover, $Q$ admits an exhaustion $Q_1\subset Q_2\subset\cdots$ by
  compact sets with smooth boundary such that the pullbacks
  $\pi^{-1}(\p Q_i)$ under the projection $\pi:T^*Q\to Q$ are
  $J$-convex hypersurfaces.
\item Outside $D^*Q$, $J$ agrees with an $\R$-invariant almost
  complex structure $J_1$ on the symplectization that takes the
  Liouville field $p\p_p$ to the Reeb vector field $R$, restricts
  to a complex structure on the contact distribution $\xi$, and is
  compatible with the symplectic form $d\lambda_1$ on $\xi$.
\item Outside the zero section, $J$ preserves the subspace
  ${\rm span}\{p\p_p,R\}$ as well as $\xi$ and is
  compatible with the symplectic form $d\lambda_1$ on $\xi$.
  Along the zero section, $J$ agrees with the canonical structure
  $\frac{\p}{\p p_i}\mapsto\frac{\p}{\p q_i}$.
\item $J$ is integrable near $K$ such that $Q$ and $K$ are real
  analytic.
\item On each neighborhood $U\times(-\eps,T+\eps)$ around a Reeb
  chord as above, the restriction of $J_1$ to the contact planes is
  the pullback of the standard complex structure on $U\subset\C^2$
  under the projection.
\end{enumerate}
\end{definition}

\begin{remark}
Conditions (i) and (ii) are standard conditions for studying
holomorphic curves in $T^*Q$ and its symplectization $\R\times S^*Q$.
Condition (iii) ensures the crucial length estimate for holomorphic
curves in the next subsection. Condition (iv) is needed for the
Finiteness Theorem~\ref{thm:finite} to hold.
Condition (v) is added to facilitate our study of spaces of
holomorphic disks and is convenient for fixing gauge when finding
smooth structures on moduli spaces; it can probably be removed with
a more involved analysis of asymptotics.
\end{remark}

\begin{remark}
Note that an admissible almost complex structure remains so under
arbitrary deformations satisfying (ii) that are supported outside $D^*Q$
and away from the Reeb chords. This gives us enough
freedom to achieve transversality within the class of admissible
structures in Section~\ref{sec:trans}.
\end{remark}

The Riemannian metric on $Q$ induces a canonical almost complex
structure $J_\st$ on $T^*Q$ which in geodesic normal coordinates $q_i$
at a point $q$ and dual coordinates $p_i$ is given by
$$
   J_\st\left(\frac{\p}{\p q_i}\right)=-\frac{\p}{\p p_i},\qquad J_\st\left(\frac{\p}{\p
   p_i}\right)=\frac{\p}{\p q_i}.
$$
More generally, for a positive smooth function $\rho\colon
[0,\infty)\to(0,\infty)$ we define an almost complex structure $J_{\rho}$ by
$$
   J_\rho\left(\frac{\p}{\p q_i}\right)=-\rho(|p|)\frac{\p}{\p p_i},\qquad
   J_\rho\left(\frac{\p}{\p p_i}\right)=\rho(|p|)^{-1}\frac{\p}{\p q_i}.
$$
If $\rho(r)=r$ for large $r$, then it is easy to check that $J_\rho$
satisfies the first part of condition (i) as well as conditions (ii)
and (iii) in Definition~\ref{def:admissible}.
If the metric is flat (i.e., $Q$ is $\R^3$ or a quotient of $\R^3$ by
a lattice), then $J_\st$ is integrable and $J_\rho$ also satisfies the
second part of (i) (choosing $Q_i$ to be round
balls) and condition (iv). Condition (v) can then be
arranged by deforming $J_\rho$ near infinity within the class of
almost complex structures satisfying (ii). So we have shown the following.

\begin{lemma}\label{lem:admissible}
For $Q=\R^3$ with the Euclidean metric there exist admissible almost
complex structures in the sense of Definition~\ref{def:admissible}.
\hfill $\square$
\end{lemma}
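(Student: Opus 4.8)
The plan is to write down an explicit one-parameter family of almost complex structures and then perturb it near infinity. First I would fix a smooth function $\rho\colon[0,\infty)\to(0,\infty)$ with $\rho\equiv 1$ near $0$ and $\rho(r)=r$ for $r\geq 1$, and consider the almost complex structure $J_\rho$ on $T^{\ast}\R^3$ given in the standard Cartesian coordinates $q_i,p_i$ (which, the metric being flat, are simultaneously geodesic normal coordinates at every point) by $J_\rho(\p_{q_i})=-\rho(|p|)\,\p_{p_i}$ and $J_\rho(\p_{p_i})=\rho(|p|)^{-1}\p_{q_i}$. For $\rho\equiv 1$ this is the integrable structure $J_\st$ identifying $(T^{\ast}\R^3,J_\st)$ with $(\C^3,i)$, the fibre with $\R^3$ and the zero section with $i\R^3$, and $J_\rho=J_\st$ on the set where $\rho\equiv 1$, in particular on a neighbourhood of the zero section.

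Next I would verify conditions (i)--(iv) of Definition~\ref{def:admissible} for $J_\rho$. Compatibility with $\om=dp\wedge dq$ is the elementary computation $\om(\cdot,J_\rho\cdot)=\rho(|p|)^{-1}\sum dp_i^2+\rho(|p|)\sum dq_i^2>0$. For the exhaustion in (i) I would take $Q_i$ to be the round ball of radius $i$; the $J_\rho$-convexity of $\pi^{-1}(\p Q_i)=\{|q|=i\}$ follows from the classical fact that a suitable smooth increasing function of $|q|^2$ is strictly $J_\rho$-plurisubharmonic on $T^{\ast}\R^3$, which one checks by a direct Levi-form computation. Conditions (ii) and (iii) hold by the computation indicated in the text: on $T^{\ast}\R^3\setminus D^{\ast}\R^3$, where $\rho(|p|)=|p|$, the structure $J_\rho$ is invariant under the dilation flow, sends the Liouville field $p\p_p$ to the Reeb field $R=\sum p_i\p_{q_i}$, preserves $\xi$ and ${\rm span}\{p\p_p,R\}$, and is $d\lambda_1$-compatible on $\xi$; and along the zero section $J_\rho=J_\st$ is the canonical structure $\p_{p_i}\mapsto\p_{q_i}$. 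Condition (iv) holds because $J_\st$ is integrable and $J_\rho=J_\st$ near the zero section, so $J_\rho$ is integrable near $K$, while $K$ may be assumed real analytic after the small perturbation discussed in Section~\ref{ss:switching}.

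It remains to arrange condition (v), which concerns the restriction of the $\R$-invariant structure $J_1$ (that is, $J_\rho$ outside $D^{\ast}\R^3$, transported to $\R\times S^{\ast}\R^3$) to the contact planes near the finitely many Reeb chords. In each neighbourhood $U\times(-\eps,T+\eps)\subset S^{\ast}\R^3$ chosen before Definition~\ref{def:admissible}, the contact planes project isomorphically onto $U\subset\C^2$ along $R$, so the standard complex structure on $U$ pulls back to an $\R$-invariant, $d\lambda_1$-compatible complex structure on $\xi$ there; I would interpolate between this structure and $J_\rho|_\xi$ in a neighbourhood of the Reeb chord, extend $\R$-invariantly, and thereby obtain a deformation of $J_\rho$ supported outside $D^{\ast}\R^3$ and away from the zero section. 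Since such deformations preserve conditions (i), (iii), (iv) and stay within the class satisfying (ii), the deformed structure is admissible, which proves the lemma.

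The main obstacle I anticipate lies in the non-formal ingredients of the last two steps: producing the explicit $J_\rho$-plurisubharmonic exhausting function certifying (i), and, more delicately, making the local normalization in (v) compatible with all the contact-geometric normalizations already imposed on the neighbourhoods $U\times(-\eps,T+\eps)$ (the identifications $d\lambda_1\cong\om_{\rm st}$, $R\parallel\p_t$, and $\Lambda_K$ linear) while keeping the resulting structure $\R$-invariant and globally compatible. Everything else is bookkeeping with the explicit coordinate formulas.
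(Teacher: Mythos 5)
Your proposal is correct and follows essentially the same route as the paper: take $J_\rho$ induced by the flat metric with $\rho(r)=r$ near infinity (so (i)--(iv) hold, with round balls giving the $J$-convex exhaustion and integrability near the zero section), then arrange (v) by an $\R$-invariant deformation supported outside $D^*Q$ near the Reeb chords. Your explicit choice $\rho\equiv 1$ near $0$ is a minor (and harmless) refinement making integrability near $K$ immediate.
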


\begin{remark}
In fact, the almost complex structure $J_\st$ induced by the metric is
integrable if and only if the metric is flat (this observation is due
to M.~Gr\"uneberg, unpublished). So the preceding proof
of Lemma~\ref{lem:admissible} does not carry over to general manifolds
$Q$ (although the conclusion should still hold).
\end{remark}

\comment{
We will use the following notation for subsets of $T^{\ast}\R^3$. Let
$(q,p)$ be coordinates on $T^{\ast}\R^{3}$ where $q\in\R^{3}$ and $p$
is a coordinate on the fiber. For $r>0$ we write
\begin{align*}
E^{\ast}_{r}\R^{3}&=\{(q,p)\colon p^{2}\ge r^{2}\},\\
D^{\ast}_{r}\R^{3}&=\{(q,p)\colon p^{2}\le r^{2}\},\\
S^{\ast}_r\R^{3}&=\{(q,p)\colon p^{2}=r^{2}\}.
\end{align*}

The restriction of the action form $p\,dq$ to $S^{\ast}_r\R^{3}$ is a
contact form. Consider the symplectization $\R\times S^{\ast}\R^{3}$, $S^{\ast}\R^{3}=S_1^{\ast}\R^{3}$
with symplectic form $d(e^{t}(p\, dq))$, where
$t$ is a coordinate in the $\R$-factor. We view $E^{\ast}_{1}\R^{3}$
as the positive half of the symplectization using the map $\phi\colon
\R_+\times S^{\ast}_1\R^{3}\to E^{\ast}_{1}\R^{3}$
\[
\phi((q,p))=(q,e^{t}p).
\]
Then $\phi$ intertwines the symplectic forms $d(e^{t}(p\,dq))$ on
$\R_+\times S^{\ast}\R^{3}$ and $dp\wedge dq$ on
$E^{\ast}_{1}\R^{3}$.

When studying spaces of holomorphic curves we will use almost complex structures on $T^{\ast}\R^{3}$ with the following properties:
\begin{enumerate}
\item On $D^{\ast}_{1/2}$, $J=J_0$ where $J_0$ is the standard complex structure on a $\frac12$-neighborhood of $\R^{3}\subset\C^{3}$.
\item On $E_{1}^{\ast}=S^{\ast}\R^{3}$, $J$ is induced from a complex structure on the contact planes $\ker(pdq)$ that is compatible with the symplectic form $d(pdq)$ on these contact planes, is $\R$-invariant, and takes $\partial_{t}$ to the Reeb vector field $R$.
\item $J$ is everywhere compatible with the symplectic form $dp\wedge dq$.
\item Each Reeb chord $c$ of $\Lambda_{K}$, has a neighborhood
  $\phi\colon \tilde c\times U\to S^{\ast}\R^{3}$, where $U$ is a
  neighborhood of the origin in $\C^{2}$ and where $\tilde c$ is an
  open flow segment containing $c$ with the following properties. The
  Legendrian $\Lambda_{K}$ corresponds to the intersection of linear
  Lagrangian subspaces of $U$ at the Reeb chord endpoints and the
  contact planes project isomorphically to $U$. We take the complex
  structure in the contact planes to be induced by the standard
  complex structure in $U$. Note that this specifies the almost
  complex structure $J$ in a neighborhood of $\R\times c\subset
  E_1^{\ast}\R^{3}$.
\end{enumerate}

\begin{remark}
Conditions (ii) and (iii) are standard conditions for studying holomorphic disk in cotangent bundles. Conditions (i) and (iv) are added to facilitate our study of spaces of holomorphic disks and are convenient for fixing gauge when finding smooth structures on moduli spaces. The properties needed for gauge fixing can probably be proved for a much larger class of complex structures with more a involved analysis of asymptotics.
\end{remark}

We will also study holomorphic disks in the symplectization $\R\times S^{\ast}\R^{3}$ and we use the $\R$-invariant extension of $J$ as above in the positive part of the symplectization $E_{1}^{\ast}\R^{3}$ to the whole symplectization.
}

The next result provides nice holomorphic coordinates near $K\subset
T^{\ast}Q$.

\begin{lemma}\label{l:knotnbhd}
Suppose that $J$ satisfies condition (iv) in Definition~\ref{def:admissible}.
Then for $\delta>0$ small enough there exists a holomorphic embedding from
$S^{1}\times (-\delta,\delta)\times B^{4}_\delta$, where
$B^{4}_\delta\subset \C^{2}$ is the ball of radius $\delta$, with its
standard complex structure onto a neighborhood of $K$ in
$T^{\ast}Q$ with complex structure $J$ with the following
properties:
\begin{itemize}
\item $S^{1}\times \{0\}\times\{0\}$ maps onto $K$;
\item $S^{1}\times\{0\}\times(\R^{2}\cap B^{4}_\delta)$ maps to $Q$;
\item $S^{1}\times\{0\}\times(i\R^{2}\cap B^{4}_\delta)$ maps to $L_K$.
\end{itemize}
Alternatively, we can arrange the last two properties with the roles
of $Q$ and $L_K$ interchanged. 
\end{lemma}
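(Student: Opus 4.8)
The plan is to produce the coordinates in three stages: first holomorphic coordinates adapted to the complexification of $K$; then a first-order normalization of the pair $(Q,L_K)$ along $K$; then a simultaneous straightening of $Q$ and $L_K$ to the linear model. The ``interchanged'' version will follow at the end by composing with $z\mapsto iz$. Throughout we use that $J$ is admissible: concretely, conditions (iv), (iii) and (i) give, respectively, integrability near $K$, agreement with the canonical structure along $Q$, and $\om$-compatibility, so that $Q$ and $L_K$ are real analytic and totally real near $K$ (for $L_K$, note that the conormal bundle of the real analytic $K$ for the real analytic metric is itself real analytic). \emph{Stage 1.} Since $K$ is a compact real analytic curve and $J$ is integrable near $K$, the real analytic inclusion $K\into(T^{\ast}Q,J)$ extends to a holomorphic embedding of the complexification $K^{\C}$ (a thin annulus) onto a complex curve $\Sigma\supset K$; a neighbourhood of $\Sigma$ in $(T^{\ast}Q,J)$ is then biholomorphic to a neighbourhood of the zero section in the normal bundle of $\Sigma$, which is holomorphically trivial since $\Sigma$ is a Stein Riemann surface retracting onto $S^{1}$. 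Trivialising $\Sigma\cong S^{1}\times(-\delta,\delta)$ with $K$ as the core circle, we obtain holomorphic coordinates $(w,z)=(s+it,\,x+iy)$ on a set of the form $(S^{1}\times(-\delta,\delta))\times B^{4}_{\delta}$ near $K$, with $\Sigma=\{z=0\}$ and $K=S^{1}\times\{0\}\times\{0\}$.

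\emph{Stage 2.} At a point $p\in K$ with base point $q$ we have the clean-intersection picture $Q\cap\Sigma=K=L_K\cap\Sigma$, and since $J$ agrees along $Q$ with the canonical structure the vertical subspace at $p$ is $J(T_{p}Q)$, whence $T_{p}L_K=T_{p}K\oplus(T_{q}K)^{\circ}=T_{p}K\oplus J\bigl(T_{p}Q\ominus_{g}T_{p}K\bigr)$. Consequently, after a $w$-holomorphic $GL(2,\C)$-valued change of the $z$-variable — chosen so that a global orthonormal normal frame of $K$ in $Q$ (which exists, the normal bundle being trivial) is carried to $\partial_{x_{1}},\partial_{x_{2}}$ — followed by a straightening $(w,z)\mapsto\bigl(w+\la g(w),z\ra,z\bigr)$, we may arrange $T_{p}Q=\la\partial_{s},\partial_{x_{1}},\partial_{x_{2}}\ra$, and hence $T_{p}L_K=\la\partial_{s},\partial_{y_{1}},\partial_{y_{2}}\ra$, for every $p\in K$. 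These are the tangent spaces of the model planes $\{t=0,\ y=0\}$ and $\{t=0,\ x=0\}$, so near $K$ both $Q$ and $L_K$ are graphs over their model planes whose defining functions vanish to second order along $K$. (This may be regarded as the base case of the construction of the next stage, and it explains why the model configuration is the right one, the first-order data being constant along $K$ and matching the model exactly.)

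\emph{Stage 3 and the main obstacle.} It remains to find a biholomorphism $\Xi$ of a neighbourhood of $K$, equal to the identity to first order along $K$, with $\Xi(Q)=\{t=0,\ y=0\}$ and $\Xi(L_K)=\{t=0,\ x=0\}$ near $K$; composing with $\Xi$ completes the construction of $g$, and the interchanged version is then $g'(w,z):=g(w,iz)$, a holomorphic embedding of a slightly smaller polydisk that swaps $\{\Im z=0\}$ with $\{\Re z=0\}$. I would build $\Xi$ by solving, order by order in the ideal of $\Sigma$ (equivalently, in $y$ and $t$), the linear equation for the next Taylor coefficient of $\Xi$ that matches $\Xi(Q)$ and $\Xi(L_K)$ with the models to that order, and this is where the difficulty lies. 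First, one must use a genuinely ambient biholomorphism rather than one subordinate to $Q$ or to $L_K$: straightening $Q$ first would leave only complexifications of real analytic diffeomorphisms of $\R^{3}$ as residual gauge, and these cannot reach enough of the higher-order normal data of $L_K$. Granting this, the linearised operator from the order-$k$ Taylor data of $\Xi$ to the order-$k$ deviations of $\Xi(Q),\Xi(L_K)$ from the models should be shown surjective by a symbol computation, the obstruction living in coherent sheaf cohomology over the Stein curve $\Sigma$ and hence vanishing. The resulting formal solution must finally be upgraded to a convergent, genuinely holomorphic $\Xi$ by a majorant (Cauchy-estimate) argument — and it is precisely this convergence step, together with the symbol computation, that constitutes the technical heart of the proof and is the part where the real analyticity of $K$ and of $J$ near $K$ is indispensable.
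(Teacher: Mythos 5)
The decisive step of your argument (Stage~3) is not carried out, and the mechanism you propose for it is not correct as stated. You reduce the lemma to finding an ambient biholomorphism $\Xi$, equal to the identity to first order along $K$, carrying the pair $(Q,L_K)$ to the linear model, and you assert that the order-by-order linearized problem ``should be shown surjective by a symbol computation, the obstruction living in coherent sheaf cohomology over the Stein curve $\Sigma$ and hence vanishing,'' with convergence to be obtained by majorants. None of this is proved, and the claimed vanishing cannot hold in the generality in which you invoke it, because the obstruction to simultaneous straightening is \emph{local} (fiberwise at each point of $K$, order by order), not a global cohomological one over $\Sigma$. Already in complex dimension one the analogous statement is false: the pair consisting of $\R$ and the real-analytic arc $A=\{it+t^{3}:|t|<\eps\}$ meets the real axis at a right angle at $0$, yet no biholomorphic germ carries $(\R,A)$ to $(\R,i\R)$. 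Indeed, after composing with $z\mapsto -iz$ if necessary we may assume the germ $F$ preserves $\R$ and maps $A$ into $i\R$; then $F$ has real Taylor coefficients $c_{k}$, and $\mathrm{Re}\,F(it+t^{3})=-c_{2}t^{2}+c_{1}t^{3}+O(t^{4})$, so vanishing forces $c_{1}=0$, contradicting invertibility. Thus two real-analytic totally real submanifolds with the correct first-order data along a clean right-angle intersection are in general \emph{not} simultaneously straightenable; the linearized problem you would need to solve is genuinely non-surjective at certain orders, and Steinness of $\Sigma$ is irrelevant. Any correct completion of your Stage~3 must therefore feed in the specific fact that $L_K$ is the \emph{conormal} of $K\subset Q$ (equivalently, that near $K$ the integrable structure is the standard one attached to the metric), and your proposal contains no mechanism for doing so; the formal-to-convergent step is likewise left entirely open.

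By contrast, the paper's proof bypasses all of this with one explicit formula. Choose a real-analytic orthonormal normal frame $v_{1},v_{2}=\dot\gamma\times v_{1}$ along $K$, so that $\phi(s,\sigma_{1},\sigma_{2})=\gamma(s)+\sigma_{1}v_{1}(s)+\sigma_{2}v_{2}(s)$ is a real-analytic tubular-neighborhood embedding into $Q$, and complexify it, substituting $(s+it,\sigma_{1}+i\tau_{1},\sigma_{2}+i\tau_{2})$; integrability of $J$ near $K$ (where it is the standard structure of the flat metric) makes this a holomorphic embedding $\Phi$ of a neighborhood of the zero section. Then $\{t=0,\tau=0\}$ parametrizes $Q$ by construction, while $\{t=0,\sigma=0\}$ is sent to $\gamma(s)+i\bigl(\tau_{1}v_{1}(s)+\tau_{2}v_{2}(s)\bigr)$, which is \emph{exactly} $L_K$, since in the standard identification of $T^{*}\R^{3}$ with $\C^{3}$ the conormal fibre at $\gamma(s)$ consists precisely of the imaginary normal displacements; no higher-order correction, formal series, or convergence argument is needed, and the ``interchanged'' version follows, as you also note, by precomposing with multiplication by $i$ on $B^{4}_{\delta}$. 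Your Stages~1--2 (Grauert-type tubular neighborhood of the complexified knot, holomorphic triviality of its normal bundle, first-order normalization) are a much heavier route to what this single complexified formula already provides, and they stop exactly where the real content of the lemma begins.
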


\begin{proof}
This is proved in more generality in~\cite[Remark 3.2]{CEL}; for
convenience we repeat the proof in the situation at hand.
Consider the real analytic embedding $\gamma\colon S^{1}\to Q$
representing $K$. Pick a real analytic vector field $v$ on $Q$
which is nowhere tangent to $K$ along $K$. Let $v_1$ be the unit
vector field along $K$ in the direction of the component of $v$
perpendicular to $\dot\gamma$. Then $v_1$ is a real analytic vector
field along $K$. Let $v_2= \dot\gamma \x v_1$ be the unit vector field
along $K$ which is perpendicular to both $\dot\gamma$ and $v_1$ and
which is such that $(\dot\gamma,v_1,v_2)$ is a positively oriented basis of
$TQ$. Consider $S^{1}\times D^{2}$ with coordinates
$(s,\sigma_1,\sigma_2)$, $s\in\R/\Z$, $\sigma_j\in\R$. Since $K$ is an
embedding there exists $\rho>0$ such that
\begin{equation}\label{e:emb1}
\phi(s,\sigma_1,\sigma_2)=\gamma(s)+\sigma_1v_1(s)+\sigma_2 v_2(s)
\end{equation}
is an embedding for $\sigma_1^{2}+\sigma_2^{2}<\rho$. Note that the
embedding is real analytic. Equip $S^{1}\times D^{2}$ with the flat
metric and consider the induced complex structure on
$T^{\ast}(S^{1}\times D^{2})$. The real analyticity of $\phi$ in
\eqref{e:emb1} implies that it extends to holomorphic embedding $\Phi$
from a neighborhood of $S^{1}\times D^{2}$ in
$T^{\ast}(S^1\times D^{2})$ to a neighborhood of $K$ in
$T^{\ast}Q$ (here we use integrability of $J$ near $K$). In fact, locally $\Phi$ is obtained by
replacing the real variables $(s,\sigma_1,\sigma_2)$ in the power
series corresponding in the right hand side of \eqref{e:emb1} by their
complexifications $(s+it,\sigma_1+i\tau_1,\sigma_2+i\tau_2)$.
This proves the first assertion of the lemma. The alternative assertion
follows from this one by precomposing $\Phi$ with multiplication by
$i$ on $B^4_\delta$. 
\end{proof}

\begin{remark}
The coordinate system gives a framing of $K$ determined by the normal
vector field $v$. By real analytic approximation we can take $v$ to
represent any class of framings.  
\end{remark}

\subsection{Length estimates}\label{sec:length-estimates2}
In this subsection we show that the chain map $\Phi$ respects the
length filtrations. This was shown in~\cite{CL}
for the absolute case, i.e.~without the additional boundary condition
$L_K$, and the arguments carry over immediately to the relative
case. For completeness, we provide the proof in this subsection and we
keep the level of generality of \cite{CL}, which is slightly more than what
we use in this paper.

For preparation, consider a smooth function $\tau\colon [0,\infty) \to
[0,\infty)$ with $\tau'(s) \geq 0$ everywhere and $\tau(s)=0$ near
$s=0$. Then
$$
   \lambda_\tau := \frac{\tau(|p|) p\,dq}{|p|}
$$
defines a smooth $1$-form on $T^*Q$.

\begin{lemma}\label{lem:nonneg}
Let $J$ be an admissible almost complex structure on $T^*Q$ and $\tau$
a function as above. Then for all $v\in T_{(q,p)}T^*Q$ we have
$$
   d\lambda_\tau(v,Jv)\geq 0.
$$
At points where $\tau(|p|)>0$ and $\tau'(|p|)>0$ equality holds only
for $v=0$, whereas at points where $\tau(|p|)>0$ and $\tau'(|p|)=0$
equality holds if and only if $v$ is a linear combination of the
Liouville field $p\,\p_{p}$ and the Reeb vector field $R=p\,\p_{q}$.
\end{lemma}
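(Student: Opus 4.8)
The plan is to exhibit $\lambda_\tau$ as (a multiple of) a primitive of a symplectic form adapted to $J$ away from the zero section, reducing the claim to the standard positivity $\omega(v,Jv)\geq 0$ for $\omega$-compatible $J$, and then identifying the kernel. First I would introduce the splitting of $T_{(q,p)}(T^*Q\setminus Q)$, for $|p|$ in the region where $\tau(|p|)>0$, as $\mathbb{R}\langle p\,\partial_p\rangle \oplus \mathbb{R}\langle R\rangle \oplus \xi_{(q,p)}$, where $\xi=\ker\lambda_1\cap\ker(p\,dp)$ is the contact distribution of $\lambda_1=\frac{p\,dq}{|p|}$ extended $\mathbb{R}$-invariantly. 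By admissibility (condition (iii)), $J$ preserves $\mathrm{span}\{p\,\partial_p,R\}$ and $\xi$, maps $p\,\partial_p$ to (a positive multiple of) $R$, and restricts to a $d\lambda_1$-compatible complex structure on $\xi$.

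Next I would compute $d\lambda_\tau$ in this splitting. Writing $s=|p|$ and $\lambda_\tau=\tau(s)\lambda_1$, we have $d\lambda_\tau = \tau'(s)\,ds\wedge\lambda_1 + \tau(s)\,d\lambda_1$. The key pointwise facts are: (a) $ds\wedge\lambda_1$ evaluated on $v=a\,p\partial_p + b\,R + w$ with $w\in\xi$ picks out exactly the $p\partial_p$ and $R$ components (since $ds(p\partial_p)=s$, $ds|_\xi=0$, $ds(R)=0$, $\lambda_1(p\partial_p)=0$, $\lambda_1(R)=1$, $\lambda_1|_\xi=0$), so that $ds\wedge\lambda_1(v,Jv)$ is, up to a positive constant, $(a^2+b^2)$ times a positive quantity coming from $J(p\partial_p)$ being a positive multiple of $R$; (b) $d\lambda_1$ vanishes on $\mathrm{span}\{p\partial_p,R\}$ and on the mixed terms between that plane and $\xi$ (this is the defining property of the contact distribution together with $\mathbb{R}$-invariance), and restricts to the compatible form on $\xi$, so $d\lambda_1(v,Jv) = d\lambda_1(w,Jw)\geq 0$ with equality iff $w=0$. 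I would carry out these bilinear-form computations carefully using the explicit formulas $\lambda_1=\frac{p\,dq}{|p|}$, $R=\sum p_i\partial_{q_i}$ stated in Section~\ref{s:acs}; this is the one genuinely computational step, but it is routine linear algebra in the chosen frame.

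Combining (a) and (b): $d\lambda_\tau(v,Jv) = \tau'(s)\cdot(\text{positive})\cdot(a^2+b^2) + \tau(s)\cdot d\lambda_1(w,Jw) \geq 0$ since $\tau,\tau'\geq 0$. For the equality discussion: when $\tau(s)>0$ and $\tau'(s)>0$, both terms are strictly nonnegative and vanish only if $a=b=0$ and $w=0$, i.e.\ $v=0$. When $\tau(s)>0$ and $\tau'(s)=0$, the first term drops out and equality holds iff $w=0$, i.e.\ iff $v\in\mathrm{span}\{p\partial_p,R\}$, as claimed. (In the region $\tau(s)=0$ the form $\lambda_\tau$ vanishes identically near there so the inequality is trivial and there is nothing to assert about equality.)

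I expect the main obstacle to be bookkeeping rather than conceptual: one must check that the mixed terms $d\lambda_1(p\partial_p, Jw)$, $d\lambda_1(R,Jw)$ and $d\lambda_1(p\partial_p,JR)$ etc.\ genuinely vanish, which requires knowing that $J$ respects the splitting \emph{and} that $d\lambda_1$ does (the latter because $\xi = \mathrm{span}\{R, p\partial_p\}^{\perp_{d\lambda_1}}$ by the formula in Section~\ref{s:acs}, and $d\lambda_1(p\partial_p,R)=0$ since $\lambda_1$ is $\mathbb{R}$-invariant and $d\lambda_1(p\partial_p,\cdot)=\mathcal{L}_{p\partial_p}\lambda_1 - d(\lambda_1(p\partial_p)) = \lambda_1 - 0$, which annihilates $R$). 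Once these orthogonality statements are in place the result follows immediately. This is exactly the computation done in \cite{CL} in the absolute case, and the presence of the extra boundary condition $L_K$ plays no role here since the statement is purely about the ambient geometry of $(T^*Q,J)$.
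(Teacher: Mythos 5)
Your argument is the same as the paper's: the splitting $T(T^*Q)={\rm span}\{p\,\partial_p,R\}\oplus\xi$ preserved by $J$, the decomposition $d\lambda_\tau=\tau\,d\lambda_1+\tau'\,ds\wedge\lambda_1$ (with $ds=\frac{p\,dp}{|p|}$), the fact that ${\rm span}\{p\,\partial_p,R\}=\ker d\lambda_1$, and positivity of $ds\wedge\lambda_1$ on that plane, so the proposal is correct and essentially identical in route. Two small repairs to your justifications: the parenthetical Cartan computation is wrong as written, since $\lambda_1=\frac{p\,dq}{|p|}$ is invariant under the Liouville scaling flow, so $\mathcal{L}_{p\partial_p}\lambda_1=0$ (it is $\lambda=p\,dq$, not $\lambda_1$, that satisfies $\mathcal{L}_{p\partial_p}\lambda=\lambda$), and in any case $\lambda_1$ does not annihilate $R$ because $\lambda_1(R)=1$; the correct conclusion $\iota_{p\partial_p}d\lambda_1=0$ is what gives $d\lambda_1(p\,\partial_p,R)=0$ and the vanishing of the mixed terms. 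Also, inside $D^*Q$ condition (iii) only says $J$ preserves the plane ${\rm span}\{p\,\partial_p,R\}$, not that $J(p\,\partial_p)$ is a positive multiple of $R$; what you actually need (and what holds, by tameness with respect to $dp\wedge dq$, which induces the same orientation on this plane as $ds\wedge\lambda_1$) is that $ds\wedge\lambda_1(u,Ju)>0$ for $u\neq 0$ in the plane, i.e.\ a positive definite quadratic form rather than literally a multiple of $a^2+b^2$.
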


\begin{proof}
By condition (iii) in Definition~\ref{def:admissible}, $J$ preserves
the splitting
$$
   T(T^*Q)={\rm span}\{p\,\p_p,R\}\oplus \xi
$$
and is compatible with $d\lambda_1$ on $\xi$. Let us
denote by $\pi_1:T(T^*Q)\to {\rm span}\{p\,\p_p,R\}$ and
$\pi_2:T(T^*Q)\to \xi$ the projections onto the direct summands. Since
$\ker(d\lambda_1) = {\rm span}\{p\,\p_p,R\}$, for $v\in T_{(q,p)}T^*Q$ we
conclude
$$
   d\lambda_1(v,Jv) = d\lambda_1(\pi_2v,J\pi_2v)\geq 0,
$$
with equality iff $v\in {\rm span}\{p\,\p_p,R\}$. Next, we consider
$$
   d\lambda_\tau = \tau(|p|)d\lambda_1 +
   \frac{\tau'(|p|)}{|p|}p\,dp\wedge\lambda_1.
$$
Since the form $p\,dp\wedge\lambda_1$ vanishes on $\xi$ and is
positive on ${\rm span}\{p\,\p_p,R\}$, we conclude
$$
   d\lambda_\tau(v,Jv) = \tau(|p|)d\lambda_1(\pi_2v,J\pi_2v) +
   \frac{\tau'(|p|)}{|p|}p\,dp\wedge\lambda_1(\pi_1v,J\pi_1v) \geq 0,
$$
with equality iff both summands vanish. From this the lemma follows.
\end{proof}

\comment{
\begin{lemma}\label{lem:nonneg}
For any functions $\rho$ and $\tau$ as above, if $v\in
T_{(q,p)}T^*Q$, $|p|\le 1$ then
$$
   d\lambda_\tau(v,J_\rho v)\geq 0.
$$
Furthermore, at points where $\tau'(|p|)>0$, equality holds only for $v=0$, whereas at
points where $\tau'(|p|)=0$ and $\tau(|p|)>0$  equality holds if and only if
$v$ is a linear combination of $\sum_{i}p_{i}\p_{p_{i}}$ and $\sum_{i}p_{i}\p_{q_{i}}$.
\end{lemma}

\begin{proof}
We compute (in geodesic normal coordinates)
\begin{align*}
   d\lambda_\tau &= d\left(\sum_i\frac{\tau(|p|)p_idq_i}{|p|}\right) \\
   & =  \sum_i\frac{\tau(|p|)dp_i\wedge dq_i}{|p|} +
   \sum_{i,j}\frac{(\tau'(|p|)|p|-\tau(|p|))p_ip_jdp_i\wedge dq_j}{|p|^3}.
\end{align*}
For a vector of the form $v=\sum_ia_i\rho(|p|)\p_{p_i}$ we obtain
$J_\rho v=\sum_ia_i\p_{q_i}$ and hence by the Cauchy-Schwarz inequality
\begin{align*}
   d\lambda(v,Jv) &= \sum_i\frac{\tau(|p|)\rho(|p|)a_i^2}{|p|} -
   \sum_{i,j}\frac{(\tau'(|p|)|p|-\tau(|p|))\rho(|p|)p_ip_ja_ia_j}{|p|^3} \cr
   &= \frac{\tau(|p|)\rho(|p|)}{|p|^3}(|a|^2|p|^2-\la a,p\ra^2) +
   \frac{\tau'(|p|)\rho(|p|)}{|p|^2}\la a,p\ra^2 \cr
   &\geq 0.
\end{align*}
At points where $\tau'>0$, equality only holds for $a=0$, and at points where
$\tau'=0$ equality holds iff $a$ is a multiple of $p$. Similarly, for a general
vector $v=\sum_ia_i\rho(|p|)\p_{p_i} - \sum_ib_i\p_{q_i}$ we get
$d\lambda(v,J_{\rho}v)\geq 0$, with equality if and only if either $a=b=0$ or $\tau'=0$ and both
$a$ and $b$ are multiples of $p$.
\end{proof}
}

Let now $J$ be an admissible almost complex structure on $T^*Q$ and
$$
   u\colon (\Sigma,\p \Sigma)\to (T^*Q,Q\cup L_K)
$$
be a $J$-holomorphic curve with finitely many positive boundary
punctures asymptotic to Reeb chords $a_1,\dots,a_s$ and with switching
boundary conditions on $Q\cup L_K$. Let $\sigma_1,\dots,\sigma_k$ be
the boundary segments on $Q$. Recall that $L(\sigma_i)$ denotes the
Riemannian length of $\sigma_i$ and $L(a_j)=\int_{a_j}\lambda_1$
denotes the action of the Reeb chord $a_j$, which agrees with the
length of the corresponding binormal chord.

\begin{prop}\label{prop:length-estimate} 
With notation as above we have
$$
   \sum_{i=1}^kL(\sigma_i) \leq \sum_{j=1}^sL(a_j),
$$
and equality holds if and only if $u$ is a branched covering of a
half-strip over a binormal chord.
\end{prop}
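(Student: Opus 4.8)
\textbf{Plan for the proof of Proposition~\ref{prop:length-estimate}.}
The plan is to exploit the nonnegativity from Lemma~\ref{lem:nonneg} together with a suitable choice of the function $\tau$, and then apply Stokes' theorem on the punctured holomorphic curve. First I would fix $\eps > 0$ and choose $\tau = \tau_\eps$ so that $\tau(s) = 0$ for $s$ near $0$, $\tau'(s) \geq 0$ everywhere, and $\tau(s) = 1$ for $s \geq \eps$ (a smooth step function). The associated $1$-form $\lambda_\tau$ then agrees with the Liouville form $\lambda_1$ (pulled back via the rescaling identification) on the region $\{|p| \geq \eps\}$, vanishes near the zero section $Q$, and restricts to $0$ on $L_K$ since $\lambda = p\,dq$ vanishes on the conormal bundle. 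The key computation is that $d\lambda_\tau(v, Jv) \geq 0$ for all $v$, so $\int_\Sigma u^*d\lambda_\tau \geq 0$.

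Next I would run Stokes' theorem on $u$, being careful about the punctures. The boundary $\p\Sigma$ decomposes into the switching boundary arcs mapping alternately to $Q$ and $L_K$. On the $L_K$-arcs, $u^*\lambda_\tau = 0$ because $\lambda$ annihilates $T L_K$. On the $Q$-arcs, $u^*\lambda_\tau = 0$ as well since $\lambda_\tau \equiv 0$ near the zero section. At each positive puncture asymptotic to the Reeb chord $a_j$, the standard asymptotic analysis in the symplectization end (where $\lambda_\tau = \lambda_1$) shows that the contribution to $\int_{\p\Sigma} u^*\lambda_\tau$ from a small arc around the puncture limits to $-L(a_j) = -\int_{a_j}\lambda_1$ (with the sign from orientation conventions: positive punctures contribute negatively to the boundary integral of the primitive). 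Therefore Stokes gives
$$
   0 \leq \int_\Sigma u^*d\lambda_\tau = \int_{\p\Sigma} u^*\lambda_\tau = -\sum_{j=1}^s L(a_j) + (\text{switching corner contributions}).
$$
The corner (switching puncture) contributions vanish because at a switch the curve is asymptotic to a point of $K$ and $\lambda_\tau$ is bounded there while the arc length shrinks to zero; so we obtain $\sum_j L(a_j) \geq 0$, which is not yet what we want. The point is to instead bound the lengths of the $Q$-strings: I would use that $\int_\Sigma u^*d\lambda_\tau$, being an integral of a form that dominates (a multiple of) the area in the $p$-variable near $\{|p|\le\eps\}$, controls how far into the zero section the curve penetrates, and in the limit $\eps \to 0$ the left-hand side becomes exactly $\sum_j L(a_j) - \sum_i L(\sigma_i)$, since the boundary integral picks up, in addition to the Reeb chord actions, the length contributions along the $Q$-arcs once $\lambda_\tau$ is replaced by a form whose restriction to $TQ$ recovers the length element. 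Concretely, one replaces $\lambda_\tau$ by $\lambda_\tau + $ (a term supported near $Q$ chosen so that on the $Q$-arcs $u^*$ of it equals the arc-length form, using that $J$ is standard along the zero section), while keeping $d(\cdot)(v,Jv)\ge 0$; this is exactly the scaled contact form argument referenced before the proposition.

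\textbf{Expected main obstacle.} The hard part will be handling the transition region $\{|p| \le \eps\}$ near the zero section: one must show that the "correction" form added to $\lambda_\tau$ to recover the Riemannian length element along $Q$-arcs can be chosen so that $d$ of the total form is still nonnegative on $J$-complex lines, and that the corner contributions at switching punctures genuinely vanish in the limit (this uses the local holomorphic model near $K$ from Lemma~\ref{l:knotnbhd} and the winding-number asymptotics of Section~\ref{ss:series}). The equality case requires tracking when $d\lambda_\tau(v,Jv) = 0$ identically along $u$: by Lemma~\ref{lem:nonneg} this forces $du$ to be everywhere a linear combination of the Liouville and Reeb directions wherever $\tau' = 0$ and $\tau > 0$, which by unique continuation forces $u$ to be invariant under the $\R$-action and hence a (branched) cover of the Reeb-chord half-strip; I would spell this out by noting that such a $u$ has image contained in the union of Reeb trajectories through a single binormal chord, and the boundary conditions on $Q \cup L_K$ then pin it down to the half-strip over that chord.
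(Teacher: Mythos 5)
Your setup is the same as the paper's: the form $\lambda_\tau=\frac{\tau(|p|)\,p\,dq}{|p|}$, the nonnegativity $d\lambda_\tau(v,Jv)\ge 0$ from Lemma~\ref{lem:nonneg}, vanishing of $\lambda_\tau$ on $L_K$ and near the zero section, and Stokes' theorem; your treatment of the equality case (tangency to ${\rm span}\{p\,\p_p,R\}$ plus the asymptotics at the positive punctures forcing a half-strip over a binormal chord) also matches the paper. The gap is in the one step that actually produces the terms $L(\sigma_i)$. You propose to replace $\lambda_\tau$ by $\lambda_\tau$ plus a correction $1$-form supported near $Q$ whose pullback along every $Q$-arc is the arc-length form. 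No such $1$-form exists: the pullback of a fixed $1$-form along a curve $\sigma$ in $Q$ is $\alpha(\dot\sigma)\,dt$, which is linear in $\dot\sigma$, whereas $|\dot\sigma|\,dt$ is not; moreover $\frac{p\,dq}{|p|}$ itself does not extend continuously to the zero section. So the mechanism you describe for recovering the lengths cannot be carried out as stated, and without it your first-pass Stokes computation only yields a statement about the chord actions (and with a sign slip: a positive puncture contributes $+L(a_j)$ to $\int_{\p\Sigma}u^*\lambda_\tau$, not $-L(a_j)$).

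The paper's resolution is different and is worth internalizing: do not modify the form, modify the domain. Remove from $\Sigma$ small half-strip neighborhoods $N_i\cong[0,\delta]\times[0,1]$ of the boundary arcs mapped to $Q$ (and small neighborhoods of the positive punctures), and apply Stokes to $\lambda_\tau$ on the truncated surface $\Sigma_\delta$. The new boundary arcs $\{s=\delta\}$ lie slightly off the zero section, and here holomorphicity is used quantitatively: writing $u\circ\phi_i(s,t)=(q(t)+v(s,t),\,s\dot q(t)+w(s,t))$ and using that $J$ is standard along the zero section (condition (iii) of Definition~\ref{def:admissible}) gives $\p_s v(0,t)=\p_s w(0,t)=0$, hence $\dot v_\delta=O(\delta)$, $w_\delta=O(\delta^2)$, and therefore
\[
(u\circ\phi_i)^*\lambda_1\big|_{s=\delta}=\bigl(|\dot q|+O(\delta)\bigr)\,dt .
\]
Thus the displaced arcs contribute $-L(\sigma_i)+O(\delta)$ to the boundary integral, the puncture arcs contribute $L(a_j)+O(\delta)$, the $L_K$-arcs contribute $0$, and letting $\delta\to 0$ gives $0\le\sum_j L(a_j)-\sum_i L(\sigma_i)$. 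In short: the arc-length element is not encoded in any auxiliary $1$-form on $T^*Q$, but emerges from $\lambda_1$ evaluated on arcs parallel to the $Q$-boundary, because the fiber component of a holomorphic curve grows to first order as $s\dot q(t)$ there. You should replace your "correction form" step by this truncation-and-asymptotics argument; the rest of your outline then goes through.
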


\begin{proof}
The idea of the proof is straightforward: integrate $u^*d\lambda_1$
over $\Sigma$ and apply Stokes' theorem. However, some care is
required to make this rigorous because the $1$-form $\lambda_1$ is
singular along the zero section.

Fix a small $\delta>0$. For $i=1,\dots,s$ pick biholomorphic
maps $\phi_i:[0,\delta]\times[0,1]\to N_i\subset\Sigma$ onto
neighborhoods $N_i$ in $\Sigma$ of the $i^{th}$ boundary segment
mapped to $Q$, so that $\phi_i(0,t)$ is a parametrization of the
$i^{th}$ boundary segment. We choose $\delta$ so small that $N_i \cap
N_j = \varnothing$ if $i\neq j$ and $u\circ\phi_i(\delta,\cdot)$ does
not hit the zero section (the latter is possible because otherwise by
unique continuation $u$ would be entirely contained in the zero
section, which it is not by assumption). For fixed $i$ we denote the
induced parametrization of $\sigma_i$ by $q(t):=u\circ\phi_i(t)\in Q$,
so we can write
$$
   u\circ\phi_i(s,t)=(q(t)+v(s,t),s\dot{q}(t) + w(s,t))
$$
with $v(0,t)=0=w(0,t)$, and therefore $\frac {\p v}{\p t}(0,t)=0=\frac
{\p w}{\p t}(0,t)$. The hypothesis that $J$ is standard near the zero
section (condition (iii) in Definition~\ref{def:admissible}) implies
that $\frac {\p v}{\p s}(0,t)=0=\frac {\p w}{\p s}(0,t)$. Denoting
$v_\delta=v(\delta,\cdot)$ and $w_\delta=w(\delta,\cdot)$ we compute
\begin{align*}
   (u\circ\phi_i)^*\lambda_1|_{s=\delta}
   &= \frac{\langle \delta\dot{q} + w_\delta,\dot{q} + \dot{v_\delta} \rangle}
   {|\delta\dot{q} + w_\delta|} dt \\
   &= \frac {\langle \dot{q} + \frac{w_\delta}{\delta}, \dot{q} + \dot{v_\delta}\rangle}
   {|\dot{q}+ \frac{w_\delta}{\delta}|} dt\\
   &= \bigl(|\dot q| + O(\delta)\bigr)dt,
\end{align*}
where in the last line we have used that $\dot v_\delta=O(\delta)$ and
$w_\delta=O(\delta^2)$.

Pick $\eps>0$ smaller than the minimal norm of the $p$-components of
$u\circ\phi_i(\delta,\cdot)$ for all $i$.
Pick a function $\tau\colon [0,\infty) \to [0,1]$ with
$\tau'\geq 0$, $\tau(s)=0$ near $s=0$, and $\tau(s)=1$ for $s\geq \eps$.
By Lemma~\ref{lem:nonneg}, the form $\lambda_\tau =
\frac{\tau(|p|)}{|p|}pdq$ on $T^*Q$ satisfies $u^*(d\lambda_\tau) \geq
0$. Note that $\lambda_\tau$ agrees with $\lambda_1 = \frac p {|p|}dq$
on the subset $\{|p|\geq \epsilon\} \subset T^*Q$, so the preceding
computation yields
$$
   \int_{\{s=\delta\}}(u\circ\phi_i)^*\lambda_\tau =
   \int_{\{s=\delta\}}\bigl(|\dot q| + O(\delta)\bigr)dt =
   L(\sigma_i)+O(\delta)
$$
for all $i$. Next, consider polar coordinates $(r,\pHi)$ around $0$ in the
upper half plane $H^+$ near the $j^{th}$ positive puncture. Then the asymptotic
behavior of $u$ near the punctures yields
\[
   \int_{\{r=\delta\}\cap H^+}u^*\lambda_\tau = L(a_j)+O(\delta).
\]
Now let $\Sigma_\delta\subset\Sigma$ be the surface obtained by
removing the neighborhoods $\{r\leq \delta\}\cap H^+$ around the
positive punctures and the neighborhoods $N_i$ of the
boundary segments mapped to $Q$, see Figure \ref{fig:sigmadelta}. 
\begin{figure}
\labellist
\small\hair 2pt
\pinlabel $L_K$ at 94 305
\pinlabel $L_K$ at 277 300
\pinlabel $L_K$ at 12 158
\pinlabel $L_K$ at 118 13
\pinlabel $L_K$ at 323 87
\pinlabel $Q$ at 42 247
\pinlabel $Q$ at 49 61 
\pinlabel $Q$ at 228 10
\pinlabel $\Sigma$ at 342 18
\pinlabel $\Sigma_\delta$ at 178 162
\pinlabel $N_1$ at 69 230 
\pinlabel $N_2$ at 79 83
\pinlabel $N_3$ at 217 39
\endlabellist
	\centering
	\includegraphics[width=.6\linewidth]{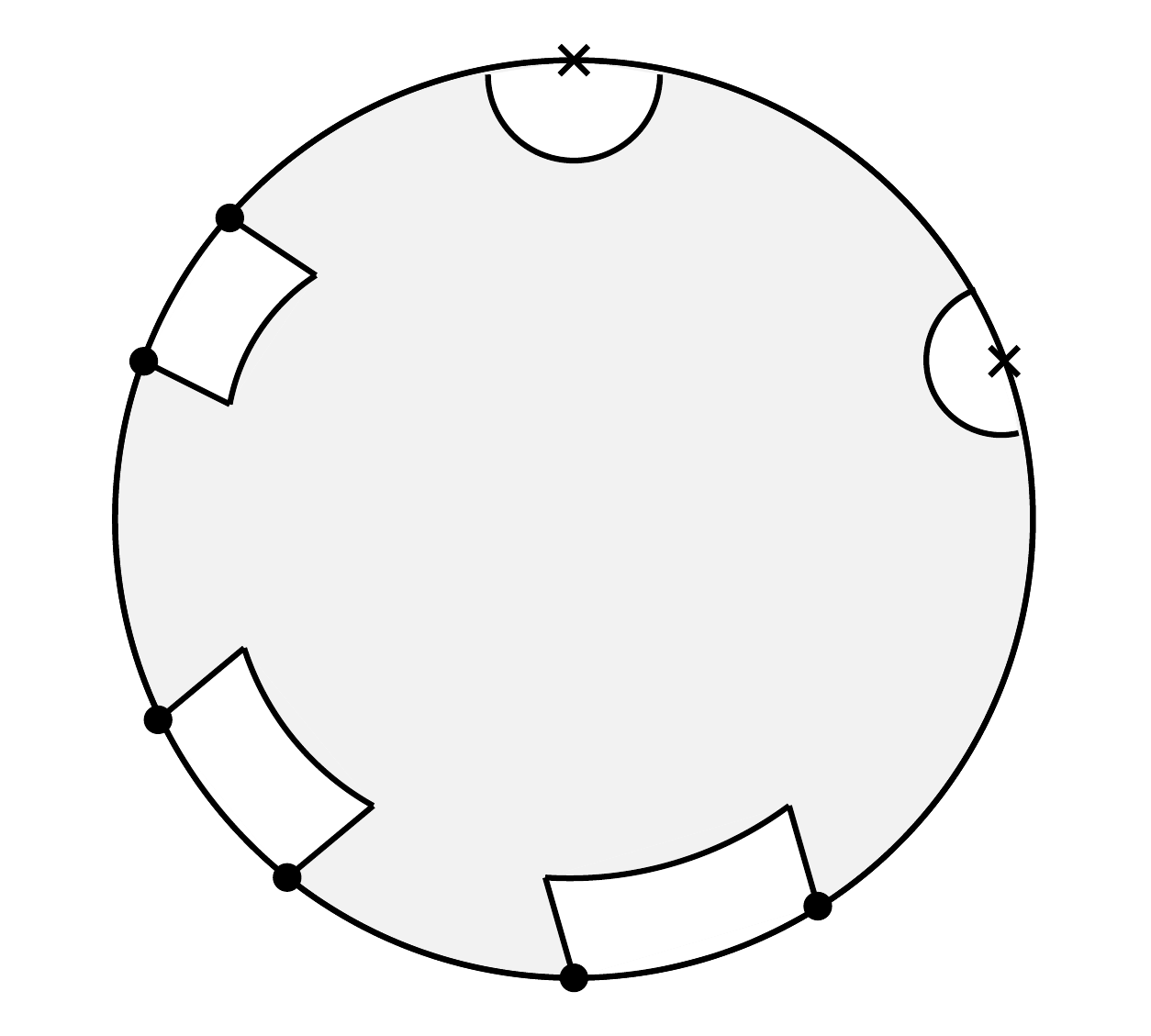}
	\caption{The domain $\Sigma_{\delta}$ is obtained from $\Sigma$ by removing small neighborhoods of the boundary arcs mapping to $Q$ and of the positive punctures. The punctures are denoted by x, and switches are denoted by dots.}
	\label{fig:sigmadelta}
\end{figure}

The boundary of $\Sigma_\delta$ consists
of the arcs $\{r=\delta\}\cap H^+$ around the positive punctures, the
arcs $\phi_i(\{s=\delta\})$ near the boundary segments mapped to $Q$
(negatively oriented), and the remaining parts of $\p\Sigma$ mapped to
$L_K$. Since $\lambda_\tau$ vanishes on $L_K$, the latter boundary
parts do not contribute to its integral and Stokes' theorem combined
with the preceding observations yields
$$
   0 \leq \int_{\Sigma_\delta}u^*d\lambda_\tau = \int_{\p\Sigma_\delta}u^*\lambda_\tau
   =  \sum_{j=1}^sL(a_j) - \sum_{i=1}^kL(\sigma_i) + O(\delta).
$$
Taking $\delta\to 0$ this proves the inequality in
Proposition~\ref{prop:length-estimate}. Equality holds iff
$u^*d\lambda_\tau$ vanishes identically, which by
Lemma~\ref{lem:nonneg} is the case iff $u$ is everywhere tangent to
${\rm span}\{p\,\p_p,R\}$. In view of the asymptotics at the positive
punctures, this is the case precisely for a half-strip over a binormal
chord.
\end{proof}

\subsection{Holomorphic half-strips}

We consider the half-strip $\R_+\times[0,1]$ with coordinates $(s,t)$ and
its standard complex structure. Let $J$ be an admissible almost
complex structure on $T^*Q$ and $J_1$ the associated structure on
$\R\times S^*Q$.
A {\em holomorphic half-strip} in $\R\times S^*Q$ is a holomorphic map
$$
   u\colon\R_+\times[0,1]\to (\R\times S^*Q,J_1)
$$
mapping the boundary segments $\R\times\{0\}$ and $\R\times\{1\}$ to
$\R\times\Lambda_K$. Similarly, a holomorphic half-strip in $T^*Q$ is
a holomorphic map
$$
   u\colon\R_+\times[0,1]\to (T^*Q,J)
$$
mapping the boundary to $L=L_{K}\cup Q$.
We write the components of a map $u$ into $\R\times S^*Q$ (or into
$T^*Q\setminus D^*Q\cong \R_+\times S^*Q$) as
$$
   u=(a,f).
$$
Recall from~\cite{BEHWZ} (see also~\cite{CEL}) that to any smooth
map $u$ from a surface to $\R\times S^*Q$ or $T^*Q$ we can associate
its {\em Hofer energy} $E(u)$. It is defined as the sum of two terms,
the $\om$-energy and the $\lambda$-energy, whose precise definition
will not be needed here.
The following result follows from \cite[Lemma B.1]{E_rsft}, see also
\cite[Proposition 6.2]{BEHWZ}, in combination with well-known results
in Lagrangian Floer theory, see e.g.~\cite{Floer_Lag_int}.

\begin{prop}\label{prop:asympt}
For each holomorphic half-strip $u$ in $\R\times S^*Q$ or $T^*Q$ of
finite Hofer energy exactly one of the following holds:
\begin{itemize}
\item There exists a Reeb chord $c\colon [0,T]\to S^*Q$ and
a constant $a_0\in\R$ such that
$$
   a(s,t)-Ts-a_0\to 0,\qquad f(s,t)\to c(Tt)
$$
uniformly in $t$ as $s\to\infty$. We say that the map has a \emph{positive puncture} at $c$.
\item There exists a Reeb chord $c\colon [0,T]\to S^*Q$ and
a constant $a_0\in\R$ such that
$$
   a(s,t)+Ts-a_0\to 0,\qquad f(s,t)\to c(-Tt)
$$
uniformly in $t$ as $s\to\infty$. We say that the map has a \emph{negative puncture} at $c$.
\item There exists a point $x_0$ on $\R\times\Lambda_K$ (resp.~$L$) such that
$$
   u(s,t)\to x_0
$$
uniformly in $t$ as $s\to\infty$. In this case $u\circ\chi^{-1}$,
where $\chi:\R_+\times[0,1]\to D^+$ is the map from~\eqref{eq:chi},
extends to a holomorphic map on the half-disk mapping the boundary to
$\R\times\Lambda_K$ (resp.~$L$). If $x_0\notin K$ then we say that $u$
has a \emph{removable puncture} at $x_0$, and if $x_0\in K$ then we say
that $u$ has a {\em Lagrangian intersection puncture} at $x_0$. (These are the standard situations in ordinary Lagrangian intersection Floer homology.)
\end{itemize}
\end{prop}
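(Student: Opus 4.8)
The plan is to reduce the statement to the standard asymptotic analysis of finite-energy punctured pseudoholomorphic curves, treating the symplectization direction and the two sheets of $L$ separately. First I would use the biholomorphism $\chi\colon\R_+\times[0,1]\to D^+$ from~\eqref{eq:chi} to regard $u$ as a holomorphic map from the punctured half-disk $D^+\setminus\{0\}$, with boundary on $\R\times\Lambda_K$ (resp.~$L$) and finite Hofer energy. The trichotomy is governed by the \emph{mass} of the puncture, the limit $m:=\lim_{\eps\to 0}\int_{\gamma_\eps}u^*\lambda_1$ over small boundary arcs $\gamma_\eps$ encircling $0$ in $H^+$; finite $\lambda$-energy guarantees that this limit exists and is nonnegative, and since $M=S^*Q$ has no contractible closed Reeb orbits there is no interior bubbling that could absorb mass, so by the quantization of Reeb-chord actions $m$ is either $0$ or the period $T$ of some Reeb chord $c$.

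If $m>0$, this is precisely the setting of~\cite[Proposition 6.2]{BEHWZ} and~\cite[Lemma B.1]{E_rsft}: passing to the strip coordinates $(s,t)$, the map converges exponentially to the Reeb-chord strip over $c$, with $\R$-component $a(s,t)=\pm Ts+a_0+o(1)$ uniformly in $t$, and the sign separates a positive from a negative puncture; for $T^*Q$, which has only the positive cylindrical end, only the positive case survives, matching the statement. If instead $m=0$, then the $\lambda$-energy concentrated at the puncture vanishes, and a monotonicity/small-energy-regularity argument (using the energy-quantization constant $\hbar>0$ for non-constant disks and spheres in this geometry to bound bubbling) shows that $u$ has finite image and bounded gradient near $0$; in the symplectization this pins the $\R$-component to be bounded as well. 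Removal of boundary singularities then applies: wherever the limiting boundary point lies off $K$, the boundary of $u$ near the puncture is contained in a single smooth Lagrangian ($\R\times\Lambda_K$, or $Q$, or $L_K$), and the classical Floer removable-singularity theorem~\cite{Floer_Lag_int} shows that $u\circ\chi^{-1}$ extends holomorphically over $0$ to a removable puncture at a point $x_0$. Exclusivity of the three alternatives is then immediate: $m=0$ precludes a Reeb-chord asymptote, and the sign of the $\R$-component distinguishes the two chord cases.

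The step needing real care is the remaining case $m=0$ with limiting point $x_0\in K=L_K\cap Q$, i.e.~a boundary puncture at which the boundary of $u$ switches between the two sheets of the singular Lagrangian $L=L_K\cup Q$; the cited references address only smooth Lagrangian boundary conditions. Here I would invoke Lemma~\ref{l:knotnbhd} to choose holomorphic coordinates identifying a neighborhood of $K$ in $T^*Q$ with a neighborhood of $S^1\times\{0\}$ in $\C\times\C^2$ so that $Q$ corresponds to $S^1\times i\R^2$ and $L_K$ to $S^1\times\R^2$ in the $\C^2$-fibre. Since $u$ already has bounded image and accumulates at a point of $K$, composing with these coordinates exhibits $u$ near $0$ as a bounded holomorphic map into $\C\times\C^2$ whose boundary rays alternate between $\R^2$ and $i\R^2$; this is exactly the local configuration of Section~\ref{sec:holo}, and the power-series discussion there (Cases~1--4, in the vector-valued form of Corollary~\ref{cor:wind}) shows that $u\circ\chi^{-1}$ extends to a smooth map on the half-disk with the stated corner behavior, so the puncture is a Lagrangian intersection puncture in the ordinary Floer sense. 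The main obstacle is precisely this extension-across-a-switching-corner argument, together with verifying that finite Hofer energy (via the $\hbar$-quantization and the vanishing of $m$) genuinely excludes oscillatory or bubbling behavior near $x_0$ and hence forces the removable/intersection alternative.
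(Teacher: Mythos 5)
Your proposal is correct and takes essentially the same route as the paper, whose proof consists precisely of invoking \cite[Lemma B.1]{E_rsft} (see also \cite[Proposition 6.2]{BEHWZ}) for the Reeb-chord alternatives and standard Lagrangian intersection Floer theory \cite{Floer_Lag_int} for the zero-mass case—exactly the mass dichotomy you run. Your additional treatment of the switching corner at a point of $K$ via Lemma~\ref{l:knotnbhd} and the reflection/power-series analysis of Section~\ref{sec:holo} is an elaboration of, not a deviation from, the intended argument: it is the same local model the paper uses immediately after the proposition to derive the expansions \eqref{e:nearK1}--\eqref{e:nearK4}.
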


Because of our choice of almost complex structure we can say more
about the local forms of the maps as follows.

Consider first a Reeb chord puncture where the map approaches a Reeb
chord $c$. Let $U\times(-\eps,T+\eps)$ be the neighborhood of $c$ as
in Definition~\ref{def:admissible} (v) and note that the holomorphic
half-strip is uniquely determined by the local projection to
$U\subset\C^{2}$ where the complex structure is standard. By a complex
linear change of coordinates on $\C^{2}$ we can arrange that the two
branches of the Legendrian $\Lambda_K$ through the end points of $c$
project to $\R^{2}$ and to the subspace spanned by the vectors
$(e^{i\theta_1},0)$ and $(0,e^{i\theta_2})$, for some angles
$\theta_1,\theta_2$. The $\C^{2}$-component $v$ of the map $u$ then has
a Fourier expansion
\begin{equation}\label{eq:chordasympt}
v(z)=\sum_{n\ge 0} \left(c_{1;n}e^{-(\theta_1+n) z}, c_{2;n}e^{-(\theta_2+n) z}\right),
\end{equation}
where $c_{j;n}$ are real numbers. We call the smallest $n$ such that
$(c_{1;n},c_{2;n})\ne 0$ the \emph{order of convergence} to the Reeb
chord $c$.

We have similar expansions near the Lagrangian intersection
punctures. Lemma \ref{l:knotnbhd} gives holomorphic coordinates
$(z_0,z_1)=(x_0+iy_0,x_1+iy_1)$ in $\C\times\C^{2}$ around any
point $q_0\in K$ such that the Lagrangian
submanifold $Q\subset T^*Q$ corresponds to
$\{y_0=y_1=0\}$, the Lagrangian submanifold $L_K$ corresponds to
$\{y_0=x_1=0\}$, and the almost complex structure $J$ corresponds to the standard complex
structure $i$ on $\C^{3}$. Consider a holomorphic map $u\colon
[0,\infty)\times[0,1]\to T^*Q$ such that $u(z)\to q\in K$
as $z\to\infty$ where $q$ lies in a small neighborhood of $q_0$ in
$K$. We write $u$ in the local coordinates described above as
$v=(v_0,v_1)$. Now Remark~\ref{rem:series} yields the following Fourier
expansions for $v$. If $v([0,\infty)\times\{0\})\subset Q$
and $v([0,\infty)\times\{1\})\subset L_K$ then
\begin{equation}\label{e:nearK1}
v(z)=\left(\sum_{m\ge 0} c_{0,m} e^{-m\pi z}, \sum_{n+\frac12>0}
c_{1;n+\frac12} e^{-(n+\frac{1}{2})\pi z}\right),
\end{equation}
where $c_{0;m}\in\R$ for all $m\in\Z_{\ge 0}$ and where
$c_{1;n+\frac12}\in\R^{2}$ for all $n\in\Z_{\ge 0}$, in a neighborhood
of $\infty$.
If $v([0,\infty)\times\{0\})\subset L_K$ and
$v([0,\infty)\times\{1\})\subset Q$ then
\begin{equation}\label{e:nearK2}
v(z)=\left(\sum_{m\ge 0} c_{0,m} e^{-m\pi z}, i\sum_{n+\frac12>0}
c_{1;n+\frac12} e^{-(n+\frac{1}{2})\pi z}\right),
\end{equation}
where notation is as in \eqref{e:nearK1}. If
$v([0,\infty)\times\{0\})\subset Q$ and
$v([0,\infty)\times\{1\})\subset Q$ then
\begin{equation}\label{e:nearK3}
v(z)=\left(\sum_{n\ge 0} c_{0,m} e^{-m\pi z}, \sum_{n>0} c_{1;n}
e^{-n\pi z}\right),
\end{equation}
where $c_{0;m}$ is as in \eqref{e:nearK1} and $c_{1;n}\in\R^{2}$ all
$n\in\Z_{>0}$. If $v([0,\infty)\times\{0\})\subset L_K$ and
$v([0,\infty)\times\{1\})\subset L_K$ then
\begin{equation}\label{e:nearK4}
v(z)=\left(\sum_{n\ge 0} c_{0,m} e^{-m\pi z}, i\sum_{n>0} c_{1;n}
e^{-n\pi z}\right),
\end{equation}
where notation is as in \eqref{e:nearK3}. We say that the smallest
half-integer $n+\frac12$ in \eqref{e:nearK1} or \eqref{e:nearK2} such
that $c_{1,n+\frac12}\ne 0$ or the smallest integer $n$ in
\eqref{e:nearK3} or \eqref{e:nearK4} such that $c_{1;n}\ne 0$ is the
{\em asymptotic winding number} of $u$ at its Lagrangian intersection
puncture.

\subsection{Holomorphic disks}\label{s:disks}
Consider the closed unit disk $D\subset\C$ with $m+1$ cyclically ordered
distinct points $z_0,\dots,z_m$ on $\p D$. Set $\dot
D:=D\setminus\{z_0,\dots,z_m\}$. Consider a $J$-holomorphic map $u\colon\dot
D\to \R\times S^{\ast} Q$ resp.~$T^{\ast}Q$ which maps $\p D\setminus\{z_0,\dots,z_m\}$
to $\R\times\Lambda_{K}$ resp.~$L=Q \cup L_{K}$ and which has finite $\omega$-energy and $\lambda$-energy.
Proposition~\ref{prop:asympt} shows that near each puncture $z_j$ the map $u$ either extends continuously, or it is positively or negatively asymptotic to a Reeb chord. We will use the following notation for such disks.

A {\em symplectization disk (with $m \geq 0$ negative punctures)} is a
$J$-holomorphic map
$$
   u\colon(\dot D,\p \dot D)\to (\R\times S^{\ast} Q,\R\times\Lambda_{K})
$$
with positive puncture at $z_0$ and negative punctures at
$z_1,\dots,z_m$.
A {\em cobordism disk (with $m\ge 0$ Lagrangian intersection punctures)} is
a $J$-holomorphic map
$$
   u\colon(\dot D,\p \dot D)\to (T^{\ast}Q,L)
$$
with positive puncture at $z_0$ and Lagrangian intersection punctures
at $z_1,\dots,z_m$.

Let $\mathbf{b}=b_1b_2\dots b_m$ be a word of $m$ Reeb chords. We write
\[
   \MM^{\rm sy}(a,n_0;b_1,\dots,b_m)=\MM^{\rm sy}(a,n_0;\mathbf{b})
\]
for the moduli space of symplectization disks with positive puncture
asymptotic to the Reeb chord $a$ where the order of convergence is $n_0$ and $m$ negative punctures (in
counterclockwise order) asymptotic to the Reeb chords
$b_1,\dots,b_m$. Here the points $z_0,\dots,z_m$ on $\p D$
are allowed to vary and we divide by the action of M\"obius
transformations on $D$. Note that $\R$ acts by translation on
these moduli spaces.

Similarly, let $\mathbf{n}=(n_1,\dots,n_m)$ be a vector of
half-integers or integers. We write
\[
   \MM(a,n_0;n_1,\dots,n_m)=\MM(a,n_0;\mathbf{n})
\]
for the moduli space of cobordism disks with positive puncture
asymptotic to the Reeb chord $a$ with degree of convergence $n_0$ and $m \geq 0$ Lagrangian intersection
punctures with asymptotic winding numbers given by the integers or half-integers $n_j$.
Note that the number of half-integers must
be even for topological reasons (at each half-integer the boundary of
$u$ switches from $Q$ to $L_K$ or vice versa).

In both cases when $n_0=0$ we will suppress it from notation and simply write
\[
   \MM^{\rm sy}(a;\mathbf{b})\text{ and }\MM(a;\mathbf{n}),
\]
respectively.

For a Reeb chord $c\colon[0,T]\to S^*Q$ of length
$T$, the map $u_c\colon\R \x [0,1] \to \R \x S^*Q$ given by
$u_c(s+it)=(Ts, c(Tt))$ is a $J$-holomorphic parametrization of $\R \x
c$ and thus a symplectization disk with positive and negative puncture
asymptotic to $c$. We call it the {\em Reeb chord strip} over $c$.

\subsection{Compactness in $\R\times S^{\ast}Q$ and $T^{\ast} Q$}\label{S:cp}
In this subsection we review the compactness results proved in \cite{CEL} that concern compactness of the moduli spaces of holomorphic disks discussed in Section \ref{s:disks}.

Let us denote by a {\em source disk} $\D_m$ the unit disk with some number
$m+1\geq 1$ of punctures $z_0,\dots,z_m$ on its boundary; we call
$z_0$ the positive and $z_1,\dots,z_m$ the negative punctures.
A {\em broken source disk $\dot\D_m$ with $r\geq 1$ levels} with
$m+1$ boundary punctures is represented as a finite disjoint union of
punctured disks,
\[
\dot \D_m=\D^{1,1}\cup(\D^{2,1}\cup\dots\cup \D^{2,l_2})
\cup \dots\cup(\D^{r,1}\cup\dots\cup \D^{r,l_r}),
\]
where $(\D^{j,1}\cup\dots\cup\D^{j,l_j})$ are the disks in the
$j^{\rm th}$ level and we require the following properties:
\begin{itemize}
\item Each negative puncture $q$ of a disk $\D^{j,k}$ in the
  $j^{\rm th}$ level for $j<r$ is formally joined to the positive
  puncture of a unique disk $\D^{j+1,s}$ in the $(j+1)^{\rm th}$
  level. We say that $\D^{j+1,s}$ is attached to $\D^{j,k}$ at
  the negative puncture $q$.
\item The total number of negative punctures on level $r$ is $m$.
\end{itemize}
Note that a broken source disk with one level is just a source disk.

We consider first compactness for curves in the symplectization.
Let $\dot\D_m$ be a broken source disk as above. A {\em
broken symplectization disk with $r$ levels} with domain
$\dot \D_m$ is a collection $\dot v$ of $J$-holomorphic maps
$v^{j,k}$ defined on $\D^{j,k}$ with the following properties:
\begin{itemize}
\item For each $1 \leq j \leq r$ and $1\le k\le l_{j}$, $v^{j,k}$
  represents an element in
\[
\MM^{\rm sy}(a^{j,k};b^{j,k}_1,\dots,b^{j,k}_{s}).
\]
Moreover, for $j>1$, the Reeb chord $a^{j,k}$ at the positive
puncture of $v^{j,k}$ matches the Reeb chord $b^{j-1,k'}$ at the
negative puncture of $v^{j-1,k'}$ in $\D^{j-1,k'}$ at which
$\D^{j,k}$ is attached.
\item For each level $1 \leq j \leq r$, at least one
  of the maps $v^{j,k}$ is not a Reeb chord strip.
\end{itemize}

An {\em arc} in a source disk is an embedded curve that intersects the
boundary only at its end points and away from the punctures.
We say that a sequence of symplectization disks
\[
   \{u_j\}\subset\MM^{\rm sy}(a;b_1,\dots,b_m)
\]
{\em converges to a broken symplectization disk} if there are disjoint
arcs $\gamma_1,\dots,\gamma_k$ in the domains of $u_j$ which give the
decomposition of the domain into a broken source disk in the limit and
such that in the complement of these arcs, the maps $u_j$
converge to the corresponding map of the broken disk uniformly on
compact subsets.

\begin{thm}\label{t:cp_sy}
Any sequence $\{u_{j}\} \subset \MM^{\rm sy}(a,b_1,\dots,b_m)$ of
symplectization disks has a subsequence which converges to a
broken symplectization disk $\dot v$ with $r\geq 1$ levels.
\end{thm}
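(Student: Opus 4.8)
The plan is to deduce Theorem~\ref{t:cp_sy} from the general SFT compactness theorem of~\cite{BEHWZ}, in the form with Lagrangian boundary conditions worked out in~\cite{CEL} (compare also~\cite{E_rsft}), once the hypotheses are verified. First I would record that the Hofer energy of the disks in $\MM^{\rm sy}(a;b_1,\dots,b_m)$ is uniformly bounded. The $\omega$-energy is controlled by Stokes' theorem together with the asymptotics of Proposition~\ref{prop:asympt} and equals the difference $L(a)-\sum_{i=1}^m L(b_i)$ of the actions of the fixed asymptotic Reeb chords; the $\lambda$-energy is bounded in terms of $L(a)$ since the boundary lies on $\R\times\Lambda_K$, on which $\lambda_1$ is exact. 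Because $\Lambda_K$ is chord generic there are only finitely many Reeb chords, so their actions take finitely many values, and each nonconstant component of a limiting building carries $\omega$-energy bounded below by a fixed positive constant; this bounds the number of levels $r$ and of components in each level a priori.

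Next I would run the standard Gromov--SFT extraction. Passing to a subsequence and choosing suitable sequences of disjoint arcs $\gamma_1,\dots,\gamma_k$ in the domains along which the curves stretch or the derivative concentrates, the maps $u_j$ converge to a holomorphic building in $\R\times S^*Q$: uniformly on compact subsets of the complement of the arcs, and in the usual broken sense near the arcs. The structural points to check are: (a) \emph{no bubbling} --- since $(\R\times S^*Q,d\lambda_1)$ is exact and $\R\times\Lambda_K$ is exact, there are no nonconstant $J$-holomorphic spheres and no nonconstant $J$-holomorphic disks with boundary on $\R\times\Lambda_K$, so (combined with the lower energy bound above) the limit has no interior or boundary nodes carrying bubbles; (b) \emph{the tree-of-disks structure is preserved} --- the domains being punctured disks of genus zero, the limit is a disjoint union of punctured disks arranged in $r\ge 1$ levels, with each negative puncture on level $j<r$ formally joined to the positive puncture of a unique disk on level $j+1$; (c) \emph{breaking occurs only at Reeb chords} --- by Proposition~\ref{prop:asympt} every puncture of every limiting component is asymptotic to a Reeb chord, and continuity of the asymptotic evaluation forces the chord at a negative puncture of a level-$j$ component to match the chord at the positive puncture of the attached level-$(j{+}1)$ component.

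Finally I would verify that each level contains at least one component that is not a Reeb chord strip. This is the usual normalization in the construction of the limit building: the rescaling producing each new level is performed precisely so that some component of that level has a nontrivial image (equivalently positive $\omega$-energy, or a point at which the derivative has unit size), and a level consisting entirely of trivial strips is collapsed rather than recorded. Together with the previous paragraph, this identifies the subsequential limit as a broken symplectization disk $\dot v$ with $r\ge 1$ levels in the sense defined above.

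I expect the main obstacle to be purely one of bookkeeping rather than of analysis: matching the abstract output of the compactness theorem --- the arcs $\gamma_i$, the level structure, the exponential asymptotics at punctures underlying the Reeb-chord matching --- with the precise notion of ``convergence to a broken symplectization disk'' used here. All of the required estimates (finiteness of energy, exponential convergence, absence of bubbles) are contained in~\cite{BEHWZ} and~\cite{CEL}, so no new hard analysis is needed.
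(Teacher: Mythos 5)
Your proposal is correct and takes essentially the same route as the paper, whose proof of this statement is simply a citation of the SFT compactness theorem of \cite{BEHWZ} in the Lagrangian-boundary form of \cite[Theorem 1.1]{CEL}; your write-up merely spells out the standard verifications (uniform Hofer energy bounds, absence of bubbling, Reeb-chord matching at breaking, nontriviality of each level) that this citation subsumes. One small refinement: to exclude limit components with interior punctures asymptotic to closed Reeb orbits you should invoke the standing hypothesis that $Q$ has no closed geodesics (so $S^*Q$ has no closed Reeb orbits), since exactness of $\R\times S^*Q$ and of $\R\times\Lambda_K$ alone rules out sphere and disk bubbles but not breaking along closed orbits.
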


\begin{proof}
Follows from \cite{BEHWZ} (see also \cite[Theorem 1.1]{CEL}).
\end{proof}

In order to describe the compactness result for moduli spaces of
holomorphic disks in $T^{\ast}Q$ we first introduce a class of
constant holomorphic disks and then the notion of convergence to a constant disk. A \emph{constant holomorphic disk} is a
source disk $\D_m$, $m\ge 3$, a constant map into a point $q\in K$,
and the following extra structure: Each boundary component is labeled
by $L_K$ or by $Q$ and at each puncture $z_j$ there is an asymptotic
winding number $n_j\in \{\frac12, 1,\frac32,\dots\}$ such that $n_{j}$
is a half-integer if the adjacent boundary components of
$\dot{\D}_{m}$ are labeled by different components of $L=L_K\cup Q$ and an
integer otherwise, and such that $n_0=\sum_{j=1}^{m}n_j$. 

A sequence of holomorphic maps $v_j\colon \dot \D_{m}\to T^{\ast}Q$ with boundary on
$L$ \emph{converges to a constant holomorphic disk} if it converges uniformly
to the constant map on any compact subset and if for all sufficiently
large $j$, $v_j$ takes any boundary component labeled by $L_K$ or $Q$
to $L_K$ or $Q$, respectively, and if the asymptotic winding numbers
at the negative punctures of the maps $v_j$ agree with those of the
constant limit map at corresponding punctures.

Let $\dot\D_m$ be a broken source disk with $r$ levels and suppose
$1 \leq r_0\le r$. A {\em broken cobordism disk
with $r_0$ non-constant levels} and domain $\dot \D_m$ is a
collection $\dot v$ of $J$-holomorphic maps $v^{j,k}$ defined on
$\D^{j,k}$ with the following properties.
\begin{itemize}
\item For $j<r_0$ and $1\le k\le l_{j}$, $v^{j,k}$ represents an element in
\[
\MM^{\rm sy}(a^{j,k};b^{j,k}_1,\dots,b^{j,k}_{s}).
\]
Moreover, for $j>1$, the Reeb chord $a^{j,k}$ at the positive
puncture of $v^{j,k}$ matches the Reeb chord $b^{j-1,k'}$ at the
negative puncture of $v^{j-1,k'}$ in $\D^{j-1,k'}$ at which
$\D^{j,k}$ is attached.
\item For each level $j<r_0$, at least one
  of the maps $v^{j,k}$ is not a Reeb chord strip.
\item For $j=r_0$ and $1\le k\le l_{j}$, $v^{j,k}$ represents an
element in \[\MM(a^{j,k};n^{j,k}_1,\dots,n^{j,k}_{s})\]
and the Reeb chord at the positive puncture of $v^{j,k}$ matches the
Reeb chord at the negative puncture of $v^{j-1,k'}$ in
$\D^{j-1,k'}$ at which $\D^{j,k}$ is attached.
\item For $j>r_0$, $v^{j,k}$ is a constant map to $q\in K$, where
  $q\in K$ is the image of the negative puncture of $v^{j-1,k'}$
in $\D^{j-1,k'}$, at which $\D^{j,k}$ is attached. Moreover,
  $\D^{j,k}$ has at least $3$ punctures and the winding number and
labels at its positive puncture agree with those of the negative
puncture where it is attached. (From the point of view of the source disk these constant levels encode degenerations of the conformal structure corresponding to colliding Lagrangian intersection punctures, see Section \ref{ss:constglu} for more details.)
\end{itemize}
We say that the disks in levels $j<r_0$ are the {\em symplectization
disks}, that the disks in level $r_0$ are the {\em cobordism disks},
and that disks in levels $j>r_0$ are the {\em constant disks} of the broken
disk.

We define convergence to a broken cobordism disk completely
parallel to the symplectization case.

\begin{thm}\label{t:cp}
Let $\{u_{j}\} \subset \MM(a;n_1,\dots,n_m)$ be a sequence of
cobordism disks. Then $\{u_j\}$ has a subsequence which
converges to a broken cobordism disk.
\end{thm}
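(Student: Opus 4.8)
The plan is to run the standard SFT compactness argument of \cite{BEHWZ}, adapted to the target $T^*Q$ with its cylindrical end $\R\times S^*Q$ and to the Lagrangian boundary condition $L=Q\cup L_K$ with clean intersection along $K$. In fact the cobordism case is a special instance of the compactness results of \cite{CEL}, so the proof should amount to quoting those in parallel with the proof of Theorem~\ref{t:cp_sy}, and I would present it that way; below I indicate what goes into it. First I would establish a uniform energy bound: by Proposition~\ref{prop:asympt} every $u_j$ has a single positive puncture asymptotic to the fixed Reeb chord $a$ and some Lagrangian intersection punctures, and by Stokes' theorem, applied exactly as in the proof of Proposition~\ref{prop:length-estimate} (using that $\lambda_1$ vanishes on $L_K$ and that $J$ is standard near $Q$), the $\omega$-energy and $\lambda$-energy of $u_j$ are bounded in terms of the action $L(a)$, independently of $j$. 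Moreover Theorem~\ref{thm:finite} bounds the total winding number, hence the number $m$ of Lagrangian intersection punctures, by the constant $\kappa$, so after passing to a subsequence we may assume all source disks have the same number of punctures $m+1$.

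Next I would perform the bubbling-off analysis. After a subsequence, either the conformal structures on the $(m+1)$-punctured disks stay in a compact region of moduli space and the gradients of $u_j$ stay bounded, in which case $u_j$ converges in $C^\infty_{\mathrm{loc}}$ to a cobordism disk and we are done; or we encounter (a) energy escaping through the positive end, (b) interior or boundary gradient blow-up away from $K$, or (c) conformal degeneration of the domain together with boundary gradient blow-up at points mapping into $K$. Case (a) is handled as in \cite{BEHWZ}: translating in the $\R$-factor, the part of $u_j$ staying near infinity converges to symplectization disks, and since $Q$ has no closed geodesics there are no closed Reeb orbits and no further bubbling there; exactness of $\lambda_1$ forces the limiting tree of symplectization disks to have a non-trivial disk on each level, producing the levels $j<r_0$. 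In case (b), rescaling at an interior blow-up point yields a non-constant finite-energy holomorphic sphere in $T^*Q$, and at a boundary blow-up point away from $K$ a non-constant finite-energy half-plane with boundary on a single component of $L$; by removal of singularities the latter extends to a disk with boundary on that Lagrangian. Both are excluded because $T^*Q$ is exact and convex at infinity and $Q$, $L_K$ are exact Lagrangians (the $J$-convex hypersurfaces $\pi^{-1}(\partial Q_i)$ also prevent escape to infinity in the rescalings). Hence only case (c) contributes genuinely new phenomena.

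The behavior near the clean intersection $K$ is the main obstacle. Using the holomorphic coordinates of Lemma~\ref{l:knotnbhd}, a neighborhood of $K$ is biholomorphic to a neighborhood of $S^1\times\{0\}$ in $\C\times\C^2$ in which $Q$ and $L_K$ correspond to $\R^2$ and $i\R^2$, so near a switch the normal component of a holomorphic disk is governed by the power-series and winding-number discussion of Section~\ref{sec:holo}. Rescaling $u_j$ at a boundary blow-up point mapping into $K$, or at a cluster of colliding Lagrangian intersection punctures, produces in the limit a (possibly further broken) holomorphic disk in $\C\times\C^2$ with boundary switching between $\R^2$ and $i\R^2$. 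I would show that such a rescaled limit extends over its punctures, that its combinatorial type is recorded by the asymptotic winding numbers and boundary labels, and, crucially, that the winding numbers of the pieces attached at a given puncture sum to the winding number there, which is exactly the matching condition in the definition of a constant disk. The monotonicity and winding-number estimates of \cite{CEL} (whose local counterpart is Corollary~\ref{cor:wind} and whose global form is Theorem~\ref{thm:finite}) simultaneously bound the number of such rescalings and enforce this additivity; the integrability of $J$ and the real analyticity of $K$ near the clean intersection are what make these local forms available.

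Finally I would assemble the pieces. Organizing the rescaled limits into the constant levels $j>r_0$ — constant maps to points of $K$ carrying at least three punctures with matching winding numbers and labels — the symplectization limits into the levels $j<r_0$, and the $C^\infty_{\mathrm{loc}}$-limit into the cobordism level $r_0$, and recording the disjoint arcs along which the domain degenerates as a broken source disk, one obtains a broken cobordism disk in the sense of the definition preceding Theorem~\ref{t:cp}; by construction the subsequence of $u_j$ converges to it. Since all of this is carried out in detail in \cite{CEL}, the proof reduces to quoting those compactness results together with the asymptotic analysis of Proposition~\ref{prop:asympt} and the winding-number bound of Theorem~\ref{thm:finite}, exactly in parallel with the proof of Theorem~\ref{t:cp_sy}.
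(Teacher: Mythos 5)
Your proposal is correct and takes essentially the same route as the paper: the paper's proof simply cites \cite[Theorem 1.1]{CEL} and notes that the constant levels are recovered from the degeneration of the source disks, which is exactly the reduction you arrive at (your sketch of the energy bound, the winding-number bound from Theorem~\ref{thm:finite}, and the local analysis near $K$ just spells out what that citation contains).
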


\begin{proof}
This is a consequence of \cite[Theorem 1.1]{CEL}. Note that the levels
of constant disks are recovered by the sequence of source disks that
converges to a broken source disk.
\end{proof}

\begin{remark}\label{r:breakingmodelclose}
We consider the convergence implied by the Compactness Theorem
\ref{t:cp} in more detail in a special case relevant to the
description of our moduli spaces below. Consider a sequence of
holomorphic disks $u_j$ as in the theorem that converges to a broken
cobordism disk with top level $v$ and such that all disks on lower levels are
constant. Let $q_\ell$ be a negative puncture of the top level $v$ and
let $D_\ell$ be the (possibly broken) constant disk attached with its
positive puncture at $q_\ell$.

Consider the sequence of domains of $u_j$ as a sequence of strips with slits $S_j$, see the discussion of standard domains in Section \ref{ssec:confrep} and Figure \ref{fig:stdom}.
It follows from the proof of \cite[Theorem 1.1]{CEL}
that there is a strip region $[-\rho_j,0]\times[0,1]\subset S_j$,
where $\rho_j\to\infty$ as $j\to\infty$ such that in the limit the
negative puncture $q_\ell$ of $v$ corresponds to
$(-\infty,0]\times[0,1]$ and the positive puncture of the domain $D_\ell$
corresponds to $[0,\infty)\times[0,1]$ attached at this
puncture. Assume that $q_\ell$ maps to $x\in K$ and consider the
Fourier expansion of $v$ near $q_\ell$ in the local coordinates
near $K$ perpendicular to the knot:
\[
   v(s+it)=e^{k_0\pi(s+it)}\sum_{k=0}^{\infty} c_k e^{k\pi(s+it)},
\]
where $k_{0}\ge\frac12$ is a half-integer and $c_k$ are vectors in
$\R^{2}$ or $i\R^{2}$, $c_0\ne 0$. We say that the complex line
spanned by $c_0$ is the limiting tangent plane of $v$
at $q_\ell$. Writing $v$ using Taylor expansion as a map from the upper
half plane with the puncture $q_\ell$ at the origin and taking the
complex line of $c_0$ as the first coordinate we find that the normal
component of $v$ at $x$ is given by
\[
   v(z) = \left(z^{k_0},\Ordo(z^{k_0+1})\right),
\]
after suitable rescaling of the first coordinate.

We next restrict to the case relevant to our applications, of a
sequence of disks $u_j$ with a  constant disk with three or four
punctures splitting off. The three punctured disk is simpler, so we
consider the case of a disk with four punctures splitting off. In this
case, consider a vertical segment $\{\rho^{0}\}\times[0,1]$
in the stretching strip $[-\rho_j,0]\times [0,1]$. It
subdivides the domain of $u_j$ in two components $D_+$ containing the
positive puncture and its complement $D_-$. Consider the Fourier
expansion of $u_j$ near this vertical segment. We have
\[
   u_j(s+it)=\sum_{k\ge k_0} c_{j;k} e^{-k\pi(s+it)},
\]
where $k$ are half-integers and $c_{j;k}\in\R^{2}$ (or
$i\R^{2}$). Since the winding number along the vertical segment
is equal to the sum of the winding numbers of the negative punctures in the component of $D_{-}$ that it bounds, we find that, for $j$ sufficiently large, $c_{j;k}=0$ for all $k<\frac32$,
hence $k_0\ge \frac32$. Moreover, $c_{j;k_0}$ converges to a vector in the limiting
tangent plane of $u_0$ at the newborn negative puncture. In the
generic case, see Lemma \ref{l:tv},
this limiting vector is non-zero. We
assume for definiteness in what follows that it is equal to $(1,0)$.

Pick a conformal map taking $D_-$ to the half disk of radius $1$ in the
upper half plane, with the vertical segment corresponding to the half
circular arc and with the middle boundary puncture mapping to
$0$. Then as $j\to\infty$ the locations of the other two punctures
both converge to $0$ and, for large $j$, the projection to the first complex coordinate determines the location of the other two punctures. Moreover, the sum of the winding numbers at these three punctures equals $\frac32$ (i.e.~the winding number along the half circle of radius $1$). Consequently, we have, with $z$ a coordinate on the upper half plane, for all $j$ large enough
\[
   u_j(z)=\sqrt{z(z-\delta_j)(z-\epsilon_j)}\Bigl((1,0)+v_j+\Ordo(z)\Bigr),
\]
where $\delta_j,\epsilon_j\to 0$ and $v_j\to 0\in\R^{2}$ as
$j\to\infty$. It follows that disks in a limiting sequence eventually
lie close to the model disk \eqref{eq:2-dimmodel} discussed in Section~\ref{ss:spikes}.

There is a completely analogous and simpler analysis of the case when
two punctures collide which shows that disks in a limiting sequence
are close to the model disk \eqref{eq:1-dimmodel} of Section~\ref{ss:spikes} in the same
sense.
\end{remark}

\section{Transversely cut out solutions and orientations}\label{sec:trans}

In this section we show that the moduli spaces in Section
\ref{S:mdlisp} are manifolds for generic almost complex structure
$J$. To accomplish this, we first express each moduli space as
the zero locus of a section of a bundle over a Banach manifold and then show,
using an argument from \cite{EES2}, that one may make any section
transverse to the $0$-section by perturbing the almost complex
structure. Here cases of disks with unstable domains require extra care: we stabilize their domains using extra marked points on the boundary. We control these marked points using disks with higher order of convergence to Reeb chords.

\subsection{Conformal representatives and Banach manifolds}\label{ssec:confrep}
In order to define suitable Banach spaces for our study of holomorphic curves we endow the domains of our holomorphic disks with cylindrical ends. For convenience we choose a particular such model for each conformal structure on the punctured disks. (The precise choice is not important since the space of possible choices of cylindrical ends is contractible.)

A \emph{standard domain} $\Delta_{0}$ with one puncture is the unit disk in the complex plane with a puncture at $1$ and fixed cylindrical end $[0,\infty)\times[0,1]$ at this puncture.

A \emph{standard domain} $\Delta_{1}$ with two punctures is the strip $\R\times[0,1]$.

A \emph{standard domain} $\Delta_m([a_1,\dots,a_{m-1}])$  with $m+1\geq 2$ boundary punctures
is a strip $\R\times[0,m]\subset\C$ with slits of small fixed width
(and fixed shape) around half-infinite lines $(-\infty,a_j]\times
\{j\}$, where $0<j<m$ is an integer, removed. See Figure
\ref{fig:stdom}.
\begin{figure}
\centering
\includegraphics[width=.6\linewidth, angle=180]{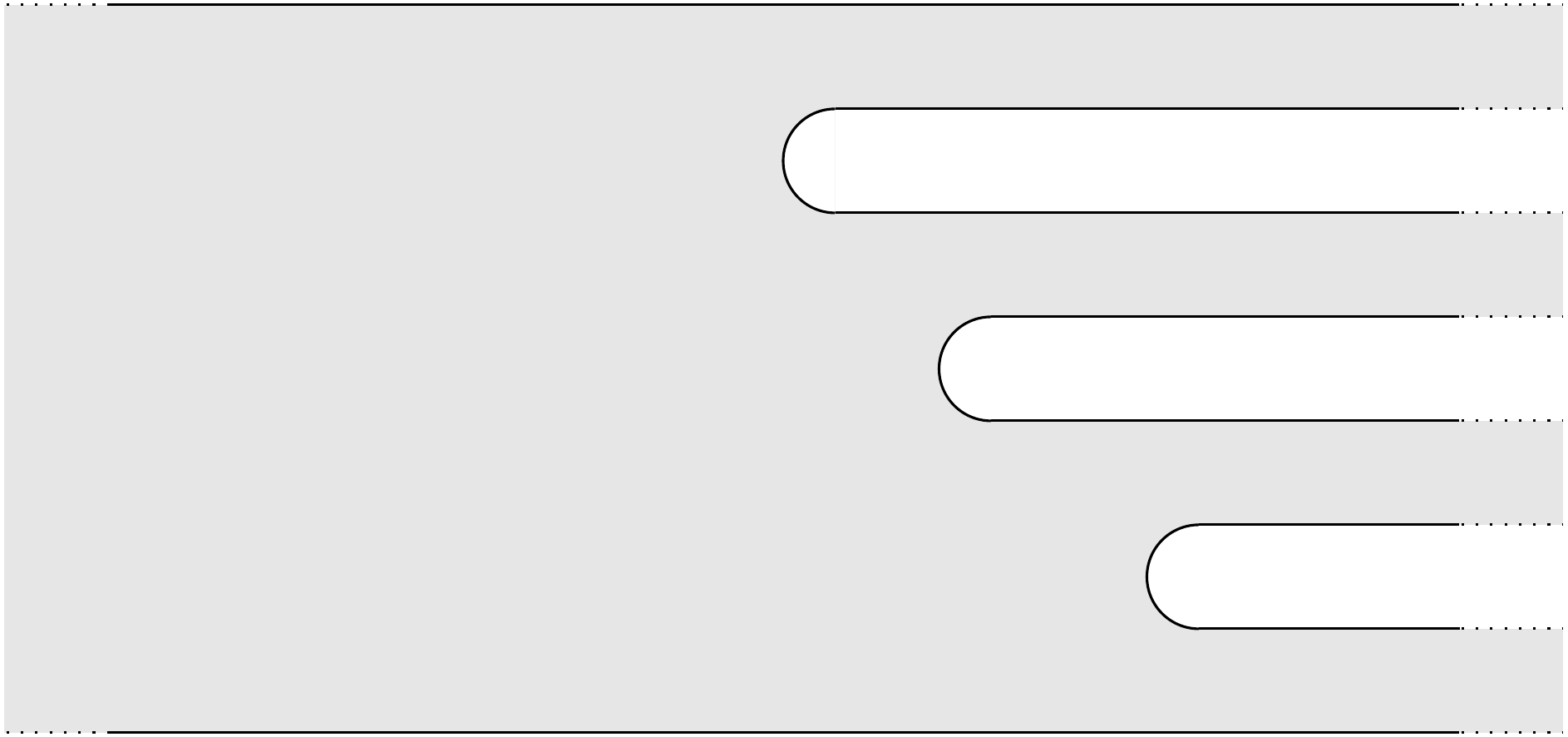}
\caption{A standard domain.}
\label{fig:stdom}
\end{figure}
We say that $a_j\in\R$ is the $j^{\rm th}$ boundary maximum of
$\Delta_m([a_1,\dots,a_{m-1}])$.

The space of conformal structures $\CC_m$ on the $(m+1)$-punctured
disk is then represented as $\R^{m}/\R$ where $\R$ acts on vectors of
boundary maxima by overall translation, see \cite[Section 2.1.1]{E}.
The boundary of the space of conformal structures on an
$(m+1)$-punctured disk in its compactification
$\pa\CC_m\subset\overline{\CC}_m$ can then be understood as consisting
of the several level disks which arise as some differences $|a_j-a_k|$
between boundary maxima approach $\infty$. We sometimes write
$\Delta_m$ for a standard domain, suppressing its conformal structure
$[a_1,\dots,a_{m-1}]$ from the notation.

The breaking of a standard domain into a standard domain of several levels is compatible with the compactness results Theorems \ref{t:cp_sy} and \ref{t:cp}. In the proof of these results given in \cite{CEL}, after adding a finite number of additional punctures the derivatives of the maps are uniformly bounded and each component in the limit has at least two punctures and can thus be represented as a standard domain. In particular, the domain right before the limit is the standard domain obtained by gluing these in the natural way and the arcs in
the definition of convergence can be represented by vertical segments. Here a \emph{vertical segment} in a standard domain $\Delta_m\subset\C$ is a line segment in $\Delta_m$ parallel to
the imaginary axis which connects two boundary components of $\Delta_m$.

\subsection{Configuration spaces}\label{s:confcob}
In this section we construct Banach manifolds which are configuration spaces for
holomorphic disks. In order to show that all moduli spaces we use are manifolds we need to stabilize disks with one and two punctures by adding punctures in a systematic way. To this end we will use Sobolev spaces with extra weights. This is the reason for introducing somewhat more complicated spaces below. The constructions in this section parallels corresponding constructions in \cite{EES2} and \cite{ES}.

We first define the configuration space for holomorphic disks in $T^{\ast}Q$ and then find local coordinates for this space showing that it is a Banach manifold. We then repeat this construction for disks in the symplectization.

Below we are interested in the moduli spaces $\MM(a,n_0;\mathbf{n})$ of holomorphic disks for $n_0=0$ or $n_0=1$ and $\mathbf{n}=(n_1,\dots,n_m)$, which we will describe as subsets of suitable configuration spaces $\mathcal{W}=\mathcal{W}(a;\delta_0;\mathbf{n})$. Here $\delta_0>0$ and $n_0$ are related as follows: Consider the standard neighborhood $(-\eps,T+\eps) \times U$ (with $U \subset \C^2$) of the Reeb cord $a:[0,T] \to S^*Q$ which we introduced on page \pageref{s:acs}. The projections of the contact planes at the two end points of $a$ to $\C^2$ intersect transversally, and we denote by $0<\theta'\pi\le \theta''\pi<\pi$ the two complex angles between them. Now for $n_0=0$ we choose $0<\delta_0<\theta'$ and for $n_0=1$ we choose $\theta''<\delta_0<1$.

The space $\mathcal{W}$ fibers over the product space
\[
B=\R^{m-2}\times \R\times J(K).
\]
The first factor $\R^{m-2}$ is the space of conformal structures on the disk with $m+1$ boundary punctures. We represent the disk as a standard domain with the first boundary maximum at $0$ and $\R^{m-2}$ as the coordinates of the remaining $m-2$ boundary maxima. The second factor $\R$ corresponds to the shift in parameterization of the asymptotic trivial strips at the positive puncture. The third factor is itself a product with one factor for each negative puncture:
\[
J(K)=J^{(r_1)}(K)\times\dots\times J^{(r_m)}(K).
\]
Here $r_{j}$ is the smallest integer $<n_j$ and $J^{(r_{j})}(K)$ denotes the $r_{j}^{\rm th}$ jet-space of $K$. A point $(q_0,q_1,\dots,q_{r_j}) \in J^{(r_j)}(K)$ corresponds to the first Fourier (Taylor) coefficients of the map at the $j^{\rm th}$ negative puncture. Note that $J(K)$ depends on $\mathbf{n}=(n_1,\dots,n_m)$, but we omit this dependence from the notation.

Fix a parameterization of each Reeb chord strip. If $\gamma\in\R^{m-2}$ then we write $\Delta[\gamma]$ for the standard domain with first boundary maximum at $0$ and the following boundary maxima according to the components of $\gamma$. If $\mathbf{n}=(n_1,\dots,n_m)\in (\frac12\Z)^{m}$ with $\sum_j n_j\in\Z$ then we decorate the boundary components of $\Delta[\gamma]$ according to $\mathbf{n}$ as follows. Start at the positive puncture and follow the boundary of $\Delta[\gamma]$ in the positive direction. Decorate the first boundary component by $L_K$ and then when we pass the $j^{\rm th}$ negative puncture we change Lagrangian (from $L_K$ to $Q$ or vice versa) if $n_j$ is a half integer and do not change if it is an integer.

Fix a smooth family of smooth maps
\[
w_{\beta}\colon (\Delta[\beta_1],\partial\Delta[\beta_1])\to (T^{\ast}Q,L), \quad\beta=(\beta_1,\beta_2,\beta_3)\in B,
\]
with the following properties:
\begin{itemize}
\item $w_\beta$ respects the boundary decoration, i.e., it takes boundary components decorated by $L_K$ resp. $Q$ to the corresponding Lagrangian submanifold.
\item $w_{\beta}$ agrees with the Reeb chord strip of $a$ shifted by $\beta_2$ in a neighborhood of the positive puncture.
\item Consider standard coordinates $\C\times\C^{2}$ near the first component of $\beta_3^{j}\in J^{(r_{j})}(K)$. Then in a strip neighborhood of the $j^{\rm th}$ negative puncture, the $\C^{2}$-component of $w_{\beta}$ vanishes and the $\C$-component is given by
\[
w_{\beta}(z)= \sum_{l=0}^{r_{j}} q_l e^{l\pi z},
\]
where the $j^{\rm th}$ component $\beta_3^{j}$ of $\beta_{3}$ is
\[
\beta_3^{j}=(q_0,q_1,\dots,q_{r_j})\in J^{(r_{j})}(K).
\]
\end{itemize}

Let $0<\delta<\frac12$ and as before let either $0<\delta_0<\theta'$ or $\theta''<\delta_0<1$, where $\theta'$ describes the smallest non-zero complex angle at the Reeb chord $a$ and $\theta''$ the largest. Let $\sblv_{\delta_0,\delta}(\beta_1)$ denote the Sobolev space of maps
\[
w\colon\Delta[\beta_1]\to T^*\R^3\cong \R^6
\]
with two derivatives in $L^{2}$ and finite weighted 2-norm with respect to the weight function $\eta_{\delta}$ with the following properties.
\begin{itemize}
\item $\eta_{\delta_0,\delta}$ equals $1$ outside a neighborhood of the punctures.
\item $\eta_{\delta_0,\delta}(s+it)=e^{\delta_{0}\pi|s|}$ near the positive puncture.
\item $\eta_{\delta_0,\delta}(s+it)=e^{(n_j-\delta)\pi|s|}$ near the $j^{\rm th}$ negative puncture.
\end{itemize}

Consider the bundle $E\to B$ with fiber over $\beta\in B$ given by $\sblv_{\delta_0,\delta}(\beta_1)$.
Define the configuration space $\mathcal{W}=\mathcal{W}(a;\delta_0;\mathbf{n})\subset E$ of $(\beta,w)$ such that $u=w_\beta+w$ satisfies the following
\begin{itemize}
\item $u$ takes the boundary of $\Delta[\beta_1]$ to $L$ respecting the boundary decoration.
\item $u$ is holomorphic on the boundary, i.e. the restriction (trace) of $\bar{\pa}_{J}u$ to $\partial\Delta[\beta_1]$ vanishes.
\end{itemize}

It is not hard to see that $\mathcal{W}$ is a closed subspace of $E$. In fact it is a Banach submanifold of the Banach manifold $E$. We will next explain how to find local coordinates on $\mathcal{W}$. Let $(\beta,w)\in E$, and assume that $u=w_\beta+w$ is a map in $\mathcal{W}$. 

In order to find local coordinates around $u$ we first consider the finite dimensional directions.
Pick diffeomorphisms of the source $\Delta[\beta_1]$,  
\begin{equation}\label{eq:confvardiffeos}
\phi_{\gamma},\; \gamma\in\R;\qquad \psi_{\eta_{1}},\;\eta_{1}\in\R^{m-1},
\end{equation}
corresponding to the second and first finite dimensional factors. Here $\phi_{\gamma}$ equals the identity outside a neighborhood of the positive end where it equals translation by $\gamma$, and $\psi_{\eta_1}\colon \Delta[\beta_1]\to\Delta[\beta_1+\eta_1]$ moves the boundary maxima according to $\eta_1$, see \cite[Section 6.2.3]{E}.

We next turn to the translations along the knot and the infinite dimensional component of the space. Using the coordinate map of Lemma \ref{l:knotnbhd} we import the flat
metric on $T^{\ast}(S^{1}\times D^{2})$ to $T^{\ast}Q$, we extend this metric to a metric $h^1$ on all of $T^\ast Q$ so that $L_K$ is totally geodesic and flat near Reeb chord endpoints, see \ref{def:admissible} (v), and such that $h^1=ds^{2}+g$ on $T^*Q\setminus D^*Q\cong\R_+\times S^{\ast}Q$, where $g$ is a metric on $S^{\ast}Q$. Consider the standard almost complex structure in a neighborhood of the zero section of $Q=\R^{3}$ in $T^{\ast} Q$. Note that this almost complex structure agrees with the standard almost complex structure in the holomorphic neighborhood of $K$. Using the construction in \cite[Proposition 5.3]{EES1}, we extend it to an almost complex structure $J$ over all of $T^{\ast}Q$ with the following additional property near $L_{K}$. If $V$ is a vector field along a geodesic in the metric $h^{1}$ in $L_{K}$ then $V$ satisfies the Jacobi equation if and only if the vector field $JV$ does. To achieve this we might have to be alter $h^{1}$ slightly near but not on $L_K$, see \cite[Equation (5.7)]{EES1} for the precise form of $h^{1}$ (corresponding to $\hat g$ in that equation). Note that this construction gives the standard almost complex structure near the knot. Let $h^0$ denote the standard flat metric on $T^{\ast}Q$ and note that it has the Jacobi field property discussed above along $Q$. Let 
\begin{equation}\label{eq:interpolmetric}
h^\sigma, \quad 0\le \sigma\le 1
\end{equation}
be the linear interpolation between the metrics $h^0$ and $h^1$. 

Consider the pullback bundle $u^{\ast}T(T^{\ast}Q)$. Note that the Riemannian metrics $h^t$ on $T^{\ast}Q$ induce connections on this bundle which we denote by $\nabla^t$.

Let $\dot\sblv_{\delta}(u)$ denote the linear space of sections $v$ of
$u^{\ast}T(T^{\ast}Q)$ with the following properties
\begin{itemize}
\item The partial derivatives of $v$ up to second order lie in $L^{2}_{\rm loc}
  (\Delta[\beta],u^{\ast}T(T^{\ast}Q))$.
\item The restriction of $\nabla^{\sigma} v + J\circ\nabla^{\sigma} v\circ i$ to the
  boundary (sometimes called the trace of $\nabla^{\sigma} v + J\circ\nabla^{\sigma} v\circ i$) vanishes, where $\sigma=1$ for a boundary component mapping to $L_K$ and $\sigma=0$ for a component mapping to $Q$.
\item With $\|\cdot\|_{\delta,\delta_0,\mathbf{n}}$ denoting the Sobolev $2$-norm
  weighted by $\eta_{\delta,\delta_0,\mathbf{n}}$, $\|v\|_{\delta,\mathbf{n}}<\infty$.
\end{itemize}
Then $\dot\sblv_{2,{\delta,\delta_0,\mathbf{n}}}(w)$ equipped with the norm
$\|\cdot\|_{\delta,\delta_0,\mathbf{n}}$ is a Banach space.

Also fix $m+\sum_{j=1}^{m} r_j$ smooth vector fields $s^{j}_{k}$, $1\le j\le m$ and $0\le k\le r_j$ along $u$ with properties as above and with the following additional properties
\begin{itemize}
\item The vector field $s^{j}_k$ is supported only near the $j^{\rm th}$ negative puncture in a half strip neighborhood which maps into the analytic neighborhood of the knot.
\item In standard coordinates along the knot $\C\times\C^{2}$, the $\C^{2}$-component of $s^j_{k}$ equals $0$ and the $\C$-component is $s^{j}_{k}= e^{k\pi z}$.
\end{itemize}

We are now ready to define the local coordinate system. Write $\exp^{\sigma}$ for the exponential map in the Riemannian metric $h^{\sigma}$, $0\le \sigma\le 1$, from \eqref{eq:interpolmetric}. The local coordinate system around $u$ has the form
\[
\Phi_{u}\colon U_1\times U_{2}\times U_{3} \times \mathcal{U}\to \mathcal{W},
\]
where $U_1\subset\R^{m-2}$, $U_{2}\subset\R$, $U_3\subset\Pi_{j=1}^{m}\R^{r_j+1}$, and $\mathcal{U}\subset \dot\sblv_{2,{\delta,\delta_0,\mathbf{n}}}(w)$ are small neighborhoods of the origin with coordinates $\gamma_j\in U_j$. Let $\sigma\colon \Delta[\beta_1]\to [0,1]$ be a smooth function that equals $0$ resp.~$1$ in a neighborhood of any boundary component that maps to $Q$ resp.~$L_K$ and that equals $0$ on $u^{-1}(D^{\ast} Q)$. For $u$ as above we then consider
\[
\Psi_{u}(\gamma_1,\gamma_2,\gamma_3,v)(z)=
\exp^{\sigma(z')}_{u(z')}\left(v(z')+\sum_{j=1}^{m}\sum_{k=0}^{r_j}{\gamma_3}^{j}_{k}s^{j}_{k}(z')\right),\quad
z'=\phi_{\gamma_1}(\psi_{\gamma_{2}}(z)),
\]
see \eqref{eq:confvardiffeos} for the diffeomorphisms $\phi_{\gamma_{1}}$ and $\psi_{\gamma_{2}}$. Here $\gamma_{1}$ corresponds to shifts near the positive puncture, $\gamma_{2}$ corresponds to variations of the conformal structure, $\gamma_{3}$ is related to variations of the map near Lagrangian intersection punctures, and $v$ is a vector field along the curve. We use the exponential map to go from linearized variations to actual maps.
\begin{lemma}
The space $\mathcal{W}$ is a Banach manifold with local coordinates around $u$ given by $\Psi_{u}$.
\end{lemma}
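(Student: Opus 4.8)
The plan is to verify that $\Psi_u$ is a local diffeomorphism onto an open neighborhood of $u$ in $\mathcal{W}$, which is the standard way one checks that a configuration space of the above type is a Banach manifold (cf.\ the constructions in \cite{EES2, ES, E}). First I would observe that the domain of $\Psi_u$ is an open subset of the Banach space $\R^{m-2}\times\R\times\prod_{j=1}^m\R^{r_j+1}\times\dot\sblv_{2,\delta,\delta_0,\mathbf{n}}(w)$; the first three factors account for the finite-dimensional base directions $B=\R^{m-2}\times\R\times J(K)$, and the last factor is the infinite-dimensional fiber direction. One checks that $\Psi_u$ does indeed land in $\mathcal{W}$: the boundary condition holds because $\exp^{\sigma}$ preserves the relevant Lagrangian (using that $L_K$ and $Q$ are totally geodesic for $h^1$ resp.\ $h^0$, and that $\sigma$ is chosen adapted to the boundary decoration), the asymptotic weights are respected because the vector fields $s^j_k$ carry exactly the Fourier modes tracked by $J^{(r_j)}(K)$ and the weight $\eta_{\delta,\delta_0,\mathbf{n}}$ is chosen to sit strictly between the allowed exponents, and the holomorphicity-on-the-boundary condition (vanishing of the trace of $\bar\partial_J u$) is precisely the linear boundary condition built into $\dot\sblv_{2,\delta,\delta_0,\mathbf{n}}(w)$, transported by the exponential map.

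Next I would show that $\Psi_u$ is smooth and that its differential at the origin is a Banach space isomorphism onto $T_u\mathcal{W}$. Smoothness follows from smooth dependence of the exponential maps $\exp^{\sigma(z)}$, the diffeomorphisms $\phi_{\gamma_1}, \psi_{\gamma_2}$, and the cutoff $\sigma$ on all parameters, together with the usual fact that composition and $\exp$ define smooth maps between the relevant Sobolev spaces (here one uses $2$ derivatives in $L^2$ on a $2$-dimensional domain, so the Sobolev multiplication and chain-rule estimates apply). For the differential: $d\Psi_u$ at the origin sends $(\gamma_1,\gamma_2,\gamma_3,v)$ to the infinitesimal variation
\[
   v + \sum_{j,k}\gamma_{3}{}^j_k\, s^j_k + (\text{variation from }\phi_{\gamma_1}) + (\text{variation from }\psi_{\gamma_2}),
\]
and one verifies that this is injective (the $\phi,\psi$-variations are transverse to $\dot\sblv$ by construction of the slits and the positive-end translation, and the $s^j_k$ are linearly independent sections not lying in $\dot\sblv$ because of their prescribed leading asymptotics) and surjective onto $T_u\mathcal{W}$, where the latter is identified with $\dot\sblv_{2,\delta,\delta_0,\mathbf{n}}(w)$ enlarged by the finite-dimensional base directions. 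Then the inverse function theorem for Banach manifolds gives that $\Psi_u$ is a local chart.

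The main obstacle I anticipate is the careful bookkeeping at the negative punctures, i.e.\ making sure the weights $\delta$, $\delta_0$, and the jet data $J^{(r_j)}(K)$ fit together so that $T_u\mathcal{W}$ has exactly the expected splitting and nothing is over- or under-counted. Concretely: the weight $\eta_{\delta,\delta_0,\mathbf{n}}$ decays (relative to the unweighted space) at rate controlled by $n_j-\delta$ at the $j$-th negative puncture and $\delta_0$ at the positive one, so elements of $\dot\sblv$ cannot see the Fourier modes of order $\le r_j$; those modes must be supplied separately by the $s^j_k$, and the genericity choices $0<\delta_0<\theta'$ (for $n_0=0$) or $\theta''<\delta_0<1$ (for $n_0=1$) are exactly what make the positive-end asymptotics behave. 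Verifying that the resulting chart transition maps are smooth (so that the charts $\Psi_u$ actually define a Banach manifold atlas, not just local homeomorphisms) is routine but tedious, and I would cite \cite[Section 6.2]{E} and \cite{EES2} for the analogous arguments rather than reproduce them. Once these pieces are in place, the lemma follows.
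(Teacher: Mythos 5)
Your proposal is correct and follows essentially the same route as the paper, which simply declares the lemma straightforward and refers to the analogous result \cite[Lemma 3.2]{EES2} for the standard chart-verification argument you sketch (landing in $\mathcal{W}$, accounting for the finite-dimensional jet/conformal/shift directions versus the weighted Sobolev directions, and smooth transitions). The only caveat is a phrasing one: rather than invoking the inverse function theorem "onto $T_u\mathcal{W}$" (which presupposes the manifold structure you are constructing), one checks directly, as in \cite{EES2}, that nearby elements of $\mathcal{W}$ are uniquely parametrized by $\Psi_u$ — which is exactly the bookkeeping at the punctures you already identify.
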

\begin{proof}
This is straightforward, see \cite[Lemma 3.2]{EES2} for an analogous result.
\end{proof}

Consider the bundle $\mathcal{E}$ over the configuration space $\mathcal{W}$ with fiber over $u$ the complex anti-linear maps
\[
T\Delta[\beta_1]\to T(T^{\ast}Q).
\]
The $\bar{\pa}_{J}$-operator gives a section of this bundle $u\mapsto (du+J\circ du\circ i)$ and the moduli space $\MM(a;n_0,\mathbf{n})$ is the zero locus of this section, where $n_0=0$ if $0<\delta_0<\theta'$ and $n_0=1$ if $\theta''<\delta_0<1$. The section is Fredholm and the formal dimension of the solution spaces is given by its index. We have the following dimension formula.

\begin{lemma}\label{l:weightdimcob}
The formal dimension of $\mathcal{M}(a,n_0;\mathbf{n})$ is given by
\[
\dim(\mathcal{M}(a,n_0;\mathbf{n}))=|a|-2n_0-|\mathbf{n}|.
\]
\end{lemma}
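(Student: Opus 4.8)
\textbf{Proof plan for Lemma \ref{l:weightdimcob}.}

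The plan is to compute the index of the $\bar\pa_J$-operator as a Fredholm section of $\mathcal{E}\to\mathcal{W}$ by reducing it to a standard index computation on the closed disk and then correcting for the weights, the conformal parameters, the shift parameter at the positive puncture, and the finite-dimensional jet-direction parameters built into the configuration space. Concretely, the linearization of $\bar\pa_J$ at a solution $u$ splits as a sum of a Cauchy--Riemann type operator on the weighted Sobolev space $\dot\sblv_{2,\delta,\delta_0,\mathbf{n}}(w)$ of sections with Lagrangian boundary conditions, plus the contributions of the $(m-2)$-dimensional conformal variations $\gamma_1$, the one-dimensional positive-puncture shift $\gamma_2$, and the $\sum_{j=1}^m(r_j+1)$-dimensional jet variations $\gamma_3$. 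Thus $\ind\bigl(D\bar\pa_J\bigr) = \ind\bigl(D^{\rm CR}_{\delta_0,\delta}\bigr) + (m-2) + 1 + \sum_{j=1}^m (r_j+1)$, and the task is to evaluate $\ind D^{\rm CR}_{\delta_0,\delta}$.

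First I would recall the standard formula for the index of a Cauchy--Riemann operator on a disk with one positive puncture and $m$ negative Lagrangian-intersection punctures, in terms of the Conley--Zehnder/Maslov data of the asymptotic operators. Since $Q=\R^3$ and the Maslov class of $\Lambda_K$ vanishes (as recalled in Section~\ref{sec:leg}), the bundle pair $(u^*TT^*Q, u^*TL)$ is trivial up to the contributions localized at the punctures, so the index is a sum of local contributions: $|a|$ from the positive Reeb chord puncture (using $|a|=\mu(a)-1$ and the chosen weight $\delta_0$, which is why the $n_0$ term appears — for $n_0=1$ the weight $\theta''<\delta_0<1$ lies on the far side of \emph{both} complex angles at $a$, shifting the index down by $2$ relative to $n_0=0$), and a contribution of $-2n_j$ (for $n_j\in\tfrac12\N$, appropriately interpreted as $-(2n_j)$ via the winding/asymptotic-weight bookkeeping of Section~\ref{ss:switching}) from each negative puncture $q_j$ of winding number $n_j$. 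This is exactly the content cited as \cite[Theorem A.1]{CEL} in the proof of Theorem~\ref{t:dim-moduli}; the present lemma is the refined, weighted version of it, tracking the extra jet parameters $r_j$. I would organize the bookkeeping so that the weight $(n_j-\delta)\pi$ at the $j$-th negative puncture combined with the $(r_j+1)$ jet directions in $\gamma_3$ contributes a \emph{net} $-2(n_j-\tfrac12) = -(2n_j-1)$, and check that summing these with the $(m-2)+1$ from the conformal and shift parameters collapses the off-by-constant terms so that $\sum_j \bigl(-(2n_j-1)\bigr) + (m-1) = -\sum_j(2n_j-1) + (m-1)$; since $|\mathbf{n}| = \sum_{j=1}^m 2(n_j-\tfrac12) = \sum_j(2n_j-1)$, this is precisely $-|\mathbf{n}|+(m-1) - (m-1) $ after the constant from the closed-disk normalization is absorbed, leaving $|a|-2n_0-|\mathbf{n}|$.

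The main obstacle I expect is the careful alignment of conventions: getting the exact integer contribution of each Lagrangian-intersection puncture right as a function of its winding number $n_j$ and the Sobolev weight $(n_j-\delta)\pi$ (half-integer versus integer $n_j$ behave differently, corresponding to the ``switching'' versus ``non-switching'' Cases 1--4 of Section~\ref{ss:series}), and separately verifying that the $(r_j+1)$ jet-parameters in the base $B$ genuinely enlarge the index by $r_j+1$ rather than being partially absorbed into the kernel of the asymptotic operator. The cleanest route is to deform the weight $\delta$ toward $0$ and toward $1$ and apply the standard spectral-flow change-of-weights formula for Cauchy--Riemann operators (each eigenvalue of the asymptotic operator crossed changes the index by $\pm1$), thereby reducing everything to the unweighted index of \cite[Theorem A.1]{CEL} plus an explicit count of eigenvalues in the relevant weight window; the jet-parameter count then matches this eigenvalue count by construction of the vector fields $s^j_k$. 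Once these local contributions are pinned down, the global formula follows by additivity of the Fredholm index, and the statement $\dim\MM(a,n_0;\mathbf{n}) = |a|-2n_0-|\mathbf{n}|$ is immediate. Finally, I would note the consistency check: setting $n_0=0$ and all $n_j=\tfrac12$ recovers $\dim\MM(a;\mathbf{n}) = |a|$, in agreement with Theorem~\ref{t:dim-moduli}, and setting $n_0=1$ accounts for the $-2$ shift used implicitly in the relation $\delta\MM(a;\mathbf n)\cong\MM(a;\mathbf n'')$ of Propositions~\ref{prop:b=intersdim1} and~\ref{prop:b=intersdim2}.
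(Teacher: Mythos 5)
Your plan is correct and is essentially the paper's own argument: the paper proves the lemma by citing \cite[Theorem A.1 and Remark A.2]{CEL} for the $n_0=0$ case and then invoking the standard fact that the index jumps when the exponential weight crosses eigenvalues of the asymptotic operator (cf.\ \cite[Proposition 6.5]{EES1}) to account for the $-2n_0$, which is exactly the endpoint of your reduction. The extra bookkeeping you carry out (conformal, shift, and jet parameters versus the weighted Cauchy--Riemann index) is just an unpacking of what those citations already encode, so no new ideas are needed beyond what you propose.
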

\begin{proof}
The case $n_0=0$ follows from \cite[Theorem A.1 and Remark A.2]{CEL}. The fact that the index jumps when the exponential weight crosses the eigenvalues of the asymptotic operator is well known and immediately gives the other case, see e.g.~\cite[Proposition 6.5]{EES1}.
\end{proof}

We next consider a completely analogous construction of a configuration space for holomorphic disks in $\MM^{\rm sy}(a,n_0;\mathbf{b})$. We discuss mainly the points where this construction differs from that above. Consider first the finite dimensional base. Here the situation is simpler and we take instead
\[
B=\R^{m-2}\times\R^{m+1},
\]
where the first factor corresponds to conformal structures on the domain exactly as before and where the second factor corresponds to re-parameterizations of the trivial Reeb chord strips exactly as for the positive puncture before. We fix a smooth family of maps $w_{\beta}\colon\Delta[\beta_1]\to\R\times S^{\ast}Q$ which agrees with the prescribed Reeb chord strips near the punctures. We next fix an isometric embedding of $S^{\ast}Q$ into $\R^{N}$ and consider the bundle of weighted Sobolev spaces with fiber over $\beta\in B$ the Sobolev space $\sblv_{n_0,\delta}$ of functions with two derivatives in $L^{2}$ with respect to the norm weighed by a function which equals $e^{\delta|s|}$ in the negative ends and $e^{(\delta+n_0)|s|}$ in the positive end.

In analogy with the above we then fix (commuting) re-parameterization diffeomorphisms $\psi_{\beta_1}$ corresponding to changes of the conformal structure and $\phi_{\beta_2}$ corresponding to translation in the half strip neighborhoods. Again this then leads to a Fredholm section and its index gives the formal dimension of the moduli space.

\begin{lemma}\label{l:weightdimsymp}
The formal dimension of $\mathcal{M}^{\rm sy}(a,n_0;\mathbf{b})$ is given by
\[
\dim\mathcal{M}^{\rm sy}(a,n_0;\mathbf{b})=|a|-2n_0-|\mathbf{b}|.
\]
\end{lemma}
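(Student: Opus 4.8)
The plan is to compute the Fredholm index of the $\bar\partial_J$-section over the configuration space $\mathcal{W}^{\mathrm{sy}}(a,n_0;\mathbf{b})$ and identify it with the stated formula. The starting point is the case $n_0=0$ (and analogously all weights just below the first positive eigenvalue of the asymptotic operators), which is the standard dimension formula for punctured holomorphic disks in the symplectization $\R\times S^*Q$ with boundary on $\R\times\Lambda_K$; this is exactly the content of Theorem~\ref{t:sy} as quoted in Section~\ref{sec:symp-disks}, giving $\dim\MM^{\mathrm{sy}}(a;\mathbf{b})=|a|-|\mathbf{b}|$ with our grading convention $|c|=\mu(c)-1$. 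So the only thing to prove is how the index changes when the exponential weight $\delta_0$ at the positive puncture is increased across the value $\theta''$, i.e.~the transition from $n_0=0$ to $n_0=1$.

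The key step is the usual weight-crossing (spectral flow) computation for the asymptotic operator at the positive Reeb chord puncture. Writing the linearized operator in a cylindrical end $[0,\infty)\times[0,1]$ near the positive puncture, conjugating by the weight $e^{\delta_0\pi s}$ shifts the asymptotic operator $A_a$ (a self-adjoint operator on $L^2([0,1])$ with the Lagrangian boundary conditions coming from the two branches of $\Lambda_K$) by $-\delta_0\pi$. The Fredholm property holds precisely when $\delta_0\pi$ is not an eigenvalue of $A_a$, and the index drops by the number of eigenvalues (counted with multiplicity) of $A_a$ that are crossed as $\delta_0$ increases. By the choice of the standard neighborhood $(-\eps,T+\eps)\times U$ in Definition~\ref{def:admissible}(v), in which the contact planes project to the standard $\C^2$ and the two branches of $\Lambda_K$ project to two transverse linear Lagrangians meeting at complex angles $\theta'\pi\le\theta''\pi$, the eigenvalues of $A_a$ near $0$ are exactly (a shift of) $\pm\theta',\pm\theta''$ plus integer translates; in the window $0<\delta_0<\theta'$ no eigenvalue has been crossed (this is the $n_0=0$ case), and for $\theta''<\delta_0<1$ precisely the two eigenvalues $\theta'\pi$ and $\theta''\pi$ have been crossed. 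Hence the index drops by $2$, giving $|a|-2-|\mathbf{b}|=|a|-2n_0-|\mathbf{b}|$ for $n_0=1$. This is the computation referenced in \cite[Proposition 6.5]{EES1}, and it goes through verbatim here since the asymptotic data at the positive puncture is identical.

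Concretely I would organize the proof as: (1) recall from Theorem~\ref{t:sy} / \cite{EES2,EESori,Rizell} the index in the unweighted ($n_0=0$) case; (2) set up the conjugated linearized operator with weight $\eta_{n_0,\delta}$ and note that only the weight at the positive puncture changes between the two cases, while all negative-puncture weights $e^{\delta|s|}$ stay in the same (trivial) window so contribute nothing; (3) invoke the standard fact that the index of a Fredholm operator on a cylindrical end changes by minus the spectral flow of the asymptotic operator as the weight varies, and compute that spectral flow from the eigenvalues $\theta'\pi,\theta''\pi$ determined by the complex angles of the linear Lagrangians in Definition~\ref{def:admissible}(v); (4) conclude the formula. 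The main obstacle — really the only non-bookkeeping point — is pinning down precisely which eigenvalues of the asymptotic operator lie in the interval $(0,\pi)$ and that exactly two of them (namely $\theta'\pi$ and $\theta''\pi$, each simple) get crossed; this requires the explicit model for $\Lambda_K$ near the Reeb chord, which is guaranteed by admissibility, and a short eigenvalue computation for the $\partial_t$-operator on $[0,1]$ with linear-Lagrangian boundary conditions (the eigenfunctions are of the form $e^{\mu t}$ times a rotation, with $\mu$ forced into $\{\theta',\theta''\}+\Z$ by the boundary conditions). Once that is in hand the rest is immediate, so I would keep the exposition brief and cite \cite[Proposition 6.5]{EES1} and \cite[Theorem A.1, Remark A.2]{CEL} for the analytic details, exactly as the excerpt already does for Lemma~\ref{l:weightdimcob}.
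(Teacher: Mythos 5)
Your proposal is correct and follows essentially the same route as the paper, whose proof of this lemma is simply the citation of \cite[Theorem A.1, Remark A.2]{CEL} for the unweighted case together with the standard weight-crossing/index-jump fact from \cite[Proposition 6.5]{EES1}; you have merely spelled out the spectral-flow computation (crossing the two simple eigenvalues $\theta'\pi,\theta''\pi$ at the positive puncture) that those references encapsulate.
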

\begin{proof}
See \cite[Theorem A.1 and Remark A.2]{CEL} and use the relation between weights and index, see e.g.~\cite[Proposition 6.5]{EES1}.
\end{proof}

\begin{remark}\label{rmk:confvaratpuncture}
We consider for future reference the conformal variations of the domain with more details. In the local coordinates around a map $w\colon \Delta_{m+1}\to T^{\ast} Q$ or $w\colon \Delta_{m+1}\to \R\times S^{\ast} Q$  defined above, the conformal variations correspond to a diffeomorphism that moves the boundary maxima of the domain. We take such a diffeomorphism to be a shift along a constant (and hence holomorphic) vector field $\tau$ in the real direction around the boundary maximum and then cut it off in nearby strip regions. Hence the corresponding linearized variation $L\bar{\pa_{J}}(\gamma)$ at $w$, where $\gamma$ is the first order variation of the complex structure corresponding in the domain is
\[
L\bar{\pa}_{J}(\gamma)=\pa_{J}w\circ \bar{\pa} \tau.
\]

We will sometimes use other ways of expressing conformal variations, where the variations are supported near a specific negative puncture rather than near a specific boundary maximum. To this end we first note that we may shift the conformal variation by any element $L\bar{\pa}_{J}(v)$ where $v$ is a vector field along $w$ in the Sobolev space $\sblv_{\delta}$. In particular we can shift $\gamma$ by $\bar{\pa}\sigma$ where $\sigma$ is a vector field along $\Delta_{m+1}$ that is constant near the punctures. In this way we get equivalent conformal variations $\gamma_{q}$ of the form
\[
L\bar{\pa}_{J}(\gamma_{q})=\pa_{J}w\circ \bar{\pa}\tau_{q}.
\]
where $\tau_{q}$ is a vector field of the form
\[
\tau_{q}(z) = \beta(s+it) e^{\pi(s+it)},
\]
where $s+it$ is a standard coordinate in the strip neighborhood of the negative puncture $a$ and $\beta$ is a cut-off function equal to $1$ near the puncture and $0$ outside a strip neighborhood of the puncture. We refer to \cite[Section 2.1.1]{E} for details.
\end{remark}

\subsection{Transversality}\label{ss:trans}
We next use the special form of our almost complex structure near Reeb chords in combination with an argument from \cite[Lemma 4.5]{EES2} to show that we can achieve transversality for $\bar{\pa}_{J}$-section of $\mathcal{E}$ over $\mathcal{W}$ by perturbing the almost complex structure. In other words we need to show that the linearization $L\bar\pa_{J}$ of the section $\bar\pa_{J}$ is surjective.

\begin{lemma}\label{l:tv}
For generic $J$ any solutions in $\mathcal{M}(a,n_0;\mathbf{n})$ and $\mathcal{M}^{\rm sy}(a, n_0;\mathbf{b})$ are transversely cut out.
\end{lemma}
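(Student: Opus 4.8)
The plan is to follow the now-standard Sard--Smale argument for achieving transversality by perturbing the almost complex structure, adapted to the present setting with switching boundary conditions as in \cite[Lemma 4.5]{EES2}. First I would set up the universal moduli space: fix a large Banach manifold $\mathcal{J}$ of admissible almost complex structures $J$ (in the sense of Definition~\ref{def:admissible}), obtained by allowing deformations supported outside $D^*Q$ and away from the Reeb chords, and consider the section
\[
\bar\partial \colon \mathcal{W} \times \mathcal{J} \to \mathcal{E}, \qquad (u,J) \mapsto du + J\circ du\circ i.
\]
I would show that this section is transverse to the zero section, so that the universal moduli space $\mathcal{M}^{\mathrm{univ}}$ is a Banach manifold; then the Sard--Smale theorem applied to the projection $\mathcal{M}^{\mathrm{univ}}\to\mathcal{J}$ gives that for generic $J$ the fiber $\mathcal{M}(a,n_0;\mathbf{n})$ (resp.\ $\mathcal{M}^{\mathrm{sy}}(a,n_0;\mathbf{b})$) is cut out transversely. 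The Fredholm property and the index count are already supplied by Lemmas~\ref{l:weightdimcob} and~\ref{l:weightdimsymp}.

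The heart of the argument is the surjectivity of the full linearization $D\bar\partial_{(u,J)} = L\bar\partial_J \oplus (\text{variation in }J)$. Since $L\bar\partial_J$ is Fredholm, its image is closed and of finite codimension, so it suffices to show that a nonzero element $\eta$ of the cokernel (an element of the dual space, supported by the formal adjoint equation) must vanish. The variation-in-$J$ term contributes $Y\circ du\circ i$ for $Y$ a suitable $J$-antilinear endomorphism-valued perturbation; pairing against $\eta$ shows that if $\eta$ annihilates all such terms then $\eta$ must vanish on the open subset of the domain where $du$ has rank two and where $u$ maps into the region where $J$ is free to vary (i.e.\ outside $D^*Q$ and away from the Reeb chords and the knot $K$). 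By unique continuation for the adjoint (elliptic) equation, $\eta$ then vanishes identically, \emph{provided} such an open set is nonempty. Establishing nonemptiness is the main obstacle: one must rule out the possibility that $u$ is entirely contained in the locus where $J$ is fixed. This is handled exactly as in \cite{EES2}: a holomorphic disk contained in a neighborhood of a Reeb chord, in the zero section, or in the analytic neighborhood of $K$ where $J$ is integrable and standard is forced by the explicit local Fourier/Taylor expansions of Section~\ref{ss:switching} (or the asymptotic analysis of Proposition~\ref{prop:asympt}) to be either a trivial Reeb chord strip or a constant disk, neither of which occurs as an element of the moduli spaces under consideration (these carry a genuine positive puncture and are nonconstant); and the length estimate of Proposition~\ref{prop:length-estimate} rules out a nonconstant disk lying entirely over a binormal chord except for the half-strip, which again is excluded. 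Hence $u$ has an injective point mapping into the region of free perturbation, which gives the needed open set.

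A secondary point requiring care is the treatment of disks with one or two punctures, whose domains are conformally rigid (unstable). As explained in Section~\ref{s:confcob}, these are stabilized by adding marked points, tracked via the jet data $J(K)$ at the negative punctures and the extra exponential weight $\delta_0$ controlling the order of convergence $n_0$; the configuration space $\mathcal{W}=\mathcal{W}(a;\delta_0;\mathbf{n})$ and the bundle $\mathcal{E}$ are built precisely so that the linearized operator remains Fredholm and the universal transversality argument goes through verbatim. Finally I would note that all the perturbations of $J$ used stay within the class of admissible almost complex structures, and that genericity can be achieved simultaneously for the finitely many moduli spaces of dimension $\le 2$ relevant to this paper (using that there are only finitely many Reeb chords and, by the Finiteness Theorem~\ref{thm:finite}, a uniform bound $\kappa$ on the number of switches), so a single generic $J$ makes all of them transversely cut out; combined with Theorem~\ref{t:dim-moduli} and the orientation discussion this completes the proof.
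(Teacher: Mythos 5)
Your overall Sard--Smale framework (universal moduli space over a Banach manifold of admissible $J$, Fredholm setup from the configuration spaces, projection to $\mathcal{J}$) is the standard scaffolding and is consistent with how the paper sets things up, but the decisive step is not established. You reduce surjectivity of the full linearization to producing an injective point of $u$ in the region where $J$ may be varied, and you argue for this by ruling out that $u$ is \emph{entirely contained} in the locus where $J$ is fixed (neighborhoods of Reeb chords, the zero section, the analytic neighborhood of $K$). That implication is false as stated: a disk whose image enters the free-perturbation region may still have no injective points there, because it can be multiply covered or can retrace parts of its image (for punctured disks with switching Lagrangian boundary conditions there is no somewhere-injectivity theorem and no reduction to an underlying simple curve, unlike the closed case). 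Proposition~\ref{prop:asympt}, the local Taylor/Fourier expansions, and the length estimate of Proposition~\ref{prop:length-estimate} constrain disks lying in special regions; they say nothing about injectivity of a general solution, so the ``hence $u$ has an injective point'' step is a genuine gap, and it is exactly the hard case of the lemma.

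The paper's proof (following \cite[Lemma 4.5]{EES2}) is designed precisely for this non-injective case. One perturbs $J$ near the Reeb chord strip of $a$: by monotonicity the preimage of the chord strip under (the $\C^2$-projection of) $u$ is a finite set $\{q_0,q_1,\dots,q_r\}$ with $q_0$ the positive puncture. If $u$ is injective somewhere near this intersection point one concludes in the standard way; otherwise one compares the half-disk $E_0$ at the positive puncture with the other local branches $E_1,\dots,E_m$ covering the image $\gamma$ of a boundary arc of $E_0$. Since the boundary arcs of $E_0$ end at the positive puncture while the other branches continue, analytic continuation shows the multiplicity of $u$ is $m+1$ along $\gamma$ but only $m$ along the rest $\gamma'\setminus\gamma$ of the Lagrangian arc containing it. Perturbing $J$ near $\gamma'\setminus\gamma$ forces the pairing of a putative cokernel element with variations supported on $E_1,\dots,E_m$ to vanish, and unique continuation then makes these contributions cancel along $\gamma$ as well, so nothing can cancel a perturbation supported in $E_0$; hence the cokernel is trivial. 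Your proposal needs this (or some substitute argument handling non-somewhere-injective disks) to be a proof; without it, only the easy ``somewhere injective'' case is covered.
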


\begin{proof}
To see this we perturb the almost complex structure near the positive puncture. Consider the local projection to $\C^{2}$ near the Reeb chord. Here the Lagrangians correspond to two Lagrangian planes. Furthermore the holomorphic disks admit local Taylor expansions near the points that map to their intersection. The lemma now follows from the proof of \cite[Lemma 4.5]{EES2}. We sketch the argument.

Let $U$ denote a neighborhood of the Reeb chord strip $C_{a}$ of $a$ for $\mathcal{M}^{\rm sy}$ or of the Reeb chord strip in $T^{\ast}Q- D^{\ast}Q$ for $\mathcal{M}$. If $u$ is a holomorphic disk then $u^{-1}(U\cap C_{a})$ is the pre-image under $u$ composed with the projection to $\C^{2}$ of the intersection point of the two Lagrangian planes. It follows by monotonicity that the preimage is a finite collection of points $\{q_{0},q_{1},\dots, q_{r}\}$, where $q_{0}$ is the positive puncture. If $q_{i}$ is an interior point, let $E_{i}$ denote a small disk around $q_{i}$, if $q_{j}$ is a boundary point let $E_{j}$ denote a half-disk neighborhood of $q_{i}$. If the map $u$ has an injective point near the double point then a standard argument perturbing the almost complex structure there establishes the necessary transversality. We therefore assume that this is not the case. Consider the image of a small half disk $E_{0}$ near the positive puncture $q_{0}$, and note that the boundary arcs end at the positive punctures. Since the map is not injective there are neighborhoods (after renumbering) $E_{1},\dots,E_{m}$ where $u$ agrees with the image $\gamma$ under $u$ of one of the boundary arcs of $E_{0}$. By analytic continuation, the images of these neighborhoods then intersect the Lagrangian sheet of the boundary arc $\gamma'$ that contains $\gamma$. Consequently, the map has multiplicity $m+1$ along $\gamma$ and multiplicity $m$ along $\gamma'-\gamma$. Consider a vector field in the cokernel of the linearized operator $L\bar{\pa}$. Perturbing the almost complex structure near $\gamma'-\gamma$ we see that the contributions from the anti-holomorphic cokernel vector field on $E_{1},\dots,E_{m}$ must vanish. By unique continuation, the contributions from $E_{1},\dots, E_{m}$ must then also cancel along $\gamma$ and it follows that there is nothing that cancels the perturbation in $E_{0}$ (just as if the map was injective in $E_{0}$). The desired transversality follows.
\end{proof}

\subsection{Stabilization of domains}\label{s:stab}
For disks with more than three punctures the transversality results in
Section \ref{ss:trans}
directly give the solution spaces the structure of $C^{1}$-smooth manifolds. For the case of unstable domains this is not as direct since the solutions admit re-parameterizations that do not act with any uniformity on the associated configuration spaces. This is a well-known phenomenon and we resolve the problem by a gauge fixing procedure, adding marked points near the positive puncture. This construction was studied in detail in \cite[Appendix A.2]{ESrev} and in \cite[Sections 5.2 and 6]{ES} and we will refer to these articles for details.

As we shall see below we need only consider moduli spaces of dimension $\le 2$. Recall the neighborhood $a\times U$, $U\subset\C^{2}$ of the Reeb chord $a$, see the discussion before Definition~\ref{def:admissible} on page~\pageref{def:admissible}, and the corresponding Fourier expansion of the $\C^{2}$-component of any holomorphic disk near $a$, see \eqref{eq:chordasympt} on page~\pageref{eq:chordasympt}.

Consider a space $\mathcal{M}(a,\mathbf{n})$ of formal dimension $\le 1$. Then by Lemmas \ref{l:weightdimcob} and \ref{l:tv} the corresponding space $\mathcal{M}(a,1;\mathbf{n})$ is empty.
Consequently, for any solution $u\in \MM(a,\mathbf{n})$, the first Fourier coefficient of the $\C^{2}$-component of the map near $a$ is non-vanishing. Let $S_{0;\epsilon}$ and $S_{1;\epsilon}$ be spheres in $\Lambda_{K}$ of radii $\epsilon>0$ around the Reeb chord endpoints of $a$. Non-vanishing of the first Fourier coefficient in combination with compactness then implies that for each solution $u$ there are two unique points in the boundary of the domain closest to the positive puncture that map to $S_{j;\epsilon}$, $j=0,1$, see Figure \ref{fig:marked}. We add punctures at these points. More precisely, we consider standard domains with two more punctures and require that the maps are asymptotic to points in $S_{j;\epsilon}$ at the extra punctures. In the above notation these would be ``Lagrangian intersection punctures'' in $S_{j;\epsilon}$ of local winding number $1$ in the direction normal to $S_{j;\epsilon}$. The transversality result \ref{l:tv} holds as before also for the solution spaces with extra punctures, so that they are $C^{1}$-manifolds. The asymptotic properties above then imply that the solutions with extra punctures capture all holomorphic disks.

Consider next a space $\mathcal{M}^{\rm sy}(a;\mathbf{b})$ of formal dimension $\le 2$. Since any holomorphic curve in the symplectization can be translated we find that the corresponding space $\mathcal{M}^{\rm sy}(a,1;\mathbf{b})$ is again empty and we get a manifold structure by adding two marked points near the Reeb chord endpoints exactly as above.

It remains then to consider the case of spaces $\MM(a;\mathbf{n})$ of formal dimension $2$. Here the corresponding space $\MM(a,1;\mathbf{n})$ has dimension $0$. There are then a finite number of solutions with this decay condition. Considering the Fourier expansion we can fix unique marked points for all solutions in a neighborhood $\mathcal{V}$ (in the configuration space) of these isolated solutions as above. For solutions outside $\mathcal{V}$ the Fourier coefficients do not vanish and we can fix marked points as above. Note however, that these will generally not be the same marked points. This way we however get two types of manifold charts: one for solutions inside $\mathcal{V}$ and one for solutions in a neighborhood of any map $u'$ with nonvanishing first Fourier coefficient which lies outside a smaller neighborhood $\mathcal{V}'$ of $u$. To get a manifold structure for the moduli space we then need to study the transition maps, and to that end we use four marked points, see Figure \ref{fig:marked} and \cite[Section 5.2]{ES} for details.

\begin{figure}[h]
\centering
\includegraphics[width=0.5\textwidth]{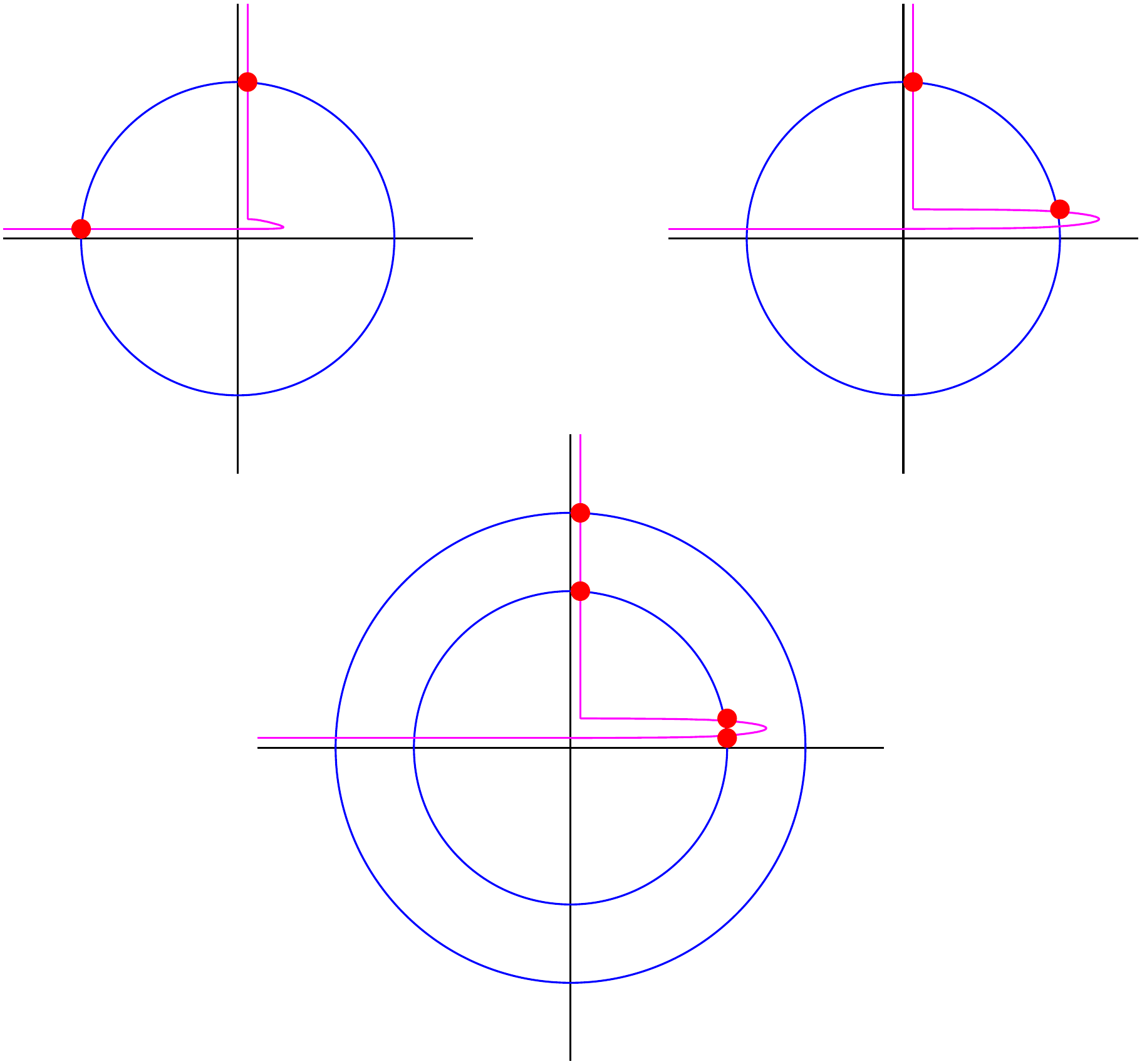}
\caption{Top left: marked points for a disk near the disk with degenerate asymptotics. Top right: marked points for a disk outside a neighborhood of the disk with degenerate asymptotics. Lower: the four marked points in the intermediate region used to define the coordinate change. The (black) lines represent the projections of the branches of $L_K \cup Q$ to $\C^2$, and the (blue) circles represent 3-spheres $S^3_\eps$, which cut these local branches along the circles $S_{j,\eps}$ appearing in the text.}
\label{fig:marked}
\end{figure}

A priori, the smooth structures on the moduli spaces above depend on the choice of gauge condition. However, using the fact that the $C^{0}$-norm of a holomorphic map controls all other norms, it is not hard to see that different gauge conditions lead to the same smooth structure.

We also need to show that the compactness result where sequences of curves converge to several level curves are compatible with additional marked points. This is similar to the above. The compactness result we already have implies uniform convergence on compact sets and in particular it is possible to add marked points on the curves near the limit that correspond to the extra marked points on the unstable curves in the limit. As before we show that these extra marked points do not affect the moduli spaces. See \cite[Section A.3]{ES} for details. In conclusion, by adding marked points also on curves near broken limits we obtain versions of the compactness results Theorems \ref{t:cp_sy} and \ref{t:cp} where all domains involved are stable with marked points compatible with the several level breaking.

\subsection{Index bundles and orientations}\label{ss:indexbndles}
Viewing the $\bar{\pa}_{J}$-operator as a Fredholm section of a Banach
bundle, its linearization defines an index bundle over the
configuration space and an orientation of this index bundle gives an
orientation of transverse solution spaces. Following
Fukaya, Oh, Ohta, Ono~\cite[Section 8.1]{FO3I} one defines a coherent system of such orientations as
follows. Fix spin structures of the two Lagrangians $L_K$ and $Q$,
which we here can think of as trivializations of the respective tangent
bundles. Consider a closed disk with boundary in one of the two
Lagrangians and the linearized $\bar{\pa}_{J}$-operator acting on vector fields
along this disk that are tangent to the Lagrangian along the
boundary. Using the trivialization of the boundary condition, such an
operator can be deformed to an operator on the disk with values in
$\C^3$ and constant
$\R^{3}$ boundary condition, with a copy of $\C P^{1}$ attached at the
center with a complex linear operator. The first operator has trivial
cokernel and a kernel that consists only of constant vector fields, and the orientation of
$\C$ induces an orientation on the determinant bundle of the operator
over $\C P^{1}$. This gives a canonical orientation over closed disks
with trivialized boundary condition (that depends only on the homotopy
class of the trivialization).

Here we need to orient moduli spaces of disks with punctures. This was
done in the setting of Legendrian contact homology in
\cite{EESori}; we will give a sketch and refer to that reference for 
details. We reduce to the case of closed disks by picking so-called
capping operators at all Reeb chords and along the Lagrangian
intersection $K$ with an orientation of the corresponding determinant bundles. Here it is
important that the capping operators are chosen in a consistent
way. At Reeb chords there is a positive and a negative capping
operator and we require that they glue to the standard orientation on
the closed disk. We also pick positive and negative capping operators
at the Lagrangian intersection punctures satisfying the same
conditions. Now, given a holomorphic disk in the
symplectization or in $T^{\ast}Q$ we glue the capping operators
to it and produce a closed disk. The standard orientation of the
closed disk and the chosen orientations on the capping operators then
give an orientation of the determinant line of the linearized operator
over the disk, which, together with an orientation of the finite dimensional space of conformal structures on the punctured disk, in turn gives an orientation of the moduli
space if it is transversely cut out. The gluing condition for the capping operators ensures that the
resulting orientations of the moduli spaces are compatible with
splittings into multi-level curves.

In what follows we assume that spin structures on the Lagrangians and capping operators have been fixed and thus all our moduli spaces are oriented manifolds.

\subsection{Signs and the chain map equation}\label{ss:signsandchainmap}
Recall the chain map  
\[ 
\Phi\colon (C_*(\RR),\p_\Lambda)\to 
(C_*(\Sigma),\partial+\delta_{Q}+\delta_{N})
\]
from Theorem \ref{t:Phichainmap}. Here we consider the signs of the operations $\delta_Q$
and $\delta_N$ in this formula. These operations are defined on chains
of broken strings by taking the oriented preimage of $K$ under the evaluation
map. In the map $\Phi$, the oriented chain is given by a moduli space
of holomorphic disks. In order to deal with the evaluation maps on
such spaces we present them as bundles over $Q$ as follows. Consider
first the operation $\delta_Q$. Fix a point $q\in Q$ and an additional
puncture on the boundary that we require maps to $q$. Concretely, we
work on strips with slits and add a small positive exponential weight
at the puncture mapping to $q$. Then we consider the bundle of such
maps over $Q$ when we let $q$ vary in $Q$. The orientation of this
space is induced from capping operators as described above. When we
consider the corresponding boundary condition on the closed disk we
find a vanishing condition for linearized variations at the marked
point corresponding to the positive exponential weight. Thus if
$\sigma$ denotes the orientation of the index bundle induced as above,
then the orientation on the bundle with marked point mapping to $q$ is
given by the orientation of the formal difference
$\sigma \ominus TQ$. (The formal difference should be interpreted as in $K$-theory: the difference $\xi\ominus \eta$ of two bundles $\xi$ and $\eta$ is represented by a bundle $\zeta$ such that the direct sum $\zeta\oplus\eta$ is equivalent to $\xi$.) 

We point out that here and throughout this section orientations depend on ordering conventions, whether the point condition goes before or after the index bundle, etc. In calculations below we put point conditions after the index bundle, and put the fiber of bundles over $Q$ before the base.

The orientation of the bundle corresponding to a point constraint $q$ varying over $Q$ is then given given by $\sigma\ominus TQ\oplus TQ$. Finally, the orientation of the chain given by the
preimage of $K$ under the evaluation map is then
\begin{equation}\label{eq:cutori0}
\sigma\ominus TQ\oplus TQ\oplus TK\ominus TQ=\sigma\oplus TK\ominus TQ.
\end{equation}

In order to show that the chain map equation holds we must then show
that there are choices of capping operators and orientations on $Q$
and $N$ so that this orientation agrees with the boundary orientation
of the disk viewed as the boundary in the moduli space of disks with
two colliding Lagrangian punctures.

Consider the capping operators $c_{QN}$ and $c_{NQ}$ for such a puncture going from $Q$ to $N$ and vice versa. These capping operators are standard $\bar{\pa}$-operators on a once punctured disk $D_{1}$ acting on $\C^{3}$-valued functions in a weighted Soboloev space that satisfy a Lagrangian boundary condition. 

We first describe the boundary conditions. For $c_{QN}$ the Lagrangian boundary condition $\lambda\colon \partial D_{1}\to \mathrm{Lag}_{3}$, where $\mathrm{Lag}_{3}$ denotes the Lagrangian Grassmannian of Lagrangian subspaces of $\C^{3}$, starts at the tangent space of $Q$ and ends at the tangent space of $N$. For $c_{NQ}$ the boundary condition instead starts at the tangent space of $N$ and ends at the of $Q$. More specifically, the tangent spaces of $Q$ and $N$ intersect along $TK$ and are perpendicular in the normal directions of $K$. We think of the normal directions to $K$ as $\C^{2}$ and the tangent spaces of $Q$ and $N$ as $i\R^{2}$ and $\R^{2}$, respectively. We take both capping operators $c_{QN}$ and $c_{NQ}$ to fix $TK$, to be a rotation by $\frac{\pi}{2}$ in one of the complex lines normal to the knot, and a rotation by $\frac{3\pi}{2}$ in the other. 

We next describe the weights at the puncture in $D_{1}$. We use a half strip neighborhood of the puncture and a Sobolev space with small positive exponential weight $\delta$, $0<\delta<\frac{\pi}{2}$, in this strip neighborhood. 

The index of the $\bar{\partial}$-operator with this boundary condition and weight equals $3$, see e.g.~\cite[Proposition 6.5]{EES1}.

Recall from Section \ref{ss:indexbndles} that an orientation of the moduli space is induced from the capping operators together with an orientation on the space of conformal structures on the punctured disk. Here we think of variations of the conformal structure as vector fields moving the punctures along the boundary of the disk. We have one such vector field for each puncture which give an additional one dimensional oriented vector space associated to each puncture, see \cite[Section 3.4.1]{EESori} for details. For simplicity we write simply $c_{QN}$ and $c_{NQ}$ for the sum of the index bundles of the capping operator described above and one dimensional conformal variations associated to the respective punctures. Thus, in the calculations below $c_{QN}$ and $c_{NQ}$ have index $3+1=4$.

We choose the orientations on $Q$ and $N$ so that the linear transformations between tangent spaces $TQ$ and $TN$ induced by the Lagrangian boundary conditions of $c_{QN}$ and $c_{NQ}$ take the orientation on $Q$ to that on $N$ and vice versa.

The boundary orientation of the two-level disk (second level constant)
is the fiber product over $K$ of the orientations of its levels. We
view the top level disk as having a small positive exponential weight
at the puncture mapping to $K$ and a cut-off local solution in the
direction of $K$. In analogy with the above, its orientation is thus
given by $\sigma\ominus TQ\oplus TK$.
The orientation of the constant disk (which
has small negative weights at its positive puncture) is then
$\sigma'\oplus c_{QN}\oplus c_{NQ}$, where $\sigma'$ is the standard orientation
on the closed up boundary condition of the constant three punctured
disk. The boundary orientation is thus
\begin{equation}\label{eq:cutori1}
(\sigma\ominus TQ\oplus TK)\oplus (\sigma'\oplus c_{QN}\oplus c_{NQ})\ominus TK.
\end{equation}
Now choose the orientation on $c_{QN}$ and $c_{NQ}$ so that the
orientation of the index one problem on the constant disk with kernel
in direction of the knot induced by $\sigma'\oplus c_{QN}\oplus c_{NQ}$ is
opposite to the orientation of $TK$. Then the orientation in
\eqref{eq:cutori1} is $\sigma\oplus TK\ominus TQ$ (there is an orientation change
when one permutes the odd-dimensional summands $TK$ and $TQ$), in
agreement with \eqref{eq:cutori0}.

For the sign of the operation $\delta_N$ we argue exactly as above
replacing $Q$ with $N$ and we must compare the orientations
$\sigma\oplus TK\ominus TN$ and
\[
(\sigma\ominus TN\oplus TK)\oplus (\sigma'\oplus c_{NQ}\oplus c_{QN})\ominus TK.
\]
Compared to the above the main difference is that the summands
$c_{NQ}$ and $c_{QN}$ have been permuted. However, as explained above, the index of each
of these operators is $4$, so the orientation remains as
before and the positive sign for $\delta_N$ is correct for the chain
map.

\section{Compactification of moduli spaces and gluing}\label{sec:gluing}

In this section we show that the moduli spaces $\MM(a;\mathbf{n})$ and $\MM^{\rm sy}(a;\mathbf{b})/\R$
admit compactifications as manifolds with boundary with corners. Furthermore, we
describe the boundary explicitly in terms of broken holomorphic
disks. The smoothness of individual strata of the compactified moduli
spaces are governed by the Transversality Lemma \ref{l:tv}.
The Compactness Theorems \ref{t:cp} and \ref{t:cp_sy} describe
disk configurations in the boundary of the compactification. The main
purpose of this section is thus to show how to glue these
configurations on the boundary to curves in the smooth part of the
moduli space and thereby obtain boundary charts in the sense of
manifolds with boundary with corners. Such gluing theorems were proved
before in closely related situations and we will discuss details only
when they differ from the standard cases.

We first state the structural theorems in Section \ref{s:mfdbdrycrn}
and then turn to the gluing results and their proofs in the following subsections.

We work throughout this section with an almost complex structure $J$ so that Lemma \ref{l:tv} holds. Furthermore we assume that the domains of all holomorphic disks are stable, which can be achieved by adding marked points as explained in Section \ref{s:stab}.

\subsection{Structure of the moduli spaces}\label{s:mfdbdrycrn}
In this subsection we state the results on moduli spaces of holomorphic
disks. As before there are two cases to consider, disks in the
symplectization and disks in the cotangent bundle. The structural
results all have the same flavor. Basically we show that a specified
moduli space is a manifold with boundary with corners of dimension
$\le 2$, and we describe the boundary strata as well as certain
submanifolds important for our study. The proofs of the results are
the main goal for the rest of the section.

Recall from Sections~\ref{s:confcob} and~\ref{ss:trans} (with
$n_0=0$) that for generic $J$ the moduli spaces
$\MM(a;\mathbf{n})$ and $\MM^{\rm sy}(a;\mathbf{b})$ are manifolds of
dimensions
$$
   \dim\mathcal{M}(a;\mathbf{n})=|a|-|\mathbf{n}|,\qquad
   \dim\mathcal{M}^{\rm sy}(a;\mathbf{b})=|a|-|\mathbf{b}|.
$$
Here $|a|=\ind(a)$ is the degree of the Reeb chord $a$ (which takes
only values $0,1,2$), and to the vector of local winding numbers
$\mathbf{n}=(n_1,\dots,n_{m})$ (where the $n_j$ are positive
half-integers or integers) we have associated the nonnegative integer
\[
   |\mathbf{n}|=\sum_{j=1}^{m}2(n_j-\tfrac12)\ge 0.
\]
If either $\mathbf{n}$ or $\mathbf{b}$ is empty, the corresponding contribution to the index formula is $0$.
If $a$ is a Reeb chord of $\Lambda_{K}\subset S^{\ast}Q$, then $0\le |a|\le 2$. Since $J_1$ is $\R$-invariant, $0$-dimensional moduli spaces in the symplectization consists only of Reeb chord strips. Thus the only non-empty moduli spaces $\MM^{\rm sy}(a;\mathbf{b})$ of dimension $d^{\rm sy}$ are the following (write $\mathbf{b}=b_1\dots b_m$), see Figure \ref{fig:sympdisk}:
\begin{itemize}
\item
{$[2,0]^{\rm sy}$}:
If $|a|=2$ and $|\mathbf{b}|=0$ (i.e.~$|b_j|=0$ for all $j$) then $d^{\rm sy}=2$.
\item
{$[2,1]^{\rm sy}$}:
If $|a|=2$ and $|\mathbf{b}|=1$ (i.e.~$|b_j|=0$ for all $j\ne s$ and $|b_s|=1$) then $d^{\rm sy}=1$.
\item
{$[1,0]^{\rm sy}$}:
If $|a|=1$ and $|\mathbf{b}|=0$ then $d^{\rm sy}=1$.
\end{itemize}

\begin{figure}
\labellist
\small\hair 2pt
\pinlabel $D$ at 64 91
\pinlabel $u$ at 184 107
\pinlabel ${\color{red} a}$ at 298 177
\pinlabel ${\color{red} b_1}$ at 256 4
\pinlabel ${\color{red} b_2}$ at 300 4
\pinlabel ${\color{red} b_3}$ at 342 4
\pinlabel $+$ at 64 162
\pinlabel $-$ at 0 61
\pinlabel $-$ at 63 22
\pinlabel $-$ at 127 61
\endlabellist	
\centering
	\includegraphics[width=.7\linewidth]{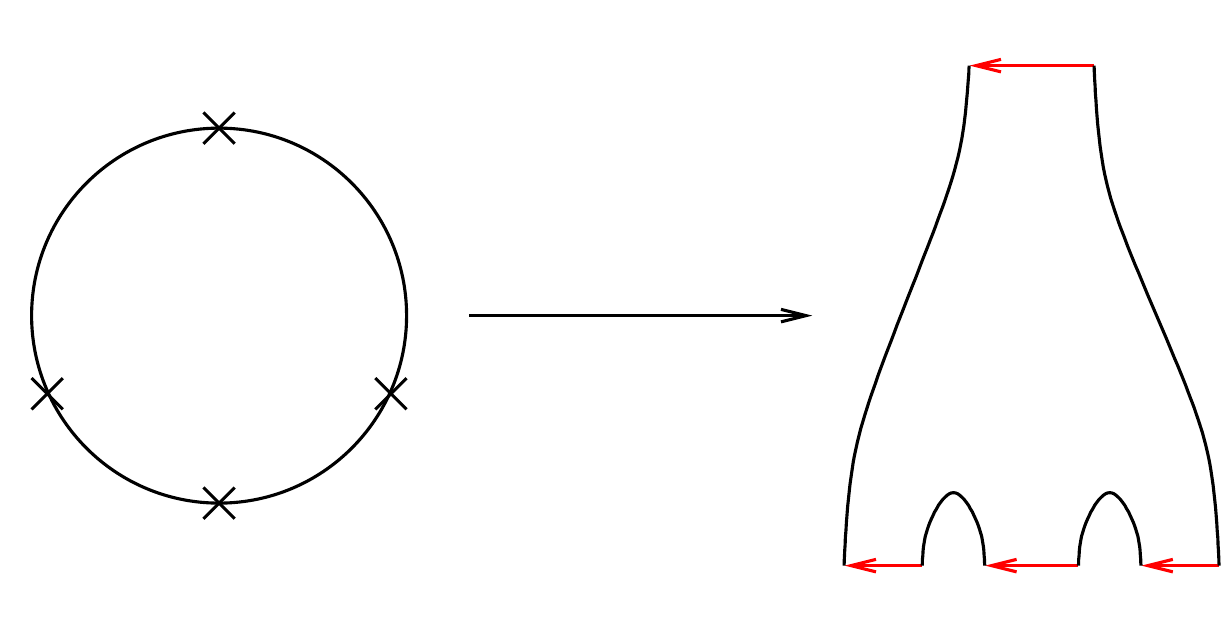}
	\caption{Disks $u :\thinspace (D,\partial D) \to (\R\times S^*Q,\R\times\Lambda_K)$ in the symplectization.}
	\label{fig:sympdisk}
\end{figure}

Similarly, the only non-empty moduli spaces $\MM(a;\mathbf{n})$ of dimension $d$ are the following
(write $\mathbf{n}=n_1\cdots n_m$), see Figures \ref{fig:pihalf}, \ref{fig:pi}, \ref{fig:pipihalf}, \ref{fig:pipi}:
\begin{itemize}
\item
{$[2,0]$}: If $|a|=2$ and all $n_j=\frac12$, then $|\mathbf{n}|=0$ and $d=2$.
\item
{$[2,1]$}: If $|a|=2$ and $n_j=\frac12$ for all $j\ne s$ and $n_s=1$, then $|\mathbf{n}|=1$ and $d=1$.
\item
{$[2,\frac32]$}: If $|a|=2$ and $n_j=\frac12$ for all $j\ne s$ and $n_s=\frac32$, then $|\mathbf{n}|=2$ and $d=0$.
\item
{$[2,2]$}: If $|a|=2$ and $n_j=\frac12$ for all $j\ne s,t$, and $n_s=n_t=1$, then $|\mathbf{n}|=2$ and $d=0$.
\item
{$[1,0]$}: If $|a|=1$ and all $n_j=\frac12$, then $|\mathbf{n}|=0$ and $d=1$.
\item
{$[1,1]$}: If $|a|=1$ and $n_j=\frac12$ for all $j\ne s$ and $n_s=1$, then $|\mathbf{n}|=1$ and $d=0$.
\item
{$[0,0]$}: If $|a|=0$ and $n_j=\frac12$ all $j$, then $|\mathbf{n}|=0$ and $d=0$.
\end{itemize}

\begin{figure}
\labellist
\small\hair 2pt
\pinlabel ${\color{red} a}$ at 64 134
\pinlabel ${\color{red} Q}$ at 174 17
\pinlabel ${\color{blue} L_K}$ at 28 65 
\pinlabel ${\color{blue} L_K}$ at 100 65
\pinlabel $\textstyle{\frac{1}{2}}$ at 42 10
\pinlabel $\textstyle{\frac{1}{2}}$ at 96 10
\endlabellist	
	\centering
	\includegraphics[width=.6\linewidth]{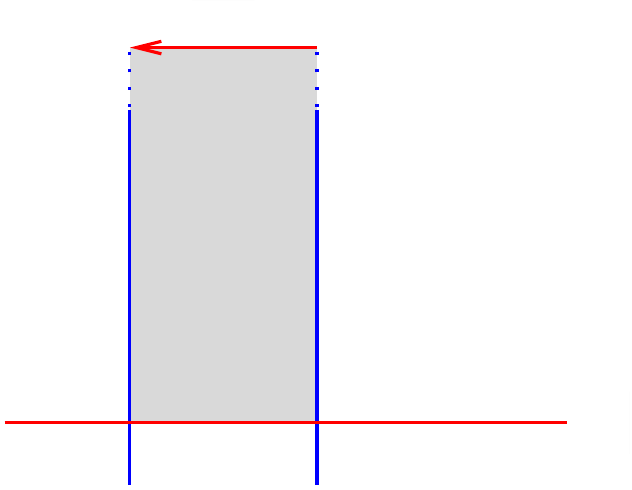}
	\begin{align*}
	[2,0]: \hspace{4ex} & |a|=2, \hspace{2ex} \dim=2 \\
	[1,0]:  \hspace{4ex} & |a|=1, \hspace{2ex}  \dim=1 \\
	[0,0]:  \hspace{4ex} & |a|=0, \hspace{2ex}  \dim=0
	\end{align*}
	\caption{Curves with $|\mathbf{n}|=0$.}
	\label{fig:pihalf}
\end{figure}

\begin{figure}
\labellist
\small\hair 2pt
\pinlabel ${\color{red} a}$ at 75 134
\pinlabel ${\color{red} Q}$ at 174 17
\pinlabel ${\color{blue} L_K}$ at 28 65 
\pinlabel ${\color{blue} L_K}$ at 82 65 
\pinlabel ${\color{blue} L_K}$ at 116 65
\pinlabel $\textstyle{\frac{1}{2}}$ at 42 10
\pinlabel $1$ at 79 10
\pinlabel $\textstyle{\frac{1}{2}}$ at 114 10
\endlabellist	
	\centering
	\includegraphics[width=.6\linewidth]{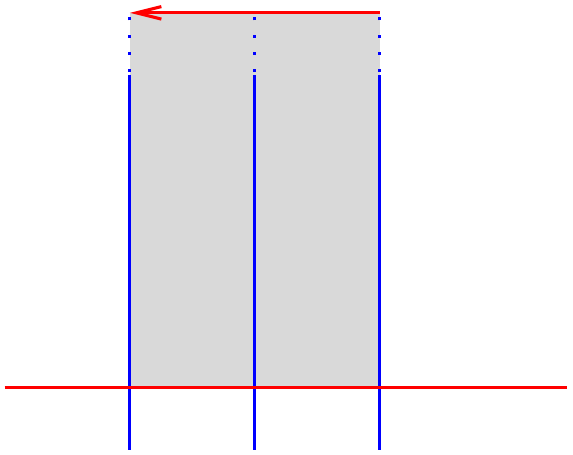}
		\begin{align*}
	[2,1]: \hspace{4ex} & |a|=2, \hspace{2ex} \dim=1 \\
	[1,1]:  \hspace{4ex} & |a|=1, \hspace{2ex}  \dim=0
	\end{align*}
	\caption{Curves with $|\mathbf{n}|=1$.}
	\label{fig:pi}
\end{figure}

\begin{figure}
\labellist
\small\hair 2pt
\pinlabel ${\color{red} a}$ at 86 142
\pinlabel ${\color{red} Q}$ at 174 28
\pinlabel ${\color{blue} L_K}$ at 28 73 
\pinlabel ${\color{blue} L_K}$ at 82 73 
\pinlabel ${\color{blue} L_K}$ at 138 73
\pinlabel $\textstyle{\frac{1}{2}}$ at 42 18
\pinlabel $\textstyle{\frac{3}{2}}$ at 68 18
\endlabellist	
	\centering
	\includegraphics[width=.6\linewidth]{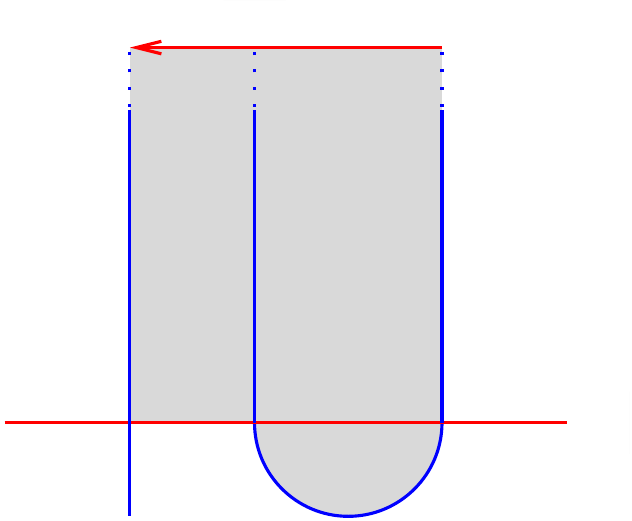}
			\begin{align*}
	[2,\textstyle{\frac{3}{2}}]: \hspace{4ex} & |a|=2, \hspace{2ex} \dim=0
	\end{align*}
	\caption{Curves with $|\mathbf{n}|=2$.}
	\label{fig:pipihalf}
\end{figure}

\begin{figure}
\labellist
\small\hair 2pt
\pinlabel ${\color{red} a}$ at 74 134
\pinlabel ${\color{red} Q}$ at 174 17
\pinlabel ${\color{blue} L_K}$ at 20 65 
\pinlabel ${\color{blue} L_K}$ at 128 65
\pinlabel $\textstyle{\frac{1}{2}}$ at 34 10
\pinlabel $1$ at 65 10
\pinlabel $1$ at 92 10
\pinlabel $\textstyle{\frac{1}{2}}$ at 124 10
\endlabellist		\centering
	\includegraphics[width=.6\linewidth]{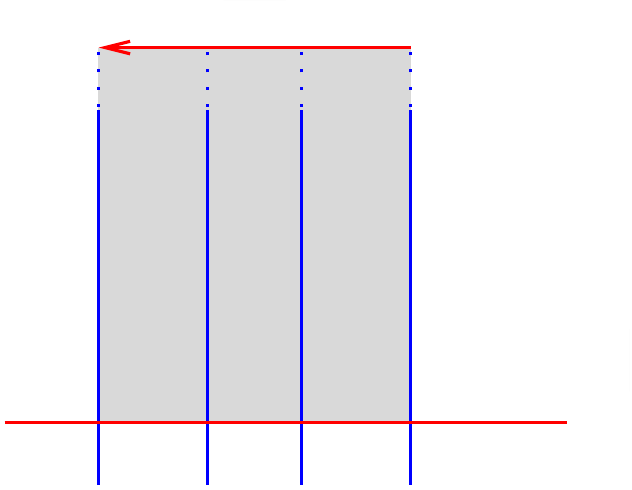}
			\begin{align*}
	[2,2]: \hspace{4ex} & |a|=2, \hspace{2ex} \dim=0
	\end{align*}
	\caption{Curves with $|\mathbf{n}|=2$.}
	\label{fig:pipi}
\end{figure}
It follows from Theorem \ref{t:cp} and Lemma \ref{l:tv} (see also Section \ref{s:stab}) that the $0$-dimensional moduli spaces listed above are transversely cut out compact $0$-manifolds. The corresponding structure theorems for moduli spaces of dimension one and two are the following.

Recall that $\R$ acts on holomorphic disks in the symplectization
$\R\times S^{\ast}Q$ by translation. Dividing out this action, we
obtain moduli spaces of dimension zero and one in the symplectization
which have the following structure.

\begin{thm}\label{t:sy}
Moduli spaces of holomorphic disks in the symplectization satisfy the following.
\begin{itemize}
\item[{\rm (i)}]
If $\MM^{\rm sy}(a;\mathbf{b})$ is a moduli space of type $[2,1]^{\rm
  sy}$ or of type $[1,0]^{\rm sy}$, then $\MM^{\rm
  sy}(a;\mathbf{b})/\R$ is a compact $0$-manifold.
\item[{\rm (ii)}]
If $\MM^{\rm sy}(a;\mathbf{b})$ is a moduli space of type $[2,0]^{\rm
  sy}$, then $\MM^{\rm sy}(a;\mathbf{b})/\R$ admits a natural
compactification $\overline{{\MM}^{\rm sy}}(a,\mathbf{b})$ which is a
compact $1$-manifold with boundary. Boundary points of $\overline{{\MM}^{\rm
    sy}}(a;\mathbf{b})$ correspond to two-level disks $\hat v$ where
the level one disk $v^{1}$ is of type $[2,1]^{\rm sy}$, and where
exactly one level two disk $v^{2,s}$ is of type $[1,0]^{\rm sy}$ and
all other level two disks $v^{2,j}$, $j\ne s$, are trivial Reeb chord
strips.
\end{itemize}
\end{thm}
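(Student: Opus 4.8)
\textbf{Proof proposal for Theorem~\ref{t:sy}.}

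The plan is to reduce everything to two inputs that are already available in the excerpt: the SFT compactness result (Theorem~\ref{t:cp_sy}) and the transversality result (Lemma~\ref{l:tv}, together with the stabilization of domains in Section~\ref{s:stab}), supplemented by a gluing theorem for two-level symplectization configurations. For part~(i), I would argue as follows. By Lemma~\ref{l:weightdimsymp} and Lemma~\ref{l:tv}, a moduli space $\MM^{\rm sy}(a;\mathbf{b})$ of type $[2,1]^{\rm sy}$ or $[1,0]^{\rm sy}$ is a transversely cut out manifold of dimension $1$, on which $\R$ acts freely (the action is free because $J_1$ is $\R$-invariant and no disk is $\R$-invariant: an $\R$-invariant disk would be a union of Reeb chord strips, which are rigid and hence not of type $[2,1]^{\rm sy}$ or $[1,0]^{\rm sy}$). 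Hence $\MM^{\rm sy}(a;\mathbf{b})/\R$ is a $0$-manifold. For compactness, apply Theorem~\ref{t:cp_sy}: a sequence with no convergent subsequence in $\MM^{\rm sy}(a;\mathbf{b})/\R$ would converge to a broken symplectization disk with $r\geq 2$ levels; by the additivity of the Fredholm index over levels and the fact that every non-trivial level has dimension $\geq 1$ after quotienting by $\R$ (again using that $0$-dimensional symplectization moduli spaces contain only Reeb chord strips, forced by Lemma~\ref{l:tv}), the total dimension would be $\geq 2$, contradicting $\dim=1$. So no breaking occurs and $\MM^{\rm sy}(a;\mathbf{b})/\R$ is compact.

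For part~(ii), let $\MM^{\rm sy}(a;\mathbf{b})$ be of type $[2,0]^{\rm sy}$, so $\MM^{\rm sy}(a;\mathbf{b})/\R$ is a transversely cut out $1$-manifold. First I would classify the possible limits. By Theorem~\ref{t:cp_sy}, a non-convergent sequence limits to a broken disk $\hat v$ with $r\geq 2$ levels. Using index additivity together with the non-existence of rigid non-trivial symplectization disks, the only way to partition the total dimension $1$ among $\geq 2$ levels, each non-trivial level contributing $\geq 1$ after the $\R$-quotient, is $r=2$ with exactly one level-one disk $v^1$ and exactly one non-trivial level-two disk $v^{2,s}$, all other level-two components being trivial Reeb chord strips. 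Matching Reeb chords and degrees then forces $v^1$ to be of type $[2,1]^{\rm sy}$ and $v^{2,s}$ of type $[1,0]^{\rm sy}$; this is exactly the stated boundary configuration. Conversely, given such a two-level configuration I would prove via a standard pregluing-plus-Newton-iteration gluing theorem (as in \cite{BEHWZ}, \cite{EESori}, \cite{E_rsft}) that it is the unique limit of a half-open arc of genuine disks in $\MM^{\rm sy}(a;\mathbf{b})/\R$, parametrized by the gluing length $T\in[T_0,\infty)$, giving a collar chart near the boundary. Smoothness of the interior is Lemma~\ref{l:tv}; the gluing charts provide the boundary charts, so $\overline{\MM^{\rm sy}}(a;\mathbf{b})$ is a compact $1$-manifold with boundary.

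The main obstacle is the gluing step underlying part~(ii): one must show that the linearized operator obtained by pregluing $v^1$ and the non-trivial $v^{2,s}$ (with trivial strips at the remaining negative punctures) is surjective with one-dimensional kernel matching the conformal gluing parameter, that the relevant quadratic estimates hold uniformly in the gluing length $T$, and that the gluing map is a diffeomorphism onto a half-open collar. Because the broken configuration involves trivial Reeb chord strips at all but one negative puncture, care is needed with the weighted Sobolev setup of Section~\ref{s:confcob} so that the exponential weights are compatible across the breaking; this is where the $\R$-invariance of $J_1$ near Reeb chords (condition (v) in Definition~\ref{def:admissible}) and the stabilization by marked points (Section~\ref{s:stab}) are used to fix gauge. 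All of this is essentially standard in SFT, so I would carry out the pregluing and the contraction-mapping argument in the following subsections and only flag the points where trivial strips or the switching boundary condition cause the setup to differ from the classical case, referring to \cite{CEL} and \cite{EESori} for the routine analytic estimates.
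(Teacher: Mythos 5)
Your proposal is correct and follows essentially the same route as the paper: part (i) from transversality (Lemma~\ref{l:tv}), the dimension formula and SFT compactness with index additivity, and part (ii) by classifying the broken limits the same way and then producing boundary collar charts via pregluing and Floer's Picard lemma (the paper carries this out in Section~\ref{ss:sympglu}, identifying the gluing parameter with the newborn conformal variation shifting the boundary maxima of the second-level disk and topologizing the boundary through the compactified space of conformal structures). The technical points you flag—uniform invertibility in the presence of trivial Reeb chord strips, weight compatibility, and gauge fixing by stabilization—are exactly the ones the paper's gluing section addresses.
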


In the cotangent bundle we have moduli spaces of dimension zero, one,
or two. We start with the $0$-dimensional case.

\begin{thm}\label{t:0dimcob}
Moduli spaces $\MM(a;\mathbf{n})$ of holomorphic disks of types
$[2,\frac32]$, $[2,2]$, $[1,1]$, or $[0,0]$ are compact
$0$-dimensional manifolds.
\end{thm}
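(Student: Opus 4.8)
\textbf{Proof plan for Theorem~\ref{t:0dimcob}.}
The plan is to combine the compactness result (Theorem~\ref{t:cp}) with the transversality and dimension count (Lemma~\ref{l:tv}, Lemmas~\ref{l:weightdimcob} and~\ref{l:weightdimsymp}) together with the gluing analysis of this section to rule out every possible degeneration. By Lemma~\ref{l:tv} each of the moduli spaces $\MM(a;\mathbf{n})$ of types $[2,\tfrac32]$, $[2,2]$, $[1,1]$, $[0,0]$ is a transversely cut out manifold, and by the dimension formula $\dim\MM(a;\mathbf{n})=|a|-|\mathbf{n}|$ each of them is $0$-dimensional (in each listed case $|a|-|\mathbf{n}|=0$; recall that adding marked points as in Section~\ref{s:stab} does not change the moduli space). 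So the only thing to prove is compactness, i.e.\ that no sequence in such a space can run off to a broken configuration.

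First I would invoke Theorem~\ref{t:cp}: any sequence $\{u_j\}\subset\MM(a;\mathbf{n})$ has a subsequence converging to a broken cobordism disk, consisting of some levels of symplectization disks, one level of cobordism disks, and some levels of constant disks. I then argue that in our situation no nontrivial breaking can occur. The key point is the additivity of the formal dimension under breaking (noted in Section~\ref{sec:symp-disks} for the symplectization and following from Lemmas~\ref{l:weightdimcob},~\ref{l:weightdimsymp} in general): the formal dimension of the limiting broken disk equals the sum of the formal dimensions of its (non-constant) components, after dividing each symplectization level by the $\R$-action. For transversely cut out disks all these dimensions are nonnegative; moreover each symplectization level that is not a collection of trivial strips, counted modulo $\R$, has dimension $\ge 0$, and there must be at least one nontrivial disk on each such level, so each breaking off of a symplectization level consumes at least dimension $0$ after dividing by $\R$ but \emph{loses} one dimension to the translation that is no longer free. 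Similarly, the constant disks encode collisions of Lagrangian intersection punctures; a constant disk with $k+1$ punctures and weights $n_0=\sum n_i$ on a $k$-punctured boundary configuration carries a strictly positive formal codimension unless it is trivial. Putting these together, any genuine breaking would force the total formal dimension to be $<0$, contradicting $\dim\MM(a;\mathbf{n})=0$ and transversality. Hence the limit is again an element of $\MM(a;\mathbf{n})$, so the space is compact; being a compact $0$-manifold it is a finite set of points.

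The main obstacle is bookkeeping the contributions of the constant disk levels correctly: one must check that a nontrivial constant disk (encoding a collision of two or more Lagrangian intersection punctures) always strictly decreases the available formal dimension, so that it cannot appear in the limit of a $0$-dimensional family. This is exactly the content of the winding-number and index analysis of Section~\ref{ss:constglu} (and Remark~\ref{r:breakingmodelclose}): replacing several punctures of weights $n_{i_1},\dots,n_{i_r}$ by a single puncture of weight $\sum n_{i_\ell}$ changes $|\mathbf{n}|$ by $\sum 2(n_{i_\ell}-\tfrac12) - 2(\sum n_{i_\ell}-\tfrac12) = -2(r-1)$, hence \emph{increases} $|\mathbf{n}|$ unless $r=1$, i.e.\ decreases the formal dimension of the remaining cobordism disk by $2(r-1)>0$. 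Combined with the nonnegativity of all component dimensions this rules out such degenerations in dimension $0$. The symplectization breakings are ruled out in the same way, exactly as in the proof of Theorem~\ref{t:sy}(i), which I would simply cite. I would also remark that all individual strata are smooth by Lemma~\ref{l:tv} and the gauge-fixing of Section~\ref{s:stab}, so no issue of non-transverse limit points arises.
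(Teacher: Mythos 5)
Your proposal is correct and follows essentially the same route as the paper, whose proof of Theorem~\ref{t:0dimcob} simply cites Lemma~\ref{l:tv} and Theorem~\ref{t:cp}; you have merely made explicit the dimension count that rules out symplectization and constant-disk degenerations. (One cosmetic slip: merging $r$ punctures of weights $n_{i_1},\dots,n_{i_r}$ into one of weight $\sum_\ell n_{i_\ell}$ increases $|\mathbf{n}|$ by $r-1$, not $2(r-1)$, but the strict positivity for $r\ge 2$ is all your argument needs.)
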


In the $1$-dimensional case we consider two cases separately. We first
consider the case when $|\mathbf{n}|=0$.

\begin{thm}\label{t:[1,0]}
Moduli spaces $\MM(a;\mathbf{n})$ of disks of type $[1,0]$ admit
natural compactifications $\overline{\MM}(a;\mathbf{n})$ which are
$1$-manifolds with boundary. Boundary points of $\overline{\MM}(a;\mathbf{n})$
correspond to the following.
\begin{itemize}
\item[{\rm (a)}] Two-level disks $\dot v$ where the level one disk $v^{1}$ has type $[1,1]$ and where the second level is a three punctured constant disk $v^{2}$ attached at the Lagrangian intersection puncture of $v^{1}$ where the asymptotic winding number equals $1$.
\item[{\rm (b)}] Two-level disks $\dot v$ where the top level disk $v^{1}$ is a symplectization disk of type $[1,0]^{\rm sy}$ and where all the second level disks $v^{2,j}$, $1\le j\le k$ are of type $[0,0]$.
\item[{\rm (c)}] If there are no entries in $\mathbf{n}$, then all points of the reduced moduli space $\MM^{\rm sy}(a;\varnothing)/\R$ containing disks of type $[1,0]^{\rm sy}$ appear as boundary points.
\end{itemize}
\end{thm}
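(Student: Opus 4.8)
\textbf{Proof proposal for Theorem~\ref{t:[1,0]}.}

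The plan is to follow the standard SFT/Floer-theoretic strategy for establishing that a $1$-dimensional moduli space admits a compactification as a manifold with boundary: first identify the possible limits (compactness), then check that each limit configuration has the expected codimension, and finally glue back to produce boundary charts. Transversality of the individual strata is already provided by Lemma~\ref{l:tv}, and the set-theoretic description of limits of sequences in $\MM(a;\mathbf{n})$ is provided by the Compactness Theorem~\ref{t:cp}: any sequence subconverges to a broken cobordism disk consisting of some symplectization levels, one cobordism level, and possibly some constant levels encoding collisions of Lagrangian intersection punctures.

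First I would enumerate, by an index count, which broken configurations can actually occur as limits of a $1$-dimensional family of type $[1,0]$. Since $|a|=1$, a symplectization disk on top must have $|a|=1$ and hence (because symplectization disks of dimension $0$ are trivial strips) the top symplectization level, if nontrivial, must be of type $[1,0]^{\rm sy}$ (dimension $1$ after quotienting by $\R$), forcing all lower cobordism pieces to be rigid of type $[0,0]$; this gives case (b). If instead the limit has a constant level, the constant disk has $\ge 3$ punctures and its positive puncture carries winding number $n_0 = \sum n_j \ge \tfrac{3}{2}$ if $\ge 3$ of the incoming winding numbers are $\tfrac12$, or $n_0 = 1$ if exactly two incoming $\tfrac12$'s collide. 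The index formula $\dim\MM(a;\mathbf{n}') = |a| - |\mathbf{n}'|$ shows that attaching a constant disk that merges two $\tfrac12$-punctures into a single puncture of winding number $1$ drops the dimension by exactly $1$; merging three (or producing a $\tfrac32$ puncture) drops it by $2$ and hence cannot appear in the boundary of a $1$-manifold. This yields case (a). Case (c), the degenerate situation $\mathbf{n}=\varnothing$ where there are no cobordism switches at all and the whole limit lives in the symplectization, is handled by the same bookkeeping together with Theorem~\ref{t:sy}(ii): the moduli space $\MM^{\rm sy}(a;\varnothing)/\R$ of type $[1,0]^{\rm sy}$ is already $0$-dimensional and supplies the boundary.

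Next I would establish the gluing statements showing each such limit configuration is the limit of a unique germ of a family in $\MM(a;\mathbf{n})$, i.e.~that the compactification is a genuine $1$-manifold with boundary near each boundary point. For the type (b) breaking — a rigid cobordism disk (or collection of them) glued to a $1$-parameter symplectization level — this is the usual SFT gluing at a Reeb chord; I would invoke the standard gluing theorem, noting (as the introduction to Section~\ref{sec:gluing} says) that it differs from the classical case only in routine bookkeeping, and that the gluing parameter provides the half-open collar. For the type (a) breaking — a constant three-punctured disk encoding two colliding Lagrangian intersection punctures — I would use the local model near $K$ from Lemma~\ref{l:knotnbhd} together with the explicit Fourier/Taylor asymptotics in Section~\ref{ss:switching} and Remark~\ref{r:breakingmodelclose}: the family is modeled near the boundary on the spike family $f_\eps(z)=\sqrt{z(z-\eps)}$ of~\eqref{eq:1-dimmodel}, and the parameter $\eps \ge 0$ is precisely the collar coordinate. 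The orientation compatibility of these boundary identifications is taken care of in Section~\ref{ss:signsandchainmap}. Finally, Theorem~\ref{thm:finite} bounds the number of switches uniformly, so only finitely many strata are involved and the compactification is compact.

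The main obstacle will be the gluing analysis at the constant-disk degeneration in case (a): unlike an ordinary Reeb-chord or Lagrangian-intersection gluing, here one is gluing in a \emph{constant} disk with $\ge 3$ punctures, so the domain degenerates (colliding conformal structures) at the same time as the map, and one must set up the pregluing and the implicit-function-theorem argument in the strip-with-slits model of Section~\ref{ssec:confrep}, keeping careful track of the weighted Sobolev norms (with the exponential weights $\theta'$, $\theta''$ that distinguish winding number $\tfrac12$ from $1$) and of the extra marked points used in Section~\ref{s:stab} to stabilize unstable domains. Establishing that the glued solutions exhaust a full one-sided neighborhood and that the gluing map is a diffeomorphism onto a half-open interval is the technical heart; everything else is index bookkeeping plus citation of the compactness and transversality results already in hand.
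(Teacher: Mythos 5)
Your proposal is correct and takes essentially the same route as the paper: transversality (Lemma~\ref{l:tv}) together with the Compactness Theorem~\ref{t:cp} identify the listed broken configurations as the only possible limits, and the constant-disk and symplectization gluing analyses via Floer's Picard lemma (Lemma~\ref{l:FloerPicard}) provide the boundary collars, with the gluing parameter (identified with a conformal variation, matching your $\eps$ in the local model~\eqref{eq:1-dimmodel}) as boundary coordinate. The only minor discrepancy is in case (c): the relevant symplectization statement is Theorem~\ref{t:sy}(i) rather than (ii), and the paper parametrizes the collar there by the $\R$-translations of the disk escaping into the $\R$-invariant end, a point your sketch leaves implicit.
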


In the second $1$-dimensional case $|\mathbf{n}|=1$ and we have the following.

\begin{thm}\label{t:[2,1]}
Moduli spaces $\MM(a;\mathbf{n})$ of disks of type $[2,1]$ admit
natural compactifications $\overline{\MM}(a;\mathbf{n})$ which are
$1$-manifolds with boundary. Boundary points of $\overline{\MM}(a;\mathbf{n})$
correspond to the following.
\begin{itemize}
\item[{\rm (a)}] Two-level disks $\dot v$ where the level one disk $v^{1}$ has type $[2,2]$ and where the second level is a three punctured constant disk $v^{2}$ attached at the new-born Lagrangian intersection puncture of $v^{1}$ with winding number $1$.
\item[{\rm (b)}] Two-level disks $\dot v$ where the top level disk $v^{1}$ is a symplectization disk of type $[2,1]^{\rm sy}$ and where the second level consists of disks $v^{2,j}$, $1\le j\le k$ such that for some $s$, $v^{2,s}$ has type $[1,1]$ and $v^{2,j}$ has type $[0,0]$ for $j\ne s$.
\item[{\rm (c)}] Two-level disks $\dot v$ where the level one disk is
  of type $[2,\frac32]$ and where the second level disk is a constant
  three punctured disk attached at the Lagrangian intersection
  puncture with winding number $\frac32$. (Here the constant disk has winding number $\frac32$ at its positive puncture, and $1$ and $\frac12$ at its negative punctures.)
\end{itemize}
\end{thm}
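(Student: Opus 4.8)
The statement to prove is Theorem~\ref{t:[2,1]}, describing the compactification of one-dimensional moduli spaces $\MM(a;\mathbf{n})$ of type $[2,1]$ (so $|a|=2$, exactly one entry $n_s=1$ and all others $\tfrac12$). The plan is to follow the same template already used for Theorems~\ref{t:sy}, \ref{t:0dimcob} and \ref{t:[1,0]}, combining the Compactness Theorem~\ref{t:cp} with a gluing analysis that upgrades each boundary configuration to an actual boundary chart. First I would invoke Theorem~\ref{t:cp}: any sequence in $\MM(a;\mathbf{n})$ has a subsequence converging to a broken cobordism disk. The index additivity from Lemmas~\ref{l:weightdimcob} and \ref{l:weightdimsymp} (together with the fact that $\R$-invariant symplectization levels of dimension $0$ are trivial strips, and that constant levels consume no index) forces the possible limits: either the conformal structure degenerates with two Lagrangian intersection punctures colliding, producing a three-punctured constant disk attached to a lower disk, or a genuine SFT-type level splitting occurs with a symplectization disk on top. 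Examining which combinations have total index $1$ on the principal (non-constant, non-strip) part leaves exactly the three cases (a), (b), (c) listed.

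For case (a) the top level is of type $[2,2]$ (index $0$, compact by Theorem~\ref{t:0dimcob}) with a three-punctured constant disk attached at a winding-number-$1$ puncture; for case (b) the top level is a symplectization disk of type $[2,1]^{\rm sy}$ (index $1$ mod $\R$), with one second-level disk of type $[1,1]$ and the rest of type $[0,0]$ (all compact by Theorems~\ref{t:sy} and \ref{t:0dimcob}); for case (c) the top level is of type $[2,\tfrac32]$ (index $0$) with a constant three-punctured disk of winding numbers $(\tfrac32;1,\tfrac12)$ attached at the weight-$\tfrac32$ puncture. Each of these is a rigid configuration. The bulk of the argument is the gluing theorem: for each such boundary configuration there is a neighborhood in a compactified moduli space diffeomorphic to a half-open interval $[0,\eps)$, the configuration sitting at $0$. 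The gluing parameter is, respectively, the length of the short boundary segment created when the two colliding punctures are separated (cases (a) and (c)) or the SFT neck-length parameter at the breaking Reeb chord (case (b)). I would set this up exactly as in the cited references for Legendrian contact homology (\cite{EES2}, \cite{ES}) and in \cite{CEL}, using the standard-domain / strip-with-slits models of Section~\ref{ssec:confrep} and the stabilized configuration spaces of Section~\ref{s:confcob}, noting that the analytic estimates are the same as in the already-established cases and only the asymptotic decay weights (chosen via $\delta_0$ relative to the complex angles $\theta',\theta''$ at $a$) need tracking. Transversality of all strata is Lemma~\ref{l:tv}, so each stratum is a manifold of the predicted dimension and the glued pieces fit together to give a compact $1$-manifold with boundary.

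The part requiring the most care, and the main obstacle, is the constant-disk gluing in cases (a) and (c): here one glues a three-punctured \emph{constant} disk to the lower level, and the subtlety is that the gluing is governed not by an exponentially small neck parameter but by the rate at which the newborn puncture's winding number ($1$ in (a), $\tfrac32$ in (c)) manifests in the Fourier expansion of the principal disk near that puncture. This is precisely the Taylor-versus-Fourier correspondence analyzed in Remark~\ref{r:breakingmodelclose}, which shows the nearby honest disks look like the model maps $\sqrt{z(z-\eps)}$ resp.\ $\sqrt{z(z+\delta)(z-\eps)}$ of Section~\ref{ss:spikes}; I would lean on that local model to identify the gluing parameter with the separation of the colliding punctures and to produce the chart. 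One then checks, as in Section~\ref{ss:signsandchainmap}, that the orientations induced from the capping operators make these boundary identifications compatible with the coherent system of orientations, which is needed downstream for the chain map equation. Finally, since $\mathbf{n}$ is nonempty in the $[2,1]$ case, there is no analogue of the purely-symplectization boundary contribution (c) of Theorem~\ref{t:[1,0]}; every boundary point carries at least one Lagrangian intersection puncture, so the enumeration above is exhaustive and the theorem follows.
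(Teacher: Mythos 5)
Your proposal is correct and follows essentially the same route as the paper: compactness (Theorem~\ref{t:cp}) plus index additivity and transversality (Lemma~\ref{l:tv}) to identify the configurations (a)--(c), then Floer--Picard gluing with the gluing parameter realized as a conformal variation (constant-disk gluing with the modified weight at the winding-$\tfrac32$ puncture for (c), symplectization gluing for (b)), with the local models of Remark~\ref{r:breakingmodelclose} providing the boundary charts. The only point worth noting is that in case (c) the paper additionally observes (Remark~\ref{rmk:twoconst}) that there are two combinatorial choices of constant disk with negative winding numbers $(1,\tfrac12)$ vs.\ $(\tfrac12,1)$, whose boundary strata fit together smoothly -- a detail used later for Theorem~\ref{t:imm} but not essential to the statement you proved.
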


\begin{remark}\label{r:1-dimcornercoord}
In order to parameterize a neighborhood of the boundary points in
Theorem~\ref{t:[1,0]}(a) and Theorem \ref{t:[2,1]}(a) one can use
the local model~\eqref{eq:1-dimmodel} from Section~\ref{ss:spikes}.
Here the location $\eps>0$ of the puncture on the real axis can be
used as local coordinate for the moduli space. Furthermore,
the maps in the moduli space differ from the map
in~\eqref{eq:1-dimmodel} by terms of order $\Ordo(z^{2})$, so they
have a spike that vanishes as $\eps\to 0$ as shown at the top of
Figure~\ref{fig:spike-families}.
Similarly, in order to parameterize a neighborhood of the boundary points in
Theorem \ref{t:[2,1]} (c) one can use the local
model~\eqref{eq:2-dimmodel} from Section~\ref{ss:spikes} with
$\delta=0$. Here the location $\eps>0$ of the puncture on the real
axis can be used as local coordinate for the moduli space. Furthermore,
the maps in the moduli space differ from the map
in~\eqref{eq:2-dimmodel} by terms of order $\Ordo(z^{5/2})$, so they
have a spike that vanishes as $\eps\to 0$ as shown at the bottom of
Figure~\ref{fig:spike-families} (with $\delta=0$).
See Remark~\ref{r:1-dimfinal} for details.
\end{remark}

In the 2-dimensional case we have the following description of the
structure of the moduli space which is naturally more involved.

\begin{thm}\label{t:[2,0]}
Moduli spaces $\MM(a;\mathbf{n})$ of disks of type $[2,0]$ admit
natural compactifications $\overline{\MM}(a;\mathbf{n})$ which are
$2$-manifolds with boundary with corners.
The top-dimensional strata of the boundary have codimension $1$ in
$\overline{\MM}(a;\mathbf{n})$ and correspond to the following.
\begin{itemize}
\item[{\rm (a1)}] Two-level disks $\dot v$ where the top level disk
  $v^{1}$ has type $[2,1]$ and where the second level is a three
  punctured constant disk $v^{2}$ attached at the Lagrangian
  intersection puncture of $v^{1}$ where the asymptotic winding number
  equals $1$.
\item[{\rm (b1)}] Two-level disks $\dot v$ where the top level disk $v^{1}$ is a symplectization disk of type $[2,0]^{\rm sy}$ and where all the second level disks $v^{2,j}$, $1\le j\le k$ are of type $[0,0]$.
\item[{\rm (c1)}] Two-level disks $\dot v$ where the top level disk
  $v^{1}$ is a symplectization disk of type $[2,1]^{\rm sy}$ and where
  the second level consists of disks $v^{2,j}$, $1\le j\le k$ such that for some $s$, $v^{2,s}$ has type $[1,0]$ and $v^{2,j}$ has type $[0,0]$ for $j\ne s$.
\end{itemize}

The corner points on the boundary (i.e., the codimension two strata) of
$\overline{\MM}(a;\mathbf{n})$ correspond to the following.
\begin{itemize}
\item[{\rm (a2)}] Two-level disks $\dot v$ where the top level disk
  $v^{1}$ has type $[2,2]$ and where the second level consists of two
  three punctured constant disks $v^{2,1}$ and $v^{2,2}$ attached at
  the Lagrangian intersection punctures of $v^{1}$ where the winding numbers are $1$.
\item[{\rm (b2)}] Three-level disks $\dot v$ where the top level disk $v^{1}$ is a symplectization disk of type $[2,1]^{\rm sy}$, where the second level disk $v^{2,s}$ is of type $[1,1]$ and all other second level disks $v^{2,j}$, $j\ne s$ are of type $[0,0]$, and where the third level consists of a constant three punctured disk $v^{3}$ attached at the Lagrangian intersection puncture of $v^{2,s}$ with winding number $1$.
\item[{\rm (c2)}] Three-level disks $\dot v$ where the top level disk $v^{1}$ is a symplectization disk of type $[2,1]^{\rm sy}$, where the second level disk $v^{2,s}$ is of type $[1,0]^{\rm sy}$ and all other second level disks $v^{2,j}$ are Reeb chord strips, and where the third level consists of disks $v^{3,j}$ all of type $[0,0]$.
\item[{\rm (d2)}] Two-level disks $\dot v$ where the top level disk
  $v^{1}$ has type $[2,\tfrac32]$ and where the second level consists of a
  4- punctured constant disk $v^{2}$ attached at
  the Lagrangian intersection puncture of $v^{1}$ where the
  asymptotic winding number is $\tfrac32$.
\end{itemize}
\end{thm}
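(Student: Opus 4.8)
\textbf{Proof proposal for Theorem~\ref{t:[2,0]}.}

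The plan is to combine the Compactness Theorem~\ref{t:cp} with a family of gluing theorems, one for each type of degenerate configuration appearing in the statement. The overall strategy mirrors the standard construction of manifold-with-corners structures on moduli spaces in SFT, the main point being to verify that the configurations enumerated by compactness are exactly those that can be glued, and that each such configuration has a neighborhood diffeomorphic to a product of half-intervals of the correct codimension. First I would invoke Lemma~\ref{l:tv} to ensure all strata of $\MM(a;\mathbf{n})$ and of the various lower-dimensional moduli spaces are transversely cut out manifolds of the dimensions recorded in Section~\ref{s:mfdbdrycrn}; a dimension count then shows that the formal dimension of any broken configuration equals the sum of the dimensions of its pieces (with $\R$-quotients for symplectization levels), so that codimension-one degenerations are precisely the two-level configurations (a1), (b1), (c1) and codimension-two degenerations are the configurations (a2), (b2), (c2), (d2). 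One also checks, again by dimension count and transversality, that no other stable-disk configurations of the types allowed by Theorem~\ref{t:cp} can occur (e.g.\ symplectization levels with more than one nontrivial component, or interior bubbling, are excluded since $L$ is exact and there are no closed Reeb orbits).

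Next I would prove the gluing statements. For the two boundary strata involving colliding Lagrangian intersection punctures --- type (a1), where two $\tfrac12$-corners merge into a $1$-corner, carrying a constant three-punctured disk --- the relevant local model is~\eqref{eq:1-dimmodel}, $f_\eps(z)=\sqrt{z(z-\eps)}$, and the gluing parameter is $\eps\geq 0$; this is the content of the ``constant gluing'' analysis and I would refer to the corresponding results (Section~\ref{ss:constglu}) for the pregluing, Newton iteration, and surjectivity of the gluing map, noting the maps in the glued family differ from the model by $\Ordo(z^2)$ as in Remark~\ref{r:breakingmodelclose}. For the strata of type (b1) and (c1), which are ordinary SFT-type breakings along a Reeb chord, the gluing is the standard one for holomorphic disks in a symplectization glued to disks in the filling, with one gluing parameter $T\in[T_0,\infty]$; here one uses that all but one level-two disk is a trivial Reeb chord strip, so there is a single gluing region. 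Then, for the corner (codimension-two) strata, the key observation is that the two degenerations involved are supported in disjoint regions of the domain --- two disjoint collision regions in (a2), a collision region plus a neck in (b2), two disjoint necks in (c2), and a single ``triple collision'' region modeled by~\eqref{eq:2-dimmodel} with $\delta=0$ in (d2) --- so the gluing maps from the two (or, for (d2), the two-parameter) families can be performed simultaneously and independently, yielding a chart modeled on $[0,\infty)^2$ (or $[0,\infty)\times[0,\infty)$) near the corner. For (d2) specifically I would use the two-parameter model $f_{\delta,\eps}(z)=\sqrt{z(z+\delta)(z-\eps)}$ restricted to $\delta=0$ to see that approaching the $\tfrac32$-corner from within $\MM(a;\mathbf{n})$ is governed by the single parameter $\eps$, while the second corner parameter comes from the position of the extra puncture on the constant four-punctured disk; compare Remark~\ref{r:1-dimcornercoord} and the details promised in Remark~\ref{r:1-dimfinal}.

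Having established that every configuration in the compactification has a neighborhood chart of the stated form, I would assemble these into a manifold-with-corners atlas: the interior $\MM(a;\mathbf{n})$ is a $2$-manifold by Lemma~\ref{l:tv}; each codimension-one stratum (a1)/(b1)/(c1) has a collar neighborhood $[0,\eps)\times(\text{stratum})$; and each corner stratum (a2)/(b2)/(c2)/(d2) has a neighborhood $[0,\eps)^2\times(\text{point})$. Compatibility of overlapping charts follows from uniqueness in the gluing theorems together with the fact, noted in Section~\ref{s:stab}, that the smooth structure is independent of the auxiliary gauge-fixing marked points. Finally one checks that the enumeration is exhaustive and non-redundant, which again is a transversality-plus-dimension argument. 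I expect the main obstacle to be the corner gluing for configurations (b2) and especially (d2): one must show that the two degeneration parameters genuinely decouple --- i.e.\ that the simultaneous Newton iteration converges uniformly in both parameters and that the resulting two-parameter family fills out a full corner neighborhood without overcounting --- and in (d2) one must carefully match the analytic parameter $\eps$ of the model~\eqref{eq:2-dimmodel} (with $\delta=0$) against the combinatorial parameter given by the position of the marked point on the limiting constant disk, since these are a priori of different natures and their independence is what produces a genuine corner rather than a boundary point (compare the discussion of the $(Lag|Lag)^2$ phenomenon in the proof of Proposition~\ref{prop:b=intersdim2}).
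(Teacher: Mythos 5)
Your proposal follows essentially the same route as the paper: transversality (Lemma~\ref{l:tv}) and compactness (Theorem~\ref{t:cp}) enumerate the degenerate configurations, the constant-disk and symplectization gluing analyses of Sections~\ref{ss:constglu} and~\ref{ss:sympglu} give the codimension-one boundary charts, and each corner stratum is treated by a two-parameter gluing in which the linearized operator is inverted on the complement of the two relevant directions (two newborn conformal variations, shift plus conformal variation, two shifts, or conformal variation plus the extra conformal structure of the constant $4$-punctured disk), with the corner structure induced from the space of conformal structures. The one small inaccuracy is at (d2), where the paper uses the full two-parameter model $f_{\delta,\eps}$ of~\eqref{eq:2-dimmodel} (equivalently the gluing parameter together with the conformal structure of the constant $4$-punctured disk, cf.~Remark~\ref{r:2-dimfinal}) to chart the corner, whereas your $\delta=0$ restriction describes the interior curve coming from the type $[2,1]$ moduli space rather than a boundary face; your identification of the second parameter and the decoupling issue you flag are, however, exactly what the paper's argument resolves.
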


\begin{remark}\label{r:2-dimcornercoord}
In order to parameterize a neighborhood of the corner points in
Theorem \ref{t:[2,0]} (d2) one can use the local
model~\eqref{eq:2-dimmodel} from Section~\ref{ss:spikes}
Here the locations $(\eps,\delta)$ of the punctures on the real
axis can be used as local coordinates for the moduli space. Furthermore,
the maps in the moduli space differ from the map
in~\eqref{eq:2-dimmodel} by terms of order $\Ordo(z^{5/2})$, so they
have two spikes that vanish as $\eps,\delta\to 0$ as shown at the bottom of
Figure~\ref{fig:spike-families}.
See Remark~\ref{r:2-dimfinal} for details.
\end{remark}

Some of the moduli spaces above admit natural maps into others by forgetting some Lagrangian intersection punctures. We next describe such maps. It is convenient to write
\[
\mathbf{\tfrac12}^{s}=\tfrac12,\stackrel{s}{\dots},\tfrac12.
\]
We consider first the case when the target is a one dimensional moduli space.
\begin{thm}\label{t:emb}
Consider a moduli space $\MM(a;\mathbf{\tfrac12}^{s},1,\mathbf{\tfrac12}^{t})$ of disks of type $[1,1]$. Forgetting the $(s+1)^{\rm th}$ Lagrangian intersection puncture we get a map
\[
\MM(a;\mathbf{\tfrac12}^{s},1,\mathbf{\tfrac12}^{t})\to
\overline{\MM}(a;\mathbf{\tfrac12}^{s+t})
\]
into the compactified moduli space of disks of type $[1,0]$. This map is an embedding of a $0$-dimensional manifold into the interior of a $1$-manifold.
\end{thm}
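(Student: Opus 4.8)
The plan is to construct the forgetful map explicitly at the level of Fourier/Taylor expansions near the distinguished puncture and then argue injectivity and smoothness using the transversality already established in Lemma~\ref{l:tv} together with the compactness description of Theorem~\ref{t:[1,0]}. First I would recall the local picture from Section~\ref{ss:switching}: near a Lagrangian intersection puncture of winding number $1$, in the holomorphic coordinates of Lemma~\ref{l:knotnbhd}, a disk $u\in\MM(a;\mathbf{\tfrac12}^{s},1,\mathbf{\tfrac12}^{t})$ has normal component with leading Taylor coefficient $c_1\neq 0$ and $c_{1/2}=0$, exactly as in Proposition~\ref{prop:b=intersdim1}. Switching from the strip (Fourier) coordinate at the puncture to the half-disk (Taylor) coordinate via the biholomorphism $\chi$ of~\eqref{eq:chi} ``fills in'' the puncture: the boundary arc that switched between $Q$ and $L_K$ now passes smoothly through the point of $K$ where $c_0$ lies. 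Concretely, since the winding number is the integer $1$ rather than a half-integer, the two adjacent boundary components both lie on the same Lagrangian sheet (either both on $Q$ or both on $L_K$, cf.\ Cases 3 and 4 of Section~\ref{ss:series}), so after filling in, the domain has one fewer puncture and we obtain a genuine cobordism disk in $\MM(a;\mathbf{\tfrac12}^{s+t})$ — or, in degenerate position, a boundary point thereof. This defines the map
\[
\MM(a;\mathbf{\tfrac12}^{s},1,\mathbf{\tfrac12}^{t})\to
\overline{\MM}(a;\mathbf{\tfrac12}^{s+t}).
\]

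Next I would identify the image precisely. A disk $v\in\MM(a;\mathbf{\tfrac12}^{s+t})$ lies in the image exactly when, at some boundary point, $v$ meets $K$; that is, when $v$ belongs to the locus $\delta\MM(a;\mathbf{\tfrac12}^{s+t})$ of Proposition~\ref{prop:b=intersdim1} (with the marked point recording the position of the intersection along the $(s+1)$-th boundary arc). Proposition~\ref{prop:b=intersdim1} already gives a natural orientation-preserving identification $\delta\MM(a;\mathbf{n})\cong\MM(a;\mathbf{n}'')$ where $\mathbf{n}''$ is $\mathbf{n}$ with a new entry $1$ inserted; that is exactly our map. So the content of Theorem~\ref{t:emb} beyond Proposition~\ref{prop:b=intersdim1} is the \emph{global} statement: the source is a compact $0$-manifold, the target is a $1$-manifold, and the map is an embedding landing in the \emph{interior}. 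Compactness and $0$-dimensionality of the source follow from Theorem~\ref{t:0dimcob} (type $[1,1]$). That the target is a $1$-manifold with boundary is Theorem~\ref{t:[1,0]}. For injectivity: two disks in the source with the same image $v$ would, after filling in, agree as holomorphic maps; but then they agree everywhere by unique continuation, and the distinguished puncture is recovered as the unique boundary point where $v$ meets $K$ with the prescribed winding, so the two coincide. (Here one uses that a generic such $v$ meets $K$ transversely at isolated boundary points — part of the transversality package, via the evaluation map $\ev:\MM^\ast(a;\mathbf{n})\to L$ being transverse to $K$, as in Proposition~\ref{prop:b=intersdim1}.)

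For smoothness of the map and the claim that it lands in the interior of the target: smoothness is immediate because the map is, in the charts of Section~\ref{s:confcob}, just the coordinate change between strips-with-slits having a puncture and the glued standard domain where that puncture is replaced by an interior strip region — this is the same analysis used to prove Proposition~\ref{prop:b=intersdim1}, and it is $C^1$ by the asymptotic estimates. To see the image avoids $\partial\overline{\MM}(a;\mathbf{\tfrac12}^{s+t})$, I would run a dimension count: the boundary of $\overline{\MM}(a;\mathbf{\tfrac12}^{s+t})$ consists by Theorem~\ref{t:[1,0]}(a),(b) of two-level configurations; a disk in $\delta\MM$ sitting over such a boundary point would be a configuration with both a switch-collision (or symplectization breaking) \emph{and} an interior intersection with $K$, which is codimension $\geq 2$ in the $1$-dimensional $\overline{\MM}(a;\mathbf{\tfrac12}^{s+t})$ and hence generically empty — equivalently, $\delta\MM_{\ell}(a)\subset\MM_{\ell}(a)$ is a finite set in the interior, as already used in the proof of Proposition~\ref{prop:chain-map-ell} for $|a|=1$.

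I expect the main obstacle to be the orientation bookkeeping: verifying that the identification is \emph{orientation preserving} requires tracking the capping-operator conventions of Section~\ref{ss:indexbndles} and~\ref{ss:signsandchainmap} through the fill-in operation, since inserting a weight-$1$ puncture corresponds, on the index bundle side, to a $\pm(6,0)$-type jump in a normal derivative (cf.\ the end of Section~\ref{ss:spikes}) whose sign must be pinned down. Everything else — injectivity, smoothness, interiority — is a routine assembly of unique continuation, the chart construction of Section~\ref{s:confcob}, and the transversality and compactness theorems cited above; the only place genuine care is needed is matching this Theorem's orientations with those declared in Proposition~\ref{prop:b=intersdim1}, for which I would defer to the general discussion promised in Section~\ref{ss:signsandchainmap}.
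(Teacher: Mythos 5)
Your overall picture is right—fill in the integer-winding puncture using the Taylor/Fourier expansions (both adjacent boundary arcs lie on the same sheet, so the puncture is removable as in Proposition~\ref{prop:asympt}), note that the filled-in disk is an honest element of $\MM(a;\mathbf{\tfrac12}^{s+t})$, and get injectivity from transversality (a disk of type $[1,0]$ with two winding-$1$ boundary tangencies to $K$ would live in a moduli space of formal dimension $-1$, empty by Lemma~\ref{l:tv})—but there are two problems. First, your appeal to Proposition~\ref{prop:b=intersdim1} is circular: in the paper that proposition is \emph{deduced from} Theorem~\ref{t:emb}, so it cannot be used as an input here. Second, and more substantively, the step you dismiss as "smoothness is immediate because the map is just the coordinate change between the charts of Section~\ref{s:confcob}" is exactly where the paper's entire proof lives. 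The source and target are cut out in genuinely different Banach setups: the configuration space of Section~\ref{s:confcob} for a winding-$1$ puncture carries an exponential weight and a jet-space factor $J^{(r)}(K)$ at that puncture, whereas after forgetting the puncture there is no puncture and no weight at all, so there is no ready-made chart in which the forgetful map is a coordinate change. The paper bridges this in Subsection~\ref{ss:branchmark}: near a constrained disk $v$ one writes the configuration space of \emph{unconstrained} disks as $\mathcal{W}=\mathcal{W}'\oplus\R^{2}$, where $\mathcal{W}'$ is the constrained (winding-$1$) configuration space and $\R^{2}$ is spanned by two cut-off constant solutions in the Lagrangian directions normal to $K$; the zero set of $\bar\pa_J$ on $\mathcal{W}$ then gives a chart for $\overline{\MM}(a;\mathbf{\tfrac12}^{s+t})$ around the image point, and transversality of the constrained problem exhibits that point as a transversely cut out interior point of a $1$-manifold, which is what "embedding into the interior" means. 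Without this (or an equivalent comparison of the two Fredholm problems), the embedding and smooth-structure claims are asserted rather than proved.

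Two smaller remarks: your codimension-two argument for interiority is not needed—by Theorem~\ref{t:[1,0]} the boundary of $\overline{\MM}(a;\mathbf{\tfrac12}^{s+t})$ consists of broken (two-level) configurations, while the image of your map consists of genuine holomorphic disks in the open stratum $\MM(a;\mathbf{\tfrac12}^{s+t})$, so interiority is automatic once the fill-in is established. And the orientation bookkeeping you flag as the main obstacle is not part of Theorem~\ref{t:emb} at all; the orientation-preserving statement belongs to Proposition~\ref{prop:b=intersdim1} and is handled by the conventions of Sections~\ref{ss:indexbndles} and~\ref{ss:signsandchainmap}, so the real missing ingredient in your write-up is the configuration-space splitting above, not the signs.
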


Finally, we consider similar maps when the target space is two dimensional.

\begin{thm}\label{t:imm}
Consider a compactified moduli space $\overline{\MM}(a;\mathbf{\tfrac12}^{s},1,\mathbf{\tfrac12}^{t})$ of disks of type $[2,1]$. Forgetting the $(s+1)^{\rm th}$ Lagrangian intersection puncture we get a map
\[
\iota\colon\overline{\MM}(a;\mathbf{\tfrac12}^{s},1,\mathbf{\tfrac12}^{t})\to
\overline{\MM}(a;\mathbf{\tfrac12}^{s+t})=\overline{\MM}
\]
into the compactified moduli space of disks of type $[2,0]$. This map is an immersion of a $1$-dimensional manifold into a $2$-manifold with boundary with corners. Let $\overline\MM_{s+1}$ denote the image of this immersion. Then $\overline{\MM}_{s+1}$ consists of those disks for which some point in the $(s+1)^{\rm th}$ boundary arc hits $K$. Then $\overline{\MM}_{s+1}$ and $\overline{\MM}_{t+1}$ intersect (self-intersect if $s=t$) transversely at disks with two points hitting $K$ (this corresponds to disks of type $[2,2]$). The boundary of $\overline{\MM}_s$ consists of points in the codimension one boundary of $\overline{\MM}$ corresponding to disks as in Theorem \ref{t:[2,1]} (a) and (b) as well as to interior points corresponding to disks of type $[2,\frac32]$ as in Theorem \ref{t:[2,1]} (c). Furthermore $\overline{\MM}_{s+1}$ and $\overline{\MM}_{s+2}$ with a common boundary point corresponding to a disk of type $[2,\frac32]$ fit together smoothly at this point.
\end{thm}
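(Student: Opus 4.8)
The plan is to deduce Theorem~\ref{t:imm} from the gluing and compactness analysis already developed in this section, together with the structure theorems for lower-dimensional moduli spaces. First I would set up the forgetful map $\iota$: given a disk $u \in \MM(a;\mathbf{\tfrac12}^s,1,\mathbf{\tfrac12}^t)$ of type $[2,1]$, the $(s+1)^{\rm th}$ puncture carries local winding number $1$ (an integer), so by the discussion in Section~\ref{ss:switching} and the Fourier expansions \eqref{e:nearK3}, \eqref{e:nearK4} the boundary does \emph{not} switch Lagrangian sheets there, and the map extends smoothly across this puncture as an interior tangency of the boundary arc with $K$. Thus $\iota$ replaces the puncture by a marked point on the boundary that evaluates to $K$, which realizes $\iota$ precisely as the inclusion $\delta\MM(a;\mathbf{\tfrac12}^{s+t}) \hookrightarrow \MM^*(a;\mathbf{\tfrac12}^{s+t}) \to \MM(a;\mathbf{\tfrac12}^{s+t})$ studied in Proposition~\ref{prop:b=intersdim2}. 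So in fact the content of this theorem is essentially a restatement of Proposition~\ref{prop:b=intersdim2} in moduli-space language, and the bulk of the work is to quote that proposition and Theorem~\ref{t:[2,0]}, matching up the boundary strata.

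Next I would establish that $\iota$ is an immersion and identify the image. Transversality of the evaluation map $\ev\colon \MM^*(a;\mathbf{\tfrac12}^{s+t})\to L$ to $K$, which holds for generic $J$ by Lemma~\ref{l:tv} (applied to the configuration space with one extra boundary marked point, as in Section~\ref{s:confcob}), shows that $\ev^{-1}(K)=\delta\MM(a;\mathbf{\tfrac12}^{s+t})$ is a $1$-dimensional submanifold of $\MM^*(a;\mathbf{\tfrac12}^{s+t})$; composing with the (generically finite-to-one) projection $\pi$ that forgets the marked point gives an immersion with transverse self-intersections. The image $\overline{\MM}_{s+1}$ is by construction the set of disks of type $[2,0]$ having a point of the $(s+1)^{\rm th}$ boundary arc on $K$. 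The transverse intersection of $\overline\MM_{s+1}$ with $\overline\MM_{t+1}$ at disks of type $[2,2]$ is again a transversality statement for the evaluation map at \emph{two} marked points, which is generic; here one uses the dimension count $\dim\MM(a;\mathbf{\tfrac12}^s,1,\mathbf{\tfrac12}^t,1,\cdots)=|a|-|\mathbf{n}|=2-2=0$ from Lemma~\ref{l:weightdimcob} to see these are isolated points.

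Then I would describe the boundary of $\overline\MM_{s+1}$. Each stratum of $\delta\MM(a;\mathbf{\tfrac12}^{s+t})$ is a moduli space $\MM(a;\mathbf{n}')$ with a $1$ inserted at the position of the marked point, i.e.\ a space of type $[2,1]$, so its compactification is controlled by Theorem~\ref{t:[2,1]}. The boundary points of type $(a)$ and $(b)$ there correspond, under $\iota$, to the codimension-one strata of type $(a1)$ and $(c1)$/$(b1)$ of $\overline\MM$ in Theorem~\ref{t:[2,0]} that involve the marked point — these map to \emph{boundary} points of $\overline\MM$. The type $(c)$ boundary points of Theorem~\ref{t:[2,1]} (where a weight-$\tfrac32$ corner forms) are, by a dimension argument exactly as in the proof of Proposition~\ref{prop:b=intersdim2}, disjoint from $\partial\overline\MM$, hence lie in the interior of $\overline\MM$; they come in pairs distinguished by whether the marked point approached the corner from the left or the right, and the two pieces $\overline\MM_{s+1}$, $\overline\MM_{s+2}$ emanating from such a configuration fit together smoothly there. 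For this last smoothness claim I would invoke the local model \eqref{eq:2-dimmodel} $f_{\delta,\eps}(z)=\sqrt{z(z+\delta)(z-\eps)}$ with $\delta=0$, as recorded in Remark~\ref{r:2-dimcornercoord} and Remark~\ref{r:breakingmodelclose}: near such a point the disks in $\overline\MM$ are $\Ordo(z^{5/2})$-close to this model, and in that model the curve traced by the marked point passing through the weight-$\tfrac32$ corner is manifestly a single smooth arc (parametrized by $\eps\in(-\eps_0,\eps_0)$ after allowing $\eps$ to change sign), with the two halves $\eps>0$ and $\eps<0$ being $\overline\MM_{s+1}$ and $\overline\MM_{s+2}$.

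The main obstacle I expect is the smooth-fitting statement at the weight-$\tfrac32$ corner points: one must show not merely that the two immersed arcs have a common endpoint but that they assemble into a $C^1$ (indeed smooth) curve in the \emph{compactified} moduli space $\overline\MM$, which requires knowing that the gluing/compactification charts near the type $(d2)$ corner of Theorem~\ref{t:[2,0]} — built from the model \eqref{eq:2-dimmodel} — are compatible with the forgetful-map description of $\overline\MM_{s+1}\cup\overline\MM_{s+2}$. Concretely this amounts to checking that the local coordinate $(\delta,\eps)$ on $\overline\MM$ near such a corner, when restricted to $\ev^{-1}(K)$ for the extra marked point, cuts out the locus $\{\delta=0\}$ (a smooth arc through the corner), and that the identification of a disk in $\overline\MM_{s+1}$ with one in $\overline\MM_{s+2}$ is exactly the reflection $\eps\mapsto-\eps$ together with the swap of the two sub-punctures in the constant bubble. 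This is where the analytic estimates of Section~\ref{sec:gluing} (the $\Ordo(z^{5/2})$ asymptotics, Remark~\ref{r:breakingmodelclose}) are genuinely used; everything else is bookkeeping with the already-established transversality (Lemma~\ref{l:tv}) and the boundary descriptions of Theorems~\ref{t:[2,1]} and~\ref{t:[2,0]}.
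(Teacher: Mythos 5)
Your proposal is, in substance, the route the paper itself takes: the paper's proof of Theorem~\ref{t:imm} consists of combining the point-constraint analysis of Subsection~\ref{ss:branchmark} (which realizes the moduli space with a winding-$1$ puncture inside the unrestricted configuration space, with an $\R^{2}$ of normal directions given by cut-off constant solutions perpendicular to $K$ --- this is exactly the content of your Fourier-expansion/evaluation-map step, and is what makes $\iota$ an immersion onto the locus of disks whose boundary meets $K$) with the argument in the proof of Theorem~\ref{t:[2,1]}~(c) for the smooth fitting of $\overline\MM_{s+1}$ and $\overline\MM_{s+2}$ at an interior point of type $[2,\tfrac32]$. Your local-model version of that last step is equivalent to the paper's gluing-parameter version via Remarks~\ref{r:1-dimcornercoord} and~\ref{r:1-dimfinal}; note only that the relevant reference is Remark~\ref{r:1-dimcornercoord} (the model~\eqref{eq:2-dimmodel} with $\delta=0$, one colliding pair), not Remark~\ref{r:2-dimcornercoord}, which concerns the $(d2)$ corner where three $\tfrac12$-switches collide and a $4$-punctured constant disk bubbles off.

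One logical point must be fixed: you cannot ``quote Proposition~\ref{prop:b=intersdim2}'' as the bulk of the work, because in the paper that proposition is deduced \emph{from} Theorem~\ref{t:imm} (its proof begins ``This is a consequence of Theorem~\ref{t:imm}''), so as framed your argument is circular. Fortunately the rest of your proposal supplies exactly the independent inputs that replace it: the identification of $\ev^{-1}(K)$ with the winding-$1$ moduli space via the expansions \eqref{e:nearK3}--\eqref{e:nearK4}, transversality for the point-constrained problem (Lemma~\ref{l:tv} applied as in Subsection~\ref{ss:branchmark} and Section~\ref{s:confcob}), the matching of boundary strata of Theorem~\ref{t:[2,1]} with those of Theorem~\ref{t:[2,0]}, and the local analysis at weight-$\tfrac32$ points. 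Cite these instead of Proposition~\ref{prop:b=intersdim2}. Two small points deserve a sentence each if you write this up: the immersion claim needs the observation that the vertical direction in $\MM^{\ast}$ (moving the marked point with the disk fixed) is not tangent to $\ev^{-1}(K)$, because at a winding-$1$ tangency the normal component of the boundary derivative is nonzero; and the transversality of $\overline\MM_{s+1}$ and $\overline\MM_{t+1}$ at $[2,2]$ disks uses, beyond the dimension count, that the two tangency conditions are cut out independently, which again follows from the $\R^{2}$-normal description of Subsection~\ref{ss:branchmark} at each of the two winding-$1$ punctures.
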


\subsection{Floer's Picard lemma}\label{s:Floer-Picard}
In the following subsections we show that the broken disks in Theorems \ref{t:[1,0]}--\ref{t:[2,1]} can be glued in a unique way to give disks in the interior of the moduli space thus providing a standard neighborhood of the boundary of the moduli space inside the compactified moduli space. Our approach here is standard and starts from Floer's Picard lemma, see \cite{Fmem} for a proof.

\begin{lemma}\label{l:FloerPicard}
Let $f\colon B_1\to B_2$ be a smooth map of Banach spaces which satisfies
$$
f(v)=f(0)+df(0)v+ N(v),
$$
where $df(0)$ is Fredholm and has a right inverse $Q$ satisfying
\begin{equation}\label{e:nlFloer}
\|QN(u)-QN(v)\|\le G(\|u\|+\|v\|)\|u-v\|
\end{equation}
for some constant $G$. Let $B(0,r)$ be the $r$-ball centered at $0\in B_1$ and assume that
\begin{equation}\label{e:appFloer}
\|Qf(0)\|\le \frac{1}{8G}.
\end{equation}
Then for $r<\frac{1}{4G}$, the zero-set $f^{-1}(0)\cap B(0,r)$ is a smooth submanifold of dimension $\dim(\ker(df(0)))$ diffeomorphic to the $r$-ball in $\ker(df(0))$.
\end{lemma}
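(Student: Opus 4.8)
\textbf{Proof proposal for Floer's Picard lemma (Lemma~\ref{l:FloerPicard}).}

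The plan is to reduce the problem to the standard contraction-mapping argument for finding zeros of a perturbed linear Fredholm map. First I would split $B_1 = \ker(df(0)) \oplus W$, where $W$ is a closed complement on which $df(0)$ restricts to a Banach space isomorphism onto $\im(df(0))$; the right inverse $Q$ can be taken to map into $W$, so that $df(0)\circ Q = \id$ on $\im(df(0))$ and $Q\circ df(0)$ is the projection onto $W$ along $\ker(df(0))$. The equation $f(v) = 0$ is then equivalent, after applying $Q$, to the fixed-point equation $v = v_0 - Qf(0) - QN(v) + (\id - Qdf(0))v$ for the $W$-component, with the $\ker(df(0))$-component free; concretely, writing $v = \xi + w$ with $\xi \in \ker(df(0))$ fixed and $w \in W$ unknown, solving $f(\xi+w)=0$ is equivalent to solving $w = T_\xi(w)$ where $T_\xi(w) := -Qf(0) - QN(\xi+w)$, since $Q$ is injective on $\im(df(0))$ and $f(\xi+w) \in \im(df(0))$ whenever it vanishes. (One checks that $Qf(\xi+w) = 0$ together with $f(\xi+w) \in \im df(0)$ forces $f(\xi+w)=0$, using $df(0)Q = \id$ there.)

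The core estimate is that $T_\xi$ is a contraction on a small ball. For $\xi, w, w'$ with $\|\xi\|, \|w\|, \|w'\| \le r$, the Lipschitz hypothesis~\eqref{e:nlFloer} gives
$$
\|T_\xi(w) - T_\xi(w')\| = \|QN(\xi+w) - QN(\xi+w')\| \le G(\|\xi+w\| + \|\xi+w'\|)\|w - w'\| \le 4Gr\,\|w-w'\|,
$$
so for $r < \tfrac{1}{4G}$ the map $T_\xi$ is a contraction with constant $4Gr < 1$. For self-maps of the ball, using $N(0)=0$ (which follows from the Taylor expansion hypothesis $f(v)=f(0)+df(0)v+N(v)$ evaluated at $v=0$) one gets $\|QN(\xi+w)\| = \|QN(\xi+w)-QN(0)\| \le G(\|\xi\|+\|w\|)(\|\xi\|+\|w\|) \le 4Gr^2$, hence
$$
\|T_\xi(w)\| \le \|Qf(0)\| + 4Gr^2 \le \tfrac{1}{8G} + 4Gr^2,
$$
using~\eqref{e:appFloer}. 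A short computation shows that for $r < \tfrac{1}{4G}$ one has $\tfrac{1}{8G} + 4Gr^2 < r$ precisely when $r$ lies in the relevant range (the quadratic $4Gr^2 - r + \tfrac{1}{8G}$ is negative for $r$ between its roots $\tfrac{1}{8G}(2\mp\sqrt{2})$, and $\tfrac{1}{4G}$ sits strictly inside that interval), so $T_\xi$ maps the closed $r$-ball of $W$ into itself. By the Banach fixed point theorem there is a unique $w(\xi) \in W$ with $\|w(\xi)\| \le r$ solving $w = T_\xi(w)$.

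Finally I would package the solution map $\xi \mapsto \xi + w(\xi)$ into a diffeomorphism. Smoothness of $\xi \mapsto w(\xi)$ follows from the implicit function theorem applied to $(\xi,w) \mapsto w - T_\xi(w)$, whose $w$-derivative is $\id - D_wT_\xi$, invertible since $\|D_wT_\xi\|\le 4Gr < 1$; alternatively one invokes the standard fact that fixed points of a smooth family of uniform contractions depend smoothly on parameters. The map $\Psi: \ker(df(0)) \cap B(0,r') \to f^{-1}(0)\cap B(0,r)$, $\Psi(\xi) = \xi + w(\xi)$ (for a slightly smaller $r'$ so the image lands in $B(0,r)$) is then a smooth bijection onto $f^{-1}(0)\cap B(0,r)$ — surjectivity because any zero $v = \xi + w$ in the ball must have $w = T_\xi(w)$ and hence $w = w(\xi)$ by uniqueness — with smooth inverse given by the linear projection onto $\ker(df(0))$. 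This exhibits $f^{-1}(0)\cap B(0,r)$ as a submanifold diffeomorphic to a ball in $\ker(df(0))$, of dimension $\dim\ker(df(0))$. The only genuinely delicate point is the bookkeeping in the two inequalities — confirming that the choice $r < \tfrac{1}{4G}$ simultaneously makes $T_\xi$ a contraction and a self-map of the ball given the a priori bound~\eqref{e:appFloer} — but this is the elementary quadratic estimate indicated above; everything else is the standard parametrized contraction mapping argument.
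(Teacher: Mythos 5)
Your overall strategy is the standard one (note that the paper itself gives no proof of this lemma but refers to Floer, so the comparison is with the expected contraction-mapping argument): split $B_1=\ker(df(0))\oplus W$ with $W=\mathrm{im}\,Q$, reduce $f(\xi+w)=0$ to the fixed-point equation $w=T_\xi(w)=-Qf(0)-QN(\xi+w)$, and parametrize the zero set by $\xi$. That reduction is correct. The genuine gap is in the quantitative step you yourself single out as the delicate one: to apply the Banach fixed point theorem you need $T_\xi$ to map the $r$-ball of $W$ into itself, i.e. $\tfrac{1}{8G}+4Gr^2\le r$, and you assert this holds for $r<\tfrac{1}{4G}$, quoting the roots $\tfrac{1}{8G}(2\mp\sqrt2)$ of $4Gr^2-r+\tfrac{1}{8G}$. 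But that quadratic has discriminant $1-16G\cdot\tfrac{1}{8G}=-1<0$; it is strictly positive for every $r$, with minimum value $\tfrac{1}{16G}$ at $r=\tfrac{1}{8G}$ (the numbers you quote are the roots of $4Gr^2-2r+\tfrac{1}{8G}$). Concretely, when both $\|\xi\|$ and $\|w\|$ are allowed to be of size $r$ close to $\tfrac{1}{4G}$, the quadratic term $G(\|\xi\|+\|w\|)^2$ alone is already of size $r$, so adding $\|Qf(0)\|$ overshoots; the self-map property fails and the existence half of your argument does not close as written.

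To repair it you must decouple the two radii instead of taking both equal to $r$: restrict $\xi$ to a ball of radius $\sigma$ and look for $w$ in a ball of radius $\rho$ satisfying $\|Qf(0)\|+G(\sigma+\rho)^2\le\rho$ and $2G(\sigma+\rho)<1$; such a choice exists precisely because of \eqref{e:appFloer} (for instance $\rho=\tfrac{1}{4G}$ and $\sigma\le\tfrac{1}{2\sqrt2\,G}-\tfrac{1}{4G}$). Separately, to see that this captures all of $f^{-1}(0)\cap B(0,r)$ and that the projection to $\ker(df(0))$ is injective there, apply \eqref{e:nlFloer} with the full norms of the points rather than with sums of component norms: for zeros $v=\xi+w$ and $v'=\xi+w'$ in $B(0,r)$ one gets $\|w-w'\|\le G(\|v\|+\|v'\|)\|w-w'\|\le 2Gr\|w-w'\|$ with $2Gr<\tfrac12$, which is where the hypothesis $r<\tfrac{1}{4G}$ actually enters. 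Relatedly, your write-up slides between the ball $B(0,r)\subset B_1$ and the product of $r$-balls in $\ker(df(0))$ and $W$; since the projections of the splitting need not have norm one, the surjectivity step ``any zero $\xi+w\in B(0,r)$ satisfies $w=w(\xi)$ by uniqueness'' is not justified by a uniqueness statement established only inside the $r$-ball of $W$. The remaining points (smooth dependence of the fixed point on $\xi$, surjectivity of $df(v)$ for $\|v\|<r$ via $\|Q\,dN(v)\|\le 2G\|v\|<1$, hence the dimension count and the graph description) are fine once the above is fixed.
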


We will apply this result as well as a parameterized version of it, see \cite[Lemma 5.13]{ESrev}.
In our case $f$ will be the $\bar\pa_J$-operator. To show existence of solutions near a broken solution we must thus establish three things: a sufficiently good approximate solution $w$ near the broken solution corresponding to $0$ in Lemma \ref{l:FloerPicard}, a right inverse for the linearization of the $\bar\pa_J$-operator at $w$, corresponding to $Q$ in Lemma \ref{l:FloerPicard}, and a quadratic estimate for the non-linear term in the Taylor expansion, corresponding to \eqref{e:nlFloer}. Here the Banach space $B_1$ will be a product of a weighted Sobolev space and a certain finite dimensional space that will serve as a neighborhood of the broken configuration and the Banach space $B_2$ will be a space of fields of complex antilinear maps. In addition to verifying uniform invertibility of the differential and the non-linear estimate we must also check that the gluing construction captures all solutions near the broken solution and that the natural change of coordinates (from the Banach space around the broken solution to the standard charts in the interior of the moduli space) is smooth.

\subsection{Gluing constant disks}\label{ss:constglu}
The boundary strata of the moduli spaces we study involve splitting off of constant disks and splitting off of disks in the symplectization. In this section we consider gluing constant disks.

We first consider a configuration $\dot v$ as in Theorem \ref{t:[1,0]} (a), \ref{t:[2,1]} (a), or \ref{t:[2,0]} (a1). In all these cases the broken configuration is a two level disk where the second level consists of a constant $3$-punctured disk $v^{2}$ that is attached to the first level disk at a Lagrangian intersection puncture with asymptotic winding number $1$. After we have carried out the gluing argument in this case we will discuss modifications needed for the other cases of constant disk gluing.

Assume that the first level disk $v^{1}$ has $m$ Lagrangian intersection punctures. We take the domain of $v^{1}$ to be the standard domain $\Delta^{1}\approx \Delta_{m}$. (As explained in Section \ref{s:stab}, we may assume that the domain is stable by adding extra marked points near the positive puncture.)
Recall that we defined a functional analytic neighborhood $\mathcal{W}(a;\mathbf{n})$ of $v^{1}$, where
$\mathcal{W}(a;\mathbf{n})$ is a product of an infinite dimensional weighted Sobolev space $\mathcal{H}(a;\delta,\mathbf{n})$ and a finite dimensional space which is an open neighborhood $B$ of the origin in $\R^{m-2}\times \R\times \R^{m}$, see Section \ref{s:confcob}. Here the first $\R^{m-2}$-component of an element in $B$ corresponds to variations of the conformal structure of $\Delta^{1}$, the second $\R$-factor to shifts of the map in the symplectization direction near the positive puncture, and the last $\R^{m}$-factor corresponds to shifts along the knot near the Lagrangian intersection punctures. Here we will write
$\mathcal{W}^{1}$ for this neighborhood $\mathcal{W}(a;\mathbf{n})$ and think of it as a product
\[
\mathcal{W}^{1}= \mathcal{W}^{1}_{0}\times B^{1}
\]
where $B^{1}$ is an open subset of $\R$, as follows. Let $q$ denote the negative puncture where the second level disk is attached. Then $B^{1}$ corresponds to shifts along the knot at $q$.

Consider the negative puncture $q$ at which the constant three punctured disk  $v^{2}\colon\Delta^2\to K$, where $\Delta^{2}\approx\Delta_{3}$ is a standard domain with three punctures, is attached and fix a half-strip neighborhood $Q=(-\infty,0]\times[0,1]$ of it such that $v^{1}(Q)$ lies entirely in the standard neighborhood of $K$ with complex analytic coordinates.

For $\rho>0$, define a standard domain $\Delta_{\rho}\approx \Delta_{m+1}$ as follows. Remove the neighborhood $(-\infty,-\rho)\times[0,1]$ of $q$ from $\Delta^{1}$ and the neighborhood $(\rho,\infty)\times[0,1]$ of the positive puncture in $\Delta^{2}$, getting domains $\Delta_{\rho}^{1}$ and $\Delta_{\rho}^{2}$. The domain $\Delta_{\rho}$ is then obtained by identifying the boundary segments $\{-\rho\}\times[0,1]\subset\Delta_{\rho}^{1}$ and $\{\rho\}\times[0,1]\subset\Delta_{\rho}^{2}$. Then $\Delta_{\rho}$ contains the strip $Q_{\rho}\approx [-\rho,\rho]\times[0,1]$:
\[
Q_{\rho}=\Delta_{\rho}-(\Delta_{0}^{1}\cup\Delta_{0}^{2}).
\]

We next define a pre-gluing $w_{\rho}\colon \Delta_{\rho}\to T^{\ast}Q$ (i.e., an approximate solution close to the broken disk $\dot v$) and a neighborhood of it in a suitably weighted space of maps. We start with the map. Fix complex analytic coordinates $\C\times\C^{2}$ around $p\in K$ on $T^{\ast}Q$, where $p$ is the point where the constant disk $v^{2}$ sits. Let $\phi\colon \Delta_{\rho}\to\C$ be a smooth function which equals $1$ on $\Delta_{{\rho/2}}^{1}$, equals $0$ on $\Delta_{\rho}^{2}$ and is real-valued and holomorphic on the boundary. (Holomorphic on the boundary just means that the restriction of $\bar{\partial}$ to the boundary vanishes. For example, if $s+it\in \mathbb{R}\times[0,1]$ are coordinates on the strip and $\phi(s)$ is an ordinary real valued cut-off function then a corresponding complex valued cut-off function that is holomorphic on the boundary is $\phi(s)+i\psi(t)\frac{d\phi}{ds}(s)$, where $\psi(t)$ is a small function with support near $\partial [0,1]$ such that $\psi(0)=\psi(1)=0$ and $\frac{d\psi}{dt}(0)=\frac{d\psi}{dt}(1)=1$.)

 Define
\[
w_{\rho}(z)=
\begin{cases}
v^{1}(z), & z\in\Delta_{{\rho/2}}^{1},\\
\phi(z)v^{1}(z) & z\notin\Delta_{{\rho/2}}^{1},
\end{cases}
\]
where the last expression refers to the analytic coordinates around
$q$ corresponding to $0$ in the coordinate system. Note then that
$w_{\rho}$ takes the boundary $\partial \Delta_{\rho}$ to $L=L_{K}\cup
Q$ and that $\bar{\pa}_{J}w_\rho$ is supported in $Q_{\rho}$. Furthermore, using the Fourier expansion of $v^{1}$ near $q$,
\[ 
v^{1}(z)=\sum_{k\ge 1} c_{k} e^{-k\pi z},
\]
we find that
\[
|\bar{\pa}_{J}w_{\rho}|_{C^{1}}=\Ordo(e^{-\pi\rho}).
\]

Define a weight function $\lambda_\rho\colon\Delta_{\rho}\to\R$ as follows, where $\eta_\delta\colon\Delta_{m+1}\to\R$ denotes the weight function on $\Delta^{1}$,
\[
\lambda_\rho(z)=
\begin{cases}
\eta_\delta(z) &\text{for }z\in \Delta^{1}_0,\\
e^{\delta(\rho-|\tau|)} &\text{for }z\in Q_\rho\approx[-\rho,\rho]\times[0,1],\\
1 &\text{for }z\in\Delta^{2}_0.
\end{cases}
\]
Let $\|\cdot\|_{k,\rho}$ denote the Sobolev norm with $k$ derivatives on $\Delta_{\rho}$ and weight function $\lambda_{\rho}$. From the above we then find
\begin{equation}
\|\bar\pa_J w_\rho\|_{1,\rho}\le |\bar\pa_J w_\rho|_{C^{1}}\int_{-\rho}^{\rho}e^{\delta(\rho-|\tau|)} d\tau=
\Ordo(e^{-(\pi-\delta)\rho}).
\end{equation}

We next define configuration spaces of maps giving neighborhoods of the approximate solutions $w_\rho$. As in Subsection \ref{s:confcob}
this space is a direct sum of an infinite dimensional space and two finite dimensional summands. We first discuss the infinite dimensional summand.

Define $\sblv_{2,\rho}(w_\rho)$ as the Sobolev space of vector fields
$v$ along $w_\rho$ (i.e., sections of $w_\rho^{\ast}T(T^\ast Q)\to\Delta_{\rho}$) which satisfies the following requirements.
\begin{itemize}
\item If $\zeta\in\pa\Delta_{\rho}$ maps to $L_K$ (maps to $Q$) then $v(\zeta)$ is tangent to $L_K$ (resp. to $Q$).
\item $\nabla v + J\circ \nabla v\circ i=0$ along $\pa\Delta_{\rho}$.
\item Fix an endpoint $\zeta_0\in\pa\Delta_\rho$ of the vertical segment which separates the part of $\Delta_{\rho}$ which corresponds to $\Delta^{1}$ from that corresponding to $\Delta^{2}$. We require that $v(\zeta_0)=0$.
\end{itemize}
Here the first two requirements have counterparts in Section~\ref{s:disks} and the third is connected to the addition of certain cut-off solutions in the gluing region.
We endow $\sblv_{2,\rho}(w_\rho)$ with the weighted Sobolev $2$-norm $\|\cdot\|_{2,\rho}$.

Second, we discuss the finite dimensional factor $B_{\rho}=B_{\rho}^{1}\times B_{\rho}^{2}$. Here
$B_{\rho}^{1}$ is an open neighborhood of the origin in $\R^{m-2}\times \R\times\R^{m-1}$ and agrees with the finite dimensional factor of $\mathcal{W}^{1}_{0}$ in the following sense. The first $\R^{m-2}$-factor corresponds to the conformal variations of $\Delta_{\rho}$ inherited from $\Delta^{1}$, the second $\R$-factor corresponds to shifts at the positive puncture, and the last $\R^{m-1}$-factor corresponds to shifts along the knot $K$ at Lagrangian intersection punctures that are also punctures of $\Delta^{1}$. The second factor $B_{\rho}^{2}$ is an open neighborhood of the origin in $\R^{3}\times \R^{2}\times\R$, where the first $\R^{3}$-factor corresponds to constant vector fields supported in $Q_{\rho}$ along the Lagrangian in a neighborhood of $p$ that are cut off in finite regions near the ends of $Q_{\rho}$, where the weight function $\lambda_{\rho}$ is uniformly bounded and where the second factor corresponds to the shifts along $K$ supported at the Lagrangian intersection punctures that are also punctures of $\Delta^{2}$. Finally, the third $\R$-factor is a newborn conformal variation defined as follows.

Consider the domain of the constant disks as a strip $\R\times[0,1]$ with positive puncture at $+\infty$, one negative puncture at $0$, and one at $-\infty$. Let $v$ be the constant vector field $\pa_\tau$ and note that its flow moves the puncture at $0$ and that in the standard model of the $3$-punctured disk this vector field looks like $c_1+\Ordo(e^{-\pi|\tau|})$ at the puncture at $+\infty$ and at one of the punctures at $-\infty$, whereas it looks like $c_2 e^{-\pi\tau}+\Ordo(1)$ at the other puncture at $-\infty$, where $c_j$, $j=1,2$ are real constants. We extend this vector field $v$ holomorphically over the gluing region $Q_\rho$ and then cut it off using a cut-off function $\beta$ with derivative supported near the end of $Q_{\rho}$ that comes from $\Delta^{1}$ where the weight function $\lambda_{\rho}$ is close to $1$. The conformal variation is then the complex anti-linear map $i\bar\pa(\beta v)$.

Note that the conformal variation in $\Delta_{\rho}$ that is inherited from the conformal variation at $q$ in $\Delta^{1}$ can be identified with the linear combination of conformal variations as above for the two punctures from $\Delta^{2}$ which looks like $0+\Ordo(e^{-\pi|\tau|})$ at the positive puncture. We take the $\R$-factor to correspond to this variation. (Note that this conformal variation is supported in $\Delta^{1}_{\rho}$ and agrees with the conformal variation in $\Delta^{1}$ corresponding to the negative puncture $q$ where the constant disk is attached.)

\begin{remark}\label{rmk:gluingparameter}
We note that there is a complementary linear combination of the two newborn conformal variations with non-zero leading constant term at the positive puncture of the constant disk that corresponds to the gluing parameter $\rho$ which, from the point of view of the domain, shifts the boundary maximum between the two new punctures, see Remark \ref{rmk:confvaratpuncture}.
\end{remark}

Let $\EE_{1,\rho}$ denote the space of complex anti-linear maps $T\Delta^{\rho}_{m+2}\to w_\rho^{\ast}T(T^{\ast}Q)$, again weighted by $\lambda_\rho$.  The linearization of the $\bar\pa_J$-operator at $w_\rho$ is then an operator
\[
L\bar\pa_J\colon\sblv_{2,\rho}(w_\rho)\times B_{\rho}\to \EE_{1,\rho},
\]

\begin{lemma}\label{l:unifinv}
The operators $L\bar\pa_J$ admit right inverses which are uniformly bounded as $\rho\to\infty$.
\end{lemma}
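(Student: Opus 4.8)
The plan is to prove Lemma~\ref{l:unifinv} by the standard strategy for gluing: first establish uniform invertibility on the pre-glued domain by patching together the (known) right inverses on the two pieces, then control the error terms coming from the cut-off functions and show they decay as $\rho\to\infty$.

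\textbf{Step 1: Approximate right inverse.} By Lemma~\ref{l:tv}, the linearized operator $L\bar\pa_J$ at the first-level disk $v^1$ (viewed as an operator on $\sblv_{2}(v^1)\times B^1$) is surjective, hence admits a right inverse $Q^1$. The constant three-punctured disk $v^2$ contributes the capping operators discussed in Section~\ref{ss:indexbndles}; after adding the newborn conformal variation (the $\R$-factor in $B^2_\rho$) and imposing the point constraint $v(\zeta_0)=0$ at the separating vertical segment, the linearization at $v^2$ on the truncated domain $\Delta^2_\rho$ with its weight function $\lambda_\rho\equiv 1$ is also surjective with a right inverse $Q^2$ that is bounded uniformly in $\rho$ (the operator on the constant disk is essentially $\rho$-independent once we fix the weight to be $1$ there). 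I would then define the approximate inverse $Q_\rho$ of $L\bar\pa_J$ at $w_\rho$ by: given $\xi\in\EE_{1,\rho}$, cut it into a piece $\xi^1$ supported on $\Delta^1_\rho$ and a piece $\xi^2$ supported on $\Delta^2_\rho$ using a partition of unity subordinate to the decomposition $\Delta_\rho = \Delta^1_\rho\cup Q_\rho\cup\Delta^2_\rho$ (with the gluing strip split in the middle), apply $Q^1$ and $Q^2$ to the respective pieces, and patch the resulting vector fields back together using cut-off functions supported in $Q_\rho$, correcting by the cut-off solutions parametrized by $B^2_\rho$ so that the patched field lies in $\sblv_{2,\rho}(w_\rho)$ and respects the constraint at $\zeta_0$.

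\textbf{Step 2: Error estimate and correction.} The patched field $Q_\rho\xi$ satisfies $L\bar\pa_J(Q_\rho\xi) = \xi + E_\rho\xi$, where the error $E_\rho\xi$ is supported in the regions where the cut-off functions have nonzero derivative, i.e.\ in the overlap part of the gluing strip $Q_\rho$ where the weight function $\lambda_\rho = e^{\delta(\rho-|\tau|)}$ achieves values $\le e^{\delta\rho/2}$ (if the cut-offs are placed near the ends of $Q_\rho$ where $\lambda_\rho$ is $\Ordo(1)$) — more precisely, one places the patching cut-offs near the two ends of $Q_\rho$ where $\lambda_\rho$ is uniformly bounded, so that $\|E_\rho\xi\|_{1,\rho} \le C e^{-\varepsilon\rho}\|\xi\|_{1,\rho}$ for some $\varepsilon>0$ depending on $\delta$ and $\pi$. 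Here the key input is the exponential decay of vector fields in the weighted Sobolev space: an element of $\ker L\bar\pa_J$ on $\Delta^1$ (or $\Delta^2$) decays like $e^{-\pi|\tau|}$ in the strip neighborhood of the relevant puncture, so when multiplied by $\lambda_\rho$ and restricted to the cut-off region it is exponentially small. For $\rho$ large, $\|E_\rho\| < \frac12$, so $\id + E_\rho$ is invertible with uniformly bounded inverse, and $Q_\rho(\id+E_\rho)^{-1}$ is the desired uniformly bounded right inverse of $L\bar\pa_J$ at $w_\rho$.

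\textbf{Main obstacle.} The genuinely delicate point is bookkeeping the newborn conformal variations and the constraint at $\zeta_0$ correctly, so that the finite-dimensional factor $B_\rho$ has exactly the right dimension to match the index formula of Lemma~\ref{l:weightdimcob} and so that the patched inverse actually lands in the constrained space $\sblv_{2,\rho}(w_\rho)$ without introducing a $\rho$-dependent loss of norm. Concretely, one must check that the complementary linear combination of the two newborn conformal variations (Remark~\ref{rmk:gluingparameter}) genuinely corresponds to the gluing parameter $\rho$ and hence should \emph{not} be counted in $B_\rho$, while the combination that looks like $0 + \Ordo(e^{-\pi|\tau|})$ at the positive puncture matches the conformal variation at $q$ on $\Delta^1$ and hence \emph{is} counted. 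I expect this index-matching and the verification that the cut-off corrections can be chosen supported where $\lambda_\rho$ is $\Ordo(1)$ — rather than where it blows up like $e^{\delta\rho}$ — to be where the argument requires the most care; everything else follows the template of \cite[Lemma 5.13]{ESrev} and the standard references, and the quadratic estimate needed for the subsequent application of Floer's Picard Lemma~\ref{l:FloerPicard} is routine given the uniform $C^0$-bounds on $w_\rho$.
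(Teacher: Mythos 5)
Your proposal takes the other standard route to Lemma~\ref{l:unifinv}: you build an explicit approximate right inverse by patching the right inverses $Q^1,Q^2$ of the two levels and then invert $\id+E_\rho$ by a Neumann series. The paper instead proves the uniform estimate $\|v\|_{2,\rho}\le C\|L\bar\pa_J v\|_{1,\rho}$ on the $L^2$-complement of the cut-off kernel of $v^1$ by contradiction: a hypothetical degenerating sequence is localized with cut-off functions to the two pieces and the neck, transversality of $v^1$ and invertibility of the standard operator on the constant three-punctured disk kill the first two pieces, and the neck is handled by the limiting $\bar\pa$-problem on the infinite strip with $\R^3$ boundary conditions, whose three-dimensional kernel of constants is exactly what the $\R^3$-factor of cut-off constant solutions in $B^2_\rho$ (together with the normalization $v(\zeta_0)=0$) is designed to absorb. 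Both routes are legitimate; the contradiction argument buys one the luxury of never having to estimate the gluing error explicitly, at the price of a compactness argument, while your route, if completed, gives more quantitative control.

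As written, however, your Step 2 has a gap precisely where the neck modes enter. First, the decay you invoke ($e^{-\pi|\tau|}$, the rate for kernel elements) is not the relevant quantity: what controls the terms $(\bar\pa\beta_j)\,Q^j\xi^j$ is the decay of elements in the image of $Q^1,Q^2$, which is governed by the weight exponent $\delta$, so you must take $Q^1$ for the operator on $\Delta^1$ with the small weight $\delta$ at $q$ matching $\lambda_\rho$ (harmless, since the asymptotic operator at an integer-winding puncture has no eigenvalue in $(0,\pi)$, but it is not the weight used in the definition of $\MM(a;\mathbf{n})$), and likewise set up $Q^2$ on the complete constant disk with matching weight at its positive puncture rather than on the $\rho$-dependent truncation with weight $1$; the error is then small because the decaying parts have traversed the full length $2\rho$ of the neck before reaching your cut-off region, not because $\lambda_\rho$ is $\Ordo(1)$ where you cut. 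Second, and more seriously, the non-decaying components cannot be handled by any placement of cut-offs: $Q^1\xi^1$ generically contains an asymptotically constant knot-direction shift at $q$ of size $\Ordo(\|\xi\|)$, and cutting it off anywhere in $Q_\rho$ produces an error of size $\Ordo(\|\xi\|)$ (cutting over a bounded-length region where $\lambda_\rho$ is bounded gives an $\Ordo(1)$ operator norm; spreading the cut-off over length $\rho$ runs into the weight $e^{\delta\rho}$), which destroys the Neumann-series step. The repair is to splice these modes exactly: in the analytic coordinates of Lemma~\ref{l:knotnbhd} the linearized operator near $K$ is the standard $\bar\pa$, the constant knot-direction field is tangent to both $Q$ and $L_K$ and extends over the neck and the entire constant disk as an exact solution, and it is then expressed, with coefficients of size $\Ordo(\|\xi\|)$ but with no contribution to the error, through the finite-dimensional factors of $B_\rho$ (the $\R^3$ cut-off constants in $Q_\rho$ and the knot shifts at the negative punctures of $\Delta^2$); the asymptotics of $Q^2\xi^2$ at its positive puncture are treated the same way. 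This is presumably what you intend by ``correcting by the cut-off solutions parametrized by $B^2_\rho$'', but it is exactly the step your error estimate does not cover, and it is where the index bookkeeping you flag as the main obstacle actually gets used.
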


\begin{proof}
The argument here is standard.
Let $k_1,\dots, k_l$ be a basis of the kernel $K$ of the linearized operator on $v^{1}$. Fix a cut-off function $\beta$ which equals $1$ on the part of $\Delta_{\rho}$ corresponding to $\Delta^{1}$ and with first and second derivatives supported in $Q_\rho$ of size $\Ordo(\rho^{-1})$. (Such a cut-off function exists since the length of $Q_\rho$ equals $2\rho$.) We will establish an estimate
\begin{equation}\label{e:estonL2c}
\|v\|_{2,\rho}\le C\|L\bar\pa_j v\|_{1,\rho},
\end{equation}
where $C>0$ is a constant, for $v$ in the $L^{2}$-complement of the subspace $\tilde K$ spanned by the cut-off solutions $\beta k_1,\dots,\beta k_l$. The lemma follows from this estimate.

We argue by contradiction: assume that the estimate does not hold. Then there is a sequence $v_\rho$ in this $L^{2}$-complement with
\begin{equation}\label{e:contra}
\|v_\rho\|_{2,\rho}= 1,\quad\text{and}\quad \|L\bar\pa_J v_\rho\|_{1,\rho}\to 0.
\end{equation}

We write $v_\rho=u_\rho + b_\rho^{1} + b_\rho^{2}$, where $u_\rho\in \sblv_{2,\rho}(w_\rho)$, $b_\rho^{1}\in B^{1}_{\rho}$, and $b^{2}_\rho\in B^{2}_{\rho}$.  Fix cut-off functions $\beta_j$ on $\Delta_{\rho}$, $j=1,2$ with the following properties. The function $\beta_1$ equals $1$ on $\Delta^{1}_{\rho/2}\subset \Delta_{\rho}$ and equals $0$ on $\Delta^{2}_{\rho}\subset\Delta_{\rho}$. Furthermore, $\beta_1 u_\rho$ is holomorphic on the boundary, and $|D\beta_1|=\Ordo(\rho^{-1})$. The function $\beta_2$ has similar properties but with support in $\Delta_{\rho}^{2}\subset\Delta_\rho$. We also let $\alpha$ be a similar cut-off function on $\Delta_{\rho}$, equal to $1$ on $Q_{\frac12\rho}$ and equal to $0$ outside $Q_{\rho-1}$.

Since $|\bar\pa\beta_j^{\rho}|\to 0$ as $\rho\to 0$ we then have
\begin{align*}
\left\|L\bar{\pa}_{J}(\beta_1 u_\rho+b_{\rho}^{1}+b_{\rho}^{2}|_{\Delta_{\rho}^{1}})\right\|_{1,\rho}
&\le|\bar\pa\beta_j^{\rho}|\left\| u_\rho \right\|_{1,\rho}
+\left\|L\bar{\pa}_{J}(u_\rho+b_{\rho}^{1}+b_{\rho}^{2}|_{\Delta_{\rho}^{1}})\right\|_{1,\rho}\\
&\le |\bar\pa\beta_j^{\rho}|\left\| u_\rho \right\|_{1,\rho}
+ \|L\bar\pa_J v_\rho\|_{1,\rho}
\to 0,
\end{align*}
as $\rho\to\infty$. We then conclude from transversality of $v^{1}$ (i.e., invertibility of the linearized operator off of its kernel) that there exists a constant $M>0$ such that
\begin{equation}\label{eq:tozero1}
\left\|\beta_1 u_\rho+b_{\rho}^{1}+b_{\rho}^{2}|_{\Delta_{\rho}^{1}}\right\|_{2,\rho}\le 
M \left\|L\bar{\pa}_{J}(\beta_1 u_\rho+b_{\rho}^{1}+b_{\rho}^{2}|_{\Delta_{\rho}^{1}})\right\|_{1,\rho}
\to 0.
\end{equation}
In particular the cut-off constant solution in the gluing region goes to $0$.

Similarly we have
\[
\left\|L\bar{\pa}_{J}(\beta_2u_\rho
+b_{\rho}^{2}|_{\Delta_{\rho}^{2}})\right\|_{1,\rho}\to 0.
\]
We conclude from the invertibility of the standard operator on the three punctured disk that
\begin{equation}\label{eq:stconst3punct}
\left\|\beta_2u_\rho
+b_{\rho}^{2}|_{\Delta_{\rho}^{2}}\right\|_{2,\rho}\to 0.
\end{equation}

After dividing the weight function in the gluing region $Q_{\rho/2}\approx [-\frac{\rho}{2},\frac{\rho}{2}]\times[0,1]$ by its maximum the problem on the gluing region converges to the $\bar\pa$-problem on the strip with $\R^{3}$-boundary condition and negative exponential weights at both ends (i.e.~weight function $\rho(s+it)=e^{-\delta|s|}$). This problem has a three-dimensional kernel spanned by constant solutions in $\R^{3}$. As mentioned above, the estimates \eqref{eq:tozero1} and \eqref{eq:stconst3punct} imply that the components along the constant solutions go to zero. This gives first that
\[
\left\|L\bar\pa_J(\alpha u_\rho)\right\|_{1,\rho}\to 0,
\]
and then, by invertibility of $L\bar{\pa}_{J}$ on the complement of the kernel, 
also that $\|\alpha u_{\rho}\|_{2,\rho}\to 0$.  Our assumption thus implies that $\|v_\rho\|_{2,\rho} \to 0$. This contradicts \eqref{e:contra}. The lemma follows.
\end{proof}

The next thing to establish is the quadratic estimate for the non-linear term in the Taylor expansion of $\bar\pa_J$ around $w_{\rho}$, i.e., around the origin in $\sblv_{2,\rho}\times B_{\rho}$. We use the exponential map as in Section \ref{s:confcob} to define the local coordinate system around $w_{\rho}$ and the estimate for the non-linear term follows from a standard argument that uses the uniform bounds on the derivatives of the exponential map in our metric, see \cite[Lemma A.18]{ESrev} and also \cite{EES1, EES2}.
In fact the standard argument gives the corresponding unweighted estimate but then the case of positive weights follows since the left hand side of the inequality is linear in the weight whereas the right hand side is quadratic. So the inequality follows for weights bounded from below. Note also that variations along the cut-off solutions in $B_{\rho}$ give contributions to the non-linear term only in the regions where the derivatives of the cut-off functions are supported and in such regions the weight functions have finite size.

\begin{remark}
It is essential here that the cut-off solutions are real solutions to the non-linear equation since a small error term would give a large norm contribution because of the large weight function in $N_\rho$, which in turn is key for the proof of the uniform invertibility of the differential in Lemma \ref{l:unifinv}.
\end{remark}

The final step is then to show surjectivity of the construction. More concretely, this means that we must show that any sequence of disks which converges in the sense of Subsection \ref{S:cp} to a broken disk eventually lies in a small $\|\cdot\|_{2,\rho}$-neighborhood of $w_\rho$. This follows once we show that any holomorphic disk in a $C^{0}$-neighborhood of the approximate solution is also close in $\|\cdot\|_{2,\rho}$-norm. The proof of that fact follows from the knowledge of explicit solutions in the region where the weight is big. Here $C^{0}$-control at the ends gives norm control, see
\cite[Proof of Theorem A.21]{ESrev} or \cite[Proof of Theorem 1.3]{E}. This finishes the gluing results needed in the cases when we glue one constant 3-punctured disks at a Lagrangian intersection puncture of winding number $1$.

The remaining cases for gluing constant disks are proved by modifications of the above argument that we describe next. Consider first Theorem \ref{t:[2,0]} (a2). Here we replace the gluing parameter $\rho$ with two independent gluing parameters $(\rho_1,\rho_2)\in[0,\infty)^{2}$,  one for each constant disk. Likewise we have two copies of the new finite dimensional factors in the configuration space. The gluing argument is then a word by word repetition of the above.

Next consider broken disks as in Theorem \ref{t:[2,1]}(c). Here the exponential weight at the winding $\frac32$-puncture of $v^{1}$ is $\delta\in (\frac{\pi}{2},\pi)$ and the boundary condition in the strip $Q_{\rho}$ has different constant Lagrangians along the two boundary components. The cut-off solutions in $B^{2}_{\rho}$ change accordingly: instead of an $\R^{3}$-factor of cut-off solutions we have an $\R^{5}$-factor, $\R^{5}=\R\times\R^{2}\times\R^{2}$. The $\R$-factor is a constant solution in the direction of the knot. The first $\R^{2}$-factor contains cut-off solutions near the positive puncture of $\Delta_{\rho}^{2}$ of the form $c e^{\frac{\pi z}{2}}$ for $c$ a vector in the appropriate Lagrangian 2-space perpendicular to the knot, the second $\R^{2}$-factor consists of cut-off solutions of the form $c e^{\frac{-\pi z}{2}}$. Then in Lemma \ref{l:unifinv} we replace \eqref{eq:stconst3punct} with the estimate on the three punctured disk with boundary condition corresponding to the constant disk. I.e., in directions perpendicular to the knot the boundary condition are two perpendicular Lagrangian planes at the two boundary components near the positive puncture and one of these planes between the two negative punctures. There is a small positive exponential weight at the negative punctures, the weight $\delta$ and two cut-off solutions at the positive puncture. In the directions perpendicular to the knot the $\bar{\pa}$-operator is then an isomorphism and the argument above proceeds as before.

\begin{remark}\label{rmk:twoconst}
In Theorem \ref{t:[1,0]} (c) there are two different constant disks and the corresponding boundary points cancel out. Geometrically this corresponds to pushing a winding $\frac12$ puncture through a winding $1$ puncture.
\end{remark}

Finally, we consider Theorem \ref{t:[2,0]} (d2). The argument here is the same as that just described for Theorem \ref{t:[2,1]} (c) with the only difference being that the 3-punctured constant disk should be replaced by a 4-punctured disk and that we invert the operator on the $L^{2}$ complement of the additional conformal variation in the 4-punctured disk. In fact, when the 4-punctured disk is broken into two levels it corresponds to the 3-level configuration with the two top levels as in Theorem \ref{t:[2,1]} (c) and a third level constant disk attached at the winding 1 puncture of the second level constant disk.

\subsection{Symplectization gluing}\label{ss:sympglu}
Consider a disk with two non-constant levels as in Theorem \ref{t:[2,0]} (b) or (c), Theorem \ref{t:[1,0]} (b) or (c), or Theorem \ref{t:[2,1]} (b). The argument needed to glue such configurations is similar to the one in Subsection \ref{ss:constglu} and we only sketch the details. There are again four steps: define an approximate solution, prove uniform invertibility of the differential, establish a quadratic estimate for the non-linear term, and show surjectivity of the construction.

We consider first the case when we glue a symplectization disk to a disk in $T^{\ast}Q$ and discuss modifications needed when the second level also lives in the symplectization later.
Denote the top-level disk in the symplectization $v^{1}\colon \Delta^{1}=\Delta_{m}\to \R\times S^{\ast}Q$ and the $m$ second level disks $v^{2,j}\colon \Delta_{m_j}\to T^{\ast}Q$, $j=1,\dots,m$. Recall that by adding marked points we reduce to the case when all domains involved are stable, see Section \ref{s:stab}.

Each symplectization disk lies in a natural $\R$-family. Let $t$ denote a standard coordinate on the $\R$-factor. Fix the unique map $v^{1}$ in this family that takes the largest boundary maximum in $\Delta^{1}$ to the slice $\{t=0\}$. By asymptotics at the negative punctures, for all $T>0$ sufficiently large $(v^{1})^{-1}(\{t\le -T\})$ consists of $m$ half strip regions with one component around each negative puncture of $v^{1}$. Furthermore, as $T\to\infty$ the inverse image of the slice $\{t=-T\}$ converges to vertical segments at an exponential rate (since the map agrees with trivial Reeb chord strips up to exponential error). We fix such a slice and consider the vertical segments through its end point. Parameterize the neighborhoods of all the punctures cut at these vertical segments by $(-\infty,0]\times[0,1]$. For $\rho>0$, let $\Delta^{1}_{\rho}\subset \Delta^{1}$ be the subset obtained by removing $(-\infty,-\rho)\times[0,1]$ from the neighborhood $(-\infty,0]\times[0,1]$ of each negative puncture.

Fix neighborhoods $[0,\infty)\times[0,1]$ of the positive puncture in each $\Delta^{2,j}$, $j=1,\dots,m$ in which the map is well approximated by the trivial strip at the positive puncture and let $\Delta^{2,j}_{\rho}\subset\Delta^{2,j}$ denote the subset obtained by removing $(\rho,\infty)\times[0,1]$ from this neighborhood. Let $\Delta_{\rho}$ denote the domain obtained by adjoining $\Delta^{2,j}_{\rho}$ to  $\Delta^{1}_{\rho}$ by identifying the vertical segment at the positive puncture of $\Delta^{2,j}_{\rho}$ with the vertical segment of the negative puncture in $\Delta^{1}_{\rho}$ where $v^{2,j}$ is attached to $v^{1}$. Then we get $m$ strip regions $Q^{j}_{\rho}=[-\rho,\rho]\times[0,1]\subset\Delta_{\rho}$ around each vertical segment where the disks were joined.

By interpolating between the two maps joined at each negative puncture using the standard coordinates near the Reeb chords we find a pregluing
\[
w_{\rho}\colon \Delta_{\rho}\to T^{\ast}Q
\]
such that $\bar{\pa}_{J}w_{\rho}$ is supported only in the middle $[-1,1]\times[0,1]$ of each $Q_{\rho}^{j}$ and such that
\[
|\bar{\pa}_{J}w_{\rho}|_{C^{1}}=\Ordo(e^{-\alpha\rho}),
\]
where $\alpha>0$ depends on the angle between the Lagrangian subspaces of the contact hyperplane obtained by moving the tangent space of $\Lambda_K$ at the Reeb chord start point to the tangent space of $\Lambda_K$ at the Reeb chord end point by the linearized Reeb flow.

As in Section \ref{ss:constglu} we use a configuration space of maps in a neighborhood of $w_{\rho}$ that is a product of an infinite and a finite dimensional space of functions. We first consider the infinite dimensional factor.
Define weight functions $\lambda_\rho\colon\Delta_{\rho}\to\R$ by patching (suitably scaled) weight functions $\eta_\delta$ of the domains of the broken disks where we take $0<\delta<\alpha$. In particular, we have $\lambda_\rho(\tau+it)=c_j e^{\delta|\tau|}$ for $\tau+it\in Q_\rho^{j}$. Then, writing $\|\cdot\|_{k,\rho}$ for the Sobolev $k$-norm with this weight, we have
\[
\|\bar\pa_J w_\rho\|_{1,\rho}=\Ordo(e^{(\delta-\alpha)\rho}).
\]

We let $\sblv_{2,\rho}(w_\rho)$ denote the $\lambda_\rho$-weighted Sobolev space of vector fields along $w_\rho$ which are tangent to the Lagrangians, holomorphic on the boundary, and which satisfy the following vanishing condition. The map $w_{\rho}$ maps the strip regions $Q^{j}_{\rho}$ into small neighborhoods of the Reeb chord strips where we have standard coordinates $\R\times (-\epsilon, L+\epsilon)\times\C^{2}$ and we require that the $\R$-component of the vector field vanishes at one of the endpoints of the vertical segments where the disks were joined. Thus there are in total $m$ vanishing conditions.

Next we discuss the finite dimensional factor $B_{\rho}=B^{0}_{\rho}\times B^{1}_{\rho}\times B^{2}_{\rho}$. The second factor $B^{1}_{\rho}$ is an open subset of the origin in $\R$ corresponding to the shift at the positive puncture of $w_{\rho}$. The third factor $B^{2}_{\rho}$ contains all the conformal variations and the shifts inherited from the negative punctures of the second level disks. Thus $B^{2}_{\rho}$ is a neighborhood of the origin in
\[
\Pi_{j=1}^{m} (\R^{m_j-2}\times\R^{m_j}).
\]
Finally, the first factor $B_{\rho}^{0}$ is an open subset of the origin in a codimension one subspace of
\[
(\R\times\R^{2})^{m},
\]
where each ($\R\times\R^{2}$)-factor corresponds to a specific second level disk. The $\R$-component of the $j^{\rm th}$ puncture of $v^{1}$ corresponds to a cut-off shifting vector field $a_j$ in the $\R$-direction of the symplectization supported in $Q^{j}_{\rho}$. The $\R^{2}$-component corresponds to the two newborn conformal variations in $\Delta^{2,j}_{\rho}$. As before these conformal variations have the form $\gamma = \bar{\pa} V$ where $V$ is a vector field along $\Delta_{\rho}$. The first factor of $\R^{2}$ corresponds to a variation $\gamma^{j}_{1}$ that agrees with the conformal variation at the negative puncture in $\Delta^{1}$ where $v^{2,j}$ is attached. The second factor is spanned by  $\gamma^{2,j}=\bar{\pa} V_{2}$ where $V_{2}$ is the vector field in $\Delta^{2,j}_{\rho}\cup Q_{\rho}^{j}$ that corresponds to translations along the real axis that moves all the boundary maxima in $\Delta^{2,j}_{\rho}$ cut off near the end of $Q_{\rho}$ in $\Delta_{\rho}^{1}$. The codimension one subspace is the orthogonal complement of the line given by the equation
\[
\gamma^{2,1}=\gamma^{2,2}=\dots =\gamma^{2,m}.
\]
Note that this later conformal variation corresponds to changing $\rho$.

\begin{remark}
The nature of the conformal variations $\gamma^{1,j}$ and $\gamma^{2,j}$ are easy to see using a different conformal model for the domain $\Delta_{\rho}$ as follows. Consider the domain of $\Delta^{1}$ as the upper half plane $H$ with positive puncture at $\infty$ and negative punctures along the real axis. The conformal variations of this domain can be viewed as translating the negative punctures along the real axis. To construct the domain $\Delta_{\rho}$ we think also of the domains $\Delta^{2,j}$ as upper half planes. Cut out small half disks of radius $c_j e^{-\alpha\rho}$ near the negative punctures of $\Delta^{1}$ and glue in the half disks in the domain $\Delta^{2,j}$ of radius $c_j e^{\alpha\rho}$ scaled by $e^{-2\alpha\rho}$. Now the conformal variation $\gamma^{1,j}$ corresponds to translating the whole half disk at the $j^{\rm th}$ negative puncture of $\Delta^{1}$ rigidly in the real direction and the conformal variation $\gamma^{2,j}$ corresponds to keeping the small half disk fixed but scaling it so that its negative punctures move closer together.
\end{remark}

We use the neighborhood $\mathcal{W}_{\rho}=\sblv_{2,\rho}\times B_{\rho}$ of $w_{\rho}$. In order to apply Lemma \ref{l:FloerPicard} we must first establish the counterpart of Lemma \ref{l:unifinv}. Here we invert the linearized operator on the $L^{2}$-complement of the subspace spanned by cut-off kernel elements in $\Delta^{1}$ and $\Delta^{2,j}$ defined as follows. The infinite dimensional components are indeed just a cut-off vector field. For the finite dimensional components we identify the conformal variation at the $j^{\rm th}$ negative puncture of $\Delta^{1}$ with $\gamma^{1,j}$, the shift at this negative puncture with $a_j$, and the shift at the positive puncture of $\Delta^{2,j}$ with $\gamma^{2,j}$. To show uniform invertibility we then argue by contradiction as in the proof of Lemma \ref{l:unifinv}. Using the above identifications of finite dimensional factors, the result follows in a straightforward way.

Finally, the two remaining steps, the quadratic estimate for the non-linear term and the surjectivity of the construction are completely analogous to their counterparts in Subsection \ref{ss:constglu} and will not be discussed further.

In the case that the second level disk lies in the symplectization as well we start as above by fixing a representative for $v^{1}$ and a slice $\{t=-T\}$ after which this representative is well approximated by Reeb chord strips. We then fix representatives for all the non-trivial second level curves $v^{2,j}$ (of which there is only one in our case) that are translated sufficiently much so that they are well approximated by Reeb chord strips in the slice $\{t=-T\}$ at their positive punctures. We then repeat the argument above.

\subsection{Point constraints on the knot}\label{ss:branchmark}

An analogous construction allows us to express neighborhoods of disks with Lagrangian intersection punctures of winding number $1$ inside the space of disks with these punctures removed. In the analytical $\C\times\C^{2}$-coordinates around the knot a disk $v$ with such a puncture looks like
\[
v(z)=
\sum_{n\ge 0} c_n e^{-n\pi z}, \quad z\in[0,\infty)\times[0,1], \quad c_n\in\R^{3} \text{ or } c_n\in\R\times i\R^{2}
\]
with $c_0=(c_0',0)$, whereas a general disk looks the same way but has unrestricted $c_0$. We can thus construct a configuration space $\mathcal{W}$ for unrestricted disks in a neighborhood of $v$ as
\[
\mathcal{W}=\mathcal{W}'\oplus \R^{2},
\]
where $\mathcal{W}'$ is the configuration space for disks in a neighborhood of $v$ with Lagrangian intersection puncture of winding number $1$ and $\R^{2}$ is spanned by two cut-off constant solutions in the Lagrangian perpendicular to $K$. The zero-set of the $\bar{\pa}_{J}$-operator acting on $\mathcal{W}$ then gives a neighborhood of $v$ in the space of unrestricted disks.

\subsection{Proofs of the structure theorems}
The proof of all the theorems on the structure of the compactified moduli spaces as manifolds with boundary with corners now follow the same pattern. Transversality and compactness results give the possible degenerations and gluing give neighborhoods of several level disks in the boundary. The manifold structure in the interior is a consequence of standard Fredholm theory, whereas charts near the boundary are obtained from the conformal structures of the domains.

\begin{proof}[Proof of Theorem \ref{t:sy}]
Part (i) follows immediately from Lemma \ref{l:tv} and Theorem \ref{t:cp}. Consider part (ii).  Lemma \ref{l:tv} and Theorem \ref{t:cp} imply that the broken disks listed are the only possible configurations in the boundary of the compactified moduli space. It follows from (the parameterized version of) Lemma \ref{l:FloerPicard} that the gluing parameter gives a parameterization of the boundary of the reduced moduli space. Recall that we identified the gluing parameter with a certain conformal variation (that shifts all the boundary maxima in the second level disk) and we topologize a neighborhood of the broken configuration using the induced map to the compactified space of conformal structures. This establishes (ii).
\end{proof}

\begin{proof}[Proof of Theorem \ref{t:0dimcob}]
The theorem follows immediately from Lemma \ref{l:tv} and Theorem \ref{t:cp}.
\end{proof}

\begin{proof}[Proof of Theorem \ref{t:[1,0]}]
The proof is analogous to the proof of Theorem \ref{t:sy} (ii) except for (c). Here a disk without Lagrangian intersection punctures moves out as a rigid disk in the symplectization into the $\R$-invariant region and the translations along $\R$ give a neighborhood of the boundary.
\end{proof}

\begin{proof}[Proof of Theorem \ref{t:[2,1]}]
The argument is analogous to the proofs above and we explain only how to parameterize the boundary in the cases that differ from the above. Consider (b). Recall that we identified the gluing parameter with the conformal variation that translates all the boundary maxima in the second level disks uniformly. As above we use this to parameterize a neighborhood of the boundary. Finally, consider (c). Here again the boundary can be parameterized by the gluing parameter which corresponds to a conformal variation. In particular, the boundary point corresponds to a three punctured disk splitting off. As explained in Remark \ref{rmk:twoconst} there are two such disks and the corresponding boundaries of the moduli space naturally fit together to a smooth $1$-manifold.
\end{proof}

\begin{remark}\label{r:1-dimfinal}(cf.~Remark~\ref{r:1-dimcornercoord}).
Consider a holomorphic disk near the codimension one boundary as in
Theorem~\ref{t:[2,1]} (c). Remark \ref{r:breakingmodelclose} gives a
local model~\eqref{eq:1-dimmodel} for the disk, parameterized by
a half disk in the upper half plane near the two colliding corners
with one puncture at $0$ and one at $\epsilon>0$. The above
proof shows that the newborn conformal variation which here is the
length of the stretching strip can be used as local coordinate in the
moduli space near the corner. A conformal map that takes a 
vertical segment in the stretching strip to the upper arc in the unit
circle and the boundary of the domain in the disk splitting off to
the real line gives a smooth change of coordinates from this parameter
to the coordinates given by $\epsilon$. Thus the local model
\eqref{eq:1-dimmodel} used in the definition of the string operations
is $C^{k}$-close to the actual moduli space, when both are viewed as
parameterized by the coordinates $\epsilon$.
A similar discussion applies to Theorem~\ref{t:[2,1]} (c), using the local
model~\eqref{eq:2-dimmodel} with $\delta=0$.
\end{remark}

\begin{proof}[Proof of Theorem \ref{t:[2,0]}]
Arguments for producing neighborhoods of codimension one boundary strata are similar to the above, so we discuss the codimension two parts.

Consider a broken disk as in (a2). The gluing result needed in this case is analogous to the argument in Subsection \ref{ss:constglu}. Here however we attach two constant disks, producing approximate solutions $w_{\rho_1,\rho_2}$ depending on two independent variables $\rho_1,\rho_2\to\infty$. In this case there are two independent newborn conformal variations and the linearized $\bar\pa_J$-operator is inverted on the complement of their linear span. It follows as above that the projection of the moduli space is an embedding into the space of conformal structures and we induce the corner structure from there. Note that this is coherent with our treatment of nearby codimension one boundary disks.

The arguments in cases (b2), (c2), and (d2) follow the same lines. We produce approximate solutions depending on two independent variables. In case (b2) the linearized operator is inverted on the complement of the $2$-dimensional spaces spanned by the cut off shift of the symplectization disk and the newborn conformal structure of the constant disk. In case (c2) the linearized operator is inverted on the complement of the (independent) shifts of the first and second level disks, and in case (d2) on the complement of the newborn conformal structure and the additional conformal structure in the constant 4-punctured disk. In all cases, the corner structure is induced from the corresponding structure on the space of conformal structures and the construction is compatible with nearby strata of lower codimension.
\end{proof}

\begin{remark}\label{r:2-dimfinal}(cf.~Remark~\ref{r:2-dimcornercoord}).
Consider a holomorphic disk near the codimension two corner as in
Theorem~\ref{t:[2,0]} (d2). Remark \ref{r:breakingmodelclose} gives a
local model~\eqref{eq:2-dimmodel} for the disk, parameterized by
a half disk in the upper half plane near the three colliding corners
with one puncture at $0$ and the two others at boundary points
$\delta< 0$ and $\epsilon>0$. The above proof shows that the newborn
conformal variation (which here is the length of the stretching strip)
together with the difference between the boundary maxima in the
4-punctured disk splitting off can be used as local coordinates in the
moduli space near the corner. A conformal map that takes a
vertical segment in the stretching strip to the upper arc in the unit
circle and the boundary of the domain in the disk splitting off to the
real line gives a smooth change of coordinates from these two
parameters to the coordinates given by $(\epsilon,\delta)$. Thus the
local model \eqref{eq:2-dimmodel} used in the definition of the string
operations is $C^{k}$-close to the actual moduli space, when both are
viewed as parameterized by the coordinates $(\epsilon,\delta)$.
\end{remark}

\begin{proof}[Proof of Theorem \ref{t:emb}]
The theorem follows from the discussion in Subsection \ref{ss:branchmark}.
\end{proof}

\begin{proof}[Proof of Theorem \ref{t:imm}]
The theorem follows from the discussion in Subsection \ref{ss:branchmark} in combination with the argument in the proof of Theorem \ref{t:[2,1]} (c).
\end{proof}


\bibliographystyle{plain}
\bibliography{biblio}

\end{document}